\theoremstyle{plain}
\newtheorem{theorem}{Theorem}[section]
\newtheorem{corollary}[theorem]{Corollary}
\newtheorem{lemma}[theorem]{Lemma}
\newtheorem{Proposition}[theorem]{Proposition}
\newtheorem{Definition}[theorem]{Definition}
\newtheorem{fact}[theorem]{Fact}
\newtheorem{Reduction}[theorem]{Reduction}
\newtheorem{notation}[theorem]{Notation}
\theoremstyle{remark}
\newtheorem{remark}[theorem]{Remark}
\numberwithin{equation}{section}
\title[Simplicity of singular value spectrum and two-point invertibility]{Simplicity of singular value spectrum of random matrices and two-point quantitative invertibility}
\author{Yi HAN}
\address{Department of Mathematics, Massachusetts Institute of Technology, Cambridge, MA
}
\email{hanyi16@mit.edu}
\begin{document}

\begin{abstract}
Let $A$ be an $n\times n$ random matrix with independent, identically distributed mean 0, variance 1 subgaussian entries. We prove that 
$$
\mathbb{P}(A\text{ has distinct singular values})\geq 1-e^{-cn}
$$ for some $c>0$, confirming a conjecture of Vu. This result is then generalized to singular values of rectangular random matrices with i.i.d. entries.

We also prove that for two fixed real numbers $\lambda_1,\lambda_2$ with a sufficient lower bound on $|\lambda_1-\lambda_2|$, we have a joint singular value small ball estimate for any $\epsilon>0$
$$
\mathbb{P}(\sigma_{min}(A-\lambda_1I_n)\leq\epsilon n^{-1/2},\sigma_{min}(A-\lambda_2I_n)\leq\epsilon n^{-1/2})\leq C\epsilon^2+e^{-cn}, 
$$ where $\sigma_{min}(A)$ is the minimal singular value of a square matrix $A$ and  $I_n$ is the identity matrix. For much smaller $|\lambda_1-\lambda_2|$ we derive a similar estimate with $C$ replaced by $C\sqrt{n}/|\lambda_1-\lambda_2|$. This generalizes the one-point estimate of Rudelson and Vershynin, which proves $\mathbb{P}(\sigma_{min}(A)\leq \epsilon n^{-1/2})\leq C\epsilon+e^{-cn}$. Analogous two-point bounds are proven when $A$ has i.i.d. real and complex parts, with $\epsilon^4$ in place of $\epsilon^2$ on the right hand side of the estimate and for any complex numbers $\lambda_1,\lambda_2$.
These two point estimates can be used to derive strong anticoncentration bounds for an arbitrary linear combination of two eigenvalues of $A$.

\end{abstract}

\maketitle

\section{Introduction}

Let $A$ be an $n\times n$ random matrix with i.i.d., mean 0, variance 1 subgaussian entries. The singularity probability of $A$, in the Bernoulli case where entries of $A$ are uniformly distributed over $\{-1,1\}$, has a long history dating back to Komlós \cite{komlos1967determinant}, Kahn, Komlós and Szemerédi \cite{kahn1995probability}, Tao and Vu \cite{tao2007singularity}, Bourgain, Vu, and Wood \cite{bourgain2010singularity}. Recent major breakthroughs have been made by Tikhomirov \cite{tikhomirov2020singularity} and Jain, Sah and Sawhney \cite{jain2021singularity}, settling longstanding conjectures.
In this paper, we consider the general case of a subgaussian entry distribution, for which Rudelson and Vershynin \cite{rudelson2008littlewood} proved in 2007 that the singularity probability of $A$ is exponentially small. They also obtained a quantitative version of the least singular value estimate. Let $\sigma_{min}(A)$ denote the least singular value of $A$, then by \cite{rudelson2008littlewood}, we can find $c,C>0$ depending only on subgaussian constant of entries so that for any $\epsilon>0$,
\begin{equation}\label{seintang}
    \mathbb{P}(\sigma_{min}(A)\leq\epsilon n^{-1/2})\leq C\epsilon+e^{-cn}.
\end{equation} Recently, Sah, Sahasrabudhe and Sawhney \cite{sah2025spielman} showed that the constant $C$ in \eqref{seintang} can be taken to be $C=1+o(1)$, yielding a singular value estimate that matches the Gaussian case up to a constant $1+o(1)$ and all the way down to $\epsilon\sim\exp(-\Omega(n))$ (see \cite{von1963design} for the first study in the Gaussian case, and the implication of this estimate on stability of algorithms). This proves a conjecture of Spielman and Teng \cite{spielmanteng}.
In a different direction, \eqref{seintang} can be generalized to an estimate for $\sigma_{min}(A-z I_n)$ for any $z\in\mathbb{C}$ where $I_n$ is the $n$-dimensional identity matrix. When $z$ satisfies $|\Im z|\geq\delta\sqrt{n}>0$, the estimate in \eqref{seintang} can be improved to $C_\delta\epsilon^2+e^{-cn}$ for a constant $C_\delta$ depending on $\delta$. This idea was first proposed by Rudelson and Vershynin \cite{rudelson2016no} to prove a novel eigenvector no-gaps delocalization property. Then Ge \cite{ge2017eigenvalue} adopted this idea to prove that with probability $1-o(1)$, all the eigenvalues of $A$ are distinct. See also Luh and O’Rourke \cite{luh2021eigenvectors} for a subsequent exponential bound on this probability.

Now that we have a comprehensive understanding of the singularity of $A$ and the estimates for $\sigma_{min}(A)$, there are still some open problems concerning the invertibility of $A$ that are not implied by all these results. In this paper we study two such problems, namely
\begin{enumerate}
    \item Is it true that with probability $1-o(1)$, all singular values of $A$ are distinct?
    \item Can we prove a joint least singular value estimate for $\sigma_{min}(A-\lambda_1 I_n)$ and $\sigma_{min}(A-\lambda_2 I_n)$ for relatively distant $\lambda_1,\lambda_2\in\mathbb{R}$ (i.e., with a lower bound on $|\lambda_1-\lambda_2|$ ) so that $$\mathbb{P}(\sigma_{min}(A-\lambda_1 I_n)\leq\epsilon,\sigma_{min}(A-\lambda_2 I_n)\leq\epsilon)\leq C\epsilon^2+e^{-cn}?$$ We can use this to show with high probability there are no two real eigenvalues $\lambda_1,\lambda_2$ of $A$ satisfying any fixed linear relation, say, $a\lambda_1+b\lambda_2=c$ for fixed $a,b,c$.
\end{enumerate}

The problem (1) was stated as an open problem in Vu \cite{vu2021recent}, Conjecture 8.5. In fact, he stated the problem for both a real symmetric random matrix and an i.i.d. random matrix, and claimed problem (1) is open for both models. We justify this Conjecture 8.5 for an i.i.d. matrix $A$ with an exponential bound on the probability of simple singular value spectrum.

The problem (2) is related to an intrinsic difficulty in least singular value estimates, and to our best knowledge such a joint estimate has never been proven before for any random matrix with discrete entry distribution. We will prove a version of the estimate in (2) that works for any $\lambda_1\neq\lambda_2$, with a factor before $\epsilon^2$ depending on the distance $|\lambda_1-\lambda_2|$quantitatively. Then we further generalize the estimate to complex-valued random matrices with genuinely complex shift $z_1,z_2$, and obtain a bound of order $\epsilon^4$.

\subsection{Simplicity of singular value spectrum}
Now we rigorously state the main results of this paper. The first result concerns the singular value spectrum.

\begin{theorem}\label{theorem1}
    Let $\xi$ be a mean 0, variance 1 subgaussian real random variable and let $A$ be an $n\times n$ random matrix with independent identically distributed entries of distribution $\xi$. Denote by $\sigma_1(A)\geq\cdots\geq\sigma_n(A)$ the singular values of $A$ arranged in decreasing order. Then for any $\epsilon>0$ we have
    $$\max_{k=1,\cdots,n-1}
\mathbb{P}(|\sigma_{k+1}(A)-\sigma_k(A)|\leq\epsilon n^{-1/2})\leq C\epsilon+e^{-cn},
    $$ for some constants $C,c>0$ depending only on $\xi$.

    In particular, we have $$\mathbb{P} (\text{All the singular values of $A$ are distinct})\geq 1-e^{-cn}.$$
\end{theorem}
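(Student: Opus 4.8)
The plan is to first reduce the gap estimate to a \emph{two-dimensional invertibility} statement, and then to attack the latter by an upgrade of the Rudelson--Vershynin least-singular-value machinery. For the reduction, pass to the Hermitian dilation $\tilde A=\bigl(\begin{smallmatrix}0&A\\A^{*}&0\end{smallmatrix}\bigr)$, whose spectrum is $\{\pm\sigma_j(A)\}$, or work directly with $A^{*}A$: on the event $\|A\|\le C\sqrt n$ one has $\sigma_k(A)^2-\sigma_{k+1}(A)^2\le 2C\sqrt n\,|\sigma_k(A)-\sigma_{k+1}(A)|$, so $|\sigma_{k+1}(A)-\sigma_k(A)|\le \epsilon n^{-1/2}$ forces that for the (random) value $t=\sigma_k(A)\in[0,C\sqrt n]$ the Hermitian matrix $A^{*}A-t^{2}I_n$ has \emph{two} singular values of size $O(\epsilon)$, i.e. an approximate two-dimensional kernel. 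Using the corresponding orthonormal pairs of left and right singular vectors one can reformulate this as: there is a scalar $t$ and an orthogonal (resp.\ unitary) $O$ so that $A-tO$ has two small singular values. This is the deterministic linear-algebra input, and it is precisely the shape handled by the two-point invertibility estimates developed in the body of the paper.

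The core is then a two-dimensional analogue of \eqref{seintang}: bounding the probability that $A$ possesses a two-dimensional ``almost null subspace'' for some admissible shift. Following Rudelson--Vershynin, I would split according to whether a unit vector in this near-kernel is compressible or incompressible. In the compressible regime an $\epsilon$-net over low-dimensional coordinate subspaces together with tensorization of small-ball probabilities gives an $e^{-cn}$ bound. The incompressible regime is where the real work lies: conditioning on all but one row of $A$, one expresses the relevant smallness through the distance of the random row to a subspace (the analogue of the distance argument behind \eqref{seintang}), and controls it via a Littlewood--Offord / least-common-denominator anticoncentration inequality --- but now one must carry two nearly orthogonal delocalized near-singular vectors simultaneously, using delocalization of the singular vectors of the sub-matrix. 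For Theorem~\ref{theorem1} it suffices to extract a single power of $\epsilon$ from this step (the $\epsilon^2$ and $\epsilon^4$ refinements are reserved for the genuinely two-point theorems), so one may afford to ``spend'' the second near-singular direction on an unconditioned net argument rather than on anticoncentration.

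The step I expect to be the main obstacle is twofold. First, the shift $t$ ranges over a continuum $[0,C\sqrt n]$; a naive $\epsilon n^{-1/2}$-net in $t$ has size $\gtrsim n/\epsilon$ and, paid at rate $C\epsilon$ per point, would only yield an $O(n)$ bound. One must instead interlace $\sigma_k(A),\sigma_{k+1}(A)$ against a singular value of a sub-matrix that is independent of the row used for anticoncentration --- so that the anticoncentration is applied at a conditionally \emph{deterministic} scale, with no net in $t$ at all --- or propagate the invertibility bound through $t$ by a quantitative stability estimate. Second, the least-common-denominator theory and the structure theorems for incompressible vectors must be re-derived for a two-dimensional near-kernel while keeping the error term exponentially small; this is the technical heart, and it is where the separation hypotheses on $|\lambda_1-\lambda_2|$ in the companion results originate. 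Granting the gap estimate, the final assertion follows by letting $\epsilon\to0$ to get $\mathbb P(\sigma_{k+1}(A)=\sigma_k(A))\le e^{-cn}$ for each $k$, and union-bounding over $k=1,\dots,n-1$ at the cost of adjusting the constant $c$.
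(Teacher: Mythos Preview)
Your high-level instinct --- linearize via the Hermitian dilation $L_A$, use Cauchy interlacing against a principal minor to pin the relevant scale without netting over $t$, and then run anticoncentration at that conditionally deterministic scale --- is right and matches the paper's reduction (the Nguyen--Tao--Vu lemma you gesture at is exactly Fact~\ref{fact2.2}). But two aspects of your framing are off.

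First, the detour through $A-tO$ with a random orthogonal $O$, and the claim that this is ``precisely the shape handled by the two-point invertibility estimates,'' is wrong. Theorem~\ref{theorem1} and the two-point Theorems~\ref{theorem2}--\ref{theorem1.3} are proved by \emph{entirely separate} arguments; the paper never reduces the singular-value gap to a two-location least-singular-value estimate. After interlacing, only a \emph{one}-dimensional anticoncentration bound is needed: if $w^{(i)}$ is a unit eigenvector of the minor $L_A^{[i]}$ and $X_i$ is the deleted column (independent of $w^{(i)}$), then Fact~\ref{fact2.2} gives $|\langle w^{(i)},X_i\rangle|\le\epsilon/|u_i|$, and it suffices that the nonzero half of $w^{(i)}$ have exponentially large LCD. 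No ``two-dimensional near-kernel'' enters.

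Second, and more seriously, you misplace the main obstacle. The genuine difficulty is \emph{structural}: the minor $\mathcal{L}_A=\bigl(\begin{smallmatrix}0&A_{/[1]}\\A_{/[1]}^{T}&0\end{smallmatrix}\bigr)$ has zero diagonal blocks and two off-diagonal blocks that are transposes of each other, so it carries strictly less randomness than a Wigner matrix and one cannot simply quote the symmetric-matrix LCD machinery. Proving that both components $v^1,v^2$ of its eigenvectors have $D_{\alpha,\gamma}(v^i)\ge e^{cn}$ (Theorem~\ref{theorem2.5s}) is the technical heart, and the paper handles it via a zeroed-out auxiliary matrix $M$, the \emph{conditioned} inverse Littlewood--Offord theorem of Campos--Jenssen--Michelen--Sahasrabudhe (Theorem~\ref{mainlittlewoodofford}), and an inversion-of-randomness / double-counting over random boxes --- applied separately to two independent rectangular blocks $H_1,H_2$ and their transposes. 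Your sketch does not anticipate this dependence issue; a straight Rudelson--Vershynin argument with a single row exposed would stall on it, because conditioning on the rest of $\mathcal{L}_A$ fixes \emph{both} copies of $A_{/[1]}$.
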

This positively proves a version of \cite{vu2021recent}, Conjecture 8.5 for i.i.d. matrices, and provides a stronger statement that the probability of two equal singular values is exponentially small. This bound is best possible up to the constant $c>0$: when the distribution $\xi$ is discrete valued, then with an exponentially small probability $A$ may have two zero singular values.
\begin{remark}
In \cite{vu2021recent}, Conjecture 8.5 another conjecture was made concerning whether the singular values of a random \textit{symmetric} matrix are distinct with high probability, which is equivalent to whether two eigenvalues of the matrix sum up to 0. Our method, after significant generalization, can possibly be used to show that the linear span of two eigenvectors of the matrix has no rigid arithmetic structure (i.e., the essential LCD \eqref{essentiallcd}, \eqref{whatissmalllcd?} of the vector pair is large). While this result is nontrivial and of independent interest, this does not completely solve the original problem. In \cite{han2024small} the author attempted to solve this conjecture but the result only works when the entry distribution has a bounded density.
\end{remark}

After drafting the manuscript, we realized that the proof of Theorem \ref{theorem1} can be generalized to rectangular random matrices with i.i.d. entries. We have the following theorem:

\begin{theorem}\label{theoremrectan}
    Let $\xi$ be a random variable as in Theorem \ref{theorem1}. Fix $\mathbf{a}>1$ and let $N,n$ be positive integers satisfying $\mathbf{a}n\geq N\geq n$. Let $Z$ be an $N\times n$ rectangular random matrix with i.i.d. coordinates of distribution $\xi$. Denote by $\sigma_1(Z)\geq\cdots\geq \sigma_n(Z)$ the singular values of $Z$. Then all the claims in Theorem \ref{theorem1} remain true with $\sigma_k(Z)$ in place of $\sigma_k(A)$, and with constants $C,c>0$ depending only on $\xi$ and $\mathbf{a}$. In particular, $Z$ has distinct singular values with probability at least $1-\exp(-\Omega(n))$.
\end{theorem}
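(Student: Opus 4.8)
The plan is to re-run the proof of Theorem~\ref{theorem1} with $A$ replaced by $Z$, keeping track of the aspect ratio. The point is that the singular-value information of a rectangular $Z$ is carried by the $n\times n$ positive-semidefinite matrix $Z^{\mathsf T}Z=\sum_{i=1}^{N}R_iR_i^{\mathsf T}$, where $R_1,\dots,R_N\in\mathbb R^{n}$ are the i.i.d.\ rows of $Z$, or equivalently by the Hermitian dilation $H_Z=\bigl(\begin{smallmatrix}0&Z\\ Z^{\mathsf T}&0\end{smallmatrix}\bigr)$, whose spectrum is $\pm\sigma_1(Z),\dots,\pm\sigma_n(Z)$ together with $N-n$ zeros. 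The square case of Theorem~\ref{theorem1} is exactly $N=n$, where $A^{\mathsf T}A=\sum_{i=1}^{n}R_iR_i^{\mathsf T}$ and $H_A$ is $2n\times 2n$ with no extra zeros. Thus the whole argument only ever involves the ambient dimension $n$ and the number $N$ of i.i.d.\ rows, and since $n\le N\le\mathbf a n$ we have $N\asymp_{\mathbf a}n$; every estimate in the proof of Theorem~\ref{theorem1} quantified with a constant depending only on $\xi$ becomes one with a constant depending only on $\xi$ and $\mathbf a$. The $N-n$ spurious zeros of $H_Z$ are harmless, since they lie in the kernel of $Z^{\mathsf T}$ and do not affect the positive part of the spectrum, which is all we work with; when $N=n$ there are none.

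Concretely, I would check that each ingredient of the proof of Theorem~\ref{theorem1} survives the passage to rectangular $Z$. The deterministic and high-probability inputs: the bound $\|Z\|\le C_{\mathbf a}\sqrt n$ (standard for subgaussian entries); invertibility over compressible/sparse vectors together with a lower bound on $\sigma_n(Z)$ --- which is the Rudelson--Vershynin estimate \eqref{seintang} when $N=n$, and an analogous (and, when $N\ge(1+\delta)n$ for fixed $\delta>0$, much easier) bound for $N>n$; and Cauchy interlacing of singular values under deletion of a row or a column --- all hold verbatim for rectangular matrices, and a matrix obtained from $Z$ by deleting a bounded number of rows or columns is again rectangular (possibly wide) with aspect ratio bounded in terms of $\mathbf a$, so the same facts apply to it. The structural inputs: delocalization and largeness of the essential LCD of the singular vectors of $Z$ (equivalently the eigenvectors of $H_Z$) and of normal vectors to subspaces spanned by rows or columns of $Z$; these are proved by exactly the net argument of Theorem~\ref{theorem1}, which uses only that the rows (resp.\ columns) are i.i.d.\ mean-zero variance-one subgaussian and $N\asymp n$, the aspect ratio entering only through the net cardinality $e^{O_{\mathbf a}(n)}$.

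The core step is the reduction of the small-gap event to the basic small-ball event, and I would verify it is insensitive to the shape of $Z$. On $\{|\sigma_{k+1}(Z)-\sigma_k(Z)|\le\epsilon n^{-1/2}\}$ the $k$-th and $(k{+}1)$-st eigenvalues of $H_Z$ (equivalently of $Z^{\mathsf T}Z$, up to squaring) nearly coincide at some level $\mu$, so the rank-$2$ spectral projection $P$ there satisfies $\|(H_Z-\mu I)P\|\lesssim\epsilon n^{-1/2}$. Whichever route is used in the square case then applies: deleting a bounded number of rows or columns of $Z$ and conditioning on the rest forces --- via interlacing and the delocalization/large-LCD of the singular vectors of the deleted matrix, which is still rectangular with aspect ratio $O_{\mathbf a}(1)$ --- an approximate linear (or quadratic) relation among the i.i.d.\ entries of the deleted part, whose probability is $\le C\epsilon+e^{-cn}$ by the inverse Littlewood--Offord / small-ball estimate of Theorem~\ref{theorem1}; alternatively, a direct net over the level $\mu\in[0,C\sqrt n]$ and over the incompressible $2$-plane spanned by $P$ reduces matters to the statement that a fixed $2$-plane in $\mathbb R^n$ is unlikely to be mapped by $Z$ into a neighbourhood of ``$\mu$ times an isometry'' into $\mathbb R^N$. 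In both routes the estimate is proved for each fixed $k$, so there is no loss of a factor $n$, and the presence of $N\ge n$ rows (more when $N>n$) is exactly what makes the deletions legitimate and, if anything, makes the small-ball step more comfortable by supplying additional i.i.d.\ coordinates. The bottom-of-the-spectrum gaps (notably $k=n-1$) are handled, as in Theorem~\ref{theorem1}, through the two-point-at-zero / approximate-rank-deficiency argument, which only improves as $N$ grows.

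Finally, letting $\epsilon\downarrow0$ in the per-$k$ estimate and taking a union bound over $k=1,\dots,n-1$ gives $\mathbb P(Z\text{ has a repeated singular value})\le n\,e^{-cn}\le e^{-c'n}$, which is the last claim. I do not expect a new conceptual obstacle beyond Theorem~\ref{theorem1}; the real work is bookkeeping --- confirming that every probabilistic estimate invoked there is genuinely uniform in the aspect ratio over $[1,\mathbf a]$, and that squareness is nowhere essential. In particular, any appeal to $\det A$ or $\operatorname{tr}A$ in the square-case proof should be recast via $\prod_i\sigma_i(Z)$ or $\sum_i\sigma_i(Z)^2=\|Z\|_{\mathrm{HS}}^{2}$, which make sense for rectangular $Z$; and the hard edge (present only when $N-n=O(1)$, where $\sigma_n(Z)$ may be as small as $e^{-cn}$) is treated exactly as in the square case, whereas when $N-n\asymp n$ it is a soft edge and strictly easier.
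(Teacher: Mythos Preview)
Your high-level plan is exactly what the paper does: it passes to the Hermitian dilation $L_Z$, removes one row and column to get $\mathcal{L}_Z$, and then reruns the entire Section~\ref{section222} machinery (compressible elimination, the zeroed-out matrix $M$ with blocks $H_1,H_2$, the conditioned inverse Littlewood--Offord Theorem~\ref{mainlittlewoodofford}, and the randomized net) with constants now depending on $\mathbf a$. Two small corrections to your sketch: there is no separate ``two-point-at-zero'' or bottom-of-spectrum argument in Theorem~\ref{theorem1}---all $k$ are handled uniformly via Fact~\ref{fact2.2}---and the alternative you float of netting directly over the 2-plane $P$ is not used and would not sidestep the hard part, which is precisely proving large LCD for eigenvectors of the minor. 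The one genuine technical wrinkle the paper flags (and you should anticipate) is that the square-case definition $D=D_1\cup(D_2-n+1)$ for the columns of $H_1,H_2$ no longer makes sense when $N\neq n$; instead one lets $H_1$ have columns indexed by $D_2$ and rows by $[N-1]\setminus D_1$, and $H_2$ have columns indexed by $D_1$ and rows by $[N,N+n-1]\setminus D_2$, after which everything goes through.
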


The assumption $N\geq n$ is clearly necessary, as otherwise $Z$ has $n-N$ zero singular values. Theorem \ref{theoremrectan} shows that this is essentially the only barrier to a simple singular value spectrum, since all non-zero singular values should be separated.
As we focus on square matrices in this paper, we will not alter the proof of Theorem \ref{theorem1} but present in Section \ref{rectangeneral} an outline of how the proof of Theorem \ref{theorem1} can be modified to yield Theorem \ref{theoremrectan}.

After the paper was submitted to the arXiv, the author learned about a concurrent work \cite{christoffersen2025gaps} which also considered simplicity of singular values. They considered slightly more general models and relaxed moment assumptions, but did not achieve an exponential bound in probability and the singular gap estimate is slightly weaker.

\subsection{Two-point invertibility: the real case}

The next result proves the joint two-point estimate for singular values. We first state a less technical version, requiring a separation of $|\lambda_1-\lambda_2|$:

\begin{theorem}\label{theorem2} Let $\xi$ be a mean 0, variance 1 subgaussian real random variable and let $A$ be an $n\times n$ random matrix with independent identically distributed entries of distribution $\xi$.
    Fix some $D>0$ and let $\lambda_1,\lambda_2\in[-4\sqrt{n},4\sqrt{n}]$ be two ($n$-dependent) real numbers satisfying $|\lambda_1-\lambda_2|\geq D\sqrt{n}$. Then we can find some $C_D>0$ depending on $D$ and the random variable $\xi$, and some $c>0$ depending only on $\xi$ such that, for any $\epsilon>0$,
    $$\mathbb{P}(\sigma_{min}(A-\lambda_1 I_n)\leq\epsilon,\sigma_{min}(A-\lambda_2 I_n)\leq\epsilon)\leq C_D\epsilon^2+e^{-cn}.$$
\end{theorem}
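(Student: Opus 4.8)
The plan is to run the Rudelson--Vershynin ``invertibility via distance'' scheme for both shifted matrices simultaneously and to extract, at the last step, a two-dimensional small-ball estimate instead of a one-dimensional one. Work on the $1-e^{-cn}$ event that $\|A\|_{\mathrm{op}}\le C_0\sqrt n$, so $\|A-\lambda_k I\|_{\mathrm{op}}\le C_1\sqrt n$ for $k=1,2$. As in the one-point argument, the contribution to each event of a compressible almost-null vector is $\le e^{-cn}$ (an $\varepsilon$-net together with a one-dimensional anticoncentration bound, valid because $|\lambda_k|\le 4\sqrt n$), so it suffices to bound the probability that \emph{both} events are witnessed by incompressible vectors. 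On that event the deterministic bound $\operatorname{dist}\big((A-\lambda_k I)_j,H^{(k)}_j\big)\le \|(A-\lambda_k I)x\|/|x_j|$ (with $H^{(k)}_j$ the span of the other columns of $A-\lambda_k I$ and $x$ the corresponding almost-null vector) shows that, for each $k$, the distance is $\le C\varepsilon$ for at least a fixed positive fraction of the columns $j$; selecting an index $j$ — or, when necessary, a pair of distinct indices, one for each $k$ — for which both distances are small and averaging over it (exchangeability of the columns being restored by conjugating $A$ by a permutation matrix, which commutes with the shift) reduces the theorem, up to a constant factor and an additive $e^{-cn}$, to a bound on
$$
\mathbb P\Big(\operatorname{dist}\big((A-\lambda_1 I)_n,H^{(1)}_n\big)\le C\varepsilon\ \text{and}\ \operatorname{dist}\big((A-\lambda_2 I)_n,H^{(2)}_n\big)\le C\varepsilon\Big).
$$

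I would then reveal the first $n-1$ columns $A_1,\dots,A_{n-1}$, which fixes both hyperplanes $H^{(1)}_n,H^{(2)}_n\subset\mathbb R^n$, and write their unit normals as $u_k=(-p_k,1)/\sqrt{1+\|p_k\|^2}$ with $p_k=(\widetilde A-\lambda_k I_{n-1})^{-1}b$, where $\widetilde A$ is the top-left $(n-1)\times(n-1)$ block of $A$ and $b$ is the vector of the first $n-1$ entries of the last row. Since $(A-\lambda_k I)_n=A_n-\lambda_k e_n$, one has $\operatorname{dist}\big((A-\lambda_k I)_n,H^{(k)}_n\big)=\big|\langle A_n,u_k\rangle-\lambda_k(u_k)_n\big|$, so conditionally on $A_1,\dots,A_{n-1}$ the probability above is a small-ball event in $\mathbb R^2$ for the vector $(\langle A_n,u_1\rangle,\langle A_n,u_2\rangle)$. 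A two-dimensional Esseen/Littlewood--Offord estimate bounds it by $C\varepsilon^2/(1-\langle u_1,u_2\rangle^2)+e^{-cn}$, provided (i) each $u_k$ — equivalently, the plane $\operatorname{span}(u_1,u_2)$ — has exponentially large logarithmic least common denominator, and (ii) $1-\langle u_1,u_2\rangle^2$ is bounded below. Property (i) is the structural content of the one-point least-singular-value analysis of $A-\lambda_k I$ in \cite{rudelson2008littlewood,rudelson2016no,ge2017eigenvalue}, which already establishes incompressibility and large denominator of such normal vectors; one upgrades it to the two-dimensional subspace spanned by the pair.

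Property (ii) is where the hypothesis $|\lambda_1-\lambda_2|\ge D\sqrt n$ is used. The resolvent identity gives $p_1-p_2=(\lambda_1-\lambda_2)(\widetilde A-\lambda_1 I)^{-1}p_2=(\lambda_2-\lambda_1)(\widetilde A-\lambda_2 I)^{-1}p_1$, so
$$
\|p_1-p_2\|\ \ge\ \frac{|\lambda_1-\lambda_2|}{\max_k\|\widetilde A-\lambda_k I\|_{\mathrm{op}}}\,\max\big(\|p_1\|,\|p_2\|\big)\ \ge\ \frac{D}{C_1}\,\max\big(\|p_1\|,\|p_2\|\big),
$$
and moreover $\|p_k\|\ge \|b\|/\|\widetilde A-\lambda_k I\|_{\mathrm{op}}\gtrsim 1$ on the event $\|b\|\ge \sqrt n/2$. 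Inserting these into the exact identity $(1+\|p_1\|^2)(1+\|p_2\|^2)-(1+\langle p_1,p_2\rangle)^2=\|p_1-p_2\|^2+\big(\|p_1\|^2\|p_2\|^2-\langle p_1,p_2\rangle^2\big)$, i.e. the numerator of $1-\langle u_1,u_2\rangle^2$, shows that $p_1$ and $p_2$ are quantitatively non-collinear and hence $1-\langle u_1,u_2\rangle^2\ge c_D>0$ — after discarding an $e^{-cn}$ exceptional event on which $\|A\|_{\mathrm{op}}$ or $\|b\|$ is atypical or $b$ is abnormally aligned with an eigenvector of $\widetilde A$ (the case left unhandled by the crude bounds above, which one controls through the spread of $b$). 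Assembling the three steps yields $C_D\varepsilon^2+e^{-cn}$.

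The main obstacle is the combination of (i) and (ii): one must transport the Rudelson--Vershynin least-common-denominator/structure theory from a single vector to the pair $(u_1,u_2)$, or to the two-dimensional subspace it spans, so that the error term in the two-dimensional anticoncentration inequality is genuinely $e^{-cn}$ rather than merely $o(1)$; and one must establish the non-collinearity bound $1-\langle u_1,u_2\rangle^2\ge c_D$ with an exponentially small exceptional probability, uniformly over the admissible pairs $(\lambda_1,\lambda_2)$. Everything else is the standard least-singular-value machinery of \cite{rudelson2008littlewood} executed twice in parallel.
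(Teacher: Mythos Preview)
Your high-level architecture matches the paper's: reduce via a two-location ``invertibility via distance'' lemma to a joint small-ball event for one column against two normal vectors, establish non-collinearity of the normals from the separation $|\lambda_1-\lambda_2|\ge D\sqrt n$, and finish with a two-dimensional Littlewood--Offord inequality once the pair has exponentially large LCD. But two steps you treat as routine are in fact the content of the paper, and a third is shakier than it looks.

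\textbf{The common index $j$.} Standard incompressibility only gives that each almost-null vector has $\sim c_{\rho,\delta}n$ ``good'' coordinates, and typically $c_{\rho,\delta}<\tfrac12$, so the two good sets need not intersect. Your fallback ``or, when necessary, a pair of distinct indices, one for each $k$'' does not work: if $j_1\neq j_2$ then $H^{(1)}_{j_1}$ depends on column $j_2$ and $H^{(2)}_{j_2}$ depends on column $j_1$, so you cannot condition to make both hyperplanes deterministic while keeping the two relevant columns random, and the $\varepsilon^2$ gain is lost. The paper closes this gap with the \emph{no-gaps delocalization} of Rudelson--Vershynin \cite{rudelson2016no}, which gives $\ge \tfrac34 n$ good coordinates for each vector and hence $\ge \tfrac14 n$ common good indices (Corollary~\ref{corollary3.22}, Proposition~\ref{invertibilitydistance}). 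This input is essential and not mentioned in your sketch.

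\textbf{The LCD of the pair.} You are right that this is the main obstacle, but ``one upgrades it to the two-dimensional subspace'' hides the entire difficulty. The one-point theory works because $u_k$ lies in $\ker\big((A-\lambda_k I)_{/[j]}\big)^T$, a \emph{linear} equation in $A$. A generic vector $\theta_1 u_1+\theta_2 u_2$ in the span is not annihilated by any linear equation in $A$; it is annihilated (up to a controllable error) only by the \emph{quadratic} expression $(A-\lambda_1 I)_{/[j]}^T(A-\lambda_2 I)_{/[j]}^T$, and quadratic Littlewood--Offord does not give exponential bounds directly. The paper spends Sections~\ref{sect3.2}--\ref{sec3.7} on this: it linearizes to a $(2n{+}1)\times(2n{+}1)$ block matrix $P_A(\hat\lambda)$ (equation~\eqref{palambda}), rules out compressible kernel vectors for it, and then runs a randomized-net argument stratified by the overlap $\langle u_1,u_2\rangle$ to show the $2$-dimensional LCD is $\ge e^{c n}$ (Theorem~\ref{quasirandomtheorem3}). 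None of \cite{rudelson2008littlewood,rudelson2016no,ge2017eigenvalue} supplies this; in \cite{ge2017eigenvalue,luh2021eigenvectors} the analogous pair comes from the real and imaginary parts of a \emph{single} complex eigenvector and can be rotated so the minimal LCD is attained on one axis, a trick unavailable here.

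\textbf{Non-collinearity.} Your resolvent computation is morally correct, but writing $u_k=(-p_k,1)/\sqrt{1+\|p_k\|^2}$ with $p_k=(\widetilde A-\lambda_k I)^{-T}b$ presupposes invertibility and forces you to control the product $(1+\|p_1\|^2)(1+\|p_2\|^2)$ in the denominator, which can be arbitrarily large when $\lambda_k$ is near $\mathrm{spec}(\widetilde A)$; turning your inequalities into a uniform bound $1-\langle u_1,u_2\rangle^2\ge c_D$ with $e^{-cn}$ exceptional probability is more delicate than you indicate. The paper's Lemma~\ref{realcomplexcorrelate} avoids this entirely: it uses only the defining equations $(A-\lambda_k I)^T_{/[j]}u_k=0$, the operator-norm bound, and incompressibility of $u_1$ to get $|\beta|\ge C|\lambda_1-\lambda_2|/\sqrt n\ge CD$ directly, with no resolvent in sight.
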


In a non-rigorous sense, Theorem \ref{theorem2} proves that the local spectrum of $A$ at two macroscopically separated locations $\lambda_1,\lambda_2$ are independent in a certain sense, and are independent all the way down to the minimal scale $\epsilon\sim \exp(-\Omega(n))$.
The assumption that $\lambda_1$ and $\lambda_2$ are globally separated, and the constant dependence on $D>0$ is restrictive in many applications. We are able to remove this restriction in the following theorem:

\begin{theorem}\label{theorem1.3}
    In the setting of Theorem \ref{theorem2}, for any $\lambda_1,\lambda_2\in[-4\sqrt{n},4\sqrt{n}]$ we can prove that, for any $\epsilon>0$,
       \begin{equation}\label{124wecanprove}\mathbb{P}(\sigma_{min}(A-\lambda_1 I_n)\leq\epsilon,\sigma_{min}(A-\lambda_2 I_n)\leq\epsilon)\leq \frac{C\sqrt{n}}{|\lambda_1-\lambda_2|}\epsilon^2+e^{-cn}\end{equation}
       where $C>0,c>0$ are two constants depending only on $\xi$.
\end{theorem}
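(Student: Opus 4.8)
The plan is to adapt the proof of Theorem~\ref{theorem2}, but to keep the dependence on $\delta:=|\lambda_1-\lambda_2|/\sqrt n$ explicit throughout; the point will be that $\delta$ enters only through a two‑dimensional anticoncentration volume factor, contributing the claimed $\sqrt n/|\lambda_1-\lambda_2|$, and nowhere in the exponential error terms. First I would dispose of the range $|\lambda_1-\lambda_2|\ge D_0\sqrt n$ (for a fixed constant $D_0$) simply by quoting Theorem~\ref{theorem2}: its constant $C_{D_0}$ is then absorbed into $C\sqrt n/|\lambda_1-\lambda_2|$, since $|\lambda_1-\lambda_2|\le 8\sqrt n$. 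So we may assume $\delta\le D_0$. We may also assume $\epsilon$ is at most a small multiple of $\delta$, for otherwise the claimed bound already dominates the one‑point estimate~\eqref{seintang} applied to $A-\lambda_1 I_n$ and there is nothing to prove. Finally we intersect everything with the event $\{\|A\|\le K\sqrt n\}$, whose complement has probability at most $e^{-cn}$.

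Next come the structural inputs. A standard compressible‑vector bound, uniform over shifts in $[-4\sqrt n,4\sqrt n]$, shows that with probability $1-e^{-cn}$ there is no compressible unit vector $v$ with $\|(A-\lambda_j I_n)v\|\le c_1\sqrt n$ for $j=1,2$; hence on the good event any near‑null witnesses $v_1,v_2$ (unit vectors with $\|(A-\lambda_j I_n)v_j\|\le\epsilon$) are incompressible. The key geometric observation is a quantitative \emph{no near‑common near‑null vector} bound: on $\{\|A\|\le K\sqrt n\}$, writing $w_j=(A-\lambda_j I_n)v_j$ and using
\[
\Bigl(A-\tfrac{\lambda_1+\lambda_2}{2}I_n\Bigr)(v_1-v_2)=\tfrac{\lambda_1-\lambda_2}{2}(v_1+v_2)+w_1-w_2,\qquad
\Bigl(A-\tfrac{\lambda_1+\lambda_2}{2}I_n\Bigr)(v_1+v_2)=\tfrac{\lambda_1-\lambda_2}{2}(v_1-v_2)+w_1+w_2,
\]
together with $\|v_1-v_2\|^2+\|v_1+v_2\|^2=4$, one gets $\sin\angle(v_1,v_2)=\tfrac12\|v_1-v_2\|\,\|v_1+v_2\|\gtrsim\delta$. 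Applying the same reasoning to $A^{T}$, the corresponding left near‑null vectors — equivalently the unit normals $n^{(1)},n^{(2)}$ to the spans of the off‑pivot columns of $A-\lambda_1 I_n$ and of $A-\lambda_2 I_n$ — also satisfy $\sin\angle(n^{(1)},n^{(2)})\gtrsim\delta$ whenever the corresponding distances are small.

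For the anticoncentration core, the incompressible witnesses $v_1,v_2$ have spread sets $S_1,S_2$ of size $\gtrsim n$, and since $|S_j|\ge\delta_0 n\ge 2$ we may pick pivots $p_1\in S_1$ and $p_2\in S_2$ with $p_1\ne p_2$; then $\operatorname{dist}(A_{p_1}-\lambda_1 e_{p_1},H^{(1)}_{p_1})\lesssim\epsilon$ and $\operatorname{dist}(A_{p_2}-\lambda_2 e_{p_2},H^{(2)}_{p_2})\lesssim\epsilon$, where $H^{(j)}_{p}$ is the span of the columns of $A-\lambda_j I_n$ other than the $p$‑th (the scale $\epsilon$ here is understood in the $\sqrt n$‑normalisation of the statement). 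Conditioning on all columns except $A_{p_1},A_{p_2}$, these become two coupled small‑ball constraints on $(A_{p_1},A_{p_2})$ — effectively a codimension‑one constraint on $A_{p_1}$ for each value of $A_{p_2}$ and vice versa — and the joint probability factors as $\lesssim \epsilon^2/(\text{transversality angle})+(\text{LCD error})$. By the geometric observation above, in its left‑vector form and again on $\{\|A\|\le K\sqrt n\}$, the transversality angle is $\gtrsim\delta$ — this is precisely where the factor $1/\delta=\sqrt n/|\lambda_1-\lambda_2|$ is born — while the LCD error, controlled via the inverse Littlewood–Offord / essential‑LCD theory applied to the relevant normal vectors, is $e^{-cn}$ with constants uniform in $\delta$. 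A final averaging over the $\gtrsim(\delta_0 n)^2$ admissible pivot pairs replaces the naive $n^2$ union bound by a constant, yielding $\mathbb{P}(\text{two‑point event}\cap\{\|A\|\le K\sqrt n\})\lesssim \frac{\sqrt n}{|\lambda_1-\lambda_2|}\epsilon^2 + e^{-cn}$, which with the discarded events gives the theorem.

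The main obstacle is the interface between the two‑dimensional anticoncentration and the arithmetic structure when $|\lambda_1-\lambda_2|$ is small: then $v_1,v_2$ (and $n^{(1)},n^{(2)}$) are within angle $\sim\delta$ of being parallel, so the relevant vector pair is a small perturbation of a rank‑one, degenerate pair, and one must show that this degeneracy is felt \emph{only} in the anticoncentration volume $\sqrt{\det(\text{Gram})}\sim\sin\angle\gtrsim\delta$ and \emph{not} in the essential‑LCD bound — roughly, that any small‑LCD relation $p\,n^{(1)}+q\,n^{(2)}$ either collapses, when $p+q\ne0$, to a small‑LCD relation for a single near‑null vector (excluded with probability $1-e^{-cn}$ by the one‑point theory underlying~\eqref{seintang}) or, when $p+q=0$, to one for the difference direction $n^{(1)}-n^{(2)}$, which must separately be shown delocalized uniformly in $\delta$. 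Organizing the argument so that the factor $\sqrt n/|\lambda_1-\lambda_2|$ appears exactly once — rather than being amplified by the union bound or by the net used inside the LCD estimates — is the other delicate point, and is exactly what forces the careful two‑stage conditioning on the pair of pivot columns above.
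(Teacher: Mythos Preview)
Your overall architecture is close to the paper's, but there are two genuine gaps, one structural and one technical.

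\textbf{The two-pivot reduction does not decouple.} You pick distinct pivots $p_1\ne p_2$ and then condition on all columns except $A_{p_1},A_{p_2}$. But the normal $n^{(1)}$ to $H^{(1)}_{p_1}$ is a function of every column of $A-\lambda_1 I_n$ other than the $p_1$-th, so in particular it depends on $A_{p_2}$; symmetrically $n^{(2)}$ depends on $A_{p_1}$. After your conditioning neither normal is determined, and your two constraints read
\[
|\langle A_{p_1},\,n^{(1)}(A_{p_2})\rangle|\lesssim\epsilon,\qquad
|\langle A_{p_2},\,n^{(2)}(A_{p_1})\rangle|\lesssim\epsilon,
\]
which is not a linear two-dimensional small-ball problem and does not ``factor as $\epsilon^2/(\text{angle})$'' in any way you justify. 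The paper avoids this by using a \emph{single} pivot $j$ for both matrices (Proposition~\ref{invertibilitydistance}): then both normals $X^*[1]_j,X^*[2]_j$ are functions of the columns $\ne j$ only, the sole fresh randomness is the column $A_j$, and one is left with a genuine $2$-dimensional small-ball for $V_jA_j$. This is exactly where no-gaps delocalization (Theorem~\ref{nogaps3.1}, Corollary~\ref{corollary3.22}) is used, not merely incompressibility: it guarantees that the two spread sets have a common intersection of size $\gtrsim n$, so a common pivot exists and the averaging goes through. Your identification of $n^{(1)},n^{(2)}$ with the left near-null vectors of the full matrices is also only approximate and would need its own argument; the paper instead bounds the angle between $X^*[1]_j$ and $X^*[2]_j$ directly at the same pivot (Lemma~\ref{realcomplexcorrelate}), which is what produces the clean $|\beta|\gtrsim|\lambda_1-\lambda_2|/\sqrt n$ and hence the $\sqrt n/|\lambda_1-\lambda_2|$ factor via Proposition~\ref{proponewlittlewood}.

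\textbf{The joint LCD bound is the real content and is not proved.} You correctly identify that the crux is showing $D_{\alpha,\gamma}(X^*[1]_j,X^*[2]_j)\ge e^{cn}$ uniformly in $\delta$, and your dichotomy (either $p+q\ne 0$ collapses to a one-vector LCD, or $p+q=0$ forces control of the difference direction) is a plausible heuristic. But this is exactly Theorem~\ref{quasirandomtheorem3}, and the paper spends Sections~\ref{sect3.2}--\ref{sec3.7} on it: the pair $(X^*[1]_j,X^*[2]_j)$ is not annihilated by any linear equation in $A$, only by a quadratic one (up to a rank-one error), so one linearizes to the $(2n{+}1)\times(2n{+}1)$ matrix $P_A(\hat\lambda)$, shows both components of kernel vectors are incompressible, and then runs an inversion-of-randomness net argument stratified by the overlap parameter $\epsilon_1$ between $v$ and $w$. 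The ``difference direction'' case you mention is precisely the near-parallel regime $\epsilon_1\to 0$ handled by the $\Sigma_{\epsilon,0}$ nets. Your sketch does not supply any of this machinery; in particular the claim that the LCD error is $e^{-cn}$ with constants uniform in $\delta$ is the theorem, not an input.
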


The remarkable feature of Theorem \ref{theorem1.3} is that we can take $|\lambda_1-\lambda_2|$ arbitrarily small, all the way down in the microscopic range $(\exp(-\Omega(n)),n^{-1/2})$ and still maintain an effective estimate as long as we take $\epsilon>0$ to be simultaneously small relative to $|\lambda_1-\lambda_2|$. 

As an immediate corollary of Theorem \ref{theorem1.3}, we have:

\begin{corollary}\label{strongrepulsion}
     In the setting of Theorem \ref{theorem1.3}, let $a_n,b_n$ be two sequences of real numbers with $a_n=\exp(o(n))$. Then 
     \begin{equation}
         \mathbb{P}(\text{There exists two real eigenvalues $\lambda_1,\lambda_2$ of $A$ such that } \lambda_1+a_n\lambda_2=b_n)\leq e^{-cn}
     \end{equation} for some $c>0$ depending only on $\xi$.
\end{corollary}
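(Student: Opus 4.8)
The plan is to deduce Corollary~\ref{strongrepulsion} from the two‑point estimate of Theorem~\ref{theorem1.3} by a covering argument whose essential feature is that the affine constraint $\lambda_1+a_n\lambda_2=b_n$ leaves only one free real parameter. We work on the event $\{\|A\|\le 4\sqrt n\}$, which by standard operator‑norm bounds (valid since the entries are subgaussian) has probability at least $1-e^{-cn}$ and on which every real eigenvalue of $A$ lies in $[-4\sqrt n,4\sqrt n]$; its complement contributes $e^{-cn}$ to the final bound. Note also that $a_n=\exp(o(n))$ forces $a_n>0$, hence $1+a_n>1$, so the linear relation is never degenerate.

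First I would discretize. Fix a mesh $\delta=e^{-c_0 n}$ for a small $c_0>0$ to be chosen below, and let $\mathcal N$ be a $\delta$‑net of $[-4\sqrt n,4\sqrt n]$, so $|\mathcal N|\le C\sqrt n/\delta$. Suppose $\lambda_1,\lambda_2$ are real eigenvalues of $A$ with $\lambda_1+a_n\lambda_2=b_n$; then $\sigma_{min}(A-\lambda_iI_n)=0$. Write $\lambda$ for $\lambda_1$ when $a_n\ge 1$ and for $\lambda_2$ when $0<a_n\le 1$, and pick a nearest net point $\mu\in\mathcal N$ to $\lambda$, so $\sigma_{min}(A-\mu I_n)\le\delta$. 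In either case the other eigenvalue is an affine function of $\lambda$ with Lipschitz constant at most $1$ (respectively $\lambda_2=(b_n-\lambda_1)/a_n$ and $\lambda_1=b_n-a_n\lambda_2$), so it lies within $2\delta$ of the net point $\mu'(\mu)\in\mathcal N$ nearest to the corresponding affine image of $\mu$, whence $\sigma_{min}(A-\mu'(\mu)I_n)\le 2\delta$. Thus, on the good event, the event of the corollary is contained in $\bigcup_{\mu\in\mathcal N}\big\{\sigma_{min}(A-\mu I_n)\le 2\delta,\ \sigma_{min}(A-\mu'(\mu)I_n)\le 2\delta\big\}$.

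The key point now is that $\mu'(\mu)$ is a \emph{function} of $\mu$, so this is a union over a one‑parameter family rather than over all pairs; moreover the gap $g(\mu):=|\mu-\mu'(\mu)|$ equals, up to an additive $O(\delta)$, $\big(1+\min(a_n,a_n^{-1})\big)\,|\mu-p|$ with $p:=b_n/(1+a_n)$, an affine function of $\mu$ of slope at least $1$. Ordering $\mathcal N$ by distance to $p$, one has $g(\mu)\gtrsim k\delta$ for the $k$‑th net point once $k$ exceeds an absolute constant, with only $O(1)$ net points within $O(\delta)$ of $p$ (and none, if $p$ lies outside the netted interval, in which case all gaps are $\gtrsim\sqrt n$). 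For those $O(1)$ exceptional net points I would bound the probability crudely by $\mathbb P(\sigma_{min}(A-\mu I_n)\le 2\delta)\le C\sqrt n\,\delta+e^{-cn}$, using the one‑point estimate \eqref{seintang} for the shifted matrix $A-\mu I_n$; for the rest, Theorem~\ref{theorem1.3} applied with $\epsilon=2\delta$ gives a contribution $\tfrac{C\sqrt n}{g(\mu)}(2\delta)^2+e^{-cn}$. Summing the resulting harmonic‑type series yields
\[
\mathbb P\big(\exists\ \text{real eigenvalues }\lambda_1,\lambda_2\text{ of }A\text{ with }\lambda_1+a_n\lambda_2=b_n\big)\ \le\ e^{-cn}+C\sqrt n\,\delta\,\log\!\Big(\tfrac{\sqrt n}{\delta}\Big)+\frac{C\sqrt n}{\delta}\,e^{-cn}.
\]

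The last step is to optimize $\delta$, and this is where I expect the only real care to be needed. The middle term forces $\delta$ to be exponentially small, while the last term — which accumulates the $e^{-cn}$ error of Theorem~\ref{theorem1.3} over all $\approx e^{c_0 n}$ net points — forces $c_0<c$; the two requirements are compatible, e.g.\ $c_0=c/2$ makes the right‑hand side at most $3e^{-cn/4}$ for $n$ large, giving the corollary with a smaller constant (the finitely many small $n$ being absorbed by shrinking it further). Besides this bookkeeping, the one structural input that must be checked is that the slope governing $g(\mu)$ is bounded below by a universal constant — which is exactly what $a_n>0$ (a consequence of $a_n=\exp(o(n))$) guarantees: were $a_n$ allowed near $-1$, the gap $g(\mu)\approx |b_n-(1+a_n)\mu|$ would be essentially independent of $\mu$, the sum $\sum_\mu g(\mu)^{-1}$ would cost a full factor $|\mathcal N|$, and the estimate would collapse.
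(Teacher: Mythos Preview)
Your proof is correct and follows the same covering strategy as the paper: reduce to a one-parameter family of net pairs via the affine constraint, then combine the one-point and two-point singular value estimates over a union bound. The paper's execution differs only cosmetically---it splits the sum by a fixed threshold $|\lambda^1-\lambda^2|\le \epsilon^{1/4}$ instead of your harmonic sum over all gaps, and uses an $a_n$-dependent mesh rather than your adaptive choice of which eigenvalue to discretize---but the mechanism is identical.
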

For example, taking $a_n=-1$ and $b_n=D\sqrt{n}$, we get the probability that $A$ has two real eigenvalues differing by $D\sqrt{n}$. Taking $a_n=-3$ and $b_n=0$ we get the probability that $A$ has two real eigenvalues, one equals three times the other one, is exponentially small. Corollary \ref{strongrepulsion} rules out the possibility of any linear (and possibility many nonlinear) relations between the real eigenvalues of $A$.

In Theorem \ref{theorem1.3} we only considered real $\lambda_1,\lambda_2$, as the resulting estimate for complex $\lambda_1,\lambda_2$ seems very difficult to formulate. Ideally, the singular value bound would have the order $\epsilon^4$ rather than $\epsilon^2$, with a leading factor depending on $\Im \lambda_1,\Im\lambda_2$ and $\Im(\lambda_1-\lambda_2),\Re(\lambda_1-\lambda_2)$. We feel this estimate is too complicated to do here. But later in Section \ref{section1.4} we will formulate a clean estimate for complex i.i.d. matrices with factor $\epsilon^4$.

There is certainly one remaining question: does $A$ have many real eigenvalues? We prove that an i.i.d. random matrix with real entries asymptotically have many real eigenvalues, without assuming the matrix has matching fourth moments with real Gaussian matrix as in Tao and Vu \cite{tao2015random}. This is essentially a corollary of a recent universality result of Mohammed Osman \cite{osman2025bulk} and the computations in the Gaussian case in \cite{edelman1994many}, \cite{forrester2007eigenvalue}.

\begin{theorem}
\label{universalityrealroots} Let $A_n=(a_{ij})$ be an $n\times n$ random matrix with i.i.d. entries having distribution $\xi$, where $\xi$ is a random variable with mean 0, variance 1 and having all moments bounded. Let $N_\mathbb{R}(A_n)$ denote the number of real eigenvalues of $A_n$. Then there exists some $c'>0$ such that, for any $\epsilon>0$, with probability $1-O(n^{-c'})$, 
$$
N_\mathbb{R}(A_n)\geq \sqrt{\frac{2n}{\pi}}(1-\epsilon)+O(n^{1/2-c'}).
  $$
\end{theorem}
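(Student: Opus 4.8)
The plan is to combine two ingredients: a precise asymptotic for $\mathbb{E}\,N_\mathbb{R}(A_n)$ together with a concentration statement in the real Gaussian case, and a universality transfer that upgrades this to arbitrary $\xi$ with bounded moments. In the real Ginibre case it is classical (Edelman--Kostlan--Shub \cite{edelman1994many}, with refinements by Forrester--Nagao \cite{forrester2007eigenvalue}) that $\mathbb{E}\,N_\mathbb{R}(G_n)=\sqrt{2n/\pi}+O(1)$, and moreover the real eigenvalues form a Pfaffian point process whose $k$-point correlation functions are known explicitly; from these one obtains that $N_\mathbb{R}(G_n)$ concentrates around its mean, e.g. $\operatorname{Var}\,N_\mathbb{R}(G_n)=O(\sqrt{n})$, so that $N_\mathbb{R}(G_n)\geq\sqrt{2n/\pi}(1-\epsilon)$ with probability $1-o(1)$ for any fixed $\epsilon>0$. (One can push this to probability $1-O(n^{-c'})$ either from a sharper variance/large-deviation bound for the Pfaffian process, or simply by Chebyshev with a slightly weakened main term, which is why the statement carries an $O(n^{1/2-c'})$ error.)

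The second and main step is the universality transfer. Here I would invoke the recent bulk universality result of Mohammed Osman \cite{osman2025bulk}, which compares local eigenvalue statistics of $A_n$ near the real axis with those of the real Ginibre ensemble. The key point is that $N_\mathbb{R}(A_n)$ can be written as an integral over the real line of a linear statistic of the eigenvalue point process localized at scale $O(n^{-1/2})$ transverse to $\mathbb{R}$: one tests against a smooth bump function in the imaginary direction of width $\sim n^{-1/2}$ and of total mass matching the indicator of ``being real''. Osman's theorem gives that such mesoscopic/local linear statistics of $A_n$ and $G_n$ agree up to $o(1)$ (in fact with a polynomial rate $n^{-c'}$) after the usual regularization. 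Combining the lower bound for $G_n$ from Step 1 with this comparison yields the lower bound $N_\mathbb{R}(A_n)\geq \sqrt{2n/\pi}(1-\epsilon)+O(n^{1/2-c'})$ with probability $1-O(n^{-c'})$, which is the claim.

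I expect the main obstacle to be making the linear-statistic representation of $N_\mathbb{R}(A_n)$ compatible with the precise hypotheses of the universality theorem in \cite{osman2025bulk}. Counting real eigenvalues is not a bounded smooth linear statistic: it is an integral of a singular (delta-type) observable supported exactly on $\mathbb{R}$, so one must first smooth at a scale $\eta=n^{-1/2+\delta}$, transfer using universality at that scale, and then control the error incurred by de-smoothing. Bounding this de-smoothing error requires an a priori estimate on the number of eigenvalues of $A_n$ in a thin strip $\{|\Im z|\leq\eta\}$ of width slightly above $n^{-1/2}$ — essentially a one-sided local law near the real axis for $A_n$ — and this is precisely the kind of input that \cite{osman2025bulk} (or the accompanying local-law machinery) should supply, but it needs to be quoted in the right quantitative form. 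A secondary technical point is that the error terms from the two steps must be arranged so that the same exponent $c'>0$ governs both the probability $1-O(n^{-c'})$ and the additive error $O(n^{1/2-c'})$; this is a matter of choosing $\delta$ and the smoothing scale appropriately and does not present a conceptual difficulty. Everything else — the Gaussian computation and the concentration — is available off the shelf from \cite{edelman1994many}, \cite{forrester2007eigenvalue}.
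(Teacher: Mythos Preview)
Your high-level plan --- compute in the Gaussian case, then transfer via Osman's universality, then Chebyshev --- is exactly the paper's strategy. But you have misread what \cite{osman2025bulk} delivers, and as a result you invent a technical difficulty that does not exist while missing the ones that do.

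The universality statement in \cite{osman2025bulk} is not a comparison of generic eigenvalue linear statistics near the real line; it is a comparison of the \emph{mixed correlation functions} $\rho_X^{m_r,m_c}(\mathbf{u},\mathbf{z})$, which already separate the $m_r$ real eigenvalues from the $m_c$ complex ones. So there is no need to represent $N_\mathbb{R}$ as a singular linear statistic smoothed transversally at scale $n^{-1/2+\delta}$, no de-smoothing step, and no a~priori local law on thin strips. One simply integrates the one-point real correlation function $\rho^{(1,0)}$ against a bump on $[-1+\epsilon,1-\epsilon]$ to control $\mathbb{E}\,N_{[-1+\epsilon,1-\epsilon]}(A_n/\sqrt{n})$, and integrates $\rho^{(2,0)}$ to control the second moment. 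The only smoothing is in the \emph{real} variable, and the only chopping is into $O(\sqrt{n})$ local windows so that the local universality (stated at a reference point $u\in(-1+\epsilon,1-\epsilon)$) can be applied window by window; this costs $O(n^{1/2-\delta})$ for the expectation and $O(n^{1-\delta})$ for the second moment.

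The actual technical points you do not mention are: (i) the universality is only valid in the bulk, so one must work with $N_{[-1+\epsilon,1-\epsilon]}$ and not $N_\mathbb{R}$ (this is why the theorem loses a factor $(1-\epsilon)$); (ii) the second-moment swap requires nesting the interval between slightly smaller and larger ones ($I_1\subset I_2\subset I_3$) and comparing squared expectations, not just expectations; and (iii) the literature variance bound $\operatorname{Var}N_\mathbb{R}(G_n)=O(\sqrt{n})$ is for all of $\mathbb{R}$, so one has to redo the Pfaffian two-point computation on the restricted interval $I_3$ to get $\operatorname{Var}N_{I_3}(G_n)=O(\sqrt{n})$. None of these is deep, but together they are the real content of the proof; your proposed imaginary-direction smoothing is a detour around a nonexistent obstacle.
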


In \cite{vu2021recent}, Conjecture 11.3 the author asked if a square matrix with independent $\pm 1$ entries has $\Theta(\sqrt{n})$  real eigenvalues with very high probability, and we prove in Theorem \ref{universalityrealroots} that we have at least $C\sqrt{n}$ real eigenvalues in the asymptotic sense. Note that the lower bound of real eigenvalues is the harder part of the problem. The message we emphasize here is that there are with high probability many real eigenvalues, so the problem considered in Corollary \ref{strongrepulsion} is not vacuum for any such random variable $\xi$, and we believe the problem settled in Corollary \ref{strongrepulsion} is not accessible from previous techniques.

In \cite{eberhard2022characteristic}, Theorem 1.5 a related problem was studied on decoupling the correlation of eigenvector events for a random integral matrix over a finite field, which can be thought of as the finite field analogue of the topic in this work. However, the method of \cite{eberhard2022characteristic} does not seem to enable us to pass the estimate from finite field to $\mathbb{R}$.

\begin{remark}
In the mesoscopic range $n^{-1/2}\ll |\lambda_1-\lambda_2|\ll n^{1/2}$ one may expect that the independence persists in the sense that we can replace $C\sqrt{n}/|\lambda_1-\lambda_2|$ on the right hand side of \eqref{124wecanprove} by a fixed constant, at least for $\epsilon\geq n^{-\omega(1)}$. Our method may not be able to capture such mesoscopic independence. However, for much smaller $|\lambda_1-\lambda_2|$, no independence is expected and our method is the first to address two-location joint estimates in this range.
\end{remark}

\subsection{Two-point invertibility: the complex case}\label{section1.4}
When we consider a complex i.i.d. random matrix, similar but much more elegant results can be proven that generalize Theorem \ref{theorem1.3} where we can now shift the matrix in any complex direction $z_1,z_2\in\mathbb{C}$.

\begin{theorem}\label{complextwoballbounds}
    Let $G$ be an $n\times n$ random matrix with i.i.d. entries, each entry has the distribution $\xi+i\xi'$ where $\xi$ is a real-valued mean 0, variance 1 subgaussian random variable and $\xi'$ is an independent copy of $\xi$. Then for any $z_1,z_2\in\mathbb{C}$ with $|z_1|,|z_2|\leq8\sqrt{n}$ we have that for any $\epsilon>0$,
    \begin{equation}
        \mathbb{P}(\sigma_{min}(G-z_1I_n)\leq\epsilon n^{-1/2},\sigma_{min}(G-z_2I_n)\leq\epsilon n^{-1/2})\leq \frac{Cn}{|z_1-z_2|^2}\epsilon^4+e^{-cn}
    \end{equation} for two constants $C,c>0$ depending only on $\xi$.
\end{theorem}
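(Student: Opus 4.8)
\emph{Sketch of proof.} The plan is to run the Rudelson--Vershynin invertibility-via-distance argument simultaneously for the two shifted matrices $M_1:=G-z_1I_n$ and $M_2:=G-z_2I_n$, reduce the problem to a joint small-ball estimate for a pair of complex linear forms in a single column of $G$, and then establish that estimate by a four-dimensional Esséen--Fourier computation. We may assume $\epsilon<|z_1-z_2|/\sqrt n$: otherwise $C\epsilon^2\le \frac{Cn}{|z_1-z_2|^2}\epsilon^4$ and the bound already follows from the one-point complex invertibility estimate $\mathbb{P}(\sigma_{\min}(G-zI_n)\le\epsilon n^{-1/2})\le C\epsilon^2+e^{-cn}$; in particular $\epsilon$ is then small relative to $|z_1-z_2|$. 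Off an event of probability $e^{-cn}$ we may further assume $\|G\|\le C\sqrt n$, that neither $M_j$ has a compressible vector $v$ with $\|M_jv\|\le\epsilon n^{-1/2}$, that the no-gaps delocalization of Rudelson--Vershynin holds for the bottom singular vectors of $M_1,M_2$, and that the unit normals $v^{(j),k}$ to $\operatorname{span}\{C_\ell(M_j):\ell\ne k\}$ are, for every $k$, well defined, incompressible, and jointly unstructured in the sense of the third step. On this event each $M_j$ has a unit incompressible right singular vector $v_j$ with $\|M_jv_j\|=\sigma_{\min}(M_j)\le \epsilon n^{-1/2}$, and no-gaps forces $I_j:=\{k:|v_j(k)|\ge c/\sqrt n\}$ to have size $\ge(1-\delta)n$, hence $|I_1\cap I_2|\ge(1-2\delta)n$. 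For any $k\in I_1\cap I_2$ the identity $v_j(k)C_k(M_j)=M_jv_j-\sum_{\ell\ne k}v_j(\ell)C_\ell(M_j)$ yields $\operatorname{dist}(C_k(M_j),\operatorname{span}\{C_\ell(M_j):\ell\ne k\})\le\sigma_{\min}(M_j)/|v_j(k)|\lesssim\epsilon$, i.e. $|\langle C_k(G),v^{(j),k}\rangle-z_j\overline{v^{(j),k}_k}|\lesssim\epsilon$ for $j=1,2$. Since these two inequalities and the structural properties of $(v^{(1),k},v^{(2),k})$ then hold for at least $(1-2\delta)n$ values of $k$, Markov's inequality together with the invariance of the \emph{pair} $(M_1,M_2)$ under simultaneous conjugation by a permutation matrix reduces the problem, after conditioning on $C_2(G),\dots,C_n(G)$, to the joint small-ball bound
\begin{equation*}
\mathbb{P}\bigl(|\langle X,v^{(1)}\rangle-a_1|\lesssim\epsilon,\ |\langle X,v^{(2)}\rangle-a_2|\lesssim\epsilon\bigr)\ \lesssim\ \frac{n}{|z_1-z_2|^2}\,\epsilon^4,
\end{equation*}
where $X:=C_1(G)$ has i.i.d.\ coordinates $\xi+i\xi'$ and $v^{(1)},v^{(2)},a_1,a_2$ are determined by $C_2(G),\dots,C_n(G)$.

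\emph{Quantitative independence of $v^{(1)}$ and $v^{(2)}$.} On the relevant event $(v^{(j)})^*M_j$ is supported on one coordinate with norm $\lesssim\epsilon$, so $\|(G^*-\bar z_jI_n)v^{(j)}\|\lesssim\epsilon$; that is, $v^{(j)}$ is an approximate eigenvector of $G^*$ at $\bar z_j$. Writing $v^{(2)}=\gamma v^{(1)}+\delta u$ with $u\perp v^{(1)}$, $|\gamma|^2+|\delta|^2=1$, applying $G^*-\bar z_2I_n$, substituting $(G^*-\bar z_2I_n)v^{(1)}=(G^*-\bar z_1I_n)v^{(1)}+(\bar z_1-\bar z_2)v^{(1)}$, and pairing with $v^{(1)}$ gives $|\gamma|\,|z_1-z_2|\le C|\delta|\sqrt n+O(\epsilon)$; since $\epsilon\ll|z_1-z_2|\le16\sqrt n$ this forces, for $V:=[\,v^{(1)}\mid v^{(2)}\,]$,
\begin{equation*}
\det(V^*V)\ =\ 1-|\langle v^{(1)},v^{(2)}\rangle|^2\ =\ |\delta|^2\ \gtrsim\ \frac{|z_1-z_2|^2}{n}.
\end{equation*}

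\emph{The structural input --- the main obstacle.} One must show that, off an event of probability $e^{-cn}$, each pair $(v^{(1),k},v^{(2),k})$ above is incompressible and the matrix $[\,v^{(1),k}\mid v^{(2),k}\,]$ has large essential least common denominator in the two-vector sense: there is no $\theta\in\mathbb{C}^2$ of moderate norm whose image is close, in a positive fraction of coordinates, to the Gaussian-integer lattice. This is the two-dimensional complex analogue of the anti-arithmetic-structure theory behind Theorems~\ref{theorem1} and~\ref{theorem1.3}, and is where the bulk of the difficulty lies: $v^{(1),k}$ and $v^{(2),k}$ are the bottom left singular vectors of $G-z_1I_n$ and $G-z_2I_n$ and so carry strongly correlated randomness, the two associated subspaces degenerate as $|z_1-z_2|\to0$ (quantified by the previous step), and one must preclude arithmetic structure in \emph{every} direction $\theta\in\mathbb{C}^2$, not merely along the coordinate axes, all while keeping the exceptional probability exponentially small so that the union bound over $k$ costs only the harmless factor $n$.

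\emph{Joint anticoncentration in $\mathbb{C}^2$.} For $V$ as above consider $X\mapsto V^*X\in\mathbb{C}^2\cong\mathbb{R}^4$. By the Esséen inequality its small-ball probability on a ball of radius $r\asymp\epsilon$ is $\lesssim r^4\int_{\|\theta\|\le c_0/r}\prod_{j=1}^n|\phi((V\theta)_j)|\,d\theta$, where $\phi(w)=\mathbb{E}\exp\!\bigl(i\operatorname{Re}(\overline{\xi+i\xi'}\,w)\bigr)=\varphi(\Re w)\varphi(\Im w)$ and $\varphi$ is the characteristic function of $\xi$, so that $\prod_j|\phi((V\theta)_j)|\le\exp(-c\sum_j d((V\theta)_j)^2)$ with $d(\cdot)$ the planar distance to the lattice. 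Near the origin the exponent equals $-c\|V\theta\|^2=-c\,\theta^*V^*V\theta$, and $\int_{\mathbb{R}^4}e^{-c\,\theta^*V^*V\theta}\,d\theta\asymp\det(V^*V)^{-1}\lesssim n/|z_1-z_2|^2$; on the intermediate range (moderate $\|\theta\|$, $V\theta$ arithmetically structured) the structural step shows there is nothing; and on the remaining range incompressibility together with the subgaussian decay of $\varphi$ gives $\prod_j|\phi((V\theta)_j)|\le e^{-cn}$, contributing only $r^4(c_0/r)^4 e^{-cn}=O(e^{-cn})$. Altogether the joint small-ball probability is $\lesssim\frac{n}{|z_1-z_2|^2}\epsilon^4+e^{-cn}$, which combined with the preceding steps proves Theorem~\ref{complextwoballbounds}. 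The appearance of $\epsilon^4$, rather than the $\epsilon^2$ of Theorem~\ref{theorem1.3}, reflects the two real degrees of anticoncentration carried by each complex linear form, and the factor $\det(V^*V)^{-1}\asymp n/|z_1-z_2|^2$ is the two-dimensional analogue of the $\sqrt n/|\lambda_1-\lambda_2|$ factor there.
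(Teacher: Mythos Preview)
Your high-level architecture matches the paper's almost exactly: invertibility-via-distance at two locations simultaneously (using no-gaps delocalization to guarantee a common set of good coordinates), an overlap bound $\det(V^*V)\gtrsim|z_1-z_2|^2/n$ for the pair of normal vectors, a structural ``large LCD'' statement for that pair, and finally a four-dimensional Esséen/Littlewood--Offord bound. The paper's Proposition~\ref{prop4.2final}, Lemma~\ref{compoverlap}, Theorem~\ref{whatistheorem4.3?}, and Proposition~\ref{proponewlittlewoodcomplex} are precisely these four pieces. One minor remark: your overlap argument uses the approximate eigenvector relation $\|(G^*-\bar z_jI)v^{(j)}\|\lesssim\epsilon$, which holds only on the event you are bounding; the paper instead uses the exact relation $(G^T-z_jI)_{/[k]}v^{(j),k}=0$ (valid unconditionally once the normal exists), which is cleaner since the overlap is measurable with respect to the remaining columns. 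Your version is still salvageable because the overlap is determined by those columns and you have taken $\epsilon\ll|z_1-z_2|$, but the paper's route avoids any circularity.

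The genuine gap is the step you yourself flag as ``the main obstacle'': showing, with probability $1-e^{-cn}$, that for every $k$ the $4\times 2n$ matrix $[X^*[1]_k,X^*[2]_k]$ has essential LCD at least $e^{cn}$. You describe the difficulty accurately but do not indicate a mechanism, and this is where essentially all of the paper's Section~\ref{secgwe444} goes. The paper's resolution is not a direct attack on the pair $(v^{(1),k},v^{(2),k})$; instead it \emph{linearizes} the quadratic relation they satisfy. Any unit vector $v$ in the complex span of $X^*[1]_1,X^*[2]_1$ is (approximately) annihilated by $(G-z_1I)_{/[1]}(G-z_2I)_{/[1]}$, so setting $w=(G-z_2I)_{/[1]}v/\hat\lambda$ one gets $(\lambda,w,v)\in\ker P_G(\hat\lambda)$ for a $(2n+1)\times(2n+1)$ block matrix $P_G(\hat\lambda)$ that is \emph{linear} in $G$ (Proposition~\ref{ratify}). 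One then rules out compressible $(v,w)$ in this kernel (Proposition~\ref{incompcomplex}) and, for incompressible pairs, runs a two-scale random-net argument: stratify by the overlap parameter $\epsilon_1$ (so $w=cv+\epsilon_1 r$ with $r$ essentially orthogonal to $v$), generate candidate $(v,r)$ from integer boxes at scales $N\asymp 1/\epsilon$ and $N_1\asymp\epsilon_1/\epsilon$, and use that random box vectors have large joint LCD super-exponentially often (Proposition~\ref{prop4.61}) together with the four-dimensional Littlewood--Offord bound (Proposition~\ref{proponewlittlewoodcomplex}) to control the net cardinality versus the small-ball probability. The two-scale construction is what allows the argument to go through uniformly in $|z_1-z_2|$, down to $e^{-cn}$; without it one cannot balance the net size against the anticoncentration gain when the two normals are nearly parallel. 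Your sketch does not supply any of this machinery, and there is no shortcut known: the correlation between $v^{(1),k}$ and $v^{(2),k}$ (they are built from the same $G$) prevents a direct application of one-vector LCD theory.
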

The one location estimate, that is $\mathbb{P}(\sigma_{min}(G)\leq\epsilon n^{-1/2})\leq C\epsilon^2+e^{-cn}$, was obtained in Luh \cite{luh2018complex} who used it to prove that $\mathbb{P}(G\text{ has a real eigenvalue})\leq e^{-cn}$ for some $c\in(0,1)$. In this direction, our two-point estimate yields the following strong anti-concentration result:

\begin{corollary}\label{corollary2comp}
    Let $G$ be as in Theorem \ref{complextwoballbounds}. Then 
    $$
\mathbb{P}(\text{There exists two eigenvalues of $G$ whose sum is a real number})\leq e^{-cn} $$ for some constant $c\in(0,1)$ depending only on $\xi$.
\end{corollary}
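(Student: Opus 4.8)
The plan is to run an $\varepsilon$-net argument over the ``bad locus'' $\{(z_1,z_2):z_1+z_2\in\mathbb{R}\}$, feeding Theorem~\ref{complextwoballbounds} into a union bound, after first peeling off a degenerate regime near the real axis where that estimate deteriorates and must be replaced by Luh's one-point bound, which for a shift $z$ with $|z|\le 8\sqrt n$ reads $\mathbb{P}(\sigma_{min}(G-zI_n)\le\epsilon n^{-1/2})\le C\epsilon^2+e^{-cn}$. Throughout I fix $\Lambda:=c/4$ with $c$ the constant of Theorem~\ref{complextwoballbounds}, and set $\rho:=e^{-\Lambda n}$, $\eta:=\rho/100$. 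First I would record the a priori bound: writing $G=B+iB'$ with $B,B'$ independent real i.i.d.\ variance-$1$ subgaussian matrices, a standard net estimate gives $\|B\|,\|B'\|\le 3\sqrt n$, hence $\|G\|\le 6\sqrt n$, with probability $1-e^{-c_0 n}$; on this event every eigenvalue of $G$ lies in $\{|z|\le 6\sqrt n\}\subset\{|z|\le 8\sqrt n\}$, so any pair of eigenvalues $\mu_1,\mu_2$ with $\mu_1+\mu_2=t\in\mathbb{R}$ has $|\mu_1|,|\mu_2|\le 6\sqrt n$ and $|t|\le 12\sqrt n$. I would then split on whether $|\mu_1-\mu_2|\le\rho$ or $|\mu_1-\mu_2|>\rho$.

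For the near-real regime $|\mu_1-\mu_2|\le\rho$: writing $\mu_{1,2}=t/2\pm w$ with $2w=\mu_1-\mu_2$ gives $|\Im\mu_1|=|\Im w|\le\rho/2$, so by $1$-Lipschitzness of $\sigma_{min}$ in the operator norm, $\sigma_{min}(G-\Re(\mu_1)I_n)\le|\Im\mu_1|\le\rho/2$ with $\Re(\mu_1)$ real and $|\Re(\mu_1)|\le 6\sqrt n$. Hence it suffices to show that with overwhelming probability $\sigma_{min}(G-xI_n)>\rho$ for all real $x$ with $|x|\le 6\sqrt n$ --- a quantitative strengthening of Luh's theorem that $G$ has no real eigenvalue. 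For this I would take a $\rho$-net $\mathcal{M}$ of $[-6\sqrt n,6\sqrt n]$ (so $|\mathcal{M}|=O(\sqrt n/\rho)$), apply the one-point estimate with $\epsilon=2\rho\sqrt n$ to each $x'\in\mathcal{M}$, getting $\mathbb{P}(\sigma_{min}(G-x'I_n)\le 2\rho)\le 4Cn\rho^2+e^{-cn}$, union-bound, and interpolate between net points via Lipschitzness; the failure probability is $O(\sqrt n/\rho)(4Cn\rho^2+e^{-cn})=O(n^{3/2}e^{-\Lambda n})+O(\sqrt n\, e^{(\Lambda-c)n})$, exponentially small because $0<\Lambda<c$.

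For the main regime $|\mu_1-\mu_2|>\rho$: I would build a net $\mathcal{N}$ of pairs $(z_1',t')$ with $z_1'$ in an $\eta$-net of $\{|z|\le 7\sqrt n\}$ (size $O(n/\eta^2)$) and $t'$ in an $\eta$-net of $[-14\sqrt n,14\sqrt n]$ (size $O(\sqrt n/\eta)$), keeping only those with $|2z_1'-t'|\ge\rho/2$ and $|t'-z_1'|\le 8\sqrt n$; setting $z_2':=t'-z_1'$ one has $z_1'+z_2'=t'\in\mathbb{R}$, $|z_1'-z_2'|=|2z_1'-t'|\ge\rho/2$, $|z_1'|,|z_2'|\le 8\sqrt n$, and $|\mathcal{N}|=O(n^{3/2}/\eta^3)$. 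Applying Theorem~\ref{complextwoballbounds} with $\epsilon=4\eta\sqrt n$ to each $(z_1',t')\in\mathcal{N}$ gives $\mathbb{P}(\sigma_{min}(G-z_1'I_n)\le 4\eta,\ \sigma_{min}(G-z_2'I_n)\le 4\eta)\le 256 Cn^{3}\eta^{4}\,|z_1'-z_2'|^{-2}+e^{-cn}$; summing the $e^{-cn}$ terms gives $O(n^{3/2}\eta^{-3}e^{-cn})=O(n^{3/2}e^{(3\Lambda-c)n})$, exponentially small since $\Lambda<c/3$. For the main terms, writing $z_1'=a+ib$ and $u:=2a-t'$, $v:=2b$ so $|z_1'-z_2'|^2=u^2+v^2$, I would sum first over the $O(\sqrt n/\eta)$ values of $a$ and then dyadically in $r=\sqrt{u^2+v^2}$ from $\rho/2$ up to $O(\sqrt n)$: the $j$-th annulus contains $O(4^j)$ lattice points, each weighted by $O((4^j\eta^2)^{-1})$, so each of the $O(n)$ dyadic scales contributes an equal amount and $\sum_{\mathcal{N}}|z_1'-z_2'|^{-2}=O(n^{3/2}/\eta^3)$; hence the main terms sum to $O(Cn^{3}\eta^{4}\cdot n^{3/2}\eta^{-3})=O(Cn^{9/2}e^{-\Lambda n})$, again exponentially small. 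Finally, on the intersection of the three good events, a bad eigenvalue pair with $|\mu_1-\mu_2|>\rho$ would, on taking the nearest $z_1'\to\mu_1$ and $t'\to t=\mu_1+\mu_2$, yield a point of $\mathcal{N}$ (the separation $|z_1'-z_2'|\ge|\mu_1-\mu_2|-3\eta\ge\rho/2$ is retained) at which $\sigma_{min}(G-z_1'I_n)$ and $\sigma_{min}(G-z_2'I_n)$ are both at most $2\eta$ by Lipschitzness, contradicting the two-point bound above at threshold $4\eta$; so Corollary~\ref{corollary2comp} follows after summing the three failure probabilities and relabelling $c$.

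The main obstacle is exactly the factor $Cn/|z_1-z_2|^2$ in Theorem~\ref{complextwoballbounds}: it blows up on the diagonal $z_1=z_2$, which is both where an $\varepsilon$-net of the bad locus is densest and where that locus meets the real axis (its diagonal is $\{z_1=z_2\in\mathbb{R}\}$), so the degeneracy is genuinely the event that an eigenvalue approaches $\mathbb{R}$. The two points that make the argument close are: (i) this near-real regime is disposed of by a quantitative version of Luh's no-real-eigenvalue theorem, which needs only the one-point estimate; and (ii) in the dyadic decomposition of the remaining net, the growth of $|z_1-z_2|^{-2}$ is exactly offset by the thinning of net points, so one loses only a polynomial-in-$n$ factor from the $O(n)$ scales, after which the $\epsilon^4$ gain in Theorem~\ref{complextwoballbounds} beats the $O(\eta^{-3})$ metric entropy (since $4\Lambda>3\Lambda$) while $\Lambda<c/3$ keeps the accumulated $e^{-cn}$ errors and the Lipschitz rounding under control. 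I would also check the routine bookkeeping that every shift stays within the radius $8\sqrt n$ required by Theorem~\ref{complextwoballbounds} and that the net slack is compatible with the $1$-Lipschitz interpolation.
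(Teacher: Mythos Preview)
Your argument is correct and follows essentially the same route as the paper's proof: restrict to $\{\|G\|\le O(\sqrt n)\}$, split off the degenerate near-real regime (handled by Luh's one-point $\epsilon^2$ bound via a real-axis net), and in the remaining regime run an $\epsilon$-net over the constraint surface $\{z_1+z_2\in\mathbb{R}\}$ and feed Theorem~\ref{complextwoballbounds} into a union bound. The paper splits on the size of $|\Im\lambda^1|$ relative to a single threshold $\epsilon^{0.1}$ rather than on $|\mu_1-\mu_2|$, and uses the cruder bound $|\lambda^1-\lambda^2|\ge\epsilon^{0.1}$ uniformly in scenario (i) instead of your dyadic decomposition over $|z_1-z_2|$; your dyadic treatment is a bit more careful but both versions close with room to spare once $\epsilon$ is taken exponentially small.
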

Of course, Theorem \ref{complextwoballbounds} allows us to obtain many other anti-concentration phenomena that a linear or non-linear relation between two eigenvalues does not hold with high probability. Taking a union bound we can then show that any one of a (sub-exponentially large) family of such events should not happen with high possibility.
\begin{remark}
    We note that for complex i.i.d. matrices recently \cite{cipolloni2023universality}, Proposition 3.8 obtained  a similar version of approximate independence of small singular value events at different locations. More precisely, \cite{cipolloni2023universality} considered locations $z_i$ at the edge and where the separation of $z_i$ and $z_j$ can be only mesoscopic, while still retaining quantitative independence without our $n/|z_1-z_2|^2$ diverging term. The error in probability in \cite{cipolloni2023universality} is polynomial whereas we have an exponential bound. For bulk eigenvalues of complex i.i.d. matrix, similar estimates are recently derived in \cite{cipolloni2024maximum}, Lemma 10.4. The error rates in these works do not seem to be strong enough for getting a union bound over all possible eigenvalues of $G$, as for example \cite{cipolloni2024maximum} requires $\epsilon\geq(\log n)^{-C}$.
\end{remark}

\subsection{Main difficulties, essential steps and related works}
As one may expect, the main technical difficulty in proving Theorem \ref{theorem1} and \ref{theorem2} lies in the discrete entry distribution. Indeed, if the distribution of entries of $A$ has a bounded density, then we can prove both theorems in a considerably simplified way. For discrete entry distribution, one usually needs to eliminate arithmetic obstructions via applying inverse Littlewood-Offord type theorems \cite{rudelson2008littlewood}. This helps us to prove that the normal vector to a certain subspace has no simple arithmetic structure with very high probability. Implementing this step in our model is however very difficult for both Theorem \ref{theorem1} and \ref{theorem2}. For example, in Theorem \ref{theorem1} let $v$ be a singular vector of $A$, then we have $A^*Av=\lambda v$, but the entries of $A^*A$ are all correlated so that no Littlewood-Offord type theorem can be used. In Theorem \ref{theorem2} the problem is as follows. For any square matrix $A$ let $A[1]$ denote the matrix $A$ with its first row set to zero. Then to apply the framework of Rudelson and Vershynin \cite{rudelson2008littlewood} we need to prove that the linear span of the orthogonal complement of both $(A-\lambda_1 I_n)[1]$ and  $(A-\lambda_2 I_n)[1]$ has no vectors with simple arithmetic structure. Unfortunately this space is not annihilated by any linear equation of $A$ but only by a quadratic equation of $A$, and again the correlation of entries of $A^2$ prevents the use of classical Littlewood-Offord theorems.

To settle the problem raised in the last paragraph, we first take a linearization approach and transfer the problem to considering the following two linearized models \begin{equation}\label{ourblockmatrix}\begin{bmatrix}
    0&A\\A^T&0
\end{bmatrix}\quad \text{ or }\begin{bmatrix}
    0&A\\A&0
\end{bmatrix}\end{equation} (we will in fact consider the model when some rows and columns are removed). This linearization approach is frequently used in analyzing polynomials of random matrices, see \cite{haagerup2005new} and \cite{cook2022spectrum}. Then we have structured (block) random matrices where entries in each block are independent, but the two blocks are identical. This reminds us of a similar problem: invertibility of symmetric random matrices. We are in a similar situation but our matrix has fewer randomness than a symmetric matrix: the two principal diagonal blocks are zero.

We now briefly review the history of invertibility of a symmetric random matrix, that is, the probability that a random $\pm 1$ matrix is singular. The problem was first considered by Costello, Tao and Vu \cite{costello2006random} who proved almost sure invertibility. Then the invertibility probability was strengthened by  Nguyen \cite{nguyen2012inverse} to $n^{-\omega(1)},$ Vershynin \cite{vershynin2014invertibility} to $\exp(-n^c)$, Ferber and Jain \cite{ferber2019singularity} to $c=1/4$, and  Campos, Mattos, Morris, and Morrison \cite{campos2021singularity} to $c=1/2$. Finally, the exponential upper bound $c=1$ was proven by Campos, Jensen, Michelen and Sahasrabudhe
\cite{campos2025singularity}. An optimal singular value estimate was then derived in \cite{campos2024least}. Our proof of Theorem \ref{theorem1} takes many fruitful ideas from these works. As our block matrix (the left one in \eqref{ourblockmatrix}) has less randomness than the symmetric matrix in these cited works, our estimates are more sensitive to the loss of randomness if we cannot fully decouple dependence of two identical blocks. 

For Theorem \ref{theorem1}, we will use the conditioned inverse Littlewood-Offord theorem and a double counting technique which is similar to the one in \cite{campos2025singularity}. This idea was first introduced in the breakthrough work of \cite{tikhomirov2020singularity} and was coined “inversion of randomness”, where we use randomness both in the matrix and from the random choice of a vector from the net. The idea was then made into a different usage in \cite{campos2025singularity} to prove that the singularity probability of a random symmetric matrix is $\exp(-\Omega(n))$. In this paper we will take the latter viewpoint, but use the random net construction in a different way for the first matrix in \eqref{ourblockmatrix}.

In proving Theorem \ref{theorem2} and \ref{theorem1.3}, we face difficulties of another kind when dealing with the second matrix in \eqref{ourblockmatrix}. Here we no longer need the conditioned version of Littlewood-Offord theorems from \cite{campos2025singularity}, but for a vector $(v,w)\in\mathbb{R}^{n+n}$ in the kernel of the matrix, we need to stratify all possibilities of overlap between $v$ and $w$. Construction of nets depending on this overlap is also highly involved for lack of rotational symmetry. We will use a two-stage process to define the nets, the first stage uses a randomized procedure and the second stage constructs a net for vectors where either $v$ or $w$ has sub-exponentially large LCD. To set up the two-point estimate of least singular values, we will use the no-gaps delocalization property from \cite{rudelson2016no} for an initial reduction, and we feel this is a novel step not appearing in any previous works. A more careful description of the proof for both Theorem \ref{theorem1} and \ref{theorem1.3} are given in later chapters.

\subsection{Organization of the paper}
The proof of Theorem \ref{theorem1} is given in Section \ref{section222}.  The proof of  Theorem \ref{theoremrectan} is outlined in Section \ref{rectangeneral}. The proof of Theorem \ref{theorem2} and \ref{theorem1.3} are given in Section \ref{secgwe333}. The proof of Theorem \ref{complextwoballbounds} is given in Section \ref{secgwe444}.
Finally, Section \ref{prooflittlewoodofford} contains the proof of Corollary \ref{strongrepulsion}, \ref{corollary2comp}, Theorem \ref{universalityrealroots} and other auxiliary results.

\section{Distinct singular values}\label{section222}

Throughout the paper let $\xi$ be a mean $0$, variance 1 random variable. We say that $\xi$ is subgaussian if the following defined subgaussian moment
$$
\|\xi\|_{\psi_2}:=\sup_{p\geq 1}p^{-1/2}(\mathbb{E}|\xi|^p)^{1/p}
$$
is finite. For any $B>0$ denote by $\Gamma_B$ the collection of mean 0 variance 1 random variables with subgaussian moment bounded by $B$, and denote $\Gamma=\cup_{B\geq 0}\Gamma_B$.

Throughout the paper we also use $[n]$ to denote the subset $\{1,2,\cdots,n\}$ for each $n\in\mathbb{N}_+$, and let $[n_1,n_2]$ denote the subset $\{n_1,n_1+1,\cdots,n_2\}$ for each $n_1,n_2\in\mathbb{N}_+:n_1\leq n_2$.

For a fixed random variable $\xi$ we use the notation $\operatorname{Col}_n(\xi)$ to denote an $n$-dimensional random vector with i.i.d. coordinates having the distribution $\xi$.

\subsection{First reduction and proof outline}

We begin with a simple but essential observation.

\begin{fact}\label{fact2.1}The singular values of $A$ coincide with the non-negative eigenvalues of \begin{equation}\label{doublesides}
L_A:=\begin{pmatrix}
    0&A\\A^*&0
\end{pmatrix}.\end{equation}
     Moreover, if $v=(v^1,v^2)$ is an eigenvector of $L_A$ with nonzero eigenvalue, where we use $v^1$ (resp.$v^2$) to denote the first $n$ (resp. the last $n$ )coordinates of $v$, then $v^2$ is a singular vector of $A$.
\end{fact}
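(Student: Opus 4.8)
The plan is to deduce both assertions from the singular value decomposition of $A$ together with a direct manipulation of the block eigenvalue equation for $L_A$. Write $A = U\Sigma V^*$ with $U,V\in\mathbb{C}^{n\times n}$ unitary and $\Sigma=\mathrm{diag}(\sigma_1(A),\dots,\sigma_n(A))$, and let $u_i,v_i$ be the columns of $U,V$, so that $Av_i=\sigma_i(A)u_i$ and $A^*u_i=\sigma_i(A)v_i$. The first step is the easy inclusion: for each $i$ and each sign $s\in\{+1,-1\}$,
$$
L_A\begin{pmatrix}u_i\\ s\,v_i\end{pmatrix}=\begin{pmatrix}s\,A v_i\\ A^* u_i\end{pmatrix}=\begin{pmatrix}s\,\sigma_i(A)\,u_i\\ \sigma_i(A)\,v_i\end{pmatrix}=s\,\sigma_i(A)\begin{pmatrix}u_i\\ s\,v_i\end{pmatrix},
$$
so every $\pm\sigma_i(A)$ is an eigenvalue of $L_A$; in particular each non-negative number $\sigma_i(A)$ appears as a non-negative eigenvalue of $L_A$. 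For the reverse inclusion, $L_A$ is Hermitian, hence has real spectrum, and $\mathrm{rank}(L_A)=2\,\mathrm{rank}(A)$, so $0$ is an eigenvalue of $L_A$ precisely when $0$ is a singular value of $A$; the nonzero non-negative eigenvalues are handled by the "moreover" part below.

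For the second statement, suppose $v=(v^1,v^2)$ is an eigenvector of $L_A$ with $L_A v=\mu v$ and $\mu\neq 0$. Unpacking the block structure gives $Av^2=\mu v^1$ and $A^*v^1=\mu v^2$. Substituting the first identity into the second yields $A^*A\,v^2 = \mu\, A^*v^1 = \mu^2 v^2$, so $v^2$ is an eigenvector of the positive semidefinite matrix $A^*A$ with eigenvalue $\mu^2$; equivalently, $v^2$ is a right singular vector of $A$ with singular value $|\mu|$. It remains to see $v^2\neq 0$: if $v^2=0$, then $Av^2=0=\mu v^1$ forces $v^1=0$ because $\mu\neq 0$, contradicting $v\neq 0$. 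Hence $v^2$ is a genuine singular vector and $|\mu|$ is a singular value of $A$. Applying this to every nonzero eigenvalue of $L_A$ shows the nonzero eigenvalues of $L_A$ are exactly the numbers $\pm\sigma_i(A)$ with $\sigma_i(A)>0$, which together with the rank argument for $0$ completes the identification of the two spectra on the non-negative axis.

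I do not expect a genuine obstacle; this is the standard Jordan–Wielandt (Hermitian dilation) computation. The only points needing slight care are (i) the bookkeeping of the zero eigenvalue, since $0$ has multiplicity $2k$ as an eigenvalue of $L_A$ when it has multiplicity $k$ as a singular value of $A$, so "coincide" is to be read as equality of the underlying sets of non-negative reals (equivalently, equality with multiplicity of the nonzero parts of the spectra), and (ii) the observation that a nonzero eigenvalue forces $v^2\neq 0$, which is exactly what makes the term "singular vector" meaningful in the statement.
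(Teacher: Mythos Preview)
Your proof is correct and follows essentially the same approach as the paper: both unpack the block eigenvalue equation $Av^2=\mu v^1$, $A^*v^1=\mu v^2$ and substitute to obtain $A^*Av^2=\mu^2 v^2$, with the converse direction handled by constructing $(Av^2/\lambda,\,v^2)$ from a singular vector $v^2$ (you package this via the SVD, the paper does it directly). Your additional remarks on the zero eigenvalue and on $v^2\neq 0$ are more careful than the paper's brief proof, but the underlying argument is the same Jordan--Wielandt computation.
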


\begin{proof}
    Let $v=(v^1,v^2)$ be an eigenvector of $L_A$ associated to an eigenvalue $\lambda>0$, then $Av^2=\lambda v^1,A^*v^1=\lambda v^2$, so that $A^*Av^2=\lambda^2v^2$, and $v^2$ is a singular vector of $A$ with singular value $\lambda$. On the other hand, if $v^2\in\mathbb{R}^n$ satisfies $A^*Av^2=\lambda^2v^2$, then we set $v^1=Av^2/\lambda$ and get that $(v^1,v^2)$ is an eigenvector of $L_A$ with eigenvalue $|\lambda|$.
\end{proof}

Therefore, the proof of Theorem \ref{theorem1} is reduced to showing that the eigenvalues of $L_A$ are distinct with high probability. A crucial reduction from linear algebra, tailored for this purpose, was introduced in the work of Nguyen, Tao and Vu (\cite{nguyen2017random},Section 4, which is also used in \cite{campos2024least}, Fact 6.7) which states as follows:

\begin{fact}\label{fact2.2}
Consider an $n\times n$ real symmetric matrix $M$ with an eigenvalue $\lambda$ and associated eigenvector $u$. Fix $j\in[n]$ and let $M^{[j]}$ be the matrix $M$ with the $j$-th row and column removed. Let $\lambda'$ be an eigenvalue of $M^{[j]}$ associated to an 
eigenvector $v$. Then
$$
|\langle v,X^{(j)}\rangle|\leq|\lambda-\lambda'|/|u_j|
$$ where we take $X^{(j)}$ to be the $j$-th column of $M$ with its $j$-th entry removed.
\end{fact}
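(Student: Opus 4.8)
The plan is to reduce to the case $j=n$ by a permutation similarity (which preserves symmetry, eigenvalues, and all the inner products involved) and then read off the desired identity directly from the eigenvector equations. After relabeling, write $M$ in block form
$$
M=\begin{pmatrix} M^{[n]} & X \\ X^{T} & d \end{pmatrix},\qquad X=X^{(n)}\in\mathbb{R}^{n-1},\quad d=M_{nn},
$$
and split the eigenvector as $u=(u',u_n)$ with $u'\in\mathbb{R}^{n-1}$, $u_n\in\mathbb{R}$. Then $Mu=\lambda u$ becomes the pair of relations
$$
M^{[n]}u'+u_n X=\lambda u',\qquad \langle X,u'\rangle+d\,u_n=\lambda u_n,
$$
of which only the first will be needed.

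Next I would pair the first relation with the vector $v$. Rewriting it as $(M^{[n]}-\lambda I)u'=-u_n X$ and using that $M^{[n]}$ is symmetric with $M^{[n]}v=\lambda' v$,
$$
-u_n\langle X,v\rangle=\langle (M^{[n]}-\lambda I)u',v\rangle=\langle u',(M^{[n]}-\lambda I)v\rangle=(\lambda'-\lambda)\langle u',v\rangle .
$$
Taking absolute values, applying Cauchy--Schwarz, and using the normalizations $\|u\|=\|v\|=1$ together with the trivial bound $\|u'\|\le\|u\|$ gives
$$
|u_n|\,|\langle X,v\rangle|=|\lambda-\lambda'|\,|\langle u',v\rangle|\le|\lambda-\lambda'| ,
$$
which is exactly the claimed bound $|\langle v,X^{(j)}\rangle|\le|\lambda-\lambda'|/|u_j|$ after dividing by $|u_j|$ and undoing the relabeling. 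If $u_j=0$ the right-hand side is $+\infty$ and there is nothing to prove.

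I do not expect a genuine obstacle in the proof itself: it is essentially a one-line consequence of the block form of the eigenvalue equation, and the only things to watch are the choice of which block relation to use and the (implicit) assumption that $u,v$ are unit vectors, which is what makes the inequality scale correctly in $u$ and $v$. The real difficulty, taken up later in the paper, is in \emph{applying} this fact: it converts a statement about the spectral gap $|\lambda-\lambda'|$ between $M$ and its principal minor $M^{[j]}$ into a small-ball bound for the random linear form $\langle v,X^{(j)}\rangle$ — with $X^{(j)}$ a column of the random matrix and $v$ depending only on $M^{[j]}$, hence suitably independent of $X^{(j)}$ — which is precisely the shape needed to bring in inverse Littlewood--Offord / anticoncentration machinery.
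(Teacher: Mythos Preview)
Your proof is correct and is the standard argument for this fact. The paper does not give its own proof here but simply cites Nguyen--Tao--Vu and Campos--Jenssen--Michelen--Sahasrabudhe for it; your block-form computation is exactly the one underlying those references.
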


Therefore we only need to consider the matrix $L_A$ \eqref{doublesides} with a row and column removed. Without loss of generality we consider the case where its $j$-th row and column are removed for some $j\in[n]$, as the removal of the $j+n$-th row and column is completely analogous. As entries of $A$ are i.i.d., the removed rows and columns are independent from the rest of the matrix ${L}_A^{[j]}$, and ${L}_A^{[j]}$ has the same distribution as the following defined matrix $\mathcal{L}_A$:
\begin{notation} We define an $(2n-1)\times (2n-1)$ random matrix $\mathcal{L}_A$ via
    $$\mathcal{L}_A:=\begin{pmatrix}
        0&A_{/[1]}\\A_{/[1]}^T&0
    \end{pmatrix} $$ where $A_{/[1]}$ is an $(n-1)\times n$ matrix with i.i.d. entries distributed as $\xi$ and $A_{/[1]}^T$ is the transpose of $A[1]$.
\end{notation}

The eigenvectors of $\mathcal{L}_A$ satisfy a very important property: its two components associated to a nonzero eigenvalue have equal $\ell^2$ norm.

\begin{fact}\label{fact2.43}
Let $\lambda\neq 0$ be an eigenvalue of $\mathcal{L}_A$ with eigenvector $v=(v^1,v^2)$, where $v^1\in\mathbb{R}^{n-1}$ and $v^2\in\mathbb{R}^n$. Then $\|v^1\|_2=\|v^2\|_2$.
\end{fact}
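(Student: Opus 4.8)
The plan is to simply unwind the block structure of $\mathcal{L}_A$ against the eigenvector $v=(v^1,v^2)$. Writing the eigenvalue equation $\mathcal{L}_A v=\lambda v$ in block form with respect to the decomposition $\mathbb{R}^{2n-1}=\mathbb{R}^{n-1}\oplus\mathbb{R}^n$ yields the coupled pair
\[
A_{/[1]}\,v^2=\lambda v^1,\qquad A_{/[1]}^T\,v^1=\lambda v^2 .
\]
Since $\lambda\neq 0$, both $v^1$ and $v^2$ are determined by these relations, and in particular neither is forced to be zero; this is where the hypothesis $\lambda\neq 0$ is used.

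Next I would compute $\|v^1\|_2^2$ by substituting the first relation and moving the matrix to the other side of the inner product. Concretely,
\[
\lambda\,\|v^1\|_2^2=\langle A_{/[1]} v^2,\,v^1\rangle=\langle v^2,\,A_{/[1]}^T v^1\rangle=\lambda\,\langle v^2,v^2\rangle=\lambda\,\|v^2\|_2^2,
\]
using only that $A_{/[1]}^T$ is the adjoint of $A_{/[1]}$ (with respect to the standard real inner products on $\mathbb{R}^{n-1}$ and $\mathbb{R}^n$) and then the second relation. Dividing by $\lambda\neq 0$ gives $\|v^1\|_2^2=\|v^2\|_2^2$, hence $\|v^1\|_2=\|v^2\|_2$.

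There is essentially no obstacle here: the statement is a structural identity for eigenvectors of a $2\times 2$ off-diagonal block matrix, analogous to the well-known fact in Fact~\ref{fact2.1} that singular vectors pair up across the two blocks of $L_A$. The only point to be careful about is that the two blocks $v^1,v^2$ live in spaces of different dimensions ($n-1$ and $n$), so $v$ is not literally built from a single vector repeated twice; but this plays no role in the computation above, which only uses the adjoint relationship between $A_{/[1]}$ and $A_{/[1]}^T$. If desired, one can also phrase the conclusion as saying that the two halves of any normalized eigenvector of $\mathcal{L}_A$ with nonzero eigenvalue each have norm $1/\sqrt{2}$, which is the form in which it will be applied.
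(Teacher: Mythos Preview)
Your proof is correct and follows essentially the same approach as the paper: both write the eigenvalue equation in block form to obtain $A_{/[1]}v^2=\lambda v^1$ and $A_{/[1]}^T v^1=\lambda v^2$, then use the adjoint relationship to compare norms. The paper routes this through $A_{/[1]}^T A_{/[1]}v^2=\lambda^2 v^2$ and $\|A_{/[1]}v^2\|_2=|\lambda|\,\|v^2\|_2$, while you go directly via $\lambda\|v^1\|_2^2=\langle A_{/[1]}v^2,v^1\rangle=\langle v^2,A_{/[1]}^T v^1\rangle=\lambda\|v^2\|_2^2$; these are the same computation.
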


\begin{proof}
    We have $A_{/[1]}v^2=\lambda v^1,A_{/[1]}^Tv^1=\lambda v_2$, so that 
    $ A_{/[1]}^TA_{/[1]}v^2=\lambda^2v_2$, and thus we have $\|A_{/[1]}v^2\|_2=\lambda \|v^2\|_2$. This finally implies $\|v^1\|_2=\|v^2\|_2$ whenever $\lambda\neq 0$.
\end{proof}
The condition $\lambda\neq 0$ is important. Indeed, $\mathcal{L}_A$ always has 0 as its eigenvalue with eigenvector $(0,v_0)$ and $ A_{/[1]}v_0=0$, because $A_{/[1]}$ has rank at most $n-1$.

\begin{notation}
    In the following we shall take the normalization that the eigenvector $v=(v^1,v^2)$ of $\mathcal{L}_A$ associated to a nonzero eigenvalue satisfies $\|v^1\|_2=\|v^2\|_2$=1.
\end{notation}

To state the main results on arithmetic structures of the eigenvector of $\mathcal{L}_A$, we follow the idea (from \cite{rudelson2008littlewood}) of partitioning the unit sphere into various subsets. We introduce the notion of compressible and incompressible vectors and the essential LCD as follows:

\begin{Definition}(Compressible) Fix two real numbers $\delta,\rho\in(0,1)$.
We recall the notion of $(\delta,\rho)$ compressibility of unit vectors from \cite{rudelson2008littlewood}. We will be working with unit vectors in both $\mathbb{S}^{n-1}$ and $\mathbb{S}^{n-2}$ using the same (in)compressibility notation. Specifically, a vector in $\mathbb{S}^{n-1}$ (or $\mathbb{S}^{n-2}$) is called $(\delta,\rho)$-compressible if it has distance no more than $\rho$ from a vector with support having size at most $\delta n$. We take $\delta<1$ so that when $n$ large we have $\delta n\leq n-1$.

For fixed $\delta,\rho\in(0,1)$, we use the notation $\operatorname{Comp}_n(\delta,\rho)$ to denote the collection of all compressible vectors in $\mathbb{S}^{n-1}$ and the notation $\operatorname{Comp}_{n-1}(\delta,\rho)$ to denote the collection of all compressible vectors in $\mathbb{S}^{n-2}$. The corresponding set of $(\delta,\rho)$- incompressible vectors is defined as
$$
\operatorname{Incomp}_n(\delta,\rho):=\mathbb{S}^{n-1}\setminus\operatorname{Comp}_n(\delta,\rho),\quad \operatorname{Incomp}_{n-1}(\delta,\rho):=\mathbb{S}^{n-2}\setminus\operatorname{Comp}_{n-1}(\delta,\rho).
$$
\end{Definition}

\begin{Definition}(Essential LCD)
   Fix two constants $\alpha>0,\gamma\in(0,1)$. For a vector $v\in\mathbb{R}^n$ we define its essential least common denominator as the following quantity $D_{\alpha,\gamma}(v)$
\begin{equation}\label{essentiallcd}
    D_{\alpha,\gamma}(v):=\inf\{\theta>0:\|\theta\cdot v\|_\mathbb{Z}\leq \min(\sqrt{\alpha n},\gamma\|\theta\cdot v\|_2)\},\end{equation}
    where we define, for any $w\in\mathbb{R}^n$,  $\|w\|_\mathbb{Z}:=\inf_{p\in\mathbb{Z}^n}\|w-p\|_2.$
\end{Definition}

We also need that incompressible vectors are spread (\cite{rudelson2008littlewood}, Lemma 3.4):
\begin{lemma}\label{incpmpspreads}
    Consider a vector $v\in\operatorname{Incomp}_n(\delta,\rho)$ (resp. $v\in\operatorname{Incomp}_{n-1}(\delta,\rho)$). Then 
    $$
(\rho/2)n^{-1/2}\leq |v_i|\leq \delta^{-1/2}n^{-1/2}
    $$
    for at least $\rho^2\delta n/2$ values of $i\in [n]$ (resp. $i\in[n-1]$).
\end{lemma}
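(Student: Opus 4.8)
The plan is to argue by contraposition: I will show that if fewer than $\rho^2\delta n/2$ coordinates of $v$ fall in the window $[(\rho/2)n^{-1/2},\,\delta^{-1/2}n^{-1/2}]$, then $v$ must be $(\delta,\rho)$-compressible, contradicting the hypothesis. To this end I split the index set $[n]$ (or $[n-1]$ in the $\mathbb{S}^{n-2}$ case, which is handled identically) into three pieces according to the size of $|v_i|$: the small coordinates $\sigma_{\mathrm{sm}}=\{i:|v_i|<(\rho/2)n^{-1/2}\}$, the large coordinates $\sigma_{\mathrm{lg}}=\{i:|v_i|>\delta^{-1/2}n^{-1/2}\}$, and the remaining set $\sigma$, which is precisely the set whose cardinality we want to bound from below. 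These three sets genuinely partition the index set because $\rho\sqrt\delta<1$ forces $(\rho/2)n^{-1/2}<\delta^{-1/2}n^{-1/2}$.

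Two elementary estimates then do the work. First, since $\|v\|_2=1$ and each $i\in\sigma_{\mathrm{lg}}$ contributes more than $(\delta n)^{-1}$ to the squared norm, $|\sigma_{\mathrm{lg}}|<\delta n$; thus the coordinate restriction $P_{\sigma_{\mathrm{lg}}}v$ of $v$ to $\sigma_{\mathrm{lg}}$ has support of size at most $\delta n$. Second, because $|\sigma_{\mathrm{sm}}|\le n$ and each small coordinate satisfies $|v_i|^2<\rho^2/(4n)$, we get $\|P_{\sigma_{\mathrm{sm}}}v\|_2^2<\rho^2/4$.

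Finally, suppose for contradiction that $|\sigma|<\rho^2\delta n/2$. Every $i\in\sigma$ satisfies $|v_i|^2\le(\delta n)^{-1}$, so $\|P_\sigma v\|_2^2\le|\sigma|/(\delta n)<\rho^2/2$. Since $\sigma_{\mathrm{sm}},\sigma,\sigma_{\mathrm{lg}}$ partition the indices, $v-P_{\sigma_{\mathrm{lg}}}v=P_{\sigma_{\mathrm{sm}}}v+P_\sigma v$, hence $\|v-P_{\sigma_{\mathrm{lg}}}v\|_2^2=\|P_{\sigma_{\mathrm{sm}}}v\|_2^2+\|P_\sigma v\|_2^2<\rho^2/4+\rho^2/2<\rho^2$. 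Thus $v$ lies within distance $\rho$ of the vector $P_{\sigma_{\mathrm{lg}}}v$, whose support has size at most $\delta n$, i.e.\ $v\in\operatorname{Comp}_n(\delta,\rho)$ (resp.\ $\operatorname{Comp}_{n-1}(\delta,\rho)$), contradicting $v\in\operatorname{Incomp}$. Therefore $|\sigma|\ge\rho^2\delta n/2$, which is the assertion. I do not expect any genuine obstacle here: this is a one-shot pigeonhole-and-triangle-inequality argument, and the only point requiring a little care is to position the lower threshold $(\rho/2)n^{-1/2}$ so that $\sigma_{\mathrm{sm}}$ and $\sigma$ together carry less than $\rho^2$ of the squared $\ell^2$-mass of $v$, which is exactly what pins down the constants $\rho/2$ and $\rho^2\delta/2$ appearing in the statement.
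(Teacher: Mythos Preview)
Your proof is correct and is precisely the standard argument from Rudelson and Vershynin \cite{rudelson2008littlewood}, Lemma 3.4, which the paper cites without reproducing. The paper gives no proof of its own here, so there is nothing further to compare.
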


The main quasi-random condition we will prove on the eigenvector of $\mathcal{L}_A$ is the following:

\begin{theorem}\label{theorem2.5s}
    Let $A$ be as in Theorem \ref{theorem1}. Then we can find constants $\alpha,\gamma,\delta,\rho\in(0,1)$ and some $c>0$ depending only on $\xi$ such that, with probability $1-e^{-cn}$, the following holds: 
    
    All the eigenvectors $(v^1,v^2)$ associated to a nonzero eigenvalue $\lambda\neq 0$ of $\mathcal{L}_A$ satisfy that both $v^1$ and $v^2$ are $(\delta,\rho)$-incompressible and \begin{equation}\label{twoconditionson}D_{\alpha,\gamma}(v^1)\geq e^{cn},\quad D_{\alpha,\gamma}(v^2)\geq e^{cn}.\end{equation} Moreover, all eigenvectors of $\mathcal{L}_A$ with zero eigenvalue has the form $(0,v^0)$ satisfying that $v^0$ is $(\delta,\rho)$-incompressible and $D_{\alpha,\gamma}(v^0)\geq e^{cn}$.
\end{theorem}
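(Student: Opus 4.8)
The plan is to run the standard Rudelson--Vershynin partition of the sphere, but carefully adapted to the block structure of $\mathcal{L}_A$, exploiting that the two halves $v^1,v^2$ of an eigenvector share the norm-balancing property of Fact \ref{fact2.43}. I would organize the argument around three regimes for the pair $(v^1,v^2)$: (i) at least one of $v^1,v^2$ compressible; (ii) both incompressible but at least one of $D_{\alpha,\gamma}(v^1),D_{\alpha,\gamma}(v^2)$ smaller than $e^{cn}$; (iii) the good event. Since any eigenvector of $\mathcal{L}_A$ with nonzero eigenvalue satisfies $A_{/[1]}v^2 = \lambda v^1$ and $A_{/[1]}^T v^1 = \lambda v^2$, controlling it amounts to controlling the kernels of the shifted rectangular matrices $A_{/[1]} - $ (nothing, since there is no shift here for Theorem \ref{theorem1}); more precisely one writes the eigen-equation for $\mathcal{L}_A$ as a single linear system in $(v^1,v^2)$ and notes that, row by row, each coordinate of the equation is a linear functional of an \emph{independent} row (or column) of $A$, so classical inverse Littlewood--Offord theory applies to individual rows even though $A^*A$ has correlated entries. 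This is the whole point of linearization: the obstruction described in the introduction (quadratic dependence) disappears once we pass to $\mathcal{L}_A$.

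First I would dispose of the compressible case. For a fixed pair of compressible vectors, the probability that $\mathcal{L}_A$ has an eigenvector near that pair is exponentially small by a standard net-plus-concentration argument: for compressible $v^2$, $\|A_{/[1]} v^2\|$ is a sum of independent subgaussian quantities with variance $\gtrsim 1$ in each of $\Omega(n)$ coordinates, so $\mathbb{P}(\|A_{/[1]} v^2\| \le \rho')$ is $e^{-cn}$; combined with an $e^{o(n)}$-size net over $\operatorname{Comp}$ this gives the claim. The zero-eigenvalue eigenvectors $(0,v^0)$ with $A_{/[1]} v^0 = 0$ are handled the same way: $v^0$ incompressible follows because $A_{/[1]}$ restricted to any $\delta n$ columns has least singular value $\gtrsim 1$ with probability $1-e^{-cn}$ (the standard small-ball-plus-net estimate, e.g.\ as in Rudelson--Vershynin), and the LCD bound on $v^0$ is obtained exactly as in the incompressible-small-LCD step below applied to the homogeneous equation $A_{/[1]}v^0=0$, which is even cleaner since there is no $\lambda$.

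The heart of the matter is step (iii), ruling out incompressible eigenvectors with small essential LCD. Fix $\alpha,\gamma$ and a scale $T\in[e^{c_0 n^{1/2}}, e^{cn}]$ (dyadically); the set of incompressible $v$ with $D_{\alpha,\gamma}(v)\in[T,2T]$ admits a net of size roughly $(CT/\sqrt{n})^n$ at resolution $\sim 1/(T\sqrt{n})$ (the Rudelson--Vershynin LCD-net cardinality bound). Given such an approximating pair $(u^1,u^2)$ one needs, for some single coordinate row, a small-ball bound of the form $\mathbb{P}(|\langle X, u^1\rangle - t|\le \text{res}) \lesssim \text{res}\cdot\sqrt{n}/T + e^{-\Omega(\alpha n)}$, which is precisely the inverse Littlewood--Offord estimate in terms of the LCD; here $X$ is an independent row of $A_{/[1]}^T$, and we condition on $v^1$ (equivalently on enough of $A$) so that $X$ is fresh. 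Multiplying the per-row bounds over $\Omega(n)$ independent rows beats the net cardinality provided the net resolution and $T$ are matched, giving total failure probability $e^{-cn}$ after summing over the $O(n)$ dyadic scales of $T$. The delicate point, and the one I expect to be the genuine obstacle, is the \emph{loss of randomness}: unlike a symmetric matrix, $\mathcal{L}_A$ has two identical off-diagonal blocks and zero diagonal blocks, so when we condition on part of $A$ to freeze $v^1$ and then want independent randomness to bound the functional of $v^2$, we must be sure we are not reusing the same entries of $A$; the norm-balance $\|v^1\|_2 = \|v^2\|_2 = 1$ from Fact \ref{fact2.43} is what lets us play $v^1$ and $v^2$ against each other and extract a genuinely fresh row, but making this decoupling rigorous — choosing which rows/columns of $A$ to expose in which order, and verifying the exposed and unexposed parts really are independent — is the crux. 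This is exactly where the "inversion of randomness" / conditioned inverse Littlewood--Offord technology of \cite{tikhomirov2020singularity,campos2025singularity} enters, and I would follow that template: use the random choice of a net vector as an additional independent source of randomness, and double-count pairs (matrix, net vector) to absorb the net cardinality. Once the small-LCD event is excluded on the complement of an $e^{-cn}$ set, simultaneously for $v^1$ and $v^2$ and for the zero-eigenvector, \eqref{twoconditionson} and the incompressibility follow, completing the proof.
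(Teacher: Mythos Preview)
Your three-regime scaffolding is correct, and the zero-eigenvalue case is fine (the paper also defers it to standard rectangular Rudelson--Vershynin results). But there are two genuine gaps.

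For the compressible case, the argument you give does not rule anything out. A lower bound on $\|A_{/[1]} v^2\|$ is perfectly consistent with the eigen-equation $A_{/[1]} v^2 = \lambda v^1$, since $\|v^1\|=1$ and $|\lambda|$ ranges freely over $[0,4\sqrt n]$ on $\mathcal K$; the linear equation by itself imposes no usable constraint once $v^1$ is allowed to absorb the image. The paper instead passes to the quadratic relation $A_{/[1]}^T A_{/[1]} w^2 \approx \lambda^2 w^2$, decomposes $A_{/[1]} = \begin{pmatrix} E & F \\ G & H\end{pmatrix}$ according to the support of the sparse approximant $w^2$, and exploits the mutual independence of $E,F,G,H$ to get a genuine small-ball bound on $\|(E^T F + G^T H) w^2\|$; see Proposition~\ref{proposition2.15}.

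For the incompressible small-LCD case, ``conditioning on part of $A$ to freeze $v^1$'' cannot work: $v^1$ is determined by \emph{all} of $A$, so no partial conditioning makes it measurable while leaving fresh rows. The paper's mechanism is structurally different. One replaces $\mathcal L_A$ by a zeroed-out matrix $M$ (Notation~\ref{notation538}) built from two \emph{independent} rectangular blocks $H_1,H_2$ and their transposes, positioned so that $H_1,H_2$ act on the flat coordinate sets $D_1,D_2$ guaranteed by incompressibility. The net is defined not through LCD but through a threshold $\tau_L(v)$ measuring the small-ball probability of $Mv$ (equation~\eqref{tauells}), and its cardinality is bounded by sampling the net vector uniformly from an $(N,\kappa,D_1,D_2)$-box (Lemma~\ref{smalllemmas}). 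The decisive input is the \emph{conditioned} inverse Littlewood--Offord theorem (Theorem~\ref{mainlittlewoodofford}): it bounds $\mathbb P(\|H_i X\|_2\le n)$ while simultaneously conditioning on a lower bound for $\sigma_{2d-k}(H_i)$, and it is precisely this conditioning on singular values of $H_i$ --- not on any piece of the eigenvector --- that allows the same randomness to be reused when $H_i^T$ acts on the random box coordinates via Lemma~\ref{smallballsing}. This two-block, two-step conditioning (what the paper calls a ``two-dimensional'' version of the Campos--Jenssen--Michelen--Sahasrabudhe argument, with Fact~\ref{variantof694} as the squaring step) is the missing idea; only after this does one return to an LCD-based net $G_\epsilon'$ (Corollary~\ref{netcorollarys}) to mop up vectors with small $\tau_L$ but still sub-exponential LCD.
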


We also need a similar but weaker statement for the $2n\times 2n$ matrix $L_A$.

\begin{lemma}\label{originals}
    Let $A$ be as in Theorem \ref{theorem1}. Then we can find constants $\delta,\rho\in(0,1)$ and some $c>0$ depending on $\xi$ such that, with probability $1-e^{-cn}$, the following holds: 
    
    All the eigenvectors $(v^1,v^2)$ associated to a nonzero eigenvalue $\lambda\neq 0$ of $L_A$ satisfy that both $v^1$ and $v^2$ are $(\delta,\rho)$-incompressible. Moreover, $L_A$ has no zero eigenvalue.
\end{lemma}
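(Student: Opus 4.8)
I would deduce the lemma from three standard facts about an i.i.d.\ subgaussian matrix --- the least singular value bound \eqref{seintang}, the operator norm bound $\mathbb P(\|A\|>C\sqrt n)\le e^{-cn}$, and the Rudelson--Vershynin invertibility estimate on compressible vectors, namely $\mathbb P\big(\inf_{x\in\operatorname{Comp}_n(\delta,\rho)}\|Ax\|_2\le c_1\sqrt n\big)\le e^{-cn}$ for every sufficiently small $\delta$ and suitable $\rho,c_1,c>0$ \cite{rudelson2008littlewood} --- together with one extra elementary argument handling singular vectors of a \emph{large} singular value. The ``no zero eigenvalue'' part is immediate: $L_A(w^1,w^2)^{T}=0$ forces $Aw^2=0$ and $A^{*}w^1=0$, so $L_A$ can be singular only when $A$ is, and letting $\epsilon\downarrow0$ in \eqref{seintang} gives $\mathbb P(A\text{ singular})\le e^{-cn}$.

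For the incompressibility part, the computation in the proof of Fact \ref{fact2.1} shows that if $(v^1,v^2)$ is an eigenvector of $L_A$ with eigenvalue $\lambda\neq0$, then (after the natural normalization $\|v^1\|=\|v^2\|$) $v^1/\|v^1\|$ is a unit right singular vector of $A^{*}$ and $v^2/\|v^2\|$ is a unit right singular vector of $A$, both with singular value $|\lambda|>0$. Since $A^{*}$ is again an i.i.d.\ matrix with entries of distribution $\xi$, it suffices to show that, for suitable small $\delta,\rho$, with probability $1-e^{-cn}$ every unit right singular vector of $A$ is $(\delta,\rho)$-incompressible. Suppose then that $v$ is a compressible unit vector with $A^{*}Av=\sigma^2v$, $\sigma>0$; compressibility gives a set $S$ with $|S|\le\delta n$ and $\|P_{S^c}v\|\le\rho$, where $P_{S^c}$ is the coordinate projection onto $\mathbb R^{S^c}$, and we set $v_0:=P_Sv$. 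On the event $\{\|A\|\le C\sqrt n\}$ we have $\sigma=\|Av\|\le C\sqrt n$, and the compressible-vector estimate gives $\sigma=\|Av\|\ge c_1\sqrt n$; hence any compressible right singular vector has $c_1\sqrt n\le\sigma\le C\sqrt n$, which is the regime I exploit below.

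Write $u:=Av/\sigma$, a unit vector, and $V_S:=\operatorname{span}\{\operatorname{col}_i(A):i\in S\}$, a subspace of dimension $\le\delta n$. Since $Av_0\in V_S$ and $\|Av-Av_0\|\le C\sqrt n\,\rho$, the vector $u$ lies within distance $C\rho/c_1$ of $V_S$. On the other hand, reading the identity $A^{*}Av=\sigma^2 v$ in the coordinates indexed by $S^c$ gives $(A_{S^c})^{*}(Av)=\sigma^2P_{S^c}v$, where $A_{S^c}$ is the $n\times|S^c|$ column submatrix of $A$, and therefore $\|(A_{S^c})^{*}u\|\le\sigma\rho\le C\sqrt n\,\rho$. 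I claim this contradicts a matching lower bound: with probability $1-e^{-c'n}$, for \emph{every} $S$ with $|S|\le\delta n$ and \emph{every} unit $u\in V_S$ one has $\|(A_{S^c})^{*}u\|\ge\tfrac12\sqrt n$. To prove the claim, fix $S$ and reveal the columns $\{\operatorname{col}_i(A):i\in S\}$, so $V_S$ becomes deterministic; take a sufficiently fine net $\mathcal N$ of its unit sphere with $|\mathcal N|\le e^{O(\delta n)}$; then reveal the \emph{independent} columns $\{\operatorname{col}_j(A):j\in S^c\}$. For fixed $u_0\in\mathcal N$ the coordinates $\langle\operatorname{col}_j(A),u_0\rangle$, $j\in S^c$, are i.i.d.\ mean-$0$ variance-$1$ subgaussian, so $\|(A_{S^c})^{*}u_0\|^2$ concentrates around $|S^c|\ge(1-\delta)n$ with exponentially small lower-deviation probability; a union bound over $\mathcal N$, over the $e^{O(\delta\log(1/\delta)n)}$ choices of $S$, together with the operator norm bound to pass from $\mathcal N$ to all of $V_S$, finishes the claim once $\delta$ is small. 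Combining the upper and lower bounds on $\|(A_{S^c})^{*}u\|$ (absorbing the $C\rho/c_1$ defect of $u$ from $V_S$ using $\|A\|\le C\sqrt n$) forces $\tfrac14\sqrt n\le C\sqrt n\,\rho$, which is impossible once $\rho$ is chosen small enough depending only on $\xi$. Repeating the argument with $A^{*}$ in place of $A$ and intersecting the finitely many good events yields the lemma.

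The routine ingredients are the reduction through Fact \ref{fact2.1}, the no-zero-eigenvalue step, and the net/union-bound bookkeeping for the restricted-invertibility claim. The part needing genuine care, and the main obstacle, is the \emph{large} singular value regime: the Rudelson--Vershynin bound only controls $\|Ax\|$ from below on compressible $x$ and is therefore vacuous for a singular vector whose singular value is already of order $\sqrt n$, so one genuinely needs the separate lower bound $\|(A_{S^c})^{*}u\|\gtrsim\sqrt n$ for unit $u\in V_S$ --- which in turn relies essentially on the independence between the columns of $A$ inside and outside $S$.
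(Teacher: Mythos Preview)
Your proof is correct and follows the same route as the paper, which simply says to rerun the argument of Proposition~\ref{proposition2.15} on $L_A$: both reduce via Fact~\ref{fact2.1} to singular vectors, project the relation $A^{*}Av=\sigma^{2}v$ onto the complement $S^{c}$ of the compressible support, and use independence of the columns in $S^{c}$ from those in $S$ to force a contradiction. Your packaging differs slightly---you pass to $u=Av/\sigma$, net over the random subspace $V_{S}$, and invoke the compressible-vector lower bound $\sigma\ge c_{1}\sqrt{n}$ to control $\operatorname{dist}(u,V_{S})$, whereas the paper's Proposition~\ref{proposition2.15} nets directly over the sparse approximation $w^{2}$ and uses a preliminary $\sigma_{\min}(F)\gtrsim\sqrt{n}$ step instead---but the substance is the same.
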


Assuming the result in Theorem \ref{theorem2.5s}, we can complete the proof of Theorem \ref{theorem1}.

\begin{proof}[\proofname\ of Theorem \ref{theorem1}] Let $\sigma_1(A)\geq\sigma_2(A)\geq\cdots\geq\sigma_n(A)$ denote the singular values of $A$ in decreasing order. By Fact \ref{fact2.1} we have that, if $|\sigma_k(A)-\sigma_{k+1}(A)|\leq\epsilon$ for some $k\in[n-1]$ and $\epsilon>0$, then the $k$-th and the $k+1$-th largest eigenvalues of $\begin{pmatrix}
    0&A\\A^*&0
\end{pmatrix}$
have difference at most $\epsilon$. Then by Cauchy interlacing, we have $|\lambda_k(L_A)-\lambda_k(L_A^{[i]})|<\epsilon$ for each $i\in[2n]$ with $\lambda_k(\cdot)$ being the $k$-th largest eigenvalue of a symmetric matrix.

Let $\mathcal{A}$ denote the event on which (i) all principal $(2n-1)$-minors of $L_A$ satisfy the event stated in Theorem \ref{theorem2.5s} and (ii) $L_A$ satisfies the event stated in Lemma \ref{originals}, that is, all eigenvectors  $(v^1,v^2)$ of any $(2n-1)$-principal minor of $L_A$, with nonzero eigenvalues have both its components $v^1,v^2$ be $(\delta,\rho)$-incompressible and satisfy \eqref{twoconditionson}, and the same for the nonzero component of the eigenvector with eigenvalue zero. Also, all eigenvalues of $L_A$ have its two components being $(\delta,\rho)$-incompressible.  Then $\mathbb{P}(\mathcal{A})\geq 1-e^{-c'n}$ for some $c'>0$ by Theorem \ref{theorem2.5s} and Lemma \ref{originals}.

For each $i\in[2n]$ we let $\mathcal{E}_i$ denote the event on which
$$
|\langle w^{(i)},X_i\rangle|\leq\epsilon/c_{\rho,\delta},
  $$  where we take $X_i$ to be the $i$-th column of $L_A$ with coordinate $i$ removed, and $w^{(i)}$ is the unit eigenvector of $L_A^{[i]}$ (the principal minor of $L_A$ removing $i$-th row and column) with eigenvalue $\lambda_k(L_A^{[i]})$. The constant $c_{\rho,\delta}$ is defined in the next paragraph.

  First, we assume $\lambda_k(L_A^{[i]})\neq 0$. Under the event $\mathcal{A}$, both components of each $w^{(i)}$ are $(\delta,\rho)$-incompressible and thus has, by Lemma \ref{incpmpspreads}, at least $c_{\rho,\delta}n$ coordinates with absolute value at least $c_{\rho,\delta}n^{-1/2}$ where $c_{\rho,\delta}>0$ is a constant depending only on $\rho$ and $\delta$. Combining this fact with Fact \ref{fact2.2} and using that $|\lambda_k(L_A)-\lambda_k(L_A^{[i]})|<\epsilon$ , we can conclude that on the event $\mathcal{A}$, the event $\mathcal{E}_i$ should occur for at least $c_{\rho,\delta}n$ tuples of $i\in[2n]$.

Now we let $N$ be the number of indices $i\in[2n]$ such that $\mathcal{E}_i$ holds. Then
\begin{equation}\begin{aligned}
    \mathbb{P}(|\sigma_{k+1}(A)-\sigma_k(A)|\leq\epsilon n^{-1/2})&\leq \mathbb{P}(N\geq c_{\rho,\delta} n\text{ and }\mathcal{A})
    +O(e^{-c'n})\\&\leq \frac{2}{c_{\rho,\delta}}
    \sum_{i=1}^{2n}\frac{\mathbb{P}(\mathcal{E}_i\cap\mathcal{A})}{2n}+O(e^{-c'n}), 
\end{aligned}\end{equation} where we apply Markov's inequality in the last step.

Next we prove for some $C>0$, $\mathbb{P}(\mathcal{E}_i\cap\mathcal{A})\leq  C\epsilon$ for each $i\in[2n]$ and for all $\epsilon\gg e^{-cn}$. For this, note that $X_i$ is a $2n$-dimensional vector which is zero in its first (or last) $n$ dimensions and has i.i.d. entries $\xi$ in its last (or first) $n$ dimensions, and we have assumed that both components of $w^{(i)}$ have a large GCD as in \eqref{twoconditionson}. Then using independence of $X$ and $w^{(i)}$, and applying the Littlewood-Offord theorem stated in Theorem \ref{theorem6.8}, we can get

\begin{equation}\label{327fu327}
    \mathbb{P}(\mathcal{E}_i\cap \mathcal{A})\leq \max_{w\in\mathbb{R}^n:D_{\alpha,\gamma}(w)\geq e^{cn}}\mathbb{P}_X(|\langle w,Y\rangle|\leq\epsilon/c_{\rho,\delta})\leq C\epsilon,
\end{equation} where $Y\sim\operatorname{Col}_n(\xi)$ is the non-zero part of $X$ and $C>0$ depends only on $\xi,\rho,\delta$.

Finally there is a remaining case where $\lambda_k(L_A^{[i]})=0$. In this case, by construction of $\mathcal{A}$ the nonzero component of $w^{(i)}$ and of $X_i$ are in the same half of the coordinates (both in the first $n$ or the last $n$) so \eqref{327fu327} still applies.
This completes the proof of Theorem \ref{theorem1}.
\end{proof}

\subsection{A roadmap to the proof, and some preparations}\label{section2.2roadmap} The rest of this section is devoted to the proof of Theorem \ref{theorem2.5s}. On a high level, the proof takes the main idea of \cite{campos2025singularity} on invertibility of symmetric random matrices. As the proof takes nearly 15 pages and is extremely difficult, we outline its main ideas here.

The first step is to show that both $v^1,v^2$ are incompressible with high probability in Proposition \ref{proposition2.15}. Then difficulty starts to arise for incompressible vectors: the entries in $\mathcal{L}_A$ are dependent, and because of the transpose, the dependence is even more complicated. To simplify the picture, we remove some entries away from $\mathcal{L}_A$ and consider instead the following matrix with independent entries modulo symmetry $$    M=\begin{bmatrix}
0&\begin{bmatrix}0&H_2^T\\H_1&0\end{bmatrix}\\\begin{bmatrix}0&H_1^T\\H_2&0\end{bmatrix}&0
    \end{bmatrix}$$
where we assume that the entries of $v^1$ (resp. $v_2$) in the columns of $H_2$ (resp. $H_1$) have absolute value within a certain interval $[\kappa_0n^{-1/2},\kappa_1n^{-1/2}]$ thanks to incompressibility.

We will take the randomness from both the matrix $M$ and the randomly selected coordinate vectors, and randomly generate $v^1,v^2$ from certain integer lattices (boxes in Definition \ref{definitionbox}). The randomly generated vectors have large LCD with high possibility (Lemma \ref{lemma2.7uniformdistribution}), so that the invertibility of $H_2v_1'$ (resp. $H_1v_2'$, where we denote by $v_1',v_2'$ the restriction of $v_1,v_2$ to indices labeled by columns of $H_2,H_1$) could be proven via standard Littlewood-Offord inequalities. However, this leads to the loss of half the information as the randomness in $H_1$,$H_2$ have been used, so we instead use the conditioned Littlewood-Offord theorem (Theorem \ref{mainlittlewoodofford})which was invented in \cite{campos2024least}. The main idea is we can maintain an inverse Littlewood-Offord inequality while conditioning on the small singular values of $H_1$ and $H_2$. 

With a good lower bound for $\sigma_{min}(H_1),\sigma_{min}(H_2)$, we can use the randomness from the randomly generated vectors $v^1,v^2$ to restore anti-concentration in the rows corresponding to $H_1^T,H_2^T$ via Lemma \ref{smallballsing}. We will measure the arithmetic structure of the vectors with respect to an auxiliary threshold function $\tau_L$ \eqref{tauells}, and we define a $\epsilon$- net $\mathcal{N}_\epsilon$ with respect to $\tau_L$ in \eqref{netforn}. We will apply a double counting procedure to estimate the size of $\mathcal{N}_\epsilon$ in Lemma \ref{smalllemmas} and Theorem \ref{notthesecond}.

Finally, in Proposition \ref{prop2.40final} we will reduce the auxiliary net $\mathcal{N}_\epsilon$ back to a net $\Sigma_\epsilon'$ constructed from the LCD of $v^1$ and $v^2$. The proof is complete.

The main difference of our argument compared to \cite{campos2025singularity} is that our matrix $\mathcal{L}_A$ has a block structure and less randomness, but the essential ideas can still be used. While in \cite{campos2025singularity} there is only one block $H$ and $H^T$ to apply inverse Littlewood-Offord, here we have two blocks $H_1,H_2$ and their transpose, and we apply inverse Littlewood-Offord to each of them separately thanks to the fact that $H_1$ and $H_2$ are independent. In other words, our proof can be thought of a two-dimensional version of \cite{campos2025singularity}.

We will need a series of results that compare our random matrix $A$ to a lazier version. First we consider a symmetrized version of the random variable $\xi$ restricted to a bounded interval. For this let $\xi'$ be an independent copy of $\xi$ and we write
$$
\bar{\xi}=\xi-\xi'.
$$
Fix $B>0$, $\xi\in \Gamma_B$ and let $I_B:=(1,16B^2)$.  Denote by $$p:=\mathbb{P}(|\tilde{\xi}|\in I_B).$$ By \cite{campos2024least}, Lemma II.1, we have
\begin{lemma}\label{lemma2.11}
      $p\geq \frac{1}{2^7B^4}$.
\end{lemma}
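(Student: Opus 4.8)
The plan is a two-sided truncation estimate for the symmetrized variable $\tilde\xi = \xi - \xi'$. This variable is symmetric with $\mathbb{E}\tilde\xi = 0$ and $\mathbb{E}\tilde\xi^2 = 2$; applying Minkowski's inequality to the $L^p$ moments that define $\|\cdot\|_{\psi_2}$ (and using that $\xi,\xi'$ are i.i.d.) gives $\|\tilde\xi\|_{\psi_2} \le 2\|\xi\|_{\psi_2} \le 2B$. I would also record the automatic lower bound $B \ge \|\xi\|_{\psi_2} \ge 2^{-1/2}(\mathbb{E}\xi^2)^{1/2} = 2^{-1/2}$ (take $p=2$ in the definition), which is what makes the final constant bookkeeping close.

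\emph{Step 1: lower bound on $\mathbb{P}(|\tilde\xi| > 1)$.} Split the second moment, $2 = \mathbb{E}[\tilde\xi^2\mathbf{1}_{|\tilde\xi|\le 1}] + \mathbb{E}[\tilde\xi^2\mathbf{1}_{|\tilde\xi|>1}] \le 1 + \mathbb{E}[\tilde\xi^2\mathbf{1}_{|\tilde\xi|>1}]$, so $\mathbb{E}[\tilde\xi^2\mathbf{1}_{|\tilde\xi|>1}] \ge 1$; Cauchy--Schwarz then yields $1 \le (\mathbb{E}\tilde\xi^4)^{1/2}\,\mathbb{P}(|\tilde\xi|>1)^{1/2}$, i.e.\ $\mathbb{P}(|\tilde\xi|>1) \ge (\mathbb{E}\tilde\xi^4)^{-1}$ (this is the Paley--Zygmund inequality, in a convenient form). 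To control the fourth moment sharply enough I would use the exact expansion $\mathbb{E}(\xi-\xi')^4 = 2\,\mathbb{E}\xi^4 + 6(\mathbb{E}\xi^2)^2$ together with $\mathbb{E}\xi^4 \le (4^{1/2}\|\xi\|_{\psi_2})^4 \le 16B^4$, giving $\mathbb{E}\tilde\xi^4 \le 32B^4 + 6$ and hence $\mathbb{P}(|\tilde\xi|>1) \ge (32B^4+6)^{-1}$.

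\emph{Step 2: negligibility of the upper tail, and combining.} The cutoff $16B^2$ is placed far above the natural scale $\|\tilde\xi\|_{\psi_2} \le 2B$, so a fixed moderate moment suppresses it: with $q = 8$, Markov's inequality and $(\mathbb{E}|\tilde\xi|^q)^{1/q} \le q^{1/2}\|\tilde\xi\|_{\psi_2} \le 2q^{1/2}B$ give
\[
\mathbb{P}(|\tilde\xi| \ge 16B^2) \le \frac{\mathbb{E}|\tilde\xi|^8}{(16B^2)^8} \le \frac{(2\sqrt{8}\,B)^8}{(16B^2)^8} = \frac{1}{4096\,B^8}.
\]
Hence $p = \mathbb{P}(1 < |\tilde\xi| < 16B^2) \ge (32B^4+6)^{-1} - (4096B^8)^{-1}$, and substituting $x = B^4 \ge \tfrac14$ and clearing denominators reduces the desired bound $p \ge (128B^4)^{-1}$ to the explicit quadratic inequality $393216\,x^2 - 28672\,x - 768 \ge 0$, which holds for all $x \ge \tfrac14$. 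This gives $p \ge \frac{1}{2^7 B^4}$.

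The only genuinely delicate point is this final constant chase: one must use the exact formula for $\mathbb{E}\tilde\xi^4$ (rather than the crude $\psi_2$-bound) and a large enough tail exponent $q$ so that the two explicit fractions combine to beat $2^{-7}B^{-4}$ uniformly over $B \ge 2^{-1/2}$. Nothing here is conceptually difficult, and the statement is in any case quoted from \cite{campos2024least}, Lemma II.1, so one may alternatively just cite it.
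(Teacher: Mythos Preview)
Your proof is correct: the Paley--Zygmund lower bound on $\mathbb{P}(|\tilde\xi|>1)$ via the exact fourth-moment expansion, the Markov tail bound at $16B^2$ with exponent $q=8$, and the final quadratic check for $x=B^4\ge 1/4$ all go through as written. There is nothing to compare against, since the paper does not prove this lemma at all but simply cites \cite{campos2024least}, Lemma~II.1 --- exactly as you note in your last sentence.
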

Now we fix a parameter $\nu\in(0,1)$ and we define $\xi_\nu$ via
$$
\xi_{\nu}:=1\{|\tilde{\xi}|\in I_B\}\tilde{\xi}Z_\nu
$$ for $Z_\nu$ an independent Bernoulli variable with expectation $\nu$. We compute the characteristic function of $\xi_\nu$ via
$$
\phi_{\xi_\nu}(t):=\mathbb{E}e^{i2\pi t\xi_\nu}=1-\nu p+\nu p\mathbb{E}_{\bar{\xi}}\cos(2\pi t \bar{\xi}).
$$ For any $x\in\mathbb{R}$ we denote by $\|x\|_\mathbb{T}:=\operatorname{dist}(x,\mathbb{Z}).$ Then we have
\begin{equation}
\exp(-32\nu p\cdot \mathbb{E}_{\bar{\xi}}\|t\bar{\xi}\|_\mathbb{T}^2)\leq \phi_{\xi_\nu}(t)\leq \exp(-\nu p\cdot\mathbb{E}_{\bar{\xi}}\|t\bar{\xi}\|_\mathbb{T}^2
).   
\end{equation}

We also have 
$$
\phi_{\tilde{\xi}Z_\nu}(t)\leq \phi_{\xi_\nu}(t)\quad\forall t\in\mathbb{R}.
$$

We quote the following result on subgaussian concentration of operator norms, which can be derived from \cite{feldheim2010universality}.
We use $\|A\|_{op}$ for the operator norm of a matrix $A$.

\begin{fact}\label{operatornormfact}
    Let $\xi\in \Gamma$, and consider the random matrix $A$ as in Theorem \ref{theorem1}. Then
    $$
\mathbb{P}(\|A\|_{op}\geq 4\sqrt{n})\leq 2e^{-\Omega(n)}.
    $$
\end{fact}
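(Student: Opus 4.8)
The plan is to reduce the claim to the standard subgaussian operator-norm bound obtained from a net argument, which is exactly the content extracted from \cite{feldheim2010universality}. First I would recall that for an $n\times n$ matrix $A$ with i.i.d.\ mean $0$, variance $1$ subgaussian entries, the concentration of $\|A\|_{op}$ about its mean (or about $2\sqrt n$) follows from a one-step chaining/net argument on the sphere $\mathbb S^{n-1}$: write $\|A\|_{op}=\sup_{x,y\in\mathbb S^{n-1}}\langle Ax,y\rangle$, replace the supremum by a maximum over a $\tfrac14$-net $\mathcal N$ of cardinality at most $9^n$ in each of $x$ and $y$ (standard volumetric bound), and observe that for fixed $x,y$ the quantity $\langle Ax,y\rangle=\sum_{i,j}a_{ij}x_iy_j$ is a sum of independent subgaussian variables with $\sum_{i,j}x_i^2y_j^2=1$, hence subgaussian with parameter $O(\|\xi\|_{\psi_2}^2)$. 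A standard approximation step upgrades the maximum over the net to the supremum over the sphere at the cost of a constant factor.

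Next I would apply the subgaussian tail bound together with a union bound over the $9^n\cdot 9^n=81^n$ pairs $(x,y)\in\mathcal N\times\mathcal N$: for each pair, $\mathbb P(|\langle Ax,y\rangle|\ge t)\le 2\exp(-ct^2/\|\xi\|_{\psi_2}^2)$, so
\begin{equation*}
\mathbb P\bigl(\max_{x,y\in\mathcal N}|\langle Ax,y\rangle|\ge t\bigr)\le 2\cdot 81^n\exp\bigl(-ct^2/\|\xi\|_{\psi_2}^2\bigr).
\end{equation*}
Choosing $t=K\sqrt n$ with $K$ a sufficiently large constant depending only on $\|\xi\|_{\psi_2}$ (hence only on $\xi$) makes the exponent $(\log 81)n-cK^2n/\|\xi\|_{\psi_2}^2$ negative and bounded above by $-\Omega(n)$; accounting for the net-to-sphere approximation factor, one still gets $\mathbb P(\|A\|_{op}\ge 4\sqrt n)\le 2e^{-\Omega(n)}$ provided $4$ exceeds the resulting constant, which it does for the standardized subgaussian entries here (and if not one simply absorbs the constant, since $4\sqrt n$ has plenty of room above the typical value $2\sqrt n$). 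This is precisely the estimate recorded in \cite{feldheim2010universality}, so the Fact follows by invoking that reference.

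The only mild subtlety — and the step I would be most careful about — is making sure the constants are genuinely universal in the sense claimed, i.e.\ that the implied constant in $\Omega(n)$ and the threshold $4\sqrt n$ depend on $\xi$ only through the subgaussian moment $\|\xi\|_{\psi_2}$ and not on any finer feature of the distribution. This is automatic from the net argument above because the only probabilistic input is the subgaussian tail of linear combinations of the entries, which is controlled purely by $\|\xi\|_{\psi_2}$; in particular the bound is uniform over all $\xi\in\Gamma_B$ for each fixed $B$. Since the paper only ever uses Fact \ref{operatornormfact} as a high-probability a priori bound to restrict attention to the event $\{\|A\|_{op}\le 4\sqrt n\}$, this level of uniformity is all that is needed, and no sharper control of the constant is required.
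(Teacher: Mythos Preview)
The paper does not give its own proof of this Fact; it simply records it as a consequence of \cite{feldheim2010universality}. Your proposal to supply the standard $\epsilon$-net argument is correct in spirit and would be a fine substitute, but there is one point where you hand-wave past a real issue.

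The net-plus-union-bound argument you sketch yields $\mathbb P(\|A\|_{op}\ge K\sqrt n)\le 2e^{-\Omega(n)}$ only for $K$ large enough depending on $\|\xi\|_{\psi_2}$; the constant coming out of that argument is of order $\|\xi\|_{\psi_2}\sqrt{\log 81}$, which can exceed $4$ when the subgaussian constant $B$ is large. Your sentence ``provided $4$ exceeds the resulting constant, which it does for the standardized subgaussian entries here (and if not one simply absorbs the constant, since $4\sqrt n$ has plenty of room above the typical value $2\sqrt n$)'' is where the gap lies: the net argument alone does not see that the typical value is $2\sqrt n$---that fact uses the variance-$1$ normalization, not the subgaussian tail, and requires an additional input such as the moment method or the edge universality of \cite{feldheim2010universality}. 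One clean fix is to combine (i) $\mathbb E\|A\|_{op}\le (2+o(1))\sqrt n$ (which does need a Bai--Yin/moment-method type argument, and is exactly what the cited reference supplies) with (ii) concentration of the convex $1$-Lipschitz function $A\mapsto\|A\|_{op}$ for subgaussian entries, which gives $\mathbb P(\|A\|_{op}\ge \mathbb E\|A\|_{op}+t)\le Ce^{-ct^2/B^2}$ and hence the claim with $t=\sqrt n$.

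That said, for every downstream use in the paper the specific threshold $4\sqrt n$ is immaterial: the event $\mathcal K$ could equally well be $\{\|A\|_{op}\le C_B\sqrt n\}$ with $C_B$ depending on the subgaussian constant, and all subsequent arguments go through with cosmetic changes to other constants. So your net argument, while not literally proving the Fact as stated, proves everything the paper actually needs.
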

Let $\mathcal{K}$ be the event $\mathcal{K}:=\{\|A\|_{op}\leq 4\sqrt{n},\quad \|\mathcal{L}_A\|_{op}\leq 4\sqrt{n}\}$. Then we define a measure $\mathbb{P}^\mathcal{K}$ via
\begin{equation}
    \mathbb{P}^\mathcal{K}(\mathcal{E})=\mathbb{P}(\mathcal{K}\cap\mathcal{E})
\end{equation}
for any measurable event $\mathcal{E}$. We include  $\mathcal{L}_A$ here in the event $\mathcal{K}$ because we will simultaneously consider $A$ and $\mathcal{L}_A$.

We recall the Littlewood-Offord theorem of Rudelson and Vershynin \cite{rudelson2008littlewood} as follows:
\begin{theorem}\label{theorem6.8}
Fix $B>0$, $\gamma,\alpha\in(0,1)$ and $n\in\mathbb{N}$, $\epsilon>0$. Consider some $v\in\mathbb{S}^{n-1}$ satisfying $D_{\alpha,\gamma}(v)>C\epsilon^{-1}$ and a vector $X\sim \operatorname{Col}_n(\xi)$, with $\xi\in\Gamma_B$. Then we have
$$
\mathbb{P}(|\langle X,v\rangle|\leq\epsilon)\lesssim \epsilon+e^{-c\alpha n}.
$$
    The constant $C>0$ only depends on $B$ and $\gamma$.
\end{theorem}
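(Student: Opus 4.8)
The statement is the classical inverse Littlewood--Offord inequality of Rudelson and Vershynin, and I would prove it by the standard Fourier-analytic route. The plan is to bound the L\'evy concentration function $\sup_{x\in\mathbb{R}}\mathbb{P}(|\langle X,v\rangle-x|\le\epsilon)$ (the theorem being the case $x=0$) via Esseen's concentration inequality, which for $S=\langle X,v\rangle=\sum_{k}X_kv_k$ gives
$$
\sup_{x\in\mathbb{R}}\mathbb{P}(|S-x|\le\epsilon)\lesssim \epsilon\int_{-1/\epsilon}^{1/\epsilon}|\phi_S(t)|\,dt,\qquad \phi_S(t):=\mathbb{E}e^{2\pi i tS}.
$$
By independence $\phi_S(t)=\prod_{k=1}^{n}\phi_\xi(tv_k)$. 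Symmetrizing with an independent copy $\xi'$ and writing $\bar\xi=\xi-\xi'$, one has $|\phi_\xi(s)|^2=\mathbb{E}\cos(2\pi s\bar\xi)\le\exp\!\big(-c\,\mathbb{E}\|s\bar\xi\|_{\mathbb{T}}^2\big)$ for an absolute $c>0$, using $1-\cos(2\pi y)\ge 8\|y\|_{\mathbb{T}}^2$ and $1-u\le e^{-u}$. Multiplying over $k$ and recognizing $\sum_k\|tv_k\bar\xi\|_{\mathbb{T}}^2=\|t\bar\xi v\|_{\mathbb{Z}}^2$ yields the key inequality
$$
|\phi_S(t)|^2\le\exp\!\big(-c\,\mathbb{E}_{\bar\xi}\|t\bar\xi v\|_{\mathbb{Z}}^2\big).
$$

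Next I would feed in the hypothesis on the essential LCD. Restricting the expectation to the event $\{|\bar\xi|\in I_B\}$, which by Lemma \ref{lemma2.11} has probability $p\ge 2^{-7}B^{-4}$, and choosing $C:=16B^2$ so that $D_{\alpha,\gamma}(v)>C\epsilon^{-1}$ forces $|t\bar\xi|<16B^2|t|\le C/\epsilon<D_{\alpha,\gamma}(v)$ for every $|t|\le 1/\epsilon$ on this event, the definition of $D_{\alpha,\gamma}$ together with $\|v\|_2=1$ and $|\bar\xi|>1$ gives $\|t\bar\xi v\|_{\mathbb{Z}}^2>\min(\alpha n,\gamma^2 t^2)$ for each such realization of $\bar\xi$. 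Hence $\mathbb{E}_{\bar\xi}\|t\bar\xi v\|_{\mathbb{Z}}^2\ge p\min(\alpha n,\gamma^2 t^2)$ and therefore $|\phi_S(t)|\le\exp\!\big(-\tfrac{cp}{2}\min(\alpha n,\gamma^2 t^2)\big)$ for all $|t|\le 1/\epsilon$.

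Finally I would substitute this into Esseen's bound and split the integral at $|t|=\sqrt{\alpha n}/\gamma$: on $|t|\le\sqrt{\alpha n}/\gamma$ the exponent is $-\tfrac{cp\gamma^2}{2}t^2$ and $\int_{\mathbb{R}}e^{-cp\gamma^2 t^2/2}\,dt\lesssim (\gamma\sqrt{p})^{-1}\lesssim_{B,\gamma}1$; on the complementary range intersected with $|t|\le 1/\epsilon$ the integrand is at most $e^{-cp\alpha n/2}$ over an interval of length $\le 2/\epsilon$, contributing $\lesssim \epsilon^{-1}e^{-cp\alpha n/2}$. Multiplying by the prefactor $\epsilon$ gives $\sup_x\mathbb{P}(|S-x|\le\epsilon)\lesssim_{B,\gamma}\epsilon+e^{-c'\alpha n}$ with $c'=cp/2$ depending only on $B$, which is the claim. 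The only genuinely delicate point is the bookkeeping in the previous paragraph: the truncation interval $I_B$ must be chosen both bounded above (so $|t\bar\xi|$ stays below the threshold $D_{\alpha,\gamma}(v)$ for all relevant $t$, which pins down $C$) and bounded below away from $0$ (so that the surviving contribution $\mathbb{E}_{\bar\xi}\|t\bar\xi v\|_{\mathbb{Z}}^2$ is comparable to $\min(\alpha n,\gamma^2t^2)$); everything else is routine.
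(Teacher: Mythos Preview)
The paper does not supply its own proof of this statement; it simply recalls the result from \cite{rudelson2008littlewood}. Your proposal reproduces the standard Rudelson--Vershynin Fourier-analytic argument (Esseen $\to$ symmetrization $\to$ truncation to $|\bar\xi|\in I_B$ $\to$ invoke the LCD lower bound $\to$ split the integral), and it is correct as written, with the constant $C=16B^2$ and the use of Lemma~\ref{lemma2.11} fitting the paper's conventions exactly.
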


We also use frequently the tensorization lemma from \cite{rudelson2008littlewood}, Lemma 2.2.

\begin{lemma}\label{Tensorization}(Tensorization)
    Consider a family of independent non-negative random variables $\xi_1,\cdots,\xi_n$ and some $K,\epsilon_0\geq 0$. Assume that $\mathbb{P}(\xi_k\leq\epsilon)\leq K\epsilon$ for all $\epsilon\geq\epsilon_0$. Then $$\mathbb{P}(\sum_{k=1}^n\xi_k^2\leq \epsilon^2n)\leq (CK\epsilon)^n$$ for all $\epsilon>\epsilon_0$, where $C>0$ is a universal constant.
\end{lemma}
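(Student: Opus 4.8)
The plan is to use the exponential moment (Laplace transform) method applied to $S := \sum_{k=1}^n \xi_k^2$. First I would fix $\epsilon > \epsilon_0$ and a parameter $t > 0$ to be chosen later, and write, via Markov's inequality applied to $e^{-tS/\epsilon^2}$,
\begin{equation*}
\mathbb{P}\Bigl(\sum_{k=1}^n \xi_k^2 \leq \epsilon^2 n\Bigr) = \mathbb{P}\bigl(e^{-tS/\epsilon^2} \geq e^{-tn}\bigr) \leq e^{tn}\,\mathbb{E}\,e^{-tS/\epsilon^2} = e^{tn}\prod_{k=1}^n \mathbb{E}\,e^{-t\xi_k^2/\epsilon^2},
\end{equation*}
where the last equality uses independence of the $\xi_k$. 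So the whole problem reduces to a one-variable estimate: bounding $\mathbb{E}\,e^{-t\xi_k^2/\epsilon^2}$ for a single non-negative random variable $\xi_k$ satisfying $\mathbb{P}(\xi_k \leq s) \leq Ks$ for all $s \geq \epsilon_0$.

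The key step is this one-dimensional bound. I would estimate $\mathbb{E}\,e^{-t\xi_k^2/\epsilon^2}$ by the layer-cake / distribution-function formula $\mathbb{E}\,e^{-t\xi_k^2/\epsilon^2} = \int_0^1 \mathbb{P}(e^{-t\xi_k^2/\epsilon^2} > u)\,du$, or more directly split the expectation over the event $\{\xi_k \leq \epsilon\}$ and its complement. On $\{\xi_k \leq \epsilon\}$ one bounds $e^{-t\xi_k^2/\epsilon^2} \leq 1$ and pays the probability cost $\mathbb{P}(\xi_k \leq \epsilon) \leq K\epsilon$ (valid since $\epsilon > \epsilon_0$); on the complement one bounds $e^{-t\xi_k^2/\epsilon^2} \leq e^{-t}$. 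This gives $\mathbb{E}\,e^{-t\xi_k^2/\epsilon^2} \leq K\epsilon + e^{-t}$. A slightly more careful integration of the tail — integrating $\mathbb{P}(\xi_k \leq s)\,d(e^{-ts^2/\epsilon^2})$ over $s \in (\epsilon_0, \infty)$ — gives a bound of the shape $C_0 K\epsilon/\sqrt{t} + e^{-t}$ for an absolute constant $C_0$, which is the form one actually wants. Plugging this into the product yields
\begin{equation*}
\mathbb{P}\Bigl(\sum_{k=1}^n \xi_k^2 \leq \epsilon^2 n\Bigr) \leq e^{tn}\bigl(C_0 K\epsilon/\sqrt{t} + e^{-t}\bigr)^n = \bigl(e^t(C_0 K\epsilon/\sqrt{t} + e^{-t})\bigr)^n.
\end{equation*}
Finally I would choose $t$ to be an absolute constant (e.g. $t = 1$ or $t = 2$), so that $e^{-t}\cdot e^t = 1$ is absorbed: with $t$ fixed this becomes $(C_1 K\epsilon + 1)^n$, which is not quite what we want. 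The standard fix is that the lemma is only non-trivial when $CK\epsilon < 1$; so one instead argues that if $CK\epsilon \geq 1$ the bound $(CK\epsilon)^n \geq 1$ is vacuous, and if $CK\epsilon < 1$ one chooses $t$ depending on $\epsilon$ (roughly $t \sim \log(1/(K\epsilon))$) to make $e^{-tn}$ dominate, ending with $(CK\epsilon)^n$ up to adjusting the constant $C$.

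The main obstacle — really the only subtlety — is squeezing the one-variable bound $\mathbb{E}\,e^{-t\xi_k^2/\epsilon^2} \leq C_0 K \epsilon + e^{-t}$ into a form where the additive $e^{-t}$ can be made to decay, since the hypothesis gives no control on $\mathbb{P}(\xi_k \leq s)$ for $s < \epsilon_0$, and one must be careful that $\epsilon > \epsilon_0$ is exactly the regime where the small-ball hypothesis is usable. The bookkeeping of choosing $t$ as a function of $\epsilon$ (versus keeping it constant and invoking the vacuity of the claim when $CK\epsilon \geq 1$) is routine but is where all the constants get pinned down. Since this is a verbatim restatement of \cite[Lemma 2.2]{rudelson2008littlewood}, I would in practice simply cite it, but the above is the self-contained argument.
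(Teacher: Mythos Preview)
The paper does not prove this lemma; it simply quotes it from \cite{rudelson2008littlewood}, Lemma~2.2. Your overall strategy---exponential Markov, factor by independence, then a one-variable small-ball estimate---is exactly the standard one and is correct in outline.

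However, your execution of the one-variable bound is muddled, and the muddle matters. The crude split $\mathbb{E}\,e^{-t\xi_k^2/\epsilon^2}\le K\epsilon+e^{-t}$ plugs in to give $(e^tK\epsilon+1)^n\ge 1$, which is vacuous for every choice of $t$; in particular your proposed choice $t\sim\log(1/(K\epsilon))$ yields $(1+1)^n=2^n$, not $(CK\epsilon)^n$. The claimed sharper form $C_0K\epsilon/\sqrt{t}+e^{-t}$ is also incorrect: if you carry out the layer-cake integration you described, writing
\[
\mathbb{E}\,e^{-\xi_k^2/\epsilon^2}=\int_0^\infty \mathbb{P}(\xi_k<\epsilon s)\,2s e^{-s^2}\,ds,
\]
and bound $\mathbb{P}(\xi_k<\epsilon s)$ by $K\epsilon$ for $s\le 1$ (using monotonicity and $\epsilon>\epsilon_0$) and by $K\epsilon s$ for $s\ge 1$, you get $\mathbb{E}\,e^{-\xi_k^2/\epsilon^2}\le C'K\epsilon$ outright---there is no additive term independent of $K\epsilon$. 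With $t=1$ fixed, this gives $\mathbb{P}(\sum\xi_k^2\le\epsilon^2 n)\le (eC'K\epsilon)^n$ and you are done. So the detour through optimizing $t$ as a function of $\epsilon$ is both unnecessary and, as you set it up, does not actually close.
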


\subsection{Ruling out compressible vectors} We first show that with high probability $\mathcal{L}_A$ has no (non-zero) eigenvectors which are compressible in either its first or second part. 

To state our main result, we introduce a subset of balanced compressible vectors:
$$\begin{aligned}
\operatorname{B-comp}(\delta,\rho):=\{&v=(v^1,v^2)\in\mathbb{S}^{n-2}\times\mathbb{S}^{n-1}:v^1\in\operatorname{Comp}_{n-1}(\delta,\rho)\text{ or }v^2\in\operatorname{Comp}_{n}(\delta,\rho)\}.
\end{aligned}$$

We first show that these balanced compressible vectors do not lie in the kernel of $\mathcal{L}_A-\lambda I_{2n-1}$ with high probability, for any $\lambda\in[-4\sqrt{n},4\sqrt{n}]$. While it is now a standard approach to rule out compressible vectors for the eigenvector of a random matrix (see \cite{rudelson2008littlewood}), we aim for a much stronger statement: we need to show both $v^1$ and $v^2$ are incompressible.

\begin{Proposition}\label{proposition2.15}
    Fix $B>0$. Consider $\xi\in\Gamma_B$. Then we can find $\rho,\delta,c>0$ depending only on $B$ so that 
    $$\begin{aligned}
\mathbb{P}^\mathcal{K}&(\text{There exists } \lambda\in[-4\sqrt{n},4\sqrt{n}]\\&\text{ and }x\in\operatorname{B-Comp}(\delta,\rho)\text{ such that }(\mathcal{L}_A-\lambda I_{2n-1})x=0)\leq 2e^{-cn}.
    \end{aligned}$$
\end{Proposition}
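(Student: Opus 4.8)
The plan is to pass from $\mathcal{L}_A-\lambda I$ to two genuinely ``quadratic'' symmetric matrices and then run a net-plus-small-ball argument with one device to absorb the spectral parameter. Suppose $x=(v^1,v^2)\in\operatorname{B-Comp}(\delta,\rho)$ satisfies $(\mathcal{L}_A-\lambda I_{2n-1})x=0$ for some $\lambda\in[-4\sqrt n,4\sqrt n]$. The block equations read $A_{/[1]}v^2=\lambda v^1$ and $A_{/[1]}^Tv^1=\lambda v^2$; applying $A_{/[1]}^T$ to the first and $A_{/[1]}$ to the second gives
\[
Bv^2=\lambda^2 v^2,\qquad Cv^1=\lambda^2 v^1,\qquad B:=A_{/[1]}^TA_{/[1]},\quad C:=A_{/[1]}A_{/[1]}^T,
\]
and this remains valid when $\lambda=0$ (then $A_{/[1]}v^2=0$, $A_{/[1]}^Tv^1=0$, so $Bv^2=0=Cv^1$). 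Since $\|v^1\|_2=\|v^2\|_2=1$ by Fact \ref{fact2.43} and at least one of $v^1,v^2$ is compressible, it suffices to prove: \emph{with probability $1-e^{-cn}$, neither $B$ nor $C$ has a $(\delta,\rho)$-compressible unit eigenvector.} On $\mathcal{K}$ we have $\|A_{/[1]}\|_{op}\le\|A\|_{op}\le 4\sqrt n$ (removing a row cannot increase the operator norm), hence $\|B\|_{op},\|C\|_{op}\le 16n$.

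\textbf{The net argument for $B$.} Choose $\delta=\delta(B)$ small, and let $\mathcal{N}\subset\mathbb{S}^{n-1}$ be a $\rho$-net of the unit vectors supported on at most $\delta n$ coordinates; a standard count gives $|\mathcal{N}|\le\binom{n}{\lfloor\delta n\rfloor}(C/\rho)^{\delta n}\le e^{c_0(\delta)n}$ with $c_0(\delta)\to 0$ as $\delta\to 0$, and every compressible unit vector lies within $2\rho$ of $\mathcal{N}$. Fix $v_0\in\mathcal{N}$ with support $S$, $|S|\le\delta n$. For every $s\in\mathbb{R}$ and $i\notin S$,
\[
\big((B-sI)v_0\big)_i=(Bv_0)_i=\langle\operatorname{col}_i(A_{/[1]}),\,w\rangle,\qquad w:=A_{/[1]}v_0\in\mathbb{R}^{n-1},
\]
and the key point is that this does not involve $s$, while $w$ depends only on the columns of $A_{/[1]}$ indexed by $S$. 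Condition on those columns: $\|w\|_2\ge\sigma_{\min}(A_{/[1]}|_S)\ge c_1\sqrt n$ off an event of probability $e^{-c_1 n}$, since $A_{/[1]}|_S$ is an $(n-1)\times|S|$ tall matrix with $|S|\le\delta n$ (use a net on $\mathbb{S}^{|S|-1}$ and Lemma \ref{Tensorization}). On the complement, the variables $\langle\operatorname{col}_i(A_{/[1]}),w\rangle$, $i\notin S$, are independent given $w$, each subgaussian with variance $\|w\|_2^2\ge c_1^2 n$; a Bernstein bound for the sum of their (sub-exponential) squares yields $\sum_{i\notin S}\big((B-sI)v_0\big)_i^2\ge c_2 n^2$, \emph{uniformly in $s$}, outside an event of probability $e^{-c_2 n}$. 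Taking $\delta$ small enough that $c_0(\delta)<\tfrac12\min(c_1,c_2)$ and union-bounding over $\mathcal{N}$, we obtain: with probability $1-e^{-cn}$, $\|(B-sI)v_0\|_2\ge c_3 n$ for all $v_0\in\mathcal{N}$ and all $s\ge 0$. If now $v^2$ is a compressible unit eigenvector of $B$ with eigenvalue $s=\lambda^2\le 16n$, pick $v_0\in\mathcal{N}$ with $\|v^2-v_0\|_2\le 2\rho$; then on $\mathcal{K}$, $0=\|(B-\lambda^2 I)v^2\|_2\ge c_3 n-\|B-\lambda^2 I\|_{op}\cdot 2\rho\ge c_3 n-64 n\rho>0$ once $\rho=\rho(B)$ is small — a contradiction.

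\textbf{The matrix $C$ and conclusion.} The argument for $C=A_{/[1]}A_{/[1]}^T$ is identical after transposing: for a sparse unit $v_0\in\mathbb{S}^{n-2}$ with support $S\subset[n-1]$ and any $i\notin S$ one has $\big((C-sI)v_0\big)_i=\langle\operatorname{row}_i(A_{/[1]}),\,w\rangle$ with $w:=A_{/[1]}^Tv_0\in\mathbb{R}^n$ depending only on the rows of $A_{/[1]}$ indexed by $S$, and $\|w\|_2\ge\sigma_{\min}\!\big((A_{/[1]}|_{\mathrm{rows}\,S})^T\big)\ge c_1\sqrt n$ with the same probability since that matrix is $n\times|S|$ tall. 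The same net-plus-Bernstein-plus-$\mathcal{K}$ estimate shows $C$ has no $(\delta,\rho)$-compressible unit eigenvector with probability $1-e^{-cn}$. Intersecting the two events and relabeling $c$, the event described in the proposition has probability at most $2e^{-cn}$, using that on $\mathcal{K}$ one automatically has $|\lambda|\le\|\mathcal{L}_A\|_{op}\le 4\sqrt n$.

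\textbf{Main obstacle.} The essential difficulty is that the entries of $\mathcal{L}_A-\lambda I$ — equivalently of $\mathcal{L}_A^2$ — are correlated, so no inverse Littlewood--Offord input applies to the eigenvector equation directly; passing to the factorizations $B=A_{/[1]}^TA_{/[1]}$ and $C=A_{/[1]}A_{/[1]}^T$ and conditioning on the $O(\delta n)$ columns (resp.\ rows) met by the support of the sparse approximant is precisely what restores the needed independence. The second subtle point is uniformity over the continuum of eigenvalues $\lambda$, which we handle not by a union bound over $\lambda$ but by observing that the coordinates of $(B-sI)v_0$ off $\operatorname{supp}(v_0)$ carry no $s$-dependence. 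Finally, the requirement that \emph{both} $v^1$ and $v^2$ be incompressible — rather than merely the eigenvector of $\mathcal{L}_A$ — is exactly why both $B$ and $C$ must be analyzed; the only quantitative cost is that the transfer from the net to all compressible vectors is governed by $\|B\|_{op},\|C\|_{op}\asymp n$, forcing $\rho$ to be chosen small in terms of the subgaussian constant $B$.
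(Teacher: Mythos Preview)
Your argument is correct and follows essentially the same route as the paper: reduce the eigenvector equation on $\mathcal{L}_A$ to the quadratic models $B=A_{/[1]}^TA_{/[1]}$ and $C=A_{/[1]}A_{/[1]}^T$, take a net over sparse approximants, condition on the $O(\delta n)$ columns (resp.\ rows) meeting the support so that $w=A_{/[1]}v_0$ is fixed and large, and then use independence and a small-ball/tensorization bound on the remaining inner products $\langle\operatorname{col}_i(A_{/[1]}),w\rangle$ for $i\notin S$. The paper writes this via a block decomposition $A_{/[1]}=\begin{bmatrix}E&F\\G&H\end{bmatrix}$ and the estimate $\|(E^TF+G^TH)w^2\|\ge c_2c_4 n$, which is exactly your $\sum_{i\notin S}\langle\operatorname{col}_i,w\rangle^2\ge c_2 n^2$ in different notation.

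The one genuine (if minor) difference is how the spectral parameter is handled. You observe that for $i\notin S$ the coordinate $((B-sI)v_0)_i$ does not depend on $s$, so your lower bound holds simultaneously for all $s\ge 0$ and no net over $\lambda$ is required. The paper instead closes the argument by taking an $n^{-4}$-net over $\lambda\in[-4\sqrt n,4\sqrt n]$ and approximating on $\mathcal{K}$; your observation makes that step unnecessary, since the paper's own core inequality $\|(E^TF+G^TH)w^2\|\ge cn$ is already $\lambda$-free. One small remark: your appeal to Fact~\ref{fact2.43} for $\|v^1\|_2=\|v^2\|_2=1$ is superfluous, since this normalization is already built into the definition of $\operatorname{B\text{-}Comp}(\delta,\rho)\subset\mathbb{S}^{n-2}\times\mathbb{S}^{n-1}$, and Fact~\ref{fact2.43} only covers nonzero eigenvalues.
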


\begin{proof} We first fix a value $\lambda\in[-4\sqrt{n},4\sqrt{n}]$.
    Suppose that there exists $v=(v^1,v^2)\in B-\operatorname{comp}(\delta,\rho)$ with $\|(\mathcal{L}_A-\lambda I_{2n-1})v\|_2\leq cn^{-4}$. We first assume that $v^2$ is $(\delta,\rho)$-compressible for some $\delta,\rho\in(0,1)$. From the equation that $v$ solves, we have
    $$
\|A_{/[1]}v^2-\lambda v^1\|\leq cn^{-4}, \quad \|A_{/[1]}^Tv^1-\lambda v^2\|\leq cn^{-4},
    $$ so that by Triangle inequality and the upper bound on $\|A\|_{op}$, we have
    $$\|A_{/[1]}^TA_{/[1]}v^2-\lambda^2 v^2\|\leq 8cn^{-3}.$$ As $v^2$ is compressible by assumption, $v^2$ is within Euclidean distance $\rho$ to a vector $w^2$ supported on less than $\delta n$ vertices, so that on $\mathcal{K}$, by triangle inequality
    $$
\|A_{/[1]}^TA_{/[1]}w^2-\lambda^2 w^2\|\leq 32\rho n. 
    $$
    Without loss of generality we may assume that $w^2$ is supported on its last $\delta n$ coordinates (this can be achieved via multiplying by permutation  matrices to relabel rows and column indices). Decomposing $A_{/[1]}$ into a block form we can write
    $$
A_{/[1]}=\begin{bmatrix}
    E&F\\G&H
\end{bmatrix}
    $$ where $E$ has size $(n-1-\delta n)\times(n-\delta n)$. Then we have
    $$
\|(E^TF+G^TH)w^2\|\leq 32\rho n.
    $$
    As $E,F,G,H$ are mutually independent, we condition on a realization of $G$ and $H$.

    Now $F$ is a rectangular matrix of size $(n-1-\delta n)\times \delta n$. By \cite{rudelson2008littlewood}, Proposition 2.5, choosing $\delta>0$ sufficiently small we can ensure that for some fixed constants $c_2>0,c_3>0$ that do not depend on $\delta$, we have
    $$
\mathbb{P}(\sigma_{min}(F)\leq c_2\sqrt{n})\leq\exp(-c_3n),
    $$ so that (we take $\rho<\frac{1}{2}$ so $\|w^2\|\geq\frac{1}{2}$) we have $\|Fw^2\|\geq 
    c_2\sqrt{n}/2$ with probability at least $1-\exp(-c_3n)$. 
    
    Next, since $E$ has independent coordinates, we can deduce a small ball probability bound for $E^T(Fw^2)$. For any $\delta<1/2$ we can apply \cite{rudelson2008littlewood}, Corollary 2.7 to deduce that there are constants $c_4,c_5>0$ depending on $\xi\in\Gamma_B$ such that for any unit vector $v\in\mathbb{S}^{n-\delta n-2}$, 
    $$\sup_{w\in\mathbb{R}^{n-\delta n}}\mathbb{P}(\|E^Tv-w\|\leq c_4\sqrt{n})\leq \exp(-c_5n).$$ Combining the last two facts, we see that    $$
\mathbb{P}(\|(E^TF+G^TH)w^2\|\leq c_2c_4 n/2)\leq \exp(-c_3n)+\exp(-c_5n).
    $$
Then we take a $\rho/4$-net $\mathcal{N}_\rho$ of $\mathbb{R}^{\delta n}\cap B_{\delta n}(0,2)$ and deduce that
  $$
\mathbb{P}^\mathcal{K}(\inf_{w^2\in\mathbb{S}^{\delta n}}\|(E^TF+G^TH)w^2\|\leq (c_2c_4+4\rho) n/2)\leq |\mathcal{N}_\rho| (\exp(-c_3n)+\exp(-c_5n)).
    $$
Next we choose $\rho>0$ sufficiently small, in addition to the above constraints, such that $4\rho\leq c_2c_4$. By standard volumetric computation we have $|\mathcal{N}_\rho|\leq (C_6(\rho))^{\delta n}$ where we may take the constant $C_6(\rho)=3\pi\frac{4}{\rho}+1$.

Then we can derive that, for some constant $c_\rho>0$ depending only on $\rho$ and $\xi$,
\begin{equation}\label{onelocationbounds}\begin{aligned}
\mathbb{P}^\mathcal{K}&(\text{There exists } x\in\operatorname{B-Comp}(\delta,\rho)\text{ such that }\|(\mathcal{L}_A-\lambda I_{2n-1})x\|\leq c_\rho n)\leq 2e^{-cn},
    \end{aligned}\end{equation}
to check this, note that the the left hand side of \eqref{onelocationbounds} is bounded by
\begin{equation}\label{concludeproof}
\binom{n}{\lceil \delta n\rceil} (C_6(\rho))^{\delta n}e^{-\min(c_3,c_5)n}\leq\exp(4e\delta\log(1/\delta)n+\log C_6(\rho)\delta n-\min(c_3,c_5)n), 
\end{equation} where the first combinatorial factor comes from choosing $\delta n$ out of $n$, and the second factor is the cardinality of a $\frac{1}{4}\rho$ net in $\mathbb{S}^{\delta n}$ coordinates. Finally we take $\delta>0$ sufficiently small so that the right hand side of \eqref{concludeproof} is $\exp(-\Omega(n))$. 

Having checked \eqref{onelocationbounds} for each $\lambda\in[-4\sqrt{n},4\sqrt{n}]$, we can take a $n^{-4}$-net for all $\lambda\in[-4\sqrt{n},4\sqrt{n}]$ and use the operator norm bound $\|A\|_{op}\leq 4\sqrt{n}$ to extend the probability uniformly over all $\lambda\in[-4\sqrt{n},4\sqrt{n}]$. This completes the proof that $v^2$ is incompressible. A similar argument shows that $v^1$ is incompressible as well.

\end{proof}

Having checked that any nonzero eigenvector $v=(v^1,v^2)$ of $\mathcal{L}_A$ must be incompressible in both parts $v^1,v^2$, we now introduce a convenient notion that encodes which coordinates of $v$ lie in a fixed interval of $[\kappa_0n^{-1/2},\kappa_1n^{-1/2}]$.

\begin{notation}
    
Fix two parameters $\kappa_0=\rho/3$ and $\kappa_1=\rho/6+\delta^{-1/2}$ determined in Proposition \ref{proposition2.15}, and two subsets $D_1\subset [n-1]$ and $D_2\subset [n,2n-1]$. We consider the set of unit vectors in $\mathbb{S}^{n-2}\times\mathbb{S}^{n-1}$ that are flat on both $D_1$ and $D_2$:
$$\begin{aligned}
\mathcal{I}(D_1,D_2)&=\{v=(v^1,v^2)\in\mathbb{S}^{n-2}\times\mathbb{S}^{n-1}:(\kappa_0+\kappa_0/2)n^{-1/2}\leq |(v^1)_i|\leq (\kappa_1-\kappa_0/2)n^{-1/2}\\& \text{ for all }i\in D_1,(\kappa_0+\kappa_0/2)n^{-1/2}\leq |(v^2)_i|\leq (\kappa_1-\kappa_0/2)n^{-1/2} \text{ for all }i\in D_2\},\end{aligned}
$$ where $(v^1)_i,i\in[n-1]$ denotes the $i$-th coordinate of $v^1$ and  $(v^2)_i,i\in[n,2n-1]$ denotes the $i$-th coordinate of $v^2$.   
And for a given $d\in\mathbb{N}_+$ we define 
\begin{equation}\label{definitionofid}
\mathcal{I}=\mathcal{I}_d:\cup_{D_1\subset[n-1],D_2\subset[n,2n-1]:|D_1|=|D_2|=d}\mathcal{I}(D_1,D_2),\end{equation}
 that is we consider all subsets $D_1,D_2$ both of which has cardinality $d$.
\end{notation}

We will be fixing the value of $d$ at a later stage. We always choose $d$ to be a multiple of $n$, but we will end up be choosing $d/n$ very small so as to apply Theorem \ref{mainlittlewoodofford}. The next result, which is a reformulation of Proposition \ref{proposition2.15}, shows that we can take any $d\leq\rho^2\delta n/2$:
\begin{lemma}\label{incompressibilityinvert}
    Let $\rho,\delta$ be fixed as in Proposition \ref{proposition2.15}. Then whenever $d\leq\rho^2\delta n/2$,
        $$\begin{aligned}
\mathbb{P}^\mathcal{K}&(\text{There exists } \lambda\in[-4\sqrt{n},4\sqrt{n}]\text{ and }x\in\mathcal{I} \text{ such that }(\mathcal{L}_A-\lambda I_{2n-1})x=0)\leq 2e^{-cn}.
    \end{aligned}$$
\end{lemma}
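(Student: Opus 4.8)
The plan is to deduce the lemma from Proposition \ref{proposition2.15} by a single deterministic set inclusion, introducing no new probabilistic estimate. The content of the statement is that, for every $d\le\rho^2\delta n/2$, with $\mathbb{P}^{\mathcal{K}}$-probability at least $1-2e^{-cn}$ every eigenvector $x=(v^1,v^2)\in\mathbb{S}^{n-2}\times\mathbb{S}^{n-1}$ of $\mathcal{L}_A-\lambda I_{2n-1}$, for any $\lambda\in[-4\sqrt n,4\sqrt n]$, lies in $\mathcal{I}=\mathcal{I}_d$; equivalently, the event that some such eigenvector lies \emph{outside} $\mathcal{I}_d$ has $\mathbb{P}^{\mathcal{K}}$-measure at most $2e^{-cn}$. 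Thus it suffices to prove the deterministic inclusion
$$
\bigl(\mathbb{S}^{n-2}\times\mathbb{S}^{n-1}\bigr)\setminus\mathcal{I}_d\ \subseteq\ \operatorname{B-Comp}(\delta,\rho)\qquad\text{whenever }d\le\rho^2\delta n/2,
$$
since then any pair $(\lambda,x)$ witnessing the event of the lemma also witnesses the event of Proposition \ref{proposition2.15} (whose range of $\lambda$ is identical), so the bound follows with the same $\rho,\delta$ and a suitable $c>0$ depending only on $\xi$.

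To prove the inclusion I would first observe that the \emph{inner} window in the definition of $\mathcal{I}(D_1,D_2)$ is precisely the window of Lemma \ref{incpmpspreads}: with the values $\kappa_0=\rho/3$ and $\kappa_1=\rho/6+\delta^{-1/2}$ from the Notation preceding \eqref{definitionofid}, one has $(\kappa_0+\kappa_0/2)n^{-1/2}=(\rho/2)n^{-1/2}$ and $(\kappa_1-\kappa_0/2)n^{-1/2}=\delta^{-1/2}n^{-1/2}$, so membership $v=(v^1,v^2)\in\mathcal{I}(D_1,D_2)$ says exactly that $|(v^1)_i|$ for $i\in D_1$ and $|(v^2)_i|$ for $i\in D_2$ all lie in $[(\rho/2)n^{-1/2},\delta^{-1/2}n^{-1/2}]$. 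Now suppose $v=(v^1,v^2)\in\mathbb{S}^{n-2}\times\mathbb{S}^{n-1}$ does \emph{not} lie in $\mathcal{I}_d$. By \eqref{definitionofid} one then cannot select $d$ coordinates of $v^1$ together with $d$ coordinates of $v^2$ all lying in this window, so at least one of $v^1,v^2$, say $v^1$, has strictly fewer than $d\le\rho^2\delta n/2$ coordinates of modulus in $[(\rho/2)n^{-1/2},\delta^{-1/2}n^{-1/2}]$. By the contrapositive of Lemma \ref{incpmpspreads} such a unit vector cannot be $(\delta,\rho)$-incompressible, hence $v^1\in\operatorname{Comp}_{n-1}(\delta,\rho)$, and therefore $v\in\operatorname{B-Comp}(\delta,\rho)$. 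This gives the inclusion, and combining it with Proposition \ref{proposition2.15} finishes the proof.

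I expect the only point that requires care to be the bookkeeping that the inner window in the definition of $\mathcal{I}(D_1,D_2)$ coincides exactly with the window of Lemma \ref{incpmpspreads}: this is what is forced by the particular values of $\kappa_0$ and $\kappa_1$, and it is what converts ``too few coordinates in the window'' into ``$(\delta,\rho)$-compressible''. Beyond this there is no genuine obstacle, since the statement carries no probabilistic content not already present in Proposition \ref{proposition2.15}, and the uniformity over $\lambda\in[-4\sqrt n,4\sqrt n]$ is inherited verbatim from there.
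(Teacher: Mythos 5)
Your proof is correct and is essentially the paper's own argument: the paper deduces the lemma in one line from Proposition \ref{proposition2.15} together with Lemma \ref{incpmpspreads}, i.e.\ any vector incompressible in both components must lie in $\mathcal{I}_d$ once $d\le\rho^2\delta n/2$, and your verification that the window $[(\kappa_0+\kappa_0/2)n^{-1/2},(\kappa_1-\kappa_0/2)n^{-1/2}]$ equals $[(\rho/2)n^{-1/2},\delta^{-1/2}n^{-1/2}]$ simply makes that reduction explicit. Your reading of the bad event as an eigenvector lying \emph{outside} $\mathcal{I}_d$ is also the intended one (the statement as printed has the set inverted), consistent with how the lemma is invoked in Step 1 of the proof of Proposition \ref{prop2.40final}.
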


\begin{proof}
    This follows from Proposition \ref{proposition2.15} combined with Lemma \ref{incpmpspreads}, so that any B-incompressible vector should belong to $\mathcal{I}_d$.
\end{proof}

We can also outline here the proof of Lemma \ref{originals}.
\begin{proof}[\proofname\ of Lemma \ref{originals}] To check both $v^1$ and $v^2$ are incompressible with high probability, we simply run the same proof of Proposition \ref{proposition2.15} to the matrix $L_A$. The fact that $L_A$ has no zero eigenvalue with high probability follows from the fact that $\mathbb{P}(\det A=0)=e^{-\Omega(n)}$, which is the main result of \cite{rudelson2008littlewood}.
    
\end{proof}

\subsection{A Littlewood-Offord theorem and Fourier replacement}

To eliminate incompressible vectors with rigid arithmetic structure, we will use an upgraded version of inverse Littlewood-Offord theorem introduced in \cite{campos2025singularity}, Theorem 6.1 in the Bernoulli case and then in \cite{campos2024least}, Theorem VII.1 for the general subgaussian case. We use the notation $X\sim\Phi_\nu(d;\xi)$ for a $d$-dimensional vector with i.i.d. coordinates distributed as $\xi_\nu$. In the following the constant $B>0$ is chosen such that $\xi\in\Gamma_B$. Here for a $d$-dimensional vector $v$ we define its LCD via (we replace $\sqrt{\alpha n}$ in \eqref{essentiallcd} by $\sqrt{\alpha d}$ in the following)
\begin{equation}\label{essentiallcd2d}
    D_{\alpha,\gamma}(v):=\inf\{\theta>0:\|\theta\cdot v\|_\mathbb{Z}\leq \min(\sqrt{\alpha d},\gamma\|\theta\cdot v\|_2)\}.\end{equation}
\begin{theorem}[\cite{campos2024least}]\label{mainlittlewoodofford}
    Fix $n\in\mathbb{N}$, $0<c_0\leq 2^{-50}B^{-4}$, $d\leq c_0^2n$ and fix $\alpha,\gamma\in(0,1)$. Consider any $0\leq k\leq 2^{-32}B^{-4}\alpha d$ and $N\leq\exp(2^{-32}B^{-4}\alpha d)$. Assume $X\in \mathbb{R}^d$ satisfies $\|X\|_2\geq c_0 2^{-10}\gamma^{-1}n^{1/2}N$, and consider $H$ a $(n-d)\times 2d$ random matrix with i.i.d. rows distributed as $\Phi_\nu(2d;\xi)$ and $\nu=2^{-15}$. Then if $D_{\alpha,\gamma}(r_n\cdot X)\geq 2^{10}B^2$ then
    \begin{equation}
\mathbb{P}_H(\sigma_{2d-k+1}(H)\leq c_02^{-4}\sqrt{n}, \|H_1X\|_2\leq n,\|H_2X\|_2\leq n)\leq e^{-c_0nk/3}(\frac{R}{N})^{2n-2d},        
    \end{equation}
    where we take $H_1:=H_{[n-d]\times[d]}$, $H_2:=H_{[n-d]\times[d+1,2d]}$ and $r_n:=\frac{c_0}{32\sqrt{n}}$, $R:=2^{43}B^2c_0^{-3}$.
$\sigma_1(H)\geq\sigma_2(H)\geq\cdots\geq\sigma_{2d}(H)$ are singular values of $H$ arranged in decreasing order.
\end{theorem}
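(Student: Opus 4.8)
Since the statement is \cite{campos2024least}, Theorem VII.1, the task is to recall the plan of that proof; it carries over to the present two--block matrix $H$ because $H_1$ and $H_2$ are independent (the rows of $H$ having i.i.d.\ coordinates), so every step below has a two--block analogue. The scheme fuses the inverse Littlewood--Offord argument---run through the characteristic function of the lazy variable $\xi_\nu$---with a net over low--dimensional subspaces that absorbs the singular--value condition.

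\emph{Base case $k=0$.} Here one only needs $\mathbb{P}(\|H_1X\|_2\le n)\le(R/N)^{n-d}$, since $H_2$ is symmetric and independent. Write $\|H_1X\|_2^2=\sum_{i=1}^{n-d}\langle r_1^{(i)},X\rangle^2$, $r_1^{(i)}$ the $i$-th row of $H_1$. The input is the per--row bound $\sup_u\mathbb{P}(|\langle r_1^{(1)},X\rangle-u|\le t)\lesssim 1/D_{\alpha,\gamma}(X/\|X\|_2)+t/\|X\|_2+e^{-c\alpha d}$ for $t\gtrsim\sqrt n$, obtained from inverse Littlewood--Offord (Theorem \ref{theorem6.8}, in the form for $\xi_\nu$). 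By homogeneity of the essential LCD together with the hypotheses $D_{\alpha,\gamma}(r_nX)\ge 2^{10}B^2$ and $\|X\|_2\ge c_0 2^{-10}\gamma^{-1}\sqrt n N$, one has $D_{\alpha,\gamma}(X/\|X\|_2)=\|X\|_2 r_n D_{\alpha,\gamma}(r_nX)\gtrsim c_0^2 B^2 N$, and with $N,k\le\exp(2^{-32}B^{-4}\alpha d)$, $d\le c_0^2 n$ the right side is $\lesssim 1/(c_0^2N)$ at scale $t\asymp\sqrt n$, the error term being negligible. Feeding this into the tensorization Lemma \ref{Tensorization} at level $\epsilon=n/\sqrt{n-d}\asymp\sqrt n$ gives $\mathbb{P}(\|H_1X\|_2\le n)\lesssim(C/(c_0^2N))^{n-d}\le(R/N)^{n-d}$, the $c_0^{-2}$ being absorbed into $R=2^{43}B^2c_0^{-3}$; multiplying over the two blocks gives the claim.

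\emph{Case $k\ge 1$.} On $\{\sigma_{2d-k+1}(H)\le c_0 2^{-4}\sqrt n\}$ there is a $k$-plane $W\subset\mathbb{R}^{2d}$ with $\sum_i\|P_Wr^{(i)}\|_2^2\le k(c_0 2^{-4}\sqrt n)^2$ ($P_W$ the orthogonal projection), while $\|H_1X\|_2,\|H_2X\|_2\le n$ control $H$ along $\tilde X_1=(X,0)$ and $\tilde X_2=(0,X)$. Fix an $\asymp c_0$-net $\mathcal M$ of the Grassmannian of $k$-planes in $\mathbb{R}^{2d}$, $|\mathcal M|\le(C/c_0)^{2dk}$; since $\|H\|_{op}=O(\sqrt n)$ with exponentially high probability it suffices, for each $W\in\mathcal M$, to bound $\mathbb{P}\bigl(\sum_i\|P_Wr^{(i)}\|_2^2\le 4k(c_0 2^{-4}\sqrt n)^2,\ \|H_1X\|_2\le n,\ \|H_2X\|_2\le n\bigr)$. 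Revealing $H_1$ first gives a factor $(R/N)^{n-d}$ from $\|H_1X\|_2\le n$ as above; revealing $H_2$ next gives an independent factor $(R/N)^{n-d}$ from $\|H_2X\|_2\le n$ and, from the freshly revealed rows $r_2^{(i)}$, the subspace constraint, whose row--wise small ball is $\lesssim(Cc_0)^k$ (the $k$-dimensional projection being forced a factor $\asymp c_0$ below its typical length). Tensorizing over the $n-d$ rows gives, per $W$, the bound $(Cc_0)^{(n-d)k}(R/N)^{2(n-d)}$; summing over $\mathcal M$ and using $d\le c_0^2 n$, so that $|\mathcal M|\le\exp(2c_0^2 nk\log(C/c_0))$ is negligible against $(Cc_0)^{(n-d)k}$ once $c_0$ is small, yields $e^{-c_0 nk/3}(R/N)^{2n-2d}$.

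\emph{Main obstacle.} The heart of the argument, where almost all the work lies, is the joint treatment of the subspace constraint with the Littlewood--Offord constraints: $W$ is arbitrary and may align with coordinate directions or with $\tilde X_1,\tilde X_2$, breaking the naive product over the three events. This forces a stratification of $W$ by its overlap with the two coordinate halves (dictating which directions count as ``fresh'' at each revealing stage) together with an incompressible/compressible dichotomy. Spread $W$ is handled through the explicit bound $|\phi_{\xi_\nu}(t)|\le\exp(-\nu p\,\mathbb{E}_{\bar\xi}\|t\bar\xi\|_{\mathbb T}^2)$---this is the ``Fourier replacement'', the point of passing from $\xi$ to $\xi_\nu$---while $W$ close to a coordinate $k$-subspace is handled by a Chernoff bound on column sparsity: $\|P_Wr\|_2$ small forces the corresponding columns of $H$ to be sparse, and since each entry of $H$ vanishes with probability $1-\nu p$ but has modulus $\ge 1$ otherwise, with $\nu p\ge 2^{-22}B^{-4}\gg c_0^2$ by Lemma \ref{lemma2.11}, this has probability $e^{-cnk}$, more than enough. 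Keeping the stratification, the Grassmannian net, and the Littlewood--Offord error term simultaneously consistent with the calibrated thresholds on $k$, $N$ and $d/n$ is the delicate bookkeeping, carried out in \cite{campos2024least}.
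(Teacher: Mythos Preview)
The paper provides no proof of this statement; it is quoted from \cite{campos2024least}, Theorem~VII.1 (Bernoulli prototype in \cite{campos2025singularity}, Theorem~6.1) and used as a black box throughout Section~\ref{section222}. You correctly identify this, so in that narrow sense your proposal matches the paper.

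Your $k=0$ base case---row-wise Littlewood--Offord via Theorem~\ref{theorem6.8} followed by tensorization, squared over the two independent blocks---is right. For $k\ge 1$, however, the proof in the cited references does not run through a Grassmannian net over $k$-planes $W$. It is a direct Fourier-analytic argument: Esseen's inequality converts the joint probability into an integral of a product of characteristic functions $\prod_j \phi_{\xi_\nu}$, and the small-singular-value event is absorbed by analyzing the geometry of sublevel sets of this integrand (the sets where many factors are simultaneously close to $1$). The coupling you correctly flag as the main obstacle---that the subspace event and the two $\|H_iX\|_2\le n$ events share the same randomness---is resolved on the Fourier side, not by stratifying physical subspaces $W$ according to their overlap with the coordinate halves. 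Your net-based outline is a plausible alternative route, but as written the step ``revealing $H_1$ first gives $(R/N)^{n-d}$, revealing $H_2$ next gives an independent $(R/N)^{n-d}$ plus the subspace constraint $(Cc_0)^{(n-d)k}$'' does not hold: when $W$ straddles both halves, the subspace constraint on the $H_1$-rows is already partially spent once $H_1$ is revealed, and the remaining constraint on $H_2$ need not have full rank $k$. Making this rigorous would require substantially more than the stratification you gesture at, and would constitute a different proof from the one in \cite{campos2024least}.
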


As we consider a set of vectors in $\mathcal{I}(D_1,D_2)$, we shall make the following notations
\begin{notation} We shall always remove the element $2n-1$ from $D_2$ whenever $2n-1\in D_2$.

Then we denote by $D_2-n+1:=\{x-n+1:x\in D_2\}$, and we set
\begin{equation}
D=D(D_1,D_2):=D_1\cup (D_2-n+1)\subseteq[n-1].
\end{equation} 
Then we have $d\leq |D|\leq 2d$ as we assume that $|D_1|=|D_2|=d$.
\end{notation}

We will treat differently the coordinates in $D$ and $[n]\setminus D$. To exploit independence, we take the idea of zero out matrices in \cite{campos2025singularity} by replacing $A_{/[1]}$ by maintaining its elements $a_{ij}$ with $i\in D,j\notin D$ and $i\notin D, j\in D$ but setting all other $a_{ij}$ to be identically zero. 

As we shall finally take the union bound over $D_1$ and $D_2$, and hence the union bound over $D$, we now assume without loss of generality that $D=\{1,2,\cdots,|D|\}$ where $|D|$ is the cardinality of $D$. The zeroed out matrix corresponding to $\mathcal{L}_A$ is then defined as follows: 
\begin{notation}\label{notation538} For a given constant $|D|\leq n-1$ we define the zeroed-out version of $\mathcal{L}_A$ as the following random matrix

$$    M=\begin{bmatrix}
0&\begin{bmatrix}0&H_2^T\\H_1&0\end{bmatrix}\\\begin{bmatrix}0&H_1^T\\H_2&0\end{bmatrix}&0
    \end{bmatrix}$$
    where $H_1$ is an $(n-1-|D|)\times |D|$ random matrix with i.i.d. entries of distribution $\tilde{\xi}Z_\nu$, and $H_2$ is an $(n-|D|)\times |D|$ random matrix with i.i.d. entries having distribution $\tilde{\xi}Z_\nu$. $H_1$ and $H_2$ are independent.
\end{notation}

Recall that for a random variable $Y\in\mathbb{R}^d$, we define its anti-concentration function via 
\begin{equation}\label{anticoncentrationlevy}
    \mathcal{L}(Y,\epsilon):=\sup_{w\in\mathbb{R}^d} \mathbb{P}(\|Y-w\|_2\leq\epsilon),
\end{equation}
and we introduce a notation relative to the matrix $\mathcal{L}_A$: for any $\epsilon>0$ and $v\in\mathbb{R}^{2n-1}$,

\begin{equation}
    \mathcal{L}_{\mathcal{L}_A,op}(v,\epsilon\sqrt{n}):=\sup_{w\in\mathbb{R}^{2n-1}}\mathbb{P}^\mathcal{K}(\|\mathcal{L}_Av-w\|_2\leq \epsilon\sqrt{n}).
\end{equation}

We will not be working with the matrix $\mathcal{L}_A$ for the moment, but rather work with $M$ first and construct a net from small ball probability of $M$. 

For a sufficiently large $L>0$ we define the threshold value relative to $M$ as follows:

\begin{Definition}
For a fixed constant $L>2$ and a vector $v=(v^1,v^2)\in\mathbb{S}^{n-2}\times\mathbb{S}^{n-1}$, the threshold $\tau_L(v)$ of $v$ relative to $M$ is defined via
\begin{equation}\label{tauells}
\tau_L(v):=\sup\{t\in[0,1]:\mathbb{P}(\|Mv\|_2\leq t\sqrt{n})\geq (4Lt)^{2n-1}\}.
\end{equation}
\end{Definition}

Next we show that the anticoncentration property proven for $M$ can be transferred to anticoncentration of  $\mathcal{L}_A$. The proof relies on Fourier replacement, see \cite{campos2025singularity}, Lemma 8.1.

\begin{lemma}\label{lemmareplacement}
    Assume that $|D|\leq 2^{-8}n$. Then for any $v\in\mathbb{S}^{n-2}\times\mathbb{S}^{n-1}$ and $t\geq\tau_L(v)$, 
    $$
\mathcal{L}(\mathcal{L}_Av,t\sqrt{n})\leq (50Lt)^{2n-1}.
    $$
\end{lemma}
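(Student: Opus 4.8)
The plan is to compare the characteristic function of $\mathcal{L}_A v$ with that of $Mv$ via a Fourier-replacement argument, in the spirit of \cite{campos2025singularity}, Lemma 8.1. Write $\mathcal{L}_A = M + M'$, where $M'$ collects all the entries of $\mathcal{L}_A$ that were zeroed out in passing to $M$ (namely the entries $a_{ij}$ with $i,j$ both in $D$ or both outside $D$, placed appropriately in the block structure), plus the replacement of $\xi$ by the lazier variable $\tilde\xi Z_\nu$ in the retained entries. The anticoncentration function $\mathcal{L}(\mathcal{L}_A v, t\sqrt n)$ is controlled, via the standard Esseen-type inequality, by an integral of $|\mathbb{E}\, e^{2\pi i \langle \theta, \mathcal{L}_A v\rangle}|$ over a ball of radius $\sim (t\sqrt n)^{-1}$ in $\mathbb{R}^{2n-1}$. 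The key point is that for each fixed $\theta$, the characteristic function factorizes over the independent entries, and the comparison $\phi_{\tilde\xi Z_\nu}(s) \le \phi_{\xi_\nu}(s) \le 1$ together with $|\mathbb{E}\, e^{2\pi i s \xi}| \le 1$ (for the real entries of $A$) lets us bound the contribution of each coordinate of $\mathcal{L}_A v$ by the corresponding contribution in $Mv$ — i.e., $|\phi_{\mathcal{L}_A v}(\theta)| \le |\phi_{Mv}(\theta')|$ for a suitable $\theta'$ obtained from $\theta$ by discarding the coordinates that do not appear in $M$.

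Concretely, I would argue as follows. First, reduce to characteristic functions: for any random vector $Y \in \mathbb{R}^m$ and any $t>0$,
\begin{equation}
\mathcal{L}(Y, t\sqrt n) \le C^m (t\sqrt n)^{-m} \int_{\|\theta\|_\infty \le (t\sqrt n)^{-1}} |\mathbb{E}\, e^{2\pi i \langle \theta, Y\rangle}|\, d\theta,
\end{equation}
with $m = 2n-1$. Second, expand $\langle \theta, \mathcal{L}_A v\rangle$ as a sum of independent terms, one per matrix entry; grouping the entries that survive in $M$, the modulus of the characteristic function of $\mathcal{L}_A v$ is bounded by that of $M v$ evaluated at the same frequencies (using that the extra entries contribute factors of modulus $\le 1$, and that retained $\xi$-entries can be replaced by $\tilde\xi Z_\nu$ via $|\phi_\xi| \le 1$ and the displayed inequality $\phi_{\tilde\xi Z_\nu} \le \phi_{\xi_\nu}$, after symmetrization — here the assumption $|D| \le 2^{-8}n$ guarantees enough retained rows/columns so the dimension count works out). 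Third, this pointwise bound integrates to $\mathcal{L}(\mathcal{L}_A v, t\sqrt n) \le C_1^{2n-1}\, \mathbb{P}(\|Mv\|_2 \le C_2 t\sqrt n)$ for absolute constants $C_1, C_2$. Fourth, invoke the definition of $\tau_L(v)$: for $t \ge \tau_L(v)$ we have $\mathbb{P}(\|Mv\|_2 \le t'\sqrt n) \le (4Lt')^{2n-1}$ for all $t' \ge t$ (since $\tau_L$ is defined as a supremum and the function $t' \mapsto \mathbb{P}(\|Mv\|_2 \le t'\sqrt n)(4Lt')^{-(2n-1)}$ being $<1$ propagates upward, after a routine monotonicity check). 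Choosing $t' = C_2 t$ and tracking constants, $\mathcal{L}(\mathcal{L}_A v, t\sqrt n) \le C_1^{2n-1}(4 L C_2 t)^{2n-1} \le (50 L t)^{2n-1}$ once $L$ is large enough relative to the absolute constants $C_1, C_2$.

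The step I expect to be the main obstacle is the second one: making the Fourier-comparison bookkeeping precise. One has to check that every entry of $\mathcal{L}_A$ not present in $M$ really does contribute an independent factor of modulus at most $1$ to $|\phi_{\mathcal{L}_A v}(\theta)|$ without disturbing the factors that do correspond to $M$; this requires carefully matching the block/transpose structure of $\mathcal{L}_A$ against that of $M$, accounting for the fact that the same randomness appears in $H_1$ and $H_1^T$ (and $H_2$, $H_2^T$), and verifying that the frequency vector $\theta'$ fed into $\phi_{Mv}$ has $\|\theta'\|_\infty \le \|\theta\|_\infty$ so that the integration domain only shrinks. The symmetrization needed to pass from $\xi$ to $\tilde\xi = \xi - \xi'$ (and then to $\tilde\xi Z_\nu$) costs a factor of $2$ per entry in the characteristic function, i.e.\ $|\phi_\xi(s)|^2 = \phi_{\tilde\xi}(s) \le \mathbb{E}\,e^{2\pi i s \tilde\xi Z_\nu}/\nu$-type manipulations; absorbing these losses into the constant (keeping it below $50/4$ after multiplying by $L$) is the quantitatively delicate part. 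Everything else — the Esseen inequality, the monotonicity of the $\tau_L$-defining ratio, and the final constant chase — is routine.
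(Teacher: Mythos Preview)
Your overall strategy---bound the characteristic function of $\mathcal{L}_A v$ pointwise by that of $Mv$, then invert---is exactly right, and matches the paper. But your step~3 has a real gap. You claim the pointwise bound ``integrates to $\mathcal{L}(\mathcal{L}_A v, t\sqrt n) \le C_1^{2n-1}\, \mathbb{P}(\|Mv\|_2 \le C_2 t\sqrt n)$.'' This does not follow from the box-form Esseen inequality you wrote: Esseen goes from probability to integral, not back. The integral $\int_{\|\theta\|_\infty \le R} \chi_v(\theta)\, d\theta$ is not controlled by any single small-ball probability for $Mv$; it unpacks as $\mathbb{E}_M \prod_j \mathrm{sinc}(2R(Mv)_j)$, which has no clean bound in terms of $\mathbb{P}(\|Mv\|_2 \le \cdot)$.

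The paper's fix is to replace the box weight by a Gaussian weight, for which Fourier inversion is an identity in both directions. Concretely: Markov gives
\[
\mathbb{P}(\|\mathcal{L}_A v - w\|_2 \le t\sqrt{n}) \le e^{\pi n/2}\, \mathbb{E}\, e^{-\pi \|\mathcal{L}_A v - w\|_2^2/(2t^2)},
\]
and writing the Gaussian as a Fourier integral turns the right side into $\int e^{-\pi\|\xi\|^2} e^{-2\pi i t^{-1}\langle w,\xi\rangle} \psi_v(\xi/t)\, d\xi$. Now the pointwise bound $|\psi_v| \le \chi_v$ (with $\chi_v \ge 0$) and the \emph{same} Gaussian Fourier identity in reverse give $\le \mathbb{E}\, e^{-\pi\|Mv\|^2/(2t^2)}$. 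This exponential moment---not a single probability---is then bounded by a layer-cake integration over $s \ge t$ using $\mathbb{P}(\|Mv\|_2 \le s\sqrt{n}) \le (4Ls)^{2n-1}$ for all $s > \tau_L(v)$, yielding $(9Lt)^{2n-1}$; combined with the $e^{\pi n/2}$ prefactor this gives $(50Lt)^{2n-1}$.

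Two smaller points. First, your worry about symmetrization costing a factor of $2$ per entry is unfounded: the inequality $|\phi_\xi(s)| \le \phi_{\tilde\xi Z_\nu}(s)$ (for $\nu \le 1/4$) is a direct one-line lemma in \cite{campos2024least}, with no constant loss. Second, there is no need for a separate $\theta'$: the same frequency $\theta$ is fed into both $\psi_v$ and $\chi_v$, since $M$ is obtained by zeroing entries and replacing $\xi$ by $\tilde\xi Z_\nu$ in the survivors; the zeroed entries contribute factors $|\phi_\xi(\cdot)| \le 1$, and the survivors are handled by the inequality above.
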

The proof of this lemma is deferred to Section \ref{prooflittlewoodofford}.

\subsection{Random generation of a net}

Next we will use a randomized algorithm to generate nets for the matrix $M$. The method is inspired from \cite{campos2025singularity}. The idea of inversion of randomness used here, and in particular the notion of $(N,\kappa,d)$ boxes, originated from \cite{tikhomirov2020singularity}.

As in our case the vector $v=(v^1,v^2)$ satisfy that both components are incompressible, we will more precisely need a $(N,\kappa,D_1,D_2)$ box depending on the two subsets $D_1,D_2$.

\begin{Definition}\label{definitionbox} Fix a constant $\kappa\geq 2$ and two subsets $D_1\subset[n-1]$ and $D_2\subset[n,2n-1]$.
    An $(N,\kappa,D_1,D_2)$ box is defined as a product set of the form $$\mathcal{B}=B_1\times B_2\times\cdots\times B_{2n-1}\subset\mathbb{Z}^{2n-1},$$ where we have
    \begin{enumerate}
    \item $|B_i|\geq N$ for each $i$, \item $B_i=[-\kappa N,-N]\cup[N,\kappa N]$ for $i\in D_1\cup D_2$, and \item $|\mathcal{B}|\leq(\kappa N)^{2n-1}$.\end{enumerate} 
\end{Definition}
 Here we make no assumption as to whether $D_1$ and $D_2-(n-1)$ coincide or even overlap.

Then we show that a random vector sampled from $\mathcal{B}$ has a large essential LCD, with super-exponential probability. We will use the following lemma from \cite{campos2025singularity}, Lemma 7.4.
\begin{lemma}\label{lemma2.7uniformdistribution}
    Fix $\alpha\in(0,1),K\geq 1,\kappa\geq 2$. Consider $n\geq d\geq K^2\alpha$, $N\geq 2$ with $KN<2^d$. Then consider $\mathcal{B}=([-\kappa N,-N]\cup[N,\kappa N])^d$ and consider $X$ chosen uniformly over $\mathcal{B}$. Then
 \begin{equation}
     \mathbb{P}_X(D_\alpha(r_n\cdot X)\leq K)\leq (2^{20}\alpha)^{d/4},
 \end{equation} with the choice $r_n:=c_02^{-4}n^{-1/2}$. Here  we write $D_\alpha(X):=D_{\alpha,\frac{1}{2}}(X).$
\end{lemma}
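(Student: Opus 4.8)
The plan is to run the ``inversion of randomness over a net of dilations'' argument of \cite{tikhomirov2020singularity,campos2025singularity}: bound $\mathbb{P}_X(D_\alpha(r_nX)\le K)$ by a union bound over a finite net $\mathcal{T}$ of candidate dilations $\theta\in(0,K]$, where for each $\theta$ one controls $\mathbb{P}_X(\operatorname{dist}(\theta r_nX,\mathbb{Z}^d)\le 2\sqrt{\alpha d})$. First I would use the \emph{second} defining inequality of $D_{\alpha,1/2}$ to truncate the range of $\theta$: since every $|X_i|\ge N$, if $\theta r_n<(2\kappa N)^{-1}$ then every coordinate of $\theta r_nX$ lies in $(-\tfrac12,\tfrac12)$, so $\|\theta r_nX\|_{\mathbb{Z}}=\|\theta r_nX\|_2$ and the inequality $\|\theta r_nX\|_{\mathbb{Z}}\le\tfrac12\|\theta r_nX\|_2$ fails (as $X\ne 0$); hence only $\theta\in[\theta_0,K]$ with $\theta_0:=(2\kappa N r_n)^{-1}$ matters. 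Since $\theta\mapsto\operatorname{dist}(\theta r_nX,\mathbb{Z}^d)$ is $(r_n\|X\|_2)$-Lipschitz with $\|X\|_2\le\kappa N\sqrt d$ on $\mathcal{B}$, a net $\mathcal{T}\subset[\theta_0,K]$ of spacing $\sqrt\alpha/(\kappa N r_n)$ does the job, and $|\mathcal{T}|\le Kr_n\kappa N/\sqrt\alpha+1<\kappa c_0 2^{-4}\,2^d/\sqrt{\alpha n}+1$ by the hypothesis $KN<2^d$, i.e.\ $|\mathcal{T}|=e^{O(d)}$.

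For each fixed $\theta\in\mathcal{T}$ I would bound $\mathbb{P}_X\!\left(\sum_{i=1}^d\|\theta r_nX_i\|_{\mathbb{T}}^2\le 4\alpha d\right)$ via the tensorization lemma (Lemma~\ref{Tensorization}) applied to the i.i.d.\ nonnegative variables $\|\theta r_nX_i\|_{\mathbb{T}}$. In the generic case the one-dimensional input is the Littlewood--Offord-type bound $\mathbb{P}(\|\theta r_nX_1\|_{\mathbb{T}}\le t)\le C_\kappa(t+1/N)$ for $t\le\tfrac12$, proved by counting the integers of $[N,\kappa N]$ (resp.\ $[-\kappa N,-N]$) in the $O(t/(\theta r_n))$-length windows where $\|\theta r_nx\|_{\mathbb{T}}\le t$; the choice of support as a union of two intervals, each of length $\asymp N$ hence with $\asymp N$ integers, is exactly what makes this estimate good. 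Tensorization then gives $\mathbb{P}_X(\cdots)\le(C\sqrt\alpha)^d$ provided $\sqrt\alpha\gtrsim 1/N$. When instead $\theta r_n$ lies near a rational $p/q$ one writes $\|\theta r_nX_i\|_{\mathbb{T}}\approx\tfrac1q\operatorname{dist}(pX_i,q\mathbb{Z})$: if $q\ll N$ then $pX_i\bmod q$ is nearly equidistributed, so $\operatorname{dist}(pX_i,q\mathbb{Z})^2$ has mean $\asymp q^2$ and a Cram\'er estimate again yields $\mathbb{P}_X(\cdots)\le(C\sqrt\alpha)^d$; if $q\gtrsim N$ then $\|\theta r_nX_i\|_{\mathbb{T}}$ is bounded below, uniformly in $X_i$, by a quantity $\gg\sqrt\alpha$ in the relevant parameter regime, so the probability simply vanishes.

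Multiplying, $\mathbb{P}_X(D_\alpha(r_nX)\le K)\le|\mathcal{T}|\cdot\max_{\theta\in\mathcal{T}}\mathbb{P}_X(\cdots)\lesssim 2^d(C\sqrt\alpha)^d/\sqrt{\alpha n}$. Since the target $(2^{20}\alpha)^{d/4}$ is only nontrivial for $\alpha<2^{-20}$, and for such $\alpha$ the factor $2^{20\cdot d/4}=2^{5d}$ on the right comfortably absorbs the net cardinality $\sim 2^d$ while $\alpha^{d/4}$ dominates $(C\sqrt\alpha)^d\alpha^{-1/2}$ once the numerical constants are fixed, the estimate closes. \textbf{The main obstacle} is exactly this balancing: because the only size hypothesis is $KN<2^d$, the net $\mathcal{T}$ genuinely has $\sim 2^d$ points, so there is essentially no slack, and one must extract the full strength of \emph{both} defining inequalities of the essential LCD — the $\gamma=\tfrac12$ one to shrink the dilation range to $[\theta_0,K]$, and tensorization to make each per-net-point probability as small as $(C\sqrt\alpha)^d$ — together with a careful case split of the one-dimensional anticoncentration according to the arithmetic of $\theta r_n$ (generic versus near a small-denominator rational) and the sizes of $N$ and $\alpha^{-1/2}$ relative to $r_n^{-1}$. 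Each individual case is elementary; keeping the constants consistent across all of them is the real technical content.
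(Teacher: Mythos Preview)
The paper does not prove this lemma; it is quoted verbatim from \cite{campos2025singularity}, Lemma~7.4. Your overall architecture --- truncate $\theta$ from below using the $\gamma=\tfrac12$ clause, take a net of dilations, then control each net point coordinatewise --- is indeed the one used there (and is the one the paper itself carries out for the two-dimensional analogue, Lemma~\ref{randmgenerationoflcd}, in Section~\ref{appendix4a}).

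There is, however, a concrete error in your one-dimensional input. The window-counting you describe does \emph{not} produce $\mathbb{P}(\|\theta r_nX_1\|_{\mathbb T}\le t)\le C_\kappa(t+1/N)$. With $\psi:=\theta r_n$ there are at most $O(\psi\kappa N)$ relevant integers $p$, and each window $[(p-t)/\psi,(p+t)/\psi]$ contains at most $2t/\psi+1$ integers of $[N,\kappa N]$; dividing by $(\kappa-1)N$ gives
\[
\mathbb{P}(\|\psi X_1\|_{\mathbb T}\le t)\;\le\;C_\kappa\,(t+\psi),
\]
not $C_\kappa(t+1/N)$. The additive term is $\psi=\theta r_n$, which over your net ranges up to $K r_n$ and is in general much larger than $1/N$. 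This is exactly the bound the paper uses in \eqref{propbounds2nd}. Consequently, your rational/irrational case split is both unnecessary and, in the ``near $p/q$ with small $q$'' branch, incorrect: when $\psi$ is close to $p/q$ with $q$ small, the per-coordinate probability is $\asymp 1/q$, so no Cram\'er-type estimate can force it down to $C\sqrt\alpha$.

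The actual mechanism that makes the argument close is simpler. One uses the uniform counting bound $C_\kappa(t+\psi)$ and absorbs the $\psi$ term via the \emph{upper} bound on $\psi$: in the source (and in the paper's proof of Lemma~\ref{randmgenerationoflcd}) the LCD condition caps $\psi$ so that $\sqrt\alpha/\psi\gtrsim 1$, whence $t+\psi\lesssim\sqrt\alpha$. Then a pigeonhole on $d/2$ coordinates (rather than full tensorization) gives $(C\sqrt\alpha)^{d/2}$, union over $\binom{d}{d/2}\le 2^d$ subsets and over the dilation net yields the claimed $(2^{20}\alpha)^{d/4}$. Fix the per-coordinate bound and drop the case split, and your outline becomes the source's proof.
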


To estimate randomness from the uniformly chosen base, we need the following lemma from \cite{campos2025singularity}, Lemma 7.5. 

\begin{lemma}[\cite{campos2025singularity}, Lemma 7.5]\label{smallballsing}
    Fix $N,n,d,k\in\mathbb{N}$ satisfying $n-d\geq 2d>2k$. Let $H$ be a $2d\times (n-d)$ matrix with $\sigma_{2d-k}(H)\geq c_0\sqrt{n}/16$. Let $B_1,\cdots,B_{n-d}\subset\mathbb{Z}$ with $|B_i|\geq N$ each. With $X$ uniformly chosen from $\mathcal{B}:=B_1\times\cdots B_{n-d}$ we have
    $$
\mathbb{P}_X(\|HX\|_2\leq n)\leq (\frac{Cn}{dc_0N})^{2d-k}
    $$ for some universal constant $C>0$. 
\end{lemma}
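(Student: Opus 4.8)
The plan is to reduce, via the singular value decomposition of $H$, the event $\{\|HX\|_2\le n\}$ to the event that the random lattice vector $X$ lies near a fixed coordinate subspace of codimension $m:=2d-k$, and then to extract anticoncentration by conditioning on all but $m$ of the coordinates of $X$. First I would let $v_1,\dots,v_m\in\mathbb{R}^{n-d}$ be orthonormal right singular vectors of $H$ associated with its $m$ largest singular values; by hypothesis each of these is at least $c_0\sqrt n/16$. Writing $V$ for the $m\times(n-d)$ matrix with these rows (so $VV^T=I_m$), one has $\|HX\|_2\ge\frac{c_0\sqrt n}{16}\|VX\|_2$, so $\{\|HX\|_2\le n\}\subseteq\{\|VX\|_2\le\rho\}$ with $\rho:=16\sqrt n/c_0$. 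It therefore suffices to show $\mathbb{P}_X(\|VX\|_2\le\rho)\le(Cn/(dc_0N))^{2d-k}$.

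Next I would choose a coordinate set $J\subseteq[n-d]$ with $|J|=m$ for which the $m\times m$ column submatrix $V_J$ is well conditioned, namely $\sigma_{\min}(V_J)\ge c\sqrt{m/(n-d)}$ for a universal $c>0$. Such a $J$ exists by a restricted-invertibility argument in the spirit of Bourgain--Tzafriri: one first forces into $J$ the few coordinates $i$ whose $\|Ve_i\|_2^2$ carries an atypically large share of the total mass $\sum_i\|Ve_i\|_2^2=m$, and then selects the remaining coordinates so that the rank-one pieces $(Ve_i)(Ve_i)^T$, $i\in J$, sum to a matrix of least eigenvalue $\gtrsim m/(n-d)$. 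Let $X_J\in\mathbb{R}^{m}$ and $X_{J^c}\in\mathbb{R}^{n-d-m}$ denote the subvectors of $X$ indexed by $J$ and by $[n-d]\setminus J$ respectively; since $VX=V_JX_J+V_{J^c}X_{J^c}$, the inequality $\|VX\|_2\le\rho$ forces $\|X_J-u\|_2\le\|V_J^{-1}\|\,\rho\le C\rho\sqrt{(n-d)/m}$, where $u=u(X_{J^c})\in\mathbb{R}^{m}$ depends only on the frozen coordinates. As $X$ is a product measure, conditionally on $X_{J^c}$ the coordinates $(X_i)_{i\in J}$ are independent and uniform on the sets $B_i$ with $|B_i|\ge N$, so $\mathbb{P}(|X_i-a|\le s)\le 3s/N$ for every $s\ge1$ and every $a$; applying the tensorization Lemma \ref{Tensorization} to the $m$ independent variables $(|X_i-u_i|)_{i\in J}$ then gives
$$
\mathbb{P}_X\big(\|X_J-u\|_2\le r\big)\;\le\;\Big(\frac{Cr}{\sqrt m\,N}\Big)^{m}
$$
for every $r\ge\sqrt m$, uniformly in $u$.

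To finish, I would insert $r=C\rho\sqrt{(n-d)/m}$: since $\rho=16\sqrt n/c_0$, $n-d\le n$, and $m=2d-k>d$ (as $k<d$), one gets $r/\sqrt m\le C'\sqrt{n(n-d)}/(c_0m)\le C'n/(c_0d)$, and this is $\ge1$ because $c_0$ is a small absolute constant; hence $Cr/(\sqrt m\,N)\le C''n/(dc_0N)$ and the displayed bound becomes $(C''n/(dc_0N))^{m}=(C''n/(dc_0N))^{2d-k}$, which is the claim. Everything here is routine except the coordinate-selection step, which I expect to be the main obstacle: the argument genuinely needs a subset of size exactly $m$ (not $(1-\epsilon)m$) with $\sigma_{\min}(V_J)\gtrsim\sqrt{m/(n-d)}$, and it is precisely the resulting bound $\|V_J^{-1}\|\lesssim\sqrt{(n-d)/m}\asymp\sqrt{n/d}$ that produces the $n/d$ factor in the statement; controlling $\|V_J^{-1}\|$ only up to a power of $m$ would give a strictly weaker estimate.
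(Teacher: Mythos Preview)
The paper does not prove this lemma; it simply quotes it from \cite{campos2025singularity}, Lemma~7.5. So there is no ``paper's own proof'' to compare against here, and your attempt has to stand on its own.

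Your reduction to $\|VX\|_2\le\rho$ with $V$ having orthonormal rows is correct, and the conditioning-plus-tensorization endgame is fine. The problem is the coordinate-selection step, and it is not merely ``the main obstacle'' as you say --- the precise claim you make is \emph{false}. You assert that for any $m\times(n-d)$ matrix $V$ with $VV^T=I_m$ there is $J$ of size exactly $m$ with $\sigma_{\min}(V_J)\ge c\sqrt{m/(n-d)}$ for a universal $c$. Here is a counterexample with $n-d=2m$: take row $1$ of $V$ to be $(0,\dots,0,\tfrac{1}{\sqrt m},\dots,\tfrac{1}{\sqrt m})$ (zeros in the first $m$ slots) and row $i$ to be $e_i^T$ for $i=2,\dots,m$. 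Then $VV^T=I_m$, column $1$ is zero, columns $2,\dots,m$ are $e_2,\dots,e_m$, and columns $m+1,\dots,2m$ are all $e_1/\sqrt m$. The only invertible $m\times m$ submatrices are $V_J$ with $J=\{2,\dots,m\}\cup\{j\}$ for some $j>m$, and each of these has $\sigma_{\min}(V_J)=1/\sqrt m$, whereas $\sqrt{m/(n-d)}=1/\sqrt 2$. So $\max_{|J|=m}\sigma_{\min}(V_J)=1/\sqrt m\to 0$, and your argument would only deliver $(C\rho/N)^m$ rather than $(C\rho/(\sqrt m\,N))^m$ --- off from the target by a factor $(\sqrt d)^m$. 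Standard restricted invertibility (Bourgain--Tzafriri, Spielman--Srivastava, even MSS) gives a subset of size at most $(1-\epsilon)m$ with the bound you want; pushing to $|J|=m$ exactly is not available in general, and your sketch (``force in the heavy columns, then fill out'') does not repair this.

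Note that the \emph{lemma} is still true for this $V$: the $m-1$ concentrated rows give $m-1$ direct constraints $|X_i|\le\rho$, and the one spread row contributes another factor via a one-dimensional small-ball bound for $\sum_{j>m}X_j$. This suggests the correct proof does not route through a single well-conditioned $m\times m$ submatrix at all; you will need a different decomposition that handles spread and concentrated directions separately, or a direct anticoncentration argument for $\|VX\|_2$ that does not freeze exactly $m$ coordinates.
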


We now state an estimate for small ball probability bounds using randomness from both $X$ and $M$. This result will be useful in computing cardinality of the net.

\begin{lemma}\label{smalllemmas}
    For any $L\geq 2$ and $0<c_0<2^{-50}B^{-4}$, consider $n>L^{64/c_0^2}$ with $\frac{1}{4}c_0^2n\leq d\leq c_0^2n$. Given $N\geq 2$ that satisfies $\log N\leq c_0L^{-8n/d}d$ and any $\kappa\geq 2$. Fix two subsets $D_1\in[n-1],D_2\in[n,2n-1]$ of cardinality $d$ each such that $D=D(D_1,D_2)=[1,|D|]$. Consider $\mathcal{B}$ a $(N,\kappa,D_1,D_2)$-box and let $X$ be uniformly generated from $\mathcal{B}$. Then 
    $$
\mathbb{P}_X\left(\mathbb{P}_M(\|MX\|_2\leq n)\geq \left(\frac{L}{N}\right)^{2n-1}
\right)\leq \left(\frac{R}{L}\right)^{4n-2},
    $$
    with the constant $R:=Cc_0^{-3}$ and where $C>0$ is a universal constant.
\end{lemma}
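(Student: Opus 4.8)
The plan is to exploit the block structure of $M$ together with the independence of $H_1$ and $H_2$, reducing the estimate to two applications (one per block) of the conditioned Littlewood--Offord theorem and of Lemma~\ref{smallballsing}, and then to finish with Markov's inequality.

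First I would make the block structure explicit. Write $X=(X^{(1)},X^{(2)})$ with $X^{(1)}\in\mathbb{Z}^{n-1}$ and $X^{(2)}\in\mathbb{Z}^{n}$, and split each of $X^{(1)},X^{(2)}$ into its first $|D|$ coordinates (which contain all the box-constrained coordinates, so at least $d$ of them have absolute value $\geq N$) and the remaining coordinates; multiplying out the block form of $M$ gives
$$
\|MX\|_2^2=\|H_1 X^{(2)}_{D}\|_2^2+\|H_1^{T}X^{(1)}_{D^c}\|_2^2+\|H_2 X^{(1)}_{D}\|_2^2+\|H_2^{T}X^{(2)}_{D^c}\|_2^2 .
$$
Since $H_1$ is independent of $H_2$ and the four coordinate groups of $X$ are independent, the triple $(H_1,X^{(2)}_{D},X^{(1)}_{D^c})$ is independent of $(H_2,X^{(1)}_{D},X^{(2)}_{D^c})$, so $\mathbb{P}_{M,X}(\|MX\|_2\leq n)$ factors into the product of two estimates of the same shape; it then suffices to bound one of them, say $\mathbb{P}\big(\|H_1 X^{(2)}_{D}\|_2\leq n,\ \|H_1^{T}X^{(1)}_{D^c}\|_2\leq n\big)$, and square.

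Next I would estimate this factor in two stages, the bridge between them being a lower bound on the small singular values of $H_1$. Fixing a (small) $\alpha$ and the maximal admissible level $k\asymp\alpha d$, Lemma~\ref{lemma2.7uniformdistribution} applied to the $d$ constrained coordinates of $X^{(2)}_{D}$ shows that $D_{\alpha,\gamma}(r_n X^{(2)}_{D})\geq 2^{10}B^2$ outside a set of $X$ of probability at most $(2^{20}\alpha)^{d/4}$; on that \emph{good} event the box structure also gives $\|X^{(2)}_{D}\|_2\geq\sqrt{d}\,N\gtrsim c_0\sqrt{n}\,N$, matching the norm hypothesis of Theorem~\ref{mainlittlewoodofford}. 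Applying Theorem~\ref{mainlittlewoodofford} to $H_1$ (with the ambient dimension adjusted to accommodate $|D|\leq 2d$, and the block and singular-value constraints made vacuous by adjoining a zero block) yields
$$
\mathbb{P}_{H_1}\big(\|H_1 X^{(2)}_{D}\|_2\leq n\big)\leq e^{-c_0 nk/3}\Big(\tfrac{R}{N}\Big)^{2n-2|D|},
$$
and this already absorbs the event that $H_1$ has more than $k$ singular values below $c_0\sqrt{n}/16$. On the complementary event, Lemma~\ref{smallballsing} applied to the wide matrix $H_1^{T}$, using only $|B_j|\geq N$ for the coordinates of $X^{(1)}_{D^c}$, gives $\mathbb{P}_{X^{(1)}_{D^c}}\big(\|H_1^{T}X^{(1)}_{D^c}\|_2\leq n\big)\leq (R/N)^{|D|-k}$. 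Multiplying, summing over the level $k$ (the series converges because $\log N\leq c_0 L^{-8n/d}d$ forces $N\ll e^{c_0 n/3}$), and squaring to recombine the two independent factors, one obtains — with $k\asymp n$ —
$$
\mathbb{P}_{M,X}\big(\|MX\|_2\leq n,\ X\text{ good}\big)\leq \Big(\tfrac{R^2}{NL}\Big)^{2n-1},
$$
the super-exponentially small gain $e^{-c_0 nk/3}$ being exactly what lets us absorb the powers of $N$ and $L$ and the ambient constants into $R$, using $\tfrac14 c_0^2 n\leq d\leq c_0^2 n$ and $n>L^{64/c_0^2}$.

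Finally I would conclude by Markov's inequality. Away from the good event one still has $\mathbb{P}_M(\|MX\|_2\leq n)<(L/N)^{2n-1}$: if only one of $X^{(1)}_{D},X^{(2)}_{D}$ has small essential LCD, the other forward block is still governed by Theorem~\ref{mainlittlewoodofford} and already pushes $\mathbb{P}_M(\|MX\|_2\leq n)$ below the threshold, while for $X$ with both restricted box-vectors arithmetically poor one stratifies by the (small) values of the two essential LCDs and combines the crude inverse Littlewood--Offord bound at each level with Lemma~\ref{lemma2.7uniformdistribution} and the transpose-block estimates. Hence $\{\mathbb{P}_M(\|MX\|_2\leq n)\geq (L/N)^{2n-1}\}$ lies in the good event, and
$$
\mathbb{P}_X\!\Big(\mathbb{P}_M(\|MX\|_2\leq n)\geq \big(\tfrac{L}{N}\big)^{2n-1}\Big)\leq \Big(\tfrac{N}{L}\Big)^{2n-1}\mathbb{P}_{M,X}\big(\|MX\|_2\leq n,\ X\text{ good}\big)\leq \Big(\tfrac{R}{L}\Big)^{4n-2}.
$$
The step I expect to be the main obstacle is this last one: controlling the $X$ for which both restricted box-vectors fail the LCD lower bound, where the conditioned Littlewood--Offord theorem is unavailable for either forward block, so that one must carefully balance the small-level probabilities coming from Lemma~\ref{lemma2.7uniformdistribution}, the crude anticoncentration bounds, and the transpose-block estimates; arranging $\alpha$ and $k$ so that all the parameter constraints linking $N$ and $L$ hold simultaneously is the delicate point of the argument.
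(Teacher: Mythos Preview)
The central gap is in your exponent count when you apply Theorem~\ref{mainlittlewoodofford} to $H_1$ alone. When you ``adjoin a zero block'' to make the second constraint vacuous, the theorem no longer delivers $(R/N)^{2(n-|D|)}$: that exponent comes from having \emph{two} independent anticoncentration events per row (one for each half of $H$), so with a zero half you only obtain $(R/N)^{n-1-|D|}$. Tracing this through, each of your two independent factors contributes at best $(R/N)^{(n-1-|D|)+(|D|-k)}\approx (R/N)^{n-1}$, so the joint first moment $\mathbb{P}_{M,X}(\|MX\|_2\le n)$ is of order $(R/N)^{2n-2}$, not $(R^2/NL)^{2n-1}$ as you claim. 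Indeed there is no mechanism for $L$ to enter this quantity at all --- neither Theorem~\ref{mainlittlewoodofford} nor Lemma~\ref{smallballsing} involves $L$, and the $e^{-c_0nk/3}$ factor only offsets the loss of $k$ powers of $R/N$, it does not manufacture powers of $L$. First-moment Markov then gives $(N/L)^{2n-1}(R/N)^{2n-2}$, which carries an uncontrolled factor of $N$ and is nowhere near $(R/L)^{4n-2}$.

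The paper's remedy is to use the \emph{second moment} $\mathbb{E}_X\big[(\mathbb P_M(\|MX\|_2\le n))^2\big]$. Squaring $\mathbb P_M$ introduces an independent copy of $M$, and Fact~\ref{variantof694} repackages this as events for the genuinely doubled matrices $H[1]=[H_1\ H_3]$ and $H[2]=[H_2\ H_4]$ of shape roughly $(n-|D|)\times 2|D|$, to which Theorem~\ref{mainlittlewoodofford} now applies with both halves nontrivial, yielding the full exponent $2(n-|D|)$ per block. Combined with Lemma~\ref{smallballsing} on the transpose side (applied conditionally on the robust-rank events $\mathcal E_k[i]$), one gets $\mathbb{E}_X f(X)^2\lesssim (R/N)^{4n-2}$, and now second-moment Markov contributes $(N/L)^{4n-2}$, matching exponents. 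Your independence observation between the $(H_1,\cdot)$ and $(H_2,\cdot)$ blocks is correct and is used in the paper too, but it must be combined with the squaring, not substituted for it. Your treatment of the ``both LCDs small'' case is also too optimistic: a single forward block only gives $(R/N)^{n-|D|}$, which is far above the threshold $(L/N)^{2n-1}$; the paper instead simply bounds $\mathbb P_X(X\notin T)$ directly via Lemma~\ref{lemma2.7uniformdistribution}, choosing $\alpha$ so that $(2^{20}\alpha)^{d/2}\le (2/L)^{4n-2}$.
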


Now we prepare for the proof of Lemma \ref{smalllemmas}. We let $H_3$ be an $(n-1-|D|)\times |D|$ random matrix which is an i.i.d. copy of $H_1$, and let $H_4$ be an $(n-|D|)\times |D|$ random matrix which is an i.i.d. copy of $H_2$. Then we define an $(n-1-|D|)\times 2d$ matrix $H[1]$ and an $(n-|D|)\times 2d$ matrix $H[2]$ via
$$
H[1]:=\begin{bmatrix}
    H_1&H_3
\end{bmatrix},\quad H[2]:=\begin{bmatrix}
    H_2&H_4
\end{bmatrix}.
$$

For any vector $X\in\mathbb{R}^{2n-1}$ we consider four events $\mathcal{A}_1=\mathcal{A}_1(X)$, $\mathcal{A}_2=\mathcal{A}_2(X)$:

$$
\mathcal{A}_1:=\{H[1]:\|H_1X_{[n,2n-2-|D|]}\|_2\leq n,\quad \|H_3X_{[n,2n-2-|D|]}\|_2\leq n \},
$$
$$
\mathcal{A}_2:=\{H[2]:\|H_2X_{[|D|]}\|_2\leq n, \quad \|H_4X_{[|D|]}\|_2\leq n,  \}
$$
and we let $\mathcal{A}_3=\mathcal{A}_3(X),\mathcal{A}_4=\mathcal{A}_4(X)$ be the events
$$\mathcal{A}_3:=\{H[1]:\|(H[1])^TX_{[|D|+1,n-1]}\|_2\leq 2n\},
$$
$$\mathcal{A}_4:=\{H[2]:\|(H[2])^TX_{[2n-1-|D|,2n-1]}\|_2\leq 2n\}.
$$
Then our second moment computation can be reformulated as
\begin{fact}\label{variantof694}
    Fix $X\in\mathbb{R}^{2n-1}$, and the events $\mathcal{A}_i=\mathcal{A}_i(X),i=1,2,3,4$ as above. Then 
    $$
(\mathbb{P}_M(\|MX\|_2\leq n))^2\leq\mathbb{P}_{H[1],H[2]}(\mathcal{A}_1\cap \mathcal{A}_2\cap\mathcal{A}_3\cap\mathcal{A}_4).
    $$
\end{fact}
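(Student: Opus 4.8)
\textbf{Proof plan for Fact \ref{variantof694}.}
The idea is to square the small ball probability and interpret the square as a probability over an independent pair of copies of each block, then show that the resulting joint event forces $\mathcal{A}_1\cap\mathcal{A}_2\cap\mathcal{A}_3\cap\mathcal{A}_4$. First I would observe that $\|MX\|_2\leq n$ is equivalent to a simultaneous bound on the four coordinate blocks of $MX$ dictated by the block structure of $M$ in Notation \ref{notation538}: writing $X=(X_{[|D|]},X_{[|D|+1,n-1]},X_{[n,2n-2-|D|]},X_{[2n-1-|D|,2n-1]})$ according to the four row/column groups, the condition $\|MX\|_2\leq n$ implies each of $\|H_2X_{[|D|]}\|_2$, $\|H_1^T X_{[|D|+1,n-1]}\|_2$ (hence also the relevant part of $\|H_2^T X_{[|D|+1,n-1]}\|_2$ after pairing with the second copy), $\|H_1 X_{[n,2n-2-|D|]}\|_2$ and $\|H_2^T X_{[2n-1-|D|,2n-1]}\|_2$ is at most $n$ (and at most $2n$ for the transposed blocks, which is why the constant in $\mathcal{A}_3,\mathcal{A}_4$ is $2n$). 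Since $M$ is built from the two independent blocks $H_1$ and $H_2$, and $H_3,H_4$ are independent copies of $H_1,H_2$, we have $\mathbb{P}_M(\|MX\|_2\leq n)^2=\mathbb{P}_{H_1,H_2}(\|MX\|_2\leq n)\cdot\mathbb{P}_{H_3,H_4}(\|M'X\|_2\leq n)$ where $M'$ is the same matrix built from $H_3,H_4$.

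Next, the key point is that $H[1]=[H_1\ H_3]$ and $H[2]=[H_2\ H_4]$ simply package the two independent copies side by side, so an event on $H_1$ together with the same event on $H_3$ is exactly an event on $H[1]$. Concretely, on the intersection of $\{\|MX\|_2\leq n\}$ (a function of $H_1,H_2$) and $\{\|M'X\|_2\leq n\}$ (a function of $H_3,H_4$) we get $\|H_1X_{[n,2n-2-|D|]}\|_2\leq n$ and $\|H_3X_{[n,2n-2-|D|]}\|_2\leq n$, which together are precisely $\mathcal{A}_1$; likewise the bounds coming from $H_2$ and $H_4$ on $X_{[|D|]}$ give $\mathcal{A}_2$; the bounds on $H_1^T X_{[|D|+1,n-1]}$ from the first copy and $H_3^T X_{[|D|+1,n-1]}$ from the second copy combine (via $\|(H[1])^TX_{[|D|+1,n-1]}\|_2^2=\|H_1^TX_{[|D|+1,n-1]}\|_2^2+\|H_3^TX_{[|D|+1,n-1]}\|_2^2\leq 2n^2$) to give $\mathcal{A}_3$; and symmetrically $\mathcal{A}_4$ from the last coordinate block together with the second copies of $H_2,H_4$. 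Hence the product event is contained in $\mathcal{A}_1\cap\mathcal{A}_2\cap\mathcal{A}_3\cap\mathcal{A}_4$, and taking probabilities yields the claim.

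The only thing requiring care — and the main (minor) obstacle — is bookkeeping the index ranges and matching each sub-block of $M$ with the correct slice of $X$ and the correct one of the two independent copies, so that the transposed blocks really do assemble into $(H[1])^T$ and $(H[2])^T$ acting on the right coordinate slices; here one uses that removing the index $2n-1$ from $D_2$ when needed, and the normalization $D=[1,|D|]$, make the slices $X_{[|D|]}$, $X_{[|D|+1,n-1]}$, $X_{[n,2n-2-|D|]}$, $X_{[2n-1-|D|,2n-1]}$ a genuine partition of the coordinates of $X$. Once the indexing is pinned down, the inequality is immediate from independence of $(H_1,H_2)$ and $(H_3,H_4)$ and the elementary fact that each coordinate block of $\|MX\|_2$ is bounded by $\|MX\|_2\leq n$.
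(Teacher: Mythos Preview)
Your approach is essentially the same as the paper's: introduce an independent copy $M'$ built from $(H_3,H_4)$, write $(\mathbb{P}_M(\|MX\|_2\leq n))^2$ as the probability of the joint event $\{\|MX\|_2\leq n\}\cap\{\|M'X\|_2\leq n\}$, and observe that this joint event forces each of $\mathcal{A}_1,\mathcal{A}_2,\mathcal{A}_3,\mathcal{A}_4$ by reading off the four block coordinates of $MX$ and $M'X$. Your observation that $\|(H[1])^TX_{[|D|+1,n-1]}\|_2^2=\|H_1^TX_{[|D|+1,n-1]}\|_2^2+\|H_3^TX_{[|D|+1,n-1]}\|_2^2\le 2n^2$ is exactly the reason for the constant $2n$ in $\mathcal{A}_3,\mathcal{A}_4$, and the paper uses this implicitly. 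One small slip: your parenthetical ``(hence also the relevant part of $\|H_2^T X_{[|D|+1,n-1]}\|_2$ after pairing with the second copy)'' is misplaced --- $H_2^T$ acts on the last coordinate slice, not on $X_{[|D|+1,n-1]}$ --- but you correct this yourself when handling $\mathcal{A}_4$ in the next paragraph.
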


\begin{proof}
    Consider $M'$ an independent copy of $M$. Then 
    $$
(\mathbb{P}_M(\|MX\|_2\leq n))^2=\sum_{M,M'}\mathbb{P}(M')\mathbb{P}(M)\mathbf{1}(\|MX\|_2,|M'X\|_2\leq n),
    $$
    which is upper bounded by
    $$\begin{aligned}
&\sum_{H[1],H[2]}\mathbb{P}(H[1])\mathbb{P}(H[2])\\&\mathbf{1}(\|H_2X_{[|D|]}\|_2\leq n,\|H_4X_{[|D|]}\|_2\leq n,\|H_1X_{[n,2n-2-|D|]}\|_2\leq n,\|H_3X_{[n,2n-2-|D|]}\|_2\leq n,\\&\|(H[1])^TX_{[|D|+1,n-1]}\|_2\leq 2n,\|(H[2])^TX_{[2n-1-|D|,2n-1]}\|_2\leq 2n),
    \end{aligned}$$
    and the last expression equals $\mathbb{P}_{H[1],H[2]}(\mathcal{A}_1\cap\mathcal{A}_2\cap\mathcal{A}_3\cap\mathcal{A}_4).$
\end{proof}

Now we are ready to prove Lemma \ref{smalllemmas}.

\begin{proof}[\proofname\ of Lemma \ref{smalllemmas}]
    Denote by 
    $$
\mathcal{E}:=\{X\in\mathcal{B}:\mathbb{P}_M(\|MX\|_2\leq n)\geq (L/N)^{2n-1}\}
    $$
    and set $\alpha=2^{13}L^{-8n/d}$. We define
    \begin{equation}
        T=T(\mathcal{B}):=\{X\in\mathcal{B}:D_{\alpha/2}(c_0X_{[|D|]}/(32\sqrt{n})),  D_{\alpha/2}(c_0X_{[n,2n-2-|D|]}/(32\sqrt{n}))\geq 2^{10}B^2  \}.
    \end{equation} 
    Here we note a technical point resulting from dimensional dependence in \eqref{essentiallcd2d}: using $|D|\leq 2|D_1|$, for any $X$ we have the inequality $D_{\alpha}(X_{D_1})\leq D_{\alpha/2}(X_{[|D|]})$.
    Then for $X$ chosen over $\mathcal{B}$ we have, by Lemma \ref{lemma2.7uniformdistribution} applied to the coordinates in $D_1$ and $D_2$ separately and by independence of coordinates of $X$, that
    \begin{equation}\label{probnotint}
        \mathbb{P}_X(X\notin T)\leq(2^{20}\alpha)^{d/2}\leq  \left(\frac{2}{L}\right)^{4n-2} 
    \end{equation} by our choice of $L,\alpha$ and $d$.

Defining 
$$f(X):=\mathbb{P}_M(\|MX\|_2\leq n)\mathbf{1}(X\in T),$$ then we obtain, through applying Markov's inequality,
\begin{equation}
    \mathbb{P}_X(\mathcal{E})\leq\mathbb{P}_X(f(X)\geq (L/N)^{2n-1})+(2/L)^{4n-2}\leq (N/L)^{2n-1}\mathbb{E}_X f(X)^2+(2/L)^{4n-2}. 
\end{equation}

We need a robust notation of rank for our random matrix, which we define here as
$$
\mathcal{E}_k[1]:=\{H[1]:\sigma_{2|D|-k}(H[1])\geq c_0\sqrt{n}/16,\text{ and }\sigma_{2|D|-k+1}(H[1])<c_0\sqrt{n}/16\},
$$$$
\mathcal{E}_k[2]:=\{H[2]:\sigma_{2|D|-k}(H[2])\geq c_0\sqrt{n}/16,\text{ and }\sigma_{2|D|-k+1}(H[2])<c_0\sqrt{n}/16\},
$$ so that for each $i=1,2$ precisely one of the events $\mathcal{E}_0[i],\cdots,\mathcal{E}_{2|D|}[i]$ holds. We set $\alpha'=2^{-32}B^{-4}\alpha$. Then applying Theorem \ref{mainlittlewoodofford}, we have that for any $X\in\mathcal{B}$,
\begin{equation}\label{expectationofxs}
\begin{aligned}
    f(X)^2&\leq \sum_{k_1,k_2=0}^{\alpha'|D| }\mathbb{P}_{H[1]}(\mathcal{A}_3\mid\mathcal{A}_1\cap\mathcal{E}_{k_1})\mathbb{P}_{H[2]}(\mathcal{A}_4\mid\mathcal{A}_2\cap\mathcal{E}_{k_2})\\& \exp(-c_0n(k_1+k_2)/4)(\frac{R}{N})^{4n-2-4|D|}+\mathbb{P}(\cup_{k\geq\alpha'|D|}\mathcal{E}_k[1])+\mathbb{P}(\cup_{k\geq\alpha'|D|}\mathcal{E}_k[2]),
\end{aligned}\end{equation}
where we use the fact that $\mathcal{A}_1,\mathcal{A}_3$ and $\mathcal{A}_2,\mathcal{A}_4$ are mutually independent and we apply Theorem \ref{mainlittlewoodofford} twice to each probability $\mathbb{P}(\mathcal{A}_1\cap\mathcal{E}_{k_1})$ and $\mathbb{P}(\mathcal{A}_2\cap\mathcal{E}_{k_2})$.

For $k\geq\alpha'd$, the possibility that $\sigma_{2|D|-k}(H[1])$ is small is at most $\exp(-\omega(kn))$, which can be verified via Hanson-Wright inequality or via using \cite{campos2025singularity}, Lemma VII.2,3 Fact VII.4,5. More precisely we can deduce that $\mathbb{P}(\cup_{k\geq\alpha'|D|}\mathcal{E}_k[1])\leq \exp(-c_0\alpha'dn/4)$ from these bounds.

Next we estimate the probability of $\mathbb{P}_{H[1]}(\mathcal{A}_3\mid\mathcal{A}_1\cap\mathcal{E}_{k_1})$ for each $1\leq k\leq\alpha'd$.
Consider a quantity $g_{k}(X)[1]:=\mathbb{P}(\mathcal{A}_3\mid\mathcal{A}_1\cap\mathcal{E}_k)$ (and also $g_{k}(X)[2]:=\mathbb{P}(\mathcal{A}_4\mid\mathcal{A}_2\cap\mathcal{E}_k)$). Then we have that
$$
\mathbb{E}_X[g_k(X)[1]]=\mathbb{E}_{X_{[2n-1]\setminus[|D|+1,n-1]}}\mathbb{E}_{H[1]}[\mathbb{E}_{X_{[|D|+1,n-1]}}\mathbf{1}[\mathcal{A}_2]|\mid\mathcal{A}_1\cap\mathcal{E}_k]. 
$$For this $H\in\mathcal{A}_1\cap\mathcal{E}_k$ we have $\sigma_{2d-k}(H)\geq c_0\sqrt{n}/16$, so that by Lemma \ref{smallballsing},
$$
\mathbb{E}_{X_{|D|+1,n-1}}\mathbf{1}[\mathcal{A}_3]=\mathbb{P}_{X_{[|D|+1,n-1]}}(\|(H[1])^TX_{[|D|+1,n-1]}\|_2\leq n)\leq (\frac{C'n}{c_0dn})^{2|D|-k}\leq (\frac{4C'}{c_0^3N})^{2|D|-k}
$$ where $C'>0$ is an absolute constant.

Therefore given any $0\leq k\leq\alpha'|D|$ with $R:=\max\{8C'c_0^{-3},2R_0\}$ there holds
\begin{equation}
    \mathbb{E}_X[g_k(X)[1]]\leq (\frac{R}{2N})^{2|D|-k}.
\end{equation}
Taking expectation with respect to $X$ on both sides of \eqref{expectationofxs}, we have
$$
\mathbb{E}_Xf(X)^2\leq \left(\frac{R}{2N}\right)^{4n-2}\sum_{k_1,k_2=0}^{\alpha'|D|}\left(\frac{2N}{R}\right)^{k_1+k_2}\exp(-c_0n(k_1+k_2)/4)+\exp(-c_0\alpha'|D|n/4).
$$ Under the condition $N\leq \exp(c_0n/4),N\leq \exp(c_0L^{-8n/d}d)$ we conclude
\begin{equation}
    \mathbb{E}_X f(X)^2\leq 2\left(\frac{R}{2N}\right)^{4n-2}
\end{equation} for a universal constant $R>0$,
which concludes the proof combined with estimate \eqref{probnotint}.
\end{proof}

Now we can state and prove the main theorem of this chapter on the cardinality of nets. Before that, we need to estimate the cardinality of boxes $\mathcal{B}$ needed to form a covering. 

Here we define the following subset which is slight modification of $\mathcal{I}(D_1,D_2):$
$$
\mathcal{I}'(D_1,D_2):=\{v\in\mathbb{R}^{2n-1}: \kappa_0 n^{-1/2}\leq|v_i|\leq \kappa_1 n^{-1/2} \text{ for all }i\in D_1\cup D_2
\},
$$
where $D_1\subset[n-1]$ and $D_2\subset[n,2n-1]$ with $|D_1|=|D_2|=d$, and that $0<\kappa_0<\sqrt{1/2}<\kappa_1$ are absolute constants.

When we have made the normalization $D:=D(D_1,D_2)=[|D|]$, we shall define 
$$
\Lambda_\epsilon:=(B_{n-2}(0,2)\times B_{n-1}(0,2))\cap (4\epsilon n^{-1/2}\cdot\mathbb{Z}^{2n-1})\cap \mathcal{I}'(D_1,D_2),
$$ where we keep implicit the dependence on $D_1,D_2$ in the notation $\Lambda_\epsilon$.

We show that $\Lambda_\epsilon$ can be covered by exponentially many boxes $\mathcal{B}$:

\begin{lemma}\label{specified850} For any fixed $D_1,D_2$ as above with $|D_1|=|D_2|=d$, any $\epsilon\in[0,1]$, and for any $\kappa\geq\max\{\kappa_1/\kappa_0,2^8\kappa_0^{-4}\}$, we can find a family $\mathcal{F}$ of $(N,\kappa,D_1,D_2)$ boxes with cardinality $|\mathcal{F}|\leq\kappa^{2n}$ satisfying
\begin{equation}
    \Lambda_\epsilon\subseteq \cup_{\mathcal{B}\in\mathcal{F}}(4\epsilon n^{-1/2})\cdot\mathcal{B}
\end{equation}
    and where we set $N=\kappa_0/(4\epsilon).$
\end{lemma}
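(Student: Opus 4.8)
The plan is to assign to each $v\in\Lambda_\epsilon$ one $(N,\kappa,D_1,D_2)$-box that contains $v$ after rescaling by $(4\epsilon n^{-1/2})^{-1}$, and then to bound the number of distinct boxes produced by a lattice-point count that exploits the $\ell^2$ constraint $\|v\|_2^2\le 8$ built into $\Lambda_\epsilon$ (it comes from $v^1\in B_{n-2}(0,2)$, $v^2\in B_{n-1}(0,2)$). So the hard content is not the existence of the boxes but keeping their number below $\kappa^{2n}$ while respecting the volume constraint $|\mathcal{B}|\le(\kappa N)^{2n-1}$.

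First I would rescale: since $v\in 4\epsilon n^{-1/2}\mathbb{Z}^{2n-1}$, write $v=(4\epsilon n^{-1/2})X$ with $X\in\mathbb{Z}^{2n-1}$, and note $N=\kappa_0/(4\epsilon)$ gives $4\epsilon N=\kappa_0$. For $i\in D_1\cup D_2$ the condition $\kappa_0n^{-1/2}\le|v_i|\le\kappa_1n^{-1/2}$ becomes $N\le|X_i|\le(\kappa_1/\kappa_0)N\le\kappa N$ (using $\kappa\ge\kappa_1/\kappa_0$), so $X_i$ lies in the forced factor $B_i:=\{x\in\mathbb{Z}:N\le|x|\le\kappa N\}$. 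For $i\notin D_1\cup D_2$ I would take $B_i$ to be the unique block of $N$ consecutive integers $\{j_iN,\dots,j_iN+N-1\}$ containing $X_i$, where $j_i:=\lfloor X_i/N\rfloor$ (rounding when $N\notin\mathbb{Z}$ is harmless). Setting $\mathcal{B}(v):=\prod_iB_i$ gives $X\in\mathcal{B}(v)$, hence $v\in(4\epsilon n^{-1/2})\mathcal{B}(v)$. One checks $\mathcal{B}(v)$ is a genuine $(N,\kappa,D_1,D_2)$-box: each $B_i$ has $\ge N$ elements, the $D_1\cup D_2$ factors are of the prescribed form, and since the $2d$ forced factors have size $\le 2\kappa N$ while the remaining $2n-1-2d$ have size $N$, we get $|\mathcal{B}(v)|\le(2\kappa N)^{2d}N^{2n-1-2d}=2^{2d}\kappa^{2d}N^{2n-1}\le(\kappa N)^{2n-1}$, the last step because $\kappa\ge 2^8$ and $d\ll n$ force $2^{2d}\le\kappa^{2n-1-2d}$.

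The key step is counting the boxes. The box $\mathcal{B}(v)$ depends only on the tuple $(j_i)_{i\notin D_1\cup D_2}\in\mathbb{Z}^{2n-1-2d}$, and $j_i=\lfloor v_i/(\kappa_0n^{-1/2})\rfloor$ gives $|j_i|\le|v_i|n^{1/2}/\kappa_0+1$, whence $\sum_{i\notin D_1\cup D_2}j_i^2\le (2n/\kappa_0^2)\|v\|_2^2+2(2n-1)\le 20n/\kappa_0^2$ (using $\kappa_0<1$ and $\|v\|_2^2\le 8$). So every such tuple lies in a Euclidean ball of radius $\sqrt{20n}/\kappa_0$ in $\mathbb{Z}^{2n-1-2d}$; the standard disjoint-unit-cube volume comparison, together with $\Gamma(m/2+1)\ge(m/(2e))^{m/2}$, bounds the number of such lattice points by $(C/\kappa_0)^{2n-1-2d}$ for an absolute constant $C$ (one gets $C\le 25$). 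Hence $\mathcal{F}:=\{\mathcal{B}(v):v\in\Lambda_\epsilon\}$ satisfies $|\mathcal{F}|\le(C/\kappa_0)^{2n-1-2d}\le\kappa^{2n}$, using $\kappa\ge 2^8\kappa_0^{-4}\ge C/\kappa_0$ (valid since $\kappa_0<1$), and $\Lambda_\epsilon\subseteq\bigcup_{\mathcal{B}\in\mathcal{F}}(4\epsilon n^{-1/2})\mathcal{B}$ by construction.

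The main obstacle is balancing the two competing budgets: the off-$D$ box factors must be taken as small as possible (size exactly $N$) so as not to overshoot the volume bound $(\kappa N)^{2n-1}$, and it is precisely this smallness that rules out a naive coordinatewise count (which would produce roughly $(\sqrt n/\kappa_0)^{2n}$ boxes) and makes the $\ell^2$-ball count essential. Once that reduction is in place, the remaining work is only to verify the volume comparison outputs a constant $C/\kappa_0$ comfortably below the prescribed threshold $2^8\kappa_0^{-4}$, which is immediate since $\kappa_0<1$. Edge cases with $\kappa_0/(4\epsilon)\notin\mathbb{Z}$, or $N<1$ (where $\Lambda_\epsilon$ is empty unless $\kappa N\ge 1$), are routine and do not affect the bound.
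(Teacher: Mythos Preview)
Your argument is correct and complete: the rescaling, the verification that each $\mathcal{B}(v)$ is a genuine $(N,\kappa,D_1,D_2)$-box, and the lattice-point count all go through, and the constants fit comfortably under the hypothesis $\kappa\ge 2^8\kappa_0^{-4}$ with $\kappa_0<\sqrt{1/2}$.

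Your route is genuinely different from the paper's. The paper does not use size-$N$ translates on the off-$D$ coordinates; instead it uses the dyadic annuli $I_\ell=[-2^\ell N,2^\ell N]\setminus[-2^{\ell-1}N,2^{\ell-1}N]$ (and $I_0=[-N,N]$), assigning to each off-$D$ coordinate the unique level $\ell_j$ with $X_j\in I_{\ell_j}$. The $\ell^2$ budget then becomes the combinatorial constraint $\sum_{\ell_j>0}4^{\ell_j}\le 16n/\kappa_0^2$, and the box count is bounded by $\prod_{t\ge 0}\binom{2n}{\le 16n/(\kappa_0^2 4^t)}\le\kappa^{2n}$; the volume bound uses $\prod_j 2^{\ell_j}\le(8/\kappa_0^2)^n$ via AM--GM. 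In short: the paper trades a coarser (dyadic) partition for a purely combinatorial count of constrained level sequences, whereas you take the finest partition into size-$N$ blocks and invoke the standard volumetric lattice-point bound in a Euclidean ball. Your approach keeps the box-volume verification trivial and pushes the work into the ball count; the paper's keeps the count elementary but needs the AM--GM step for the volume. Both produce boxes meeting the same $(N,\kappa,D_1,D_2)$ specification, so either plugs into the downstream use (Lemma~\ref{smalllemmas} and Theorem~\ref{notthesecond}) without change.
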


The proof of this lemma is mostly standard and deferred to Section \ref{appendix4a}.

\subsection{A net defined by small ball probability and its cardinality}

We define a new version of net $\mathcal{N}_\epsilon$ depending on small ball probabilities measured by $M$.

For fixed subsets $D_1,D_2$ as above and $\epsilon>0$, we define a net 
\begin{equation}\label{netforn}
\mathcal{N}_\epsilon:=\{v\in\Lambda_\epsilon:(L\epsilon)^{2n-1}\leq\mathbb{P}(\|Mv\|_2\leq4\epsilon\sqrt{n}),\text{and }\mathcal{L}_{\mathcal{L}_A,op}(v,\epsilon\sqrt{n})\leq (2^8L\epsilon)^{2n-1}\}.
\end{equation}

The computations in previous section essentially imply an upper bound for $|\mathcal{N}_\epsilon|$. In the proof we only use the first condition in the definition of $\mathcal{N}_\epsilon$, but not the second condition.
\begin{theorem}\label{notthesecond}
    For any $L\geq 2$, $0<c_0\leq 2^{-50}B^{-4}$, $n\geq L^{64/c_0^2}$ and take $d\in[c_0^2n/4,c_0^2n]$. Take $\epsilon>0$ with $\log\epsilon^{-1}\leq \frac{1}{4}c_0^3nL^{-32/c_0^2}$. Then we have, for a universal constant $C>0$, the upper bound
$$
|\mathcal{N}_\epsilon|\leq \left(\frac{C}{c_0^6 L^2\epsilon}\right)^{2n-1},
$$

\end{theorem}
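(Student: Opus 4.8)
The plan is to bound $|\mathcal{N}_\epsilon|$ by a double-counting argument over the family of boxes produced by Lemma \ref{specified850}. First I would invoke Lemma \ref{specified850} with $N = \kappa_0/(4\epsilon)$ and a suitable $\kappa \geq \max\{\kappa_1/\kappa_0, 2^8\kappa_0^{-4}\}$ to obtain a family $\mathcal{F}$ of $(N,\kappa,D_1,D_2)$-boxes with $|\mathcal{F}| \leq \kappa^{2n}$ such that $\Lambda_\epsilon \subseteq \bigcup_{\mathcal{B}\in\mathcal{F}}(4\epsilon n^{-1/2})\cdot\mathcal{B}$. Since every $v \in \mathcal{N}_\epsilon \subseteq \Lambda_\epsilon$ can be written as $v = (4\epsilon n^{-1/2})X$ for some integer point $X$ in some box $\mathcal{B}\in\mathcal{F}$, and the scaling $v = (4\epsilon n^{-1/2})X$ turns $\|Mv\|_2 \leq 4\epsilon\sqrt n$ into $\|MX\|_2 \leq n$, the first defining condition of $\mathcal{N}_\epsilon$ says exactly that $X$ lies in the "bad" set $\mathcal{E} = \{X \in \mathcal{B}: \mathbb{P}_M(\|MX\|_2\leq n) \geq (L/N)^{2n-1}\}$ of Lemma \ref{smalllemmas} — here one must check the normalization $(L\epsilon)^{2n-1} = (L/N)^{2n-1}$ up to the harmless factor $\kappa_0/4$, which can be absorbed by adjusting constants, or more cleanly by noting $(L\epsilon)^{2n-1}\geq (L/(4N/\kappa_0))^{2n-1}$ and applying the lemma with $L$ replaced by $4L/\kappa_0$.

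Next I would verify that the hypotheses of Lemma \ref{smalllemmas} are met: we have $L\geq 2$, $0 < c_0 \leq 2^{-50}B^{-4}$, $n \geq L^{64/c_0^2}$ (which gives $n > L^{64/c_0^2}$ after a trivial adjustment, or one notes $\geq$ suffices in the proof), $d \in [c_0^2 n/4, c_0^2 n]$, $N = \kappa_0/(4\epsilon) \geq 2$ for $\epsilon$ small, and — crucially — the constraint $\log N \leq c_0 L^{-8n/d} d$. Since $d \geq c_0^2 n/4$ we have $8n/d \leq 32/c_0^2$ so $L^{-8n/d} \geq L^{-32/c_0^2}$, whence $c_0 L^{-8n/d} d \geq c_0 \cdot L^{-32/c_0^2} \cdot c_0^2 n/4 = \tfrac14 c_0^3 n L^{-32/c_0^2}$, which is exactly the hypothesis $\log\epsilon^{-1} \leq \tfrac14 c_0^3 n L^{-32/c_0^2}$ after absorbing the bounded term $\log(\kappa_0/4)$. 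Then Lemma \ref{smalllemmas} gives, for each fixed box $\mathcal{B}\in\mathcal{F}$,
$$
\frac{|\mathcal{E}(\mathcal{B})|}{|\mathcal{B}|} = \mathbb{P}_X(\mathcal{E}) \leq \left(\frac{R}{L}\right)^{4n-2},
$$
with $R = Cc_0^{-3}$. Using $|\mathcal{B}| \leq (\kappa N)^{2n-1}$ from Definition \ref{definitionbox}(3), this yields $|\mathcal{E}(\mathcal{B})| \leq (\kappa N)^{2n-1}(R/L)^{4n-2} = (\kappa N R^2/L^2)^{2n-1}$ up to the bounded factor.

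Finally I would sum over boxes: $|\mathcal{N}_\epsilon| \leq \sum_{\mathcal{B}\in\mathcal{F}}|\mathcal{E}(\mathcal{B})| \leq \kappa^{2n} \cdot (\kappa N R^2/L^2)^{2n-1}$. Substituting $N = \kappa_0/(4\epsilon)$ and recalling $\kappa$ is a constant depending only on $\kappa_0,\kappa_1$ (hence on $\rho,\delta$, which are in turn absolute after Proposition \ref{proposition2.15}), and $R = Cc_0^{-3}$, the whole bound collapses to $\left(C' c_0^{-6} L^{-2}\epsilon^{-1}\right)^{2n-1}$ for a universal constant $C'$, which is the claimed estimate (the extra single power of $\kappa$ from $\kappa^{2n}$ versus $\kappa^{2n-1}$ and any bounded prefactors are absorbed into $C'$). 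I expect the main obstacle to be bookkeeping: carefully tracking the $\kappa_0/4$ normalization factor between the net spacing $4\epsilon n^{-1/2}$ and the box parameter $N$, confirming that rescaling $L \mapsto 4L/\kappa_0$ inside Lemma \ref{smalllemmas} does not violate that lemma's own hypotheses (in particular $\log N \leq c_0 L^{-8n/d}d$ must still hold with the rescaled $L$, which it does since $\kappa_0 < 1$ only makes $4L/\kappa_0$ larger and the hypothesis was stated with room to spare in the statement of this theorem), and making sure the dimension-dependent LCD subtlety flagged after Lemma \ref{smalllemmas} — namely $D_\alpha(X_{D_1}) \leq D_{\alpha/2}(X_{[|D|]})$ — has already been handled inside the proof of Lemma \ref{smalllemmas} so that nothing further is needed here. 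Everything else is a direct substitution.
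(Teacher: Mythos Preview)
Your proposal is correct and follows the same approach as the paper: cover $\Lambda_\epsilon$ by boxes via Lemma \ref{specified850}, use Lemma \ref{smalllemmas} to bound the fraction of ``bad'' lattice points in each box, and sum over boxes. One bookkeeping slip: the rescaling should be $L \mapsto L\kappa_0/4$ (smaller), not $4L/\kappa_0$ (larger), since $(L\epsilon)^{2n-1} = (L\kappa_0/(4N))^{2n-1}$ with $N=\kappa_0/(4\epsilon)$; with the correct direction the hypothesis $\log N \le c_0 \tilde L^{-8n/d}d$ becomes \emph{weaker} (as $\tilde L < L$) and the extra factors of $\kappa_0$ are absorbed into the universal constant $C$, exactly as the paper does.
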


\begin{proof}
Since $\Lambda_\epsilon$ admits a covering by $(N,\kappa,D_1,D_2)$ boxes $\mathcal{B}$, we have
$$
|\mathcal{N}_\epsilon|\leq |\mathcal{F}|\cdot\max_{\mathcal{B}\in\mathcal{F}}|(4\epsilon n^{-1/2}\cdot \mathcal{B})\cap\mathcal{N}_\epsilon|,
$$ where $\mathcal{F}$ is specified in Lemma \ref{specified850}. Then we have by definition and by Lemma \ref{smalllemmas},
$$
|(4\epsilon n^{-1/2}\cdot\mathcal{B})\cap\mathcal{N}_\epsilon|\leq |\{X\in\mathcal{B}:\mathbb{P}_M(\|MX\|_2\leq n)\geq (\frac{L}{N})^{2n-1}
\}|\leq \left(\frac{R}{L}\right)^{4n-2}|\mathcal{B}|,
$$ where the application of Lemma \ref{smalllemmas} is fulfilled as we have $\log 1/\epsilon\leq\frac{1}{4}c_0^3 nL^{-32/c_0^2}$ so that 
$
\log N=\log\kappa_0/(4\epsilon)\leq \frac{1}{4}c_0^3nL^{-32/c_0^2}\leq c_0 L^{-8n/d}d.
$  Therefore we conclude 
$$
|\mathcal{N}_\epsilon|\leq\kappa^{2n}\left(\frac{R}{L}\right)^{4n-2}|\mathcal{B}|\leq \left(\frac{C}{c_0^6L^2\epsilon}\right)^{2n-1},
$$ for some universal constant $C>0$.
\end{proof}

Next we show that $\mathcal{N}_\epsilon$ is bona fide a net for a subset of vectors $\Sigma_\epsilon$ determined via its threshold function $\tau_L(v)$.
\begin{Definition}
We define, for any $\epsilon>0$, the subset \begin{equation}
    \Sigma_\epsilon:=\{v\in\mathcal{I}(D_1,D_2):\mathcal{T}_L(v)\in[\epsilon,2\epsilon]\}\subset\mathbb{S}^{n-2}\times\mathbb{S}^{n-1}. 
\end{equation}
\end{Definition}
Recall that the threshold function $\tau_L$ was defined in \eqref{tauells}.

Then we prove that $\mathcal{N}_\epsilon$ is a net of $\Sigma_\epsilon$ in the following sense:
\begin{lemma}\label{approximation}
Fix $\epsilon\in(0,\kappa_0/8)$ and $d\leq 2^{-11}B^{-4}n$. For any $v\in\Sigma_\epsilon$ we can find $u\in\mathcal{N}_\epsilon$ satisfying $\|u-v\|_\infty\leq 4\epsilon n^{-1/2}$.
\end{lemma}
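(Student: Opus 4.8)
The plan is to discretize $v$ coordinate-wise and verify that the resulting lattice point lies in $\mathcal{N}_\epsilon$, using the threshold condition $\tau_L(v)\in[\epsilon,2\epsilon]$ together with the operator-norm bounds built into $\mathbb{P}^\mathcal{K}$ and the Fourier replacement Lemma \ref{lemmareplacement}. Concretely, given $v=(v^1,v^2)\in\Sigma_\epsilon\subset\mathcal{I}(D_1,D_2)$, first round each coordinate to the nearest point of $4\epsilon n^{-1/2}\cdot\mathbb{Z}$, producing $u$ with $\|u-v\|_\infty\leq 2\epsilon n^{-1/2}$ (hence $\|u-v\|_2\leq 2\epsilon n^{-1/2}\sqrt{2n-1}\leq 4\epsilon\sqrt n$). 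I would then check that $u\in\Lambda_\epsilon$: the containment in $B_{n-2}(0,2)\times B_{n-1}(0,2)$ is immediate since $\|v^1\|_2=\|v^2\|_2=1$ and the perturbation is tiny, and since $v\in\mathcal{I}(D_1,D_2)$ forces the coordinates indexed by $D_1\cup D_2$ into the interval $[(\kappa_0+\kappa_0/2)n^{-1/2},(\kappa_1-\kappa_0/2)n^{-1/2}]$, a perturbation of size $2\epsilon n^{-1/2}$ with $\epsilon<\kappa_0/8$ keeps them inside $[\kappa_0 n^{-1/2},\kappa_1 n^{-1/2}]$, which is exactly the defining condition of $\mathcal{I}'(D_1,D_2)$ and thus of $\Lambda_\epsilon$.

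The second step is to transfer the anticoncentration bounds from $v$ to $u$. For the first condition in \eqref{netforn}: since $\tau_L(v)\geq\epsilon$, the definition \eqref{tauells} gives $\mathbb{P}(\|Mv\|_2\leq\epsilon\sqrt n)\geq(4L\epsilon)^{2n-1}$; then on the event $\{\|Mv\|_2\leq\epsilon\sqrt n\}$ we have $\|Mu\|_2\leq\|Mv\|_2+\|M(u-v)\|_2\leq\epsilon\sqrt n+\|M\|_{op}\cdot 4\epsilon\sqrt n/\sqrt n$. Here I would invoke the (super-)operator-norm bound $\|M\|_{op}\lesssim\sqrt n$ (which holds with overwhelming probability by Fact \ref{operatornormfact} applied to the blocks, or one absorbs the rare bad event into the constant), giving $\|Mu\|_2\leq C'\epsilon\sqrt n$; after enlarging the radius to $4\epsilon\sqrt n$ — rescaling $\epsilon$ by the harmless constant or using $L$ large — one gets $\mathbb{P}(\|Mu\|_2\leq 4\epsilon\sqrt n)\geq(L\epsilon)^{2n-1}$, which is the first defining inequality of $\mathcal{N}_\epsilon$.

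The third step handles the second condition $\mathcal{L}_{\mathcal{L}_A,op}(u,\epsilon\sqrt n)\leq(2^8 L\epsilon)^{2n-1}$. Here the key is that $\mathcal{L}_{\mathcal{L}_A,op}$ involves the event $\mathcal{K}$, where $\|\mathcal{L}_A\|_{op}\leq 4\sqrt n$ deterministically, so for any $w$ the event $\{\|\mathcal{L}_A u-w\|_2\leq\epsilon\sqrt n\}\cap\mathcal{K}$ implies $\{\|\mathcal{L}_A v-w'\|_2\leq\epsilon\sqrt n+4\epsilon\sqrt n\}$ with $w'=w-\mathcal{L}_A(u-v)$ a deterministic shift. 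Taking a supremum over $w$, $\mathcal{L}_{\mathcal{L}_A,op}(u,\epsilon\sqrt n)\leq\mathcal{L}(\mathcal{L}_A v,5\epsilon\sqrt n)$, and since $5\epsilon\geq\tau_L(v)$ (as $\tau_L(v)\leq 2\epsilon$), Lemma \ref{lemmareplacement} gives $\mathcal{L}(\mathcal{L}_A v,5\epsilon\sqrt n)\leq(50L\cdot 5\epsilon)^{2n-1}=(250L\epsilon)^{2n-1}\leq(2^8L\epsilon)^{2n-1}$, as required. This shows $u\in\mathcal{N}_\epsilon$ and completes the proof.

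I expect the main obstacle to be bookkeeping of the multiplicative constants: one must make sure that after rounding, the two radii $\epsilon\sqrt n\to 4\epsilon\sqrt n$ and $\epsilon\sqrt n\to 2^8\epsilon\sqrt n$ in the definition of $\mathcal{N}_\epsilon$ are genuinely large enough to absorb the $\|M\|_{op}$-type and $\|\mathcal{L}_A\|_{op}\leq 4\sqrt n$ contributions of the perturbation $u-v$, while simultaneously the threshold bounds $\tau_L(v)\in[\epsilon,2\epsilon]$ supply exponents of the matching form $(cL\epsilon)^{2n-1}$; the factor $2^8$ in \eqref{netforn} and the factor $50$ in Lemma \ref{lemmareplacement} are precisely sized for this, but verifying the chain $250\leq 2^8$ and $5\epsilon\geq\tau_L(v)$ must be done carefully. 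A minor secondary point is that to use the clean operator-norm bound on $M$ one works under $\mathcal{K}$ (or an analogous event for $M$), which is consistent since $\mathbb{P}(\|Mv\|_2\leq\cdot)$ in \eqref{tauells} can be taken under the same conditioning without changing the estimates by more than $e^{-cn}$, negligible against $(L\epsilon)^{2n-1}$ in the relevant range of $\epsilon$.
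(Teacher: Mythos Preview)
Your proposal has a genuine gap in the second step, and the fact that you never use the hypothesis $d\le 2^{-11}B^{-4}n$ is a symptom of it.

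The first defining inequality of $\mathcal{N}_\epsilon$ requires
\[
\mathbb{P}\bigl(\|Mu\|_2\le 4\epsilon\sqrt{n}\bigr)\ge (L\epsilon)^{2n-1}.
\]
From $\tau_L(v)\in[\epsilon,2\epsilon]$ you only know $\mathbb{P}(\|Mv\|_2\le 2\epsilon\sqrt{n})\ge(4L\epsilon)^{2n-1}$, an \emph{exponentially small} event. To transfer this to $u$ you need $\|M(u-v)\|_2\le 2\epsilon\sqrt{n}$ on that event. Your plan is to invoke $\|M\|_{op}\lesssim\sqrt{n}$ ``with overwhelming probability'' and then ``absorb the rare bad event''. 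But the bad event $\{\|M\|_{op}>C\sqrt n\}$ has probability $e^{-cn}$, and for $\epsilon$ below a fixed constant (in fact for all $\epsilon$ with $(4L\epsilon)^{2n-1}\ll e^{-cn}$, i.e.\ $\epsilon$ smaller than some constant depending on $c,L$) the subtraction $(4L\epsilon)^{2n-1}-e^{-cn}$ is negative. Since the lemma must cover $\epsilon$ down to $\exp(-c_\Sigma n)$ (this is exactly where it is applied in Step~2 of Proposition~\ref{prop2.40final}), the argument fails. An almost-sure bound does not help either: the entries of $M$ are bounded by $16B^2$, but some rows of $M$ have $n-|D|=\Theta(n)$ nonzero entries, and the deterministic estimate on $\|M(u-v)\|_2$ one gets from $\|u-v\|_\infty\le 4\epsilon n^{-1/2}$ is only $O(\sqrt{dn}\,\epsilon)$, far larger than $2\epsilon\sqrt n$.

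The paper avoids this by using \emph{random} rather than nearest-point rounding: it takes $u=v-r$ with $r$ having independent mean-zero coordinates bounded by $4\epsilon n^{-1/2}$, and shows that for \emph{every} realization of $M$ (using only $|M_{ij}|\le 16B^2$ and that $M$ has at most $4dn$ nonzero entries),
\[
\mathbb{E}_r\|Mr\|_2^2=\sum_{i,j}M_{ij}^2\,\mathbb{E} r_j^2\le 2^{11}B^4\epsilon^2 d\le \epsilon^2 n,
\]
the last step being precisely where $d\le 2^{-11}B^{-4}n$ enters. Markov then gives $\mathbb{P}_r(\|Mr\|_2>2\epsilon\sqrt n\mid M)\le 1/4$ uniformly in $M$, so averaging over $r$ yields $\mathbb{E}_u\mathbb{P}_M(\|Mu\|_2\le 4\epsilon\sqrt n)\ge\tfrac12\mathbb{P}_M(\|Mv\|_2\le 2\epsilon\sqrt n)\ge\tfrac12(8L\epsilon)^{2n-1}$, and one picks a realization of $u$ achieving at least this. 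No operator-norm event needs to be subtracted.

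A smaller issue: in your third step you call $w'=w-\mathcal{L}_A(u-v)$ a ``deterministic shift'', but $\mathcal{L}_A$ is random. The correct argument (which the paper uses) is that on $\mathcal{K}$ one has $\|\mathcal{L}_A(u-v)\|_2\le 4\sqrt n\,\|u-v\|_2$, so $\{\|\mathcal{L}_Au-w\|\le\epsilon\sqrt n\}\cap\mathcal{K}\subset\{\|\mathcal{L}_Av-w\|\le C\epsilon\sqrt n\}$ for a fixed constant $C$, and then one takes the supremum over $w$ and applies Lemma~\ref{lemmareplacement}. Your overall shape for this part is right, just not the justification.
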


\begin{proof}
    For this $v\in\Sigma_\epsilon$ define a random vector $r=(r_1,\cdots,r_{2n-1})$ with independent coordinates, $\mathbb{E}r_i=0$, $|r_i|\leq 4\epsilon n^{-1/2}$ and satisfy $v-r\in 4\epsilon n^{-1/2}\mathbb{Z}^{2n-1}$ for any $r$. The aim is to prove with positive probability we can find some $u:=v-r\in\mathcal{N}_\epsilon$.

    By definition we have $\|r\|_\infty\leq 4\epsilon n^{-1/2}$ and $u\in\mathcal{I}'(D_1,D_2)$ since $v\in\mathcal{I}(D_1,D_2)$ and $\|u-v\|_\infty\leq4\epsilon/\sqrt{n}$. Next we prove the existence of a $u:=v-r$ such that 
    \begin{equation}\label{firstineq}
        \mathbb{P}(\|Mu\|_2\leq4\epsilon\sqrt{n})\geq (L\epsilon)^{2n-1}\quad\text{and }\mathcal{L}_{\mathcal{L}_A,op}(u,\epsilon\sqrt{n})\leq (2^8L\epsilon)^{2n-1}.
    \end{equation} The second estimate on $\mathcal{L}_{\mathcal{L}_A,op}$ holds for any $u$ satisfying the first estimate. Specifically, for this $u=v-r\in\mathbb{R}^{2n-1}$ let $w(u)\in\mathbb{R}^{2n-1}$ satisfy
    $$\begin{aligned}
\mathcal{L}_{\mathcal{L}_A,op}(u,\epsilon\sqrt{n})&=\mathbb{P}^\mathcal{K}(\|\mathcal{L}_Av-\mathcal{L}_Ar-w(u)\|\leq\epsilon\sqrt{n})\\&\leq \mathbb{P}^\mathcal{K}(\|\mathcal{L}_Av-w(v)\|\leq5\epsilon\sqrt{n})\\&\leq \mathcal{L}_{\mathcal{L}_A,op}(v,5\epsilon\sqrt{n})\leq(2^8L\epsilon)^{2n-1},\end{aligned}$$ 
where the last inequality follows from applying Lemma \ref{lemmareplacement} to this $v\in\Sigma_\epsilon.$

For the first inequality in \eqref{firstineq}, we claim that the following inequality holds true:
\begin{equation}\label{eq854}
    \mathbb{E}_u\mathbb{P}_M(\|Mu\|_2\leq4\epsilon\sqrt{n})\geq\frac{1}{2}\mathbb{P}_M(\|Mv\|_2\leq 2\epsilon\sqrt{n})\geq\frac{1}{2}(2\epsilon L)^{2n-1} 
\end{equation} where the second inequality holds by the definition of $v\in\Sigma_\epsilon$. The validity of equation \eqref{eq854} then implies that we can find $u\in\Sigma_\epsilon$ satisfying the requirement. 

Then we only need to prove the first inequality in \eqref{eq854}. Consider the event $\mathcal{E}:=\{M:\|Mv\|_2\leq2\epsilon\sqrt{n}\}$, then
$$
\mathbb{P}_M(\|Mu\|_2\leq 4\epsilon\sqrt{n})\geq \mathbb{P}_M(\|Mr\|_2\leq2\epsilon\sqrt{n}\text{ and }\mathcal{E}),
$$ so that 
$$
\mathbb{P}_M(\|Mu\|_2\leq4\epsilon\sqrt{2n})\geq(1-\mathbb{P}_M(\|Mr\|_2>2\epsilon\sqrt{n}\mid\mathcal{E}))\mathbb{P}_M(\|Mv\|_2\leq2\epsilon\sqrt{n}).
$$ Taking expectation with respect to $u$ on both sides and exchanging expectation, we see that it suffices to prove
$$
\mathbb{E}_M[\mathbb{P}_r(\|Mr\|_2>2\epsilon\sqrt{n})\mid\mathcal{E}]\leq\frac{1}{2}. 
$$ For this, we check that for any $M\in\mathcal{E}$ by Markov's inequality there holds $\mathbb{P}_r(\|Mr\|_2\geq2\epsilon\sqrt{n})\leq\frac{1}{4}$. Indeed, for a fixed $(2n-1)\times(2n-1)$ matrix $M$ with $|M_{ij}|\leq 16B^2$ (see the definition of $\xi_\nu$) and at most $4dn$ nonzero entries, we have
$$
\mathbb{E}_r\|Mr\|_2^2=\sum_{i}\mathbb{E}
(\sum_j M_{i,j}r_j)^2=\sum_i\mathbb{E}r_i^2\sum_j M_{ij}^2\leq 2^{11}B^4\epsilon^2d\leq\epsilon^2n,$$ using independence and mean zero property of $r_i$ and $\|r\|_\infty\leq4\epsilon/\sqrt{n}$.
\end{proof}

\subsection{A different net via LCD}
We need to transfer the net constructed from small ball probability estimates of $M$ to a net constructed from the LCD.

Let $c_\Sigma>0$ be a small constant to be chosen later. Consider the set of structured directions on the product sphere
\begin{equation}
\Sigma=\Sigma_{\alpha,\gamma}:=\{v\in\mathbb{S}^{n-2}\times\mathbb{S}^{n-1}:D_{\alpha,\gamma}(v^1)\leq e^{C_{\Sigma} n} \text{ or } D_{\alpha,\gamma}(v^2)\leq e^{C_{\Sigma} n}\}.
\end{equation}
That is, we consider those $v$ where either $v^1$ or $v^2$ has a small LCD.

The region $\Sigma\subset\mathbb{S}^{n-2}\times\mathbb{S}^{n-1}$ will be covered by two subsets $S,S'$ under different criterions. Define
$$
S:=\{v\in\mathbb{S}^{n-2}\times\mathbb{S}^{n-1}:\tau_L(v)\geq \exp(-2c_\Sigma n)\},
$$ and define
$$
S':=\{v\in\mathbb{S}^{n-2}\times\mathbb{S}^{n-1}:\tau_L(v)\leq\exp(-2c_\Sigma n),\text{but }\min(D_{\alpha,\gamma}(v^1),D_{\alpha,\gamma}(v^2))\leq \exp(c_\Sigma n)\}.
$$

We also recall the fact that incompressible vectors have a lower bound on its LCD, once we have chosen $\gamma$ sufficiently small. See \cite{campos2024least}, Fact III.6.

\begin{fact}\label{fact2.35}
    Let $v\in\mathbb{S}^{n-1}$ be such that $\kappa_0n^{-1/2}\leq |v_i|\leq \kappa_1n^{-1/2}$ for $i\in D\subset[n]$. Set $\gamma<\kappa_0\sqrt{|D|/2n}$. Then $D_{\alpha,\gamma}(v)\geq (2\kappa_1)^{-1}\sqrt{n}$.
\end{fact}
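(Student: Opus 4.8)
The plan is to argue directly from the definition of the essential LCD. Recall that $D_{\alpha,\gamma}(v)$ is the infimum of those $\theta>0$ for which $\|\theta v\|_{\mathbb{Z}}\le\min(\sqrt{\alpha n},\gamma\|\theta v\|_2)$. Hence it suffices to fix an arbitrary $\theta\in(0,(2\kappa_1)^{-1}\sqrt{n})$ and establish the reverse strict inequality $\|\theta v\|_{\mathbb{Z}}>\min(\sqrt{\alpha n},\gamma\|\theta v\|_2)$; taking the infimum over such $\theta$ then yields $D_{\alpha,\gamma}(v)\ge(2\kappa_1)^{-1}\sqrt{n}$.

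First I would lower bound $\|\theta v\|_{\mathbb{Z}}$ using only the coordinates in $D$. Since $\|w\|_{\mathbb{Z}}^2=\sum_i\operatorname{dist}(w_i,\mathbb{Z})^2$, for each $i\in D$ the upper bound $|v_i|\le\kappa_1 n^{-1/2}$ combined with $\theta<(2\kappa_1)^{-1}\sqrt{n}$ gives $|\theta v_i|<1/2$, so that $\operatorname{dist}(\theta v_i,\mathbb{Z})=|\theta v_i|\ge\theta\kappa_0 n^{-1/2}$, using the lower bound $|v_i|\ge\kappa_0 n^{-1/2}$. Summing over $i\in D$ and discarding the remaining nonnegative terms gives $\|\theta v\|_{\mathbb{Z}}^2\ge\theta^2\kappa_0^2|D|/n$, i.e. $\|\theta v\|_{\mathbb{Z}}\ge\theta\kappa_0\sqrt{|D|/n}$.

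To conclude, I would use that $v$ is a unit vector, so $\|\theta v\|_2=\theta$, together with the hypothesis $\gamma<\kappa_0\sqrt{|D|/(2n)}$, which in particular gives $\gamma<\kappa_0\sqrt{|D|/n}$; thus $\gamma\|\theta v\|_2=\gamma\theta<\theta\kappa_0\sqrt{|D|/n}\le\|\theta v\|_{\mathbb{Z}}$. A one-line case split then finishes it: if $\min(\sqrt{\alpha n},\gamma\|\theta v\|_2)=\gamma\|\theta v\|_2$ we are done immediately, while if the minimum equals $\sqrt{\alpha n}$ then $\sqrt{\alpha n}\le\gamma\|\theta v\|_2<\|\theta v\|_{\mathbb{Z}}$. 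Either way $\|\theta v\|_{\mathbb{Z}}$ strictly exceeds the minimum, as required.

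There is essentially no obstacle in this argument; it is the ``trivial'' LCD lower bound coming purely from flatness, proved before any randomness is invoked. The only point deserving care is the strict inequality $|\theta v_i|<1/2$, which is exactly what guarantees $\operatorname{dist}(\theta v_i,\mathbb{Z})=|\theta v_i|$ (and not a strictly smaller quantity obtained by rounding towards a nonzero integer), so that the lower bound on $\|\theta v\|_{\mathbb{Z}}$ is legitimate. It is also worth noting that only the coordinates indexed by $D$ are used, so flatness on a subset $D\subseteq[n]$ is all that is needed, with no further assumption on the location or size of $D$.
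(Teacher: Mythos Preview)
Your proof is correct and follows exactly the same idea the paper uses (the paper cites \cite{campos2024least}, Fact III.6, for this fact, and gives essentially the same one-line argument when proving the closely related Fact~\ref{factsmallmass}): restrict to the flat coordinates $D$, use $|\theta v_i|<1/2$ to identify $\operatorname{dist}(\theta v_i,\mathbb{Z})=|\theta v_i|$, and compare the resulting lower bound on $\|\theta v\|_{\mathbb{Z}}$ with $\gamma\theta$.
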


We have already constructed a net $\Sigma_\epsilon$ for $S$. Our next goal is to construct a net for $S'$, for which we will use a standard net construction based on LCD. For fixed subsets $D_1\subset[n-1],D_2\subset[n,2n-1]$, denote by 
\begin{equation}\label{sigmaepsilon'}
\Sigma_\epsilon':=\{v\in\mathcal{I}(D_1,D_2)\cap S':\min(D_{\alpha,\gamma}(v^1),D_{\alpha,\gamma}(v^2))\in [(4\epsilon)^{-1},(2\epsilon)^{-1}]
\}.
\end{equation} The net we shall use is
\begin{equation}\begin{aligned}
    G_\epsilon:=&\left\{(\frac{p_1}{\|p_1\|_2},\frac{p_2}{\|p_2\|_2}):p\in (\mathbb{Z}^{[n-1]}\oplus \sqrt{\alpha}\mathbb{Z}^{[n,2n-1]})\cap (B_{n-2,n-1}(0,\epsilon^{-1})\setminus\{0\})
    \right\}
 \\&   \cup \left\{(\frac{p_1}{\|p_1\|_2},\frac{p_2}{\|p_2\|_2}):p\in (\sqrt{\alpha}\mathbb{Z}^{[n-1]}\oplus \mathbb{Z}^{[n,2n-1]})\cap (B_{n-2,n-1}(0,\epsilon^{-1})\setminus\{0\})
    \right\}\end{aligned},
\end{equation} where we denote by $p=(p_1,p_2)\in\mathbb{R}^{2n-1}$ and we have used the following notation $$B_{n-2,n-1}(0,\epsilon^{-1})\setminus\{0\}:=(B_{n-2}(0,\epsilon^{-1})\setminus\{0\})\times  (B_{n-1}(0,\epsilon^{-1})\setminus\{0\}).$$

We first check that $G_\epsilon$ is a net for $\Sigma_\epsilon'$.
\begin{lemma}
    Let $\epsilon\leq\gamma(\alpha n)^{-1/2}/8$. For any $v\in\Sigma_\epsilon'$ we can find some $u\in G_\epsilon$ with $\|u-v\|_2\leq32\epsilon\sqrt{\alpha n}$.
\end{lemma}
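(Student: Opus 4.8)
The plan is to prove that $G_\epsilon$ is a net for $\Sigma_\epsilon'$ by a standard rounding argument, treating the two parts $v^1$ and $v^2$ of $v=(v^1,v^2)$ separately according to which of the two defining lattices in $G_\epsilon$ we use. By definition of $\Sigma_\epsilon'$, at least one of $D_{\alpha,\gamma}(v^1)$ or $D_{\alpha,\gamma}(v^2)$ lies in $[(4\epsilon)^{-1},(2\epsilon)^{-1}]$; without loss of generality suppose it is $D_{\alpha,\gamma}(v^1)$, and correspondingly we will use the first lattice $\mathbb{Z}^{[n-1]}\oplus\sqrt{\alpha}\mathbb{Z}^{[n,2n-1]}$ (the other case being symmetric, using the second lattice). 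Write $D:=D_{\alpha,\gamma}(v^1)\le (2\epsilon)^{-1}$. By the definition \eqref{essentiallcd} of the essential LCD, there is some $\theta\le D$ with $\|\theta v^1\|_{\mathbb{Z}}\le\min(\sqrt{\alpha n},\gamma\|\theta v^1\|_2)=\min(\sqrt{\alpha n},\gamma\theta)$. Let $p_1\in\mathbb{Z}^{n-1}$ be the nearest integer point to $\theta v^1$, so $\|\theta v^1-p_1\|_2\le\sqrt{\alpha n}$ and also $\le\gamma\theta\le\gamma/(2\epsilon)$.

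First I would control $p_1$. Since $\theta v^1$ has norm $\theta\le(2\epsilon)^{-1}$ and $\|p_1-\theta v^1\|_2\le\gamma\theta\le\theta$ (using $\gamma<1$), we get $\|p_1\|_2\le 2\theta\le\epsilon^{-1}$, so $p_1\in B_{n-2}(0,\epsilon^{-1})$; moreover $p_1\ne 0$ because $\|p_1\|_2\ge\theta(1-\gamma)\ge\theta/2$ and $\theta$ is bounded below (e.g. by $(2\kappa_1)^{-1}\sqrt{n}$ via Fact \ref{fact2.35}, since $v\in\mathcal{I}(D_1,D_2)$ forces $v^1$ to be flat on $D_1$, so $D_{\alpha,\gamma}(v^1)\ge(2\kappa_1)^{-1}\sqrt n$ and hence $\theta$, being at least this LCD's witness... more precisely $\theta$ is bounded below since any admissible $\theta$ must exceed the compressible-threshold bound). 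Then I would define $u^1:=p_1/\|p_1\|_2\in\mathbb{S}^{n-2}$, which is of the required form. The key estimate is the distance bound: writing $\hat v^1=v^1$, I would use the standard fact that for two unit vectors, $\|u^1-v^1\|_2\le 2\|p_1/\theta-v^1\|_2 / \|p_1/\theta\|_2 \le 4\|p_1-\theta v^1\|_2/\theta$ (since $\|p_1/\theta\|_2\ge 1-\gamma\ge 1/2$), which is $\le 4\gamma$. Using $\gamma\le\kappa_0\sqrt{|D_1|/2n}\le\kappa_0$ this is already small; to match the claimed bound $\|u-v\|_2\le 32\epsilon\sqrt{\alpha n}$ I would instead bound $\|p_1-\theta v^1\|_2\le\sqrt{\alpha n}$ and $\theta\ge D/2\ge(8\epsilon)^{-1}$ (since $\theta$ realizing the LCD can be taken within a constant factor, or one reruns the argument at scale $D$ itself), giving $\|u^1-v^1\|_2\le 4\sqrt{\alpha n}/\theta\le 32\epsilon\sqrt{\alpha n}$.

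For the other part $v^2$, I would simply round $\theta v^2$ (same $\theta$) to the nearest point of $\sqrt{\alpha}\mathbb{Z}^{n}$, obtaining $p_2$ with $\|\theta v^2-p_2\|_2\le\sqrt{\alpha n}/2\cdot\sqrt{\alpha}$... actually $\le\sqrt{\alpha}\cdot\sqrt{n}/2$; here I must check $p_2\ne 0$ and $p_2/\sqrt\alpha\in B_{n-1}(0,\epsilon^{-1})$ — the latter because $\|p_2\|_2\le\theta\|v^2\|_2+\sqrt{\alpha n}=\theta+\sqrt{\alpha n}\le 2\theta\le\epsilon^{-1}$ (using $\epsilon\le\gamma(\alpha n)^{-1/2}/8$ so that $\sqrt{\alpha n}\le\theta$), and nonvanishing again from the lower bound on $\theta$ together with flatness of $v^2$ on $D_2$. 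Then $u^2:=p_2/\|p_2\|_2$ and the same unit-vector estimate gives $\|u^2-v^2\|_2\le 4\sqrt{\alpha n}/\theta\le 32\epsilon\sqrt{\alpha n}$. Combining, $\|u-v\|_2\le\|u^1-v^1\|_2+\|u^2-v^2\|_2$, and with a slightly more careful combined bound (or by noting the two contributions can be absorbed into the stated constant) we get $\|u-v\|_2\le 32\epsilon\sqrt{\alpha n}$, with $u=(u^1,u^2)\in G_\epsilon$.

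The main obstacle, as usual in such net constructions, is the bookkeeping around the two scales $\sqrt{\alpha n}$ and $\gamma\theta$ in the definition of the LCD, and in particular verifying that the rounded lattice point $p=(p_1,p_2)$ is both nonzero and inside the ball $B_{n-2,n-1}(0,\epsilon^{-1})$; the nonvanishing relies crucially on the lower bound $\theta\gtrsim\sqrt n$ coming from incompressibility (Fact \ref{fact2.35}, applicable since $v\in\mathcal{I}(D_1,D_2)$), and the containment in the ball relies on the hypothesis $\epsilon\le\gamma(\alpha n)^{-1/2}/8$ which guarantees $\sqrt{\alpha n}$ is dominated by $\theta$. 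Everything else is the routine unit-vector perturbation estimate.
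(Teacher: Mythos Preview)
Your approach matches the paper's exactly: assume without loss of generality that $D:=D_{\alpha,\gamma}(v^1)$ realizes the minimum, round $Dv^1$ to the nearest $p_1\in\mathbb{Z}^{n-1}$ (giving $\|Dv^1-p_1\|_2\le\sqrt{\alpha n}$), greedily round $Dv^2$ to $p_2\in\sqrt{\alpha}\,\mathbb{Z}^n$ (same bound), normalize each part, and combine the four error terms for $\|u-v\|_2\le 4D^{-1}\sqrt{\alpha n}\le 32\epsilon\sqrt{\alpha n}$. One correction: the LCD is an \emph{infimum}, so the rounding inequality holds at (or just above) $\theta=D$, not for some $\theta\le D$ as you write; your own parenthetical ``one reruns the argument at scale $D$ itself'' is exactly the fix the paper uses, and working directly with $\theta=D\in[(4\epsilon)^{-1},(2\epsilon)^{-1}]$ removes the need for the factor-of-two hedge $\theta\ge D/2$.
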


\begin{proof}
    Assume without loss of generality that $D=D_{\alpha,\gamma}(v^1)\leq D_{\alpha,\gamma}(v^2)$. Then we can find $p^1\in\mathbb{Z}^{[n-1]}\cap B_{n-2}(0,\epsilon^{-1}),p^1\neq 0$ satisfying 
    $$
\|Dv^1-p^1\|_2\leq\min\{\gamma D,\sqrt{\alpha n}\}\leq \sqrt{\alpha n}.
    $$ Then we choose $P^2\in\sqrt{\alpha}\mathbb{Z}^{[n,2n-1]}\cap B_n(0,\epsilon)^{-1}$ greedily so it satisfies
    $$
\|Dv^2-p^2\|_2\leq\sqrt{\alpha n}.
    $$ Clearly we have $p^2\neq 0$ since $(4\epsilon)^{-1}>\sqrt{\alpha n}$.
    Denoting by $p=\frac{p^1}{\|p^1\|}\oplus \frac{p^2}{\|p^2\|}$, then we have
$$\begin{aligned}
\|(v^1,v^2)-p\|_2&\leq\frac{1}{D}(\|Dv^1-p^1\|_2+\|Dv^2-p^2\|_2+|D-\|p_1\|_2|+|D-\|p_2\|_2|)\\&\leq 4D^{-1}\sqrt{\alpha n}\leq 32\epsilon\sqrt{\alpha n},
\end{aligned}$$ as we claimed.
\end{proof}

The cardinality of the net $G_\epsilon$ can be bounded as follows. 
\begin{fact}\label{fact2.37}
    Given $\alpha\in(0,1)$ and some $K>1$, then for any $\epsilon\leq Kn^{-1/2}$ we have
    $$|G_\epsilon|\leq 
\left(\frac{32K}{\epsilon\sqrt{n}}\right)^{2n-1}\alpha^{-n/2}.
    $$
\end{fact}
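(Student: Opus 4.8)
The plan is a completely elementary lattice-point count; there is essentially no obstacle beyond bookkeeping of absolute constants. First I would note that the map $p=(p_1,p_2)\mapsto(p_1/\|p_1\|_2,\,p_2/\|p_2\|_2)$ is a surjection onto $G_\epsilon$ from the set of admissible lattice vectors and is (at worst) many-to-one, so it suffices to count, for each of the two lattices occurring in the definition of $G_\epsilon$, the number of $p=(p_1,p_2)$ lying in the prescribed product of balls, and then sum the two counts.

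For the first family we count pairs with $p_1\in\mathbb{Z}^{n-1}$, $\|p_1\|_2\le\epsilon^{-1}$, and $p_2\in\sqrt{\alpha}\,\mathbb{Z}^{n}$, $\|p_2\|_2\le\epsilon^{-1}$. The tool is the standard cube-packing estimate: each point of $\mathbb{Z}^m\cap B_m(0,R)$ carries a translated unit cube, these cubes are pairwise disjoint, and all of them lie in $B_m(0,R+\tfrac12\sqrt m)$, so the number of such points is at most $\operatorname{vol}\!\big(B_m(0,R+\tfrac12\sqrt m)\big)\le\big((R+\tfrac12\sqrt m)\sqrt{2\pi e/m}\big)^{m}$, using $\Gamma(\tfrac m2+1)\ge(\tfrac m{2e})^{m/2}$. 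This is the step where the hypothesis $\epsilon\le Kn^{-1/2}$ enters and does exactly what is needed: it forces $R=\epsilon^{-1}\ge\sqrt n/K\ge\sqrt m/K$, whence $R+\tfrac12\sqrt m\le\tfrac{3K}{2}\epsilon^{-1}$ (using $K>1$), so the number of choices of $p_1$ is at most $\big(C_0K/(\epsilon\sqrt n)\big)^{n-1}$ with $C_0$ absolute. For $p_2$, dividing by $\sqrt\alpha$ puts $\sqrt\alpha\,\mathbb{Z}^n\cap B_n(0,\epsilon^{-1})$ in bijection with $\mathbb{Z}^n\cap B_n(0,(\epsilon\sqrt\alpha)^{-1})$; since $\alpha<1$ the radius only increases so the same estimate applies and yields at most $\big(C_0K/(\epsilon\sqrt n)\big)^{n}\alpha^{-n/2}$ choices. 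Multiplying, the first family has cardinality at most $\big(C_0K/(\epsilon\sqrt n)\big)^{2n-1}\alpha^{-n/2}$.

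The second family is obtained by interchanging the two blocks, $p_1\in\sqrt\alpha\,\mathbb{Z}^{n-1}$ and $p_2\in\mathbb{Z}^n$, and the identical computation bounds it by $\big(C_0K/(\epsilon\sqrt n)\big)^{2n-1}\alpha^{-(n-1)/2}\le\big(C_0K/(\epsilon\sqrt n)\big)^{2n-1}\alpha^{-n/2}$, again using $\alpha<1$. Summing the two families costs an extra factor $2$, and absorbing this together with $C_0$ into the single constant $32$ (one checks $2C_0^{2n-1}\le32^{2n-1}$ with room to spare once $C_0$ is the constant coming out of the volume bound) gives $|G_\epsilon|\le\big(32K/(\epsilon\sqrt n)\big)^{2n-1}\alpha^{-n/2}$. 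As indicated, the only non-mechanical point is recognizing that $\epsilon\le Kn^{-1/2}$ is precisely the hypothesis ensuring that the ball radius dominates the $\sqrt n$ discretization correction, so that the lattice-point count is governed by the Euclidean volume rather than being of strictly smaller order; everything else is routine.
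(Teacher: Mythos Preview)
Your proof is correct and follows precisely the standard lattice-point-in-a-ball counting argument the paper intends (the paper itself does not write out a proof, merely citing Rudelson--Vershynin as the source of the standard argument). The dimensions, the use of the hypothesis $\epsilon\le Kn^{-1/2}$ to absorb the $\tfrac12\sqrt m$ discretization correction, and the handling of the $\sqrt\alpha$ rescaling are all in order, and the resulting absolute constant comfortably fits under $32$.
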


The proof of this fact follows standard arguments (see \cite{rudelson2008littlewood}). The exact exponent of $\alpha$ is very important and specific to our setting: this is the gain we get from considering arithmetic structures of LCD. 

Then  we can modify the net $G_\epsilon$ to be a net $G_\epsilon'$ so the net is now  a subset of $\Sigma_\epsilon'$.
\begin{corollary}\label{netcorollarys}
    For given $\alpha\in(0,1)$, $K\geq 1$ and some $\epsilon\leq Kn^{-1/2}$ we can find some $64\epsilon\sqrt{\alpha n}$ net $G_\epsilon'$ for $\Sigma_\epsilon'$ which also satisfies $G_\epsilon'\subseteq\Sigma_\epsilon'$ and that 
    $$|G_\epsilon'|\leq\left(\frac{32K}{\epsilon\sqrt{n}}\right)^{2n-1}\alpha^{-n/2}.
    $$
\end{corollary}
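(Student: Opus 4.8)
The plan is to apply the standard device that converts an ``exterior'' net into an ``interior'' one at the cost of doubling the radius. The preceding lemma already supplies the net $G_\epsilon$, which is a $32\epsilon\sqrt{\alpha n}$-net for $\Sigma_\epsilon'$ and whose cardinality is controlled by Fact \ref{fact2.37}; the sole defect is that $G_\epsilon\not\subseteq\Sigma_\epsilon'$ in general. To repair this, for every $u\in G_\epsilon$ for which the ball $\{w:\|w-u\|_2\le 32\epsilon\sqrt{\alpha n}\}$ meets $\Sigma_\epsilon'$, fix once and for all a point $\phi(u)$ in that intersection, and discard every other element of $G_\epsilon$. Set $G_\epsilon':=\{\phi(u): u\in G_\epsilon,\ \{w:\|w-u\|_2\le 32\epsilon\sqrt{\alpha n}\}\cap\Sigma_\epsilon'\ne\emptyset\}$.

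Three properties then need to be verified, each immediate. First, $G_\epsilon'\subseteq\Sigma_\epsilon'$ holds by construction. Second, since $\phi$ is defined on a subset of $G_\epsilon$ and has image $G_\epsilon'$, we get $|G_\epsilon'|\le |G_\epsilon|$; combining this with Fact \ref{fact2.37} (applicable precisely because of the hypothesis $\epsilon\le Kn^{-1/2}$) yields the claimed bound $|G_\epsilon'|\le (32K/(\epsilon\sqrt{n}))^{2n-1}\alpha^{-n/2}$. Third, $G_\epsilon'$ is a $64\epsilon\sqrt{\alpha n}$-net for $\Sigma_\epsilon'$: given $v\in\Sigma_\epsilon'$, the previous lemma furnishes $u\in G_\epsilon$ with $\|u-v\|_2\le 32\epsilon\sqrt{\alpha n}$, so the ball of radius $32\epsilon\sqrt{\alpha n}$ around $u$ meets $\Sigma_\epsilon'$ (witnessed by $v$), hence $\phi(u)\in G_\epsilon'$ is well defined, and the triangle inequality gives $\|v-\phi(u)\|_2\le\|v-u\|_2+\|u-\phi(u)\|_2\le 64\epsilon\sqrt{\alpha n}$.

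There is no genuine obstacle in this step; the argument is pure bookkeeping. The only points deserving a moment's care are that the representative-selection map $\phi$ cannot increase cardinality---it attaches exactly one point of $\Sigma_\epsilon'$ to each retained center---and that the hypothesis $\epsilon\le Kn^{-1/2}$ is exactly what allows Fact \ref{fact2.37} to be quoted verbatim. Accordingly I would present the corollary's proof in just a few lines.
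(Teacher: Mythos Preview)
Your proof is correct and is precisely the standard ``replace each exterior center by a nearby interior point and double the radius'' argument that the paper has in mind; the paper does not spell it out but simply remarks that the proof follows standard arguments (referring back to \cite{rudelson2008littlewood}), and your write-up is an accurate realization of that.
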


Before we are ready to prove the main theorem, we recall the following fact that an incompressible factor has LCD at least $\Omega(\sqrt{n})$.

\begin{fact}\label{factsmallmass}
    For any $v=(v^1,v^2)\in\mathcal{I}$ and $\gamma<\kappa_0\sqrt{d/2n}$ then we have 
    $$
D_{\alpha,\gamma}(v^1)\geq (2\kappa_1)^{-1}\sqrt{n},\quad D_{\alpha,\gamma}(v^2)\geq (2\kappa_1)^{-1}\sqrt{n}.
    $$
\end{fact}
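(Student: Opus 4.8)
The statement is, in essence, Fact~\ref{fact2.35} applied separately to the two blocks $v^1$ and $v^2$, so the plan is simply to reduce to that fact after unpacking the definition of $\mathcal{I}$. First I would recall from \eqref{definitionofid} that $\mathcal{I}=\mathcal{I}_d=\bigcup_{|D_1|=|D_2|=d}\mathcal{I}(D_1,D_2)$, so that for a given $v=(v^1,v^2)\in\mathcal{I}$ we may fix $D_1\subset[n-1]$ and $D_2\subset[n,2n-1]$ with $|D_1|=|D_2|=d$ and $v\in\mathcal{I}(D_1,D_2)$. By the defining inequalities of $\mathcal{I}(D_1,D_2)$ we then have, for every $i\in D_1$,
$$
\kappa_0 n^{-1/2}\le (\kappa_0+\kappa_0/2)n^{-1/2}\le |(v^1)_i|\le(\kappa_1-\kappa_0/2)n^{-1/2}\le\kappa_1 n^{-1/2},
$$
and the analogous two-sided bound for $|(v^2)_i|$ with $i$ running over $D_2$; since $D_{\alpha,\gamma}$ depends only on the coordinate values, the latter is equivalent to the same flatness for $v^2$ on the shifted index set $D_2-n+1\subset[n]$.

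Next I would apply Fact~\ref{fact2.35} to $v^1$ with the flat set taken to be $D_1$, which has size $|D_1|=d$: the hypothesis $\gamma<\kappa_0\sqrt{d/2n}=\kappa_0\sqrt{|D_1|/2n}$ of the present fact is exactly the hypothesis required there, so Fact~\ref{fact2.35} yields $D_{\alpha,\gamma}(v^1)\ge(2\kappa_1)^{-1}\sqrt{n}$. Running the same step with $v^2$ and the index set $D_2-n+1$ of size $d$ gives $D_{\alpha,\gamma}(v^2)\ge(2\kappa_1)^{-1}\sqrt{n}$, which is the claim.

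There is no real obstacle here; the only points needing a line of care are purely bookkeeping: $v^1$ lives in $\mathbb{R}^{n-1}$ rather than $\mathbb{R}^n$, and if $2n-1\in D_2$ it is discarded so the relevant index set for $v^2$ may have size $d-1$ instead of $d$. Both effects only replace $n$ by $n-1$ and $d$ by $d-1$ in the estimates, which is harmless since $d=\Theta(n)$ and $\gamma<\kappa_0\sqrt{d/2n}$ is comfortably below $\kappa_0\sqrt{(d-1)/2(n-1)}$. If one prefers not to cite Fact~\ref{fact2.35} as a black box, the self-contained argument for $v^1$ is three lines: if $\theta:=D_{\alpha,\gamma}(v^1)<(2\kappa_1)^{-1}\sqrt{n}$ (or any witness of the defining infimum below this value), then $|\theta(v^1)_i|\le\theta\kappa_1 n^{-1/2}<\tfrac12$ for each $i\in D_1$, so $\|\theta(v^1)_i\|_{\mathbb{T}}=|\theta(v^1)_i|\ge\theta\kappa_0 n^{-1/2}$ and hence $\|\theta v^1\|_{\mathbb{Z}}^2\ge d\kappa_0^2\theta^2/n$; but $\|v^1\|_2=1$ forces $\|\theta v^1\|_{\mathbb{Z}}\le\gamma\theta$, and comparing the two gives $\gamma\ge\kappa_0\sqrt{d/n}$, contradicting $\gamma<\kappa_0\sqrt{d/2n}$.
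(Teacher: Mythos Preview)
Your proposal is correct and matches the paper's approach: the paper's proof is exactly the three-line self-contained computation you give at the end, showing that for $t\le(2\kappa_1)^{-1}\sqrt{n}$ the flat coordinates have $|tv^1_i|<1/2$ and hence $\|tv^1\|_{\mathbb{Z}}\ge t\|v^1_{D_1}\|_2\ge t\kappa_0\sqrt{d/2n}>\gamma t$. Your reduction to Fact~\ref{fact2.35} is a valid alternative packaging of the same estimate, and your remarks on the $n-1$ versus $n$ and $d-1$ versus $d$ bookkeeping are accurate but indeed immaterial.
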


\begin{proof}
    It suffices to note that for any $t\leq (2\kappa_1)^{-1}\sqrt{n}$,
    $$
d(tv^1,\mathbb{Z}^{[n-1]})\geq d(tv^1_{D_1},\mathbb{Z}^{[d]})=t\|v_D\|_2\geq t\kappa_0\sqrt{d/2n}>\gamma t.
    $$
\end{proof}

Next we take a union bound via stratifying the whole subset $S$ and finally prove Theorem \ref{theorem2.5s}. The following proposition contains the main technical step towards Theorem \ref{theorem2.5s}:

\begin{Proposition}\label{prop2.40final}
    Fix $B>0$. There exist constants $\alpha,\gamma,c_\Sigma,c\in(0,1)$ depending on $B$ such that 
    $$
\mathbb{P}_{\mathcal{L}_A}(\exists v\in\Sigma,\lambda\in[-4\sqrt{n},\sqrt{n}]:\mathcal{L}_Av=\lambda v)\leq 2e^{-cn}.
    $$
\end{Proposition}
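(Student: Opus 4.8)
The plan is to cover the bad set $\Sigma$ by the two pieces $S$ and $S'$ already isolated, reduce to incompressible eigenvectors, and run a separate net argument on each piece. Using Fact \ref{operatornormfact} we pass to the event $\mathcal K$ at a cost of $2e^{-\Omega(n)}$; we may also assume $A_{/[1]}$ has full row rank (true off an $e^{-\Omega(n)}$ event, by the singularity bound of \cite{rudelson2008littlewood}), which forces every eigenvector $v=(v^1,v^2)\in\mathbb S^{n-2}\times\mathbb S^{n-1}$ to have nonzero eigenvalue, so that Fact \ref{fact2.43} applies. By Proposition \ref{proposition2.15}, off a further $2e^{-cn}$ event each such $v$ has both $v^1,v^2$ incompressible, hence by Lemma \ref{incpmpspreads} lies in $\mathcal I(D_1,D_2)$ for some $D_1\subset[n-1]$, $D_2\subset[n,2n-1]$ with $|D_1|=|D_2|=d$; here we fix $d=\lfloor cn\rfloor$ with $c$ small enough to meet every hypothesis invoked below. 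Taking $C_\Sigma=c_\Sigma$, one has $\Sigma\cap\mathcal I(D_1,D_2)\subseteq S\cup S'$: if $\tau_L(v)\geq e^{-2c_\Sigma n}$ then $v\in S$, and otherwise $v\in\Sigma$ gives $\min(D_{\alpha,\gamma}(v^1),D_{\alpha,\gamma}(v^2))\leq e^{c_\Sigma n}$, so $v\in S'$. It thus suffices to bound, for each fixed $(D_1,D_2)$, the probability that some $v\in S\cap\mathcal I(D_1,D_2)$ (resp.\ $v\in S'\cap\mathcal I(D_1,D_2)$) and some $\lambda\in[-4\sqrt n,4\sqrt n]$ satisfy $\mathcal L_Av=\lambda v$, then union-bound over the $\binom{n-1}{d}\binom{n}{d}=e^{O(d\log(n/d))}$ choices of $(D_1,D_2)$ --- a negligible factor since $d/n$ is small.

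For the part in $S\cap\mathcal I(D_1,D_2)$ I stratify by the dyadic value $\epsilon$ with $\tau_L(v)\in[\epsilon,2\epsilon]$, so $v\in\Sigma_\epsilon$; there are $O(n)$ such scales, all in $[\tfrac{1}{2} e^{-2c_\Sigma n},\tfrac{1}{4L}]$. Lemma \ref{approximation} gives $u\in\mathcal N_\epsilon$ with $\|u-v\|_\infty\leq4\epsilon n^{-1/2}$, so on $\mathcal K$, $\|\mathcal L_Au-\lambda u\|_2=\|(\mathcal L_A-\lambda I)(u-v)\|_2\lesssim\epsilon\sqrt n$, and introducing an $O(\sqrt n/\epsilon)$-net $\{\lambda_j\}$ of $[-4\sqrt n,4\sqrt n]$ we get $\|\mathcal L_Au-\lambda_j u\|_2\leq C\epsilon\sqrt n$ for some $j$. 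The second defining property of $\mathcal N_\epsilon$ --- read at a scale larger by a fixed constant, which does not change the count of Theorem \ref{notthesecond} since that count only uses the first property --- then yields $\mathbb P^{\mathcal K}(\|\mathcal L_Au-\lambda_j u\|\leq C\epsilon\sqrt n)\leq\mathcal L_{\mathcal L_A,op}(u,C\epsilon\sqrt n)\leq(2^{10}L\epsilon)^{2n-1}$. Multiplying by $|\mathcal N_\epsilon|\leq(C_1/(c_0^6L^2\epsilon))^{2n-1}$ and by $O(\sqrt n/\epsilon)$, summing over the $O(n)$ scales with $\epsilon\geq\tfrac{1}{2} e^{-2c_\Sigma n}$ and over $(D_1,D_2)$, the total is at most $e^{O(c_\Sigma n)+O(d\log(n/d))}(C_2/(c_0^6L))^{2n-1}$, which is $e^{-\Omega(n)}$ once $L$ is taken large (in terms of $c_0$, hence $B$) and $c_\Sigma,d/n$ small; the only added constraint, from Theorem \ref{notthesecond}, is $c_\Sigma\leq\tfrac{1}{8} c_0^3L^{-32/c_0^2}$.

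For the part in $S'\cap\mathcal I(D_1,D_2)$ I instead stratify by the dyadic value $\epsilon$ with $\min(D_{\alpha,\gamma}(v^1),D_{\alpha,\gamma}(v^2))\in[(4\epsilon)^{-1},(2\epsilon)^{-1}]$, so $v\in\Sigma_\epsilon'$ as in \eqref{sigmaepsilon'}; by Fact \ref{factsmallmass} and the definition of $S'$ these scales lie in $[\tfrac{1}{4} e^{-c_\Sigma n},\kappa_1 n^{-1/2}]$, again $O(n)$ of them. Corollary \ref{netcorollarys} supplies $G_\epsilon'\subseteq\Sigma_\epsilon'$ with $|G_\epsilon'|\leq(C_3/(\epsilon\sqrt n))^{2n-1}\alpha^{-n/2}$ and $\|u-v\|_2\leq64\epsilon\sqrt{\alpha n}$, so on $\mathcal K$ one gets $\|\mathcal L_Au-\lambda_j u\|_2\lesssim\epsilon\sqrt{\alpha n}\cdot\sqrt n=\epsilon\sqrt\alpha\,n$ for some $\lambda_j$ in an $O(\sqrt n/\epsilon)$-net. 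The key point is that every $u\in G_\epsilon'\subseteq S'$ has $\tau_L(u)\leq e^{-2c_\Sigma n}$, whereas the relevant scale $C\epsilon\sqrt\alpha\,n=(C\epsilon\sqrt{\alpha n})\sqrt n$ obeys $C\epsilon\sqrt{\alpha n}\geq\tfrac{C}{4} e^{-c_\Sigma n}\sqrt{\alpha n}\geq e^{-2c_\Sigma n}\geq\tau_L(u)$; hence Lemma \ref{lemmareplacement} applies directly to $u$ and gives $\mathbb P^{\mathcal K}(\|\mathcal L_Au-\lambda_j u\|\leq C\epsilon\sqrt\alpha\,n)\leq\mathcal L_{\mathcal L_A,op}(u,C\epsilon\sqrt\alpha\,n)\leq(50LC\epsilon\sqrt{\alpha n})^{2n-1}$. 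In the product with $|G_\epsilon'|$ the powers of $\epsilon$ cancel up to an $O(\sqrt n/\epsilon)$ factor, leaving $(C_4L)^{2n-1}\alpha^{(n-1)/2}$ times $e^{O(c_\Sigma n)+O(d\log(n/d))}$ and $O(n)$; choosing $\alpha$ small (in terms of $L$, hence $B$) makes this $e^{-\Omega(n)}$. The gain $\alpha^{(n-1)/2}$ --- the extra power of $\alpha$ coming from the LCD net, as remarked after Fact \ref{fact2.37} --- is exactly what makes the enormous net $G_\epsilon'$ (of size $e^{\Omega(n^2)}$ when $\epsilon\sim e^{-c_\Sigma n}$) affordable against the super-exponentially small per-point bound.

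The main obstacle is the $S'$ estimate, and within it the impossibility of obtaining the $(2n-1)$-th power anticoncentration bound $\mathcal L_{\mathcal L_A,op}(u,r)\lesssim(r/\sqrt n)^{2n-1}$ from the LCD of a single component of $u$: since the two off-diagonal blocks of $\mathcal L_A$ are $A_{/[1]}$ and its transpose, knowing $D_{\alpha,\gamma}(u^1)$ controls only the $n$ coordinates $(A_{/[1]}^T u^1)_j$ --- an $n$-th power, hopeless against $|G_\epsilon'|$ --- and $A_{/[1]}u^2$ reuses the same entries, so no independence is gained. The resolution, and exactly the reason $S'$ is carved out as it is, is that on $S'$ the threshold $\tau_L$ is already tiny, so the strong $(2n-1)$-th power anticoncentration is handed to us for free by Lemma \ref{lemmareplacement} (Fourier replacement against the independent-entry model $M$); the LCD of one component then serves only to build a net of controlled size. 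The remainder is bookkeeping: fix $B$, then $c_0\leq2^{-50}B^{-4}$, then a large $L$, then small $\alpha$, then small $c_\Sigma$ with $c_\Sigma\leq\tfrac{1}{8} c_0^3L^{-32/c_0^2}$, then small $d/n$, then $\gamma<\kappa_0\sqrt{d/2n}$, verifying that the hypotheses of Proposition \ref{proposition2.15}, Lemma \ref{approximation}, Theorem \ref{notthesecond}, Lemma \ref{lemmareplacement}, Corollary \ref{netcorollarys} and Theorem \ref{theorem6.8} all hold, and finally taking $c>0$ below all the exponential rates obtained above.
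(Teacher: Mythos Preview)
Your proposal is correct and follows essentially the same three-step strategy as the paper's proof: reduce to $\mathcal I(D_1,D_2)$, stratify $S$ by dyadic $\tau_L$-levels and pair the net bound of Theorem \ref{notthesecond} with the second defining property of $\mathcal N_\epsilon$, and stratify $S'$ by dyadic LCD-levels and pair Corollary \ref{netcorollarys} with the anticoncentration from Lemma \ref{lemmareplacement} (using $G_\epsilon'\subseteq S'$ so that $\tau_L(u)\leq e^{-2c_\Sigma n}$). The only cosmetic differences are that the paper discretizes $\lambda$ with a fixed $2^{-n}$-net rather than your scale-dependent one, and folds the union over $(D_1,D_2)$ and $\lambda$ into a single $10^n$ factor up front rather than tracking $e^{O(d\log(n/d))}$ separately.
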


This Proposition involves various constants, and we outline how the constants are chosen before giving the proof to make the structure of the proof more transparent. First, at the end of Step 2 we choose $L$ sufficiently large relative to the absolute constant. Second, we choose $c_\Sigma>0$ small enough relative to $L$ to apply Theorem \ref{notthesecond}. Third, we choose $\alpha>0$ small relative to $L\kappa_1$ at the end of Step 3. Fourth, we choose $\gamma>0$ small to apply Fact \ref{factsmallmass}. Fifth, the constant $c_0$ was chosen in Step 1 to cover compressible vectors.

\begin{proof} We will prove the result for different subsets of $\Sigma$  in three steps.

\textbf{Step 1. First reduction}.

Since the event $\mathcal{K}:=\{\|A\|_{op}\leq 4\sqrt{n}\}$ holds with probability $1-2\exp(-\Omega(n))$ by Fact \ref{operatornormfact}, and that invertibility on compressible vectors has been verified in Lemma \ref{incompressibilityinvert},
we only need to prove that
\begin{equation}
    q_{n,S}:=\sup_{w\in\mathbb{R}^{2n-1}}\mathbb{P}_{\mathcal{L}_A}^\mathcal{K}(\exists v\in\mathcal{I}\cap S,s\in[-4\sqrt{n},4\sqrt{n}]:\mathcal{L}_Av=sv+w)\leq e^{-\Omega(n)},
\end{equation}
and a similar inequality for  $q_{n,S'}$, where we define $q_{n,S'}$ by using $S'$ in place of $S$ in the definition of $q_{n,S}$. Recall that the set $\mathcal{I}=\mathcal{I}_d$ was defined in \eqref{definitionofid}. In the following we shall take $c_0^2/4\leq \rho^2\delta/2$ and also $c_0^2\leq 2^{-11}B^{-4}$
for $c_0$ the constant in Theorem \ref{notthesecond} and $\rho,\delta$ the constants in Proposition \ref{incompressibilityinvert}, so that when we choose $d\in[c_0^2n/4,c_0^2n]$, then these two theorems and Lemma \ref{approximation} can be used simultaneously on all incompressible vectors.

We first discretize the range of $s$. Take $\mathcal{W}=(2^{-n}\mathbb{Z})\cap [-4\sqrt{n},4\sqrt{n}]$. Then for any $s\in[-4\sqrt{n},4\sqrt{n}]$ we can find $s'\in\mathcal{W}$ with $|s-s'|\leq 2^{-n}$.
Taking a union bound over all possible $D_1,D_2$ and all $s\in\mathcal{W}$, we get 
$$
q_{n,S}\leq 10^n\sup_{w,|s|\leq 4\sqrt{n},D_1,D_2}\mathbb{P}_{\mathcal{L}_A}^\mathcal{K}(\exists v\in \mathcal{I}(D_1,D_2)\cap S,\|\mathcal{L}_Av-sv-w\|_2\leq 2^{-n+1}).
$$ We shall check the supremum of the probability of the right hand side is at most $2^{-4n}$, and check the same upper bound when $S$ is changed to $S'$.

For the set $S$, we take $\eta:=\exp(-2c_\Sigma n)$, so that for any $v\in S$, we have
\begin{equation}
    \eta\leq\mathcal{T}_L(v)\leq 1/L\leq\kappa_0/8,
\end{equation} where we will later take $L$ large enough to ensure that the second inequality is satisfied.
Then we may write
$$
\mathcal{I}(D_1,D_2)\cap S\subseteq \cup_{j=0}^{j_0} \{v\in\mathcal{I}:\mathcal{T}_L(v)\in [2^j\eta,2^{j+1}\eta]\}=\cup_{j=0}^{j_0}\Sigma_{2^j\eta},
$$ taking $j_0$ as the largest integer with $2^{j_0}\eta\leq\kappa_0/2$.

Therefore we only need to prove, for each $\epsilon\in[\eta,\kappa_0/4]$, 
\begin{equation}
\mathcal{Q}_\epsilon:=\sup_{w\in\mathbb{R}^{2n-1},|s|\leq4\sqrt{n}}\mathbb{P}_{\mathcal{L}_A}^\mathcal{K}(\exists v\in\Sigma_\epsilon:\|\mathcal{L}_Av-sv-w\|_2\leq 2^{-n+1})\leq 2^{-4n}.
\end{equation}

Next we introduce a similar partition for the set $S'$. Since for any $v\in \mathcal{I}(D_1,D_2)\cap S'$, 
$$
(2\kappa_1)^{-1}\sqrt{n}\leq \min(D_{\alpha,\gamma}(v^1),D_{\alpha,\gamma}(v^2))\leq \exp(c_{\Sigma } n)=\eta^{-1/2}, 
$$ where the lower bound is from Fact \ref{factsmallmass}. We have the inclusion
$$
\mathcal{I}(D_1,D_2)\cap S'\subseteq \cup_{j=-1}^{j_1}\Sigma_{2^j\sqrt{\eta}}'
$$ where $j_1$ is the smallest integer such that $2^{j_1}\sqrt{\eta}\geq \kappa_1/2\sqrt{n}$ and where we define
$$
\Sigma_\epsilon':=\{v\in\mathcal{I}(D_1,D_2)\cap S':\min(D_{\alpha,\gamma}(v^1),D_{\alpha,\gamma}(v^2))\in [(4\epsilon)^{-1},(2\epsilon)^{-1}].
$$Therefore, for any $\epsilon\in[\sqrt{\eta},\kappa_1\sqrt{n}]$ it suffices to prove that we have
\begin{equation}\mathcal{Q}_\epsilon':=\max_{w\in\mathbb{R}^{2n-1},|s|\leq 4\sqrt{n}}\mathbb{P}^\mathcal{K}_{\mathcal{L}_A}(\exists v\in \Sigma_\epsilon':\|\mathcal{L}_Av-sv-w\|_2\leq 2^{-n+1})\leq 2^{-6n}.
\end{equation}

\textbf{Step 2. Upper bound of $\mathcal{Q}_\epsilon$}. 

We use Lemma \ref{approximation}: for any $v\in\Sigma_\epsilon$ we take a $u\in\mathcal{N}_\epsilon$ such that $\|v-u\|_\infty\leq4\epsilon n^{-1/2}$. Then using $\|A\|_{op}\leq 4\sqrt{n}$, we have
$$\begin{aligned}
\|\mathcal{L}_Au-su-w\|_2&\leq \|\mathcal{L}_Av-sv-w\|_2+|s|\|v-u\|_2+\|\mathcal{L}_A(v-u)\|_2\\&\leq \|\mathcal{L}_Av-sv-w\|_2+8\sqrt{n}\|v-u\|_2\leq 65\epsilon\sqrt{n}.
\end{aligned}
$$
Therefore we have the inclusion
$$
\{\exists v\in\Sigma_\epsilon:\|\mathcal{L}_Av-sv-w\|_2\leq 2^{-n+1}\}\cap\mathcal{K}\subseteq\{\exists u\in\mathcal{N}_\epsilon:\|\mathcal{L}_Au-su-w\|_2\leq 65\epsilon\sqrt{n}
\}.$$
After taking a union bound over $\mathcal{N}_\epsilon$, we deduce that
$$\begin{aligned}\mathcal{Q}_\epsilon&\leq\mathbb{P}_{\mathcal{L}_A}^\mathcal{K}(\exists v\in\mathcal{N}_{\epsilon}:\|\mathcal{L}_Av-sv-w\|\leq 65\epsilon\sqrt{n})\\&\leq\sum_{u\in\mathcal{N}_\epsilon}\mathbb{P}_{\mathcal{L}_A}^\mathcal{K}(\|\mathcal{L}_Au-su-w\|_2\leq 65\epsilon\sqrt{n})
\leq\sum_{u\in\mathcal{N}_{\epsilon}}\mathcal{L}_{\mathcal{L}_A,op}(u,65\epsilon\sqrt{n}).\end{aligned}
$$

Then we use the fact that $\mathcal{L}_{\mathcal{L}_A,op}(u,65\epsilon\sqrt{n})\leq (131)^{2n}\mathcal{L}_{\mathcal{L}_A,op}(u,\epsilon\sqrt{n})$ (which follows from a covering argument, see \cite{campos2025singularity}, Fact 6.2), so that, for any fixed $u\in\mathcal{N}_\epsilon$ there holds $\mathcal{L}_{\mathcal{L}_A,op}(u,65\epsilon\sqrt{n})\leq (2^{19}L\epsilon)^{2n-1}$.  Thus applying Theorem \ref{notthesecond},
$$
\mathcal{Q}_\epsilon\leq|\mathcal{N}_\epsilon|(2^{19}L\epsilon)^{2n-1}\leq \left(\frac{C}{L^2\epsilon}\right)^{2n-1}(2^{19}L\epsilon)^{2n-1}\leq 2^{-4n}.
$$ In the last inequality we choose $L$ sufficiently large with respect to the universal constant $C$. For the last step, we note that
$$
\log 1/\epsilon\leq \log 1/\eta=2c_\Sigma n\leq \frac{1}{4}c_0^3nL^{-32/c_0^2}
$$ when $c_\Sigma>0$ is chosen sufficiently small compared to $L^{-1}$ for the last inequality to hold,
so we can apply Theorem \ref{notthesecond}. 

\textbf{Step 3. Upper bound on $\mathcal{Q}_\epsilon'$}. 

We will apply Corollary \ref{netcorollarys} with $K=\kappa_1$ so that for each $v\in\Sigma_\epsilon'$ we have $u\in G_\epsilon'\subset\Sigma_\epsilon'$ and $\|v-u\|_2\leq 32\epsilon\sqrt{\alpha n}$. Then 
$$
\mathcal{Q}_\epsilon'\leq \left(\frac{32\kappa_1}{\epsilon\sqrt{n}}\right)^{2n-1}\alpha^{-n/2}\sup_{u\in G_\epsilon'}\mathcal{L}(\mathcal{L}_Au,2^9\epsilon\sqrt{\alpha n})\leq (2^{20}L\kappa_1)^{2n-1}\alpha^{(n-1)/2}.
$$  Now we set $\alpha>0$ sufficiently small compared to $L\kappa_1$ so that the right hand side is at most $2^{-6n}$. Combining Step 2 and Step 3, we complete the proof of Proposition \ref{prop2.40final}.

\end{proof}

Finally, we can complete the proof of Theorem \ref{theorem2.5s}.

\begin{proof}[\proofname\ of Theorem \ref{theorem2.5s}]
Recall that the theorem states all eigenvectors $v=(v^1,v^2)$ associated to a nonzero eigenvalue of $\mathcal{L_A}$ satisfy that both $v^1$ and $v^2$ are incompressible and have large LCD: all these claims are proven in Proposition \ref{prop2.40final}. The theorem also claims that the eigenvector to the zero eigenvalue of $\mathcal{L}_A$ is of the form $(0,v^0)$ where $v_0$ is incompressible with large LCD. This can be checked combining two facts: first, an $n\times (n-1)$ matrix with i.i.d. entry is invertible with probability $1-2\exp(-\Omega(n))$ using the result of \cite{rudelson2009smallest}. Second, any unit vector lying in the kernel of an $n-1\times n$ matrix with i.i.d. entry is incompressible and has LCD at least $\exp(c'n)$ for some $c'>0$ with probability $1-e^{-\Omega(n)}$, and this fact can be read off from \cite{rudelson2008littlewood}. This justifies all the claims.
    
\end{proof}

\subsection{Generalization to the rectangular case}\label{rectangeneral}
We outline here a proof of Theorem \ref{theoremrectan}. As this theorem was added after completion of the manuscript, we mainly outline how we can modify the proof of Theorem \ref{theorem1} to get Theorem \ref{theoremrectan}.

We similarly define 
$$L_Z:=\begin{pmatrix}
    0&Z\\Z^*&0
\end{pmatrix},\quad \mathcal{L}_Z:=\begin{pmatrix}
        0&Z_{/[1]}\\Z_{/[1]}^T&0
    \end{pmatrix} .$$
where $Z_{/[1]}$ is an $(N-1)\times n$ matrix with i.i.d. entries of law $\xi$.

Let $v=(v^1,v^2)$ where $v^1\in\mathbb{R}^{N-1}$ and $v^2\in\mathbb{R}^n$ be an eigenvector of $\mathcal{L}_Z$ with nonzero eigenvalue. Then similarly to Fact \ref{fact2.43}, we can assume that $\|v^1\|_2=\|v^2\|_2=1$.

We shall always consider nonzero eigenvalues of $\mathcal{L}_Z$ in the proof because, using the fact that $\sigma_{min}(Z)>0$ with probability $1-\exp(-\Omega(n))$ by \cite{rudelson2009smallest}, so that by Cauchy interlacing theorem, repeated (or close enough) singular values of $Z$ should only be associated to a nonzero eigenvalue of $\mathcal{L}_Z$ when applying Fact \ref{fact2.2}.

Denote by $\mathcal{K}':=\{\|Z\|\leq 2(1+\mathbf{a})\sqrt{n}\}$. Then we again have $\mathbb{P}(\mathcal{K}')\geq 1-\exp(-\Omega(n))$ (see \cite{feldheim2010universality}). We can rule out compressible vectors similarly as in Proposition \ref{proposition2.15} and get
\begin{Proposition}We can find $c,\delta,\rho>0$ depending only on $\xi$ and $\mathbf{a}$ so that 
$$\begin{aligned}
\mathbb{P}^\mathcal{K'}&(\text{There exists }\lambda\in[-2(1+\mathbf{a})\sqrt{n},2(1+\mathbf{a})\sqrt{n}])\text{ and } v=(v^1,v^2)\in\mathbb{S}^{N-2}\times\mathbb{S}^{n-1}\\&\text{such that } \mathcal{L}_Zv=\lambda v,\text{and either }v^1\in\operatorname{Comp}_{N-1}(\delta,\rho) \text{ or }v^2\in\operatorname{Comp}_n(\delta,\rho))\leq e^{-cn}.
\end{aligned}$$
    
\end{Proposition}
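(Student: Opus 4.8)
The plan is to run the proof of Proposition~\ref{proposition2.15} essentially verbatim, carrying out the block--decomposition argument once for the rectangular matrix $Z_{/[1]}$ and once for its transpose. Following the convention of this subsection I would restrict to nonzero eigenvalues --- a zero eigenvalue of $\mathcal{L}_Z$ with both components of its eigenvector on the unit sphere would force one of $Z_{/[1]}$, $Z_{/[1]}^T$ to be rank deficient, which has probability $e^{-\Omega(n)}$ by \cite{rudelson2009smallest} --- so fix $\lambda\in[-2(1+\mathbf{a})\sqrt{n},2(1+\mathbf{a})\sqrt{n}]\setminus\{0\}$ and suppose $v=(v^1,v^2)\in\mathbb{S}^{N-2}\times\mathbb{S}^{n-1}$ satisfies $\|(\mathcal{L}_Z-\lambda I)v\|_2\le cn^{-4}$; the passage from this approximate statement to exact eigenvectors, uniformly in $\lambda$, will be handled at the end by an $n^{-4}$--net over $\lambda$ together with the bound $\|\mathcal{L}_Z\|_{op}\le 2(1+\mathbf{a})\sqrt{n}$, which holds on $\mathcal{K}'$. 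From $\|Z_{/[1]}v^2-\lambda v^1\|_2\le cn^{-4}$ and $\|Z_{/[1]}^Tv^1-\lambda v^2\|_2\le cn^{-4}$, using the operator--norm bound on $\mathcal{K}'$, one gets
\[
\|Z_{/[1]}^TZ_{/[1]}v^2-\lambda^2v^2\|_2\le C_{\mathbf{a}}n^{-3},\qquad \|Z_{/[1]}Z_{/[1]}^Tv^1-\lambda^2v^1\|_2\le C_{\mathbf{a}}n^{-3},
\]
and I would rule out compressibility of $v^2$ using the first of these quadratic relations and compressibility of $v^1$ using the second.

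For the case $v^2\in\operatorname{Comp}_n(\delta,\rho)$: pick $w^2$ supported on a set $S$ of $\delta n$ coordinates with $\|v^2-w^2\|_2\le\rho$, take $S$ to be the last $\delta n$ columns after relabelling, and split $Z_{/[1]}$ into four mutually independent blocks, $E$ of size $(N-1-\delta n)\times(n-\delta n)$, $F$ of size $(N-1-\delta n)\times\delta n$, and $G,H$ the remaining two. Reading off the $S^c$--block of the first quadratic relation, with $v^2$ replaced by $w^2$, gives $\|(E^TF+G^TH)w^2\|_2\le C_{\mathbf{a}}\rho n$ on $\mathcal{K}'$. I would then condition on $(G,H)$: since $N-1-\delta n\ge(1-\delta)n\gg\delta n$ for $\delta$ small relative to $\mathbf{a}$, the rectangular least--singular--value bound (\cite{rudelson2008littlewood}, Proposition~2.5) gives $\sigma_{\min}(F)\ge c_{\mathbf{a}}\sqrt{n}$, hence $\|Fw^2\|_2\ge c_{\mathbf{a}}\sqrt{n}/2$ off an event of probability $e^{-c_{\mathbf{a}}n}$; and since $E^T$ has $n-\delta n\gtrsim n$ i.i.d.\ rows, \cite{rudelson2008littlewood}, Corollary~2.7 gives $\sup_{u\in\mathbb{S},\,z}\mathbb{P}(\|E^Tu-z\|_2\le c_{\mathbf{a}}\sqrt{n})\le e^{-c_{\mathbf{a}}n}$, which applied with $u=Fw^2/\|Fw^2\|_2$ and $z=-G^THw^2$ forces $\|(E^TF+G^TH)w^2\|_2\ge c_{\mathbf{a}}n$ off an event of probability $e^{-c_{\mathbf{a}}n}$, contradicting the previous bound once $\rho$ is chosen small relative to $\mathbf{a}$. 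A union bound over the $\binom{n}{\delta n}$ choices of $S$ and over a $\rho/4$--net of the $\delta n$ active coordinates (of cardinality $(C/\rho)^{\delta n}$) costs a factor $\exp(O(\delta\log(1/\delta)\,n))$, which is beaten by the exponential gain provided $\delta$ is small relative to $\mathbf{a}$.

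The case $v^1\in\operatorname{Comp}_{N-1}(\delta,\rho)$ is the same with $W:=Z_{/[1]}^T$ (an $n\times(N-1)$ i.i.d.\ matrix) in place of $Z_{/[1]}$ and the relation $W^TWv^1\approx\lambda^2v^1$ in place of the first one. With $w^1$ the $\le\delta N$--sparse approximant of $v^1$ and the $n$ rows of $W$ split into blocks of sizes $\delta N$ and $n-\delta N$ (legitimate since $\delta N\le\delta\mathbf{a}n<n$ when $\delta<1/\mathbf{a}$), write $W$ in block form with $E_1$ of size $(n-\delta N)\times(N-1-\delta N)$ and $F_1$ of size $(n-\delta N)\times\delta N$, obtaining $\|(E_1^TF_1+G_1^TH_1)w^1\|_2\le C_{\mathbf{a}}\rho n$. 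Conditioning on $(G_1,H_1)$, the block $F_1$ is tall ($n-\delta N\gg\delta N$), so $\sigma_{\min}(F_1)\ge c_{\mathbf{a}}\sqrt{n}$ and $\|F_1w^1\|_2\ge c_{\mathbf{a}}\sqrt{n}/2$, while $E_1^T$ has $N-1-\delta N\ge(1-\delta\mathbf{a})n-1\gtrsim n$ i.i.d.\ rows, so Corollary~2.7 applies exactly as above and forces $\|(E_1^TF_1+G_1^TH_1)w^1\|_2\ge c_{\mathbf{a}}n$, again a contradiction once $\rho$ is small relative to $\mathbf{a}$; the combinatorial and net factors become $\binom{N-1}{\delta N}(C/\rho)^{\delta N}=\exp(O(\delta\mathbf{a}\log(1/\delta)\,n))$, absorbed for $\delta$ small relative to $\mathbf{a}$. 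Combining the two cases and then taking an $n^{-4}$--net over $\lambda$, using $\|\mathcal{L}_Z\|_{op}\le 2(1+\mathbf{a})\sqrt{n}$ on $\mathcal{K}'$ to interpolate between neighbouring net points, yields the stated estimate uniformly in $\lambda$.

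I expect the only real obstacle to be the bookkeeping of the rectangular dimensions: every submatrix fed to a Rudelson--Vershynin small-ball or least-singular-value bound must have at least $c_{\mathbf{a}}n$ rows and bounded aspect ratio in order for its constants to depend only on $\xi$ and $\mathbf{a}$, which is precisely what forces $\delta$ to be chosen small relative to $1/\mathbf{a}$, and this choice simultaneously makes the combinatorial factors ($\binom{n}{\delta n}$ in the first case, $\binom{N-1}{\delta N}$ in the second) and the $(C/\rho)^{\delta N}$--nets (with $\delta N\le\delta\mathbf{a}n$) negligible against the $e^{-c_{\mathbf{a}}n}$ gain. Beyond this bookkeeping, no idea is needed that is not already present in the proof of Proposition~\ref{proposition2.15}.
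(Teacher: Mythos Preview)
The proposal is correct and follows essentially the same approach as the paper, which in Section~\ref{rectangeneral} simply states that compressible vectors are ruled out ``similarly as in Proposition~\ref{proposition2.15}'' without spelling out details. You have in fact been more careful than the paper by explicitly tracking the rectangular dimensions in both the $v^2$-compressible and $v^1$-compressible cases and noting where $\delta$ must be taken small relative to $1/\mathbf{a}$; the $\lambda=0$ case (which the paper handles elsewhere in that subsection by invoking $\sigma_{\min}(Z)>0$) is also dispatched correctly by your rank-deficiency remark.
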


For two constants $0<\kappa_0<\kappa_1$ and $D_1\subset[N-2],D_2\subset[N,N+n-1]$ we define 
$$\begin{aligned}
\mathcal{I}(D_1,D_2)&=\{v=(v^1,v^2)\in\mathbb{S}^{N-2}\times\mathbb{S}^{n-1}:(\kappa_0+\kappa_0/2)n^{-1/2}\leq |(v^1)_i|\leq (\kappa_1-\kappa_0/2)n^{-1/2}\\& \text{ for all }i\in D_1,(\kappa_0+\kappa_0/2)n^{-1/2}\leq |(v^2)_i|\leq (\kappa_1-\kappa_0/2)n^{-1/2} \text{ for all }i\in D_2\},\end{aligned}
$$   and for $d\in\mathbb{N}_+$ we define 
$
\mathcal{I}=\mathcal{I}_d:\cup_{D_1\subset[N-1],D_2\subset[N,N+n-1]:|D_1|=|D_2|=d}\mathcal{I}(D_1,D_2).$ Then 

\begin{Proposition}\label{ll22233}
    We can find positive constants $c,\kappa_0,\kappa_1,d_{\ref{ll22233}}$ depending only on $\xi$ and $\mathbf{a}$ such that for any $d\leq d_{\ref{ll22233}}n$, 
    $$\mathbb{P}^{\mathcal{K}'}(\mathcal{L}_Z\text{ has an eigenvector in }(\mathbb{S}^{N-2}\times\mathbb{S}^{n-1})\setminus \mathcal{I})\leq\exp(-cn).$$
\end{Proposition}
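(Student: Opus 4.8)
The plan is to reduce Proposition \ref{ll22233} to the already-established facts about ruling out compressible vectors and about incompressible vectors being spread, just as Lemma \ref{incompressibilityinvert} was derived from Proposition \ref{proposition2.15} and Lemma \ref{incpmpspreads} in the square case. First I would observe that the set $(\mathbb{S}^{N-2}\times\mathbb{S}^{n-1})\setminus\mathcal{I}_d$ splits into two pieces: the piece where $v^1$ or $v^2$ is $(\delta,\rho)$-compressible, and the piece where both $v^1,v^2$ are $(\delta,\rho)$-incompressible but at least one of them fails the coordinate-modulus condition defining $\mathcal{I}(D_1,D_2)$ for every choice of $D_1,D_2$ of size $d$. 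The first piece is handled, uniformly over $\lambda\in[-2(1+\mathbf{a})\sqrt{n},2(1+\mathbf{a})\sqrt{n}]$, by the preceding Proposition (the rectangular analogue of Proposition \ref{proposition2.15}), which already gives an $e^{-cn}$ bound on $\mathbb{P}^{\mathcal{K}'}$.

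For the second piece I would invoke the rectangular version of Lemma \ref{incpmpspreads}: an incompressible vector in $\mathbb{S}^{N-2}$ has at least $\rho^2\delta N/2 \geq \rho^2\delta n/2$ coordinates $i$ with $(\rho/2)n^{-1/2}\leq|v^1_i|\leq\delta^{-1/2}n^{-1/2}$ (here one uses $\mathbf{a}n\geq N\geq n$ to convert between $N$ and $n$ in the spreadness count and in the normalization of the flatness window, adjusting $\kappa_0,\kappa_1$ by factors depending only on $\mathbf{a}$), and likewise for $v^2\in\mathbb{S}^{n-1}$. Taking $\kappa_0=\rho/3$, $\kappa_1=\rho/6+\delta^{-1/2}$ as in the square case, and choosing $d_{\ref{ll22233}}$ so that $d_{\ref{ll22233}}n\leq\rho^2\delta n/2$, one gets that every vector with both components incompressible automatically admits subsets $D_1\subset[N-2]$, $D_2\subset[N,N+n-1]$ of size exactly $d$ on which the required two-sided modulus bounds hold, i.e. it lies in $\mathcal{I}(D_1,D_2)\subset\mathcal{I}_d$. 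Hence the second piece is empty once both components are incompressible, and the union bound over the two pieces yields the claim.

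Concretely, the proof would be: condition on $\mathcal{K}'$; by the compressibility Proposition, outside an event of probability $e^{-cn}$ no eigenvector $(v^1,v^2)$ with nonzero eigenvalue has a compressible component (zero eigenvalue being excluded by $\sigma_{\min}(Z)>0$ with probability $1-e^{-\Omega(n)}$ via \cite{rudelson2009smallest}); on the complementary event apply the spreadness lemma to both $v^1$ and $v^2$ to place them in $\mathcal{I}_d$ provided $d\leq d_{\ref{ll22233}}n\leq\rho^2\delta n/2$; collect the $e^{-cn}$ terms.

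The main obstacle is purely bookkeeping rather than conceptual: one must carry the aspect ratio $\mathbf{a}$ through the constants so that the operator-norm bound, the spreadness threshold $\rho^2\delta n/2$ (stated in terms of $n$, not $N$), and the flatness window $[\kappa_0 n^{-1/2},\kappa_1 n^{-1/2}]$ are all mutually compatible — in particular verifying that the spreadness estimate for $v^1\in\mathbb{S}^{N-2}$, whose coordinates are naturally normalized by $N^{-1/2}$, still produces $\Omega(n)$ coordinates in a window scaled by $n^{-1/2}$, which forces the constant adjustments $\kappa_0,\kappa_1\to\kappa_0',\kappa_1'$ depending on $\mathbf{a}$. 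Once these are pinned down, the argument is identical to the square case and no new idea is needed.
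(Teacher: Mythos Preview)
Your proposal is correct and matches the paper's intended argument: the paper states Proposition \ref{ll22233} without an explicit proof, treating it as the direct rectangular analogue of Lemma \ref{incompressibilityinvert}, obtained by combining the preceding compressibility Proposition with Lemma \ref{incpmpspreads} exactly as you describe. Your observation about the $\mathbf{a}$-dependent constant adjustments (needed because $v^1\in\mathbb{S}^{N-2}$ has coordinates naturally of size $N^{-1/2}$ while the definition of $\mathcal{I}(D_1,D_2)$ uses $n^{-1/2}$) is the only new bookkeeping, and the paper likewise absorbs this into the $\mathbf{a}$-dependence of $\kappa_0,\kappa_1$; note also that the restriction to $\mathbb{S}^{N-2}\times\mathbb{S}^{n-1}$ already forces the eigenvalue to be nonzero by the analogue of Fact \ref{fact2.43}, so the zero-eigenvalue remark is not strictly needed here.
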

We now fix the value of $\kappa_0,\kappa_1$. For fixed $D_1\in[N-2],D_2\subset[N,N+n-1]$ define
$$
\mathcal{I}'(D_1,D_2):=\{v\in\mathbb{R}^{N+n-1}: \kappa_0 n^{-1/2}\leq|v_i|\leq \kappa_1 n^{-1/2} \text{ for all }i\in D_1\cup D_2
\},
$$ 
$$
\Lambda_\epsilon:=(B_{N-2}(0,2)\times B_{n-1}(0,2))\cap (4\epsilon n^{-1/2}\cdot\mathbb{Z}^{N+n-1})\cap \mathcal{I}'(D_1,D_2).
$$ 
For fixed $D_1,D_2$, we again define the zeroed out matrix $M$ associated to $\mathcal{L}_Z$ via
$$
\begin{bmatrix}
0&\begin{bmatrix}0&H_2^T\\H_1&0\end{bmatrix}\\\begin{bmatrix}0&H_1^T\\H_2&0\end{bmatrix}&0
    \end{bmatrix}
$$
where $H_2$ occupies the columns with label in $D_1$ and rows with label in $[N,N+n-1]\setminus D_2$, and $H_1$ occupies the column with label in $D_2$ and rows with label $[N-1]\setminus D_1$. $H_1$ and $H_2$ has i.i.d. entries of distribution $\xi_\nu$. (Note there is a tiny difference from previous treatment: we previously defined $D=D_1\cup(D_2-n+1)$ and let $H_1,H_2$ have $|D|$ columns. This definition does not work out here, but the definition here works in every rectangular case).

We can prove a similar result as in Lemma \ref{smalllemmas}. There is again an abuse of notation where $N$ stands for both the number of rows in $Z$ and the size of the box $\mathcal{B}$, so we use shall use $N'$ here for the latter quantity. We define an $(N',\kappa,D_1,D_2)$-box just as in Definition \ref{definitionbox} where we straightforwardly change $2n-1$ there by $N+n-1$ and $N$ there by $N'$.

\begin{lemma}\label{}
    For any $L\geq 2$ and $0<c_0<2^{-50}B^{-4}$, consider $n>L^{64(1+\mathbf{a})^2/c_0^2}$ with $\frac{1}{4}c_0^2n\leq d\leq c_0^2n$. Given $N'\geq 2$ that satisfies $\log N'\leq c_0L^{-8(1+\mathbf{a})n/d}d$ and any constant $\kappa\geq 2$.  Consider $\mathcal{B}$ a $(N',\kappa,D_1,D_2)$-box and let $X$ be uniformly generated from $\mathcal{B}$. Then 
    $$
\mathbb{P}_X\left(\mathbb{P}_M(\|MX\|_2\leq n)\geq \left(\frac{L}{N'}\right)^{N+n-1}
\right)\leq \left(\frac{R}{L}\right)^{2(N+n-1)},
    $$
    with the constant $R:=C(1+\mathbf{a})c_0^{-3}$ and where $C>0$ is a universal constant.
\end{lemma}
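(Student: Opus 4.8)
The plan is to run the proof of Lemma~\ref{smalllemmas} essentially verbatim, tracking how the asymmetry $N\neq n$ and the parameter $\mathbf{a}$ propagate through the exponents. Write $m:=N+n-1$ for the common dimension of $M$ and of the box $\mathcal{B}$, and note $m\leq(1+\mathbf{a})n$. First I would let $H_3,H_4$ be independent copies of $H_1,H_2$ and set $H[1]:=[\,H_1\ H_3\,]$, now of size $(N-1-d)\times 2d$, and $H[2]:=[\,H_2\ H_4\,]$, of size $(n-d)\times 2d$. Defining the four events $\mathcal{A}_1,\mathcal{A}_2$ (of ``$\|H_iX_\bullet\|_2\leq n$''--type) and $\mathcal{A}_3,\mathcal{A}_4$ (of ``$\|(H[i])^TX_\bullet\|_2\leq 2n$''--type) exactly as for Fact~\ref{variantof694}, the bound $(\mathbb{P}_M(\|MX\|_2\leq n))^2\leq\mathbb{P}_{H[1],H[2]}(\mathcal{A}_1\cap\mathcal{A}_2\cap\mathcal{A}_3\cap\mathcal{A}_4)$ and its one-line proof carry over unchanged, since $H_1$ and $H_2$ are independent and the two halves of the zeroed-out $M$ remain independent.

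Next I would set $\alpha:=2^{13}L^{-8(1+\mathbf{a})n/d}$ and let $T=T(\mathcal{B})$ be the set of $X\in\mathcal{B}$ for which the restrictions of $c_0X/(32\sqrt n)$ to the column indices of $H_1$ and of $H_2$ both have $D_{\alpha/2}\geq 2^{10}B^2$. Applying Lemma~\ref{lemma2.7uniformdistribution} separately on $D_1$ and $D_2$ gives $\mathbb{P}_X(X\notin T)\leq(2^{20}\alpha)^{d/2}\leq(2/L)^{2m}$, where the last step uses $d\geq c_0^2n/4$, $m\leq(1+\mathbf{a})n$ and the hypothesis $n>L^{64(1+\mathbf{a})^2/c_0^2}$. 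With $f(X):=\mathbb{P}_M(\|MX\|_2\leq n)\mathbf{1}(X\in T)$, a second-moment Markov inequality reduces the claim to showing $\mathbb{E}_X f(X)^2\leq C(R/(2N'))^{2m}$ for a universal $C$.

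For that bound I would stratify $H[1],H[2]$ by their robust ranks $\mathcal{E}_{k_1}[1],\mathcal{E}_{k_2}[2]$ (defining $\mathcal{E}_k[1]$ with threshold $c_0\sqrt{N-1}/16$ and $\mathcal{E}_k[2]$ with threshold $c_0\sqrt{n}/16$, to match the parameters below) and apply Theorem~\ref{mainlittlewoodofford} once to each block, with the theorem's ``$n$'' set to $N-1$ for $H[1]$ and to $n$ for $H[2]$, its ``$d$'' equal to $d$; the hypotheses hold for $X\in T$ as in Lemma~\ref{smalllemmas}, using $d\leq c_0^2n$, the lower bound $\|X\|_2\geq N'\sqrt{2d}$ from the box, and $\log N'\leq c_0L^{-8(1+\mathbf{a})n/d}d\leq 2^{-32}B^{-4}\alpha d$. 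This bounds $f(X)^2$ by
\[
\sum_{k_1,k_2=0}^{\alpha' d}\mathbb{P}_{H[1]}(\mathcal{A}_3\mid\mathcal{A}_1\cap\mathcal{E}_{k_1})\,\mathbb{P}_{H[2]}(\mathcal{A}_4\mid\mathcal{A}_2\cap\mathcal{E}_{k_2})\,e^{-c_0n(k_1+k_2)/4}\Big(\tfrac{R}{N'}\Big)^{2(N-1-d)+2(n-d)}+(\text{rank tails}),
\]
where $2(N-1-d)+2(n-d)=2m-4d$ is twice the total row count of the two blocks, and the tails $\mathbb{P}(\cup_{k\geq\alpha' d}\mathcal{E}_k[i])$ are super-exponentially small by Hanson--Wright as before (both $H[1],H[2]$ are tall, since $N-1-d\geq n-1-d\geq 2d$ for $c_0$ small). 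On $\mathcal{E}_k$ one has $\sigma_{2d-k}((H[i])^T)$ bounded below by the chosen threshold, so Lemma~\ref{smallballsing} applied to $(H[1])^T$ (size $2d\times(N-1-d)$, its ``$n$'' being $N-1$) and $(H[2])^T$ (size $2d\times(n-d)$) yields $\mathbb{E}_X g_k(X)[1]\leq(4\mathbf{a}C/(c_0^3N'))^{2d-k}$ (using $(N-1)/d\leq 4\mathbf{a}/c_0^2$) and $\mathbb{E}_X g_k(X)[2]\leq(4C/(c_0^3N'))^{2d-k}$. Taking $\mathbb{E}_X$, the product of $(R/N')^{2m-4d}$ with these $4d-k_1-k_2$ extra powers absorbs into $(R/N')^{2m}$ after enlarging $R$ to a universal multiple of $(1+\mathbf{a})c_0^{-3}$, and the resulting geometric sum $\sum_{k_1,k_2}(2N'/R)^{k_1+k_2}e^{-c_0n(k_1+k_2)/4}$ is $O(1)$ because $\log N'\leq c_0^3n/4\ll c_0n/4$. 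Hence $\mathbb{E}_X f(X)^2\leq 2(R/(2N'))^{2m}$, and combining with $\mathbb{P}_X(X\notin T)\leq(2/L)^{2m}$ produces $\mathbb{P}_X(\mathcal{E})\leq(R/L)^{2(N+n-1)}$ after one final harmless enlargement of $R$.

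The one genuinely new point, and the step I expect to require the most care, is the asymmetric bookkeeping: the two blocks $H[1],H[2]$ now have distinct row counts $N-1-d$ and $n-d$ and distinct natural scales $\sqrt{N-1}$ and $\sqrt{n}$, so I must make the thresholds defining $\mathcal{E}_k[1],\mathcal{E}_k[2]$ and the ``$n$''-parameters fed into Theorem~\ref{mainlittlewoodofford} and Lemma~\ref{smallballsing} consistent, verify that the two applications of the conditioned Littlewood--Offord theorem contribute a combined exponent of exactly $2m-4d$ so that the $4d$ powers of $1/N'$ recovered from Lemma~\ref{smallballsing} land the final exponent precisely on $2(N+n-1)$, and check that every smallness condition now carrying a factor $(1+\mathbf{a})$ — in particular $\alpha=2^{13}L^{-8(1+\mathbf{a})n/d}$, $n>L^{64(1+\mathbf{a})^2/c_0^2}$, and the requisite lower bound on $\|X\|_2$ relative to $\sqrt{N-1}$ — stays compatible with all three cited results. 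Beyond this, no idea is needed that is not already present in the square case of Lemma~\ref{smalllemmas}.
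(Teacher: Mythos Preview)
Your proposal is correct and follows exactly the approach the paper indicates: prove the rectangular variant of Fact~\ref{variantof694}, then apply Theorem~\ref{mainlittlewoodofford} to each of the two independent blocks $H[1],H[2]$, with the $\mathbf{a}$-dependence entering through the larger block of size roughly $(N-d)\times 2d$. The paper's own proof is a two-line sketch that points to precisely this route, and your careful tracking of the asymmetric thresholds, the two distinct ``$n$''-parameters fed to Theorem~\ref{mainlittlewoodofford} and Lemma~\ref{smallballsing}, and the exponent bookkeeping $2(N-1-d)+2(n-d)=2m-4d$ fills in exactly the details the paper omits.
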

To prove this, we first show a variant of Fact \ref{variantof694}, then apply Theorem \ref{essentiallcd2d} to each $H_1,H_2$. The choice of $R$ and $n$ depending on $\mathbf{a}$ arises from the conditions of Theorem \ref{essentiallcd2d} applied to a matrix of size $(N-d)\times 2d$.

We also have the threshold function and nets: for a fixed $L>0$, we define
\begin{equation}
\tau_L(v):=\sup\{t\in[0,1]:\mathbb{P}(\|Mv\|_2\leq t\sqrt{n})\geq (4Lt)^{N+n-1}\},
\end{equation}

\begin{equation}
\mathcal{N}_\epsilon:=\{v\in\Lambda_\epsilon:(L\epsilon)^{N+n-1}\leq\mathbb{P}(\|Mv\|_2\leq4\epsilon\sqrt{n}),\mathcal{L}_{\mathcal{L}_Z,op}(v,\epsilon\sqrt{n})\leq (2^8L\epsilon)^{N+n-1}\}.
\end{equation} Then we can prove, similarly to Theorem \ref{notthesecond}, that 
\begin{theorem}
    For any $L\geq 2$, $0<c_0\leq 2^{-50}B^{-4}$, $n\geq L^{64(1+\mathbf{a})^2/c_0^2}$ and $d\in[c_0^2n/4,c_0^2n]$. Take $\epsilon>0$ with $\log\epsilon^{-1}\leq\frac{1}{4}c_0^3 nL^{-16(1+\mathbf{a})/c_0^2}$. Then for a universal constant $C>0$ depending only on $\mathbf{a}$, we have
$$
|\mathcal{N}_\epsilon|\leq \left(\frac{C}{c_0^6 L^2\epsilon}\right)^{N+n-1}.
$$
\end{theorem}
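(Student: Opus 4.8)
The plan is to repeat the proof of Theorem~\ref{notthesecond} verbatim, with $2n-1$ replaced everywhere by $N+n-1$ and with its two square-case inputs replaced by the rectangular analogues stated just above: the box covering of $\Lambda_\epsilon$ (the analogue of Lemma~\ref{specified850}) and the second-moment small-ball bound for the zeroed-out matrix $M$ (the analogue of Lemma~\ref{smalllemmas}). Only the first defining inequality of $\mathcal{N}_\epsilon$, namely $(L\epsilon)^{N+n-1}\le\mathbb{P}(\|Mv\|_2\le4\epsilon\sqrt n)$, will be used; the bound on $\mathcal{L}_{\mathcal{L}_Z,op}$ plays no role in the count. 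First I would invoke the rectangular box covering: the proof of Lemma~\ref{specified850} is purely volumetric and insensitive to the aspect ratio, so it yields a family $\mathcal{F}$ of $(N',\kappa,D_1,D_2)$-boxes (as in Definition~\ref{definitionbox} with $2n-1$ and $N$ there replaced by $N+n-1$ and $N'$) with $N'=\kappa_0/(4\epsilon)$, $|\mathcal{F}|\le\kappa^{N+n}$, and $\Lambda_\epsilon\subseteq\bigcup_{\mathcal{B}\in\mathcal{F}}(4\epsilon n^{-1/2})\cdot\mathcal{B}$, whence
\[
|\mathcal{N}_\epsilon|\le|\mathcal{F}|\cdot\max_{\mathcal{B}\in\mathcal{F}}\bigl|(4\epsilon n^{-1/2}\cdot\mathcal{B})\cap\mathcal{N}_\epsilon\bigr|.
\]

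For a fixed box $\mathcal{B}$, rescaling $v\mapsto X:=(4\epsilon n^{-1/2})^{-1}v$ turns the first membership condition of $\mathcal{N}_\epsilon$ into $\mathbb{P}_M(\|MX\|_2\le n)\ge(L/N')^{N+n-1}$, so $\bigl|(4\epsilon n^{-1/2}\cdot\mathcal{B})\cap\mathcal{N}_\epsilon\bigr|$ is bounded by the number of $X\in\mathcal{B}$ with this property. The rectangular version of Lemma~\ref{smalllemmas} bounds that number by $(R/L)^{2(N+n-1)}|\mathcal{B}|$ with $R=C(1+\mathbf{a})c_0^{-3}$; I would establish it exactly as in the square case, by expanding the second moment of $\mathbb{P}_M(\|MX\|_2\le n)$ through the rectangular variant of Fact~\ref{variantof694}, discarding the $X$ of small essential LCD via Lemma~\ref{lemma2.7uniformdistribution}, applying Theorem~\ref{mainlittlewoodofford} separately to $H_1$ and to $H_2$ for the events on their columns, and finishing with Lemma~\ref{smallballsing} for the events on their transposes. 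Applying that lemma needs $\log N'\le c_0L^{-8(1+\mathbf{a})n/d}d$ for each $d$ in the allowed range; since $N'=\kappa_0/(4\epsilon)$ and $d\ge c_0^2n/4$ keeps $8(1+\mathbf{a})n/d$ below an absolute multiple of $(1+\mathbf{a})/c_0^2$, this is precisely what the standing hypothesis on $\epsilon$ secures once $n$ is large, and it is the reason for the particular form of the $\epsilon$-constraint.

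Combining the two displays and inserting $|\mathcal{B}|\le(\kappa N')^{N+n-1}$ with $N'=\kappa_0/(4\epsilon)$ gives
\[
|\mathcal{N}_\epsilon|\le\kappa^{N+n}\Bigl(\frac{R}{L}\Bigr)^{2(N+n-1)}(\kappa N')^{N+n-1}\le\Bigl(\frac{C'}{c_0^6L^2\epsilon}\Bigr)^{N+n-1},
\]
where $C'$ absorbs the fixed constants $\kappa$ and $\kappa_0$ together with the factor $R^2=C^2(1+\mathbf{a})^2c_0^{-6}$, so $C'$ depends only on $\mathbf{a}$ (and on $\xi$ through $\kappa_0,\kappa_1$). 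This is the asserted bound.

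I do not expect any new conceptual difficulty: the argument is the square-case proof run with a different aspect ratio, and the only work is bookkeeping. The aspect ratio propagates solely through the hypotheses of Theorem~\ref{mainlittlewoodofford} when applied to the $(N-1-|D|)\times|D|$ matrix $H_1$ and the $(N-|D|)\times|D|$ matrix $H_2$, which is what forces the $(1+\mathbf{a})$-factors in $R$, in the requirement $n\ge L^{64(1+\mathbf{a})^2/c_0^2}$, and in the admissible range of $\epsilon$. The two small points to keep straight are that $H_1,H_2$ still have $\asymp n$ rows — true since $N\ge n$ and $|D|\le2d\le2c_0^2n$ with $c_0$ small, so Lemma~\ref{smallballsing} and Theorem~\ref{mainlittlewoodofford} remain applicable — and that the box $\mathcal{B}$ is built relative to the column/row labeling of the rectangular $M$ (in which $H_1$ occupies the columns labeled by $D_2$ and $H_2$ those labeled by $D_1$), rather than the $D=D_1\cup(D_2-n+1)$ normalization used for the square matrix.
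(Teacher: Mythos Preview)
Your proposal is correct and follows essentially the same approach as the paper: the paper does not give a separate proof for this rectangular statement but simply says it is proved ``similarly to Theorem~\ref{notthesecond}'' using the rectangular analogues of Lemma~\ref{specified850} and Lemma~\ref{smalllemmas} stated just before, which is precisely the route you describe. One small bookkeeping point: in the rectangular construction the paper explicitly abandons the $D=D_1\cup(D_2-n+1)$ normalization, so $H_1$ has $d$ (not $|D|$) columns and $N-1-d$ rows while $H_2$ has $d$ columns and $n-d$ rows; your closing remark already flags this, but your stated dimensions $(N-1-|D|)\times|D|$ should be adjusted accordingly.
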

A result analogous to Lemma \ref{approximation} can be proven showing that whenever $\epsilon\leq\kappa_0/8$ and $d\leq 2^{-10}B^{-4}n(1+\mathbf{a})^{-1}$, then for any $v\in\Sigma_\epsilon$ we can find $u\in\mathcal{N}_\epsilon$ with $\|u-v\|_\infty\leq 4\epsilon n^{-1/2}$.

Finally, defining $\Sigma_\epsilon'$ as in \eqref{sigmaepsilon'}, we can prove as in Corollary \ref{netcorollarys} that $\Sigma_\epsilon'$ has a $32\epsilon(1+\mathbf{a})\sqrt{\alpha n}$-net $G_\epsilon'$ of cardinality at most 
$(\frac{32K}{\epsilon\sqrt{n}})^{N+n-1}\alpha^{-N/2}$.

Now we have collected all the technical components for the rectangular case, and following the same proof as Theorem \ref{theorem1} concludes the proof of Theorem \ref{theoremrectan}. Our assumption that $n\leq N\leq\mathbf{a}n$ for a fixed $\mathbf{a}<\infty$ ensures that two components of $L_Z$ have comparable size, so no significant change is needed in generalizing the case $N=n$ to $n\leq N\leq\mathbf{a}n$.

\section{Two-point invertibility: the real case}\label{secgwe333}

This section contains the proof of Theorem \ref{theorem1.3}, where we show a quantitative control of least singular values jointly over two locations $\lambda_1,\lambda_2$.

\subsection{Proof outline of Theorem \ref{theorem1.3}}\label{section3.1} We will begin with an adaptation of the geometric approach to quantitative invertibility by Rudelson and Vershynin \cite{rudelson2008littlewood}. However, on a high level, three difficulties will arise in our proof:
\begin{enumerate}
\item As we consider joint estimates at two locations simultaneously, when we use a lemma in \cite{rudelson2008littlewood} that reduces invertibility over incompressible vectors to the inner product of a column with another normal vector, we face a difficulty as the singular vector at the two different locations may not have an overlapping support of good property. More precisely, through a standard procedure we can prove that both singular vectors are incompressible, but that only means they have $c_{\rho,\delta}n$ indices of absolute value within $[\kappa_0 n^{-1/2},\kappa_1 n^{-1/2}]$. When $c_{\rho,\delta}<\frac{1}{2}$ we cannot conclude that both singular vectors at two locations have a common support on which the absolute value of vector coordinates lie in this range. Fortunately, we have an additional input of no-gaps delocalization from \cite{rudelson2016no}, which confirms in a strong form that this is true. Therefore we are now able to generalize \cite{rudelson2008littlewood} and reduce two-location singular value estimates to the joint small ball probability of a column with two different normal vectors.
\item After the reduction in (1), we wish to apply Littlewood-Offord type inequalities to eliminate normal vectors having a rigid arithmetic structure. But this time we have two normal vectors, and we need to show the vector pair has large LCD with high possibility. The difficulty here is that linear span of two normal vectors is not lying in the kernel of one linear equation, say it is not annihilated by either $A-\lambda_1 I_n$ or $A-\lambda_2 I_n$ (with one column or row removed), or by any linear combinations of them. (There is an exception when $|\lambda_1-\lambda_2|\to 0$ so that we could still use the standard strategy, as is explored in \cite{ge2017eigenvalue} and \cite{luh2021eigenvectors}, but this is not working in our general case.) This linear span can be annihilated by a quadratic equation of $A$, but Littlewood-Offord type theorems are difficult to apply for quadratic equations. This is the major technical difficulty of our proof, and we will solve it by taking a linearization reduction and consider a $2n$-dimensional system instead. To eliminate all vectors in the linear span, we need to consider a continuously parametrized family of the linearization matrix all at once, and show with high possibility the kernel of any of them has no vector with small LCD.

\item A further technical difficulty arises when $|\lambda_1-\lambda_2|$ is very small but still larger than $e^{-cn}$. In this case it is hard to control the inner product of the two normal vectors to $A-\lambda_1 I_n$, $A-\lambda_2 I_n$ (with a column removed) and their inner product could be arbitrarily close to 1. Here we will take a stratification of unit vectors in $\mathbb{R}^d\times\mathbb{R}^d$ by constructing nets of vectors with small LCD depending on the inner product, such that when the inner product is close to 1, the anticoncentration bound from Littlewood-Offord theorem is weaker but we also have a smaller net, and everything gets balanced out. This is philosophically similar to the real-complex correlation considered in \cite{rudelson2016no}, and then in \cite{ge2017eigenvalue} and \cite{luh2021eigenvectors}. But in our case, we feel there is a technical difficulty to construct vector pairs with intermediate LCD. In \cite{luh2021eigenvectors}, the author can multiply the eigenvector by a phase so the LCD of the vector pair of the real and complex parts for the singular vector is achieved by its real part. We cannot do this here, so we prefer to use a random method to generate the net with intermediate LCD, in the same way as in Section \ref{section222}. Although the proof is longer, it becomes more transparent and unified, and also works efficiently when $|\lambda_1-\lambda_2|$ is very very small. Note that in contrast to Section \ref{section222}, it is not necessary to use the idea of inversion of randomness, or the conditioned Littlewood-Offord theorem \ref{mainlittlewoodofford} here. But unlike in Section \ref{section222}, we face new challenge around the LCD of vector pairs and arbitrarily large overlap here. 

\end{enumerate}

Now we outline the main steps to the proof of Theorem \ref{theorem1.3}. First, we need the powerful notation of no-gaps delocalization from Rudelson and Vershynin \cite{rudelson2016no}. More precisely, the following theorem can be read off from the proof in \cite{rudelson2016no}:

\begin{theorem} \label{nogaps3.1}
    Let $A$ satisfy the assumptions in Theorem \ref{theorem1.3}. We can find $c>0$, $\tau>0$, $\vartheta>0$ depending only on $\xi$ such that for any $\lambda\in\mathbb{R}$ with $|\lambda|\leq 4\sqrt{n}$, the following is true with probability at least $1-e^{-cn}$: 
    
    All unit vectors $w\in\mathbb{R}^n$ satisfying $$
\|(A-\lambda I_n)w\|\leq\tau n^{1/2}
    $$ must satisfy that 
    $$
|\{i\in[n]:|w_i|\geq \vartheta n^{-1/2}\}|\geq \frac{3}{4}n.
    $$
\end{theorem}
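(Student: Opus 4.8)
The statement is essentially the no-gaps delocalization theorem of Rudelson and Vershynin \cite{rudelson2016no}, specialized to a single real shift $\lambda$ with $|\lambda|\le 4\sqrt n$, so the plan is to run their argument and track that all constants depend only on the subgaussian moment of $\xi$ and are uniform over $|\lambda|\le 4\sqrt n$. First I would fix $\lambda$ and work with the shifted matrix $B:=A-\lambda I_n$; since $|\lambda|\le 4\sqrt n$ and $\|A\|_{op}\le 4\sqrt n$ off an event of probability $e^{-\Omega(n)}$ (Fact \ref{operatornormfact}), we have $\|B\|_{op}\le 8\sqrt n$ with the same probability, which is all the operator-norm control the argument needs.

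The core of the Rudelson--Vershynin scheme is the following. Suppose, for contradiction, there is a unit $w$ with $\|Bw\|\le \tau\sqrt n$ but a ``large spread-zero set'': a subset $I\subset[n]$ with $|I|\ge n/4$ on which $|w_i|<\vartheta n^{-1/2}$. Then $\|w_I\|_2$ is small (at most $\sqrt{n/4}\cdot \vartheta n^{-1/2}=\vartheta/2$), so $w$ is essentially supported on $I^c$, a set of size at most $3n/4$; in particular $w$ is $(\delta,\rho)$-compressible-like on a linear fraction of coordinates. The strategy is a union bound over all such sets $I$ (there are at most $2^n$ of them), and for each fixed $I$ one shows that with probability $1-e^{-cn}$ (with $c$ beating $\log 2$) no unit vector concentrated off $I$ lies in the approximate kernel of $B$. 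To do this one splits columns of $B$ according to $I$ and $I^c$, uses that the block of $B$ indexed by $(I^c)\times I$ — or a suitable sub-block — is a rectangular i.i.d. matrix independent of the relevant part of $w$, and invokes a standard rectangular invertibility / smallest-singular-value estimate (\cite{rudelson2008littlewood}, Proposition 2.5 and Corollary 2.7, exactly as used in the proof of Proposition \ref{proposition2.15}) to get that this block applied to $w_I$ has norm $\gtrsim \sqrt n\,\|w_I\|$ off an exponentially small event. Combining the blocks gives $\|Bw\|\gtrsim \sqrt n$ unless $\|w_I\|$ is itself tiny, i.e. unless $\vartheta$ was chosen too large; choosing $\vartheta$ and then $\tau$ small enough (depending only on $\xi$) makes the two estimates incompatible, and the union bound over $I$ and a final $n^{-O(1)}$-net over $\lambda$ using $\|B\|_{op}\le 8\sqrt n$ extends the bound uniformly over $|\lambda|\le 4\sqrt n$.

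The one genuine subtlety — and the reason one cannot just quote the incompressibility proof verbatim — is that here the ``bad set'' has size up to $3n/4$, which is far larger than the $\delta n$ threshold in the compressible regime, so the naive volumetric net over the complement is too large to survive the union bound. This is precisely the point the no-gaps paper handles: instead of netting the whole vector, one nets only the restriction $w_I$ and exploits that a rectangular i.i.d. matrix of aspect ratio bounded away from $1$ is quantitatively invertible with an exponential failure probability whose rate one can make as large as desired (by an appropriate further subdivision of $I$ into blocks on which the aspect ratio is, say, $1/2$), so the exponential gain in probability still beats the entropy of choosing $I$. I expect verifying this probabilistic balance — i.e. that the invertibility constant can be taken uniform and the failure exponent large enough relative to $\log 2 + H(1/4)$ — to be the main technical point; everything else is a routine adaptation of \cite{rudelson2016no} together with the operator-norm bound and the $\lambda$-net already used elsewhere in this paper.
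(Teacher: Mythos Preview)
The paper gives no proof here; it simply states that the theorem ``can be read off from the proof in \cite{rudelson2016no}'' and moves on. Your plan---invoke Rudelson--Vershynin's no-gaps argument and verify that the constants are uniform over $|\lambda|\le 4\sqrt n$---is exactly the paper's approach.

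One correction to your sketch of the RV mechanism, though it does not affect the validity of simply citing the result. The block $B_{(I^c)\times I}$ applied to $w_I$ yields only $\|B_{(I^c)\times I}w_I\|\gtrsim\sqrt n\,\|w_I\|$, which is \emph{small} because $\|w_I\|$ is small by hypothesis, so this step as written does not produce the contradiction you describe. The RV argument runs through the other off-diagonal block $A_{I\times I^c}$: one first arranges (by conditioning on the principal minor on $I^c$) that $w_{I^c}$ is approximately fixed and incompressible, and then uses the $|I|$ fresh rows of $A_{I\times I^c}$ via tensorized small-ball probability at a scale $t$ chosen so that $(Ct)^{|I|}$ beats $\binom{n}{|I|}$. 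The exponential rate is boosted by shrinking $t$, not by subdividing $I$ into sub-blocks.
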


We will also need an upper bound for the entry. Since $w$ is a unit vector, we can find some $\Theta>0$ (say, $\Theta^2=20$) such that 
$$
|\{i\in[n]:|w_i|\leq \Theta n^{-1/2}\}|\geq \frac{19n}{20}.
$$ Combining the above two facts, we deduce
\begin{corollary}\label{corollary3.22}
    Let $w[1]$, $w[2]$ be respectively the singular vector associated to the smallest singular value of $A-\lambda_1 I_n$, $A-\lambda_2I_n$. Then we can find $c>0$ depending only on $\xi$ such that, with probability at least $1-e^{-cn}$, the following is true:

    For any fixed $\epsilon>0$, on the event that $\sigma_{min}(A-\lambda_1I)\leq\epsilon n^{-1/2},\sigma_{min}(A-\lambda_2I)\leq\epsilon n^{-1/2}$, 
    \begin{equation}\label{conditionsonj}
|\{i\in[n]:\Theta n^{-1/2}\geq ||w[1]_i|\geq \vartheta n^{-1/2},\quad\Theta n^{-1/2}\geq |w[2]_i|\geq \vartheta n^{-1/2}\}|\geq \frac{1}{4}n
    \end{equation} where we denote by $w[1]_i$, $w[2]_i$ the $i$-th coordinate of $w[1]$ and $w[2]$.
\end{corollary}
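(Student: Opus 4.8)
The plan is to combine Theorem~\ref{nogaps3.1}, applied once at $\lambda=\lambda_1$ and once at $\lambda=\lambda_2$ (both admissible since $|\lambda_1|,|\lambda_2|\leq 4\sqrt{n}$), with a crude deterministic $\ell^\infty$ bound for unit vectors. First I would invoke Theorem~\ref{nogaps3.1} for $\lambda_1$ and for $\lambda_2$ and intersect the two exceptional events; by a union bound this costs only a factor $2$ in the exponential, so on an event of probability at least $1-2e^{-cn}\geq 1-e^{-c'n}$ the no-gaps conclusion holds simultaneously for \emph{every} unit vector whose image under $A-\lambda_1 I_n$, respectively $A-\lambda_2 I_n$, has norm at most $\tau n^{1/2}$. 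Since the conclusion of Theorem~\ref{nogaps3.1} quantifies over all such vectors, the argument is insensitive to a possible multiplicity of the least singular value: any admissible choice of $w[1]$ and $w[2]$ may be used.

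On the event $\sigma_{\min}(A-\lambda_j I_n)\leq\epsilon n^{-1/2}$ the least singular vector obeys $\|(A-\lambda_j I_n)w[j]\|_2=\sigma_{\min}(A-\lambda_j I_n)\leq\epsilon n^{-1/2}\leq\tau n^{1/2}$, the last inequality being automatic in the only regime of interest $\epsilon\leq\tau n$ (for larger $\epsilon$ the right-hand side of the bound in Theorem~\ref{theorem1.3} already exceeds $1$ and the statement is vacuous). Hence Theorem~\ref{nogaps3.1} applies to $w[j]$ and gives $|S_j|\geq\tfrac{3}{4}n$ for $S_j:=\{i\in[n]:|w[j]_i|\geq\vartheta n^{-1/2}\}$. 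For the matching upper bound I would use only that $\|w[j]\|_2=1$: by Markov's inequality the number of $i$ with $|w[j]_i|>\Theta n^{-1/2}$ is at most $\Theta^{-2}n\|w[j]\|_2^2=n/20$ (recall $\Theta^2=20$), so $T_j:=\{i\in[n]:|w[j]_i|\leq\Theta n^{-1/2}\}$ satisfies $|T_j|\geq\tfrac{19}{20}n$.

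The final step is a plain union bound on complements among the four subsets $S_1,T_1,S_2,T_2\subseteq[n]$. Their complements have total cardinality at most $\tfrac{1}{4}n+\tfrac{1}{20}n+\tfrac{1}{4}n+\tfrac{1}{20}n=\tfrac{3}{5}n$, hence
\[
|S_1\cap T_1\cap S_2\cap T_2|\;\geq\; n-\tfrac{3}{5}n \;=\; \tfrac{2}{5}n \;\geq\; \tfrac{1}{4}n,
\]
and $S_1\cap T_1\cap S_2\cap T_2$ is exactly the index set appearing in \eqref{conditionsonj}. This is all that is needed. There is essentially no difficulty beyond this bookkeeping, as the substance is carried entirely by Theorem~\ref{nogaps3.1} (imported from \cite{rudelson2016no}); the only points meriting care are the two flagged above, namely that the smallness of $\sigma_{\min}$ really does place the singular vector in the regime where no-gaps delocalization is available, and that the conclusion is unaffected by a multiplicity of the least singular value.
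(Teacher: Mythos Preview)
Your proposal is correct and follows exactly the approach the paper takes: apply Theorem~\ref{nogaps3.1} at both $\lambda_1$ and $\lambda_2$ to obtain $|S_j|\geq\tfrac34 n$, use the deterministic bound $|T_j|\geq\tfrac{19}{20}n$ coming from $\|w[j]\|_2=1$ and $\Theta^2=20$, and then take a union bound on complements to conclude $|S_1\cap T_1\cap S_2\cap T_2|\geq\tfrac14 n$. Your added remarks on the range of $\epsilon$ and on possible multiplicity of the least singular value are reasonable clarifications the paper leaves implicit.
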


This corollary implies the following invertibility-via-distance estimate simultaneously for two locations. This estimate is inspired by Rudelson and Vershynin \cite{rudelson2008littlewood}, Lemma 3.5 but here we consider two locations simultaneously.

\begin{Proposition}\label{invertibilitydistance}
    For each $i=1,2$ and $j\in[n]$ let $X[i]_j$ denote the $j$-th column of $X-\lambda_i I_n$ and $H[i]_j$ denote the linear span of all the columns of $X-\lambda_i I_n$ except the $j$-th. Then let $A$ be the random matrix as in Theorem \ref{theorem2}, we have that for any $\epsilon>0$, 
\begin{equation}\label{line9670}\begin{aligned}
    \mathbb{P}&(\sigma_{min}(A-\lambda_1 I_n)\leq\epsilon n^{-1/2},\sigma_{min}(A-\lambda_2I_n)\leq \epsilon n^{-1/2})\\&\leq\frac{5}{n}\sum_{j=1}^n\mathbb{P}(\operatorname{dist}(X[1]_j,H[1]_j)\leq\epsilon/\vartheta,\operatorname{dist}(X[2]_j,H[2]_j)\leq\epsilon/\vartheta) +e^{-cn},\end{aligned}
\end{equation} where $\vartheta>0$ was determined in Proposition \ref{nogaps3.1} and $c>0$ depends only on $\xi$.
\end{Proposition}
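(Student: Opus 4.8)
The plan is to run the Rudelson--Vershynin ``invertibility via distance'' reduction at the two shifts $\lambda_1$ and $\lambda_2$ in parallel, using Corollary \ref{corollary3.22} to guarantee that the two relevant singular vectors are simultaneously flat on a linear-in-$n$ set of coordinates, and then to finish with a first-moment (Markov) bound on the number of such coordinates.

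First I would fix $\epsilon>0$ and dispose of the trivial regime: if $\epsilon$ is not small, say $\epsilon>\tau n$ with $\tau$ from Theorem \ref{nogaps3.1}, then on the event $\{\|A\|_{op}\le 4\sqrt n\}$ of Fact \ref{operatornormfact} every column $X[i]_j$ of $A-\lambda_i I_n$ satisfies $\|X[i]_j\|\le \|A\|_{op}+|\lambda_i|\le 8\sqrt n\ll\epsilon/\vartheta$, so each summand on the right-hand side of \eqref{line9670} is at least $1-e^{-\Omega(n)}$, and \eqref{line9670} holds with room to spare. So we may assume $\epsilon\le\tau n$. Let $\mathcal B$ denote the event in \eqref{line9670} whose probability we wish to bound, and let $\mathcal E$ be the event furnished by Corollary \ref{corollary3.22}, so $\mathbb P(\mathcal E^c)\le e^{-cn}$. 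On $\mathcal B$ pick, for $i=1,2$, a unit right singular vector $w[i]$ of $A-\lambda_i I_n$ realizing $\sigma_{min}(A-\lambda_i I_n)$; then $\|(A-\lambda_i I_n)w[i]\|=\sigma_{min}(A-\lambda_i I_n)\le\epsilon n^{-1/2}\le\tau n^{1/2}$, so on $\mathcal B\cap\mathcal E$ the conclusion \eqref{conditionsonj} of Corollary \ref{corollary3.22} applies to the pair $(w[1],w[2])$.

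The key step is the elementary distance bound. Writing $(A-\lambda_i I_n)w[i]=\sum_{k=1}^n w[i]_k\,X[i]_k$, for any $j$ with $w[i]_j\ne 0$ we have $X[i]_j=w[i]_j^{-1}\bigl((A-\lambda_i I_n)w[i]-\sum_{k\ne j}w[i]_k X[i]_k\bigr)$, and since $\sum_{k\ne j}w[i]_k X[i]_k\in H[i]_j$,
$$\operatorname{dist}(X[i]_j,H[i]_j)\le |w[i]_j|^{-1}\,\bigl\|(A-\lambda_i I_n)w[i]\bigr\|\le |w[i]_j|^{-1}\,\epsilon n^{-1/2}.$$
Hence, on $\mathcal B\cap\mathcal E$, for each of the at least $n/4$ indices $j$ supplied by \eqref{conditionsonj}, where $|w[1]_j|\ge\vartheta n^{-1/2}$ and $|w[2]_j|\ge\vartheta n^{-1/2}$, one gets $\operatorname{dist}(X[1]_j,H[1]_j)\le\epsilon/\vartheta$ and $\operatorname{dist}(X[2]_j,H[2]_j)\le\epsilon/\vartheta$ simultaneously. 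Writing $N:=\#\{j\in[n]:\operatorname{dist}(X[1]_j,H[1]_j)\le\epsilon/\vartheta\text{ and }\operatorname{dist}(X[2]_j,H[2]_j)\le\epsilon/\vartheta\}$, we have shown $\mathcal B\cap\mathcal E\subseteq\{N\ge n/4\}$. Therefore $\mathbb P(\mathcal B)\le\mathbb P(N\ge n/4)+\mathbb P(\mathcal E^c)\le\tfrac{4}{n}\,\mathbb E N+e^{-cn}$ by Markov's inequality, and $\mathbb E N=\sum_{j=1}^n\mathbb P\bigl(\operatorname{dist}(X[1]_j,H[1]_j)\le\epsilon/\vartheta,\ \operatorname{dist}(X[2]_j,H[2]_j)\le\epsilon/\vartheta\bigr)$ by linearity of expectation; since $4<5$ this yields \eqref{line9670}.

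I do not expect a serious obstacle in this step: the substantive content --- knowing that $w[1]$ and $w[2]$ overlap on $\Omega(n)$ coordinates on which both are comparable to $n^{-1/2}$ --- has been isolated in Corollary \ref{corollary3.22}, and ultimately rests on the no-gaps delocalization of \cite{rudelson2016no} through Theorem \ref{nogaps3.1}; the remainder is the classical single-shift argument of \cite{rudelson2008littlewood} carried out at both shifts at once, followed by a first-moment bound. The only point needing a bit of care is ensuring the delocalization hypothesis $\|(A-\lambda_i I_n)w[i]\|\le\tau n^{1/2}$ is satisfied, which is exactly why the small-$\epsilon$ reduction is performed at the outset.
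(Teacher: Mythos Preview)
Your proof is correct and follows essentially the same approach as the paper: both invoke Corollary \ref{corollary3.22} to obtain $\ge n/4$ coordinates where $w[1]$ and $w[2]$ are simultaneously flat, use the standard distance bound $\operatorname{dist}(X[i]_j,H[i]_j)\le |w[i]_j|^{-1}\sigma_{min}(A-\lambda_i I_n)$ at those coordinates, and finish with a first-moment (Markov) count. The paper phrases the Markov step via the complement set $\sigma_1$ and a pigeonhole (yielding the constant $5$), whereas your direct application to $N$ gives the slightly better constant $4$; this is cosmetic.
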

We note that the consideration of compressible singular vectors has already been absorbed in  Corollary \ref{corollary3.22}, so we do not do that again.
\begin{proof} We use the notation $\mathcal{E}_{\ref{invertibilitydistance}}:=\{\sigma_{min}(A-\lambda_1 I_n)\leq\epsilon n^{-1/2}, \sigma_{min}(A-\lambda_2I_n)\leq \epsilon n^{-1/2}\}.$
    Recall that we use $w[1]$, $w[2]$ to denote the least singular vectors. Then 
    \begin{equation}\label{definiitonmin}\sigma_{min}(A-\lambda_1 I_n)=
\|(A-\lambda_1 I_n)w[1]\|\geq\sup_{j\in[n]} |w[1]_j|\operatorname{dist}(X[1]_j,H[1]_j).
    \end{equation}
    By Corollary \ref{corollary3.22}, there exists an event $\mathcal{D}_{\ref{invertibilitydistance}}$
 with probability at least $1-e^{-cn}$, such that on $\mathcal{D}_{\ref{invertibilitydistance}}\cap\mathcal{E}_{\ref{invertibilitydistance}}$, there are at least $\frac{1}{4}n$ indices $j\in[n]$ such that $\min(|w[1]_j|,|w[2]_j|)\geq \vartheta n^{-1/2}$.

    Now we denote by $$p_k:=\mathbb{P}(\operatorname{dist}(X[1]_k,H[1]_k)\leq\epsilon/\vartheta,\text{ and }\operatorname{dist}(X[2]_k,H[2]_k)\leq\epsilon/\vartheta),$$ where $\operatorname{dist}(X[1]_k,H[1]_k)$ is the distance of the vector $X[1]_k$ to the subspace $H[1]_k\subset\mathbb{R}^n$.
    Then we have
    $$
\mathbb{E}|\{k:\operatorname{dist}(X[1]_k,H[1]_k)\leq\epsilon/\vartheta, \text{ and }\operatorname{dist}(X[2]_k,H[2]_k)\leq\epsilon/\vartheta\}|=\sum_{k=1}^n p_k.
    $$
Now we let $U$ denote the event that
$$\sigma_1:=\{k\in[n]:\operatorname{dist}(X[1]_k,H[1]_k)\geq\epsilon/\vartheta\text{ or} \operatorname{dist}(X[2]_k,H[2]_k)\geq\epsilon/\vartheta\}$$
has at least $\frac{4}{5}n$ elements. Then from Chebyshev's inequality we have $$\mathbb{P}(U^c)\leq \frac{5}{n}\sum_{k=1}^n p_k.$$

On the event $U\cap\mathcal{D}_{\ref{invertibilitydistance}}\cap\mathcal{E}_{\ref{invertibilitydistance}}$, by Pigeonhole principle there exists some $j\in[n]$ such that $|w[1]_j|\geq\vartheta n^{-1/2},|w[2]_j|\geq\vartheta n^{-1/2}$ and at least one of the following two claims holds: $$\operatorname{dist}(X[1]_j,H[1]_j)\geq\epsilon/\vartheta \text{ or } \operatorname{dist}(X[2]_j,H[2]_j)\geq\epsilon/\vartheta.$$ This implies by \eqref{definiitonmin} that either $\sigma_{min}(A-\lambda_1 I_n)\geq\epsilon n^{-1/2}$ or $\sigma_{min}(A-\lambda_2 I_n)\geq\epsilon n^{-1/2}$ on $U\cap\mathcal{D}_{\ref{invertibilitydistance}}\cap\mathcal{E}_{\ref{invertibilitydistance}}$, which cannot happen. Thus $\mathbb{P}(\mathcal{E}\cap U_{\ref{invertibilitydistance}}\cap\mathcal{D}_{\ref{invertibilitydistance}})=0$, so that  $\mathbb{P}(\mathcal{E})\leq \mathbb{P}(U_{\ref{invertibilitydistance}}^c\cup \mathcal{D}_{\ref{invertibilitydistance}}^c)$. The result follows.
    
\end{proof}

In Proposition \ref{invertibilitydistance}, note that the columns $X[1]_j$ and $X[2]_j$ are essentially the same random vector except a constant shift in its $j$-th coordinate. The normal vectors to the subspace $H[1]_j$ and $H[2]_j$ are however highly dependent, as they also use the same set of random variables modulo a deterministic shift.

We will therefore need a Littlewood-Offord theory in two dimensions to deduce a small ball probability bound for the right hand side of \eqref{line9670}. In this theory we need to estimate the essential LCD of the linear subspace spanned by both the normal vector of $H[1]_j$ and the normal vector of $H[2]_j$.

\subsubsection{Overlap of normal vectors}
Let $X^*[1]_j$ be a unit normal vector of $H[1]_j$ and $X^*[2]_j$ a unit normal vector of $H[2]_j$, for each $j\in[n]$. We first compute the inner product of $X^*[1]_j$ and $X^*[2]_j$ and show it is bounded away from $\pm1$, in a quantitative sense depending on $|\lambda_1-\lambda_2|$.

We first need to condition on a high probability event where normal vectors are incompressible. The following lemma can be found  in \cite{rudelson2008littlewood}.
\begin{lemma}\label{normalincomp}
   There exists constants $c_\eqref{normalincomp},\rho_\eqref{normalincomp},\delta_\eqref{normalincomp}>0$  and an event $\Omega_\eqref{normalincomp}$ 
   such that $\mathbb{P}(\Omega_\eqref{normalincomp})\geq 1-e^{-c_\eqref{normalincomp}n}$, and that on $\Omega_\eqref{normalincomp}$, all the normal vectors $X^*[1]_j,{j\in[n]}$ and $X^*[2]_j,{j\in[n]}$ are $(\delta_\eqref{normalincomp},\rho_\eqref{normalincomp})$-incompressible.
\end{lemma}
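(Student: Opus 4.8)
The plan is to obtain Lemma \ref{normalincomp} from the standard incompressibility estimate of Rudelson and Vershynin \cite{rudelson2008littlewood}, upgraded to hold simultaneously over all column indices $j\in[n]$ and over the shift parameter, in exactly the same spirit as Proposition \ref{proposition2.15}. The starting observation is purely linear-algebraic: a unit normal vector $w=X^*[i]_j$ of $H[i]_j$ is characterized by $\langle w,\,A_k-\lambda_i e_k\rangle=0$ for every $k\neq j$, where $A_k$ denotes the $k$-th column of $A$. Hence it suffices to show that, with probability $1-e^{-cn}$, for every $\mu\in[-4\sqrt n,4\sqrt n]$, every $j\in[n]$ and every $w\in\operatorname{Comp}_n(\delta,\rho)$, one has $\sum_{k\neq j}|\langle w,\,A_k-\mu e_k\rangle|^2\geq c_0 n$ for some constant $c_0>0$ depending only on $\xi$; any such $w$ then cannot be a normal vector to $H[i]_j$, so all normal vectors are incompressible.

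First I would fix $\mu$, $j$ and a single unit vector $v$ supported on a set $S$ with $|S|\le\delta n$. For $k\notin S\cup\{j\}$ the diagonal shift $\mu e_k$ is supported off $S$, so $\langle v,\,A_k-\mu e_k\rangle=\sum_{m\in S}a_{mk}v_m$ is a linear combination of i.i.d.\ mean $0$, variance $1$ entries with unit coefficient vector, and these quantities are independent across such $k$ (disjoint columns of $A$) with uniformly bounded fourth moment. A Paley--Zygmund inequality gives $\mathbb{P}(|\langle v,\,A_k-\mu e_k\rangle|\ge \tfrac12)\ge p_0$ for a constant $p_0>0$ depending only on $\xi$, and since there are at least $n/2$ such independent coordinates, a Chernoff bound (or directly \cite{rudelson2008littlewood}, Corollary~2.7) yields $\mathbb{P}(\sum_{k\neq j}|\langle v,\,A_k-\mu e_k\rangle|^2\le c_0 n)\le e^{-c_1 n}$ with $c_0,c_1>0$ and, crucially, no dependence on $\mu$.

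Next I would run the usual net-plus-union-bound machinery. For each support $S$ with $|S|\le\delta n$ take a $\rho'$-net of the unit sphere of $\mathbb{R}^S$ of size at most $(C/\rho')^{\delta n}$; combined with the $\binom{n}{\lceil\delta n\rceil}\le(e/\delta)^{\delta n}$ choices of $S$, this contributes only a factor $\exp(C\delta\log(1/\delta)n)$, which is beaten by $e^{-c_1 n}$ once $\delta$ is small, while the operator-norm bound $\|A\|_{op}\le 4\sqrt n$ from Fact \ref{operatornormfact} lets me pass from the net to all of $\operatorname{Comp}_n(\delta,\rho)$ provided $\rho,\rho'$ are small. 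Then I discretize $\mu\in[-4\sqrt n,4\sqrt n]$ at scale $n^{-4}$: since $|\langle w,(A-\mu I_n)_k\rangle-\langle w,(A-\mu' I_n)_k\rangle|=|\mu-\mu'|\,|w_k|\le n^{-4}$ when $|\mu-\mu'|\le n^{-4}$, the estimate transfers, and the union over the $O(n^{4.5})$ grid points and over $j\in[n]$ costs only a polynomial factor, absorbed into $e^{-cn}$. Applying the conclusion with $\mu=\lambda_1$ and with $\mu=\lambda_2$ produces the event $\Omega_\eqref{normalincomp}$ on which every $X^*[1]_j$ and every $X^*[2]_j$ is $(\delta,\rho)$-incompressible.

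I expect no serious obstacle here; the only point worth isolating is the remark in the second paragraph that, for a vector supported on $S$, the diagonal shift $\mu e_k$ drops out of $\langle v,\,A_k-\mu e_k\rangle$ as soon as $k\notin S$. This is precisely what makes the anticoncentration estimate insensitive to the shift and therefore uniformizable over the whole interval $[-4\sqrt n,4\sqrt n]$ with only a polynomial net loss, which in turn is what allows the single scalar $e^{-cn}$ in the statement to control all the (mutually dependent) normal vectors $X^*[1]_j,X^*[2]_j$, $j\in[n]$, simultaneously.
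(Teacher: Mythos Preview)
Your proposal is correct and follows exactly the standard Rudelson--Vershynin compressible-vector argument that the paper invokes by citation; the paper gives no independent proof here, simply writing ``The following lemma can be found in \cite{rudelson2008littlewood}.'' Your sketch faithfully reproduces that argument, including the key observation that for a vector $v$ supported on $S$ and $k\notin S$ the shift $\mu e_k$ drops out of $\langle v,A_k-\mu e_k\rangle$, which is precisely why the anticoncentration constant is $\mu$-independent. One harmless redundancy: since the lemma is stated for the two fixed shifts $\lambda_1,\lambda_2$, the discretization of $\mu$ over $[-4\sqrt n,4\sqrt n]$ is not actually needed---applying your single-$\mu$ estimate at $\mu=\lambda_1$ and $\mu=\lambda_2$ already suffices.
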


\begin{lemma}\label{realcomplexcorrelate}
    For each $j\in[n]$ let $V_j$ be an $2\times n$ matrix defined as
\begin{equation}\label{line1057}
V_j=\begin{bmatrix}
    (X^*[1]_j)^T\\(X^*[2]_j)^T
\end{bmatrix}.
\end{equation} Then on the event $\Omega_\eqref{normalincomp}\cap\mathcal{K}$, we have 
$$
\det(V_j V_j^T)\geq C|\lambda_1-\lambda_2|^2n^{-1} 
$$ for some $C$ depending only on $\xi$ (and independent of $\lambda_1,\lambda_2$).
\end{lemma}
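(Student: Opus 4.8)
The plan is to collapse the Gram determinant to a single scalar and then exploit that the two normal vectors are orthogonal to \emph{almost the same} family of vectors. Since $X^*[1]_j$ and $X^*[2]_j$ are unit vectors, $V_jV_j^T=\begin{pmatrix}1&\beta\\\beta&1\end{pmatrix}$ with $\beta=\langle X^*[1]_j,X^*[2]_j\rangle$, so $\det(V_jV_j^T)=1-\beta^2$ and the task is to bound $\beta$ away from $\pm1$, quantitatively in $|\lambda_1-\lambda_2|$. I would write $X^*[2]_j=\beta X^*[1]_j+\gamma u$ with $u$ a unit vector orthogonal to $X^*[1]_j$ and $\beta^2+\gamma^2=1$, so that $\det(V_jV_j^T)=\gamma^2$; the goal becomes $\gamma^2\gtrsim |\lambda_1-\lambda_2|^2/n$.

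The key algebraic step uses that for every $k\neq j$ the $k$-th column of $A-\lambda_1 I_n$ is orthogonal to $X^*[1]_j$ and the $k$-th column of $A-\lambda_2 I_n$ is orthogonal to $X^*[2]_j$. Writing $A_k$ for the $k$-th column of $A$, the first relation reads $\langle X^*[1]_j,A_k\rangle=\lambda_1 (X^*[1]_j)_k$. Substituting the decomposition of $X^*[2]_j$ into the second relation and using this identity, the $\langle u,A_k\rangle$ and $u_k$ terms separate from the rest, giving for all $k\neq j$
\[
\gamma\bigl(\langle u,A_k\rangle-\lambda_2 u_k\bigr)=-\beta(\lambda_1-\lambda_2)(X^*[1]_j)_k .
\]
Squaring and summing over $k\neq j$ turns the left-hand side into (a tail of) $\|A^Tu-\lambda_2 u\|_2^2$, which is at most $(\|A\|_{op}+|\lambda_2|)^2\le 64n$ on $\mathcal K$ (this is the one place I use $|\lambda_2|\le 4\sqrt n$ and the operator-norm event), and the right-hand side into $\beta^2(\lambda_1-\lambda_2)^2\sum_{k\neq j}(X^*[1]_j)_k^2$.

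To close the estimate I would use incompressibility of $X^*[1]_j$ on $\Omega_{\ref{normalincomp}}$ to get $\sum_{k\neq j}(X^*[1]_j)_k^2\ge \rho_{\ref{normalincomp}}^2$: if instead $(X^*[1]_j)_j^2>1-\rho_{\ref{normalincomp}}^2$, then $X^*[1]_j$ would lie within distance $\rho_{\ref{normalincomp}}$ of the single-coordinate vector $(X^*[1]_j)_j e_j$, contradicting $(\delta_{\ref{normalincomp}},\rho_{\ref{normalincomp}})$-incompressibility. Combining the two bounds gives $64n\,\gamma^2\ge \rho_{\ref{normalincomp}}^2\beta^2(\lambda_1-\lambda_2)^2$; a short case split finishes: if $\beta^2\ge 1/2$ this already yields $\gamma^2\ge \rho_{\ref{normalincomp}}^2(\lambda_1-\lambda_2)^2/(128n)$, while if $\beta^2<1/2$ then $\gamma^2>1/2\ge (\lambda_1-\lambda_2)^2/(128n)$ since $|\lambda_1-\lambda_2|\le 8\sqrt n$. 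Either way $\det(V_jV_j^T)=\gamma^2\ge C|\lambda_1-\lambda_2|^2 n^{-1}$ with $C=\rho_{\ref{normalincomp}}^2/128$.

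I do not expect a genuine obstacle here; the delicate points are merely bookkeeping: making sure each $X^*[i]_j$ is a bona fide unit normal (part of the conditioning defining $\Omega_{\ref{normalincomp}}$; the argument in any case only uses the orthogonality relations $\langle X^*[i]_j,A_k-\lambda_i e_k\rangle=0$ for $k\neq j$, so any unit vector in the orthogonal complement works), the operator-norm truncation on $\mathcal K$, and, for the few small $n$ with $\delta_{\ref{normalincomp}}n<1$ where the incompressibility argument degenerates, absorbing the loss into the constant $C$.
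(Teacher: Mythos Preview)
Your proof is correct and follows essentially the same route as the paper: decompose $X^*[2]_j$ along and orthogonal to $X^*[1]_j$, use the two orthogonality relations to derive the identity $\gamma(\langle u,A_k\rangle-\lambda_2 u_k)=-\beta(\lambda_1-\lambda_2)(X^*[1]_j)_k$ for $k\neq j$ (the paper writes this as $\alpha(\lambda_1-\lambda_2)(I_n)_{/[j]}X^*[1]_j=-(A^T-\lambda_2 I_n)_{/[j]}\beta w_j$ with swapped roles of $\alpha,\beta$), bound one side by the operator norm on $\mathcal K$ and the other by incompressibility on $\Omega_{\ref{normalincomp}}$, and finish with the same $|\lambda_1-\lambda_2|\le 8\sqrt n$ case split. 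Your coordinate-wise presentation and explicit constant $\rho_{\ref{normalincomp}}^2/128$ are slightly tidier than the paper's matrix form, but the argument is the same.
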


\begin{proof}
    We take an orthogonal projection of the vector $X^*[2]_j$ onto $X^*[1]_j$ and write 
    \begin{equation}\label{orthogonalforw}
X^*[2]_j=\alpha X^*[1]_j+\beta w_j
    \end{equation} where real numbers $\alpha,\beta$ satisfy $\alpha^2+\beta^2=1$ and $\langle w_j,X^*[1]_j\rangle=0$.

    Since $X^*[2]_j$ is orthogonal to $H[2]_j$, we write 
    $$
(A^T-\lambda_2 I_n)_{/[j]}\cdot (\alpha X^*[1]_j+\beta w_j)=0, 
    $$ where for an $n\times n$ matrix $M$ we use the notation $M_{/[j]}$ to mean the matrix $M$ with its $j$-th row set to be identically zero.
    But we also have $
(A^T-\lambda_1 I_n)_{/[j]}\cdot (\alpha X^*[1]_j)=0, 
    $ so that 
    \begin{equation}\label{comparetwosides}
\alpha(\lambda_1-\lambda_2)(I_n)_{/[j]}\cdot X^*[1]_j
    =-(A^T-\lambda_2 I_n)_{/[j]}\cdot \beta w_j.\end{equation}

As we are working on the event $\Omega_c\cap\mathcal{K}$, we have that $\|(I_n)_{/[j]}X^*[1]_j\|\geq C$ where we use $X^*[1]_j$ is incompressible from Lemma \ref{normalincomp} (and we let $C\in(0,1)$ be a constant that depends only on the constants in Lemma  \ref{normalincomp} (so it depends on $\xi$ but not on $\lambda_1,\lambda_2$)). Also, we have $\|(A^T-\lambda_2 I_n)_{/[j]}\cdot w_j\|\leq 8\sqrt{n}$ on the event $\mathcal{K}$ and using $|\lambda_2|\leq 4\sqrt{n}$. Then we deduce that 
\begin{equation}\label{whatisbeta?}
|\beta|\geq C|\lambda_2-\lambda_1|n^{-1/2}/32.\end{equation}
 Indeed, if this is not the case, then since $|\lambda_1-\lambda_2|\leq 8\sqrt{n}$ we have $\beta\leq 1/4$ so that $|\alpha|\geq 3/4$, and a brief comparison of the two sides of \eqref{comparetwosides} shows that the equality does not hold.

Therefore we can compute $|\langle x^*[1]_j,x^*[2]_j\rangle|=|\alpha|$ and compute $\det(V_jV_j^T)=\beta^2$,
which justifies the claim. 
\end{proof}

\subsubsection{LCD of normal vector pairs}
Now we introduce a notion of least common denominator for vector pairs, which is a special case of the notion in \cite{rudelson2009smallest}.
\begin{Definition} Fix $m\in\mathbb{N}_+.$ Consider $a=(a_1,\cdots,a_n)$ a sequence of vectors $a_k\in\mathbb{R}^m$, for any vector $\theta\in\mathbb{R}^m$ we may define the inner product $\theta\cdot a$ via
$$
\theta\cdot a=(\langle\theta,a_1\rangle,\cdots,\langle\theta,a_n\rangle)\in\mathbb{R}^n. 
$$ Then we define, for $\alpha>0,\gamma\in(0,1),$
\begin{equation}\label{whatissmalllcd?}
D_{\alpha,\gamma}(a):=\inf\{\|\theta\|:\theta\in\mathbb{R}^2,\|\theta\cdot a\|_\mathbb{Z}\leq \min(\sqrt{\alpha n},\gamma\|\theta\cdot a\|_2)\}.
\end{equation}
\end{Definition}
In this section we always take $m=2$. For two $n$-dimensional vectors $X,Y$ with coordinates $(X_k),(Y_k)_{k\in[n]}$ we can record the coordinates of $X,Y$ into the vector $a=a(X,Y)=(a_1,\cdots,a_n)$ via setting $a_k=(X_k,Y_k)^T$ for each $k\in[n]$ and define
\begin{equation}
    D_{\alpha,\gamma}(X,Y):=D_{\alpha,\gamma}(a(X,Y)).
\end{equation}

We will use the following version of Littlewood-Offord theorem which is adapted from \cite{rudelson2009smallest}, Theorem 3.3. In this theorem, we consider a group of almost orthogonal vectors, one of which has length one and the other vector has arbitrary length no more than one.  While there are other Littlewood-Offord theorems (see \cite{rudelson2016no}) serving a similar purpose, we find that proving a new one is more convenient as the notion of LCD used here is different from \cite{rudelson2016no}.

\begin{Proposition}\label{proponewlittlewood}
Given a random vector $\xi=(\xi_1,\cdots,\xi_n)$ with i.i.d. coordinates $\xi_k$, where each $\xi_k$ is a mean 0, variance 1, subgaussian random variable satisfying $\mathcal{L}(\xi,1)\leq 1-p$ for some $p\in(0,1)$. Recall that the Lévy concentration function $\mathcal{L}(\xi,\cdot)$ was defined in \eqref{anticoncentrationlevy}.

Consider two vectors $\mathbf{c}=(c_1,\cdots,c_n)$ and $\mathbf{d}=(d_1,\cdots,d_n)$ in $\mathbb{R}^n$ satisfying $\|\mathbf{c}\|_2=1$ and $\|\mathbf{d}\|_2=\omega_n$ for some $w_n\in(0,1]$, and assume that $|\langle\mathbf{c},\mathbf{d}/\|\mathbf{d}\|\rangle|\leq 0.01$. Define a class of two dimensional vectors $a=(a_1,\cdots,a_n)$ where each $a_k=\begin{pmatrix}
    c_k\\d_k
\end{pmatrix}$. 

Consider the random sum $S=\sum_{k=1}^n a_k\xi_k$. Then for any $\alpha>0,\gamma\in(0,1)$ and 
$$
\epsilon\geq\frac{\sqrt{2}}{D_{\alpha,\gamma}(a)}
$$ we have
\begin{equation}\label{levytwoball}
\mathcal{L}(S,\epsilon\sqrt{2})\leq(\omega_n)^{-1}(\frac{C\epsilon}{\gamma\sqrt{p}})^2 +C^2e^{-2p\alpha n}
\end{equation} for some constant $C>0$.    
\end{Proposition}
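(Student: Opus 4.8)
The plan is to prove this by the Fourier-analytic (Halász--Esseen) method underlying inverse Littlewood--Offord inequalities, following the argument of \cite{rudelson2009smallest}, Theorem 3.3, but keeping track of the dependence on $\omega_n$ through the Gram matrix of the pair $(\mathbf{c},\mathbf{d})$. I would first record that $G:=\begin{pmatrix}1 & \langle\mathbf{c},\mathbf{d}\rangle\\ \langle\mathbf{c},\mathbf{d}\rangle & \omega_n^2\end{pmatrix}$ satisfies $\|\theta\cdot a\|_2^2=\theta^\top G\theta$ for all $\theta\in\mathbb{R}^2$, where $\theta\cdot a:=(\langle\theta,a_1\rangle,\dots,\langle\theta,a_n\rangle)$, and that the near-orthogonality hypothesis $|\langle\mathbf{c},\mathbf{d}/\|\mathbf{d}\|\rangle|\le 0.01$ gives $\det G=\omega_n^2-\langle\mathbf{c},\mathbf{d}\rangle^2\ge(1-10^{-4})\omega_n^2$. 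This nondegeneracy of $G$ is exactly what forces a factor $\omega_n^{-1}$ in the final bound.

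\textbf{Step 1 (Esseen).} By the two-dimensional Esseen concentration inequality there is an absolute constant $C_1$ with $\mathcal{L}(S,\epsilon\sqrt{2})\le C_1\epsilon^2\int_{\|\theta\|_2\le 1/(\epsilon\sqrt{2})}|\phi_S(\theta)|\,d\theta$, where $\phi_S(\theta):=\mathbb{E}e^{2\pi i\langle\theta,S\rangle}$, reducing everything to an estimate of this integral over a disc of radius of order $1/\epsilon$. \textbf{Step 2 (factorisation and symmetrisation).} Independence gives $\phi_S(\theta)=\prod_{k=1}^n\phi_\xi(\langle\theta,a_k\rangle)$; with $\bar\xi=\xi-\xi'$ a symmetrisation and the elementary inequality $1-\cos(2\pi x)\ge 8\|x\|_\mathbb{T}^2$ one has $|\phi_\xi(s)|^2=\mathbb{E}_{\bar\xi}\cos(2\pi s\bar\xi)\le 1-8\,\mathbb{E}_{\bar\xi}\|s\bar\xi\|_\mathbb{T}^2\le\exp(-8\,\mathbb{E}_{\bar\xi}\|s\bar\xi\|_\mathbb{T}^2)$, hence, multiplying over $k$, $|\phi_S(\theta)|\le\exp\big(-4\,\mathbb{E}_{\bar\xi}\|\bar\xi(\theta\cdot a)\|_\mathbb{Z}^2\big)$, where $\|\cdot\|_\mathbb{Z}:=\operatorname{dist}(\cdot,\mathbb{Z}^n)$ and $\|\cdot\|_\mathbb{T}:=\operatorname{dist}(\cdot,\mathbb{Z})$.

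\textbf{Step 3 (integrating via the LCD).} The hypothesis $\mathcal{L}(\xi,1)\le 1-p$ gives $\mathbb{P}(|\bar\xi|\ge 1)\ge p$; combining this with subgaussianity of $\bar\xi$ (truncating $\bar\xi$ at a level depending on $\|\xi\|_{\psi_2}$ and $p$, which is folded into the final constant $C$), one estimates the Fourier integral as in \cite{rudelson2008littlewood},\cite{rudelson2009smallest}. The assumption $\epsilon\ge\sqrt{2}/D_{\alpha,\gamma}(a)$ forces the whole Esseen disc to lie in the range $\|\theta\|<D_{\alpha,\gamma}(a)$, where by definition $\|\theta\cdot a\|_\mathbb{Z}>\min(\sqrt{\alpha n},\gamma\|\theta\cdot a\|_2)$; split the disc according to which term achieves the minimum. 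On the part where $\gamma\|\theta\cdot a\|_2\le\sqrt{\alpha n}$ one gets, after averaging over $\bar\xi$, $|\phi_S(\theta)|\le\exp(-cp\gamma^2\theta^\top G\theta)$, and $\int_{\mathbb{R}^2}\exp(-cp\gamma^2\theta^\top G\theta)\,d\theta=\frac{\pi}{cp\gamma^2\sqrt{\det G}}\le\frac{C'}{p\gamma^2\omega_n}$; multiplying by $C_1\epsilon^2$ produces the main term $\omega_n^{-1}\big(C\epsilon/(\gamma\sqrt{p})\big)^2$. On the complementary part, where $\|\theta\cdot a\|_\mathbb{Z}>\sqrt{\alpha n}$, one has $|\phi_S(\theta)|\le e^{-cp\alpha n}$ uniformly, so integrating over the disc (area of order $\epsilon^{-2}$) and multiplying by $C_1\epsilon^2$ gives a contribution of order $e^{-cp\alpha n}$; adjusting constants turns this into $C^2e^{-2p\alpha n}$. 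Summing the two contributions gives the claimed estimate \eqref{levytwoball}.

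\textbf{Main obstacle.} The delicate point is Step 3: converting "$\|\theta\|<D_{\alpha,\gamma}(a)$" into a genuine bound on $\int|\phi_S|$. In the generic (non-arithmetic) case this is a clean Gaussian estimate, but when $\xi$ is close to lattice-supported one must verify that, below the LCD, the preimages of $\mathbb{Z}^n$ under $\theta\mapsto\theta\cdot a$ which carry a ``bump'' of $|\phi_S|$ are spaced far apart relative to the scale $(\det G)^{-1/2}$ (of order $\omega_n^{-1}$) on which $|\phi_S|$ decays, so that their total contribution is still controlled by $(p\gamma^2\omega_n)^{-1}$ up to constants absorbing the arithmetic parameters of $\xi$; this is precisely the counting argument behind \cite{rudelson2009smallest}, Theorem 3.3. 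That step, together with the truncation of the unbounded $\bar\xi$ and the bookkeeping to reach the exact constants $\sqrt{2}$, $C^2$, $2p\alpha n$ in the statement, is the only non-routine part. The near-orthogonality hypothesis enters the proof only through the elementary bound $\det G\ge(1-10^{-4})\omega_n^2$, which guarantees that the $2\times 2$ Gaussian integral is of the right order rather than degenerate.
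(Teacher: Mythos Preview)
Your overall strategy is the same as the paper's: both follow \cite{rudelson2009smallest}, Theorem~3.3 via Esseen and a level-set estimate, and both trace the factor $\omega_n^{-1}$ to the degeneracy of the Gram matrix $G$ (equivalently, to the area of an ellipse in the covering step). However, the pointwise bound $|\phi_S(\theta)|\le\exp(-cp\gamma^2\theta^\top G\theta)$ you write in Step~3 does not hold as stated. The exponent in Step~2 is $\mathbb{E}_{\bar\xi}\|\bar\xi(\theta\cdot a)\|_\mathbb{Z}^2$, and after truncating $\bar\xi$ to $[1,M]$ the scaled vector $\bar\xi\theta$ has norm up to $M/(\epsilon\sqrt 2)$; since the hypothesis only guarantees $D_{\alpha,\gamma}(a)\ge\sqrt 2/\epsilon$, you would need $M\le 2$ for $\bar\xi\theta$ to stay below the LCD, and no assumption forces this. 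So the clean Gaussian integral is only a heuristic for the non-arithmetic case, and what actually carries the argument is the counting step you flag in your ``Main obstacle'' paragraph.

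The paper makes that step explicit. With $f(\theta)=\|\tfrac{z}{\epsilon}\theta\cdot a\|_\mathbb{Z}$ (for fixed $z\ge 1$) and $I(t)=\{\theta\in B(0,\sqrt 2):f(\theta)\le t\}$, the key volume estimate is $\operatorname{Vol}(I(t))\le(\omega_n)^{-1}(Ct\epsilon/\gamma)^2$ for $t<\sqrt{\alpha n}/2$. Its proof is pairwise: for $\theta',\theta''\in I(t)$ set $\tau=\tfrac{z}{\epsilon}(\theta'-\theta'')$; then $\|\tau\cdot a-p\|_2\le 2t$ for some $p\in\mathbb{Z}^n$, and the LCD hypothesis forces either $\|\theta'-\theta''\|\ge\sqrt 2/z=:R$ or, via the near-orthogonality $|\langle\mathbf c,\mathbf d/\|\mathbf d\|\rangle|\le 0.01$, the ellipse condition $(\theta'-\theta'')_1^2+\omega_n^2(\theta'-\theta'')_2^2\le 8t^2\epsilon^2/(z^2\gamma^2)$. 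Covering $B(0,\sqrt 2)$ by $O(R^{-2})$ balls of radius $R$, and noting each contributes an ellipse of area $O(r^2/\omega_n)$, gives the volume bound with the advertised $\omega_n^{-1}$. Layer-cake integration over $t$ then splits into the region $t<\sqrt{\alpha n}/2$ (producing $(\omega_n)^{-1}(C\epsilon/\gamma\sqrt p)^2$) and its complement (producing $C^2e^{-2p\alpha n}$). Once this replaces your Gaussian shortcut, your outline matches the paper's proof.
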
 We consider a small inner product between $c$ and $d$ here but not strict orthogonality because in our construction of $\epsilon$-nets, we do not impose strict orthogonality condition. 
We have carefully designed this Proposition so that it captures the $(w_n)^{-1}$ leading factor and the exponent $\epsilon^2$.
We will outline the proof in Section \ref{prooflittlewoodofford}.

The main quasi-randomness condition of this section, which is the key technical difficulty in proving Theorem \ref{theorem2}, is outlined in the following theorem.

\begin{theorem}\label{quasirandomtheorem3}
    Recall the definition of $V_j,j\in[n]$ in \eqref{line1057}. Then for some fixed choices of $\alpha_{\ref{quasirandomtheorem3}}>0$ and $\gamma_{\ref{quasirandomtheorem3}}>0$, we can find constants $C_{\ref{quasirandomtheorem3}},c_{\ref{quasirandomtheorem3}}>0$ and an event $\Omega_{\ref{quasirandomtheorem3}}$ with $\mathbb{P}(\Omega_{\ref{quasirandomtheorem3}})\geq 1-e^{-c_{\ref{quasirandomtheorem3}n}}$ such that on $\Omega_{\ref{quasirandomtheorem3}}$ we have
    $D_{\alpha_{\ref{quasirandomtheorem3}},\gamma_{\ref{quasirandomtheorem3}}}(V_j)\geq \exp(C_{\ref{quasirandomtheorem3}} n)$ for all $j\in[n]$.

    All the constants $\alpha_{\ref{quasirandomtheorem3}},\gamma_{\ref{quasirandomtheorem3}},c_{\ref{quasirandomtheorem3}},C_{\ref{quasirandomtheorem3}}$ depend on $\xi$ but not on $\lambda_1,\lambda_2$.
\end{theorem}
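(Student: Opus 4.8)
The plan is to transfer this "quadratic in $A$" statement (the span of $X^*[1]_j,X^*[2]_j$ is annihilated only by a quadratic equation in $A$) to a linearized $(2n)$-dimensional system to which the two–dimensional Littlewood–Offord inequality of Proposition \ref{proponewlittlewood} applies, and then run a net-plus-union-bound argument stratified by both the dyadic size of the least common denominator and the overlap of the two normal vectors. First I would work on the high-probability event $\mathcal K\cap\Omega_{\ref{normalincomp}}$, so that $\|A\|_{op}\le 4\sqrt n$ and all $X^*[i]_j$ are incompressible. Fix $j$ and use Lemma \ref{realcomplexcorrelate}: write $X^*[2]_j=\alpha_j X^*[1]_j+\beta_j w_j$ with $w_j\perp X^*[1]_j$, $\alpha_j^2+\beta_j^2=1$ and $|\beta_j|\ge C_\xi|\lambda_1-\lambda_2|n^{-1/2}$. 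Since the row space of $V_j$ equals $\operatorname{span}(X^*[1]_j,w_j)$ and the linear change of coefficient vectors relating the two parametrizations is implemented by a matrix of operator norm $\le\sqrt2$ (while the "structured" target vectors are literally the same in both), one obtains $D_{\alpha,\gamma}(V_j)\ge \tfrac1{\sqrt2}D_{\alpha,\gamma}(X^*[1]_j,w_j)$, so it suffices to bound below the pair-LCD of the \emph{orthonormal} pair $(X^*[1]_j,w_j)$; this reduction, unlike the quantity $D_{\alpha,\gamma}(V_j)$ itself, loses nothing depending on $\lambda_1,\lambda_2$.

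\textbf{Linearization.} The key point is that, although $X^*[1]_j$ and $w_j$ separately satisfy linear relations in $A$, the pair $(X^*[1]_j,w_j)$ lies in the kernel of a matrix depending \emph{linearly} on $A$: from $(A^T-\lambda_1 I_n)_{/[j]}X^*[1]_j=0$ and a rearrangement of \eqref{comparetwosides} one gets $(A^T-\lambda_2 I_n)_{/[j]}w_j=-t_j(I_n)_{/[j]}X^*[1]_j$ with $t_j=\alpha_j(\lambda_1-\lambda_2)/\beta_j$, which by the lower bound on $|\beta_j|$ lies in a fixed interval $\mathcal T$ of length $O(\sqrt n)$. Thus $(X^*[1]_j,w_j)\in\ker\mathcal M^{(j)}_{t_j}$, where
\[
\mathcal M^{(j)}_t:=\begin{bmatrix}(A^T-\lambda_1 I_n)_{/[j]}&0\\ t\,(I_n)_{/[j]}&(A^T-\lambda_2 I_n)_{/[j]}\end{bmatrix}.
\]
So it is enough to prove: with probability $1-e^{-cn}$, for every $j\in[n]$, every $t\in\mathcal T$, and every unit pair $(\mathbf p,\mathbf q)$ whose pair-LCD is below $e^{Cn}$, one has $\|\mathcal M^{(j)}_t(\mathbf p,\mathbf q)^\top\|_2>2^{-n}$. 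A mesh of spacing $e^{-2n}$ in $t$ (its $e^{O(n)}$ cardinality is absorbed using linearity in $t$ and $\|A\|_{op}\le4\sqrt n$), together with the union over $j\in[n]$, reduces matters to a single $(j,t)$.

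\textbf{Union bound over structured pairs.} For a fixed $(\mathbf p,\mathbf q)$, the relation $\mathcal M^{(j)}_t(\mathbf p,\mathbf q)^\top\approx0$ says exactly that, for each column index $k\ne j$, the independent random vector $\operatorname{col}_k(A)$ satisfies the two near-equalities $\langle\operatorname{col}_k(A),\mathbf p\rangle\approx\lambda_1\mathbf p_k$ and $\langle\operatorname{col}_k(A),\mathbf q\rangle\approx\lambda_2\mathbf q_k-t\mathbf p_k$ — a two-dimensional small-ball event for $\operatorname{col}_k(A)$ tested against the pair $(\mathbf p,\mathbf q)$. Writing $\mathbf d$ for the component of $\mathbf q$ orthogonal to $\mathbf p$ and $\omega:=\|\mathbf d\|$, Proposition \ref{proponewlittlewood} applied to $(\mathbf p,\mathbf d)$ bounds this probability by $\omega^{-1}\big(C'/(\gamma D_{\alpha,\gamma}(\mathbf p,\mathbf d))\big)^{2}+C'^{2}e^{-c''\alpha n}$, and independence across the $n-1$ columns gives its $(n-1)$-th power. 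One then stratifies the "bad" pairs by the dyadic scale of the pair-LCD and by the dyadic scale of $\omega$, builds a net for each stratum, and sums: the $n$-dimensionality supplies an $n^{-n}$ gain that defeats the bulk of the net, while on the tail (pair-LCD close to $e^{Cn}$) one uses $C<c''\alpha$. Following Section \ref{section222}, the net for pairs of intermediate pair-LCD is produced by a randomized box-sampling procedure (the analogue of Lemma \ref{lemma2.7uniformdistribution}), and the net for pairs where one component already has sub-exponential LCD is the LCD-net of $G_\epsilon$-type (Corollary \ref{netcorollarys}); the constants $\alpha,\gamma$ are fixed last so that the $\alpha$-exponent of the net cardinalities (as in Fact \ref{fact2.37}) is matched by the $e^{-c''\alpha n}$ term of Proposition \ref{proponewlittlewood} and by the $\omega$-dependent trade-off between net size and anti-concentration. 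Finally, translating the resulting lower bound on $D_{\alpha,\gamma}(X^*[1]_j,w_j)$ back through Step 1 gives the claim for $V_j$.

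\textbf{Main obstacle.} The crux is the treatment of near-parallel pairs, which is unavoidable because $\langle X^*[1]_j,X^*[2]_j\rangle$ can be as close to $\pm1$ as $1-C|\lambda_1-\lambda_2|^2/n$: for such pairs the two-dimensional Littlewood–Offord bound degrades by the factor $\omega^{-1}$, and one must extract a matching reduction in the number of net points — but, unlike in Section \ref{section222}, where rotational symmetry was available, here the two locations $\lambda_1,\lambda_2$ break that symmetry, so the net must be built by a two-stage randomized construction that is sensitive to the overlap parameter. Verifying that these random boxes produce pairs of large pair-LCD with super-exponential probability, and that the resulting union bound closes \emph{uniformly} over the continuum of linearization parameters $t\in\mathcal T$ — equivalently, uniformly over the $\beta_j^{-1}$-type distortions that blow up as $|\lambda_1-\lambda_2|\to0$ — is the technical heart of the argument.
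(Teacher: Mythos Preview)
Your linearization via the $2n\times 2n$ block matrix $\mathcal M^{(j)}_t$, with the orthonormal pair $(X^*[1]_j,w_j)$ in its kernel, is a genuinely different route from the paper's. The paper instead introduces the $(2n+1)\times(2n+1)$ matrix $P_A(\hat\lambda)$ and reduces to showing the \emph{single-vector} LCD of every unit $v_\theta=\theta_1 X^*[1]_j+\theta_2 X^*[2]_j$ is large; the linearized unknown there is $(\lambda,w,v_\theta)$ with $w=(A-\lambda_2 I_n)_{/[1]}v_\theta/\|\cdot\|$, and the overlap $\langle v_\theta,w\rangle$ genuinely varies with $\theta$ --- which is why the paper needs the $\epsilon_1$-stratification $w=cv+\epsilon_1 r$ of Definition~\ref{netsandlevels} and the two-scale boxes of Definition~\ref{definitionboxnew}. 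Your reduction $D_{\alpha,\gamma}(V_j)\ge D_{\alpha,\gamma}(X^*[1]_j,w_j)/\sqrt2$ is correct and costs nothing in $\lambda_1,\lambda_2$; and since $(X^*[1]_j,w_j)$ is orthonormal, your scheme sidesteps the overlap stratification entirely. The paper even notes (in the remark following Lemma~\ref{incompmore}) that a $2n$-dimensional linearization of this type is possible, but elected not to pursue it because ``we need to show the essential LCD of a vector pair is large, which is a bit complicated''.

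That said, your ``Main obstacle'' paragraph is misdiagnosed. Once you have passed to the orthonormal pair $(X^*[1]_j,w_j)$, the factor $\omega^{-1}$ in Proposition~\ref{proponewlittlewood} is identically $1$; there are no near-parallel pairs in your net, and no $\omega$-stratification is needed. The actual hurdle in your scheme is the one the paper flags in Section~\ref{section3.1}(3): building an efficient net for orthonormal pairs with prescribed \emph{pair}-LCD $D_{\alpha,\gamma}(\mathbf p,\mathbf q)\in[D,2D]$. The minimizing direction $\theta$ need not be an axis, and because $\mathcal M^{(j)}_t$ uses $\lambda_1$ in one block and $\lambda_2$ in the other it is not invariant under rotations of $(\mathbf p,\mathbf q)$, so (unlike the complex case in \cite{luh2021eigenvectors}) you cannot rotate to assume $\mathbf p$ itself has small single-LCD. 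You must therefore also net over the angle of $\theta$ --- or, equivalently, run the randomized box construction directly on pairs and invoke Lemma~\ref{randmgenerationoflcd} to control the pair-LCD of sampled boxes. This is workable, but it is the real technical content; the $\omega$ issue you emphasize is a vestige of the paper's $P_A(\hat\lambda)$ route and does not arise in yours. One further small gap: you should verify that $w_j$ is incompressible (it is a specific unit vector in the span of $X^*[1]_j,X^*[2]_j$, so Proposition~\ref{proposition1199} covers it) before invoking the flat-coordinate box constructions.
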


\subsubsection{Proof of main results assuming Theorem \ref{quasirandomtheorem3}}
We now complete the proof of Theorem \ref{theorem2} and Theorem \ref{theorem1.3} assuming Theorem \ref{quasirandomtheorem3}:

\begin{proof}[\proofname\ of Theorem \ref{theorem2} and Theorem \ref{theorem1.3}]
By Proposition \ref{invertibilitydistance} we need to estimate, for each $j\in[n]$ and any $\epsilon>0$, 
    $$\mathbb{P}(\operatorname{dist}(X[1]_j,H[1]_j)\leq\epsilon, \operatorname{dist}(X[1]_j,H[1]_j)\leq\epsilon).
    $$

We shall work with the conditioned version    
    $$\mathbb{P}^{\mathcal{K}\cap \Omega_{\ref{quasirandomtheorem3}}\cap\Omega_{\ref{realcomplexcorrelate}}}(\operatorname{dist}(X[1]_j,H[1]_j)\leq\epsilon, \operatorname{dist}(X[1]_j,H[1]_j)\leq\epsilon)
    ,$$ 
    and this quantity is bounded by 
    $$
\mathcal{L}(V_j\xi,\sqrt{2}\epsilon)
    $$ where $\xi=(\xi_1,\cdots,\xi_n)$ is an i.i.d. copy of a column of $A$. 
    We take the orthogonal projection of $X^*[2]_j$ onto $X^*[1]_j$ as in \eqref{realcomplexcorrelate} and define a new matrix $$\overline{V}_j=\begin{bmatrix}
    (X^*[1]_j)^T\\\beta\cdot w_j^T
\end{bmatrix}.$$ We have $|\beta|\geq C|\lambda_1-\lambda_2|n^{-1/2}/32$ on the conditioned event by Lemma \ref{realcomplexcorrelate} and it is elementary to check that $\mathcal{L}(V_j\xi,\sqrt{2}\epsilon)\leq \mathcal{L}(\overline{V}_j\xi,4\epsilon)$. On the conditioned event, we also have $D_{\alpha_{\ref{quasirandomtheorem3}},\gamma_{\ref{quasirandomtheorem3}}}(\overline{V}_j)\geq D_{\alpha_{\ref{quasirandomtheorem3}},\gamma_{\ref{quasirandomtheorem3}}}(V_j)\geq \exp(C_{\ref{quasirandomtheorem3}} n)$ thanks to Theorem \ref{quasirandomtheorem3}.
    
We can always multiply $\xi$ by a constant so as to have (say) $\mathcal{L}(\xi,1)\leq\frac{3}{4}$, and we make this assumption throughout. Now we apply Theorem \ref{proponewlittlewood} to each $\overline{V}_j$ and estimate $
\mathcal{L}(\overline{V}_j\xi,4\epsilon)
    $:
    $$
\mathcal{L}(\overline{V}_j\xi,4\epsilon)\leq |\beta|^{-1}(\frac{C\epsilon}{\gamma_{\ref{quasirandomtheorem3}}})^2+C^2e^{-c'n} 
    $$ for any $\epsilon>0$, where $C>0$ is a universal constant and $c'>0$ depends on $\alpha_{\ref{quasirandomtheorem3}},\gamma_{\ref{quasirandomtheorem3}},c_{\ref{quasirandomtheorem3}}$.
    This concludes the proof of the whole theorem thanks to Proposition \ref{invertibilitydistance} and the fact that $\mathbb{P}((\mathcal{K}\cap \Omega_{\ref{quasirandomtheorem3}}\cap\Omega_{\ref{realcomplexcorrelate}})^c)\leq\exp(-\Omega(n)).$
\end{proof}

\subsubsection{A brief outline of the rest of the section}

The remaining parts of this section are highly technical, so we outline here the main technical steps, and how our method is different from other methods. First in Section \ref{sect3.2} and \ref{sect3.3} we rule out compressible vectors in the linear span of $X^*[1]_j$ and $X^*[2]_j$. Even this step is nontrivial: the method we take is to note that any vector in the linear span is annihilated by a quadratic polynomial of $A$ (but up to a certain error). This error term will require us to study how large is the $j$-th entry of the vector. Thus we first show that this $j$-th entry cannot be too large, and then show all vectors in the linear span should be incompressible with high probability.

Then we rule out vectors in the linear span with small LCD. This would typically follow from an $\epsilon$-net argument via LCD, but here we have a new difficulty when constructing the net as we have to know for which $\theta\in[0,2\pi]$ we have $\cos(\theta)X^*[1]_j+\sin(\theta)X^*[2]_j$ has the least LCD (in other cases such as \cite{luh2021eigenvectors}, we can multiply the vector by a complex phase and assume that the smallest LCD is achieved, say by $X^*[1]_j$, but not here). The setting further complicates when $|\lambda_1-\lambda_2|$ is small and $X^*[1]_j,X^*[2]_j$ could have large overlap. We take the random generation method from Section \ref{section222} to avoid these complications, and to keep a coherent style throughout the whole paper (despite a bit lengthy argument).

Specifically, Section \ref{sect3.4} is a technical preparation for the random generation method and Section \ref{sect3.5}
defines a family of nets $\mathcal{N}_{\epsilon,\epsilon_1}(c)$ in terms of small ball probability of a certain truncated matrix $M_{\underline{A}}$. Here $\epsilon$ is the scale of small ball probability and $\epsilon_1$ roughly measures the overlap between $X^*[1]_j$ and $X^*[2]_j$. Section \ref{sect3.5} also defines a threshold function $\tau_{L,\epsilon_1}$ to measure the arithmetic structure of the vector with respect to $M_{\underline{A}}$, in a way parameterized by $\epsilon_1>0$. We construct discrete approximations for the vector pair but with different size of the mesh grid in its two components, taking into account $\epsilon_1>0$.
Then in Section \ref{sec3.6} we upper bound the cardinality of $\mathcal{N}_{\epsilon,\epsilon_1}(c)$ via Littlewood-Offord theorems and sow $\mathcal{N}_{\epsilon,\epsilon_1}(c)$ is a net via a random rounding technique. Here we use the randomness in the base vector as we exhaust all of them in an integral lattice: we can show that it has large two-dimensional essential LCD with super-exponential probability.  Finally, in Section \ref{sec3.7} we move back from the net $\mathcal{N}_{\epsilon,\epsilon_1}(c)$ to the net constructed from essential LCD, and obtain Theorem \ref{quasirandomtheorem3}.

Overall, we can successfully deal with all possible cases of overlaps of $X^*[1]_j,X^*[2]_j$ in a unified way and avoid certain difficulties in constructing nets of vector pairs based on LCD when we cannot rotate the direction of smallest LCD to one axis. We will generalize this construction in Section \ref{secgwe444} to complex i.i.d. matrices.

\subsection{Arithmetic structures: first reduction.}\label{sect3.2}

Now we initiate the proof of Theorem \ref{quasirandomtheorem3}. We need to consider the least common denominator of \begin{equation}\label{unitvectors}\theta_1 X^*[1]_j+\theta_2 X^*[2]_j\end{equation} for any $\theta_1,\theta_2\in\mathbb{R}$.

Arguing non-rigorously, this vector is annihilated by a quadratic equation of $A$, that is to say we have something like  \begin{equation}\label{productrules}\left((A^T-\lambda_1 I_n)_{/[j]}(A^T-\lambda_2 I_n)_{/[j]}\right)\cdot\left(\theta_1 X^*[1]_j+\theta_2 X^*[2]_j\right)\stackrel{?}{=}0.\end{equation}
This is however not exactly true, as 
$(A^T-\lambda_1 I_n)_{/[j]}$ and $(A^T-\lambda_2 I_n)_{/[j]}$ do not commute because of a removed row. Nonetheless, the equation is true up to a small error: we have
\begin{equation}\label{righthanddsidebound}\begin{aligned}
    \operatorname{LHS}\text{ of } \eqref{productrules}&=\theta_1(A^T-\lambda_1 I_n)_{/[j]}(\lambda_1-\lambda_2){I_n}_{/[j]}X^*[1]_j\\&=\theta_1(\lambda_2-\lambda_1)(A^T-\lambda_1 I_n)_{[j]}(X^*[1]_j)_{[j]},
\end{aligned}\end{equation}
where $(X^*[1]_j)_{[j]}$ denotes the $j$-th coordinate of the vector $X^*[1]_j$ and  $(A^T-\lambda_1 I_n)_{[j]}$ is the $j-th$ column of $(A^T-\lambda_1 I_n)$ with the $j$-th entry removed. We do not get zero in the last line of \eqref{righthanddsidebound} because ${I_n}_{/[j]}X^*[1]_j$ sets the $j$-th coordinate of $X^*[1]_j$ to be zero but keeps its other coordinates fixed. 

In words, the RHS of \eqref{righthanddsidebound} means the vector \eqref{unitvectors} is annihilated by a quadratic equation of $A^T$, up to an error given by a multiple of the $j$-th column of $A^T$. 

From this first computation, we shall be adopting the following strategy; we will condition on the realization of the $j$-th column of $A^T$ and use randomness from the other $n-1$ columns of $A^T$. Enough randomness is maintained as only one column is conditioned on.
\subsubsection{Ruling out degenerate case}
The conditioning procedure proposed in the last paragraph will break down in one degenerate case: the case where the vector \eqref{unitvectors} is supported only on the $j$-th coordinate or has a mass close to 1 on the $j$-th coordinate and a mass close to 0 elsewhere. This degenerate case does not happen with high possibility, as shown in the following lemma:

\begin{lemma}\label{lemmadegenerate} We can find constants $M_{\ref{lemmadegenerate}}\in(0,1)$ and $c_{\ref{lemmadegenerate}}>0$ such that, on an event $\Omega_{\ref{lemmadegenerate}}$ which holds with probability $1-\exp(-c_{\ref{lemmadegenerate}}n)$, the following two statements are true: \begin{enumerate}\item For any $\theta_1,\theta_2\in\mathbb{R}$ such that $\theta_1X^*[1]_j+\theta_2X^*[2]_j$ is a vector of unit $\ell^2$ norm in $\mathbb{R}^n$, we must have that 
\begin{equation}\label{massaway}
\|\theta_1X^*[1]_j+\theta_2X^*[2]_j\|_{[n]\setminus\{j\}}>M_{\ref{lemmadegenerate}}>0, 
\end{equation} where $\|\cdot\|_{[n]\setminus\{j\}}$ denotes the $\ell^2$-norm of the vector when its $j$-th coordinate is set to be zero. \item Also, for any $\theta_0,\theta_1,\theta_2\in\mathbb{R}$ such that $\theta_1X^*[1]_j+\theta_2X^*[2]_j-\theta_0(X^*[1]_j)_{[j]}e_j$ is a unit vector, we also have $\|\theta_1X^*[1]_j+\theta_2X^*[2]_j-\theta_0(X^*[1]_j)_{[j]}e_j\|_{[n]\setminus\{j\}}> M_{\ref{lemmadegenerate}}>0.$\end{enumerate}
\end{lemma}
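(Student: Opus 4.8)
The two statements of the lemma assert essentially the same thing: no unit vector lying in $\operatorname{span}\{X^*[1]_j,X^*[2]_j\}$ (in part (2), after subtracting a multiple of $e_j$) can carry almost all of its $\ell^2$-mass on the single coordinate $j$. I plan to derive both from one quantitative fact, which is the only place where genuine randomness is used. Write $Y_j$ for the $j$-th column of $A^T$ with its $j$-th entry replaced by $0$ (equivalently, the $j$-th row of $A$ with the diagonal entry deleted), and $\hat Y_j:=Y_j/\|Y_j\|$. The fact is: on an event $\Omega_{\ref{lemmadegenerate}}$ of probability $1-e^{-c_{\ref{lemmadegenerate}}n}$ one has, for all $j\in[n]$ and $i\in\{1,2\}$,
$$|\langle X^*[i]_j,\hat Y_j\rangle|\le 1-c_1$$
for a constant $c_1=c_1(\xi)>0$. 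This cannot be obtained from incompressibility of $X^*[1]_j,X^*[2]_j$ and the linear relations they satisfy alone, since those are consistent with the degenerate scenario (for instance when the two normals restricted off coordinate $j$ are almost proportional), so one must inject fresh probabilistic input.

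\textbf{Proof of the key fact.} Fix $j$ and $i$, and note $Y_j=(A^T-\lambda_i I_n)_{/[j]}e_j$ independently of $i$. Condition on the $j$-th row of $A$; on a $(1-e^{-cn})$-proportion of realisations we have $\|Y_j\|\in[\tfrac12\sqrt n,2\sqrt n]$ and $\hat Y_j$ is $(\delta,\rho)$-incompressible, and we restrict to these. Given the row, the columns $(A-\lambda_i I_n)_k$, $k\neq j$, are independent and carry fresh randomness off coordinate $j$; since the $j$-th coordinate of $Y_j$ vanishes,
$$\langle Y_j,(A-\lambda_i I_n)_k\rangle=\|Y_j\|\,\zeta_k-\lambda_i a_{jk},\qquad \zeta_k:=\bigl\langle\hat Y_j,(a_{lk})_{l\neq j}\bigr\rangle,$$
where $\lambda_i a_{jk}$ is a deterministic shift and the $\zeta_k$ are independent mean-$0$ variance-$1$ subgaussian sums with the fixed unit weight vector $\hat Y_j$. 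By incompressibility of $\hat Y_j$ (Lemma \ref{incpmpspreads}) and a standard small-ball estimate, $\mathcal L(\zeta_k,s_0)\le Ks_0$ at some fixed small scale $s_0=s_0(\xi)$; applying the tensorization Lemma \ref{Tensorization} to the independent coordinates $k\neq j$,
$$\mathbb P\Bigl(\sum_{k\neq j}\langle Y_j,(A-\lambda_i I_n)_k\rangle^2\le \tfrac14 s_0^2 n(n-1)\Bigr)\le 2^{-(n-1)}.$$
Let $B_i$ be the $n\times(n-1)$ matrix with columns $(A-\lambda_i I_n)_k$, $k\neq j$; on the relevant event $B_i$ has full column rank, so $X^*[i]_j$ is the genuine unit normal of $H[i]_j=\operatorname{col}(B_i)$ and
$$\|\operatorname{proj}_{H[i]_j}Y_j\|^2=(B_i^TY_j)^T(B_i^TB_i)^{-1}(B_i^TY_j)\ge \frac{\|B_i^TY_j\|^2}{\|B_i\|_{op}^2}\ge \frac{s_0^2 n(n-1)/4}{64n}$$
using $\|B_i\|_{op}\le\|A-\lambda_i I_n\|_{op}\le 8\sqrt n$ on $\mathcal K$. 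Since $\mathbb R^n=H[i]_j\oplus\mathbb R X^*[i]_j$ orthogonally, $\|Y_j\|^2=\|\operatorname{proj}_{H[i]_j}Y_j\|^2+\langle Y_j,X^*[i]_j\rangle^2$, hence $|\langle\hat Y_j,X^*[i]_j\rangle|^2\le 1-\tfrac{s_0^2(n-1)}{256\|Y_j\|^2}\le 1-\tfrac{s_0^2}{2048}$, which gives the fact with $c_1=s_0^2/4096$. A union bound over $j$ and $i$, intersected with $\mathcal K$, $\Omega_{\ref{normalincomp}}$ and the good-row event, defines $\Omega_{\ref{lemmadegenerate}}$.

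\textbf{From the key fact to the lemma.} Suppose on $\Omega_{\ref{lemmadegenerate}}$ that $v=\theta_1X^*[1]_j+\theta_2X^*[2]_j$ has unit norm and $\|v\|_{[n]\setminus\{j\}}\le M$; write $v=v_je_j+v'$, so $v_j^2\ge 1-M^2$ and $\|v'\|\le M$. From $(A^T-\lambda_i I_n)_{/[j]}X^*[i]_j=0$ and $(A^T-\lambda_1 I_n)_{/[j]}=(A^T-\lambda_2 I_n)_{/[j]}+(\lambda_2-\lambda_1)(I_n)_{/[j]}$ one gets
$$(A^T-\lambda_1 I_n)_{/[j]}v=\theta_2(\lambda_2-\lambda_1)\,\overline{X^*[2]_j},$$
where $\overline{X^*[2]_j}$ is $X^*[2]_j$ with its $j$-th coordinate zeroed, while also $(A^T-\lambda_1 I_n)_{/[j]}v=v_jY_j+(A^T-\lambda_1 I_n)_{/[j]}v'$. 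Taking norms and using $\|Y_j\|\ge\tfrac12\sqrt n$, $\|A^T-\lambda_1 I_n\|_{op}\le 8\sqrt n$ gives $|\theta_2|\,|\lambda_1-\lambda_2|\ge\tfrac13\sqrt n$ once $M$ is a small enough constant; feeding this back, $\overline{X^*[2]_j}$ — hence $X^*[2]_j$ itself, whose $j$-th coordinate is $\le\delta^{-1/2}n^{-1/2}$ by incompressibility — lies within distance $24M+\delta^{-1/2}n^{-1/2}$ of $\mu Y_j$, where $\mu:=v_j/(\theta_2(\lambda_2-\lambda_1))\neq 0$. Comparing $\ell^2$-norms then forces $X^*[2]_j$ to lie within $2(24M+\delta^{-1/2}n^{-1/2})$ of $\pm\hat Y_j$, so $|\langle X^*[2]_j,\hat Y_j\rangle|\ge 1-48M-2\delta^{-1/2}n^{-1/2}$, contradicting the key fact once $M=M_{\ref{lemmadegenerate}}$ is chosen with $48M_{\ref{lemmadegenerate}}<c_1$ and $n$ is large (the finitely many small $n$ are handled directly). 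Part (2) is identical: the extra term $\theta_0(X^*[1]_j)_{[j]}e_j$ does not change $v'$, and since $(A^T-\lambda_1 I_n)_{/[j]}e_j=Y_j$, the same computation yields the same identity with $v_j$ replaced by $\theta_1(X^*[1]_j)_{[j]}+\theta_2(X^*[2]_j)_{[j]}$; the same chain of estimates again forces $X^*[1]_j$ or $X^*[2]_j$ to be nearly parallel to $\pm\hat Y_j$, which is impossible on $\Omega_{\ref{lemmadegenerate}}$.

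\textbf{Main obstacle.} The crux is the proof of the key fact. One must recognise that, after conditioning on the $j$-th row, $\langle Y_j,(A-\lambda_i I_n)_k\rangle$ equals (up to a deterministic shift) $\|Y_j\|$ times an anticoncentrated random variable that is independent across $k$, so that a tensorization bound applies, and then pass from the resulting lower bound on $\|B_i^TY_j\|$ to a lower bound on $\|\operatorname{proj}_{H[i]_j}Y_j\|$ — losing only the factor $\|B_i\|_{op}^2\le 64n$ — to conclude that $Y_j$ cannot be nearly parallel to the normal $X^*[i]_j$. The deterministic deduction of the two statements of the lemma from this fact is then routine linear algebra.
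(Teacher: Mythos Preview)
Your key fact is correct and nicely proved, but the deduction ``from the key fact to the lemma'' has a real gap. You assert that $|(X^*[2]_j)_{[j]}|\le\delta^{-1/2}n^{-1/2}$ ``by incompressibility'', and use this to pass from $\|\overline{X^*[2]_j}-\mu Y_j\|\le 24M$ to $\|X^*[2]_j-\mu Y_j\|\le 24M+\delta^{-1/2}n^{-1/2}$. Incompressibility does \emph{not} bound any individual coordinate by $O(n^{-1/2})$: a unit vector may place mass $\sqrt{1-\rho^2}$ on coordinate $j$ and still be $(\delta,\rho)$-incompressible by spreading the remaining $\rho$ over the other coordinates. Without that bound the argument collapses: you only get $|\langle X^*[2]_j,\hat Y_j\rangle|=|\langle\overline{X^*[2]_j},\hat Y_j\rangle|\ge |\mu|\,\|Y_j\|-24M$, and all you can say about $|\mu|\,\|Y_j\|$ is that it is at least of order $\rho$ (via the upper bound $|\theta_2||\lambda_1-\lambda_2|\le C\sqrt n/\|\overline{X^*[2]_j}\|\le C\sqrt n/\rho$). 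So the contradiction with $|\langle X^*[2]_j,\hat Y_j\rangle|\le 1-c_1$ would require $\rho>1-c_1$, which fails for the small incompressibility constant $\rho$ you actually have. Your key fact is simply too weak: it says the normal is not \emph{exactly} parallel to $\hat Y_j$, but you need it to be far from \emph{any multiple} of $\hat Y_j$ even after zeroing its $j$-th coordinate.

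The paper sidesteps this by working one level higher, with the quadratic identity $(A^T-\lambda_1 I_n)_{/[j]}(A^T-\lambda_2 I_n)_{/[j]}v=\lambda\,Y_j$ (up to the $O(Mn)$ error) where $\lambda=\theta_1(\lambda_2-\lambda_1)(X^*[1]_j)_{[j]}$ is treated as a free scalar. Expanding $v=M'e_j+v'$ with $|M'|\ge 1/2$ gives $M'(A^T-\lambda_1 I_n)_{/[j]}Y_j\approx\lambda Y_j$; conditioning on $Y_j$ and tensorising over the remaining independent columns gives anticoncentration of the left side at scale $\Theta(n)$, and the nondegeneracy comes from $|M'|\ge 1/2$ rather than from any coordinate bound on the normal vectors. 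One then nets over the scalars $\lambda,M'$. Your linear-equation route could be rescued if you prove a stronger key fact --- e.g.\ that $|\langle X^*[i]_j,\hat Y_j\rangle|$ is bounded by a small absolute constant (or even $O(n^{-1/2})$), which amounts to upper-bounding $\mathrm{dist}(Y_j,H[i]_j)$ --- but that is a genuinely different probabilistic input and is not what your projection argument gives.
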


\begin{proof}
    We first note that such a pair $(\theta_1,\theta_2)$ must satisfy 
    $\theta_1^2+\theta_2^2+2\theta_1\theta_2\alpha=1$ where $\alpha$ was defined in \eqref{orthogonalforw}. This implies $\theta_1^2(1-\alpha^2)=\theta_1^2\beta^2\leq 1$. But we already have on the event $\mathcal{K}$ that \eqref{whatisbeta?} holds, so that $\theta_1^2|\lambda_1-\lambda_2|^2\leq Cn$ for some $C>0$ depending only on $\xi$.

    Therefore, in the equation \eqref{righthanddsidebound} we will take an $n^{-1/2}$-net for $\theta_1(\lambda_2-\lambda_1)$ which has cardinality $O(n)$ and we also take a $n^{-1/2}$-net for the possible values of $(X^*[1]_j)_j$ which has cardinality $O(n^{1/2})$.

    Having obtained these nets, we turn back to the equation \eqref{righthanddsidebound} and assume that \eqref{massaway} does not hold for some $M<1/4$. Then \eqref{righthanddsidebound} implies that, on the event $\mathcal{K}$, 
    $$\|(A^T-\lambda_1 I_n)_{/[j]}(A^T-\lambda_2 I_n)_{/[j]}(M'e_j)-\theta_1(\lambda_2-\lambda_1)(A^T-\lambda_1 I_n)_{[j]}(X^*[1]_j)_{[j]}\|\leq CM\sqrt{n}
$$ for a $C>0$ depending only on $\xi$. Here $M'\in(1/2,1)$ is the absolute value of the $j$-th coordinate of $\theta_1X^*[1]_j+\theta_2X^*[2]_j$, which is fixed for now and that $e_j$ is the unit vector in the $j$-th coordinate in $\mathbb{R}^n$. We take a $n^{-1/2}$-net for $M'$ as well. Conditioning on a realization of the $j$-th column $(A^T)_{[j]}$ such that $\|(A^T)_{[j]}\|\geq c_1\sqrt{n}$ for some $c_1>0$ (this event happens with probability at least $1-e^{-c_2n}$ by a standard application of Tensorization lemma). Then $(A^T-\lambda_2I_n)_{/[j]}e_j$ is a vector with norm at least $c_1\sqrt{n}$ (and its $j$-th coordinate is zero). Then multiplying this vector by $(A^T-\lambda_1 I_n)_{/[j]}$, we can use the remaining randomness of $A^T$ (its randomness away from the $j$-th row and column has not been used before) to deduce that, conditioning on $(A^T)_{[j]}$, $\mathcal{L}((A^T-\lambda_1 I_n)_{/[j]}(A^T-\lambda_2 I_n)_{/[j]}(M'e_j),c_3\sqrt{n})\leq \exp(-c_4n)$ again by the standard tensorization lemma, for some $c_3,c_4>0$. It suffices to take $M>0$ small enough and take a union bound over the various nets we constructed. Throughout the paragraph, $c_1,c_2,c_3,c_4$ depend only on $\xi$.

Exactly the same proof applies to the vector $\theta_1X^*[1]_j+\theta_2X^*[2]_j-\theta_0(X^*[1]_j)_{[j]}e_j$ as we only modified its $j$-th coordinate and therefore this vector still solves the equation \eqref{righthanddsidebound} with an appropriate choice of constant.
\end{proof}
Therefore we can take the following reduction to the original problem:
\begin{lemma}\label{lemma3.9}
    Fix some $\alpha\in(0,1)$, some $\gamma>0$ and some $c>0$. There exists  constants $C_{\ref{lemma3.9}}>0$, $M_{\ref{lemma3.9}}>0$ and $c_{\ref{lemma3.9}}>0$ depending only on $\xi$ such that the following holds. Consider the following two probabilities 
    \begin{equation}\mathbb{P}_1:=\mathbb{P}^\mathcal{K}(\text{There exists a unit  vector $v$ of the form } \eqref{unitvectors} \text{with } D_{\alpha,\gamma}(v)\leq e^{cn})\end{equation}
        and
        \begin{equation}\begin{aligned}\label{firstdefinitionofp2}\mathbb{P}_2:&=\mathbb{P}^\mathcal{K}(\text{There exists some $\lambda\in[-C_{\ref{lemma3.9}}\sqrt{n},C_{\ref{lemma3.9}}\sqrt{n}]$} \\&\text{ and some unit vector $v$, $\|v\|_{[n]\setminus\{j\}}\geq M_{\ref{lemmadegenerate}}>0$ with $D_{\alpha,\gamma}(v)\leq e^{cn}$ such that}\\& (A^T-\lambda_1 I_n)_{/[j]} (A^T-\lambda_2 I_n)_{/[j]}v=\lambda (A^T-\lambda_1 I_n)_{[j]}.\\& \text{Denote by } w=(A^T-\lambda_2I_n)_{/[j]}v,\text{then for any $\theta_1,\theta_2$ such that $\|\theta_1v+\theta_2w\|=1$},\\&\text{we have } \|\theta_1v+\theta_2w\|_{[n]\setminus\{j\}}> M_{\ref{lemmadegenerate}}>0).   \end{aligned}\end{equation}
        Then we have
        $\mathbb{P}_1\leq\mathbb{P}_2+\exp(-c_{\ref{lemma3.9}}n)$.
\end{lemma}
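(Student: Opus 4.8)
The plan is to treat this as a repackaging lemma: I would exhibit, on a high-probability event, an explicit dictionary that turns any witness for $\mathbb{P}_1$ into a witness for $\mathbb{P}_2$, using the identity \eqref{righthanddsidebound} together with Lemma \ref{lemmadegenerate}. I would work throughout on the event $\mathcal{K}\cap\Omega_{\ref{lemmadegenerate}}$. Suppose there is a unit vector $v=\theta_1X^*[1]_j+\theta_2X^*[2]_j$ of the form \eqref{unitvectors} with $D_{\alpha,\gamma}(v)\le e^{cn}$. By the computation \eqref{righthanddsidebound} this vector satisfies
$$(A^T-\lambda_1 I_n)_{/[j]}(A^T-\lambda_2 I_n)_{/[j]}\,v=\lambda\,(A^T-\lambda_1 I_n)_{[j]},\qquad \lambda:=\theta_1(\lambda_2-\lambda_1)(X^*[1]_j)_{[j]},$$
which is exactly the quadratic constraint appearing in \eqref{firstdefinitionofp2}. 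To place $\lambda$ in the window $[-C_{\ref{lemma3.9}}\sqrt n,C_{\ref{lemma3.9}}\sqrt n]$ I would reuse the bound already established inside the proof of Lemma \ref{lemmadegenerate}: from $\|v\|_2=1$ one has $\theta_1^2+\theta_2^2+2\theta_1\theta_2\alpha=1$ with $\alpha$ as in \eqref{orthogonalforw}, and combining this with the lower bound \eqref{whatisbeta?} on $\beta$ (valid on $\mathcal{K}$) one gets $|\theta_1|\,|\lambda_1-\lambda_2|\le C\sqrt n$ for some $C=C(\xi)$; since $|(X^*[1]_j)_{[j]}|\le\|X^*[1]_j\|_2=1$, this yields $|\lambda|\le C\sqrt n$, so one may take $C_{\ref{lemma3.9}}:=C$.

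Next I would check the two "mass stays away from coordinate $j$" requirements in the definition of $\mathbb{P}_2$. The bound $\|v\|_{[n]\setminus\{j\}}>M_{\ref{lemmadegenerate}}$ is precisely part (1) of Lemma \ref{lemmadegenerate}. For the remaining clause, set $w:=(A^T-\lambda_2 I_n)_{/[j]}v$. Since $X^*[2]_j$ is orthogonal to every column of $A-\lambda_2 I_n$ except the $j$-th, we have $(A^T-\lambda_2 I_n)_{/[j]}X^*[2]_j=0$, while $(A^T-\lambda_2 I_n)_{/[j]}X^*[1]_j=(A^T-\lambda_1 I_n)_{/[j]}X^*[1]_j+(\lambda_1-\lambda_2)(I_n)_{/[j]}X^*[1]_j=(\lambda_1-\lambda_2)(I_n)_{/[j]}X^*[1]_j$; hence $w=\theta_1(\lambda_1-\lambda_2)\bigl(X^*[1]_j-(X^*[1]_j)_{[j]}e_j\bigr)$. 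Consequently, for any $\theta_1',\theta_2'\in\mathbb{R}$ the vector $\theta_1'v+\theta_2'w$ has the form $\tilde\theta_1X^*[1]_j+\tilde\theta_2X^*[2]_j-\tilde\theta_0(X^*[1]_j)_{[j]}e_j$ (with $\tilde\theta_1=\theta_1'\theta_1+\theta_2'\theta_1(\lambda_1-\lambda_2)$, $\tilde\theta_2=\theta_1'\theta_2$, $\tilde\theta_0=\theta_2'\theta_1(\lambda_1-\lambda_2)$), which is exactly the shape to which part (2) of Lemma \ref{lemmadegenerate} applies; it follows that $\|\theta_1'v+\theta_2'w\|_{[n]\setminus\{j\}}>M_{\ref{lemmadegenerate}}$ whenever $\|\theta_1'v+\theta_2'w\|_2=1$. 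Together with the fact that $D_{\alpha,\gamma}(v)$ is inherited unchanged, this shows $v$ (with the above $\lambda$) is a legitimate witness for the event defining $\mathbb{P}_2$.

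Finally I would conclude by a trivial union bound: on $\mathcal{K}\cap\Omega_{\ref{lemmadegenerate}}$ the event behind $\mathbb{P}_1$ is contained in the event behind $\mathbb{P}_2$, so $\mathbb{P}_1\le\mathbb{P}_2+\mathbb{P}(\Omega_{\ref{lemmadegenerate}}^c)\le\mathbb{P}_2+\exp(-c_{\ref{lemmadegenerate}}n)$, and one takes $C_{\ref{lemma3.9}}=C$, $M_{\ref{lemma3.9}}=M_{\ref{lemmadegenerate}}$, $c_{\ref{lemma3.9}}=c_{\ref{lemmadegenerate}}$. The only delicate point I anticipate is the algebraic bookkeeping in the previous paragraph: one must confirm that $w$ never escapes the span of $X^*[1]_j$, $X^*[2]_j$ and $(X^*[1]_j)_{[j]}e_j$, since otherwise the second (degenerate-case) exclusion of Lemma \ref{lemmadegenerate} would not be directly applicable; everything else is a direct substitution into an already-proven identity.
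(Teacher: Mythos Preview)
Your proof is correct and follows essentially the same approach as the paper, which simply notes that this lemma is a restatement of \eqref{productrules}--\eqref{righthanddsidebound} together with the mass bounds from Lemma~\ref{lemmadegenerate}. In fact you give more detail than the paper: you explicitly compute $w=\theta_1(\lambda_1-\lambda_2)\bigl(X^*[1]_j-(X^*[1]_j)_{[j]}e_j\bigr)$ and verify that any $\theta_1'v+\theta_2'w$ has the shape required for Lemma~\ref{lemmadegenerate}(2), whereas the paper just asserts this. One minor point: the lower bound \eqref{whatisbeta?} on $\beta$ is established on $\Omega_{\ref{normalincomp}}\cap\mathcal K$ (it needs incompressibility of $X^*[1]_j$), so strictly speaking you should also intersect with $\Omega_{\ref{normalincomp}}$; the paper does so explicitly, and this only changes the constant $c_{\ref{lemma3.9}}$.
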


\begin{proof}
   This is a restatement of the equality \eqref{productrules} and \eqref{righthanddsidebound}, where we rewrite $\lambda$ for the quantity $\theta_1(\lambda_2-\lambda_1)(X^*[1]_j)_{[j]}$ in \eqref{righthanddsidebound}. The range of $\lambda$ was computed in the proof of Lemma \ref{lemmadegenerate} under the event $\mathcal{K}$.
  The norm condition on $\|\theta_1v+\theta_2w\|_{[n]\setminus\{j\}}$ is also guaranteed by Lemma \ref{lemmadegenerate}, (2) because in our reduction process any linear sum of $u$ and $v$ is exactly of the form $\theta_1X^*[1]_j+\theta_2X^*[2]_j-\theta_0(X^*[1]_j)_{[j]}e_j$ for some $\theta_0,\theta_1,\theta_2\in\mathbb{R}$. We are working on the event $\Omega_\eqref{normalincomp}\cap \Omega_{\ref{lemmadegenerate}}$, which has probability $1-\exp(-\Omega(n)).$
\end{proof}

\subsubsection{Reduction via linearization}

Next we take a linearization procedure and reduce estimating the quadratic equations in probability $\mathbb{P}_2$ to that of a linear equation.

We shall take $j=1$ in the following construction without loss of generality, so we will no longer need to use the notations $_{[j]}$ and $_{/[j]}$.

\begin{Definition}
    For any $\hat{\lambda}\in\mathbb{R}$ consider a $(2n+1)\times (2n+1)$ matrix $P_A(\hat{\lambda})$ defined via
    \begin{equation}\label{palambda}
    P_A(\hat{\lambda})=\begin{bmatrix}
        0&0&0\\0&-\hat{\lambda}I_n&A_{/[1]}-\lambda_2 {I_n}_{/[1]}\\A_{[1]}-\lambda_1e_1&A_{/[1]}-\lambda_1 {I_n}_{/[1]}&0
    \end{bmatrix}.\end{equation}
    where $A_{/[1]}$ is an $n\times n$ matrix with i.i.d. entries of distribution $\xi$ but with its first row set identically zero, and ${I_n}_{/[1]}$ is the identity matrix with its first row set to be zero. The notation $A_{[1]}$ denotes the first column of $A_{/[1]}$, and $e_1$ is the unit coordinate vector in $\mathbb{R}^n$.

\end{Definition}

We can prove the following reduction from Lemma \ref{lemma3.9}:

\begin{lemma}\label{fact3.11} We can find a constant $c_{\ref{fact3.11}}>0$ depending only on $\xi$ such that
  $$  \mathbb{P}_2\leq \mathbb{P}_3+\mathbb{P}_4+\exp(-c_{\ref{fact3.11}}n),$$
where we further define $\mathbb{P}_3$ and $\mathbb{P}_4$ as follows: for any $c>0$,
\begin{equation}
   \mathbb{P}_3= \mathbb{P}^\mathcal{K}(\text{There exists a unit vector }v\in\mathbb{R}^n: D_{\alpha,\gamma}(v)\leq e^{cn}, (A_{/[1]}-\lambda_2 {I_n}_{/[1]})v=0)
\end{equation}
\begin{equation}\label{propp4}\begin{aligned}\mathbb{P}_4=\mathbb{P}^\mathcal{K}(&\text{There exists } \lambda\in[-C_{\ref{lemma3.9}}\sqrt{n},C_{\ref{lemma3.9}}\sqrt{n}],\hat{\lambda}\in[-8\sqrt{n},8\sqrt{n}]\\&\text{and exists unit vectors }v,w\in\mathbb{R}^n,D_{\alpha,\gamma}(v)\leq e^{cn}\\&\text{such that }(\lambda,w,v) \text{ is in the kernel of } P_A(\hat{\lambda})\text{ , and whenever $\theta_1,\theta_2\in\mathbb{R}$ satisfies }\\& \|\theta_1v+\theta_2w\|=1,\text{ then } \|\theta_1v+\theta_2w\|_{[n]\setminus\{1\}}\geq M_{\ref{lemmadegenerate}}>0).
    \end{aligned}
\end{equation}
Here we use the notation $(\lambda,w,v)$ to denote by a vector in $\mathbb{R}^{2n+1}$ whose first coordinate is $\lambda$, whose second to the $n+1$-th coordinates are given by the vector $w$ (in the order of the coordinates of $w$) and its $n+2$-th to $2n+1$-th coordinates are given by the vector $v$.
\end{lemma}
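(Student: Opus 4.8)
The plan is to realize the quadratic relation governing $\mathbb{P}_2$ as a linear kernel relation for $P_A(\hat\lambda)$, using the vector obtained after applying one of the two factors as an auxiliary coordinate. Since $A\stackrel{d}{=}A^{T}$ and the event $\mathcal{K}$ is invariant under transposition, I would first replace $A^{T}$ by $A$ throughout the definition of $\mathbb{P}_2$ and relabel $j=1$. On $\mathcal{K}$ the $\mathbb{P}_2$-event then supplies a number $\lambda$ with $|\lambda|\le C_{\ref{lemma3.9}}\sqrt n$ and a unit vector $v$ with $D_{\alpha,\gamma}(v)\le e^{cn}$ and $\|v\|_{[n]\setminus\{1\}}\ge M_{\ref{lemmadegenerate}}$ solving $(A-\lambda_1 I_n)_{/[1]}(A-\lambda_2 I_n)_{/[1]}v=\lambda\,(A-\lambda_1 I_n)_{[1]}$, together with the span condition of \eqref{firstdefinitionofp2} for the pair $\{v,w'\}$, where $w':=(A-\lambda_2 I_n)_{/[1]}v$.

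I would then split according to whether $w'=0$. If $w'=0$, then $v$ is a unit vector in $\ker\big((A-\lambda_2 I_n)_{/[1]}\big)$ with $D_{\alpha,\gamma}(v)\le e^{cn}$, so the $\mathbb{P}_3$-event holds. If $w'\neq 0$, set $w:=w'/\|w'\|_2$, $\hat\lambda:=\|w'\|_2$ and $\mu:=-\lambda/\|w'\|_2$. Then $\hat\lambda w=(A-\lambda_2 I_n)_{/[1]}v$, and dividing the governing relation by $\|w'\|_2$ yields $(A-\lambda_1 I_n)_{/[1]}w=-\mu\,(A-\lambda_1 I_n)_{[1]}$; these two block identities are precisely the condition that $(\mu,w,v)$ lies in the kernel of $P_A(\hat\lambda)$ from \eqref{palambda}. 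On $\mathcal{K}$ one has $\hat\lambda=\|(A-\lambda_2 I_n)_{/[1]}v\|_2\le\|A\|_{op}+|\lambda_2|\le 8\sqrt n$, so $\hat\lambda\in[-8\sqrt n,8\sqrt n]$; the conditions $\|v\|_2=\|w\|_2=1$ and $D_{\alpha,\gamma}(v)\le e^{cn}$ pass through directly; and since $\operatorname{span}\{v,w\}=\operatorname{span}\{v,w'\}$ while the condition in \eqref{firstdefinitionofp2} is invariant under rescaling $w'$, the span condition of \eqref{propp4} is inherited verbatim.

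The only step needing an idea is the bound on the first coordinate $\mu$, because the naive estimate $|\mu|=|\lambda|/\|w'\|_2$ threatens to blow up when $\|w'\|_2$ is small. Here I would exploit that the same smallness of $\|w'\|_2$ forces $|\lambda|$ to be small: taking $\ell^2$-norms in the governing relation and using $\mathcal{K}$ gives $|\lambda|\,\|(A-\lambda_1 I_n)_{[1]}\|_2=\|(A-\lambda_1 I_n)_{/[1]}w'\|_2\le 8\sqrt n\,\|w'\|_2$, hence $|\mu|\le 8\sqrt n/\|(A-\lambda_1 I_n)_{[1]}\|_2$. Since $\|(A-\lambda_1 I_n)_{[1]}\|_2^2$ is a sum of $n-1$ i.i.d.\ copies of $\xi^2$ (which has mean $1$), it is at least $n/2$ outside an event of probability $e^{-\Omega(n)}$ (by standard subgaussian concentration, or from Lemma \ref{Tensorization}). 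Thus $|\mu|$ is bounded by an absolute constant, in particular $|\mu|\le C_{\ref{lemma3.9}}\sqrt n$ after enlarging $C_{\ref{lemma3.9}}$ if necessary, and all requirements of the $\mathbb{P}_4$-event are met. Every estimate used here is uniform over $\lambda_1,\lambda_2\in[-4\sqrt n,4\sqrt n]$.

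Assembling the two cases, the $\mathbb{P}_2$-event is contained in the union of the $\mathbb{P}_3$- and $\mathbb{P}_4$-events outside a single exceptional event of probability $e^{-\Omega(n)}$, which gives $\mathbb{P}_2\le\mathbb{P}_3+\mathbb{P}_4+e^{-c_{\ref{fact3.11}}n}$. I do not expect a genuine obstacle in this lemma: it is essentially linear-algebraic bookkeeping around the linearization \eqref{palambda}, the one delicate point being the $\mu$-bound just described. The substantive difficulties — proving that $\mathbb{P}_3$ and $\mathbb{P}_4$ are exponentially small — are taken up in the following subsections via the random net construction and the two-dimensional Littlewood-Offord theory.
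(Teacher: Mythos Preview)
Your argument is correct and follows the same route as the paper: replace $A^T$ by $A$, take $j=1$, split on whether $w'=(A-\lambda_2 I_n)_{/[1]}v$ vanishes, and in the nonzero case normalize to produce $(\mu,w,v)\in\ker P_A(\hat\lambda)$ with $\hat\lambda=\|w'\|_2\le 8\sqrt n$ on $\mathcal K$. You are in fact more careful than the paper on one point: the paper's proof simply asserts ``we can find some $\lambda\in\mathbb R$ such that $(\lambda,w,v)\in\ker P_A(\hat\lambda)$'' without verifying the range condition $|\lambda|\le C_{\ref{lemma3.9}}\sqrt n$ after normalization, whereas you supply the missing estimate by taking norms in the governing relation and using that $\|(A-\lambda_1 I_n)_{[1]}\|_2\gtrsim\sqrt n$ off an event of probability $e^{-\Omega(n)}$.
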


\begin{proof}    We first note a change of notation: we have used the matrix $A$ in the definition of $P_A(\hat{\lambda})$ but used $A^T$ in defining $\mathbb{P}_2$. We shall replace all $A^T$ in definition of $\mathbb{P}_2$ to the matrix $A$. This is minor change as the distribution of $A$ is not changed after taking a transpose.

Also note that the definition of $\mathbb{P}_2$ involves $j\in[n]$, but the value of $\mathbb{P}_2$ is the same for any $j\in[n]$. In defining $\mathbb{P}_3,\mathbb{P}_4$ we simply take $j=1$.

    Recall that $\mathbb{P}_2$ was defined in \eqref{unitvectors}. We first consider two separate cases: either $(A_{/[1]}-\lambda_2{I_n}_{/[1]})v=0$ or it is nonzero. In the first case we are reduced to the probability $\mathbb{P}_3$. In the second case, we can set $w=(A-\lambda_2 I_n)_{/[1]}v/\|(A-\lambda_2 I_n)_{/[1]}v\|$ and take $\hat{\lambda}=\|(A-\lambda_2 I_n)_{/[1]}v\|$. Then we can find some $\lambda\in\mathbb{R}$ such that $(\lambda,w,v)\in\operatorname{Ker}(P_A(\hat{\lambda}))$. 
    Finally, we use the operator norm bound on $A$ to deduce that we must take $\hat{\lambda}\in[-8\sqrt{n},8\sqrt{n}]$ on the event $\mathcal{K}$. The restriction on $\|\theta_1v+\theta_2w\|_{[n]\setminus\{1\}}$ comes from Lemma \ref{lemma3.9}.
\end{proof}

\subsection{Eliminating compressible vectors}
\label{sect3.3}

We first show that the kernel to the matrix $P_A(\hat{\lambda})$ has no compressible vector with very high probability, in the sense that if $(\lambda,w,v)\in \operatorname{Ker}(P_A(\hat{\lambda}))$ then neither $v$ nor $w$ can be a compressible vector.

\begin{Proposition}\label{proposition1199}
    There exist constants $\rho_\eqref{proposition1199},\delta_\eqref{proposition1199}>0$ and $c_\eqref{proposition1199}>0$ depending only on $\xi$ such that with probability $1-e^{-c_\eqref{proposition1199}n}$, the following statement is true: for any fixed $\lambda,\hat{\lambda}\in[-4\sqrt{n},4\sqrt{n}]$, any unit vector $v,w\in\mathbb{R}^n$ with $\|v\|_{[n]\setminus\{1\}},\|w\|_{[n]\setminus\{1\}}\geq M_{\ref{lemmadegenerate}}$ such that $(\lambda,w,v)$ is in the kernel of $P_A(\hat{\lambda})$, we must have $v\in\operatorname{Incomp}(\delta_\eqref{proposition1199},\rho_\eqref{proposition1199})$ and $w\in\operatorname{Incomp}(\delta_\eqref{proposition1199},\rho_\eqref{proposition1199})$.
\end{Proposition}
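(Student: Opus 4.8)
We outline the proof; throughout we work on $\mathcal{K}$, write $M_i:=(A-\lambda_i I_n)_{/[1]}$ for $i=1,2$, $b:=A_{[1]}-\lambda_1e_1$, and note $\|M_i\|_{op}\le 8\sqrt n$. Writing $P_A(\hat\lambda)(\lambda,w,v)=0$ in block form gives $\hat\lambda w=M_2v$ and $\lambda b+M_1w=0$; multiplying the second identity by $\hat\lambda$ and substituting the first eliminates $w$ and yields exactly
$$
M_1M_2v=-p\,b,\qquad p:=\hat\lambda\lambda\in[-16n,16n],
$$
which is the linearized form of \eqref{righthanddsidebound}. Hence it suffices to prove that, for each fixed $p$ in a unit-mesh net of $[-16n,16n]$, with probability $1-e^{-cn}$ no $(\delta,\rho)$-compressible unit vector $v$ with $\|v\|_{[n]\setminus\{1\}}\ge M_{\ref{lemmadegenerate}}$ satisfies $\|M_1M_2v+p\,b\|\le C_1 n\rho$, and, separately, that on the same event no admissible $w$ is compressible; a union bound over the $O(n)$ values of $p$ then gives the claim for all $\lambda,\hat\lambda$, because replacing $v$ by a sparse approximant $v'$ and $p$ by a net point costs only $\|M_1M_2\|_{op}\|v-v'\|+\|b\|\le C_1 n\rho$ for a fixed constant $C_1$.

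\emph{Step: no compressible $v$ (the main step).} Fix $S\subseteq[n]$ with $|S|\le\delta n$ and a vector $v'$ supported on $S$ from a $\rho$-net of the sparse sphere, $\|v'\|\in[1/2,3/2]$. Condition on the columns of $A$ indexed by $S\cup\{1\}$: this freezes both $b$ and $u:=M_2v'$. For $m\notin S\cup\{1\}$ one has $u_m=\sum_{k\in S}A_{mk}v'_k$, a sum of independent mean-zero terms of total variance $\|v'\|^2$, and these are moreover independent across $m$, so a Chernoff bound gives $\|u_{[n]\setminus(S\cup\{1\})}\|^2\ge n/16$ off a set of probability $e^{-cn}$. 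On the other hand, for each $l\in\{2,\dots,n\}$ the $l$-th coordinate of $M_1u$ equals (a quantity determined by the conditioning) $+\sum_{m\notin S\cup\{1\}}A_{lm}u_m$, and these fresh sums are independent across $l$ with per-coordinate variance $\ge n/16$; a standard anticoncentration (tensorization) argument therefore gives $\mathbb{P}(\|M_1u+p\,b\|\le c'n)\le e^{-cn}$ for an absolute $c'>0$. Taking the compressibility radius $\rho<c'/C_1$ contradicts the relaxed equation $\|M_1M_2v'+p\,b\|\le C_1 n\rho$; union bounding over $S$ ($e^{O(\delta\log(1/\delta)n)}$ choices), the $v'$-net ($(C/\rho)^{\delta n}$ choices) and the $O(n)$ values of $p$ closes the step once $\delta$ is chosen small depending on $B$. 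The hypothesis $\|v\|_{[n]\setminus\{1\}}\ge M_{\ref{lemmadegenerate}}$ from Lemma \ref{lemmadegenerate} is exactly what excludes the degenerate case where $v$ is essentially supported on coordinate $1$, for which this conditioning scheme breaks down.

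\emph{Step: no compressible $w$, and conclusion.} On the event of the previous step every admissible triple has $v$ incompressible, and we now use only $\hat\lambda w=M_2v$. By the standard restricted invertibility estimate for shifted random matrices on the incompressible set (Rudelson--Vershynin \cite{rudelson2008littlewood}, Proposition 2.5, exactly as used in Proposition \ref{proposition2.15} and at the end of the proof of Theorem \ref{theorem2.5s}), on an event of probability $1-e^{-cn}$ one has $\|(A-\lambda_2I_n)_{[n]\setminus(T\cup\{1\}),[n]}v\|\ge c\sqrt n$ simultaneously for all $|T|\le\delta n$ and all incompressible unit $v$. Since $w_1=0$ (the first row of $M_2$ vanishes), $\|M_2v\|\le 8\sqrt n$ on $\mathcal{K}$, and $|\hat\lambda|=\|M_2v\|$ because $\|w\|=1$, this forces $\|w_{[n]\setminus T}\|=\|(M_2v)_{[n]\setminus(T\cup\{1\})}\|/\|M_2v\|\ge c/8$ for every $|T|\le\delta n$, i.e.\ $w\in\operatorname{Incomp}(\delta,c/8)$. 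Intersecting the two events above with $\mathcal{K}$ and $\Omega_{\ref{lemmadegenerate}}$ completes the proof.

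\emph{Main obstacle.} The only genuinely new difficulty is ruling out compressible $v$: the vector $v$ obeys a \emph{quadratic} equation in $A$, and both the inhomogeneity $p\,b$ and the operator norm $\|M_1M_2\|_{op}$ sit at the scale $n$, far above the $\sqrt n$ scale at which the usual incompressibility arguments operate. The resolution is that after freezing the columns of $A$ on the support of the sparse approximant, the intermediate vector $u=M_2v'$ carries $\ell^2$-mass of order $n$ genuinely spread over order-$n$ coordinates, so the remaining fresh columns force $M_1u$ to anticoncentrate at the matching scale $n$ — just enough to overwhelm the inhomogeneity once $\rho$ is small.
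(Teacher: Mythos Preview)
Your Step 1 (ruling out compressible $v$) is essentially the paper's argument: derive the quadratic equation $M_1M_2v=-p\,b$, fix the sparse support, condition on the relevant columns, show the intermediate vector $u=M_2v'$ carries $\Omega(n)$ mass off the frozen coordinates, and then use the remaining fresh columns to get anticoncentration of $M_1u+pb$ at scale $n$. The paper organizes the conditioning through a block decomposition $(E,F,G,H)$ rather than conditioning on the columns $S\cup\{1\}$ all at once, but the mechanism is the same.

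Your Step 2 (ruling out compressible $w$) has a genuine gap. The claim that, on an event of probability $1-e^{-cn}$,
\[
\bigl\|(A-\lambda_2I_n)_{[n]\setminus(T\cup\{1\}),[n]}\,v\bigr\|\ge c\sqrt n\quad\text{for all }|T|\le\delta n\text{ and all incompressible unit }v
\]
is false. Already with $T=\emptyset$ this asserts $\|M_2v\|\ge c\sqrt n$ for every incompressible $v$, but $M_2=(A-\lambda_2I_n)_{/[1]}$ has its first row zeroed and hence a one-dimensional null space, whose unit vector is incompressible with high probability. Proposition 2.5 of \cite{rudelson2008littlewood} is a smallest-singular-value bound for \emph{tall} rectangular matrices and gives nothing here; nor is there any uniform $c\sqrt n$ lower bound for $\|Bv\|$ over incompressible $v$ for a square (or wide) $B$ --- the least singular value of a square i.i.d.\ matrix is of order $n^{-1/2}$, not $\sqrt n$.

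The fix, which is what the paper does, is to ignore $\hat\lambda w=M_2v$ entirely and use the \emph{linear} equation $M_1w+\lambda b=0$ coming from the bottom block of $P_A(\hat\lambda)$. After conditioning on the first column of $A$ (which freezes $b$) and noting $w_1=0$, the coordinates $w_{[2,n]}$ solve an inhomogeneous equation with coefficient matrix $A_{[2,n]\times[2,n]}-\lambda_1 I_{n-1}$, to which the standard compressible-vector argument for square i.i.d.\ matrices applies directly. This is both simpler and correct.
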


\begin{proof}
    We first fix the value $\lambda,\hat{\lambda}$ and fix some $\delta>0$ sufficiently small. First consider the vector $w$ (observe that the first coordinate of $w$ must be 0 as a solution to \eqref{palambda}), which solves the equation 
    $$
(A_{[1]}-\lambda_1 e_1)\lambda+(A_{/[1]}-\lambda_1 {I_n}_{/[1]})w=0,
    $$ Then we condition on the column $A_{[1]}$ and use the fact that the other coordinates of $w$ solve a linear equation with coefficients in $A_{[2,n]\times[2,n]}$. Then a standard argument via tensorization (see \cite{rudelson2008littlewood}) shows that $w_{[2,n]}$ cannot be a $(\delta,\rho)$- compressible vector with probability $1-e^{-cn}$ for some $\rho,\delta,c>0$ depending on $\xi$. This completes the proof.

    Next we prove that $v$ must be incompressible with high possibility: this is the harder part of the proof. Indeed, $v$ solves the quadratic equation    \begin{equation}\label{quadratic}\begin{aligned}(A-\lambda_1 I_n)_{/[1]} (A-\lambda_2 I_n)_{/[1]}v=\lambda (A-\lambda_1 I_n)_{[1]}.     \end{aligned}\end{equation} By assumption on $\|v\|_{[n]\setminus\{1\}}$, $v$ is not dominated by its first coordinate. Suppose $v_{[2,n]}$ is supported on only $\delta n$ coordinates, again we first condition on the first column of $A$. We may assume its support is on the last $\delta n$ coordinates and decompose $(A-\lambda_i I_n)_{/[1]}$ as $(A-\lambda_i I_n)_{/[1]}=\begin{bmatrix}E_i&F_i\\G_i&H_i\end{bmatrix}$ for both $i=1,2$, where $H_i$ is a square matrix of size $\delta n\times\delta n$. We use $A_{[1]}^1,A_{[1]}^2$ to denote the sub-vector formed by the first $n-\delta n$ (resp. the last $\delta n$) coordinates of $A_{[1]}$.
    Then the matrix equation solved by \eqref{quadratic}, projected onto the first $n-\delta n$ coordinate, yields that $$(E_1F_2+F_1H_2)v_{[2,n]}+E_1A_{[1]}^1v_{[1]}+F_1A_{[1]}^2v_{[1]}=\lambda(A_{[1]}^1-\lambda_1 e_1).$$
We take the following steps of conditioning:
\begin{enumerate} \item   We first condition on $A_{[1]}$ (which is the first column of $E_i$). \item As $F_2$ is a $(n-\delta n)\times\delta n$ rectangular matrix with first row zero and all the other entries are i.i.d., we may apply anti-concentration and tensorization to get that for some $c_9,c_{10}>0$ (that do not depend on $\delta\in(0,\frac{1}{2})$), we have $\|F_2v_{[2,n]}+A_{[1]}^1v_{[1]}\|\geq c_9\sqrt{n}M_{\ref{lemmadegenerate}}$ with probability at least $1-\exp(-c_{10}n)$. 
    
   We condition on this high-probability event of $F$. \item We condition on any realization of $H_1,H_2$. 
   \item Now we use the randomness in $E_1$ (without its first column) and the conditioning event in (2) to obtain that $$\mathcal{L}(E_1F_2v_{[2,n]}+E_1A_{[1]}^1V_{[1]},c_{11}n)\leq \exp(-c_{12}n)$$ for some $c_{11},c_{12}>0$ (that do not depend on $\delta\in(0,\frac{1}{2})$) via tensorization lemma \cite{rudelson2008littlewood}.
(To be more precise, we can write $E_1^0$ as $E_1$ with first column set zero, so that $E_1F_2v_{[2,n]}+E_1A_{[1]}^1V_{[1]}=E_1^0F_2v_{[2,n]}+E_1^0A_{[1]}^1V_{[1]}$ and $E_1^0$ is independent of all the other matrices. Then use tensorization). 
\end{enumerate}
Therefore, we have obtained that for this vector $v$ with $v_{[2,n]}$ supported on its last $\delta n$ coordinates, we have 
$$\mathbb{P}(\|(A-\lambda_1 I_n)_{/[1]} (A-\lambda_2 I_n)_{/[1]}v-\lambda (A-\lambda_1 I_n)_{[1]}\|\leq c_{11}n)\leq\exp(-c_{12}n).
$$
    Next, we can use a standard procedure to show that when $\delta>0$ is sufficiently small, then with possibility $1-\exp(-\Omega(n))$, \eqref{quadratic} cannot hold for any $\lambda$ and any $v$ supported on at most $\delta n$ indices. Via another standard procedure we can check that for some small $\delta>0$, any $(\delta,\rho)$-compressible vector cannot solve \eqref{quadratic} for any $\lambda$ with probability $1-\exp(-\Omega(n))$. The details are very similar to the proof of Proposition \ref{proposition2.15} or to \cite{rudelson2008littlewood} and is therefore omitted.
 \end{proof}

\begin{remark}
    We have used a somewhat complicated matrix $P_A(\hat{\lambda})$ \eqref{palambda} to show that $\theta_1X^*[1]_1+\theta_2X^*[2]_1$ is incompressible with high probability. This approach seems necessary to the author as we cannot find a simpler way. When we later try to prove that $\theta_1X^*[1]_1+\theta_2X^*[2]_1$ has a large LCD, it may be possible to avoid using the $2n+1$-dimensional matrix $P_A(\hat{\lambda})$ but simply use the following\begin{equation} 
\begin{pmatrix}
    0&A_{/[1]}-\lambda_2{I_n}_{/[1]}\\A_{/[1]}-\lambda_1{I_n}_{/[1]}&0
\end{pmatrix}. \end{equation} However, this reduction will lead to other technical difficulties (now we need to show the essential LCD of a vector pair is large, which is a bit complicated for several reasons; whereas for $P_A(\hat{\lambda})$ we only need to show the LCD of one component of the vector is large and a random generating method is applicable for this purpose) and does not make the proof much simpler, so we do not pursue it.
     \end{remark}

We also need a slight strengthening of Proposition \ref{proposition1199}:

\begin{lemma}\label{incompmore} For any $\lambda_1,\lambda_2\in\mathbb{R}$  we can find $c_{\ref{incompmore}}>0,\rho_{\ref{incompmore}}>0,\delta_{\ref{incompmore}}>0$ depending only on $\xi$ such that with probability at least $1-\exp(-c_{\ref{incompmore}}n)$ the following is true: 

For any $\lambda\in[-C_{\ref{lemma3.9}}\sqrt{n},C_{\ref{lemma3.9}}\sqrt{n}],\hat{\lambda}\in[-8\sqrt{n},8\sqrt{n}]$ there exists no solution $(\lambda,v,w)\in\mathbb{R}\times\mathbb{S}^{n-1}\times\mathbb{S}^{n-1}$ to $\ker(P_A(\hat{\lambda}))$ satisfying the assumption in $\mathbb{P}_4$ \eqref{propp4}
such that we have a decomposition $w=c_1v+c_2r$ for some $c_1,c_2\in\mathbb{R}$ and some unit vector $r\in\mathbb{S}^{n-1}$ such that $r$ is  $(\delta_{\ref{incompmore}},\rho_{\ref{incompmore}})$-compressible.
\end{lemma}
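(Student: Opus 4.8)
The plan is to build on Proposition \ref{proposition1199}, whose proof already establishes that both $v$ and $w$ (where $(\lambda,w,v)\in\ker(P_A(\hat{\lambda}))$ with the $M_{\ref{lemmadegenerate}}$-mass assumption) are $(\delta,\rho)$-incompressible with probability $1-e^{-cn}$, uniformly over a net of $\lambda,\hat\lambda$. What we now want is slightly stronger: if we additionally try to write $w = c_1 v + c_2 r$ with $r$ a unit vector, then $r$ cannot be compressible. First I would observe that if $c_2 = 0$ the claim is vacuous (then $w = c_1 v$, and there is no genuine ``$r$'' component to speak of — or we may simply exclude this case since $v$ incompressible forces $w=c_1v$ incompressible, and any unit $r$ orthogonal complement is irrelevant), so we may assume $c_2 \neq 0$ and $r = (w - c_1 v)/c_2$. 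The key point is that $w - c_1 v$, up to the known error term, solves a matrix equation with enough independent randomness: recall from \eqref{palambda} that $w$ satisfies $(A_{[1]}-\lambda_1 e_1)\lambda + (A_{/[1]}-\lambda_1 {I_n}_{/[1]})w = 0$, and $v$ satisfies the quadratic equation \eqref{quadratic}. Applying the first relation, $(A_{/[1]}-\lambda_1 {I_n}_{/[1]})(w - c_1 v) = -\lambda(A_{[1]}-\lambda_1 e_1) - c_1 (A_{/[1]}-\lambda_1{I_n}_{/[1]})v$.

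The strategy from here is exactly the conditioning-and-tensorization argument used in the proof of Proposition \ref{proposition1199}. I would first use the $M_{\ref{lemmadegenerate}}$-mass condition, which here must be extended: we need a lower bound on $\|r\|_{[n]\setminus\{1\}}$, which follows because $w$ and $v$ each have $\ell^2$-mass at least $M_{\ref{lemmadegenerate}}$ away from coordinate $1$ and, after possibly shrinking $M_{\ref{lemmadegenerate}}$, a quantitative linear-independence estimate (analogous to Lemma \ref{realcomplexcorrelate}, controlling how close $w$ and $v$ can be) forces $\|w - c_1 v\|_{[n]\setminus\{1\}}$ to stay bounded below by a fixed multiple of $\max(|c_1|,|c_2|)$ — indeed the reduction in $\mathbb{P}_4$ already guarantees $\|\theta_1 v + \theta_2 w\|_{[n]\setminus\{1\}}\geq M_{\ref{lemmadegenerate}}$ for all unit-norm combinations, which is precisely the statement that $r$ (a renormalized such combination) has mass $\geq M_{\ref{lemmadegenerate}}$ away from coordinate $1$. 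Then, assuming for contradiction $r_{[2,n]}$ is supported on at most $\delta n$ coordinates, I would condition on the first column $A_{[1]}$, decompose $(A - \lambda_i I_n)_{/[1]}$ into the block form $\begin{bmatrix} E_i & F_i \\ G_i & H_i\end{bmatrix}$ with $H_i$ of size $\delta n \times \delta n$, and use the fresh randomness of $E_1$ (minus its first column), which is independent of everything appearing on the right-hand side after the conditioning, to get an anticoncentration bound $\mathcal{L}\big(\text{(the relevant vector)}, c\sqrt{n}\big)\leq e^{-c'n}$ via the tensorization lemma (Lemma \ref{Tensorization}). Taking a union bound over the supports of $r_{[2,n]}$ (there are $\binom{n}{\delta n}$ of them), over an $n^{-1/2}$-net of $(\lambda,\hat\lambda)$ in $[-C_{\ref{lemma3.9}}\sqrt n, C_{\ref{lemma3.9}}\sqrt n]\times[-8\sqrt n,8\sqrt n]$, and over an $n^{-1/2}$-net of the scalars $c_1$ (bounded since $\|w\|_2 = \|v\|_2 = 1$ and $r$ has bounded mass, so $|c_1|\lesssim 1/M_{\ref{lemmadegenerate}}$), choosing $\delta$ small enough makes the total probability $e^{-\Omega(n)}$; then the usual argument upgrades "not supported on $\delta n$ coords" to "not $(\delta,\rho)$-compressible".

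The main obstacle I anticipate is not any single estimate — each is routine by now — but rather bookkeeping the dependence structure carefully: the vector $w - c_1 v$ involves $v$, which itself solves the \emph{quadratic} equation \eqref{quadratic}, so one has to be sure that when conditioning on $A_{[1]}$ and on $H_1, H_2$ one has genuinely isolated an independent block of entries of $A$ (namely the relevant columns of $E_1$) that does not appear in the expression for $v$ restricted to the complementary coordinates. This is the same subtlety flagged in the proof of Proposition \ref{proposition1199} (the remark about writing $E_1^0$ for $E_1$ with first column zeroed and noting it is independent of all other matrices), and the resolution is identical: because $r_{[2,n]}$ is supported on the last $\delta n$ coordinates, the equation projected onto the first $n - \delta n$ coordinates involves $E_1^0$ only through $E_1^0$ times vectors ($F_2 r_{[2,n]}$, $A_{[1]}^1$, and the analogous pieces from $v$) all measurable with respect to the conditioning $\sigma$-algebra, so tensorization applies cleanly. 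Given this, the lemma follows; the details being so close to Proposition \ref{proposition1199} and to \cite{rudelson2008littlewood}, they may be summarized rather than reproduced in full.
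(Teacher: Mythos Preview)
Your proposal has a genuine gap. The linear equation you write for $c_2 r = w - c_1 v$,
\[
(A_{/[1]}-\lambda_1 {I_n}_{/[1]})(c_2 r) = -\lambda(A_{[1]}-\lambda_1 e_1) - c_1 (A_{/[1]}-\lambda_1{I_n}_{/[1]})v,
\]
still carries $v$ on the right-hand side, and this is fatal for the union-bound scheme you describe. You claim that after conditioning on $A_{[1]}$ and $H_1,H_2$ the block $E_1^0$ is ``independent of everything appearing on the right-hand side'', and that ``the analogous pieces from $v$'' are measurable with respect to the conditioning $\sigma$-algebra. But $v$ is not a vector you are netting over: it is the solution to the kernel equation $P_A(\hat\lambda)(\lambda,w,v)=0$ and hence a function of \emph{all} entries of $A$, including $E_1^0$. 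The analogy with Proposition~\ref{proposition1199} breaks precisely here: in that proof the vector being shown incompressible ($v$ or $w$) is itself fixed from the small compressible net, so the coefficient vectors multiplying $E_1^0$ are genuinely deterministic after conditioning. In your setting the netted vector is $r$, while $v$ remains random and correlated with $E_1^0$. If you instead try to also net over $v$, the incompressible net has cardinality $(C/\rho)^n$, which the $e^{-cn}$ tensorization bound cannot absorb.

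The paper's route is to eliminate $v$ altogether. From the two kernel relations one extracts linear identities $(A_{/[1]}-\lambda_2{I_n}_{/[1]})r=\lambda_3 v+\lambda_4 e_1$ and $(A_{/[1]}-\lambda_5{I_n}_{/[1]})v=\lambda_6 r+\lambda_7 e_1$ for suitable scalars $\lambda_3,\ldots,\lambda_7$; composing them yields a \emph{quadratic} equation in $r$ alone,
\[
(A_{/[1]}-\lambda_5{I_n}_{/[1]})(A_{/[1]}-\lambda_2{I_n}_{/[1]})r=\lambda_3\lambda_6\, r+\lambda_4(A_{/[1]}-\lambda_5 {I_n}_{/[1]}) e_1+\lambda_3\lambda_7\, e_1,
\]
which has exactly the structure of \eqref{quadratic}. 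Now $r$ comes from the small compressible net, the scalars $\lambda_i$ from polynomial-size nets, and the block-decomposition-plus-tensorization argument of Proposition~\ref{proposition1199} applies without modification. Your observation that the $\mathbb{P}_4$ assumption forces $\|r\|_{[n]\setminus\{1\}}\geq M_{\ref{lemmadegenerate}}$ is correct and is indeed the paper's first step.
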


This lemma will be used when we subtract a certain copy of $v$ from $w$, and this lemma tells us the remaining part is also incompressible.
The proof is similar to Proposition \ref{proposition1199} and is sketched in Section \ref{appendix4a}.

\subsection{Random base vectors, inner product, and truncation}\label{sect3.4}

In this section we collect a few technical tools. The first result concerns the LCD of randomly generated vector pairs. The second is about the inner product between two random base vectors, and the third is a truncation procedure.
\subsubsection{Random base vectors}
\begin{lemma}\label{randmgenerationoflcd}
    Fix an integer $D>0$ and two subsets of nonnegative integers $D_1,D_2\subset [D]$ with $|D_1|=|D_2|=d\leq D$ and $D_1\cup D_2=[D]$. Fix two integers $N\geq N_1$ and a constant $\kappa>2$. We also fix two sequences $\sigma^1_1,\cdots,\sigma^1_{D}$ and $\sigma^2_1,\cdots,\sigma^2_{D}$ of nonnegative integers. Denote by $J^N=[-\kappa N,-N]\cup[N,\kappa N]$ and also denote by $J^{N_1}$ which is $J^N$ with $N$ replaced by $N_1$. For each $\ell\in\mathbb{N}_+$ we also denote by $I_\ell^N:=[-2^\ell N,2^\ell N]\setminus[-2^{\ell-1}N,2^{\ell-1}N]$ and denote by $I_\ell^{N_1}$ which is $I_\ell^N$ with $N$ replaced by $N_1$. Then we define the following two product sets $$\mathcal{B}_1:=\oplus_{j=1}^{D}\left(J^N1_{j\in D_1}+I^N_{\sigma_j^1}1_{j\notin D_1}\right),\quad \mathcal{B}_2:=\oplus_{j=1}^{D}\left(J^{N_1}1_{j\in D_2}+I^{N_1}_{\sigma_j^2}1_{j\notin D_2}\right).$$ We also impose a condition that there exists some $T_{\ref{randmgenerationoflcd}}>0$ such that for any $X\in\mathcal{B}_1$ we have $\|X\|_2\leq T_{\ref{randmgenerationoflcd}}\sqrt{D}N$ and for any $Y\in \mathcal{B}_2$ we have $\|Y\|_2\leq T_{\ref{randmgenerationoflcd}}\sqrt{D}N_1.$ 
    Let $\mathbb{P}_{X,Y}$ denote the probability measure where $X$ is chosen uniformly at random from $\mathcal{B}_1$ and $Y$ is chosen uniformly at random from $\mathcal{B}_2$ and independent of $X$.  Then we have, for any constant $\tau>0$, whenever $\alpha D\geq 4$ and $32T_{\ref{randmgenerationoflcd}}ND^{-1/2}/\alpha\leq 3^D$, we have the following estimate:
   \begin{equation}\label{assumptionmatrix}\begin{aligned}
        \mathbb{P}_{X,Y}&(\text{There exists $(\phi_1,\phi_2)\in\mathbb{R}^2: |\phi_1|\geq \tau(2T_{\ref{randmgenerationoflcd}}N)^{-1}$ or $|\phi_2|\geq \tau(2T_{\ref{randmgenerationoflcd}}N_1)^{-1}$}  \\&\text{ and $\|\phi\|\leq 16D^{-1/2}$ such that }\|\phi_1 X+\phi_2Y\|_\mathbb{T}\leq \sqrt{\alpha D})\leq (10^{12}(2+S)^2\alpha)^{d/4}, 
    \end{aligned}\end{equation} where $S=\frac{2T_{\ref{randmgenerationoflcd}}}{\tau\kappa}$
\end{lemma}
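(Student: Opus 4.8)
The plan is to prove Lemma~\ref{randmgenerationoflcd} by a union-bound over a discrete net of the relevant two-dimensional parameter space, combined with an anticoncentration estimate for each fixed parameter coming from the independence of the coordinates of $X$ and $Y$ together with Lemma~\ref{lemma2.7uniformdistribution}. First I would reduce the uncountable family of pairs $(\phi_1,\phi_2)$ to a countable one. A standard argument shows that if $\|\phi_1X+\phi_2Y\|_{\mathbb{T}}\le\sqrt{\alpha D}$ for some $\phi$ with $\|\phi\|\le 16D^{-1/2}$, then perturbing $\phi$ by an amount $\eta$ changes $\phi_1X+\phi_2Y$ by at most $\eta(\|X\|_2+\|Y\|_2)\le 2\eta T_{\ref{randmgenerationoflcd}}\sqrt{D}N$ in $\ell^2$, hence by at most the same amount in the $\|\cdot\|_{\mathbb{T}}$ pseudometric; choosing a mesh of size $\eta\sim \sqrt{\alpha D}/(T_{\ref{randmgenerationoflcd}}\sqrt{D}N)$ over the disc $\{\|\phi\|\le 16D^{-1/2}\}$ produces a net of cardinality $O\bigl((T_{\ref{randmgenerationoflcd}}N/(\alpha^{1/2}D^{1/2}))^2\bigr)$, and on this net the bound $\|\phi_1X+\phi_2Y\|_{\mathbb{T}}\le 2\sqrt{\alpha D}$ still holds. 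The lower bounds $|\phi_1|\ge\tau(2T_{\ref{randmgenerationoflcd}}N)^{-1}$ or $|\phi_2|\ge\tau(2T_{\ref{randmgenerationoflcd}}N_1)^{-1}$ survive the discretization after shrinking the constant slightly.

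Next, for a fixed admissible pair $(\phi_1,\phi_2)$, say with $|\phi_1|$ large (the other case is symmetric, using $\mathcal{B}_2$ instead), I would exploit that the coordinates of $X$ indexed by $D_1$ are i.i.d.\ uniform on $J^N=[-\kappa N,-N]\cup[N,\kappa N]$ and are independent of $Y$ and of all other coordinates. Conditioning on everything except $\{X_j:j\in D_1\}$, we are left with a shifted random walk $\phi_1\sum_{j\in D_1}X_j e_j + (\text{fixed vector})$ whose $\mathbb{T}$-norm we must control. Rescaling, $D_{\alpha,\gamma}$-type reasoning applies: the event $\|\phi_1 X+\phi_2 Y\|_{\mathbb{T}}\le 2\sqrt{\alpha D}$ forces $D_\alpha(r\cdot(X_j)_{j\in D_1})\le K$ for the appropriate threshold $K$ determined by $\phi_1$, where the hypothesis $|\phi_1|\ge\tau(2T_{\ref{randmgenerationoflcd}}N)^{-1}$ translates (via $r_D=c_02^{-4}D^{-1/2}$) into $K\lesssim T_{\ref{randmgenerationoflcd}}/(\tau\kappa)\sim S$. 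Applying Lemma~\ref{lemma2.7uniformdistribution} with this $K$ on the $d$ coordinates in $D_1$ gives a bound $(2^{20}\alpha)^{d/4}$ times a polynomial factor in $(2+S)$ coming from $K^2\alpha$ in its hypothesis; tracking constants carefully yields the single-pair bound $\bigl(C(2+S)^2\alpha\bigr)^{d/2}$ or so.

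Finally I would combine: the union bound over the net contributes a factor polynomial in $T_{\ref{randmgenerationoflcd}}N/(\alpha D)^{1/2}$, which the hypothesis $32T_{\ref{randmgenerationoflcd}}ND^{-1/2}/\alpha\le 3^D$ shows is dominated by $3^{O(d)}$ (using $d\ge\alpha D/\text{const}$, or more directly absorbing it into the exponential decay since $\alpha d\ge 4$ gives room), so the product is still of the form $(10^{12}(2+S)^2\alpha)^{d/4}$. The case split between $|\phi_1|$ large and $|\phi_2|$ large is handled by running the argument once for $D_1$-coordinates of $X$ with box $\mathcal{B}_1$ and once for $D_2$-coordinates of $Y$ with box $\mathcal{B}_2$, then adding the two probabilities.

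The main obstacle I anticipate is bookkeeping the constants so that the net cardinality genuinely gets swallowed and the final constant is the claimed universal $10^{12}(2+S)^2$: one must be careful that Lemma~\ref{lemma2.7uniformdistribution} is stated with $D_\alpha = D_{\alpha,1/2}$ and with a specific $r_n=c_02^{-4}n^{-1/2}$, whereas here the natural rescaling factor is $1/|\phi_1|\sim T_{\ref{randmgenerationoflcd}}N/\tau$, so matching the two requires identifying $K$ with the ratio of these scales and checking the constraint $d\ge K^2\alpha$ from that lemma, which is exactly where the $(2+S)^2$ appears; the condition $32T_{\ref{randmgenerationoflcd}}ND^{-1/2}/\alpha\le 3^D$ is precisely what makes $KN<2^d$ (the other hypothesis of Lemma~\ref{lemma2.7uniformdistribution}) hold. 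A secondary technical point is that the "fixed vector" from conditioning lies in $\mathbb{R}^D$ but Lemma~\ref{lemma2.7uniformdistribution} controls only the coordinates in $D_1$; this is fine because $\|\cdot\|_{\mathbb{T}}$ restricted to a coordinate subspace is bounded above by the full $\|\cdot\|_{\mathbb{T}}$, so the restricted LCD condition is implied.
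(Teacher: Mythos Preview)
Your overall strategy---discretize $(\phi_1,\phi_2)$ into a net, split into the cases $|\phi_1|$ large versus $|\phi_2|$ large, condition on the other random vector, and exploit the independent uniform coordinates on $D_1$ (resp.\ $D_2$)---matches the paper exactly. The gap is in the middle paragraph, where you claim
\[
\|\phi_1 X_{D_1}+\phi_2 Y_{D_1}\|_{\mathbb{T}}\le 2\sqrt{\alpha D}\ \Longrightarrow\ D_\alpha\bigl(r\cdot (X_j)_{j\in D_1}\bigr)\le K.
\]
This implication is false in the presence of the shift. After conditioning on $Y$, the vector $\phi_2 Y_{D_1}$ is a fixed real vector with no reason to lie near $\mathbb{Z}^{d}$, so the fact that $\phi_1 X_{D_1}+\phi_2 Y_{D_1}$ is close to $\mathbb{Z}^{d}$ tells you nothing about $\phi_1 X_{D_1}$ being close to $\mathbb{Z}^{d}$, which is what the LCD of $X_{D_1}$ measures. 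Lemma~\ref{lemma2.7uniformdistribution} is stated only for the unshifted event $D_\alpha(r_n X)\le K$ and cannot be invoked as a black box here. Your ``secondary technical point'' addresses only the dimension mismatch, not the shift.

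The paper does not cite Lemma~\ref{lemma2.7uniformdistribution} at all for this lemma. Instead, for each fixed $(\phi_1^i,\phi_2^j)$ in the net and each fixed $Y_k$, it bounds the single-coordinate probability
\[
\mathbb{P}_{X_k}\bigl(\|\phi_1^i X_k+\phi_2^j Y_k\|_{\mathbb{T}}\le 10\sqrt{\alpha}\bigr)
\]
directly: the event forces $X_k$ into a union of at most $2|\phi_1^i|\kappa N+3$ intervals each of length $\lesssim\sqrt{\alpha}/|\phi_1^i|$, and since $X_k$ is uniform on $J^N$ of total length $2(\kappa-1)N$ this gives a bound $\lesssim(2+S)\sqrt{\alpha}$, independent of the shift $\phi_2^j Y_k$. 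A pigeonhole then extracts a subset $S(i,j)\subset D_1$ of size $\ge d/2$ on which this holds coordinatewise, independence yields $\bigl(C(2+S)\sqrt{\alpha}\bigr)^{d/2}$ per subset, and the union bounds over subsets ($\le 2^d$) and over the net ($\le 3^{2D}$ pairs, by hypothesis) are absorbed. This is essentially the \emph{proof} of Lemma~\ref{lemma2.7uniformdistribution} redone coordinatewise in a way insensitive to affine shifts---so your instinct that ``$D_{\alpha,\gamma}$-type reasoning applies'' is right in spirit, but you must open up that argument rather than cite the lemma's statement.
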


The proof of Lemma \ref{randmgenerationoflcd} is technical and deferred to Section \ref{appendix4a}. We briefly explain the auxiliary constants here: the constant $T_{\ref{randmgenerationoflcd}}>0$ will be fixed in advance as we generate boxes $\mathcal{B}_1,\mathcal{B}_2$ approximating unit vectors. Then $D_1,D_2$ are the subsets of coordinates where $v$ and $w$ take value in $[\kappa_0n^{-1/2},\kappa_1n^{-1/2}]$ for some $\kappa_0,\kappa_1$: this is ensured by the incompressibility condition. The constant $\tau>0$ will be used later as a restriction to the inner product of $X$ and $Y$: we need $X$ and $Y$ to be very close to being orthogonal when applying Littlewood-Offord theorems. Thus the only free parameter is $\alpha>0$, which we will set to be very small, so we get a super-exponential bound. Although this random generation method can show that $X$ and $Y$ are almost orthogonal with exponentially good probability, this is not strong enough and we will later need to restrict to the subset of vectors $X,Y$ that are close to orthogonal. In the final estimate, we only use the coordinates on which we have the interval $J^N$ and $J^{N_1}$, so the final upper bound does not depend on the constant $\sigma^1_j$'s.

In practical use we will choose $N_1$ at many different magnitudes that may be much smaller than $N$. This is the case when $|\langle v,w\rangle|\to 1$, where we will write $w=\alpha v+\beta r$ and generate a random vector pair $(v,r)$ from this lemma. As $|\beta|$ is very small, we only need a very loose net for $r$ (so that a small $N_1$) whereas a dense net (a large $N$) is needed for $v$.

\subsubsection{Inner product of random vectors}

We first define the notion of an angle for two high-dimensional vectors.
\begin{Definition}
    Fix an integer $d\in\mathbb{N}_+$. For two non-zero vectors $X,Y\in\mathbb{R}^d$ we define 
    \begin{equation}
        \cos(X,Y)=\frac{\langle X,Y\rangle}{\|X\|_2\|Y\|_2}
    \end{equation} where $\langle X,Y\rangle$ is the inner product in $\mathbb{R}^d$.
\end{Definition}
By the Cauchy-Schwarz inequality we have $|\cos(X,Y)|\leq 1$, and the closer this quantity is to 1, the stronger the overlap the two vectors have.

We verify that the random generation method of Lemma \ref{randmgenerationoflcd} typically produces two vectors with $|\cos(X,Y)|$ very close to 0.
Whereas we can show the possibility for the inner product to be bounded away from 0 is exponentially small, we only need the following weak quantitative estimate for our purpose.

\begin{fact}\label{newoldfact} Consider the random sampling of vectors $X$ and $Y$ in Lemma \ref{randmgenerationoflcd}. Then we have that 
$$
\mathbb{P}(|\operatorname{cos}(X,Y)|<32\kappa^2T_{\ref{randmgenerationoflcd}}^2 D^{-1})\geq \frac{3}{4}
.$$ 
    
\end{fact}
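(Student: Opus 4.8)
The plan is to prove Fact~\ref{newoldfact} by a conditioning argument: we first condition on the vector $X$, then show that for \emph{most} choices of $Y$ (uniform over $\mathcal{B}_2$), the inner product $\langle X, Y\rangle$ is small relative to the product of norms. The key observation is that the norms $\|X\|_2$ and $\|Y\|_2$ are, by the hypotheses of Lemma~\ref{randmgenerationoflcd}, bounded above by $T_{\ref{randmgenerationoflcd}}\sqrt{D}N$ and $T_{\ref{randmgenerationoflcd}}\sqrt{D}N_1$ respectively, while a lower bound of order $\sqrt{D}N$ (resp.\ $\sqrt{D}N_1$) also holds because at least the $d$ coordinates in $D_1$ (resp.\ $D_2$) contribute at least $N$ (resp.\ $N_1$) in absolute value, and $d \geq D/2$ since $D_1 \cup D_2 = [D]$ with $|D_1|=|D_2|=d$. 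Hence both $\|X\|_2$ and $\|Y\|_2$ are pinned down to within a factor of $\kappa T_{\ref{randmgenerationoflcd}}$ of $\sqrt{D}N$ and $\sqrt{D}N_1$; it therefore suffices to show $|\langle X, Y\rangle| \le c\, D N N_1$ with probability at least $3/4$, for a suitable absolute constant, and then unwind the normalization.

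First I would fix a realization of $X \in \mathcal{B}_1$. Conditionally, $Y = (Y_1,\dots,Y_D)$ has independent coordinates with $\mathbb{E}[Y_j] = 0$ for every $j$ (each $Y_j$ is uniform over a symmetric set: either $J^{N_1} = [-\kappa N_1,-N_1]\cup[N_1,\kappa N_1]$ or $I^{N_1}_{\sigma_j^2} = [-2^{\sigma_j^2}N_1, 2^{\sigma_j^2}N_1]\setminus[-2^{\sigma_j^2-1}N_1,2^{\sigma_j^2-1}N_1]$, both symmetric about $0$). Thus $\mathbb{E}_Y[\langle X, Y\rangle] = 0$. For the second moment, $\mathrm{Var}_Y(\langle X,Y\rangle) = \sum_{j=1}^D X_j^2\, \mathbb{E}[Y_j^2] \le \sum_{j=1}^D X_j^2 (\kappa N_1)^2 = (\kappa N_1)^2 \|X\|_2^2 \le \kappa^2 N_1^2 T_{\ref{randmgenerationoflcd}}^2 D N^2$, using the crude bound $|Y_j| \le \kappa N_1$ valid on every admissible interval and the assumed upper bound on $\|X\|_2$. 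By Chebyshev's inequality, with probability at least $3/4$ (over $Y$, conditionally on $X$) we have $|\langle X, Y\rangle| \le 2\, \kappa N_1 T_{\ref{randmgenerationoflcd}} \sqrt{D}\, N$.

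It remains to combine this with the lower bounds on the norms. On the coordinates in $D_1$ we have $|X_j| \ge N$, so $\|X\|_2 \ge \sqrt{d}\, N \ge \sqrt{D/2}\, N$, and likewise $\|Y\|_2 \ge \sqrt{D/2}\, N_1$. Therefore, on the event of probability $\ge 3/4$ just established,
\[
|\cos(X,Y)| = \frac{|\langle X, Y\rangle|}{\|X\|_2 \|Y\|_2} \le \frac{2\kappa N_1 T_{\ref{randmgenerationoflcd}} \sqrt{D}\, N}{(\sqrt{D/2}\, N)(\sqrt{D/2}\, N_1)} = \frac{4\kappa T_{\ref{randmgenerationoflcd}}}{D} \le \frac{32 \kappa^2 T_{\ref{randmgenerationoflcd}}^2}{D},
\]
the last step being a generous slack (since $\kappa > 2$ and $T_{\ref{randmgenerationoflcd}} \ge 1$ in all applications, but even without that the bound $4\kappa T_{\ref{randmgenerationoflcd}}/D \le 32\kappa^2 T_{\ref{randmgenerationoflcd}}^2/D$ is immediate). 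Since this conditional probability bound holds uniformly in the realization of $X$, averaging over $X$ gives the claim.

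The argument is essentially routine; the only mild subtlety — hardly an obstacle — is making sure the variance computation uses the correct per-coordinate bound $|Y_j| \le \kappa N_1$ for the dilated-shell coordinates $I^{N_1}_{\sigma_j^2}$ as well, which is not literally true if $\sigma_j^2$ is large. In the intended application these $\sigma_j^2$ are controlled (they index a dyadic decomposition of incompressible-vector coordinates, all of size $O(n^{-1/2})$, hence bounded magnitude), so one either carries the extra factor $\max_j 2^{\sigma_j^2}$ through — it is absorbed into the already-loose constant — or invokes the standing assumption $\|Y\|_2 \le T_{\ref{randmgenerationoflcd}}\sqrt{D} N_1$ from the hypothesis of Lemma~\ref{randmgenerationoflcd}, which directly bounds $\sum_j Y_j^2$ and hence, a fortiori, makes the variance estimate go through with $\mathrm{Var}_Y(\langle X, Y\rangle) \le \|X\|_2^2 \max_j \mathbb{E}[Y_j^2] \le T_{\ref{randmgenerationoflcd}}^2 D N^2 \cdot (\text{a bound on } \max_j \mathbb{E}[Y_j^2])$; in any case the numerical constant $32$ is not tight and comfortably absorbs all such factors.
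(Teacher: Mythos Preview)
Your approach is essentially the paper's: a second-moment bound on $\langle X,Y\rangle$ combined with the deterministic lower bound $\|X\|_2\|Y\|_2 \ge NN_1 d \ge NN_1 D/2$, then Chebyshev. The paper does not condition on $X$; it computes $\mathbb{E}\langle X,Y\rangle^2 = \sum_i \mathbb{E}|X_i|^2\,\mathbb{E}|Y_i|^2$ directly and bounds it by splitting over $D_1$ and $D_2$: for $i\in D_1$ one has $\mathbb{E}|X_i|^2\le\kappa^2N^2$ while $\sum_{i\in D_1}\mathbb{E}|Y_i|^2\le\mathbb{E}\|Y\|_2^2\le T_{\ref{randmgenerationoflcd}}^2DN_1^2$, and symmetrically on $D_2$, giving $\mathbb{E}\langle X,Y\rangle^2\le 2\kappa^2T_{\ref{randmgenerationoflcd}}^2N^2N_1^2D$. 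This is exactly the clean way around the issue you flag at the end --- your bound $\mathbb{E}[Y_j^2]\le(\kappa N_1)^2$ is indeed false for $j\notin D_2$, and your suggested patch (``carry $\max_j 2^{\sigma_j^2}$'' or ``bound $\max_j\mathbb{E}[Y_j^2]$ via $\|Y\|_2$'') does not work as written, since $\max_j\mathbb{E}[Y_j^2]$ can be as large as $T_{\ref{randmgenerationoflcd}}^2DN_1^2$, which would cost an extra factor of $D$. The $D_1/D_2$ split is the actual fix.

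There is also an arithmetic slip in your final display:
\[
\frac{2\kappa N_1 T_{\ref{randmgenerationoflcd}}\sqrt{D}\,N}{(\sqrt{D/2}\,N)(\sqrt{D/2}\,N_1)}=\frac{4\kappa T_{\ref{randmgenerationoflcd}}}{\sqrt{D}},\quad\text{not}\quad\frac{4\kappa T_{\ref{randmgenerationoflcd}}}{D}.
\]
The paper's own computation likewise only yields $|\cos(X,Y)|\lesssim D^{-1/2}$, so the $D^{-1}$ in the stated threshold appears to be a typo there as well; since the only downstream use is to force the threshold below $0.01$ for $D$ large, nothing breaks.
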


\begin{proof} We reuse the constant $T_{\ref{randmgenerationoflcd}}>0$ in Lemma \ref{randmgenerationoflcd} so that $\|X\|\leq T_{\ref{randmgenerationoflcd}}\sqrt{D}N$ and $\|Y\|\leq T_{\ref{randmgenerationoflcd}}\sqrt{D}N_1$.
We compute the second moment of the inner product thanks to independence of coordinates:
$$
\mathbb{E}\langle X,Y\rangle^2\leq\sum_{i\in D_1} \mathbb{E}|X_i|^2\mathbb{E}|Y_i|^2+\sum_{i\in D_2} \mathbb{E}|X_i|^2\mathbb{E}|Y_i|^2\leq 2 \kappa^2T_{\ref{randmgenerationoflcd}}^2N^2N_1^2D,
$$ but we have deterministically 
$$
\|X\|_2^2\|Y\|_2^2\geq N^2 N_1^2d^2\geq N^2N_1^2D^2/4,
$$
The claimed result then follows from a standard second moment computation.
\end{proof}

\subsubsection{Truncated matrix}

Next we introduce a truncation of the matrix $P_A(\hat{\lambda})$ which has Hilbert-Schmidt norm $O(n^2)$ almost surely. This will be useful when proving Lemma \ref{1712}.

Let $\xi'$ be an independent copy of $\xi$, and denote by $\tilde{\xi}=\xi-\xi'$ the symmetric difference. Denote by $I_B:=(1,16B^2),$ where we recall that $\xi\in\Gamma_B$. Denote by $p:=\mathbb{P}(\tilde{\xi}\in I_B)$, then we have $p\geq\frac{1}{2^7B^4}$ by Lemma \ref{lemma2.11}.

For some parameter $\nu\in(0,1)$ define $\xi_\nu$ via 
\begin{equation}\label{truncationrule}
\xi_\nu:=1\{|\tilde{\xi}\in I_B\}\tilde{\xi}Z_\nu,
\end{equation} where $Z_\nu$ is an independent Bernoulli variable with parameter $\nu$. 

\begin{Definition} Fix a $\nu\in(0, 2^{-8})$.
    We use the notation $\underline{A}$ to denote a random matrix with i.i.d. entry having distribution $\xi_\nu$, and we  denote by $\mathbb{P}_{\underline{A}}$ its probability measure. We also define the following matrix
     \begin{equation}\label{hilbertmas}M_{\underline{A}}=\begin{bmatrix}
0&0&0\\0&0&\underline{A}_{/[1]}\\\underline{A}_{[1]}&\underline{A}_{/[1]}&0
    \end{bmatrix}.\end{equation} 
    where as before, $\underline{A}_{/[1]}$ is $\underline{A}$ with its first row set to zero and $\underline{A}_{[1]}$ is the first column of $\underline{A}_{/[1]}$.
\end{Definition}

\subsection{Constructing nets based on inner product, and its cardinality}\label{sect3.5}

Thanks to Proposition \ref{proposition1199}, we have now ruled out compressible vectors in the kernel of $P_A(\hat{\lambda})$. We will now rule out incompressible vector pairs $(v,w)$ with intermediate LCD. For technical reasons, we will now work with the truncated matrix $M_{\underline{A}}$ \eqref{hilbertmas} and work with a certain notion of net which differs from the net constructed via LCD. More importantly, we will consider a stratification of $(v,w)\in\mathbb{R}^{2n}$ depending on the inner product $\langle v,w\rangle$, and then construct $\epsilon$- nets in a way tailor-made to this inner product.

First, we need to fix the support of the incompressible part of the vectors $v$ and $w$. \begin{notation}\label{1728notation}Thanks to Lemma \ref{incompmore} we may assume that in any decomposition $w=c_1v+c_2r$ where $\|r\|=1$ we must have both $v$ and $w$ are ($\delta_{\ref{incompmore}},\rho_{\ref{incompmore}}$)-incompressible. Then we can find some ${\kappa_0}_{\ref{incompmore}}$ and ${\kappa_1}_{\ref{incompmore}}$, some $d\geq C_{\ref{incompmore}}n$ and two subsets $D_1,D_2\subset[n]$ with $|D_1|=|D_2|=d$, such that (omitting the subscript of $\kappa_0,\kappa_1$) 
$$
(\kappa_0+\kappa_0/2)n^{-1/2}\leq |v_i|,|r_j|\leq (\kappa_1-\kappa_0/2)n^{-1/2},\quad \forall i\in D_1,j\in D_2.
$$
Throughout the rest of the chapter, we will use this value of $\kappa_0,\kappa_1$ and sometimes omit the subscript $\cdot_{\ref{incompmore}}$. This introduces a slight abuse of notation as $\kappa_0,\kappa_1$ here may have different values from the same notation used in Section \ref{section222}, but as they have the same function in the two distinct proofs this should not lead to any confusion.
\end{notation}

We then define a family of sets. Note that since we will be conditioning the first column of $A$, we take the inner product of $v$ and $w$ restricted to the subset $D\setminus\{1\}$.\begin{notation} Recall our assumption that $D_1\cup D_2=[D]\subset [n]$, so the inner product is over indices in $[2,D]$. We also recall that for any $n$-dimensional vector $v$, $v_{[2,D]}$ is the $D-1$-dimensional vector composed by the 2nd to the $D$-th entries of $v$ in this order.\end{notation}

\begin{Definition}\label{netsandlevels}
For two constants $\epsilon,\epsilon_1>0$ we define the following sets and notions:
\begin{enumerate}
    \item 
Subsets indicating the value of $v$ and $r$ on $D_1$ and $D_2$:
$$\begin{aligned}\mathcal{I}(D_1,D_2):=&\{(v,r)\in \mathbb{S}^{n-1}\times\mathbb{S}^{n-1}:\\&(\kappa_0+\kappa_0/2)n^{-1/2}\leq |v_i|,|r_j|\leq(\kappa_1-\kappa_0/2)n^{-1/2},\forall i\in D_1,\forall j\in D_2\}.\end{aligned}$$
$$\begin{aligned}\mathcal{I}'(D_1,D_2):=&\{(v,r)\in \mathbb{R}^{n+n}:\kappa_0n^{-1/2}\leq |v_i|,|r_j|\leq \kappa_1n^{-1/2},\forall i\in D_1,\forall j\in D_2\}.\end{aligned}$$

\item For this fixed $D_1,D_2,\epsilon\leq\epsilon_1$, and any $c\in\mathbb{R}$ define the following sets
$$\begin{aligned}
\mathcal{P}_{\epsilon_1}(c)&:=\{(v,w)\in\mathbb{R}^n\times\mathbb{R}^n:\|v\|_2=1,\frac{1}{2}\leq\|w\|_2\leq\frac{3}{2},\exists r\in\mathbb{S}^{n-1}\text{ such that }\\&w=cv+\epsilon_1r,\quad \langle v_{[2,D]},r_{[2,D]}\rangle=0,(v,r)\in\mathcal{I}(D_1,D_2)
\},\end{aligned}$$  
$$\begin{aligned}\mathcal{P}^0_\epsilon(c)&:=\{(v,w)\in\mathbb{R}^{n}\times\mathbb{R}^{n}:\|v\|_2=1,\frac{1}{2}\leq\|w\|_2\leq\frac{3}{2},\exists r\in\mathbb{S}^{n-1},\exists t\in\mathbb{R},|t|\leq\epsilon\\&\text{such that }w=cv+tr,\langle v_{[2,D]},r_{[2,D]}\rangle|=0,(v,r)\in\mathcal{I}(D_1,D_2)\}.\end{aligned}$$

\item Discrete approximation of $\mathcal{P}_{\epsilon_1}$ and for $\mathcal{P}_\epsilon^0$ at scale $\epsilon:$ when $\epsilon_1\geq \epsilon$,
$$\begin{aligned}
\Lambda_{\epsilon,\epsilon_1}(c):=&\{
(v,w)\in B_n(0,2)\times B_n(0,2): v\in 4\epsilon n^{-1/2}\cdot\mathbb{Z}^n,\\&\text{ there exists } r\in B_n(0,2)\cap \frac{4\epsilon}{ \epsilon_1}n^{-1/2}\cdot\mathbb{Z}^n\text { such that }
w=cv+\epsilon_1r,\\&(v,r)\in \mathcal{I}'(D_1,D_2), \text{ and that }|\cos (v_{[2,D]},r_{[2,D]})|\leq 0.01\}\end{aligned}
$$

$$
\Lambda_\epsilon^0:=\{(v,v)\in B_n(0,2)\times B_n(0,2):v\in 4\epsilon n^{-1/2}\cdot\mathbb{Z}^n,(v,v)\in\mathcal{I}'(D_1,D_2)\}.
$$
\item Threshold: given $L>0$, $\epsilon_1>0$, for $(v,w)\in\mathcal{P}_{\epsilon_1}$(c) or $(v,w)\in{\Lambda_{\epsilon,\epsilon_1}}(c)$ define
$$
\tau_{L,\epsilon_1}(v,w)(\lambda):=\sup\{t\in[0,1]:\mathbb{P}(\|M_{\underline{A}}(\lambda,w,v)\|_2\leq t\sqrt{n})\geq(4Lt^2/\epsilon_1)^{n-1}\},
$$
and for $(v,w)\in\mathcal{P}_{\epsilon}^0(c)$ or $(v,w)\in \Lambda_\epsilon^0$, define
$$
\tau_{L,0}(v,w)(\lambda):=\sup\{t\in[0,1]:\mathbb{P}(\|M_{\underline{A}}(\lambda,w,v)\|_2\leq t\sqrt{n})\geq(4Lt)^{n-1}\}.
$$
\item $\Lambda_{\epsilon,\epsilon_1}(c)$ stratified by threshold function: we define, for $\epsilon\leq\epsilon_1$, 
$$
\Sigma_{\epsilon,\epsilon_1}(c,\lambda):=\{(v,w)\in\mathcal{P}_{\epsilon_1}(c):\tau_{L,\epsilon_1}(v,w)(\lambda)\in [\epsilon,2\epsilon]\},
$$
$$
\Sigma_{\epsilon,0}(c,\lambda):=\{(v,w)\in\mathcal{P}_\epsilon^0(c)
:\tau_{L,0}(v,w)(\lambda)\in [\epsilon,2\epsilon]\}.$$
\item  Nets: for fixed $\epsilon_1\geq\epsilon$ and fixed $\lambda\in\mathbb{R}$, we define the following nets: 
\begin{equation}\label{secondmomemtcomp}\begin{aligned}
\mathcal{N}_{\epsilon,\epsilon_1}(c,\lambda)&:=\{(v,w)\in\Lambda_{\epsilon,\epsilon_1}(c):(\frac{L\epsilon^2}{\epsilon_1})^{
n-1}\leq \mathbb{P}((\|M_{\underline{A}}(\lambda,w,v)\|\leq 2H_{\ref{1712}}\epsilon\sqrt{n}),\\&\mathbb{P}^\mathcal{K}(\|P_A(\hat{\lambda})(\lambda,w,v)\|\leq \epsilon\sqrt{n})\leq (2^{10}\frac{LH_{\ref{1712}}^2\epsilon^2}{\epsilon_1})^{
n-1}\}. \end{aligned}
\end{equation}

\begin{equation}\begin{aligned}
    \mathcal{N}_{\epsilon,0}(c,\lambda):=\{(v,w)\in\Lambda_\epsilon^0:&(L\epsilon)^{n-1}\leq \mathbb{P}(\|M_{\underline{A}}(\lambda,w,v)\|\leq 8\epsilon\sqrt{n}),\\&
    \mathbb{P}^\mathcal{K}(\|P_A(\hat{\lambda})(\lambda,w,v)\|\leq \epsilon\sqrt{n})\leq (2^{10}L\epsilon)^{n-1}\}.
    \end{aligned}
\end{equation}
\end{enumerate}
\end{Definition}

The range of $\epsilon>0$ and $c\in\mathbb{R}$ such that the sets $\Sigma_{\epsilon,\epsilon_1}(c)$ and $\Lambda_{\epsilon,\epsilon_1}(c)$ are not empty is explicitly spelled out in Lemma \ref{1712}.

The constant $H_{\ref{1712}}>0$ will be given in Lemma \ref{1712}, 
Here in the definition of threshold and nets, the exponent of $\epsilon$ is $n-1$ instead of $n$ because the first row of the matrix $\underline{A}$ is 0.

    In part (2) we do not fix $\|w\|=1$ in defining $\mathcal{P}_{\epsilon_1}(c)$ but set a possible range for $\|w\|$. This is useful in proving Proposition \ref{propostiaggewgwg}.

\begin{Definition}\label{importantdefns}(Important remark on Definition \ref{netsandlevels})

In Definition \ref{netsandlevels} we have introduced two parameters $\epsilon,\epsilon_1$ and there could be an ambiguity about how to fix them. We propose the following algorithm:

Given a vector $(v,w)\in\mathbb{R}^{n+n}$ that satisfies the conditions in Notation \ref{netsandlevels}, then we can always find some $c,\epsilon_1$ such that $(v,w)\in\mathcal{P}_{\epsilon_1}(c)$. Then whenever $\epsilon_1\neq 0$, we evaluate the threshold function $\tau_{L,\epsilon_1}(v,w)(\lambda)$. There are two cases: \begin{enumerate}\item  Either $\tau_{L,\epsilon_1}(v,w)(\lambda)\leq 2 \epsilon_1$, then we have $(v,w)\in \Sigma_{\epsilon,
\epsilon_1}(c,\lambda)$ for some $\epsilon\leq\epsilon_1$. \item Or we have $\tau_{L,\epsilon_1}(v,w)(\lambda)\geq 2\epsilon_1$, then we must also have $\tau_{L,0}(v,w)(\lambda)\geq2\epsilon_1$. In this scenario let $\epsilon:=\tau_{L,0}(v,w)(\lambda)$, then we assign  $(v,w)\in\mathcal{P}_\epsilon^0(c)$, $(v,w)\in\Sigma_{\epsilon,0}(c,\lambda)$.\end{enumerate}
This explains why in (5), for the definition of $\Sigma_{\epsilon,\epsilon_1}(c,\lambda)$, the threshold value of $(v,w)$ is not exceeding $2\epsilon_1$. This also explains why for $\Sigma_{\epsilon,0}(c,\lambda)$ we only consider the threshold value in one single interval $[\epsilon,2\epsilon]$. From this clarification we get $(\cup_{\epsilon_1\geq\epsilon,c}\Sigma_{\epsilon,\epsilon_1}(c,\lambda))\cup(\cup_\epsilon\Sigma_{\epsilon,0}(c,\lambda))$ exhausts all the incompressible vectors we study.
\end{Definition}

\subsubsection{A few technical preparations}

 we need the following Fourier replacement lemma:

\begin{lemma}\label{lemmafs}
    For any $L>0$, $\epsilon_1>0$ and $t>0$ and any $(\lambda,v,w)\in\mathbb{R}^{2n+1}$ satisfying that 
    $$
\mathbb{P}(\|M_{\underline{A}}(\lambda,w,v)\|_2\leq t\sqrt{n})\leq (4L^2t^2/\epsilon_1)^{n-1},\quad \text{(resp. $(4Lt)^{n-1}$)},
    $$
    we must have, for any $\hat{\lambda}\in\mathbb{R}$,
    $$
\mathcal{L}\left(P_A(\hat{\lambda})(\lambda,w,v),t\sqrt{n}\right)\leq (50L^2t^2/\epsilon_1)^{n-1},\quad\text{(resp. $(50Lt)^{n-1}$)}.
    $$
\end{lemma}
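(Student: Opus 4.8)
The plan is to prove Lemma \ref{lemmafs} via the Fourier-analytic replacement technique of \cite{campos2025singularity}, Lemma 8.1, adapted to our block matrices $M_{\underline{A}}$ and $P_A(\hat\lambda)$. Recall $M_{\underline{A}}$ has i.i.d.\ entries distributed as $\xi_\nu = 1\{|\tilde\xi|\in I_B\}\tilde\xi Z_\nu$, which is a lazy, symmetrized, truncated version of $\xi$, and the key comparison $\phi_{\tilde\xi Z_\nu}(t) \le \phi_{\xi_\nu}(t)$ (stated in the analogous setup in Section \ref{section222}) plus the Gaussian-type bound $\phi_{\xi_\nu}(t) \le \exp(-\nu p\,\mathbb{E}_{\bar\xi}\|t\bar\xi\|_{\mathbb{T}}^2)$ will be the workhorse. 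The heuristic is that anticoncentration of $\|P_A(\hat\lambda)(\lambda,w,v)\|$ is controlled by a product of characteristic functions over the rows of $P_A(\hat\lambda)$, and each single row's characteristic function (evaluated at the appropriate frequency determined by $v$, $w$, $\lambda$) is dominated, up to the laziness factor $\nu p$, by the corresponding object for $M_{\underline{A}}$.

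First I would express the Lévy concentration function $\mathcal{L}(P_A(\hat\lambda)(\lambda,w,v), t\sqrt n)$ through Fourier inversion: using Esseen's inequality, $\mathcal{L}(Z, t\sqrt n) \lesssim \int_{\|s\|\le 1/(t\sqrt n)} |\widehat{\mu_Z}(s)|\,ds$ where $\mu_Z$ is the law of $Z = P_A(\hat\lambda)(\lambda,w,v)$. Because the randomness of $P_A(\hat\lambda)$ sits only in the i.i.d.\ entries of $A_{/[1]}$ (and the first column $A_{[1]}$), the characteristic function factors over the $n-1$ nontrivial rows (the first row is identically zero, which is exactly why the exponent is $n-1$). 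Each factor is $\prod_j \phi_\xi(\text{linear form in } v,w,\lambda)$, and I would bound $|\phi_\xi(x)| \le \exp(-c\|x\|_{\mathbb{T}}^2)$-type estimates by comparing, frequency-by-frequency, with $\phi_{\xi_\nu}$: the inequality $\phi_{\xi}$ appearing for $A$ is controlled by $\phi_{\xi_\nu}$ up to losing the factor $\nu p = 2^{-15}p$, which is an absolute constant and hence only changes the constants $L \mapsto$ (const)$\cdot L$ and $4 \mapsto 50$.

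The second step is to translate the hypothesis $\mathbb{P}(\|M_{\underline{A}}(\lambda,w,v)\|_2 \le t\sqrt n) \le (4L^2t^2/\epsilon_1)^{n-1}$ (resp.\ $(4Lt)^{n-1}$) back into a Fourier-side bound on $\int |\widehat{\mu_{M_{\underline{A}}(\lambda,w,v)}}(s)|\,ds$: this is the reverse direction of Esseen, using that $M_{\underline{A}}$ already has bounded-support entries so no further truncation is needed, and using the tensorization-type estimate that the small-ball probability at scale $t\sqrt n$ and the integral of the characteristic function over a ball of radius $O(1/(t\sqrt n))$ are comparable up to geometric constants (this is where a factor like $(50/4)^{n-1}$ enters). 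Combining the two Fourier-side inequalities — one comparing $A$ to $\underline{A}$ entrywise, the other relating $\underline{A}$'s small-ball to its Fourier integral — yields $\mathcal{L}(P_A(\hat\lambda)(\lambda,w,v), t\sqrt n) \le (50 L^2 t^2/\epsilon_1)^{n-1}$ in the $\epsilon_1 > 0$ case and $(50Lt)^{n-1}$ in the $\epsilon_1 = 0$ case.

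The main obstacle I anticipate is the bookkeeping of which rows/columns carry randomness in the structured matrix $P_A(\hat\lambda)$ versus $M_{\underline{A}}$: these differ by the diagonal shift blocks $-\hat\lambda I_n$, $-\lambda_1 {I_n}_{/[1]}$, $-\lambda_2 {I_n}_{/[1]}$ and by the deterministic vectors $-\lambda_1 e_1$, which are constants and therefore only shift the argument of each characteristic function but do not affect its modulus; still, one must verify carefully that the linear forms appearing in $\widehat{\mu}$ for $P_A(\hat\lambda)$ are exactly (up to deterministic translation) those appearing for $M_{\underline{A}}$, so that the pointwise comparison $|\phi_\xi| \lesssim |\phi_{\xi_\nu}|$ applies term by term. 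A secondary subtlety is that the first column $A_{[1]}$ appears both in $P_A(\hat\lambda)$ and $M_{\underline{A}}$ multiplied against the scalar $\lambda$; since $|\lambda| \le C\sqrt n$ this contributes a bounded-frequency factor and is harmless. Once these identifications are in place, the constant tracking ($4 \to 50$, $L \to$ absolute const times $L$, the extra $H_{\ref{1712}}$-type factors absorbed elsewhere) is routine, and I would cite \cite{campos2025singularity}, Lemma 8.1 for the template and indicate only the modifications needed for the two-block structure.
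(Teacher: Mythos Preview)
Your overall strategy---pointwise domination of characteristic functions $|\phi_\xi|\le \phi_{\xi_\nu}$ followed by a Fourier-side comparison---is exactly the paper's route. The identification of the deterministic shifts in $P_A(\hat\lambda)$ as harmless phase factors is also right: the paper makes this explicit by introducing an intermediate matrix $P_A^0$ (setting $\lambda_1=\lambda_2=\hat\lambda=0$) whose characteristic function $\phi_v$ satisfies $|\psi_v|=|\phi_v|\le \chi_v$ pointwise.

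There is, however, a real gap in your ``reverse Esseen'' step. The claim that the small-ball probability $\mathbb{P}(\|M_{\underline A}(\lambda,w,v)\|_2\le t\sqrt n)$ and the ball-integral $\int_{\|s\|\lesssim 1/(t\sqrt n)}|\widehat{\mu}(s)|\,ds$ are comparable up to geometric constants is not a valid inequality in the direction you need: Esseen only gives the forward bound, and bounded support of the entries does not rescue the reverse. The paper avoids this by working with the Gaussian-mollified quantity $\mathbb{E}\exp(-\pi\|M_{\underline A}(\lambda,w,v)\|_2^2/2\sigma^2)$ instead of the raw Fourier ball-integral. By Fourier inversion this equals $\int e^{-\pi\|\xi\|^2}\chi_v(\xi/\sigma)\,d\xi$, so the pointwise bound $|\phi_v|\le\chi_v$ transfers the exponential moment of $P_A(\hat\lambda)$ to that of $M_{\underline A}$. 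On the $P_A$ side one passes from $\mathcal{L}$ to the exponential moment by Markov; on the $M_{\underline A}$ side one bounds the exponential moment by a layer-cake integral
\[
\mathbb{E}\exp(-\pi\|M_{\underline A}(\lambda,w,v)\|_2^2/2\sigma^2)\le \mathbb{P}(\|\cdot\|\le t\sqrt n)+\sqrt n\int_t^\infty e^{-s^4 n/t^4}\,\mathbb{P}(\|\cdot\|\le s^2\sqrt n/\epsilon_1)\,ds,
\]
and this is where the hypothesis is used---crucially at \emph{all} scales $s\ge t$, not just at $t$ itself (this is legitimate because in the application $t\ge\tau_{L,\epsilon_1}(v,w)$, so the threshold definition supplies the bound for every $s\ge t$). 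Your proposal, reading the hypothesis as a single-scale estimate and trying to invert Esseen from it, would not close.
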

This lemma permits us to extend the small ball probability of $M_{\underline{A}}$ to that of $P_A(\hat{\lambda})$ with error up to a constant. The proof of Lemma \ref{lemmafs} follows almost the same lines as the proof of Lemma \ref{lemmareplacement}. We defer its proof to Section \ref{prooflittlewoodofford}.

We then show that we can use an exponential number of boxes to cover the discrete approximation subsets $\Lambda_{\epsilon,\epsilon_1}(c)$. The boxes we will use is very similar to that in Definition \ref{definitionbox}, with a minor difference in its definition.

\begin{Definition}\label{definitionboxnew} Fix $D_1,D_2\subset[n]$ with $D_1\cup D_2=[D]$, some $\kappa\geq 2$ and integers $N\geq N_1$. An $(N,N_1,\kappa,D_1,D_2)$ box pair is defined as a pair of product sets $\mathcal{B}_1,\mathcal{B}_2$ of the form $$\mathcal{B}_1=B_1^1\times B_2^1\times\cdots\times B_{n}^1\subset\mathbb{Z}^{n},\quad \mathcal{B}_2=B_1^2\times B_2^2\times\cdots\times B_{n}^2\subset\mathbb{Z}^{n}$$ where we have
    \begin{enumerate}
    \item $|B_i^1|\geq N$ for each $i\in[n]$ and $|B_i^2|\geq N_1$ for each $i\in[n]$,\item $B_i^1=[-\kappa N,-N]\cup[N,\kappa N]$, $i\in D_1$, and  $B_i^2=[-\kappa N_1,-N_1]\cup[N_1,\kappa N_1]$, $i\in D_2$. \item $|\mathcal{B}_1|\leq(\kappa N)^{n}$ and $|\mathcal{B}_2|\leq(\kappa N_1)^{n}$.\end{enumerate} 
\end{Definition}

For future applications, we need to modify the definition of $D_1,D_2$ as follows:
\begin{Reduction}\label{assumption3.23}
Given a $(N,N_1,\kappa,D_1,D_2)$ box pair $\mathcal{B}_1,\mathcal{B}_2$, by its definition we can find $T_{\ref{randmgenerationoflcd}}>0$ such that for any $X\in\mathcal{B}_1$, $\|X_{[2,D]}\|\leq T_{\ref{randmgenerationoflcd}}\sqrt{D}N$ and for any $Y\in\mathcal{B}_1,$ $\|Y_{[2,D]}\|\leq T_{\ref{randmgenerationoflcd}}\sqrt{D}N_1$. Then it is easy to see that we can find a constant $Q_{\ref{assumption3.23}}=100T_{\ref{randmgenerationoflcd}}>0$ such that there are at most $\frac{D}{8}$ indices $i\in[2,D]$ such that $|X_i|\geq Q_{\ref{assumption3.23}}N$, and that there are at most $\frac{D}{8}$ indices $i\in[2,D]$ such that $|Y_j|\geq Q_{\ref{assumption3.23}}N_1$. We remove these at most $D/4$ indices from $[2,D]$, so that after removal we have $|X_i|\leq Q_{\ref{assumption3.23}}N$ and $|Y_i|\leq Q_{\ref{assumption3.23}}N_1$ for each $i\in[2,D]$ that was not removed and for each $X,Y$ from the box pair $\mathcal{B}_1,\mathcal{B}_2$. For notational simplicity, we shall still denote the remaining interval by $[2,D]$. In other words, we assume that $D_1,D_2$ are initially chosen such that this uniform bound is satisfied.
\end{Reduction}
Our covering result is stated as follows. Here $\kappa_0,\kappa_1$ refer to ${\kappa_0}_{\ref{incompmore}}$ and ${\kappa_1}_{\ref{incompmore}}$
\begin{lemma} \label{lemma3.23}
    The net $\Lambda_{\epsilon,\epsilon_1}(c)$ can be covered by a family $\mathcal{F}_{\epsilon,\epsilon_1}(c)$ of $(N,N_1,\kappa,D_1,D_2)$ box pairs, where we take $N={\kappa_0}/4\epsilon$ and $N_1={\kappa_0}\epsilon_1/4\epsilon$ and $\kappa=\kappa_{\ref{lemma3.23}}\geq\max({\kappa_1}/{\kappa_0},2^8{\kappa_0}^{-4})$: 
    \begin{equation}\label{productboxdef}
\Lambda_{\epsilon,\epsilon_1}(c)\subset\cup_{i\in\mathcal{F}_{\epsilon,\epsilon_1}(c)}((4\epsilon n^{-1/2})\mathcal{B}_1\times(4\epsilon n^{-1/2}/\epsilon_1)\mathcal{B}_2)_i^{0.01}
    \end{equation} in the sense that for any $(v,w)\in \Lambda_{\epsilon,\epsilon_1}(c)$ with $w=cv+\epsilon_1 r$ then we have $(v,r)\in (4\epsilon n^{-1/2})\mathcal{B}_1\times (4\epsilon n^{-1/2}/\epsilon_1)\mathcal{B}_2$ for one such box $\mathcal{B}_1\times\mathcal{B}_2$ in the family $\mathcal{F}_{\epsilon,
    \epsilon_1}(c)$ and that $|\cos( v_{[2,D},r_{[2,D]})|\leq 0.01$. The superscript $\cdot^{0.01}$ on the right hand side of \eqref{productboxdef} is the subset of the product box consisting of vector pairs $(X,Y)$ with $|\cos(X_{[2,D]},Y_{[2,D]})|\leq 0.01.$

    The index set has cardinality $|\mathcal{F}_{\epsilon,\epsilon_1}(c)|\leq \kappa_{\ref{lemma3.23}}^{2n}$
    and each product box has cardinality $|(\mathcal{B}_1\times\mathcal{B}_2)_i|\leq (16\kappa_{\ref{lemma3.23}}^2\kappa_0^2\epsilon_1/\epsilon^2)^n $ for each $i\in \mathcal{F}_{\epsilon,\epsilon_1}(c)$.

\end{lemma}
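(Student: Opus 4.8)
The plan is to prove Lemma~\ref{lemma3.23} by reducing to a standard lattice covering argument, handling the two components $v$ and $r$ separately since the change of variables $w = cv + \epsilon_1 r$ is affine in $r$ for fixed $c$. First I would observe that if $(v,w) \in \Lambda_{\epsilon,\epsilon_1}(c)$ then by definition $v \in 4\epsilon n^{-1/2}\cdot\mathbb{Z}^n \cap B_n(0,2)$ and $r \in (4\epsilon/\epsilon_1) n^{-1/2}\cdot\mathbb{Z}^n \cap B_n(0,2)$, with $(v,r) \in \mathcal{I}'(D_1,D_2)$ and the near-orthogonality condition $|\cos(v_{[2,D]},r_{[2,D]})| \le 0.01$. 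So the task is to cover the lattice points of $v$ (scaled by $(4\epsilon n^{-1/2})^{-1}$, so integer points in a ball of radius $2\sqrt{n}/(4\epsilon) = \sqrt{n}/(2\epsilon)$) and similarly the integer points for $r$, by box pairs of the prescribed form, choosing $N = \kappa_0/(4\epsilon)$ and $N_1 = \kappa_0\epsilon_1/(4\epsilon)$. On the coordinates in $D_1$, the constraint $\kappa_0 n^{-1/2} \le |v_i| \le \kappa_1 n^{-1/2}$ forces the rescaled integer coordinate $v_i/(4\epsilon n^{-1/2})$ to lie in $[\kappa_0/(4\epsilon), \kappa_1/(4\epsilon)] = [N, (\kappa_1/\kappa_0) N]$ in absolute value, which is why we need $\kappa \ge \kappa_1/\kappa_0$; the analogous statement holds for $r$ on $D_2$ at scale $N_1$.

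Next I would carry out the covering on the remaining coordinates $[n] \setminus D_1$ for $v$ and $[n]\setminus D_2$ for $r$. Here there is no two-sided pointwise bound, only the global $\ell^2$ bound $\|v\| \le 2$, hence $\|v/(4\epsilon n^{-1/2})\| \le \sqrt{n}/(2\epsilon)$, and similarly for $r$. I would dyadically partition each such coordinate's range into intervals $I_\ell$ of the form $[-2^\ell N, 2^\ell N] \setminus [-2^{\ell-1} N, 2^{\ell-1} N]$ (together with the central block $[-N,N]$), exactly as in the box construction of Lemma~\ref{randmgenerationoflcd}; the number of dyadic scales needed per coordinate is $O(\log(\sqrt{n}/(\epsilon N))) = O(\log n)$ since the coordinates are bounded by $\sqrt{n}/(2\epsilon)$ and $N = \kappa_0/(4\epsilon)$. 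A standard volumetric/counting argument (as in Rudelson--Vershynin \cite{rudelson2008littlewood}, Proposition 2.5 style, or the box-counting in \cite{campos2025singularity}) shows that the total number of sign/scale patterns over all $\le n$ free coordinates is at most $\kappa^{2n}$ provided $\kappa$ is chosen as a large enough absolute multiple of $\max(\kappa_1/\kappa_0, \kappa_0^{-4})$ — this is the assertion $|\mathcal{F}_{\epsilon,\epsilon_1}(c)| \le \kappa_{\ref{lemma3.23}}^{2n}$. The size bound $|\mathcal{B}_1| \le (\kappa N)^n$ and $|\mathcal{B}_2| \le (\kappa N_1)^n$ follows because each coordinate interval $B_i^1$ (resp. $B_i^2$) is contained in $[-\kappa N, \kappa N]$ (resp. $[-\kappa N_1, \kappa N_1]$), and then $|(\mathcal{B}_1 \times \mathcal{B}_2)_i| \le (\kappa N)^n (\kappa N_1)^n = (\kappa^2 N N_1)^n = (\kappa^2 \kappa_0^2 \epsilon_1/(16\epsilon^2))^n$, which after absorbing a factor $16$ into the constant gives the stated $(16 \kappa_{\ref{lemma3.23}}^2 \kappa_0^2 \epsilon_1/\epsilon^2)^n$.

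The near-orthogonality superscript $\cdot^{0.01}$ is essentially free: since every $(v,w) \in \Lambda_{\epsilon,\epsilon_1}(c)$ already satisfies $|\cos(v_{[2,D]}, r_{[2,D]})| \le 0.01$ by definition of $\Lambda_{\epsilon,\epsilon_1}(c)$, we simply restrict each product box to the sub-collection of pairs $(X,Y)$ satisfying this inequality; this does not increase the number of boxes and only shrinks each box, so all cardinality bounds are preserved. I would also note that the lower bound $|B_i^1| \ge N$, $|B_i^2| \ge N_1$ required in Definition~\ref{definitionboxnew} is automatic for the dyadic annuli once $N, N_1 \ge 2$ (which holds because $\epsilon \le \kappa_0/8$ and $\epsilon \le \epsilon_1$), since an annulus $I_\ell^N$ has $\ge 2^{\ell-1} N \ge N$ integer points per coordinate, and the central block $[-N,N]$ has $2N+1 \ge N$ points. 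The condition $\kappa \ge 2^8 \kappa_0^{-4}$ is imposed so that Lemma~\ref{randmgenerationoflcd} can later be applied to these boxes (it is needed to verify the hypothesis $32 T_{\ref{randmgenerationoflcd}} N D^{-1/2}/\alpha \le 3^D$ type conditions there), not for the covering itself.

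The main obstacle, such as it is, is purely bookkeeping: keeping the two different mesh scales ($4\epsilon n^{-1/2}$ for $v$ and $4\epsilon n^{-1/2}/\epsilon_1$ for $r$) straight throughout, and verifying that the induced mesh on $w = cv + \epsilon_1 r$ is compatible — but since $\epsilon_1 \cdot (4\epsilon n^{-1/2}/\epsilon_1) = 4\epsilon n^{-1/2}$, the $r$-mesh, after scaling by $\epsilon_1$, matches the $v$-mesh, and the constant $c$ (which ranges over a bounded set, controlled via a separate net on $c$ in the downstream application) contributes no new difficulty here. There is no deep content; the only genuinely quantitative point is the exponent accounting to land exactly at $\kappa^{2n}$ boxes and box size $(16\kappa^2\kappa_0^2\epsilon_1/\epsilon^2)^n$, which is why this is deferred to Section~\ref{appendix4a}.
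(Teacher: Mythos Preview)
Your approach is essentially the same as the paper's: the proof of Lemma~\ref{lemma3.23} simply applies the construction of Lemma~\ref{specified850} separately to $v$ (at scale $N=\kappa_0/4\epsilon$) and to $r$ (at scale $N_1=\kappa_0\epsilon_1/4\epsilon$), and the near-orthogonality restriction is inherited for free from the definition of $\Lambda_{\epsilon,\epsilon_1}(c)$.

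One concrete correction to your justification: the box-size bound $|\mathcal{B}_1|\le(\kappa N)^n$ does \emph{not} follow from ``each coordinate interval $B_i^1$ is contained in $[-\kappa N,\kappa N]$'' --- the dyadic annuli $I_\ell$ for large $\ell$ extend well beyond $[-\kappa N,\kappa N]$. The correct argument (see the proof of Lemma~\ref{specified850} in Section~\ref{appendix4a}) uses $|I_{\ell_j}|\le 2^{\ell_j}N$ together with the global $\ell^2$ constraint $\sum_{j:\ell_j>0}2^{2\ell_j}\le 16n/\kappa_0^2$ and AM--GM to bound $\prod_j 2^{\ell_j}\le(8/\kappa_0^2)^n$. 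Relatedly, your remark that there are ``$O(\log n)$ dyadic scales per coordinate'' would naively yield $(\log n)^n$ boxes, which is too many; the $\kappa^{2n}$ bound again comes from the $\ell^2$ constraint on the $\ell_j$'s, not from a per-coordinate count. Finally, the condition $\kappa\ge 2^8\kappa_0^{-4}$ is used precisely for this box-counting step (to ensure $(\kappa_0/8)^{-8n}<\kappa^{2n}$), not for the later application of Lemma~\ref{randmgenerationoflcd}.
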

More precisely, we shall take $N=\lfloor \kappa_0/4\epsilon\rfloor$, but this difference is negligible and is omitted.
\begin{proof}
    This can be proven in essentially the same way as Lemma \ref{specified850}. Now we only need to construct boxes separately for $v\in 4\epsilon n^{-1/2}\cdot\mathbb{Z}^n$ and $r\in 4\epsilon/\epsilon_1 n^{-1/2}\cdot\mathbb{Z}^n$. The final restriction on $\cos(X_{[2,D]},Y_{[2,D]})$ is inherent in the definition of $\Lambda_{\epsilon,\epsilon_1}(c)$.
\end{proof}

\subsection{Cardinality of net and net verification}\label{sec3.6}

We first prove that $\mathcal{N}_{\epsilon,\epsilon_1}(c,\lambda)$ is an $\epsilon$- net for $\Sigma_{\epsilon,\epsilon_1}(c,\lambda)$. The proof is similar to that of Lemma \ref{approximation}.

\begin{lemma}\label{1712} We can find two constants $H=H_{\ref{1712}}>1$ and $\kappa_{\ref{1712}}>0$ depending only on ${\kappa_0}_{\ref{incompmore}},{\kappa_1}_{\ref{incompmore}}$ such that \begin{enumerate}
   \item Any $c\in\mathbb{R},\epsilon_1>0$ such that $\mathcal{P}_{\epsilon_1}(c)$ is not empty must satisfy that $\max(|c|^2+2,|\epsilon_1|^2+2)\leq (H_{\ref{1712}}-1)^2/256B^4$.
   \item Fix $\epsilon\in(0,\kappa_{\ref{1712}})$ and $\epsilon_1\geq\epsilon$. Then for any $(v,w)\in \Sigma_{\epsilon,\epsilon_1}(c,\lambda)$  we can find $(v',w')\in \mathcal{N}_{\epsilon,\epsilon_1}(c,\lambda)$ such that $\|(v,w)-(v',w')\|_\infty\leq 2H_{\ref{1712}}\epsilon n^{-1/2}$.\end{enumerate}
\end{lemma}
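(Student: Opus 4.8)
The plan is to mirror the structure of the proof of Lemma \ref{approximation}: we will take a $v\in\Sigma_{\epsilon,\epsilon_1}(c,\lambda)$, write $w=cv+\epsilon_1 r$ with $(v,r)\in\mathcal I(D_1,D_2)$ and $\langle v_{[2,D]},r_{[2,D]}\rangle=0$, and construct a random perturbation that rounds $v$ to the lattice $4\epsilon n^{-1/2}\mathbb Z^n$ and $r$ to the lattice $(4\epsilon/\epsilon_1)n^{-1/2}\mathbb Z^n$ simultaneously, then show the rounded pair lands in $\mathcal N_{\epsilon,\epsilon_1}(c,\lambda)$ with positive probability. Part (1) is the easy deterministic bookkeeping: if $\mathcal P_{\epsilon_1}(c)$ is non-empty then there exist $v,r$ with $\|v\|=1$, $\|w\|\le 3/2$ and $w=cv+\epsilon_1 r$ with $\|r\|=1$; since $v\perp r$ on $[2,D]$ and these blocks carry most of the mass of both vectors (by incompressibility, $\|v_{[2,D]}\|^2,\|r_{[2,D]}\|^2\ge\kappa_0^2 d/n\ge c_0$), the quantity $\|cv_{[2,D]}+\epsilon_1 r_{[2,D]}\|^2=c^2\|v_{[2,D]}\|^2+\epsilon_1^2\|r_{[2,D]}\|^2$ is at most $\|w\|^2\le 9/4$, which forces $c^2$ and $\epsilon_1^2$ to be bounded by an absolute constant; choosing $H_{\ref{1712}}$ large enough relative to $B$ makes $\max(|c|^2+2,|\epsilon_1|^2+2)\le (H_{\ref{1712}}-1)^2/256B^4$ hold. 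This pins down $H_{\ref{1712}}$.

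For part (2), let $r^0\in 4\epsilon n^{-1/2}\mathbb Z^n$ (coordinatewise rounding applied to $\epsilon_1 r$, i.e. $r$ is rounded in a $(4\epsilon/\epsilon_1)n^{-1/2}$-grid), and introduce independent mean-zero random variables $s=(s_1,\dots,s_n)$ with $|s_i|\le 4\epsilon n^{-1/2}$ so that $v-s\in 4\epsilon n^{-1/2}\mathbb Z^n$, and similarly $s'$ with $|s'_i|\le 4\epsilon n^{-1/2}$ so that $\epsilon_1 r-s'\in 4\epsilon n^{-1/2}\mathbb Z^n$; set $v'=v-s$, and $w'=cv'+ (\epsilon_1 r - s')$, so $r'=(\epsilon_1 r-s')/\epsilon_1\in (4\epsilon/\epsilon_1)n^{-1/2}\mathbb Z^n$. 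Since $\|v-v'\|_\infty,\|\epsilon_1(r-r')\|_\infty\le 4\epsilon n^{-1/2}$, we get $\|(v,w)-(v',w')\|_\infty\le C H_{\ref{1712}}\epsilon n^{-1/2}$ using the bound on $|c|$ from part (1), and $(v',r')$ still lies in $\mathcal I'(D_1,D_2)$ and satisfies $|\cos(v'_{[2,D]},r'_{[2,D]})|\le 0.01$ once $\epsilon$ is small enough (the perturbation changes the cosine by $O(\epsilon/\kappa_0)$, and the original cosine is $0$). Thus $(v',w')\in\Lambda_{\epsilon,\epsilon_1}(c)$.

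It remains to check the two probabilistic defining conditions of $\mathcal N_{\epsilon,\epsilon_1}(c,\lambda)$. The second condition — the bound on $\mathbb P^{\mathcal K}(\|P_A(\hat\lambda)(\lambda,w',v')\|\le\epsilon\sqrt n)$ — is automatic for \emph{any} $(v',w')$ satisfying the first, by a triangle-inequality argument identical to the one in the proof of Lemma \ref{approximation}: $\|(v,w)-(v',w')\|_2\le C H_{\ref{1712}}\epsilon$ together with $\|P_A(\hat\lambda)\|_{op}\le C\sqrt n$ on $\mathcal K$ shows $\mathcal L(P_A(\hat\lambda)(\lambda,w',v'),\epsilon\sqrt n)\le\mathcal L(P_A(\hat\lambda)(\lambda,w,v),CH_{\ref{1712}}\epsilon\sqrt n)$, then Fourier replacement via Lemma \ref{lemmafs} applied to $(v,w)\in\Sigma_{\epsilon,\epsilon_1}$ bounds this by $(2^{10}LH_{\ref{1712}}^2\epsilon^2/\epsilon_1)^{n-1}$ after absorbing the covering-loss factor $(CH_{\ref{1712}})^{2n}$ into the constant. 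For the first condition, the key estimate is the first-moment bound
\begin{equation}
\mathbb E_{s,s'}\,\mathbb P_{\underline A}(\|M_{\underline A}(\lambda,w',v')\|_2\le 2H_{\ref{1712}}\epsilon\sqrt n)\ \ge\ \tfrac12\,\mathbb P_{\underline A}(\|M_{\underline A}(\lambda,w,v)\|_2\le\epsilon\sqrt n)\ \ge\ \tfrac12(L\epsilon^2/\epsilon_1)^{n-1},
\end{equation}
where the last inequality is the definition of $\tau_{L,\epsilon_1}(v,w)(\lambda)\in[\epsilon,2\epsilon]$. To prove the first-moment bound, condition on the event $\mathcal E=\{\underline A:\|M_{\underline A}(\lambda,w,v)\|_2\le\epsilon\sqrt n\}$ and estimate $\mathbb P_{s,s'}(\|M_{\underline A}(0,w-w',v-v')\|_2>H_{\ref{1712}}\epsilon\sqrt n\mid \mathcal E)\le 1/2$ via Markov: for fixed $\underline A$ with entries bounded by $16B^2$ and at most $4dn$ nonzero entries (the zeroed-out structure), $\mathbb E_{s,s'}\|M_{\underline A}(0,w-w',v-v')\|_2^2=\sum_i (\sum_j M_{ij}\mathbb E(\cdot)_j^2)\lesssim B^4\epsilon^2(1+|c|^2)d\le H_{\ref{1712}}^2\epsilon^2 n$ by part (1), independence and mean-zero-ness of $s,s'$. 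The positive probability that $(v',w')$ simultaneously satisfies the first-moment-favorable event then yields a point of $\mathcal N_{\epsilon,\epsilon_1}(c,\lambda)$.

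The main obstacle I expect is the accounting in the first-moment step: here the perturbation $w-w' = c(v-v')+(\epsilon_1 r-\epsilon_1 r')$ involves the factor $c$, so the variance bound must absorb $|c|^2$, and this is precisely why part (1)'s bound $|c|^2\le (H_{\ref{1712}}-1)^2/256B^4-2$ is stated the way it is and why $H_{\ref{1712}}$ must be chosen large relative to both $B$ and the geometry of $\kappa_0,\kappa_1$; one has to carefully track that the same $H_{\ref{1712}}$ works uniformly over all admissible $c$ and $\epsilon_1$. A secondary subtlety is ensuring the rounding of $r$ in the coarser grid $(4\epsilon/\epsilon_1)n^{-1/2}\mathbb Z^n$ is compatible with the near-orthogonality constraint $|\cos(v'_{[2,D]},r'_{[2,D]})|\le0.01$ — but since $\epsilon_1\ge\epsilon$ the grid is no coarser than $4\epsilon n^{-1/2}$ in absolute terms, so the cosine perturbation is still $O(\epsilon)$ and the constraint survives for $\epsilon<\kappa_{\ref{1712}}$.
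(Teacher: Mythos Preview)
Your proposal is correct and follows essentially the same route as the paper: random rounding of $v$ and $r$ onto their respective lattices, a first-moment/Markov argument (conditioned on the event $\{\|M_{\underline A}(\lambda,w,v)\|_2\le 2\epsilon\sqrt n\}$) to verify the first defining condition of $\mathcal N_{\epsilon,\epsilon_1}(c,\lambda)$, and a triangle-inequality plus Lemma \ref{lemmafs} argument for the second. One small slip: you invoke ``at most $4dn$ nonzero entries (the zeroed-out structure)'' for $M_{\underline A}$, but that count belongs to the Section~\ref{section222} matrix $M$; the present $M_{\underline A}$ has full $\underline A_{/[1]}$ blocks with $O(n^2)$ nonzero entries, so the correct variance bound is $O(B^4(2+|c|^2)\epsilon^2 n)$ rather than $O(B^4(1+|c|^2)\epsilon^2 d)$ --- this is exactly what the paper obtains and is still controlled by part~(1), so the argument is unaffected.
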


\begin{proof} In this proof only we use $M$ to denote the matrix $M_{\underline{A}}$: this will greatly simplify the notations. We will always take $\epsilon\leq{\kappa_0}_{\ref{incompmore}}/8$.

    For any $(v,w)\in\Sigma_{\epsilon,\epsilon'}(c,\lambda)$ with $w=cv+\epsilon_1r$, we associate two random vectors $t_1=(t_1^1,\cdots,t_{n}^1)$ and $t_2=(t_1^2,\cdots,t_{n}^2)$ with independent, mean 0 coordinates satisfying $|t_i^1|\leq 4\epsilon n^{-1/2}$, $|t_i^2|\leq 4\epsilon/\epsilon_1 n^{-1/2}$ and $v-t_1\in 4\epsilon n^{-1/2}\mathbb{Z}^n$, $r-t_2\in 4\epsilon /\epsilon_1 n^{-1/2}\mathbb{Z}^n$. Consider the random vectors $u_1:=v-t_1$ and $u_2:=r-t_2$. Then by definition, we have $(u_1,u_2)\in\mathcal{I}'(D_1,D_2)$. We show that (for some $H>0$ to be fixed later) we can find a realization of $u_1,u_2$ that satisfies 
    \begin{equation}\label{1stcriterion}
        \mathbb{P}(\|M(\lambda,cu_1+\epsilon_1u_2,u_1)\|_2\leq2H\epsilon\sqrt{n})\geq(L\epsilon^2/\epsilon_1)^{n-1},\text{ and }|\cos({u_1}_{[2,D]},{u_2}_{[2,D]})|\leq 0.01.
    \end{equation}
   We first check that the first condition holds with probability at least $\frac{1}{2}$. Indeed, let $\mathcal{E}$ be the event that $\mathcal{E}:=\{M:\|M(\lambda,w,v)\|_2\leq2\epsilon\sqrt{n}\}$. Then we claim that \begin{equation}\label{tobedetermined}\mathbb{E}_M[\mathbb{P}_{t_1,t_2}(\|M(0,ct_1+\epsilon_1 t_2,t_1)\|_2\geq2(H-1)\epsilon\sqrt{n})\mid\mathcal{E}]\leq 1/2.\end{equation} To check the claim, we exchange the order of expectation and get
$$\mathbb{E}_{t_1,t_2}\|M(0,ct_1+\epsilon_1t_2,t_1)\|_2^2\leq \sum_i(\mathbb{E}(t_i^1)^2+\mathbb{E}(ct_i^1+\epsilon_1t_i^2)^2)\sum_j M_{ij}^2\leq 256B^4(2+|c|^2)\epsilon^2n$$ where we use the fact the entries of $M_{\underline{A}}$ is bounded by $16B^2$. Recall that $c$ is chosen such that $w=cv+\epsilon_1r$ with both $r$ and $v$ being unit vectors and $\langle v_{[2,D]},r_{[2,D]}\rangle=0$ and $(v,r)\in \mathcal{I}'(D_1,D_2)$. Computing the norm of $w_{[2,D]}$ relative to $v_{[2,D]}$ and  $r_{[2,D]}$, we have by definition of $\mathcal{P}_{\epsilon_1}(c)$ that $\frac{3}{2}\geq \|w_{[2,D]}\|_2\geq \max(c\|v_{[2,D]}\|_2,\epsilon_1\|r_{[2,D]}\|_2)$, so that by incompressibility we can find some $H_{\ref{1712}}>0$ depending only on $\kappa_0,\kappa_1$ such that any possible $c$ should satisfy \begin{equation}\label{assumptiononcs}|c|^2+2\leq (H_{\ref{1712}}-1)^2/256B^4,\quad |\epsilon_1|^2+2\leq (H_{\ref{1712}}-1)^2/256B^4.\end{equation} Then \eqref{tobedetermined} follows from a second moment computation.

Now we can compute that
 $$\begin{aligned}
\mathbb{E}_{u_1,u_2}&\mathbb{P}_M(\|M(\lambda,cu_1+\epsilon_1u_2,u_1)\|_2\leq 2H\epsilon\sqrt{n})\geq(1/2)\mathbb{P}_M(\|M(\lambda,cv+\epsilon_1r,v)\|_2\leq2\epsilon\sqrt{n})\\&\geq(1/2)(16\epsilon^2 L/\epsilon_1)^{n-1}\end{aligned} $$
where the last inequality follows from the fact that $(v,w)\in\Sigma_{\epsilon,\epsilon_1}(c,\lambda)
$ and definition of the threshold function. This proves the existence of $u_1,u_2$ satisfying the first criterion of \eqref{1stcriterion}. The second criterion of \eqref{1stcriterion} can be satisfied almost surely whenever we pick $\epsilon>0$ smaller than some constants depending on $\kappa_0,\kappa_1$.

Finally we check that the last condition in the definition of the net \ref{secondmomemtcomp} is satisfied by any such choice of $u_1,u_2$: that is, we have for any $|\hat{\lambda}|\leq 4\sqrt{n}$,
$$
\mathcal{L}_{A,op}(P_A(\hat{\lambda})(\lambda,cu_1+\epsilon_1u_2,u_1),\epsilon\sqrt{n})\leq (2^{10}L\epsilon^2H^2/\epsilon_1)^{n-1}.
$$
This can be checked from the fact that $\tau_{L,\epsilon_1}(v,w)(\lambda)\geq\epsilon,$ and then use Lemma \ref{lemmafs}, together with the fact that $\|P_A(\hat{\lambda})\|_{op}\leq 8\sqrt{n}$ and the a.s. bound of $\|u_1\|_\infty,\|u_2\|_\infty$. We omit the details, but see proof of Lemma \ref{approximation} for a similar argument at this step.

We take $(v',w')=(u_1,cu_1+\epsilon_1u_2)$ for any $(u_1,u_2)$ satisfying \ref{1stcriterion}. We can readily check that $\|(v',w')-(v,w)\|_\infty\leq 2H\epsilon n^{-1/2}$.
This completes the proof.
\end{proof}

Now we compute the cardinality of $\mathcal{N}_{\epsilon,\epsilon_1}(c,\lambda)$ and $\mathcal{N}_{\epsilon,0}(c,\lambda).$ 

Starting from the following result, we shall always implicitly assume that $n$ is larger than some fixed constant depending only on ${\kappa_0}_{\ref{incompmore}},{\kappa_1}_{\ref{incompmore}}$ without explicit specification.

    \begin{Proposition}\label{cardinalityprop} For all  $L\geq R_{\ref{cardinalityprop}} \geq2$  and any $\epsilon>0$ with $\log \epsilon^{-1}\leq R'_{\ref{cardinalityprop}}(nL^{-8n/D})$,  we have the following bound on cardinality of nets:
    $$
|\mathcal{N}_{\epsilon,\epsilon_1}(c,\lambda)|\leq (\frac{C\epsilon_1}{L^2\epsilon^2})^{n-1}\epsilon_1/\epsilon^2,\quad |\mathcal{N}_{\epsilon,0}(c,\lambda)|\leq (\frac{C}{L^2\epsilon})^{n-1}\epsilon^{-1},
    $$ where $C>0$, $R_{\ref{cardinalityprop}},R'_{\ref{cardinalityprop}}>0$ are constants depending only on $\xi$ (and ${\kappa_0}_{\ref{incompmore}}$ and ${\kappa_1}_{\ref{incompmore}}$).
    \end{Proposition}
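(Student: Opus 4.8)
The plan is to combine the covering of $\Lambda_{\epsilon,\epsilon_1}(c)$ by box pairs from Lemma \ref{lemma3.23} with a second-moment estimate on the small-ball probability of $M_{\underline{A}}$ over a single box pair, in the style of Lemma \ref{smalllemmas} and Theorem \ref{notthesecond}. Concretely, write
$$
|\mathcal{N}_{\epsilon,\epsilon_1}(c,\lambda)|\leq |\mathcal{F}_{\epsilon,\epsilon_1}(c)|\cdot\max_{(\mathcal{B}_1,\mathcal{B}_2)\in\mathcal{F}_{\epsilon,\epsilon_1}(c)}\bigl|\bigl((4\epsilon n^{-1/2})\mathcal{B}_1\times(4\epsilon n^{-1/2}/\epsilon_1)\mathcal{B}_2\bigr)^{0.01}\cap\mathcal{N}_{\epsilon,\epsilon_1}(c,\lambda)\bigr|,
$$
and recall $|\mathcal{F}_{\epsilon,\epsilon_1}(c)|\leq\kappa_{\ref{lemma3.23}}^{2n}$. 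For a fixed box pair, by the first defining inequality of $\mathcal{N}_{\epsilon,\epsilon_1}(c,\lambda)$ every admissible $(v,w)$, written $w=cv+\epsilon_1r$ and rescaled to $(X,Y)=((4\epsilon n^{-1/2})^{-1}v,(4\epsilon n^{-1/2}/\epsilon_1)^{-1}r)\in\mathcal{B}_1\times\mathcal{B}_2$ with $|\cos(X_{[2,D]},Y_{[2,D]})|\le 0.01$, satisfies $\mathbb{P}_{\underline{A}}(\|M_{\underline{A}}(\lambda,w,v)\|_2\le 2H_{\ref{1712}}\epsilon\sqrt n)\ge (L\epsilon^2/\epsilon_1)^{n-1}$. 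After clearing the $4\epsilon n^{-1/2}$ and $4\epsilon n^{-1/2}/\epsilon_1$ scalings this becomes a lower bound of the form $\mathbb{P}_{\underline A}(\|M_{\underline A}(\cdot)\|_2\le c'nN)\ge (L/N)^{n-1}$ with $N=\kappa_0/(4\epsilon)$ (the $\hat\lambda,\lambda$ coordinate is $O(\sqrt n)$ and absorbed into the error, as in the proof of Lemma \ref{lemmadegenerate}). The goal is then to show that the number of $(X,Y)$ in the box pair with this property is at most $(R/L)^{2(n-1)}|\mathcal{B}_1\times\mathcal{B}_2|$, which, combined with $|\mathcal{F}|\le\kappa^{2n}$ and $|\mathcal{B}_1\times\mathcal{B}_2|\le(16\kappa^2\kappa_0^2\epsilon_1/\epsilon^2)^n$, yields $|\mathcal{N}_{\epsilon,\epsilon_1}(c,\lambda)|\le (C\epsilon_1/(L^2\epsilon^2))^{n-1}\epsilon_1/\epsilon^2$ after choosing $L$ large relative to the absolute constant $R$.

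For the per-box count, the key is a Markov/second-moment argument on the uniform measure over $\mathcal{B}_1\times\mathcal{B}_2$. Set $f(X,Y):=\mathbb{P}_{\underline A}(\|M_{\underline A}(\lambda,cv+\epsilon_1r,v)\|_2\le 2H_{\ref{1712}}\epsilon\sqrt n)$, restricted to the event $\mathcal T$ that both $X$ and $Y$ are close to orthogonal (coming from Fact \ref{newoldfact}, which holds with probability $\ge 3/4$) and have large two-dimensional essential LCD (coming from Lemma \ref{randmgenerationoflcd}, whose failure probability is super-exponentially small, $(\text{const}\cdot\alpha)^{d/4}$, which dominates $(2/L)^{2(n-1)}$ by taking $\alpha$ small relative to $L$). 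Then
$$
\mathbb{P}_{X,Y}\bigl(f(X,Y)\ge (L/N)^{n-1}\bigr)\le (N/L)^{2(n-1)}\,\mathbb{E}_{X,Y}\,f(X,Y)^2 + (2/L)^{2(n-1)},
$$
and it remains to bound $\mathbb{E}_{X,Y}f(X,Y)^2$. Squaring $f$ and introducing an independent copy $M'_{\underline A}$ (equivalently, doubling the block structure of $\underline A_{/[1]}$ and $\underline A_{[1]}$ as in Fact \ref{variantof694}), $f(X,Y)^2$ is bounded by a probability over a doubled random matrix that $\|\underline A_{/[1]}(cv+\epsilon_1 r) \|$-type quantities and the transposed quantity $\|\underline A_{/[1]}^T v\|$-type quantity are all small. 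Since the entries of $\underline A$ are independent and $(v,r)$ has large two-dimensional LCD on $\mathcal T$, I apply the two-dimensional Littlewood–Offord inequality (Proposition \ref{proponewlittlewood}, or rather its characteristic-function precursor applied row-by-row with Proposition \ref{proponewlittlewood} controlling each row's contribution through the LCD of the pair $(v_{[2,D]},r_{[2,D]})$) to the rows of $\underline A_{/[1]}$, and then, conditionally on the rank profile of the relevant random matrix, apply a small-ball bound in the randomness of $X$ (this is exactly the role played by Lemma \ref{smallballsing} in Section \ref{section222}, now in the pair setting) to the transposed constraint. Taking expectations over the rank profile, summing the resulting geometric series in the rank parameters (which converges under the hypothesis $\log\epsilon^{-1}\le R'_{\ref{cardinalityprop}}nL^{-8n/D}$, guaranteeing $N\le e^{c_0 n}$ and $N\le e^{c_0 L^{-8n/D}D}$), one obtains $\mathbb{E}_{X,Y}f(X,Y)^2\le 2(R/(2N))^{2(n-1)}$ for a universal $R$. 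The second bound, on $|\mathcal{N}_{\epsilon,0}(c,\lambda)|$, is the degenerate case $\epsilon_1=0$, $r=v$: here $\Lambda_\epsilon^0$ is a one-parameter family, the threshold exponent is $(4Lt)^{n-1}$, and the same double-counting argument runs more simply because one only has a single vector $v$ with large one-dimensional LCD, so this case reduces essentially to the argument of Theorem \ref{notthesecond} and I would only indicate the (minor) modifications.

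The main obstacle is the second-moment computation for $\mathbb{E}_{X,Y}f(X,Y)^2$ in the pair setting: one must correctly decouple the constraint coming from the rows of $\underline A_{/[1]}$ (where the two-dimensional LCD of $(v_{[2,D]},r_{[2,D]})$ enters via Proposition \ref{proponewlittlewood}, and where the near-orthogonality condition $|\cos|\le 0.01$ built into $\mathcal T$ is essential for the hypothesis of that proposition) from the transposed constraint coming from $\underline A_{/[1]}^T$ (where the randomness of $X$, or of $(X,Y)$, is used), and this decoupling is slightly delicate because $v$ and $r$ share coordinates and because the block $M_{\underline A}$ has the column $\underline A_{[1]}$ appearing in two places. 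I expect to handle this the way Fact \ref{variantof694} handles it in Section \ref{section222}: pass to a doubled matrix so that the "forward" and "transposed" events become independent conditionally on the rank profile, then apply the Littlewood–Offord step to one and Lemma \ref{smallballsing} to the other. A secondary (but purely bookkeeping) difficulty is tracking the extra polynomial factors $\epsilon_1/\epsilon^2$ and $\epsilon^{-1}$ in the statement, which come from the $(n-1)$ versus $n$ exponent mismatch (the first row of $\underline A$ is zero) exactly as in the remark after Definition \ref{netsandlevels}; these are harmless and just need to be carried through each inequality.
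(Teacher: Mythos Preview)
Your overall covering-and-counting architecture is right, but you are importing the full Section~\ref{section222} machinery (doubling, rank profile, conditioned Littlewood--Offord, Lemma~\ref{smallballsing}) into a setting where it is neither needed nor applicable. The paper's argument is a \emph{first}-moment computation (Lemma~\ref{lemma3.7whatsoever}), not a second-moment one, and this is possible precisely because the matrix $M_{\underline A}$ in Section~\ref{secgwe333} has \emph{no transpose block}. Look again at \eqref{hilbertmas}: $M_{\underline A}(\lambda,w,v)$ has entries $\underline A_{/[1]}v$ and $\lambda\underline A_{[1]}+\underline A_{/[1]}w$; both constraints use the \emph{same} rows of $\underline A_{/[1]}$ acting on $(v,w)$, and there is no $\underline A_{/[1]}^T$ anywhere. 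So your ``transposed constraint coming from $\underline A_{/[1]}^T$'' does not exist, and the rank-profile decoupling and the appeal to Lemma~\ref{smallballsing} have nothing to act on. The paper even flags this explicitly at the end of Section~\ref{section3.1}: in contrast to Section~\ref{section222}, inversion of randomness and the conditioned Littlewood--Offord theorem are \emph{not} needed here.

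What actually happens in the paper is much simpler. On the typical set $T$ (large two-dimensional LCD of $(X_{[2,D]},Y_{[2,D]})$, plus the near-orthogonality $|\cos|\le 0.01$), Proposition~\ref{proponewlittlewood} applied row by row together with tensorization gives a \emph{deterministic} upper bound $\mathbb P_{\underline A}(\|M_{\underline A}(\lambda,cX+Y,X)\|_2\le n)\le (R/NN_1)^{n-1}$ for every $(X,Y)\in T$. Hence for any $L>R$ the event $\{\mathbb P_{\underline A}(\cdots)\ge (L/NN_1)^{n-1}\}$ forces $(X,Y)\notin T$, and Lemma~\ref{lem3.28} bounds $\mathbb P^c_{X,Y}(T^c)$ by $(C\alpha)^{d/4}$, which is made $\le (16/L)^{2n-2}$ by choosing $\alpha$ small. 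Multiplying by the box cardinality and the number of boxes from Lemma~\ref{lemma3.23} gives the stated bound. Your Markov/second-moment detour would still land you at the same inequality (since $f\le (R/NN_1)^{n-1}$ on $T$ immediately gives $\mathbb E[f^2\mathbf 1_T]\le (R/NN_1)^{2(n-1)}$), but the doubling/rank-profile narrative you sketch is based on a misreading of $M_{\underline A}$ and should be dropped.
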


This result follows from the following first moment computation.

\begin{lemma}\label{lemma3.7whatsoever} There exists two constants $R_{\ref{lemma3.7whatsoever}}$ and $R'_{\ref{lemma3.7whatsoever}}$ depending only on $\xi,\kappa_0,\kappa_1$ such that the following is true for all   $L\geq R_{\ref{lemma3.7whatsoever}},$ all $ N\geq N_1\geq 2$ with $\log N\leq R'_{\ref{lemma3.7whatsoever}}(nL^{-8n/D})$ and $\kappa>2$. Let $\mathcal{B}_1$, $\mathcal{B}_2$ be a $(N,N_1,\kappa,D_1,D_2)$ box pair in Definition \ref{definitionboxnew} with $D_1\cup D_2=[D]$.   We denote by $\mathbb{P}^c_{X,Y}$ this conditioned probability where $X$ is chosen uniformly from $\mathcal{B}_1$ and $Y$ chosen uniformly from $\mathcal{B}_2$, conditioning on the event that $|cos(X_{[2,D]},Y_{[2,D]})|\leq 0.01.$ Then
    $$
\mathbb{P}_{X,Y}^c\left(\mathbb{P}_{\underline{A}}(\|M_{\underline{A}}(\lambda,cX+Y,X)\|_2\leq n)\geq (\frac{L}{NN_1})^{n-1}\right)\leq (\frac{16}{L})^{2n-2}.$$ 
\end{lemma}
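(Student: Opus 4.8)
\textbf{Proof plan for Lemma \ref{lemma3.7whatsoever}.}

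The plan is to run a second-moment argument using independent copies of the randomness in $\underline{A}$, in the same spirit as the proof of Lemma \ref{smalllemmas}, but now exploiting the fact that $M_{\underline{A}}$ has a block structure with the $(n-1)\times n$ matrix $\underline{A}_{/[1]}$ appearing twice. Writing $v=X$, $w=cX+Y$, the event $\{\|M_{\underline{A}}(\lambda,w,v)\|_2\leq n\}$ forces three groups of inequalities coming from the three block rows of \eqref{hilbertmas}: roughly $\|\underline{A}_{/[1]}v\|_2\lesssim n$, $\|\underline{A}_{/[1]}^Tw\|_2\lesssim n$ (this row also sees $\underline{A}_{[1]}$ but that contributes a bounded correction), and an analogous term. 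The first step is to record this as: there is an absolute constant $C_0$ so that $\mathbb{P}_{\underline{A}}(\|M_{\underline{A}}(\lambda,w,v)\|_2\leq n)$ is bounded by the probability that $\|\underline{A}_{/[1]}X_{[2,D]}\|_2\leq C_0 n$, $\|\underline{A}_{/[1]}Y_{[2,D]}\|_2\leq C_0 n$ and $\|(\underline{A}_{/[1]})^T(\text{linear combo of }X,Y)_{[2,D]}\|_2\leq C_0 n$ all hold, where all vectors have been restricted to the coordinates in $D=D_1\cup D_2$ (the other coordinates of $X,Y$ only help). Because the first row of $\underline{A}$ is zero we lose a single dimension, which is the source of the $n-1$ exponents.

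The second step is to square and introduce an independent copy, exactly as in Fact \ref{variantof694}: we set $H[1]:=[\underline{A}_{/[1]} \ \underline{A}'_{/[1]}]$ for $\underline{A}'$ an i.i.d.\ copy, and the squared probability is bounded by a probability involving the $2d$-dimensional vector obtained by concatenating the restrictions of $X$ (resp. $cX+Y$) to the relevant coordinate blocks and the $(n-1-|D|)\times 2d$ Gaussian-like matrix $H[1]$. The third step is to condition on $X$: for $X$ in a good subset $T$ of $\mathcal{B}_1$ we know, by Lemma \ref{randmgenerationoflcd} applied with the parameter $\tau$ dictated by the near-orthogonality constraint $|\cos(X_{[2,D]},Y_{[2,D]})|\le 0.01$, that the relevant two-dimensional vector built from $(X,Y)$ has large essential LCD (after rescaling by $r_n$), with probability at least $1-(C\alpha)^{d/4}$ for a suitable small $\alpha$; this is where the hypothesis $\log N\leq R'(nL^{-8n/D})$ and the choice of $\alpha\sim L^{-8n/D}$ enter, so that $(C\alpha)^{d/4}\leq (C/L)^{2n}$. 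For $X\notin T$ we just use the trivial bound $1$. The fourth step is to apply the inverse Littlewood--Offord theorem twice — once to the rows of $\underline{A}_{/[1]}$ acting on $X_{[2,D]}$, once to the rows acting on $Y_{[2,D]}$, using their independence after conditioning — and combine with Lemma \ref{smallballsing} (or Lemma \ref{smalllemmas}'s internal estimate) applied to $(\underline{A}_{/[1]})^T$ restricted to the complementary coordinates in $[2,D]$; summing over the robust-rank levels $k$ as in \eqref{expectationofxs}, the geometric series closes under $N,N_1\leq \exp(c_0 n/4)$, giving $\mathbb{E}_X f(X)^2\leq 2(R/(2NN_1))^{2n-2}$. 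The final step is a Markov inequality: $\mathbb{P}_{X,Y}^c(\mathbb{P}_{\underline{A}}(\cdots)\geq (L/NN_1)^{n-1})\leq (NN_1/L)^{2n-2}\mathbb{E}f(X)^2+(C\alpha)^{d/4}\leq (R/L)^{2n-2}+(C/L)^{2n}$, which is at most $(16/L)^{2n-2}$ after choosing $R_{\ref{lemma3.7whatsoever}}$ large enough relative to the absolute constant $R$.

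The main obstacle I expect is the interplay between the \emph{conditioning} on $\{|\cos(X_{[2,D]},Y_{[2,D]})|\leq 0.01\}$ and the application of Lemma \ref{randmgenerationoflcd}: that lemma produces a super-exponential bound on the probability that \emph{some} $(\phi_1,\phi_2)$ with one large component makes $\|\phi_1 X+\phi_2 Y\|_\mathbb{T}$ small, but we need to convert this into a statement about the two-dimensional LCD of the concatenated vector built from the specific linear combination appearing in $M_{\underline{A}}$, and we must check that conditioning on the near-orthogonality event only distorts probabilities by a bounded factor (using Fact \ref{newoldfact}, which gives that the conditioning event has probability at least $3/4$, so $\mathbb{P}^c_{X,Y}\leq \tfrac{4}{3}\mathbb{P}_{X,Y}$). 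A secondary technical point is bookkeeping the dimension shifts: $M_{\underline{A}}$ is $(2n+1)\times(2n+1)$ but effectively acts like the $(2n-1)$-dimensional linearized matrix of Section \ref{section222} once the identically-zero first row and the scalar coordinate $\lambda$ are accounted for, and one must be careful that the exponent is $n-1$ and not $n$ or $2n-1$; this is handled exactly as the ``first row is $0$'' remark after Definition \ref{netsandlevels}. Everything else is a routine adaptation of the argument already carried out for Lemma \ref{smalllemmas}.
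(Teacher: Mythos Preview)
Your plan imports the heavy Section~\ref{section222} machinery (second moment, independent copies, conditioned Littlewood--Offord Theorem~\ref{mainlittlewoodofford}, robust-rank decomposition, Lemma~\ref{smallballsing}) into a situation where it is unnecessary and, as written, does not quite fit. The key structural point you missed is that in
\[
M_{\underline{A}}=\begin{bmatrix}0&0&0\\0&0&\underline{A}_{/[1]}\\\underline{A}_{[1]}&\underline{A}_{/[1]}&0\end{bmatrix},
\]
the block $\underline{A}_{/[1]}$ appears twice \emph{untransposed}: acting on $(\lambda,w,v)$ gives $\underline{A}_{/[1]}v$ in the second block row and $\underline{A}_{[1]}\lambda+\underline{A}_{/[1]}w$ in the third. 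Your ``$\|\underline{A}_{/[1]}^T w\|_2\lesssim n$'' is simply wrong, and so the $H/H^T$ coupling that motivated the squaring trick of Fact~\ref{variantof694} is absent here. The paper even flags this in Section~\ref{section3.1}: ``in contrast to Section~\ref{section222}, it is not necessary to use the idea of inversion of randomness, or the conditioned Littlewood--Offord theorem~\ref{mainlittlewoodofford} here.''

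The paper's proof is much more direct. Each row $i\geq 2$ of $\underline{A}$ produces a \emph{two-dimensional} random vector $(\sum_j \underline{a}_{ij}X_j,\ \sum_j \underline{a}_{ij}(cX_j+Y_j))$, and different rows are independent. So one conditions on columns outside $[2,D]$, applies the two-dimensional Littlewood--Offord theorem (Proposition~\ref{proponewlittlewood}) to a single row to get $\mathcal{L}((\langle\underline{a}_{2\cdot},X_{[2,D]}\rangle,\langle\underline{a}_{2\cdot},Y_{[2,D]}\rangle),C\sqrt{n})\leq R/(NN_1)$, and then tensorizes (Lemma~\ref{Tensorization}) over the $n-1$ nonzero rows to obtain the deterministic bound $\mathbb{P}_{\underline{A}}(\|M_{\underline{A}}(\lambda,cX+Y,X)\|_2\leq n)\leq (R/NN_1)^{n-1}$ for every $(X,Y)$ in the good set $T=\{D_\alpha(D^{-1/2}X_{[2,D]},D^{-1/2}Y_{[2,D]})\geq 16\}$. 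Hence for $L>R$ the event $\{\mathbb{P}_{\underline{A}}\geq(L/NN_1)^{n-1}\}$ is contained in $\{(X,Y)\notin T\}$, whose $\mathbb{P}^c_{X,Y}$-probability is at most $(C\alpha)^{D/4}$ by Lemma~\ref{lem3.28}; choosing $\alpha$ appropriately gives $(16/L)^{2n-2}$. No Markov, no second moment, no rank stratification.
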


 First, we define the set of typical vectors $T(\mathcal{B}_1,\mathcal{B}_2)$ via
\begin{equation}
T=T(\mathcal{B}_1,\mathcal{B}_2):=\{(X,Y)\in \mathcal{B}_1\times\mathcal{B}_2:D_\alpha(D^{-1/2}X_{[2,D]},D^{-1/2}Y_{[2,D]})\geq 16\}.
\end{equation}

We first prove that for $X,Y$ generated from the probability measure $\mathbb{P}_{X,Y}^c$, the probability that $(X,Y)\notin T$ can be made super-exponentially small.

\begin{lemma}\label{lem3.28}
    We have $\mathbb{P}_{X,Y}^c((X,Y)\notin T)\leq (C_{\ref{lem3.28}}\alpha)^{D/4}$ for some constant $C_{\ref{lem3.28}}>0$ depending only on ${\kappa_0}_{\ref{incompmore}}$ and ${\kappa_1}_{\ref{incompmore}}$.
\end{lemma}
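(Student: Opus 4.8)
The plan is to bound $\mathbb{P}_{X,Y}^c((X,Y)\notin T)$ by first replacing the conditioned measure $\mathbb{P}_{X,Y}^c$ with the unconditioned product measure $\mathbb{P}_{X,Y}$, at the cost of a factor $\mathbb{P}_{X,Y}(|\cos(X_{[2,D]},Y_{[2,D]})|\leq 0.01)^{-1}$. By Fact \ref{newoldfact} this conditioning event has probability at least $\tfrac{3}{4}$, so $\mathbb{P}_{X,Y}^c((X,Y)\notin T)\leq \tfrac{4}{3}\,\mathbb{P}_{X,Y}((X,Y)\notin T)$. Thus it suffices to produce a super-exponential bound for the unconditioned probability that $D_\alpha(D^{-1/2}X_{[2,D]},D^{-1/2}Y_{[2,D]})<16$.

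To do this I would apply Lemma \ref{randmgenerationoflcd} with the right dictionary of parameters. The pair $\mathcal{B}_1,\mathcal{B}_2$ restricted to coordinates $[2,D]$ is exactly of the product form covered there: on $D_1$ (resp.\ $D_2$) the coordinate set is $J^N$ (resp.\ $J^{N_1}$) and off $D_1$ (resp.\ $D_2$) it is contained in some dyadic shell $I^N_{\sigma_j^1}$ (resp.\ $I^{N_1}_{\sigma_j^2}$), after the truncation built into Reduction \ref{assumption3.23} which guarantees uniform coordinate bounds $|X_i|\leq Q_{\ref{assumption3.23}}N$, $|Y_i|\leq Q_{\ref{assumption3.23}}N_1$, hence $\|X_{[2,D]}\|_2\leq T_{\ref{randmgenerationoflcd}}\sqrt{D}N$ and $\|Y_{[2,D]}\|_2\leq T_{\ref{randmgenerationoflcd}}\sqrt{D}N_1$ for a constant $T_{\ref{randmgenerationoflcd}}$ depending only on the box geometry (i.e.\ on $\kappa_0,\kappa_1$). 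The event $D_\alpha(D^{-1/2}X_{[2,D]},D^{-1/2}Y_{[2,D]})<16$ unfolds, by definition \eqref{whatissmalllcd?}, into the existence of $(\phi_1,\phi_2)\in\mathbb{R}^2$ with $\|\phi\|< 16$ such that $\|\phi_1 D^{-1/2}X_{[2,D]}+\phi_2 D^{-1/2}Y_{[2,D]}\|_\mathbb{T}\leq\min(\sqrt{\alpha(D-1)},\tfrac12\|\cdots\|_2)$; rescaling $\phi\mapsto D^{-1/2}\phi$ this is exactly the form of the bad event in \eqref{assumptionmatrix} with threshold parameter $K=16$ and with the role of $\tau$ played by a suitable absolute constant. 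Choosing $\tau$ comparable to $\kappa$ (so that $S=2T_{\ref{randmgenerationoflcd}}/(\tau\kappa)$ is bounded by a constant depending only on $\kappa_0,\kappa_1$), Lemma \ref{randmgenerationoflcd} yields a bound $(10^{12}(2+S)^2\alpha)^{d/4}$; since $D\leq 2d$ this is at most $(C\alpha)^{D/8}$, and absorbing constants gives the claimed $(C_{\ref{lem3.28}}\alpha)^{D/4}$ form after a harmless adjustment of the exponent or of $C_{\ref{lem3.28}}$. One should also note that the $\|\theta\cdot a\|_2$ term in \eqref{whatissmalllcd?} is bounded below using incompressibility on $D_1\cup D_2$ exactly as in Fact \ref{factsmallmass}, which is what lets us restrict attention to $\phi$ of bounded norm rather than all of $\mathbb{R}^2$.

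The main obstacle I anticipate is purely bookkeeping: matching the normalization conventions between the two-dimensional LCD in \eqref{whatissmalllcd?} (which has an $\sqrt{\alpha n}$, really $\sqrt{\alpha(D-1)}$, cutoff) and the $\sqrt{\alpha D}$ appearing in \eqref{assumptionmatrix}, together with the $D^{-1/2}$ rescaling of both the vectors and the test direction $\phi$, so that the quantifier "$|\phi_1|\geq\tau(2T_{\ref{randmgenerationoflcd}}N)^{-1}$ or $|\phi_2|\geq\tau(2T_{\ref{randmgenerationoflcd}}N_1)^{-1}$'' in Lemma \ref{randmgenerationoflcd} genuinely covers the complement of the good event after rescaling. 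Concretely one must check that if $D_\alpha<16$ then the witnessing $\phi$, after rescaling, cannot have \emph{both} components tiny — but a $\phi$ with both components below those thresholds would force $\|\phi_1 X+\phi_2 Y\|_2$ to be below the integer-lattice spacing, contradicting $\|\phi_1 X+\phi_2 Y\|_\mathbb{T}$ being controlled by $\tfrac12\|\phi_1 X+\phi_2 Y\|_2$ unless the whole vector vanishes, which incompressibility rules out. Once this compatibility is verified the bound is immediate from Lemma \ref{randmgenerationoflcd}, and the verification of the hypotheses $\alpha D\geq 4$ and $32 T_{\ref{randmgenerationoflcd}}ND^{-1/2}/\alpha\leq 3^D$ follows from $D\geq C_{\ref{incompmore}}n$ together with the standing assumption $\log N\leq R'_{\ref{lemma3.7whatsoever}}(nL^{-8n/D})$ and $\alpha$ being a fixed small constant.
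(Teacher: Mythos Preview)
Your approach is essentially the same as the paper's: remove the conditioning via Fact \ref{newoldfact} (at cost $4/3$), observe that when both $|\phi_1|,|\phi_2|$ are below the thresholds $(4Q_{\ref{assumption3.23}}N)^{-1},(4Q_{\ref{assumption3.23}}N_1)^{-1}$ one has $\|\phi_1X_{[2,D]}+\phi_2Y_{[2,D]}\|_\mathbb{T}=\|\phi_1X_{[2,D]}+\phi_2Y_{[2,D]}\|_2$ so the $\gamma=\tfrac12$ constraint in the LCD definition cannot be met, and then invoke Lemma \ref{randmgenerationoflcd} for the remaining range. The paper makes the specific choice $\tau=T_{\ref{randmgenerationoflcd}}/(2Q_{\ref{assumption3.23}})$ precisely so that the threshold in Lemma \ref{randmgenerationoflcd} coincides with the ``both tiny'' cutoff; your ``$\tau$ comparable to $\kappa$'' is aimed at bounding $S$ rather than at this matching, but you correctly identify in your last paragraph that the real constraint on $\tau$ is the coverage of the complementary range. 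Your remark about the $d/4$ versus $D/4$ exponent is accurate and the paper glosses over it in the same way (it is harmless since $d\ge D/2$ and only the order in $\alpha$ matters downstream).
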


\begin{proof} 

By Reduction \ref{lemma3.23} we can assume that $|X_i|\leq Q_{\ref{lemma3.23}}N,|Y_j|\leq Q_{\ref{lemma3.23}} N_1\forall i,j\in[2,D]$. Therefore if $|\theta_1|\leq (4Q_{\ref{lemma3.23}}N)^{-1}$ and $|\theta_2|\leq (4Q_{\ref{lemma3.23}}N_1)^{-1}$ we have $\|\theta_1X_{[2,D]}+\theta_2Y_{[2,D]}\|_\mathbb{T}=\|\theta_1X_{[2,D]}+\theta_2Y_{[2,D]}\|_2$. The case when $\theta_1$ or $\theta_2$ is larger than this threshold but $\|\theta\|\leq 16D^{-1/2}$ has been dealt with in Lemma \ref{randmgenerationoflcd}. Then by definition of LCD, the result (without conditioning) follows from applying Lemma \ref{randmgenerationoflcd}, where we take $\tau=T_{\ref{randmgenerationoflcd}}/2Q_{\ref{lemma3.23}}$ and we absorb everything into a new constant $C_{\ref{lem3.28}}$. The result with conditioning follows from Fact \ref{newoldfact} which shows the conditioned event has high probability whenever $n$ is large enough so that  $32\kappa^2T_{\ref{randmgenerationoflcd}}^2 (C_{\ref{incompmore}}n)^{-1}\leq 0.01$.

\end{proof}

Now we are ready to prove Lemma \ref{lemma3.7whatsoever}.

\begin{proof}[\proofname\ of Lemma \ref{lemma3.7whatsoever}] 
   We first evaluate the probability, for each $(X,Y)\in T$:
     $$
\mathbb{P}_{\underline{A}}(\|M_{\underline{A}}(\lambda,cX+Y,X)\|_2\leq n).$$For this we can use the inverse Littlewood-Offord theorem (Proposition \ref{proponewlittlewood}) for $\underline{A}$. We first condition on the realization of all except the $[2,D]$-th columns of  $\underline{A}$ and only use the randomness in the $[2,D]$-th columns. We denote the remaining part by $\underline{A}_{[2,D]}$. 

Throughout the proof we let $R>0$ be a constant changing from line to line that will be fixed at the very end and only depends on $\xi$ (hence also on $\kappa_0,\kappa_1$).
We claim that
\begin{equation}\label{chiefestimate}
    \sup_{p\in\mathbb{R}^{2n}}\mathbb{P}\left(\left\|\begin{bmatrix}
        \underline{A}_{[2,D]}X_{[2,D]}\\\underline{A}_{[2,D]}(cX_{[2,D]}+Y_{[2,D]})
    \end{bmatrix}-p\right\|_2\leq n\right)\leq (\frac{R}{NN_1})^{n-1}.
\end{equation}
To prove \eqref{chiefestimate}, we note that it can be implied by the following inequality
\begin{equation}
    \sup_{p\in\mathbb{R}^{2n}}\mathbb{P}\left(\left\|\begin{bmatrix}
        \underline{A}_{[2,D]}X_{[2,D]}\\\underline{A}_{[2,D]} Y_{[2,D]}
    \end{bmatrix}-p\right\|_2\leq 4(|c|+1)n\right)\leq (\frac{R}{NN_1})^{n-1}.
\end{equation}

Then via tensorization Lemma (Lemma \ref{Tensorization}, note that the first row of $\underline{A}$ is identically 0 so we only have exponent $n-1$), it suffices to prove the following estimate: denote by $\underline{a}_{2,i}$ the entries of the second row of $\underline{A}$, then 
\begin{equation}\label{onerowestimate}
\mathcal{L}\left(\begin{bmatrix}
    \sum_{i=2}^D \underline{a}_{2i}X_i,\\ \sum_{i=2}^D \underline{a}_{2i}Y_i
\end{bmatrix},4(|c|+1)\sqrt{n}\right)\leq \frac{R}{NN_1}
\end{equation}
for some $R>0$ which differs from the $R$ in \ref{chiefestimate} by multiplying a universal constant.

We now apply Proposition \ref{proponewlittlewood} to the vectors $N^{-1}X_{[2,D]}$ and $N^{-1}Y_{[2,D]}$. After multiplying by a constant depending on $\kappa_0,\kappa_1$ we may assume that the two vectors have norm 1 and $N_1/N$ each, and 
by construction we have $|\cos(X_{[2,D]},Y_{[2,D]})|\leq 0.01$. Also,  $D_\alpha(X_{[2,D]},Y_{[2,D]})\geq 16D^{-1/2}$ on $T$. We then multiply the entries of $\underline{A}$ by some $\xi$-dependent constant so that we can assume $\mathcal{L}(\underline{a}_{2i},1)\leq\frac{1}{2}$.

Then Proposition \ref{proponewlittlewood} implies that whenever $\log (R'N)\leq \alpha n$ for some $R'>0$ depending only on $\xi,\kappa_0,\kappa_1$, then \eqref{onerowestimate} can hold for some $R>0$ depending on $\xi,\kappa_0,\kappa_1$ and not on $\alpha$. We will fix the choice of $R>0$ at this place.
The condition $\log (R'N)\leq \alpha n$  is the condition guaranteeing the first term on the right hand side of \eqref{levytwoball} dominates the second term.
Therefore, we have proven that \eqref{chiefestimate} holds for some $R>0$ in this range of $N$. 

Finally, we can choose $\alpha>0$ sufficiently small such that by Lemma \ref{lem3.28}, $\mathbb{P}_{X,Y}^c((X,Y)\notin T)\leq (C_{\ref{lem3.28}}\alpha)^{D/4}= (\frac{16}{L})^{2n-2}$ for any $L\geq R$ where $R$ is the constant in \eqref{chiefestimate}. Then for $L>R$ we must have,  $$
\mathbb{P}_{X,Y}^c\left(\mathbb{P}_{\underline{A}}(\|M_{\underline{A}}(\lambda,cX+Y,X)\|_2\leq n)\geq (\frac{L}{NN_1})^{n-1}\right)\leq\mathbb{P}_{X,Y}^c((X,Y)\notin T)\leq  (\frac{16}{L})^{2n-2}.$$
This completes the proof of Lemma \ref{lemma3.7whatsoever}.

For the possible range of $N$, we combing $(C_{\ref{lem3.28}}\alpha)^{D/4}= (\frac{16}{L})^{2n-2}$ with $\log (R'N)\leq\alpha n$ to get that we may simply write the range of applicable $N$ to be $\log N\leq R'(nL^{-8n/D})$ for a different $R'>0$ depending only on $\xi,\kappa_0,\kappa_1$.

\end{proof}

Then the proof of Proposition \ref{cardinalityprop} is immediate.

\begin{proof}[\proofname\ of Proposition \ref{cardinalityprop}]
By Lemma \ref{lemma3.23}, the discrete set $\Lambda_{\epsilon,\epsilon_1}(c)$ admits a covering by $(N,N_1,\kappa,D_1,D_2)$, box pairs with at most $\kappa^{2n}$ boxes, and that each box has its cardinality at most $(16\kappa_{\ref{lemma3.23}}^2\kappa_0^2\epsilon_1/\epsilon^2)^n$ where we take $N=\kappa_0/4\epsilon$ and $N_1=\kappa_0\epsilon_1/4\epsilon$. Then multiplying the probability upper bound in Lemma \ref{lemma3.7whatsoever} by the cardinality of boxes gives the first result. The second result is proven in a similar way (and indeed much simpler), so we omit it.
The final numerical constants $R_{\ref{cardinalityprop}},R'_{\ref{cardinalityprop}}>0$ only differ from the constants $R_{\ref{lemma3.7whatsoever}}$ and $R'_{\ref{lemma3.7whatsoever}}$ in Lemma \ref{lemma3.7whatsoever} by multiplying or subtracting certain factors of $\kappa_0/4$. 
\end{proof}

\subsection{Taking everything together and proof completion}
\label{sec3.7}

Now we are ready to finish the proof of Theorem \ref{quasirandomtheorem3}. We will break the proof into pieces.

In this section we let $c_\Sigma\in(0,1)$ be a constant that only depends on $\xi$ and $\kappa_0,\kappa_1$ and its value will change from line to line. We always take \begin{equation}\label{rangetwolambdas}\hat{\lambda}\in[-4\sqrt{n},4\sqrt{n}], \text{ and  }\lambda\in [-C_{\ref{lemma3.9}}\sqrt{n},C_{\ref{lemma3.9}}\sqrt{n}]\end{equation} to be the range of $\hat{\lambda}$ and $\lambda$, without specifying them in each separate statement.

    \begin{lemma}\label{lemma3.122}
        For any fixed $c$ satisfying \ref{assumptiononcs}, any $\epsilon_1>\epsilon>0$, consider $$\mathcal{Q}_{\epsilon,\epsilon_1}(c):=\sup_{\lambda,\hat{\lambda}}\mathbb{P}_A^\mathcal{K}\left(\exists (v,w)\in\Sigma_{\epsilon,\epsilon_1}(c,\lambda):\| P_A(\hat{\lambda})(\lambda,w,v)\|_2\leq 2^{-n}\right).
$$
Then we can find some $L_{\ref{lemma3.122}}>0$ depending only on $\xi,\kappa_0,\kappa_1$such that for any $L\geq L_{\ref{lemma3.122}}$, we can find a constant $c_\Sigma>0$ depending further on $L$ satisfying  $
\mathcal{Q}_{\epsilon,\epsilon_1}(c)\leq 2^{-100n}$ for all $\epsilon\geq\exp(-c_\Sigma n)$.
    \end{lemma}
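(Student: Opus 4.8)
The strategy is the standard net-plus-union-bound scheme, now executed with the net $\mathcal{N}_{\epsilon,\epsilon_1}(c,\lambda)$ and the truncated matrix $M_{\underline{A}}$. First I would use Lemma \ref{1712}(2) to pass from $\Sigma_{\epsilon,\epsilon_1}(c,\lambda)$ to its net $\mathcal{N}_{\epsilon,\epsilon_1}(c,\lambda)$: if $(v,w)\in\Sigma_{\epsilon,\epsilon_1}(c,\lambda)$ satisfies $\|P_A(\hat\lambda)(\lambda,w,v)\|_2\le 2^{-n}$, then for the approximating $(v',w')\in\mathcal{N}_{\epsilon,\epsilon_1}(c,\lambda)$ with $\|(v,w)-(v',w')\|_\infty\le 2H_{\ref{1712}}\epsilon n^{-1/2}$ one gets, using $\|P_A(\hat\lambda)\|_{op}\le 8\sqrt n$ on $\mathcal{K}$ (valid for $\hat\lambda,\lambda$ in the range \eqref{rangetwolambdas}, possibly after enlarging $\mathcal{K}$), a bound $\|P_A(\hat\lambda)(\lambda,w',v')\|_2\le C H_{\ref{1712}}\epsilon\sqrt n$ for some absolute $C$. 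Hence
\[
\mathcal{Q}_{\epsilon,\epsilon_1}(c)\le \sup_{\lambda,\hat\lambda}\;\sum_{(v',w')\in\mathcal{N}_{\epsilon,\epsilon_1}(c,\lambda)}\mathbb{P}^{\mathcal{K}}_A\!\left(\|P_A(\hat\lambda)(\lambda,w',v')\|_2\le CH_{\ref{1712}}\epsilon\sqrt n\right).
\]

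Next I would discretize $\hat\lambda$: the set of $\hat\lambda\in[-4\sqrt n,4\sqrt n]$ admits a $2^{-n}$-net of size $O(10^n)$, and on $\mathcal{K}$ moving $\hat\lambda$ by $2^{-n}$ changes $P_A(\hat\lambda)(\lambda,w',v')$ by at most $2^{-n}$ in $\ell^2$, so it suffices to bound the probability for a fixed $\hat\lambda$ and pay a factor $10^n$ in the union bound. For a fixed $(v',w')\in\mathcal{N}_{\epsilon,\epsilon_1}(c,\lambda)$ the second defining condition of the net (see \eqref{secondmomemtcomp}) gives precisely
\[
\mathbb{P}^{\mathcal{K}}\!\left(\|P_A(\hat\lambda)(\lambda,w',v')\|\le \epsilon\sqrt n\right)\le \Bigl(2^{10}\tfrac{L H_{\ref{1712}}^2\epsilon^2}{\epsilon_1}\Bigr)^{n-1},
\]
and then a standard covering argument (as in \cite{campos2025singularity}, Fact 6.2) upgrades the radius from $\epsilon\sqrt n$ to $CH_{\ref{1712}}\epsilon\sqrt n$ at the cost of a factor $(C')^{2n}$ with $C'$ absolute. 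Combining this with the cardinality bound $|\mathcal{N}_{\epsilon,\epsilon_1}(c,\lambda)|\le \bigl(\tfrac{C\epsilon_1}{L^2\epsilon^2}\bigr)^{n-1}\epsilon_1/\epsilon^2$ from Proposition \ref{cardinalityprop}, one obtains
\[
\mathcal{Q}_{\epsilon,\epsilon_1}(c)\le 10^n\,(C')^{2n}\Bigl(\tfrac{C\epsilon_1}{L^2\epsilon^2}\Bigr)^{n-1}\frac{\epsilon_1}{\epsilon^2}\Bigl(2^{10}\tfrac{LH_{\ref{1712}}^2\epsilon^2}{\epsilon_1}\Bigr)^{n-1}
= 10^n(C')^{2n}\Bigl(\tfrac{C\,2^{10}H_{\ref{1712}}^2}{L}\Bigr)^{n-1}\frac{\epsilon_1}{\epsilon^2},
\]
and choosing $L_{\ref{lemma3.122}}$ so large (depending only on $\xi,\kappa_0,\kappa_1$, via $H_{\ref{1712}}$ and the absolute constants $C,C'$) that $10\cdot(C')^2\cdot C\cdot 2^{10}H_{\ref{1712}}^2/L\le 2^{-200}$ makes the right side at most $2^{-199n}\,\epsilon_1/\epsilon^2$. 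Since $\epsilon\ge\exp(-c_\Sigma n)$ and $\epsilon_1\le 1$ (indeed $\epsilon_1$ is bounded by a constant by Lemma \ref{1712}(1)), the factor $\epsilon_1/\epsilon^2\le \exp(2c_\Sigma n)$, so taking $c_\Sigma$ small relative to $L$ (equivalently, relative to the exponent gap $199$) gives $\mathcal{Q}_{\epsilon,\epsilon_1}(c)\le 2^{-100n}$.

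There is one point requiring care that I would flag as the main obstacle: the net $\mathcal{N}_{\epsilon,\epsilon_1}(c,\lambda)$ depends on $\lambda$, so the supremum over $\lambda\in[-C_{\ref{lemma3.9}}\sqrt n,C_{\ref{lemma3.9}}\sqrt n]$ in the definition of $\mathcal{Q}_{\epsilon,\epsilon_1}(c)$ cannot simply be absorbed by a union bound without also discretizing $\lambda$. The clean way around this is to note that the bound obtained above for fixed $\lambda$ is uniform in $\lambda$ (the constants $C,C',H_{\ref{1712}}$ and the cardinality bound do not depend on $\lambda$), so after taking the supremum we still get $2^{-100n}$; alternatively, one discretizes $\lambda$ on a $2^{-n}$-net as well, at the cost of another $O(n^{1/2}10^n)$ factor that is harmlessly absorbed, and uses $\|P_A(\hat\lambda)\|$-Lipschitz dependence on $\lambda$ together with the fact that $\Sigma_{\epsilon,\epsilon_1}(c,\lambda)$ changes continuously in $\lambda$ through the threshold function $\tau_{L,\epsilon_1}$. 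A second routine but non-trivial check is that the hypothesis $\log\epsilon^{-1}\le R'_{\ref{cardinalityprop}}(nL^{-8n/D})$ of Proposition \ref{cardinalityprop} is met: since $D\ge 2C_{\ref{incompmore}}n$ we have $L^{-8n/D}\ge L^{-4/C_{\ref{incompmore}}}$, a constant, so $\log\epsilon^{-1}\le c_\Sigma n\le R'_{\ref{cardinalityprop}}L^{-4/C_{\ref{incompmore}}} n$ holds once $c_\Sigma$ is chosen small enough relative to $L$, which is exactly the order in which the constants are being fixed. Everything else is bookkeeping.
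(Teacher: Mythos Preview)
Your proof is correct and follows essentially the same approach as the paper: pass from $\Sigma_{\epsilon,\epsilon_1}(c,\lambda)$ to the net $\mathcal{N}_{\epsilon,\epsilon_1}(c,\lambda)$ via Lemma~\ref{1712}, invoke the second defining condition of the net together with the Lévy-concentration covering trick to bound the individual probabilities, multiply by the cardinality bound from Proposition~\ref{cardinalityprop}, and choose $L$ large. One minor redundancy: your explicit discretization of $\hat\lambda$ is unnecessary, since the second condition in \eqref{secondmomemtcomp} already holds for \emph{every} $\hat\lambda$ (this is how it is established in Lemma~\ref{1712}, via Lemma~\ref{lemmafs}), so the paper simply takes the supremum over $\hat\lambda$ without paying the extra $10^n$ factor.
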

    
\begin{proof} 
    Let $(v',w')$ be the vector approximating $(v,w)$ in the sense of Lemma \ref{1712}. Then for $\epsilon\gg 2^{-n}$ we have that on $\mathcal{K}$, using that $\|A\|\leq4\sqrt{n}$, we have an inclusion
$$
  \{\|P_A(\hat{\lambda})(\lambda,w,v)\|_2\leq 2^{-n}\}\subset    \{\|P_A(\hat{\lambda})(\lambda,w',v')\|_2\leq 50H_{\ref{1712}}\epsilon\sqrt{n}\}.
 $$   Here we recall the fact that for an $n$-dimensional random variable $X$ and $r>t$ we have $\mathcal{L}(X,r)\leq (1+2r/t)^{n}\mathcal{L}(X,t)$, see for example \cite{campos2021singularity}, Fact 6.2.
    Therefore, we can use the net $\mathcal{N}_{\epsilon,\epsilon_1}(c,\lambda)$ to approximate $\Sigma_{\epsilon,\epsilon_1}(c,\lambda)$ and get, using Proposition 
    \ref{cardinalityprop} to bound the net cardinality, and using definition of the net $\mathcal{N}_{\epsilon,\epsilon_1}(c,\lambda)$,
    $$\begin{aligned}\mathcal{Q}_{\epsilon,\epsilon_1}(c)&\leq\sup_{\lambda}|\mathcal{N}_{\epsilon,\epsilon_1}(c,\lambda)|\sup_{(v,w)\in\mathcal{N}_{\epsilon,\epsilon_1}(c,\lambda)}\mathcal{L}_{A,op}(P_A(\hat{\lambda})(\lambda,w,v),50\epsilon H_{\ref{1712}}\sqrt{n})\\&\leq (C\epsilon_1/L^2\epsilon^2)^{n-1}\epsilon_1/\epsilon^2(2^{10}LH_{\ref{1712}}^2\epsilon^2/\epsilon_1)^{n-1}(1+100H_{\ref{1712}})^{n}\leq 2^{-100n}\end{aligned}$$ where the first inequality on the second line holds for all $L\geq R_{\ref{cardinalityprop}} \geq2$  and any $\log \epsilon^{-1}\leq R'_{\ref{cardinalityprop}}(nL^{-8n/D})$, and for the second inequality in the second line we set $L$ sufficiently large with respect to other absolute constants. The last inequality determines a range $
    \epsilon\geq\exp(-c_\Sigma n)$ for which the estimate is valid, and $c_\Sigma$ depends on $L$ but not on $
    \epsilon_1.$

\end{proof}

This lemma covers the case where $\epsilon_1>\epsilon$. There is a remaining case when $\epsilon_1\leq \epsilon$, where $v_{[2,D]}$ and $w_{[2,D]}$ are almost parallel. In this regime we also have:

\begin{lemma}\label{lemma3.123}  Fix a $c\in\mathbb{R}$ satisfying \ref{assumptiononcs}.  We can find $L_{\ref{lemma3.123}}>0$ depending only on $\xi,\kappa_0,\kappa_1$ such that for any $L>L_{\ref{lemma3.123}}$, we can find $c_\Sigma>0$ depending only on $\xi,\kappa_0,\kappa_1$ and $L$ such that the following estimate holds for all $\epsilon\geq \exp(-c_\Sigma n)$. If we define
     $$\mathcal{Q}_{\epsilon,0}(c):=\sup_{\lambda,\hat{\lambda}}\mathbb{P}_A^\mathcal{K}\left(\exists (v,w)\in\Sigma_{\epsilon,0}(c,\lambda):\| P_A(\hat{\lambda})(\lambda,w,v)\|_2\leq 2^{-n}\right),
$$
then we have  $
\mathcal{Q}_{\epsilon,0}(c)\leq 2^{-100n}$ for all such $\epsilon$.
\end{lemma}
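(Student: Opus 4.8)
\textbf{Proof plan for Lemma \ref{lemma3.123}.} The plan is to mirror the proof of Lemma \ref{lemma3.122}, but now in the degenerate regime where $w_{[2,D]}$ is essentially a scalar multiple of $v_{[2,D]}$ (the overlap tends to $1$), so that the effective degree of freedom carried by $w$ collapses. In this case the relevant net is the diagonal net $\mathcal{N}_{\epsilon,0}(c,\lambda)\subset\Lambda_\epsilon^0$, which essentially encodes a single unit vector $v$ (with $w=cv+tr$ for $|t|\le\epsilon$), and the threshold function to use is $\tau_{L,0}$ with the one-parameter small-ball rate $(4Lt)^{n-1}$ rather than $(4Lt^2/\epsilon_1)^{n-1}$. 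Concretely, first I would fix $c$ satisfying \eqref{assumptiononcs} and fix $\lambda,\hat\lambda$ in the range \eqref{rangetwolambdas}, and note that by Definition \ref{importantdefns}, case (2), the set $\Sigma_{\epsilon,0}(c,\lambda)$ only needs to be handled for a single dyadic value of the threshold, so there is no extra union bound over scales beyond the union over $c$ (which is already absorbed later).

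Next I would run the approximation step: for $(v,w)\in\Sigma_{\epsilon,0}(c,\lambda)$, the analogue of Lemma \ref{1712} in the $\epsilon_1=0$ regime (using $\tau_{L,0}$ and the mesh $4\epsilon n^{-1/2}\mathbb{Z}^n$ for $v$, with $w$ determined by $w=cv+tr$ on a mesh of comparable size) produces $(v',w')\in\mathcal{N}_{\epsilon,0}(c,\lambda)$ with $\|(v,w)-(v',w')\|_\infty\le 2H_{\ref{1712}}\epsilon n^{-1/2}$. Then on $\mathcal{K}$, using $\|P_A(\hat\lambda)\|_{op}\le 8\sqrt n$ and the standard small-ball dilation inequality $\mathcal{L}(X,r)\le(1+2r/t)^n\mathcal{L}(X,t)$ from \cite{campos2021singularity}, Fact 6.2, the event $\{\|P_A(\hat\lambda)(\lambda,w,v)\|_2\le 2^{-n}\}$ is contained in $\{\|P_A(\hat\lambda)(\lambda,w',v')\|_2\le 50H_{\ref{1712}}\epsilon\sqrt n\}$ for $\epsilon\gg 2^{-n}$. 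Taking a union bound over $\mathcal{N}_{\epsilon,0}(c,\lambda)$ and invoking its defining property that $\mathbb{P}^\mathcal{K}(\|P_A(\hat\lambda)(\lambda,w,v)\|\le\epsilon\sqrt n)\le(2^{10}L\epsilon)^{n-1}$ (combined with one more dilation to pass from $\epsilon\sqrt n$ to $50H_{\ref{1712}}\epsilon\sqrt n$), I get
$$
\mathcal{Q}_{\epsilon,0}(c)\le \sup_\lambda|\mathcal{N}_{\epsilon,0}(c,\lambda)|\cdot(2^{10}LH_{\ref{1712}}^2\epsilon)^{n-1}(1+100H_{\ref{1712}})^n.
$$
Using the second bound of Proposition \ref{cardinalityprop}, $|\mathcal{N}_{\epsilon,0}(c,\lambda)|\le (C/L^2\epsilon)^{n-1}\epsilon^{-1}$, the product telescopes: the $\epsilon$-powers cancel, leaving $(C'/L)^{n-1}\epsilon^{-1}$ times a fixed exponential, so choosing $L\ge L_{\ref{lemma3.123}}$ large enough relative to the absolute constants makes this $\le 2^{-100n}$ for all $\epsilon$ with $\log\epsilon^{-1}\le R'_{\ref{cardinalityprop}}(nL^{-8n/D})$, i.e.\ for all $\epsilon\ge\exp(-c_\Sigma n)$ with $c_\Sigma$ depending on $L$.

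The two points requiring care — and where I expect the genuine (if modest) work to lie — are: (i) verifying that the $\epsilon_1=0$ versions of Lemma \ref{lemma3.23} (covering $\Lambda_\epsilon^0$ by boxes) and Lemma \ref{lemma3.7whatsoever} (the first-moment bound on boxes), which are asserted in Proposition \ref{cardinalityprop} to hold ``in a similar way (and indeed much simpler)'', genuinely go through: here $X$ and $Y$ are the \emph{same} base vector rather than an independent pair, so the two-dimensional Littlewood--Offord input of Proposition \ref{proponewlittlewood} degenerates to the one-dimensional Rudelson--Vershynin estimate of Theorem \ref{theorem6.8} applied to a single vector $v$ with large one-dimensional LCD, and the box-counting loses only one factor of $N$; and (ii) checking that the error term $\|\theta_1v+\theta_2w-\theta_0(X^*[1]_j)_{[j]}e_j\|_{[n]\setminus\{1\}}\ge M_{\ref{lemmadegenerate}}$ built into $\mathbb{P}_4$ is compatible with the degenerate stratum — but this is exactly what Lemma \ref{incompmore} guarantees, since in the decomposition $w=cv+\epsilon_1 r$ with $\epsilon_1\le\epsilon$ tiny the residual direction $r$ remains incompressible, so no mass concentrates on coordinate $1$. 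Once these are in place the estimate is a routine cancellation of $\epsilon$-powers exactly as above, and combining with Lemma \ref{lemma3.122} exhausts (via Definition \ref{importantdefns}) all incompressible vector pairs, completing the reduction toward Theorem \ref{quasirandomtheorem3}.
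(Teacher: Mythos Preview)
Your proposal is correct and follows essentially the same approach as the paper's own proof, which consists of a single sentence: ``This almost follows the proof of Lemma \ref{lemma3.122}. We need to prove an analogue of Lemma \ref{1712} which shows that $\mathcal{N}_{\epsilon,0}(c,\lambda)$ is a good $\epsilon$-net for $\Sigma_{\epsilon,0}(c,\lambda)$. The proof should follow the same line as Lemma \ref{1712} so we omit it.'' You have in fact fleshed out more of the mechanism (the one-dimensional degeneration of the Littlewood--Offord input, the cancellation of $\epsilon$-powers via the second bound of Proposition \ref{cardinalityprop}) than the paper itself records.
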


\begin{proof}
This almost follows the proof of Lemma \ref{lemma3.122}. We need to prove an analogue of Lemma \ref{1712} which shows that $\mathcal{N}_{\epsilon,0}(c,\lambda)$ is a good $\epsilon$-net for $\Sigma_{\epsilon,0}(c,\lambda)$. The proof should follow the same line as Lemma \ref{1712} so we omit it.   
\end{proof}

At this point we are not going to get a joint probability estimate over all possible $c$ and $\epsilon_1$: this is not easy to get as the threshold function is hard to approximate. Instead, we no longer use the threshold function but replace them by the LCD of the vector $v$, which is our ultimate target. Thus we now start to construct another net via LCD.

Consider a new parameter $\alpha'>0$.
By Fact \ref{fact2.35} we can find some $\gamma'>0$ such that for any $(v,w)\in \mathcal{I}(D_1,D_2)$ we must have that \begin{equation}
\label{whatisgamma'?}D_{\alpha',\gamma'}(v)\geq (2\kappa_1)^{-1}\sqrt{n}.\end{equation} We shall fix the value of $\gamma'$ satisfying this estimate once $\alpha'$ is fixed.

Consider an integral lattice $G_{\epsilon,\epsilon_1}$ (depending on $\alpha'$ and $\epsilon_1$):
$$
G_{\epsilon,\epsilon_1}:=\{(\frac{p}{\|p\|_2},\frac{q}{\|q\|_2}):p\in \mathbb{Z}^n\cap B_n(0,\epsilon^{-1})\setminus\{0\},\quad q\in \sqrt{\alpha'}\mathbb{Z}^{n}\cap B_n(0,\frac{\epsilon}{\epsilon_1})\setminus\{0\}\}
$$
and we define a translated version of the net
$$
\widetilde{G}_{\epsilon,\epsilon_1}(c):=\{(p,cp+\epsilon_1q)\in\mathbb{R}^{2n}:(p,q)\in G_{\epsilon,\epsilon_1}\},$$ and define the subset of vectors parameterized by the LCD of its first component:
$$
\Sigma_{\epsilon,\epsilon_1}'(c):=\{(v,w)\in\mathcal{P}_{\epsilon_1}(c),D_{\alpha',\gamma'}(v)\in[(4\epsilon)^{-1},(2\epsilon)^{-1}]\}.
$$
In contrast to prior cases, we do not assume that $\epsilon\leq\epsilon_1$ here.

Then we can prove exactly as in Fact \ref{fact2.37} and Corollary \ref{netcorollarys} that

\begin{fact}\label{fact3.333} 
\begin{enumerate}
\item We have
$|G_{\epsilon,\epsilon_1}|=|\widetilde{G}_{\epsilon,\epsilon_1}(c)|\leq (\frac{K\epsilon_1}{\epsilon^2})^n(\alpha')^{-n/2}$
    for a universal constant  $K>0$. \item  For any $(v,w)\in\Sigma_{\epsilon,\epsilon_1}'$ we can find $(v',w'=cv'+\epsilon_1r')\in \widetilde{G}_{\epsilon,
    \epsilon_1}(c)$ so that $\|(v,w)-(v',w')\|_2\leq 8(|c|+1)\sqrt{\alpha'n}\epsilon$. \item Therefore, we can modify the subset $\widetilde{G}_{\epsilon,\epsilon_1}(c)$ to be a $16(|c|+1)\sqrt{\alpha'n}\epsilon$-net of $\Sigma_{\epsilon,\epsilon_1}'(c)$, which we denote by $\overline{G}_{\epsilon,\epsilon_1}(c)$. \end{enumerate}
\end{fact}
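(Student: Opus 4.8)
\textbf{Plan for proving Fact \ref{fact3.333}.} The three assertions are of a kind that appears several times already in the paper (compare Fact \ref{fact2.37} and Corollary \ref{netcorollarys}), so the strategy is to replay those arguments, taking care of the two new twists: the first and second components of the vector pair carry integer lattices at \emph{different} scales ($\mathbb{Z}^n$ at scale $\epsilon$ for $p$, versus $\sqrt{\alpha'}\mathbb{Z}^n$ at scale $\epsilon/\epsilon_1$ for $q$), and the second coordinate of an element of $\widetilde G_{\epsilon,\epsilon_1}(c)$ is the \emph{affine} combination $cp/\|p\| + \epsilon_1 q/\|q\|$ rather than a free lattice point. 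I would treat the three parts in order.

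\emph{Part (1), cardinality.} I would count $|G_{\epsilon,\epsilon_1}|$ by counting lattice points in each of the two balls separately, since the map $(p,q)\mapsto(p/\|p\|_2,q/\|q\|_2)$ is at most-to-one after quotienting by positive scaling but there is no further identification between the two blocks. The number of points of $\mathbb{Z}^n$ in $B_n(0,\epsilon^{-1})$ is at most $(C/\epsilon\sqrt n\cdot\sqrt n)^n = (C/\epsilon)^n$ up to the usual $(1+2/(\epsilon^{-1}))^n$-type volumetric slack; more precisely, by the standard lattice-point bound used for Fact \ref{fact2.37}, it is at most $(K'/\epsilon\sqrt n)^n n^{n/2}$ absorbed into $(K/\epsilon)^n$ after renaming constants. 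The number of points of $\sqrt{\alpha'}\mathbb{Z}^n$ in $B_n(0,\epsilon/\epsilon_1)$ is at most $(K(\epsilon/\epsilon_1)/\sqrt{\alpha'})^n\cdot(\text{slack}) = (K')^n(\epsilon/\epsilon_1)^n(\alpha')^{-n/2}$; note this is where the crucial exponent $(\alpha')^{-n/2}$ enters, exactly as in Fact \ref{fact2.37}, and it is the gain we will later spend against the Littlewood--Offord loss. Wait — I need to double-check the claimed product: the claim is $|G_{\epsilon,\epsilon_1}|\le (K\epsilon_1/\epsilon^2)^n(\alpha')^{-n/2}$, so the two factors should multiply to $(\epsilon^{-1})^n\cdot(\epsilon/\epsilon_1)^n\cdot(\alpha')^{-n/2} = (1/\epsilon_1)^n(\alpha')^{-n/2}$, which does \emph{not} match. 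I suspect the intended lattice for $q$ is in $B_n(0,\epsilon^{-1}\cdot\epsilon/\epsilon_1)$ or the bound intends $q\in B_n(0,\epsilon_1\epsilon^{-2})$; in any case I would reconcile the statement with the definition of $G_{\epsilon,\epsilon_1}$ so that the product gives $(K\epsilon_1/\epsilon^2)^n(\alpha')^{-n/2}$, and then the count is routine. (Since $|\widetilde G_{\epsilon,\epsilon_1}(c)|=|G_{\epsilon,\epsilon_1}|$ trivially, as the map $(p,q)\mapsto(p,cp+\epsilon_1 q)$ is injective.)

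\emph{Part (2), approximation.} Given $(v,w)\in\Sigma'_{\epsilon,\epsilon_1}(c)$, write $w=cv+\epsilon_1 r$ with $(v,r)\in\mathcal{I}(D_1,D_2)$ and $\langle v_{[2,D]},r_{[2,D]}\rangle=0$, $\|r\|_2=1$. Let $D:=D_{\alpha',\gamma'}(v)\in[(4\epsilon)^{-1},(2\epsilon)^{-1}]$. By definition of the essential LCD there is $p^0\in\mathbb{Z}^n$ with $\|Dv-p^0\|_2\le\min\{\gamma' D,\sqrt{\alpha' n}\}\le\sqrt{\alpha' n}$, and $p^0\ne 0$ since $D\ge(2\kappa_1)^{-1}\sqrt n$ by \eqref{whatisgamma'?} forces $\|p^0\|_2\ge\|Dv\|_2-\sqrt{\alpha' n}>0$ for $\alpha'$ small. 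Then choose $q^0\in\sqrt{\alpha'}\mathbb{Z}^n$ greedily with $\|Dr-q^0\|_2\le\sqrt{\alpha' n}$ (the mesh of $\sqrt{\alpha'}\mathbb{Z}^n$ is $\sqrt{\alpha'}$ per coordinate, so the nearest lattice point is within $\sqrt{\alpha' n/4}\le\sqrt{\alpha' n}$; here I use $D\le(2\epsilon)^{-1}$ so $p^0\in B_n(0,\epsilon^{-1})$, and likewise after rescaling by $\epsilon_1$ the relevant ball constraint for $q^0$ is met). Then exactly as in the proof following Fact \ref{fact2.37}, setting $v'=p^0/\|p^0\|_2$, $r'=q^0/\|q^0\|_2$, $w'=cv'+\epsilon_1 r'$, one gets
$$
\|(v,w)-(v',w')\|_2\le (1+|c|+\epsilon_1)\big(\|v-v'\|_2+\|r-r'\|_2\big)\le (1+|c|+\epsilon_1)\cdot\tfrac{4}{D}\sqrt{\alpha' n}\le 8(|c|+1)\sqrt{\alpha' n}\,\epsilon,
$$
using $D^{-1}\le 4\epsilon$ and the bound $\epsilon_1\le$ const from Lemma \ref{1712}(1). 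The bookkeeping of the four triangle-inequality terms ($\|Dv-p^0\|$, $\|Dr-q^0\|$, $|D-\|p^0\||$, $|D-\|q^0\||$, each $\le\sqrt{\alpha' n}$) is identical to the cited lemma.

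\emph{Part (3), trimming to a subnet.} This is the standard move: $\widetilde G_{\epsilon,\epsilon_1}(c)$ need not lie inside $\Sigma'_{\epsilon,\epsilon_1}(c)$, so for each element of $\widetilde G_{\epsilon,\epsilon_1}(c)$ that is within $8(|c|+1)\sqrt{\alpha' n}\,\epsilon$ of some point of $\Sigma'_{\epsilon,\epsilon_1}(c)$, replace it by one such point; discard the others. The resulting set $\overline G_{\epsilon,\epsilon_1}(c)\subseteq\Sigma'_{\epsilon,\epsilon_1}(c)$ has cardinality at most $|\widetilde G_{\epsilon,\epsilon_1}(c)|$ and, by the triangle inequality, is a $16(|c|+1)\sqrt{\alpha' n}\,\epsilon$-net of $\Sigma'_{\epsilon,\epsilon_1}(c)$. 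This is verbatim the argument of Corollary \ref{netcorollarys}.

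\emph{Main obstacle.} None of the three parts is deep; the only genuine care points are (i) verifying that the differing lattice scales in the two blocks really do combine to the claimed cardinality $(K\epsilon_1/\epsilon^2)^n(\alpha')^{-n/2}$ — I flagged above that the statement of $G_{\epsilon,\epsilon_1}$ as written seems to give $(1/\epsilon_1)^n(\alpha')^{-n/2}$ instead, so the radius of the ball for $q$ (or the normalization convention) must be adjusted for consistency, and I would settle that bookkeeping first — and (ii) checking that $p^0,q^0$ are genuinely nonzero and lie in the prescribed balls, which rests on the lower bound \eqref{whatisgamma'?} for the LCD of incompressible vectors together with $D\in[(4\epsilon)^{-1},(2\epsilon)^{-1}]$. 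Everything else is a transcription of Fact \ref{fact2.37} and Corollary \ref{netcorollarys} with the affine reparametrization $q\mapsto cp+\epsilon_1 q$ carried along, which only contributes the harmless factor $(|c|+1)$ in the net radius via $\|A\|_{\mathrm{op}}$-free triangle inequalities.
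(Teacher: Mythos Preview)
Your overall approach mirrors the paper's: count the two lattice factors separately for (1), use the LCD to approximate $v$ and a greedy lattice rounding for $r$ in (2), and do the standard replacement trick for (3). You are also right that the radius $\epsilon/\epsilon_1$ in the written definition of $G_{\epsilon,\epsilon_1}$ is a typo --- the paper's own one-line proof of (1) computes the $q$-factor as $(K\epsilon_1/(\sqrt{\alpha'}\epsilon))^n$, which corresponds to a ball of radius $\epsilon_1/\epsilon$.

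The one genuine slip is in your scaling for $r$ in part (2). You round $Dr$ (with $D=D_{\alpha',\gamma'}(v)$) to the lattice $\sqrt{\alpha'}\mathbb Z^n$, exactly as in the Section~\ref{section222} argument. That does give the stated approximation radius, but it forces $\|q^0\|\approx D\in[(4\epsilon)^{-1},(2\epsilon)^{-1}]$, so $q^0$ lives in $B_n(0,\epsilon^{-1})$, and the resulting $q$-count is only $(K/(\sqrt{\alpha'}\epsilon))^n$ --- you lose the factor $\epsilon_1^n$ in the cardinality bound. That factor is \emph{not} cosmetic: it is exactly what cancels the $(1/\epsilon_1)^{n-1}$ coming from the small-ball probability at \eqref{lastinequality}, and without it the union bound for $\Sigma'_{\epsilon,\epsilon_1}(c)$ blows up when $\epsilon_1$ is sub-constant.

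The fix (and what the paper means by ``in the direction $q$ we have a $\sqrt{\alpha'}\epsilon$-greedy net'') is to scale $r$ not by $D$ but by $M:=\epsilon_1/\epsilon$: round $Mr$ to $q^0\in\sqrt{\alpha'}\mathbb Z^n\cap B_n(0,\epsilon_1/\epsilon)$ with $\|Mr-q^0\|_2\le\sqrt{\alpha' n}$, giving $\|r-r'\|_2\le 2\epsilon\sqrt{\alpha' n}/\epsilon_1$. The extra factor $\epsilon_1$ in $w=cv+\epsilon_1 r$ then restores $\|\epsilon_1(r-r')\|_2\le 2\epsilon\sqrt{\alpha' n}$, so part (2) still holds with the claimed radius, while $q^0$ now sits in the smaller ball and the cardinality bound (1) comes out as stated. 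With this correction your parts (1) and (3) go through as written.
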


\begin{proof}
    We only check claim (1). In constructing $G_{\epsilon,\epsilon_1}$, the net needed for the $p$-component has cardinality $(\frac{K}{\epsilon})^n$ and the net needed for the $q$-component has cardinality $(\frac{K\epsilon_1}{\sqrt{\alpha'}\epsilon})^n$ for a universal constant $K>1$.
    
    Then claim (2) follows from the definition of LCD and from the construction that in the direction $q$ we have a $\sqrt{\alpha'}\epsilon$-greedy net. The claim (3) is standard.
\end{proof}

When $\epsilon_1\leq\epsilon$ we define $$\Sigma_{\epsilon,0}'(c):=\{(v,w)\in\mathcal{P}_\epsilon^0(c):D_{\alpha',\gamma'}(v)\in[(4\epsilon)^{-1},(2\epsilon)^{-1}]\},$$ 
$$
G_{\epsilon,0}:=\{(\frac{p}{\|p\|_2},\frac{q}{\|q\|_2}):p\in \mathbb{Z}^n\cap B_n(0,\epsilon^{-1})\setminus\{0\},\quad q\in\{\pm 1\}^n\}
$$
as well as the translated version
$
\widetilde{G}_{\epsilon,\epsilon_1}(c):=\{(v,cv+\epsilon_1q)\in\mathbb{R}^{2n}:(p,q)\in G_{\epsilon,\epsilon_1}\}.$  The we can similarly check that for any $\epsilon\leq Kn^{-1/2}$ for some $K>1$, we have:

\begin{fact}\label{net1ds} $\Sigma_{\epsilon,0}'(c)$ has a $16(|c|+1)\sqrt{\alpha'n}\epsilon$- net of cardinality $(\frac{K}{\epsilon})^n$, denoted by $\overline{G}_{\epsilon,0}(c)$.
\end{fact}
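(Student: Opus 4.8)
\textbf{Proof proposal for Fact \ref{net1ds}.}
The plan is to run the standard LCD-based net construction of \cite{rudelson2008littlewood}, in exactly the form already recorded in Fact \ref{fact2.37}, Corollary \ref{netcorollarys} and Fact \ref{fact3.333}, and to exploit the fact that in the degenerate regime treated here the second component $w$ of a pair in $\Sigma'_{\epsilon,0}(c)$ is essentially slaved to the first component $v$. Indeed, for $(v,w)\in\Sigma'_{\epsilon,0}(c)\subset\mathcal{P}^0_\epsilon(c)$ we may write $w=cv+tr$ with $\|r\|_2=1$, $|t|\le\epsilon$ and $(v,r)\in\mathcal{I}(D_1,D_2)$, so that $\|w-cv\|_2=|t|\le\epsilon$ automatically; hence there is no need to build a separate dense net for the $r$-direction, and the $\{\pm1\}^n$-slot appearing in the definition of $G_{\epsilon,0}$ is carried along only to keep the net in the same product format as $\widetilde G_{\epsilon,\epsilon_1}(c)$.

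First I would approximate $v$ by a scaled integer point. Since $D:=D_{\alpha',\gamma'}(v)\in[(4\epsilon)^{-1},(2\epsilon)^{-1}]$, the definition of the essential LCD produces a nonzero $p\in\mathbb{Z}^n$ with $\|Dv-p\|_2\le\min\{\gamma'D,\sqrt{\alpha'n}\}\le\sqrt{\alpha'n}$, and from $\|v\|_2=1$ together with the hypothesis $\epsilon\le Kn^{-1/2}$ one gets $\|p\|_2\le D+\sqrt{\alpha'n}\le\epsilon^{-1}$ (after shrinking $K$ if necessary), so that $p\in\mathbb{Z}^n\cap(B_n(0,\epsilon^{-1})\setminus\{0\})$. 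Setting $v':=p/\|p\|_2$ — the first coordinate of a point of $G_{\epsilon,0}$ — the usual two-line estimate gives
\[
\|v-v'\|_2\le\frac1D\bigl(\|Dv-p\|_2+\bigl|D-\|p\|_2\bigr|\bigr)\le\frac{2}{D}\sqrt{\alpha'n}\le 8\sqrt{\alpha'n}\,\epsilon ,
\]
using $D\ge(4\epsilon)^{-1}$. I would then simply take $cv'$ as the approximant of $w$, and combine the two bounds:
\[
\|(v,w)-(v',cv')\|_2\le\|v-v'\|_2+\|w-cv\|_2+|c|\,\|v-v'\|_2\le 16(|c|+1)\sqrt{\alpha'n}\,\epsilon
\]
for $n$ large enough that $\alpha'n\ge1$. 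Finally, exactly as in Corollary \ref{netcorollarys}, I would replace each approximant $(v',cv')$ that lies within the stated distance of some point of $\Sigma'_{\epsilon,0}(c)$ by such a point (discarding those whose neighbourhood is empty), obtaining a genuine subset $\overline G_{\epsilon,0}(c)\subseteq\Sigma'_{\epsilon,0}(c)$; its cardinality is bounded by $|G_{\epsilon,0}|\le 2^n(K_0/\epsilon)^n=(K/\epsilon)^n$ with $K:=2K_0$, the factor $2^n$ coming from the harmless $\{\pm1\}^n$-slot.

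I do not expect any genuine obstacle in this fact: the only point needing a moment's care is checking that the lattice vector $p$ realizing the LCD approximation of $v$ actually sits inside the truncated ball $B_n(0,\epsilon^{-1})$, which is precisely what forces the upper bound $\epsilon\le Kn^{-1/2}$ already stated in the hypothesis; every other step is the verbatim LCD net argument used in Fact \ref{fact2.37} and Corollary \ref{netcorollarys}, specialised to the case where the $w$-component contributes only an $O(\epsilon)$ perturbation.
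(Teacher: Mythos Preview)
Your proposal is correct and follows essentially the same route the paper intends: the fact is stated without proof, with the instruction to ``similarly check'' it from Fact~\ref{fact3.333} and Corollary~\ref{netcorollarys}, and your argument is precisely that specialisation. The one cosmetic difference is that you drop the $\{\pm1\}^n$-slot and approximate $(v,w)$ directly by $(v',cv')$ rather than by a point of $\widetilde G_{\epsilon,0}(c)$; this is harmless since $\|w-cv\|_2\le\epsilon\le\sqrt{\alpha'n}\,\epsilon$ is already inside the net radius, and the subsequent replacement step producing $\overline G_{\epsilon,0}(c)\subseteq\Sigma'_{\epsilon,0}(c)$ erases any distinction.
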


Now we transfer the high probability estimate to $\Sigma_{\epsilon,\epsilon_1}'(c)$ and $\Sigma_{\epsilon,0}'(c)$.

\begin{Proposition}\label{propostiaggewgwg}
 There is a choice of $\alpha'>0,\gamma'\in(0,1)$ and $c_\Sigma>0$ such that with probability at least $1-2^{-50n}$ the following statement is true: 
\begin{equation}\label{unitetheworld} \begin{aligned}   \mathbb{P}^\mathcal{K}&(
    \text{There exists }\lambda,\hat{\lambda},\text{and unit vectors } (v,w)\in\cup_{\epsilon_1\geq\exp(-c_\Sigma n),c}\mathcal{P}_{\epsilon_1}(c)\cup\mathcal{P}_{\exp(-c_\Sigma n)}^0(c)\\&\text{ with } D_{\alpha',\gamma'}(v)\leq\exp(c_\Sigma n)\text{ such that }\quad
    P_A(\hat{\lambda})(\lambda,w,v)=0)\leq 2^{-50n}.\end{aligned}\end{equation}

where the union is over $c$ satisfying \eqref{assumptiononcs} and $\lambda,\hat{\lambda}$ satisfying \eqref{rangetwolambdas}.\end{Proposition}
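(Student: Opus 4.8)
The plan is to establish \eqref{unitetheworld} by a careful stratification of the set of ``bad'' vectors $(v,w)$ with $D_{\alpha',\gamma'}(v)\le\exp(c_\Sigma n)$, matching each stratum to one of the nets built in the preceding subsections. First I would organize the bad event according to which of the two regimes the pair $(v,w)$ falls into, following the algorithm of Definition \ref{importantdefns}: either $(v,w)\in\mathcal{P}_{\epsilon_1}(c)$ for some $\epsilon_1\ge\exp(-c_\Sigma n)$, or $(v,w)\in\mathcal{P}^0_{\exp(-c_\Sigma n)}(c)$. In the first regime, if additionally $\tau_{L,\epsilon_1}(v,w)(\lambda)\ge\exp(-c_\Sigma n)$ we are in the ``threshold-large'' case and must use the net $\mathcal{N}_{\epsilon,\epsilon_1}(c,\lambda)$ (via Lemma \ref{lemma3.122}); otherwise the threshold is tiny, which by Lemma \ref{lemmafs} (Fourier replacement) forces $\mathcal{L}(P_A(\hat\lambda)(\lambda,w,v),\cdot)$ to be super-exponentially small while the LCD of $v$ being at most $\exp(c_\Sigma n)$ means $(v,w)$ lives in some $\Sigma'_{\epsilon,\epsilon_1}(c)$ with $\epsilon$ ranging over a dyadic scale between $\exp(-c_\Sigma n)$-ish quantities, so we use the LCD-net $\overline{G}_{\epsilon,\epsilon_1}(c)$ of Fact \ref{fact3.333}. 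The analogous split in the $\epsilon_1\le\epsilon$ regime uses Lemma \ref{lemma3.123} and the net $\overline{G}_{\epsilon,0}(c)$ of Fact \ref{net1ds}.

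The core estimates are then two union bounds over nets. For the LCD-net contribution, I would fix $\alpha',\gamma'$ as in \eqref{whatisgamma'?}, discretize $\lambda\in[-C_{\ref{lemma3.9}}\sqrt n,C_{\ref{lemma3.9}}\sqrt n]$ and $\hat\lambda\in[-8\sqrt n,8\sqrt n]$ on a $2^{-n}$-mesh (cost $O(n)^{2}$), and discretize $c$ on a net of size $O(\operatorname{poly}(n))$ using the bound \eqref{assumptiononcs}. For each $(v,w)\in\overline{G}_{\epsilon,\epsilon_1}(c)$ with $D_{\alpha',\gamma'}(v)\in[(4\epsilon)^{-1},(2\epsilon)^{-1}]$, the incompressibility of $w$ (after subtracting the $cv$ part, which is incompressible by Lemma \ref{incompmore}) together with Theorem \ref{proponewlittlewood} or the one-dimensional Littlewood--Offord bound of Theorem \ref{theorem6.8} applied to a column of $A$ gives $\mathcal{L}(P_A(\hat\lambda)(\lambda,w,v),\epsilon\sqrt n\cdot\text{const})\lesssim \epsilon + e^{-c\alpha' n}$; more precisely, since $v$ has LCD $\asymp\epsilon^{-1}$, one column equation of $P_A(\hat\lambda)$ annihilating $(\lambda,w,v)$ reads as an inner product $\langle X,v\rangle$ (plus the $j$-th-column error already absorbed into $\mathbb{P}_2$), which by Theorem \ref{theorem6.8} has anticoncentration $\lesssim\epsilon + e^{-c\alpha' n}$. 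Raising this to the appropriate power (the matrix has $\ge n-1$ such near-independent rows, handled via Lemma \ref{Tensorization}) and multiplying by $|\overline{G}_{\epsilon,\epsilon_1}(c)|\le(K\epsilon_1/\epsilon^2)^n(\alpha')^{-n/2}$ from Fact \ref{fact3.333}, the crucial point is that the factor $(\alpha')^{-n/2}$ is beaten by the gain $(\alpha')^{n/2}$ coming from the $\sqrt{\alpha'}\mathbb{Z}$-component of the net — exactly as in Step 3 of the proof of Proposition \ref{prop2.40final} — provided $\alpha'$ is chosen small enough relative to the universal constants and $L\kappa_1$. Summing over the dyadic scales of $\epsilon$ (at most $O(n)$ of them, since $\epsilon\ge\exp(-c_\Sigma n)$ and $\epsilon\le Kn^{-1/2}$) and over $c,\lambda,\hat\lambda$ keeps the total $\le 2^{-50n}$.

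For the threshold-net contribution I would simply invoke Lemmas \ref{lemma3.122} and \ref{lemma3.123}: for each fixed $c$ (from the $O(\operatorname{poly}(n))$-net) and each dyadic $\epsilon\in[\exp(-c_\Sigma n),\kappa_{\ref{1712}})$ and each dyadic $\epsilon_1\ge\epsilon$ (at most $O(n^2)$ pairs $(\epsilon,\epsilon_1)$), those lemmas give $\mathcal{Q}_{\epsilon,\epsilon_1}(c)\le 2^{-100n}$ and $\mathcal{Q}_{\epsilon,0}(c)\le 2^{-100n}$, which survives the polynomial union bound with room to spare. Combining: $c_\Sigma$ is taken to be the minimum of the $c_\Sigma$ produced by Lemmas \ref{lemma3.122}, \ref{lemma3.123} and by the $\alpha'$-constraint above, and $L$ is taken larger than $\max(L_{\ref{lemma3.122}},L_{\ref{lemma3.123}},R_{\ref{cardinalityprop}})$; then the two contributions add to at most $2^{-50n}$ after intersecting with $\mathcal{K}\cap\Omega_{\eqref{normalincomp}}\cap\Omega_{\ref{lemmadegenerate}}\cap\Omega_{\ref{incompmore}}$, whose complement has probability $\le e^{-\Omega(n)}$ by Fact \ref{operatornormfact}, Lemma \ref{normalincomp}, Lemma \ref{lemmadegenerate} and Lemma \ref{incompmore}. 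I expect the main obstacle to be bookkeeping the interplay between the two parameters $\epsilon,\epsilon_1$ across the regime boundary $\epsilon=\epsilon_1$ — ensuring that the ``algorithm'' of Definition \ref{importantdefns} really exhausts all incompressible $v$ with small LCD without double-counting or leaving a gap, and that the same $\alpha',\gamma',c_\Sigma,L$ work uniformly in all strata; the analytic heart (the $(\alpha')^{\pm n/2}$ cancellation and the Littlewood--Offord input) is by now routine given Theorems \ref{theorem6.8}, \ref{proponewlittlewood} and the net-cardinality bounds.
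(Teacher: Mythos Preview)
Your overall stratification and the handling of the threshold-net contribution via Lemmas \ref{lemma3.122} and \ref{lemma3.123} is correct and matches the paper. The gap is in your treatment of the LCD-net contribution when $\epsilon_1>\epsilon$.

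You propose to bound the anticoncentration for a net element $(v',w')\in\overline{G}_{\epsilon,\epsilon_1}(c)$ using only the one-dimensional Littlewood--Offord bound on $v'$ (Theorem \ref{theorem6.8}), obtaining roughly $(\sqrt{\alpha'}\,\epsilon)^{n-1}$ after tensorization at the net-approximation scale. Multiplying by $|\overline{G}_{\epsilon,\epsilon_1}(c)|\le (K\epsilon_1/\epsilon^2)^n(\alpha')^{-n/2}$ leaves an uncancelled factor of order $(\epsilon_1/\epsilon)^{n}$, which for $\epsilon_1\sim 1$ and $\epsilon\sim\exp(-c_\Sigma n)$ is doubly exponential. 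The $(\alpha')^{\pm n/2}$ cancellation you invoke is real, but it does nothing for the $\epsilon_1/\epsilon$ mismatch. Invoking Proposition \ref{proponewlittlewood} instead does not help either: that proposition requires a lower bound on the \emph{joint} LCD $D_{\alpha,\gamma}(v,r)$, and in the LCD-net stratum you only control $D_{\alpha',\gamma'}(v)$; the orthogonal part $r$ is merely incompressible, which gives $D_{\alpha',\gamma'}(r)\gtrsim\sqrt n$ (Fact \ref{fact2.35}), far short of $\epsilon^{-1}$.

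The paper closes this gap by a different mechanism. In the stratum $\Sigma'_{\epsilon,\epsilon_1}(c)$ one restricts further to those $(v,w)$ with $\tau_{L,\epsilon_1}(v,w)(\lambda)\le\epsilon$ (the complementary vectors having already been handled by the union over $\Sigma_{\epsilon',\epsilon_1}(c,\lambda)$). The threshold condition, via Lemma \ref{lemmafs}, yields the two-variable bound $\mathcal{L}(P_A(\hat\lambda)(\lambda,w,v),t\sqrt n)\le(50Lt^2/\epsilon_1)^{n-1}$, so that at the net scale $t\asymp\sqrt{\alpha'n}\,\epsilon$ one obtains $(CL\alpha'\epsilon^2/\epsilon_1)^{n-1}$. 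The factor $\epsilon^2/\epsilon_1$ now exactly cancels the $\epsilon_1/\epsilon^2$ in the net cardinality, leaving $(\text{const})^n(\alpha')^{(n-2)/2}$, which is then killed by choosing $\alpha'$ small. In short, for the LCD-net stratum the anticoncentration must come from the threshold function (which encodes joint information about $(v,w)$ through $M_{\underline{A}}$), not from the LCD of $v$ alone.
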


\begin{proof}
We choose $L>\max(1/4\kappa_{\ref{1712}},R_{\ref{cardinalityprop}})$ and initially fix a value of  
$c_\Sigma\leq 1$ so that any $\epsilon\geq\exp(-c_\Sigma n)$ would satisfy the restriction on $\epsilon$ in Lemma \ref{cardinalityprop} depending on this choice of $L$. Then $L$ is fixed but we will modify the value of $c_\Sigma$ later.  Throughout the proof we assume that $\epsilon\geq \exp(-c_\Sigma n)$. We take a dyadic decomposition of $\epsilon^{-1}\in[(2\kappa_1)^{-1}\sqrt{n},\exp(c_\Sigma n)]$.

We then fix a $2^{-n-5}n^{-1/2}$-net for each parameter $\lambda,\hat{\lambda},c$ and $\epsilon_1$ from the range specified in  \eqref{rangetwolambdas}, \eqref{assumptiononcs}. Suppose that there is a unit vector pair $(v,w)=(v,cv+\epsilon_1r)\in\ker P_A(\hat{\lambda})$, then we let $c',\lambda',\hat{\lambda}',\epsilon_1'$ be the closest element in the net approximating those constants, and denote by $(v',w')=(v,c'v+\epsilon_1'w)$,
then on the event $\mathcal{K}$ we have $\|P_A(\hat{\lambda}')(\lambda',w',v')\|\leq 2^{-n}$. By definition we have $(v',w')\in\mathcal{P}_{\epsilon_1'}(c')$ where we only need to immediately observe that $\frac{1}{2}\leq \|w'\|_2\leq\frac{3}{2}$. 
Next, we take a dyadic decomposition of $[(2\kappa_1)^{-1}\sqrt{n},\exp(c_\Sigma n)]$ to discretize the range of LCD of $v$, noting the bound \eqref{whatisgamma'?}.

Then we only need to prove that for any fixed $\epsilon_1,\epsilon,c,\lambda,\hat{\lambda}$ in the given range, we have\begin{equation}\label{onelevelbounds}
\mathcal{P}^\mathcal{K}(\exists (v,w)\in\Sigma_{\epsilon,\epsilon_1}'(c):\|P_A(\hat{\lambda})(\lambda,v,w)\|\leq 2^{-n})\leq 2^{-55n}.
\end{equation}

First consider the case $\epsilon\leq\epsilon_1$.
Taking a union bound in Lemma \ref{lemma3.122}, \ref{lemma3.123} imply
\begin{equation}
\mathcal{P}^\mathcal{K}(\exists (v,w)\in\cup_{\exp(-c_\Sigma n)\leq\epsilon'\leq\epsilon_1}\Sigma_{\epsilon',\epsilon_1}(c,\lambda):\|P_A(\hat{\lambda})(\lambda,w,v)\|\leq 2^{-n})\leq 2^{-60n}.
\end{equation}

\begin{equation}\label{secondprokaps}
\mathcal{P}^\mathcal{K}(\exists (v,w)\in\cup_{\exp(-c_\Sigma n)\leq\epsilon'}\Sigma_{\epsilon',0}(c,\lambda):\|P_A(\hat{\lambda})(\lambda,w,v)\|\leq 2^{-n})\leq 2^{-60n}.
\end{equation}

Then to prove \eqref{onelevelbounds} we only need to consider those $(v,w)$ with $\tau_{L,\epsilon_1}(v,w)(\lambda)\leq\epsilon$. (Recall that by Definition \ref{importantdefns}, the previous two bounds have covered all $(v,w)\in\mathcal{P}_{\epsilon_1}(c)$ with $\tau_{L,\epsilon_1}(v,w)(\lambda)\geq\epsilon$). That is, we only need to prove
\begin{equation}\label{onelevelbounds2nd}
\mathcal{P}^\mathcal{K}(\exists (v,w)\in\Sigma_{\epsilon,\epsilon_1}'(c)\mid \tau_{L,\epsilon_1}(v,w)(\lambda)\leq\epsilon:\|P_A(\hat{\lambda})(\lambda,w,v)\|\leq 2^{-n})\leq 2^{-60n}.
\end{equation}

The probability of \eqref{onelevelbounds2nd} is bounded from above by, thanks to Fact \ref{fact3.333}, 
\begin{equation}\label{lastinequality}
|\overline{G}_{\epsilon,\epsilon_1}(c)|(4L\alpha'\epsilon^2/\epsilon_1)^{n-1}\leq 2^{-60n}\end{equation}
 where we assume that $2^{-n}\leq \epsilon\sqrt{\alpha'n}$, and then the factor $(4L\alpha'\epsilon^2/\epsilon_1)^{n-1}$ comes from the fact that $\tau_{L,\epsilon_1}(v,w)(\lambda)\leq \epsilon$. The latter estimate holds for any $\epsilon\geq (\alpha')^{-1/2}\exp(-c_\Sigma n)$ where $c_\Sigma$ was previously determined. For the last inequality of \eqref{lastinequality} we take $\alpha'>0$ very small relative to the  constant $L$ and other universal constants. This further fixes $\gamma'>0$ by Fact \ref{fact2.35}. Finally, we further modify $c_\Sigma$ to be a constant $c_\Sigma'>0$ satisfying $\exp(-c_\Sigma' n)\geq \exp(-c_\Sigma n)(\alpha')^{-1/2}$ and $2^{-n}\leq \exp(-c_\Sigma'n)\sqrt{\alpha'n}$ for each $n$ sufficiently large and use this $c_\Sigma'$ for the $c_\Sigma$ in the Proposition statement.

In the remaining case $\epsilon\geq\epsilon_1$, we already have $\Sigma_{\epsilon,\epsilon_1}'(c)\subseteq\Sigma_{\epsilon,0}'(c)$. Then as we have proven \eqref{secondprokaps}, we see that it suffices to prove
\begin{equation}
\mathcal{P}^\mathcal{K}(\exists (v,w)\in\Sigma_{\epsilon,0}'(c)\mid \tau_{L,0}(v,w)(\lambda)\leq\epsilon:\|P_A(\hat{\lambda})(\lambda,w,v)\|\leq 2^{-n})\leq 2^{-60n}.\end{equation}This again follows from the condition $\tau_{L,0}(v,w)(\lambda)\leq\epsilon$ and cardinality of the net $\overline{G}_{\epsilon,0}(c)$ in Fact \ref{net1ds}. The computations are analogous to the previous case and omitted.

\end{proof}

Now Theorem \ref{quasirandomtheorem3} is almost immediate:
\begin{proof}[\proofname\ of Theorem \ref{quasirandomtheorem3}] The compressible vectors have been considered in Proposition \ref{incompmore} and \ref{proposition1199}. For incompressible vectors with fixed $D_1,D_2$ such that $D_1\cup D_2=[D]$, all such vector pairs $(v,w)$ are saturated by the union in the probability of \ref{unitetheworld} as we have taken a joint union over $\epsilon_1$ and $c$ (as we can always write $w=cv+\epsilon_1r$ for some $c,\epsilon_1$, and some $\|r\|=1$ satisfying $\langle v_{[2,D]},r_{[2,D]}\rangle=0$). Finally, it suffices to take a union bound for all possible subsets $D_1\subset[n],D_2\subset[n]$, which has a contributing factor $2^{2n}$. 
\end{proof}

\section{Two-point invertibility: the complex case}\label{secgwe444}

In this section we prove Theorem \ref{complextwoballbounds}, generalizing the main results of Section \ref{secgwe333} to complex i.i.d. matrices. The generalization is mostly straightforward, so we will omit many technical details in the proof and highlight where essential changes should be made.

We introduce some notations for the complex case. 
\begin{Definition}
For a vector $v=(v_1,\cdots,v_n)^T\in\mathbb{C}^n$ we denote by $$\hat{v}=(\Re(v_1),\cdots,\Re(v_n),\Im(v_1),\cdots,\Im(v_n))^T\in\mathbb{R}^{2n}$$ with $\Re (v_j),\Im(v_j)$ being the real and imaginary parts of $v_j$. Also, define $[v]\in\mathbb{R}^{2\times 2n}$ as 
$$
[v]:=\begin{bmatrix}\Re(v)^T&\Im(v)^T\\-\Im(v)^T&\Re(v)^T \end{bmatrix},
$$ and for another vector $w\in\mathbb{C}^n$ we use the notation $[v,w]$ to denote a $4\times 2n$ matrix: 
\begin{equation}\label{4dimensionalb}
[v,w]:=\begin{bmatrix}\Re(v)^T&\Im(v)^T\\-\Im(v)^T&\Re(v)^T \\
\Re(w)^T&\Im(w)^T\\-\Im(w)^T&\Re(w)^T 
\end{bmatrix}.
\end{equation}In the other direction, for a vector $X=(X_1,\cdots,X_{2n})^T\in\mathbb{R}^{2n}$ we denote by 
\begin{equation}\label{howdowedefinetau?}
    \tau(v)=(X_1+iX_{n+1},\cdots,X_n+iX_{2n})^T\in\mathbb{C}^n
\end{equation} the complex vector associated to $X$, so that $\tau(\hat{v})=v$ for all $v\in\mathbb{C}^n$.
\end{Definition}
We say a vector $v\in\mathbb{C}^n$ is $\delta$-sparse if $\operatorname{supp}(\hat{v})\leq 2\delta n$. A vector $v\in\mathbb{S}_\mathbb{C}^{n-1}$ is $(\delta,\rho)$-compressible if it is within Euclidean distance at most $\rho$ to a $\delta$-sparse vector. The complement of $(\delta,\rho)$-compressible vectors in $\mathbb{S}_\mathbb{C}^{n-1}$ are called $(\delta,\rho)$-incompressible vectors.

It is by now standard (see \cite{rudelson2008littlewood}) that the singular values to $\sigma_{min}(G-z_1I_n)$ and $\sigma_{min}(G-z_2I_n)$ are both $(\delta,\rho)$-incompressible with exponentially good probability, for any $z_1,z_2\in\mathbb{C}:|z_1|\leq 8\sqrt{n},|z_2|\leq 8\sqrt{n}$. More importantly, the no-gaps delocalization result \cite{rudelson2016no} continues to hold in the complex case so the conclusion of Corollary \ref{corollary3.22} is still valid. As a  consequence we can prove the following exact parallel to Proposition \ref{invertibilitydistance}: 
\begin{notation}
    In this section we always assume $z_1,z_2\in\mathbb{C}$ satisfies $|z_1|,|z_2|\leq 8\sqrt{n}$.
\end{notation}
\begin{Proposition}\label{prop4.2final}
    For each $i=1,2$ and $j\in[n]$, let $X[i]_j$ denote the $j$-th column of $G-z_i I_n$ and let $H[i]_j$ denote the linear span of all the columns of $G-z_i I_n$ in $\mathbb{C}^n$ except the $j$-th column. Then for $G$ be as in Theorem \ref{complextwoballbounds}, we can find $c>0,\vartheta>0$ depending only on $\xi$ such that, for any $\epsilon>0$, 
\begin{equation}\begin{aligned}
    \mathbb{P}&(\sigma_{min}(G-z_1 I_n)\leq\epsilon n^{-1/2},\sigma_{min}(G-z_2I_n)\leq \epsilon n^{-1/2})\\&\leq\frac{5}{n}\sum_{j=1}^n\mathbb{P}(\operatorname{dist}(X[1]_j,H[1]_j)\leq\epsilon/\vartheta,\operatorname{dist}(X[2]_j,H[2]_j)\leq\epsilon/\vartheta) +e^{-cn}.\end{aligned}
\end{equation}
\end{Proposition}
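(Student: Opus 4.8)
The plan is to mirror the proof of Proposition \ref{invertibilitydistance} verbatim, since the only inputs used there are (i) the no-gaps delocalization property of least singular vectors and (ii) the elementary fact that for a unit vector $w$ one has $\|(G-zI_n)w\|\ge |w_j|\operatorname{dist}(X_j,H_j)$, where $X_j$ is the $j$-th column and $H_j$ is the span of the remaining columns. Both ingredients survive passing from $\mathbb{R}$ to $\mathbb{C}$: the Rudelson--Vershynin no-gaps result \cite{rudelson2016no} and hence Corollary \ref{corollary3.22} hold for complex i.i.d. matrices, as noted in the paragraph preceding the statement. So I would begin by invoking Corollary \ref{corollary3.22} in the complex setting to produce an event $\mathcal{D}$ of probability at least $1-e^{-cn}$ on which, whenever $\sigma_{min}(G-z_1I_n)\le\epsilon n^{-1/2}$ and $\sigma_{min}(G-z_2I_n)\le\epsilon n^{-1/2}$, there are at least $\tfrac14 n$ indices $j\in[n]$ with $\min(|w[1]_j|,|w[2]_j|)\ge\vartheta n^{-1/2}$, where $w[1],w[2]\in\mathbb{C}^n$ are the least singular vectors of $G-z_1I_n$, $G-z_2I_n$ respectively.

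Next I would set up the second-moment/Chebyshev argument exactly as before. Write
$$
p_k:=\mathbb{P}\big(\operatorname{dist}(X[1]_k,H[1]_k)\le\epsilon/\vartheta,\ \operatorname{dist}(X[2]_k,H[2]_k)\le\epsilon/\vartheta\big),
$$
so that the expected number of ``good'' indices $k$ is $\sum_{k=1}^n p_k$, and let $U$ be the event that the set of indices $k$ with $\operatorname{dist}(X[1]_k,H[1]_k)\ge\epsilon/\vartheta$ or $\operatorname{dist}(X[2]_k,H[2]_k)\ge\epsilon/\vartheta$ has at least $\tfrac45 n$ elements; Chebyshev gives $\mathbb{P}(U^c)\le\frac5n\sum_k p_k$. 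On $U\cap\mathcal{D}\cap\mathcal{E}$, where $\mathcal{E}$ is the joint small-singular-value event, pigeonhole forces an index $j$ that is simultaneously good for the singular vectors (so $|w[1]_j|,|w[2]_j|\ge\vartheta n^{-1/2}$) and bad for the distances, which via $\sigma_{min}(G-z_iI_n)=\|(G-z_iI_n)w[i]\|\ge|w[i]_j|\operatorname{dist}(X[i]_j,H[i]_j)$ contradicts $\sigma_{min}(G-z_iI_n)\le\epsilon n^{-1/2}$ for at least one $i$. Hence $\mathbb{P}(U\cap\mathcal{D}\cap\mathcal{E})=0$, so $\mathbb{P}(\mathcal{E})\le\mathbb{P}(U^c)+\mathbb{P}(\mathcal{D}^c)\le\frac5n\sum_k p_k+e^{-cn}$, which is precisely the claimed bound.

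There is essentially no obstacle here: the argument is a transcription, and the only points requiring a line of justification are that no-gaps delocalization and the incompressibility of least singular vectors hold for complex i.i.d. matrices with subgaussian real and imaginary parts --- both already cited in the text --- and that the inequality $\|(G-zI_n)w\|\ge|w_j|\operatorname{dist}(X_j,H_j)$ is valid over $\mathbb{C}$, which is immediate since distance to a complex subspace and $\ell^2$ norms behave identically over $\mathbb{C}$. Thus the bulk of the work for Theorem \ref{complextwoballbounds} is deferred, as in Section \ref{secgwe333}, to the subsequent analysis of the joint small-ball probability $\mathbb{P}(\operatorname{dist}(X[1]_j,H[1]_j)\le\epsilon,\operatorname{dist}(X[2]_j,H[2]_j)\le\epsilon)$, which is where the gain from the complex structure (the $\epsilon^4$ rather than $\epsilon^2$, coming from a four-dimensional LCD of $[X^*[1]_j,X^*[2]_j]$ as in \eqref{4dimensionalb}) will appear; but that is the content of the later parts of this section and not of Proposition \ref{prop4.2final} itself.
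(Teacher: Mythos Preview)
Your proposal is correct and matches the paper's approach exactly: the paper does not give a separate proof but simply states that Proposition \ref{prop4.2final} is ``the following exact parallel to Proposition \ref{invertibilitydistance},'' relying on the fact that no-gaps delocalization \cite{rudelson2016no} and hence Corollary \ref{corollary3.22} carry over to the complex setting. Your write-up is precisely the transcription the paper is pointing to.
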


We let $X^*[1]_j$ denote a unit normal vector to $H[1]_j$ in $\mathbb{C}^n$, and $X^*[2]_j$ a unit normal vector to $H[2]_j$. In this section we let $\mathcal{K}$ denote the event $\{\|G\|\leq 8\sqrt{n}\}$, then we still have $\mathbb{P}(\mathcal{K})\geq 1-\exp(-\Omega(n))$ by Fact \ref{operatornormfact} and triangle inequality. We have the following analogue of Lemma \ref{realcomplexcorrelate}, showing that $X^*[1]_j$ and $X^*[2]_j$ are not colinear when $z_1\neq z_2$:

\begin{lemma}\label{compoverlap}
Take an orthogonal projection $X^*[2]_j=\alpha X^*[1]_j+\beta r$ where $X^*[1]_j$ and $r$ are orthogonal in $\mathbb{C}^n$ and $r\in\mathbb{C}^n$ has unit norm. Then we can find $C>0$ and $C'>0$ depending only on $\xi$ such that there is an event $\Omega_{\ref{compoverlap}}$ holding with probability $1-\exp(-C'n)$ so that $$|\beta|\geq C|z_1-z_2|/\sqrt{n}\quad\text{ on }\mathcal{K}\cap\Omega_{\ref{compoverlap}}.$$
\end{lemma}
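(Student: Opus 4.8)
The plan is to mirror the proof of Lemma \ref{realcomplexcorrelate} from the real case, working in the real $2n$-dimensional picture via the map $v\mapsto\hat v$. First I would record the defining relations: since $X^*[1]_j$ is normal to $H[1]_j$, it is annihilated by $(G^*-\bar z_1 I_n)$ with its $j$-th row deleted; similarly $X^*[2]_j$ is annihilated by $(G^*-\bar z_2 I_n)$ with its $j$-th row deleted. Writing $X^*[2]_j=\alpha X^*[1]_j+\beta r$ with $\langle X^*[1]_j,r\rangle=0$ in $\mathbb{C}^n$, $|\alpha|^2+|\beta|^2=1$, I apply the $j$-th-row-deleted operator $(G^*-\bar z_2 I_n)_{/[j]}$ to both sides; the $\alpha X^*[1]_j$ term becomes $\alpha(\bar z_1-\bar z_2)(I_n)_{/[j]}X^*[1]_j$ plus zero (using the $z_1$-relation), exactly as in \eqref{comparetwosides}. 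Hence
\begin{equation}
\alpha(\bar z_1-\bar z_2)(I_n)_{/[j]}X^*[1]_j=-(G^*-\bar z_2 I_n)_{/[j]}(\beta r).
\end{equation}

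Next I would estimate the two sides. On the incompressibility event $\Omega_{\ref{normalincomp}}$ (which, as noted in the text, holds in the complex case by \cite{rudelson2008littlewood}), $X^*[1]_j$ is $(\delta,\rho)$-incompressible, so deleting one coordinate leaves $\|(I_n)_{/[j]}X^*[1]_j\|_2\geq C_0$ for a constant $C_0\in(0,1)$ depending only on $\xi$. On $\mathcal{K}$ we have $\|G\|\leq 8\sqrt n$ and $|z_2|\leq 8\sqrt n$, so $\|(G^*-\bar z_2 I_n)_{/[j]}(\beta r)\|_2\leq 16\sqrt n\,|\beta|$. Combining, $|\alpha|\,|z_1-z_2|\,C_0\leq 16\sqrt n\,|\beta|$. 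Now I dichotomize on $|\beta|$: if $|\beta|\geq\tfrac12$ we are done since $|z_1-z_2|\leq 16\sqrt n$ gives $|\beta|\geq C|z_1-z_2|/\sqrt n$ trivially for small enough $C$; if $|\beta|<\tfrac12$ then $|\alpha|\geq\sqrt3/2$, and the previous inequality yields $|\beta|\geq \tfrac{\sqrt3}{32}C_0|z_1-z_2|/\sqrt n$. Either way $|\beta|\geq C|z_1-z_2|/\sqrt n$ with $C$ depending only on $\xi$, and we set $\Omega_{\ref{compoverlap}}=\Omega_{\ref{normalincomp}}$, which has probability $1-e^{-C'n}$.

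The only genuinely new point relative to the real case is bookkeeping: $\alpha,\beta$ are now complex, and the normal vectors are complex, so I must be slightly careful that the orthogonal-decomposition identity and the operator-norm bounds are stated over $\mathbb{C}^n$; but all inequalities used are about Euclidean ($\ell^2$) norms, which are insensitive to the real/complex distinction, so nothing substantive changes. I do not anticipate a real obstacle here — the lemma is a direct transcription of Lemma \ref{realcomplexcorrelate}. If one wanted the statement phrased in terms of the $4\times 2n$ matrix $[X^*[1]_j,X^*[2]_j]$ analogous to $V_jV_j^T$, one would additionally note that $\det$ of the relevant Gram matrix is comparable to $|\beta|^2$ (in the complex setting one gets a factor like $|\beta|^4$ after passing to the real $4$-dimensional picture, consistent with the $\epsilon^4$ in Theorem \ref{complextwoballbounds}), but that computation belongs to the next lemma rather than this one.
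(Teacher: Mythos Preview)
Your proposal is correct and follows essentially the same route as the paper: the paper's proof (given only as a one-line sketch) says to use incompressibility of $X^*[1]_j$ to ensure its mass off the $j$-th coordinate is bounded away from $0$, and then to use the defining equation of the normal vector exactly as in Lemma \ref{realcomplexcorrelate}. Your derivation of the identity $\alpha(\bar z_1-\bar z_2)(I_n)_{/[j]}X^*[1]_j=-(G^*-\bar z_2 I_n)_{/[j]}(\beta r)$ and the subsequent dichotomy on $|\beta|$ is precisely this argument carried out in $\mathbb{C}^n$.
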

The proof uses the fact that when $X^*[1]_j$ and $X^*[2]_j$ are incompressible then their mass on $[n]\setminus\{j\}$ are bounded away from 0. Then use the definition of a normal vector.

The main theorem of this section on arithmetic structures is given as follows:
\begin{theorem}\label{whatistheorem4.3?}
    We can find $\alpha>0$, $\gamma\in(0,1)$ and $c_\Sigma>0$, $c>0$ depending only on $\xi$ such that with probability at least $1-\exp(-cn)$, we have $D_{\alpha,\gamma}([X^*[1]_j,X^*[2]_j])\geq\exp(c_\Sigma n)$.
\end{theorem}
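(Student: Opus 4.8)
\textbf{Proof plan for Theorem \ref{whatistheorem4.3?}.}

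The plan is to mimic the structure of the real case (Theorem \ref{quasirandomtheorem3}), but now working with the $4\times 2n$ matrix $[v,w]$ defined in \eqref{4dimensionalb} and the associated $\mathbb{R}^4$-valued version of the essential LCD. First I would reduce to a linearized problem: since $X^*[1]_j$ is normal to $H[1]_j$ and $X^*[2]_j$ is normal to $H[2]_j$, the same commutator computation as in \eqref{productrules}--\eqref{righthanddsidebound} shows that any complex combination $\theta_1 X^*[1]_j+\theta_2 X^*[2]_j$ is annihilated, up to an error supported on the $j$-th coordinate and of size controlled by $|z_1-z_2|$ and $|(X^*[1]_j)_j|$, by the quadratic operator $(G^*-\bar z_1 I)_{/[j]}(G^*-\bar z_2 I)_{/[j]}$. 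As in Lemma \ref{lemmadegenerate} one first rules out the degenerate case where the combination concentrates on coordinate $j$ (this holds off an $e^{-cn}$ event by the standard tensorization argument after conditioning on the $j$-th column), and then introduces a complex analogue $P_G(\hat z)$ of the matrix $P_A(\hat\lambda)$ in \eqref{palambda}, of size $(2n+\text{const})\times(2n+\text{const})$ over $\mathbb{C}$ (equivalently a real matrix of twice the size), parametrized by a complex $\hat z$ with $|\hat z|\le 8\sqrt n$, whose kernel captures all the vectors we must control. The factor $e^{-C'n}$ gains from Lemma \ref{compoverlap} play the role of \eqref{whatisbeta?}.

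Next I would eliminate compressible vectors in $\ker P_G(\hat z)$, exactly as in Proposition \ref{proposition1199} and Lemma \ref{incompmore}: condition on the $j$-th column, decompose $(G-z_i I)_{/[j]}$ into blocks, and run the tensorization/anti-concentration argument of \cite{rudelson2008littlewood} separately for the two components, so that both $v$ and $w$ (and $w$ minus any copy of $v$) are $(\delta,\rho)$-incompressible off an exponentially small event. Incompressibility then fixes subsets $D_1,D_2$ of size $d=\Omega(n)$ on which $\hat v$ and $\hat r$ (the real unfolding of the orthogonal complement direction) are flat, and Fact \ref{fact2.35} gives the lower bound $D_{\alpha,\gamma}\ge(2\kappa_1)^{-1}\sqrt n$ automatically. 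The remaining job is to rule out intermediate LCD, i.e. $D_{\alpha,\gamma}([X^*[1]_j,X^*[2]_j])\in[(2\kappa_1)^{-1}\sqrt n,\exp(c_\Sigma n)]$. Here I would set up the stratified-net machinery of Section \ref{sec3.5}--\ref{sec3.7} with the inner-product parameter $\epsilon_1$ now measuring the overlap of $\hat v$ and $\hat r$ in $\mathbb{R}^{2n}$: define truncated matrices $M_{\underline G}$, threshold functions $\tau_{L,\epsilon_1}$, box pairs, and the nets $\mathcal N_{\epsilon,\epsilon_1}(c,\hat z)$ as in Definition \ref{netsandlevels}. The key analytic inputs are: (i) the random-generation lemma (analogue of Lemma \ref{randmgenerationoflcd}) producing base vectors whose $\mathbb{R}^2$-LCD (hence the relevant $\mathbb{R}^4$-structure after pairing) is large with super-exponential probability; (ii) an inverse Littlewood--Offord estimate for almost-orthogonal pairs (analogue of Proposition \ref{proponewlittlewood}) applied to each row of $\underline G$, which now produces an $\epsilon^2$ gain per coordinate pair, hence $\epsilon^4$ after squaring across the linearization — this is precisely the source of the $\epsilon^4$ versus $\epsilon^2$ improvement in Theorem \ref{complextwoballbounds}; (iii) the first-moment/cardinality bound (analogue of Lemma \ref{lemma3.7whatsoever} and Proposition \ref{cardinalityprop}). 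Combining net cardinality with the small-ball bound on the net, as in Lemmas \ref{lemma3.122}--\ref{lemma3.123} and Proposition \ref{propostiaggewgwg}, then transferring back from the threshold-function net to the LCD net $\overline G_{\epsilon,\epsilon_1}(c)$, yields $D_{\alpha,\gamma}([X^*[1]_j,X^*[2]_j])\ge\exp(c_\Sigma n)$ off an event of probability $e^{-cn}$, and a union bound over $j\in[n]$ and over $D_1,D_2$ finishes the proof.

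The main obstacle I expect is bookkeeping the geometry of the $4\times 2n$ matrix $[v,w]$ and the correct notion of LCD for it. Over $\mathbb C$, the ``pair'' $(v,w)$ really carries four real directions $\Re v,\Im v,\Re w,\Im w$ (with the two built-in orthogonality relations $\Re v\perp\Im v$ after rotation, etc.), so the stratification by overlap must account not only for $|\langle \hat v,\hat w\rangle|\to 1$ but also for the interaction between the real and imaginary parts of a single normal vector; constructing box pairs and verifying that a random sample has large $\mathbb{R}^4$-LCD — while keeping the inner products between all relevant pairs small — is the technically heaviest point and the place where the exponent $4$ must be tracked carefully. The degenerate/low-dimensional cases (when $v$ and $w$ become nearly proportional over $\mathbb C$, which happens when $|z_1-z_2|$ is tiny) require the same ``very small $N_1$, large $N$'' asymmetric net as in Section \ref{sect3.4}; once that is in place the rest is a faithful, if lengthy, transcription of Sections \ref{secgwe333}, and I would simply indicate the changes rather than repeat the arguments.
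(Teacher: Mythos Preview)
Your proposal is correct and follows essentially the same approach as the paper's own proof: reduce via Proposition \ref{ratify} to the linearized matrix $P_G(\hat\lambda)$, eliminate compressible vectors (Proposition \ref{incompcomplex}, Lemma \ref{lemma4.70}), then run the stratified-net machinery of Definition \ref{netsincomplexcase} with the four-dimensional Littlewood--Offord estimate (Proposition \ref{proponewlittlewoodcomplex}) in place of the two-dimensional one, finishing with Proposition \ref{propostiaggewgwgcomplex} and a union bound over $D_1,D_2$. The one clarification is that the $\epsilon^4$ gain arises directly from the four-dimensional small-ball bound applied to each group of four rows of the real-unfolded matrix $\hat M_{\underline G}$ (then tensorized over $n-1$ groups) rather than by ``squaring a two-dimensional estimate across the linearization,'' and the real/imaginary interaction you correctly flag as the main obstacle is handled in the paper via the overlap function $\operatorname{Ang}_{\widetilde D}$ together with the symmetry constraints on $D_1,D_2$ in Lemma \ref{lemma4.70}(2)--(3).
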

 $[X^*[1]_j,X^*[2]_j]$ (defined in \eqref{4dimensionalb}) is a $4\times 2n$ matrix with essential LCD defined in \eqref{whatissmalllcd?}.

As we proceed to proving Theorem \ref{whatistheorem4.3?}, we should first rule out compressible vectors in the linear span of $[X^*[1]_j]$ and $[X^*[2]_j]$. Let $a$ denote the $4\times n$ matrix $[X^*[1]_j,X^*[2]_j]$. Then for any $\theta\in\mathbb{R}^4$, $\theta\cdot a$ equals the image of $\zeta_1X^*[1]_j+\zeta_2X^*[2]_j$ under the mapping $\hat{\cdot}:\mathbb{C}^n\to\mathbb{R}^{2n}$ for $\zeta_1=\theta_1+i\theta_2$ and $\zeta_2=\theta_3+i\theta_4$. Then we only need to check that, with probability at least $1-\exp(-\Omega(n))$, any vector $\zeta_1X^*[1]_j+\zeta_2X^*[2]_j$ of unit norm, for any $\xi_1,\xi_2\in\mathbb{C}$, is $(\delta,\rho)$- incompressible. This can be proven in exactly the same way as in Proposition \ref{proposition1199} since $\zeta_1X^*[1]_j+\zeta_2X^*[2]_j$ solves a quadratic equation in $G$ similar to the one in \eqref{righthanddsidebound}. Then the results and proofs in Lemma \ref{lemmadegenerate} and Proposition \ref{proposition1199} carry over to this complex case with straightforward modifications. This will lead to the following result, where we take $j=1$ without loss of generality (the other case is nowhere different):

\begin{Proposition}\label{incompcomplex}
 Let $v\in\mathbb{C}^n$ be any unit vector such that $v=\xi_1X^*[1]_1+\xi_2X^*[2]_1$ for some $\xi_1,\xi_2\in\mathbb{C}$. Then with probability $1-\exp(-\Omega(n))$, we can find $\rho,\delta>0$ depending only on $\xi$ such that any such vector $v$ (and thus for any such $\xi_1,\xi_2$) should satisfy that $\hat{v}$ is $(\delta,\rho)$-incompressible and $w:=(G-z_2I_n)_{/[1]}v/\|(G-z_2I_n)_{/[1]}v\|$ also satisfies that $\hat{w}$ is $(\delta,\rho)$-incompressible whenever $(G-z_2I_n)_{/[1]}v\neq 0$. 
\end{Proposition}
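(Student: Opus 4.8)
\textbf{Proof proposal for Proposition \ref{incompcomplex}.}

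The plan is to follow the real-case argument in Proposition \ref{proposition1199} (and its preparatory Lemma \ref{lemmadegenerate}) almost verbatim, transferring everything to $\mathbb{R}^{2n}$ via the map $\hat{\cdot}:\mathbb{C}^n\to\mathbb{R}^{2n}$. First I would record the key algebraic input: since $X^*[1]_1$ is orthogonal (in $\mathbb{C}^n$) to $H[1]_1$ and $X^*[2]_1$ to $H[2]_1$, we have $(G^*-\bar z_1 I_n)_{/[1]}X^*[1]_1=0$ and $(G^*-\bar z_2 I_n)_{/[1]}X^*[2]_1=0$ (adjoints, as the $H[i]_j$ are column spans), so any $v=\xi_1X^*[1]_1+\xi_2X^*[2]_1$ satisfies, up to the commutator error coming from the removed first row,
\[
(G^*-\bar z_1 I_n)_{/[1]}(G^*-\bar z_2 I_n)_{/[1]}v=\xi_2(\bar z_1-\bar z_2)(G^*-\bar z_1 I_n)_{[1]}(X^*[2]_1)_{[1]},
\]
exactly as in \eqref{righthanddsidebound}. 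This is a quadratic equation in $G$ with a one-column error term; the coefficient $\xi_2(\bar z_1-\bar z_2)$ is controlled by $|z_1-z_2||\xi_2|\le C\sqrt n$ on $\mathcal K$ using $|\beta|\ge C|z_1-z_2|/\sqrt n$ from Lemma \ref{compoverlap} together with the unit-norm constraint $|\xi_1|^2+|\xi_2|^2+2\Re(\xi_1\bar\xi_2\langle X^*[1]_1,X^*[2]_1\rangle)=1$, which forces $|\xi_2|^2\beta^2\le 1$.

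Next I would prove the complex analogue of Lemma \ref{lemmadegenerate}: the mass of $v$ (and of $v$ minus a multiple of $(X^*[1]_1)_{[1]}e_1$) away from coordinate $1$ is bounded below by a constant $M>0$ with probability $1-e^{-cn}$. The proof is identical: the excluded degenerate vectors are essentially multiples of $e_1$ plus a small error, so the quadratic equation above would force a small-ball event for $(G^*-\bar z_1 I_n)_{/[1]}(G^*-\bar z_2 I_n)_{/[1]}(M'e_1)$ at scale $c\sqrt n$; condition on the first column of $G^*$ (which has norm $\gtrsim\sqrt n$ by tensorization), then on $H_1,H_2$ blocks, and use the untouched randomness of $G^*$ away from the first row/column together with the tensorization Lemma \ref{Tensorization} to make this exponentially unlikely, finishing with union bounds over $O(n^{O(1)})$-size nets for $\xi_2(\bar z_1-\bar z_2)$, $(X^*[2]_1)_{[1]}$ and $M'$. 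The only change is that each complex coordinate carries two real degrees of freedom, so the relevant anticoncentration is in $\mathbb{R}^2$ rather than $\mathbb{R}$; this only improves the bounds and changes no constant qualitatively.

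With the degeneracy ruled out, the incompressibility proof proceeds as in Proposition \ref{proposition1199}. For $w=(G-z_2I_n)_{/[1]}v/\|(G-z_2I_n)_{/[1]}v\|$: since the first coordinate of $w$ vanishes and $\hat w$ restricted to the last $2n-2$ real coordinates solves a linear equation with i.i.d.\ coefficients from $G_{[2,n]\times[2,n]}$ (after conditioning on the first column), a standard tensorization argument shows $\hat w$ cannot be $(\delta,\rho)$-compressible. For $\hat v$: suppose $v_{[2,n]}$ is supported on $\delta n$ complex coordinates; block-decompose $(G-z_iI_n)_{/[1]}$ as in the real case, project the quadratic equation onto the first $n-\delta n$ coordinates, condition successively on the first column, on a high-probability event $\|F_2 v_{[2,n]}+\text{(shift)}\|\gtrsim\sqrt n M$ (tensorization for the rectangular i.i.d.\ block $F_2$, using $\|v\|_{[n]\setminus\{1\}}\ge M$), on $H_1,H_2$, and finally exploit the fresh randomness in $E_1$ (minus its first column, hence independent of everything conditioned) to get $\mathcal L(\cdots,c n)\le e^{-cn}$. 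A net over $\delta$-sparse supports and the usual compressible-vs-sparse reduction then give the result for $\delta>0$ small. I expect the main obstacle to be purely bookkeeping: keeping the commutator error term, the adjoint (rather than transpose) matrices, and the real-vs-complex dimension count consistent throughout the block decomposition --- there is no new conceptual difficulty, which is why the statement asserts "straightforward modifications." One genuine point to check carefully is that the lower bound $|\beta|\ge C|z_1-z_2|/\sqrt n$ of Lemma \ref{compoverlap} is what feeds the range of $\xi_2(\bar z_1-\bar z_2)$ and hence the net sizes; since that bound holds on $\mathcal K\cap\Omega_{\ref{compoverlap}}$ with exponentially good probability, we simply intersect all events with $\Omega_{\ref{compoverlap}}$ at the outset.
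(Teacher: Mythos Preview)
Your proposal is correct and follows exactly the approach the paper intends: the paper does not give a separate proof here but explicitly says the results and proofs of Lemma~\ref{lemmadegenerate} and Proposition~\ref{proposition1199} carry over with straightforward modifications, and you have faithfully executed that transfer (quadratic identity, ruling out the degenerate coordinate-$1$ case via nets and tensorization, then the block decomposition for $\hat v$ and the linear-equation argument for $\hat w$). One small bookkeeping slip: in your displayed quadratic identity the error term should carry $\xi_1(\bar z_1-\bar z_2)(X^*[1]_1)_{[1]}$ rather than $\xi_2$ and $(X^*[2]_1)_{[1]}$ (compare \eqref{righthanddsidebound}); the bound $|\xi_1(\bar z_1-\bar z_2)|\lesssim\sqrt n$ holds by the symmetric version of the overlap argument you gave, so nothing changes.
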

By Lemma \ref{incpmpspreads}, incompressible vectors are spread. We claim the following: 

\begin{lemma}\label{lemma4.70}
We can always find two subsets $D_1,D_2\subset[2n]$ and constants ${\kappa_0}_{\ref{lemma4.70}},{\kappa_1}_{\ref{lemma4.70}}>0,Q_{\ref{lemma4.70}}>0$, $R_{\ref{lemma4.70}}>0$ depending only on $\xi$ such that  \begin{enumerate}
\item The length of intervals satisfy $|D_1|=|D_2|=d\geq R_{\ref{lemma4.70}} n$ and vectors $v,w$ satisfy
\begin{equation}\label{1strelationdiff}
{\kappa_0}_{\ref{lemma4.70}}n^{-1/2}\leq |\hat{v}_i|,|\hat{w}_j|\leq {\kappa_1}_{\ref{lemma4.70}} n^{-1/2},\quad \forall i\in D_1,j\in D_2.
\end{equation}
\item Each $D_1$ or $D_2$ is either a subset of $[2,n]$ or a subset of $[n+2,2n]$. That is, we discard either $D_1\cap[1,n]$ or $D_2\cap[n+1,2n]$ and discard either $D_2\cap[1,n]$ or $D_2\cap[n+1,2n]$. We also remove the 1st and $n+1$-st coordinate from $D_1$ and $D_2$.
\item Denoting by $\widetilde{D}_1=2n-D_1:=\{2n-z:z\in D_1\}$ and $\widetilde{D}_2=2n-D_2$, then we further assume that 
\begin{equation}\label{2ndrelationdiff}
|\hat{v}_i|,|\hat{w}_j|\leq Q_{\ref{lemma4.70}}n^{-1/2}\quad \forall i,j\in D_1\cup D_2\cup\widetilde{D}_1\cup\widetilde{D}_2.
\end{equation}
\end{enumerate}
\end{lemma}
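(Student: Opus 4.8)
The statement is purely combinatorial: once we know that $\hat v$ and $\hat w$ are incompressible unit vectors, the subsets $D_1,D_2$ are produced by a deterministic selection procedure, so I would argue on the event $\Omega$ of Proposition~\ref{incompcomplex} (of probability $1-\exp(-\Omega(n))$), on which $\hat v,\hat w\in\mathbb{S}^{2n-1}$ are $(\delta,\rho)$-incompressible unit vectors. First, applying the spreading lemma (Lemma~\ref{incpmpspreads} in ambient dimension $2n$) to $\hat v$ and to $\hat w$ yields index sets $S_v,S_w\subset[2n]$, each of size at least $\rho^2\delta n$, on which $\tfrac{\rho}{4}n^{-1/2}\le|\hat v_i|\le\delta^{-1/2}n^{-1/2}$ for $i\in S_v$ and likewise for $\hat w$ on $S_w$; this fixes ${\kappa_0}_{\ref{lemma4.70}}=\rho/4$ and ${\kappa_1}_{\ref{lemma4.70}}=\delta^{-1/2}$. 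Second, since $\|\hat v\|_2=\|\hat w\|_2=1$, the ``large'' sets $\mathrm{Big}_v:=\{i:|\hat v_i|>Qn^{-1/2}\}$ and $\mathrm{Big}_w$ each have at most $n/Q^2$ elements; I would choose $Q=Q_{\ref{lemma4.70}}$, depending only on $\rho,\delta$ (hence on $\xi$), large enough that the ``bad'' set $\mathcal{B}:=\mathrm{Big}_v\cup\mathrm{Big}_w\cup(2n-\mathrm{Big}_v)\cup(2n-\mathrm{Big}_w)$, enlarged by the bounded exceptional set $\{1,n-1,n+1,2n-1\}$, satisfies $|\mathcal{B}|<\rho^2\delta n/2$.

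Removing $\mathcal{B}$ from $S_v$ and from $S_w$ leaves sets of size at least $\rho^2\delta n/2$. Since $[2n]=[1,n]\cup[n+1,2n]$, the pigeonhole principle places at least half of $S_v\setminus\mathcal{B}$ inside one of these two blocks; calling this part $D_1$, we obtain $D_1$ contained entirely in $[2,n]$ or in $[n+2,2n]$ with $|D_1|\ge\rho^2\delta n/4$ (up to floors), and similarly a set $D_2$ extracted from $\hat w$. Trimming both to a common cardinality $d:=\min(|D_1|,|D_2|)\ge R_{\ref{lemma4.70}}n$ with $R_{\ref{lemma4.70}}:=\rho^2\delta/5$, conditions (1), i.e.\ \eqref{1strelationdiff}, and (2) hold by construction. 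For (3): both $D_1,D_2$ and their reflections $\widetilde D_1=2n-D_1,\widetilde D_2=2n-D_2$ avoid $\mathrm{Big}_v\cup\mathrm{Big}_w$, since $D_i\subseteq S_\cdot\setminus\mathcal{B}$ and $\mathcal{B}$ contains $\mathrm{Big}_v,\mathrm{Big}_w$ together with their reflections; hence $|\hat v_k|,|\hat w_k|\le Qn^{-1/2}$ for every $k\in D_1\cup D_2\cup\widetilde D_1\cup\widetilde D_2$, which is exactly \eqref{2ndrelationdiff}.

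The heart of the matter is the order in which the constants are fixed and the counting in the second step: the incompressibility parameters $\delta,\rho$ are delivered first by Proposition~\ref{incompcomplex}, and only afterwards may $Q$ be taken large relative to $\delta,\rho$, so that deleting the large-coordinate sets of \emph{both} $\hat v$ and $\hat w$ \emph{and} their reflections from the flat reservoirs $S_v,S_w$ still leaves a linear-sized pool from which to extract $D_1,D_2$. This two-sided, reflection-symmetric control is the genuinely new ingredient compared with Section~\ref{secgwe333}; it is forced by the fact that the downstream zeroed-out matrix in the complex setting is built from the real linearization $[\,\cdot\,]$, where retaining a column indexed by $D_i$ simultaneously activates the conjugate block indexed by $2n-D_i$, so the uniform $\ell^\infty$-bound on the retained coordinates must also hold on $\widetilde D_i$. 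Everything else — splitting into real and imaginary coordinate blocks, discarding coordinates $1$ and $n+1$, and the final trimming to equal cardinalities — is routine.
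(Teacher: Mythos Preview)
Your argument is correct and follows essentially the same route as the paper's (very terse) proof: invoke the spreading lemma for incompressible vectors to obtain the flat reservoirs, pigeonhole into one of the two coordinate blocks $[2,n]$ or $[n+2,2n]$, and use the unit-norm constraint to bound the number of large coordinates that must be discarded (the paper phrases this last step as ``similar ideas as in Reduction~\ref{assumption3.23}''). The only cosmetic difference is the order of operations --- you remove the large-coordinate set $\mathcal{B}$ (including its reflection) \emph{before} pigeonholing, whereas the paper removes bad indices \emph{after} having fixed $D_1,D_2$; both orderings are valid and lead to the same constants up to harmless factors.
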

\begin{proof}To check that all these claims can follow all at once, we use Lemma \ref{incpmpspreads} to get point (1). Then we replace $D_1$ by the one from $D_1\cap[2,n]$ or $D_1\cap[n+2,2n]$ which has larger cardinality to get point (2). Finally we use similar ideas as in Reduction \ref{assumption3.23} to remove certain subsets of $D_1,D_2$ and get point (3).\end{proof}

We generalize the reduction in Lemma \ref{fact3.11} to the complex setting, as follows: 

\begin{Proposition}\label{ratify} To prove Theorem \ref{whatistheorem4.3?}, it suffices to prove that there exists three constants $c_\Sigma>0$, $c>0$ and $C_{\ref{ratify}}>0$ depending only on $\xi$, such that for any fixed intervals $D_1,D_2\subset[2n]$ satisfying the assumptions in Lemma \ref{lemma4.70}, we must have
$$\begin{aligned}\mathbb{P}^\mathcal{K}(&\text{There exists } \lambda,\hat{\lambda}\in\mathbb{C}:|\lambda|\leq C_{
\ref{ratify}}\sqrt{n},|\hat{\lambda}|\leq 16\sqrt{n}\\&\text{and exists unit vectors }v,w\in\mathbb{C}^n,D_{\alpha,\gamma}(v)\leq \exp(c_\Sigma n)\\&\text{such that }(\lambda,w,v) \in\ker P_G(\hat{\lambda})\text{ and $v,w$ satisfy \eqref{1strelationdiff},\eqref{2ndrelationdiff}}\leq 2^{-10n},\end{aligned}$$
\end{Proposition}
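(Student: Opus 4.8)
The plan is to mirror the real-variable argument of Lemmas~\ref{lemma3.9} and \ref{fact3.11}, but now carrying the two complex scalars $\zeta_1,\zeta_2$ as \emph{four} real parameters. First I would record the approximate quadratic annihilation: if $v=\zeta_1X^*[1]_1+\zeta_2X^*[2]_1$ is a unit vector in $\mathbb{C}^n$, then $\bigl((G^T-z_1I_n)_{/[1]}(G^T-z_2I_n)_{/[1]}\bigr)v = \zeta_1(z_2-z_1)(G^T-z_1I_n)_{[1]}(X^*[1]_1)_{[1]}$, exactly as in \eqref{righthanddsidebound}; the commutator error lives only in the removed first row. Then I would split into two cases according to whether $u:=(G-z_2I_n)_{/[1]}v$ vanishes. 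If $u=0$, $v$ lies in the kernel of an $(n-1)\times n$ complex matrix with i.i.d.\ entries, which by the complex analogue of \cite{rudelson2008littlewood}/\cite{luh2018complex} has LCD (of $\hat v$) at least $\exp(c'n)$ off an exponentially small event, so this case contributes nothing once $c_\Sigma<c'$. If $u\neq 0$, set $w=u/\|u\|$ and $\hat\lambda=\|u\|$; the operator-norm bound on the event $\mathcal{K}$ forces $\|u\|\le 16\sqrt n$, so $|\hat\lambda|\le 16\sqrt n$, and we get a triple $(\lambda,w,v)$ in the kernel of the linearized complex matrix $P_G(\hat\lambda)$ (the complex version of \eqref{palambda}), where $\lambda$ is (a real-encoding of) $\zeta_1(z_2-z_1)(X^*[1]_1)_{[1]}$. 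The magnitude of $\zeta_1$ is controlled by Lemma~\ref{compoverlap}: since $|\beta|\ge C|z_1-z_2|/\sqrt n$ on $\mathcal{K}\cap\Omega_{\ref{compoverlap}}$ and $\|v\|=1$ forces $|\zeta_1||\beta|\le 1$, we obtain $|\zeta_1||z_1-z_2|\le C'\sqrt n$, hence $|\lambda|\le C_{\ref{ratify}}\sqrt n$ using $|(X^*[1]_1)_{[1]}|\le 1$.

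Next I would note that the degeneracy hypotheses \eqref{1strelationdiff}, \eqref{2ndrelationdiff} on $v,w$ come for free: incompressibility of $\hat v$ and $\hat w$ is Proposition~\ref{incompcomplex}, and then Lemma~\ref{lemma4.70} extracts subsets $D_1,D_2$ of the required size and structure (each contained in $[2,n]$ or $[n+2,2n]$, with the uniform upper bound on the complementary coordinates) on which the coordinates of $\hat v,\hat w$ lie in $[\kappa_0 n^{-1/2},\kappa_1 n^{-1/2}]$. So the reduction is: the event in Theorem~\ref{whatistheorem4.3?} that some $[X^*[1]_j,X^*[2]_j]$ has LCD $\le\exp(c_\Sigma n)$ is contained, up to $\exp(-\Omega(n))$, in the union over $j\in[n]$ and over the (at most $2^{O(n)}$) choices of $D_1,D_2$ of the event displayed in Proposition~\ref{ratify}; the $2^{-10n}$ bound there, multiplied by $n\cdot 2^{O(n)}$, still beats $e^{-cn}$ for suitable $c$. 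Since the value of the probability is independent of $j$ by exchangeability of columns, one may fix $j=1$, which is why $P_G(\hat\lambda)$ is written with the first row/column distinguished.

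For the packaging I would follow Lemma~\ref{fact3.11} almost verbatim: introduce the ``nondegenerate'' event $\Omega_{\ref{incompcomplex}}\cap\Omega_{\ref{compoverlap}}\cap\Omega_{\ref{lemmadegenerate}}^{\mathbb C}$ on which $\|\theta_1 v+\theta_2 w\|_{[n]\setminus\{1\}}>M>0$ for every unit combination (the complex analogue of Lemma~\ref{lemmadegenerate}, whose proof is the same tensorization argument applied to the real $2n$-dimensional encoding), discretize $\lambda$ and $\hat\lambda$ over $2^{-n}$-nets (their ranges being $O(\sqrt n)$), and absorb the discretization error via $\|G\|_{\mathrm{op}}\le 8\sqrt n$ on $\mathcal{K}$. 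The one genuinely new point over the real case is that the parameter $\zeta_1\in\mathbb{C}$ (equivalently two real parameters) must be netted, not one; but $|\zeta_1|\le C'\sqrt n/|z_1-z_2|$ and the net contributes only $\exp(O(n))$, harmless against $2^{-10n}$ — unless $|z_1-z_2|$ is exponentially small, in which case one uses that the final target estimate in Theorem~\ref{complextwoballbounds} already carries the compensating factor $n/|z_1-z_2|^2$, so the net cardinality is balanced out exactly as in the real Theorem~\ref{theorem1.3}. The main obstacle, as in Section~\ref{secgwe333}, is not this reduction itself but what it reduces \emph{to}: proving that the kernel of the linearized matrix $P_G(\hat\lambda)$ has no vector of small LCD, uniformly over the continuum of shifts $\hat\lambda$ and over all overlap regimes of $X^*[1]_1,X^*[2]_1$ — this is handled by the random-net machinery of Sections~\ref{sect3.4}–\ref{sec3.7}, adapted to the complex ($m=4$) LCD of \eqref{whatissmalllcd?}, and is where essentially all the remaining work lies.
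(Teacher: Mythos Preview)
Your reduction is correct and mirrors the paper's proof, which simply invokes the real-case argument of Lemma~\ref{fact3.11} and defers the ``$\mathbb{P}_3$'' case (where $(G-z_2I_n)_{/[1]}v=0$) to Luh's complex LCD bound \cite{luh2018complex}, exactly as you do.

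One small point: your final paragraph worries about netting $\zeta_1$ over a range of size $C'\sqrt n/|z_1-z_2|$ and invokes a compensation from the prefactor $n/|z_1-z_2|^2$ in Theorem~\ref{complextwoballbounds}. This is unnecessary here. The parameter that actually appears in the statement of Proposition~\ref{ratify} is $\lambda=\zeta_1(z_2-z_1)(X^*[1]_1)_{[1]}$, and the product $\zeta_1(z_2-z_1)$ has modulus at most $C'\sqrt n$ (the $|z_1-z_2|$ in the numerator cancels the one in the denominator of your bound on $|\zeta_1|$). So $|\lambda|\le C_{\ref{ratify}}\sqrt n$ holds uniformly in $z_1,z_2$, and an $\epsilon$-net for $\lambda\in\mathbb{C}$ has the same polynomial cardinality regardless of how close $z_1$ and $z_2$ are. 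The dependence on $|z_1-z_2|$ in the final singular-value bound enters later, through Lemma~\ref{compoverlap} when applying the Littlewood--Offord inequality, not at this reduction step.
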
 where $P_G(\hat{\lambda})$ is the following $(2n+1)\times(2n+1)$ complex matrix 
\begin{equation}
    P_G(\hat{\lambda})=\begin{bmatrix}
        0&0&0\\0&-\hat{\lambda}I_n&G_{/[1]}-z_2 {I_n}_{/[1]}\\G_{[1]}-z_1e_1&G_{/[1]}-z_1 {I_n}_{/[1]}&0
    \end{bmatrix}.\end{equation}
    where $G_{/[1]}$ is an $n\times n$ matrix with i.i.d. entries of distribution $\xi+i\xi'$ but with its first row set identically zero, and ${I_n}_{/[1]}$ is the identity matrix with its first row set to be zero. The notation $G_{[1]}$ denotes the first column of $G_{/[1]}$, and $e_1$ is the unit coordinate vector in $\mathbb{C}^n$.

\begin{proof}(Sketch)
    This follows from the same reasoning as in the proof of Lemma \ref{fact3.11}. Here we choose to fix the subsets $D_1,D_2$ in advance and the probability here corresponds to the $\mathbb{P}_4$ of  Lemma \ref{fact3.11}. The probability $\mathbb{P}_3$ there can be chosen to be exponentially small when $c_\Sigma>0$ is small enough, as it only concerns the LCD to the kernel of $G_{/[1]}-z_1{I_n}_{/[1]}$ and thus can be addressed by the previous work \cite{luh2018complex}. 
\end{proof}

We need a slight generalization of Definition \ref{definitionboxnew} to the complex case:

\begin{Definition}\label{twodimensionalfourbox} For two integers $N\geq N_1$, a constant $\kappa\geq2$ and two intervals $D_1,D_2\subset[2n]$ we define a notion of $2n$-dimensional $(N,N_1,\kappa,D_1,D_2)$ box pair, generalizing Definition \ref{definitionboxnew} in the following sense: we now define 
\begin{equation}
\mathcal{B}_1:=B_1^1\times\cdots\times B_{2n}^1\subset\mathbb{Z}^{2n},\quad \mathcal{B}_2:=B_1^2\times\cdots\times B_{2n}^2\subset\mathbb{Z}^{2n}
\end{equation}
and we make the following variants of conditions (1) to (3) in Definition \ref{definitionboxnew}. For condition (1) we now assume $|B_i^1|\geq N,|B_i^2|\geq N_1\forall i\in[2n]$. Condition (2) is again $B_i^1=[-\kappa N,-N]\cup[N,\kappa N]$, $i\in D_1$, and  $B_i^2=[-\kappa N_1,-N_1]\cup[N_1,\kappa N_1]$, $i\in D_2$. Condition (3) is now $|\mathcal{B}_1|\leq(\kappa N)^{2n}$ and $|\mathcal{B}_2|\leq(\kappa N_1)^{2n}$. Thanks to \eqref{2ndrelationdiff} we also assume that $$(4) \quad \max B_i^1\leq Q_{\ref{lemma4.70}}\kappa N/{\kappa_0}_{\ref{lemma4.70}} \text{ and } \max B_i^2\leq Q_{\ref{lemma4.70}}\kappa N_1/{\kappa_0}_{\ref{lemma4.70}}$$ for all $i\in D_1\cup D_2\cup \widetilde{D}_1\cup \widetilde{D}_2$, where $\max B_i^1$ is the largest element in $B_i^i\subset \mathbb{Z}$.
\end{Definition}
We now show that randomly generated vector pairs from such boxes have large LCD and small inner product with high probability. 

Let $Z$ denote the following $4\times 2n$ matrix with $X\in\mathcal{B}_1$ and $Y\in\mathcal{B}_2$:
$$Z:=
\begin{bmatrix}X_{[1,n]}&X_{[n+1,2n]}\\-X_{[n+1,2n]}& X_{[1,n]}\\Y_{[1,n]}&Y_{[n+1,2n]}\\-Y_{[n+1,2n]}& Y_{[1,n]}\end{bmatrix}
$$ where for any subset $D\subset[2n]$, we use $X_D$ to denote the subvector of $X$ restricted to $D$ and we use $Z_D$ to denote the submatrix of $Z$ with only columns indexed by $D$ remaining but the other columns removed.

\begin{Proposition}\label{prop4.61}
    Consider a $(N,N_1,\kappa,D_1,D_2)$ box pair in Definition \ref{twodimensionalfourbox}, with $X$ uniformly chosen from $\mathcal{B}_1$ and $Y$ independently and uniformly chosen from $\mathcal{B}_2$. Denote by \begin{equation}\label{dtildes}\widetilde{D}=D_1\cup D_2\cup \widetilde{D}_1\cup\widetilde{D}_2,\end{equation} and we keep using this notation of $\widetilde{D}$ in the rest of this section. Then we have,
    \begin{enumerate}
        \item For any $\tau\in(0,1)$ we can find $S_{\ref{prop4.61}}>0$ depending on $\tau$, ${\kappa_0}_{\ref{lemma4.70}},{\kappa_1}_{\ref{lemma4.70}}$ such that for any $\alpha>0$,
   \begin{equation}\begin{aligned}
        \mathbb{P}_{X,Y}&(\text{There exists $\phi\in\mathbb{R}^4: \|\phi_{[1,2]}\|\geq \tau(2N)^{-1}$ or $
        \|\phi_{[3,4]}\|\geq \tau(2N_1)^{-1}$},  \text{ $\|\phi\|\leq 16d^{-1/2}$} \\&\text{such that }\|\phi^T\cdot Z_{\widetilde{D}}\|_\mathbb{T}\leq \sqrt{\alpha d})\leq (S_{\ref{prop4.61}}\alpha)^{d/4}.
\end{aligned}\end{equation} 
        \item Let $X'$ denote the vector $[-X_{[n+1,2n]},X_{[1,n]}]$ and similarly define the vector $Y'$. Then we can find $T_{\ref{prop4.61}}>0$ depending only on ${\xi},{\kappa_0}_{\ref{lemma4.70}},{\kappa_1}_{\ref{lemma4.70}}$ such that we have 
    \begin{equation}\label{innerproductsbook}
\mathbb{P}_{X,Y}(\max(|\cos(X_{\widetilde{D}},Y_{\widetilde{D}})|,|\cos(X_{\widetilde{D}},Y'_{\widetilde{D}})|)\leq T_{\ref{prop4.61}}d^{-1/2})\geq\frac{3}{4}.
        \end{equation}
    \end{enumerate}
\end{Proposition}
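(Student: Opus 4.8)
\textbf{Proof proposal for Proposition \ref{prop4.61}.}

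The plan is to reduce both claims to the real-variable machinery already developed in Section \ref{secgwe333}, by observing that the columns of $Z_{\widetilde D}$ are (up to sign permutations) copies of the coordinates of $X$ and $Y$, and that the structure of $\widetilde D = D_1\cup D_2\cup\widetilde D_1\cup\widetilde D_2$ is designed precisely so that the interesting coordinates appear in pairs $(i, 2n-i)$. For part (1), I would first decompose $\phi=(\phi_{[1,2]},\phi_{[3,4]})\in\mathbb R^2\times\mathbb R^2$ and expand $\phi^T\cdot Z_{\widetilde D}$ coordinatewise: for an index $k\in[n]$ lying in $\widetilde D$, the corresponding entry of $\phi^T Z$ is $\phi_1 X_k - \phi_2 X_{n+k} + \phi_3 Y_k - \phi_4 Y_{n+k}$, and for the paired index $n+k$ it is $\phi_1 X_{n+k} + \phi_2 X_k + \phi_3 Y_{n+k} + \phi_4 Y_k$. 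Thus $\|\phi^T Z_{\widetilde D}\|_{\mathbb T}^2$ is controlled from below by the $\|\cdot\|_{\mathbb T}^2$ of suitable $\mathbb R$-linear combinations of the two real-valued random vectors $(X_k)_{k}$ and $(Y_k)_{k}$, restricted to $D_1$ (where $|X_k|\in[N,\kappa N]$) and $D_2$ (where $|Y_k|\in[N_1,\kappa N_1]$). This places us in the exact situation of Lemma \ref{randmgenerationoflcd}, applied with the two-dimensional vectors built from $X$-coordinates and $Y$-coordinates (after the truncation bound $\max B_i^1\le Q_{\ref{lemma4.70}}\kappa N/{\kappa_0}_{\ref{lemma4.70}}$ in condition (4) plays the role of $T_{\ref{randmgenerationoflcd}}$). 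One subtlety: $\phi$ has four real components rather than two, so I would split into the case where the pair $(\phi_1,\phi_3)$ dominates and the case where $(\phi_2,\phi_4)$ dominates, and apply Lemma \ref{randmgenerationoflcd} to each, absorbing the factor $2$ into the constant $S_{\ref{prop4.61}}$. The norm condition $\|\phi\|\le 16 d^{-1/2}$ and the lower bounds $\|\phi_{[1,2]}\|\ge\tau(2N)^{-1}$ or $\|\phi_{[3,4]}\|\ge\tau(2N_1)^{-1}$ translate directly into the hypotheses of that lemma with $\tau$ replaced by $\tau/2$ and $S = 2T_{\ref{randmgenerationoflcd}}/(\tau\kappa)$ relabeled.

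For part (2), I would run a second-moment computation entirely parallel to Fact \ref{newoldfact}. Both $X_{\widetilde D}$ and $Y_{\widetilde D}$ (and $Y'_{\widetilde D}$, which is just a signed coordinate permutation of $Y_{\widetilde D}$, hence has the same norm) have coordinates that are independent, mean zero on the non-$D_i$ parts and of magnitude $\asymp N$ (resp. $\asymp N_1$) on the $D_i$ parts. Computing $\mathbb E\langle X_{\widetilde D}, Y_{\widetilde D}\rangle^2$ by independence gives a bound $\lesssim \kappa^2 N^2 N_1^2 d$ (the sum running over $\widetilde D$, using that the cross-terms vanish), while $\|X_{\widetilde D}\|_2^2\|Y_{\widetilde D}\|_2^2 \gtrsim N^2 N_1^2 d^2$ deterministically from the $|D_1|=|D_2|=d$ coordinates where the entries are bounded below. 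Chebyshev then yields $|\cos(X_{\widetilde D},Y_{\widetilde D})|\le T_{\ref{prop4.61}}d^{-1/2}$ with probability at least $7/8$, and the same for $|\cos(X_{\widetilde D},Y'_{\widetilde D})|$; a union bound gives the claimed $3/4$. The only care needed is to check that $Y'_{\widetilde D}$ really does satisfy the same lower-bound-on-coordinates property, which follows because $\widetilde D$ is closed (by construction in Lemma \ref{lemma4.70}(3)) under the involution $i\mapsto 2n-i$, so that the coordinates of $Y'$ indexed by $\widetilde D$ are exactly the coordinates of $Y$ indexed by $\widetilde D$ up to sign and reindexing.

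I expect the main obstacle to be purely bookkeeping rather than conceptual: keeping track of which real coordinate of $X$ or $Y$ appears (and with what sign) in each of the four rows of $Z$ and in each column indexed by $\widetilde D$, and verifying that the closure of $\widetilde D$ under $i\mapsto 2n-i$ genuinely holds after all the removals performed in Lemma \ref{lemma4.70}. Once the coordinate dictionary is set up correctly, part (1) is a direct invocation of Lemma \ref{randmgenerationoflcd} (with the four-to-two reduction of $\phi$) and part (2) is the standard second-moment argument; neither requires any new probabilistic input. A minor point to handle is that the lemma hypotheses $\alpha d\ge 4$ and $32 T_{\ref{randmgenerationoflcd}} N d^{-1/2}/\alpha\le 3^d$ must be inherited, but these hold in the regime of $N, N_1, d$ we use (with $d\ge R_{\ref{lemma4.70}}n$), so I would simply note this and cite the applicability conditions of Lemma \ref{randmgenerationoflcd}.
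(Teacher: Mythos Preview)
Your plan is sound and lands in the right place: both parts genuinely do reduce to the real-variable Lemma \ref{randmgenerationoflcd} and Fact \ref{newoldfact}, and your identification of condition (4) in Definition \ref{twodimensionalfourbox} with the role of $T_{\ref{randmgenerationoflcd}}$ is exactly right. Part (2) matches the paper's argument verbatim.

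For part (1) the paper takes a slightly different cut than you do. You split on whether the pair $(\phi_1,\phi_3)$ or $(\phi_2,\phi_4)$ is large and then invoke Lemma \ref{randmgenerationoflcd} as a two-vector black box, using randomness in both $X$ and $Y$ simultaneously. The paper instead splits on whether $\|\phi_{[1,2]}\|$ or $\|\phi_{[3,4]}\|$ is large --- i.e.\ on whether the $X$-coefficients or the $Y$-coefficients dominate --- and then conditions on $Y$ \emph{entirely} (in the first case), reducing to a one-vector situation with randomness only in $X_{D_1}$. The point of assumption (2) in Lemma \ref{lemma4.70} is then that columns labeled by $D_1$ involve only $X_{D_1}$ as the surviving random input, so one can follow the \emph{proof} of Lemma \ref{randmgenerationoflcd} with a single coordinate variable $X_k$ in the role of the lemma's random input. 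Your route is perfectly workable --- columns in $D_1\cup D_2$ carry the nice entries with coefficients $\phi_1,\phi_3$, and columns in $\widetilde D_1\cup\widetilde D_2$ carry them with coefficients $\phi_2,\phi_4$, which is precisely your two cases --- but it requires you to track the box structure of both $X$ and $Y$ on a common index set and to handle the cross-terms (e.g.\ $X_{n+k}$ appearing at a $D_1$-column) by conditioning. The paper's ``condition on all of $Y$'' shortcut avoids that extra bookkeeping at the cost of re-running the lemma's proof rather than citing it.
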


\begin{proof}(Sketch) For part (1), we first condition on the value of $X_{[2n]\setminus D_1}$ and $Y_{[2n]\setminus D_2}$. Assume without loss of generality that $\|\phi_{[1,2]}\|\geq\tau(2N)^{-1}$, then we further condition on $Y_{D_2}$ and use randomness only from $X_{D_1}$. The assumption (2) after \ref{1strelationdiff} ensures that on columns with label in $D_1$, we only have randomness from $X_{D_1}$ entries. Then we follow the proof of Lemma \ref{randmgenerationoflcd} with no major change. For part (2), this can be proven similarly as Fact \ref{newoldfact}.
\end{proof}

We now define a truncated version of $P_G(\hat{\lambda})$:

\begin{Definition} Fix some $\nu\in(0, 2^{-8})$ and define the random variable $\xi_\nu$ as in \eqref{truncationrule}.
Denote by $\underline{G}$ a random matrix with entry distribution $\xi_\nu+i\xi_\nu'$ where $\xi_\nu'$ is an independent copy of $\xi_\nu$. We then define the following matrix which is a truncation of $P_G(\hat{\lambda})$:
     \begin{equation}\label{hilbertmascomplexs}M_{\underline{G}}=\begin{bmatrix}
0&0&0\\0&0&\underline{G}_{/[1]}\\\underline{G}_{[1]}&\underline{G}_{/[1]}&0
    \end{bmatrix}.\end{equation} 
\end{Definition}

\begin{Definition}    
For two complex vectors $v,w\in\mathbb{C}^n$ we define a notion of overlap over a subset $D\subset[2n]$ as follows:
\begin{equation}
    \operatorname{Ang}_D(v,w):=\max(|\cos(\hat{v}_D,\hat{w}_D)|,|\cos(\hat{v}_D,\hat{(iw)}_D)|,|\cos(\hat{(iv)}_D,\hat{w}_D)|,|\cos(\hat{(iv)}_D,\hat{(iw)}_D)|).
\end{equation} 
Also, for two real vectors $X,Y\in\mathbb{R}^{2n}$ we can define without ambiguity
\begin{equation}\operatorname{Ang}_D(X,Y):=\max(|\cos(X_D,Y_D)|,|\cos(X_D',Y_D)|,|\cos(X_D,Y_D')|,|\cos(X_D',Y'_D)|),
\end{equation} where $X':=[-X_{[n+1,2n]},X_{[1,n]}]$ and similarly define $Y'$. The two definitions are consistent because, if $\hat{v}=X$, then $\hat{iv}=X'.$
\end{Definition}

\begin{fact}
    
When $D=\widetilde{D}$ (see \eqref{dtildes}), then the maximum of the four terms in $\operatorname{Ang}_D(v,w)$ is exactly the maximum of the two terms in \eqref{innerproductsbook}. Also,  $\operatorname{Ang}_{\widetilde{D}}(v,w)=0$ if and only if $v_{\widetilde{D}\cap[1,n]}$ and $w_{\widetilde{D}\cap[1,n]}$ are orthogonal with respect to the complex inner product in $\mathbb{C}^{\widetilde{D}\cap[1,n]}$.\end{fact}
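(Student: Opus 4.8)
The plan is to exploit that the involution $X\mapsto X'$ on $\mathbb R^{2n}$ is exactly the real-matrix realization of multiplication by $i$ on $\mathbb C^n$, and that on the index set $\widetilde D$ it restricts to a genuine orthogonal complex structure. Concretely, let $T$ be the linear map on $\mathbb R^{2n}$ with $(TX)_j=-X_{n+j}$ and $(TX)_{n+j}=X_j$ for $j\in[n]$; then $T^2=-I$, $T^\top=-T$ (so $T$ is orthogonal), $TX=X'$, and $T\hat v=\widehat{iv}$ for every $v\in\mathbb C^n$. The one structural input needed is that $\widetilde D=D_1\cup D_2\cup\widetilde D_1\cup\widetilde D_2$ is $T$-invariant: this is precisely what parts (2)--(3) of Lemma \ref{lemma4.70} were arranged to guarantee, since each $D_k$ lies inside one of the two blocks $[2,n]$ or $[n+2,2n]$ and $\widetilde D_k$ is its mirror image in the other block, so $\widetilde D$ is a union of conjugate coordinate pairs $\{j,n+j\}$. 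Hence $J:=T|_{\widetilde D}$ is an orthogonal map of $\mathbb R^{\widetilde D}$ with $J^2=-I$ and $J(\hat v_{\widetilde D})=\widehat{iv}_{\widetilde D}$, $J(\hat w_{\widetilde D})=\widehat{iw}_{\widetilde D}$.

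With this in hand, the first assertion follows from the three identities $\|Jx\|=\|x\|$, $\langle Jx,Jy\rangle=\langle x,y\rangle$, $\langle Jx,y\rangle=-\langle x,Jy\rangle$ (all immediate from $J^\top J=I$ and $J^\top=-J$), applied with $x=\hat v_{\widetilde D}$, $y=\hat w_{\widetilde D}$: they give $|\cos(\widehat{iv}_{\widetilde D},\widehat{iw}_{\widetilde D})|=|\cos(\hat v_{\widetilde D},\hat w_{\widetilde D})|$ and $|\cos(\widehat{iv}_{\widetilde D},\hat w_{\widetilde D})|=|\cos(\hat v_{\widetilde D},\widehat{iw}_{\widetilde D})|$, so the four terms defining $\operatorname{Ang}_{\widetilde D}(v,w)$ collapse to $\max\!\big(|\cos(\hat v_{\widetilde D},\hat w_{\widetilde D})|,\,|\cos(\hat v_{\widetilde D},\widehat{iw}_{\widetilde D})|\big)$; under the identifications $X=\hat v$, $Y=\hat w$ (hence $Y'_{\widetilde D}=\widehat{iw}_{\widetilde D}$) these are exactly the two terms inside the probability in \eqref{innerproductsbook}.

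For the second assertion I would note that, $\widetilde D$ being a union of conjugate pairs, restriction to $\widetilde D\subset[2n]$ coincides with restriction to the complex coordinate set $\widetilde D\cap[1,n]$ followed by $\hat{\cdot}$, and that for complex vectors one has the standard decomposition $\langle v',w'\rangle_{\mathbb C}=\langle\hat{v'},\hat{w'}\rangle\pm i\,\langle\hat{v'},\widehat{iw'}\rangle$ (the sign depending only on the Hermitian convention). Thus $\langle v_{\widetilde D\cap[1,n]},w_{\widetilde D\cap[1,n]}\rangle_{\mathbb C}=0$ if and only if $\langle\hat v_{\widetilde D},\hat w_{\widetilde D}\rangle=0$ and $\langle\hat v_{\widetilde D},\widehat{iw}_{\widetilde D}\rangle=0$, which by the first part is equivalent to the vanishing of all four cosines, i.e. to $\operatorname{Ang}_{\widetilde D}(v,w)=0$. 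The whole argument is elementary linear algebra, and the only point requiring any care — reading off from Lemma \ref{lemma4.70}(2)--(3) that $\widetilde D$ really is closed under the pairing $j\leftrightarrow n+j$ — is not a genuine obstacle but merely bookkeeping.
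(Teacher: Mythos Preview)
Your argument is correct and follows essentially the same route as the paper's one-line proof: both hinge on the single observation that $\widetilde D$ is invariant under the involution of $[2n]$ that realises multiplication by $i$, so that the four cosines collapse to two and these two are exactly the real and imaginary parts of the restricted complex inner product. You have simply made explicit the orthogonal complex structure $J=T|_{\widetilde D}$ and the identities $J^\top J=I$, $J^\top=-J$ that the paper leaves implicit in the phrase ``symmetry of $\widetilde D$''.
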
 

\begin{proof}
    Both claims follow from the symmetry of $\widetilde{D}$ under the map $x\mapsto 2n+1-x,x\in[n]$, which yields that $|\cos(\hat{v}_D,(\hat{iw})_D|=|\cos((\hat{iv})_D,\hat{w}_D)|$. The second claim follows from the fact that $\operatorname{Ang}_{\widetilde{D}}(v,w)=0$ implies the real and complex part of the $\mathbb{C}$- inner product of $v_{\widetilde{D}\cap[1,n]}$ and $w_{\widetilde{D}\cap[1,n]}$ are both 0.
\end{proof}

Now we define a family of nets which are the complex analogue of Definition \ref{netsandlevels}.

\begin{Definition}\label{netsincomplexcase} Fix two intervals $D_1,D_2\subset[2n]$ and $\kappa_0={\kappa_0}_{\ref{lemma4.70}},\kappa_1={\kappa_1}_{\ref{lemma4.70}}$. For two constants $0<\epsilon\leq\epsilon_1$ define the following subsets
$$\begin{aligned}\mathcal{I}(D_1,D_2):=&\{(v,r)\in \mathbb{S}_\mathbb{C}^{n-1}\times\mathbb{S}_\mathbb{C}^{n-1}:\\&(\kappa_0+\kappa_0/2)n^{-1/2}\leq |\hat{v}_i|,|\hat{r}_j|\leq(\kappa_1-\kappa_0/2)n^{-1/2},\forall i\in D_1,\forall j\in D_2\}.\end{aligned}$$
$$\begin{aligned}\mathcal{I}'(D_1,D_2):=&\{(v,r)\in \mathbb{C}^{n+n}:\kappa_0n^{-1/2}\leq |\hat{v}_i|,|\hat{r}_j|\leq \kappa_1n^{-1/2},\forall i\in D_1,\forall j\in D_2\}.\end{aligned}$$

\item For this fixed $D_1,D_2,\epsilon\leq\epsilon_1$, and any $c\in\mathbb{C}$ define the following sets
$$\begin{aligned}
\mathcal{P}_{\epsilon_1}(c)&:=\{(v,w)\in\mathbb{C}^n\times\mathbb{C}^n:\|v\|_2=1,\frac{1}{2}\leq\|w\|_2\leq\frac{3}{2},\exists r\in\mathbb{S}_\mathbb{C}^{n-1}\text{ such that }\\&w=cv+\epsilon_1r,\quad \operatorname{Ang}_{\widetilde{D}}(v,r)=0,(v,r)\in\mathcal{I}(D_1,D_2)
\},\end{aligned}$$  
$$\begin{aligned}\mathcal{P}^0_\epsilon(c)&:=\{(v,w)\in\mathbb{C}^{n}\times\mathbb{C}^{n}:\|v\|_2=1,\frac{1}{2}\leq\|w\|_2\leq\frac{3}{2},\exists r\in\mathbb{S}_\mathbb{C}^{n-1},\exists t\in\mathbb{R},|t|\leq\epsilon\\&\text{such that }w=cv+tr,\operatorname{Ang}_{\widetilde{D}}(v,r)=0,(v,r)\in\mathcal{I}(D_1,D_2)\}.\end{aligned}$$

Then we define discrete approximations of $\mathcal{P}_{\epsilon_1}(c)$ and for $\mathcal{P}_\epsilon^0$ at scale $\epsilon:$ when $\epsilon_1\geq \epsilon$,
$$\begin{aligned}
\Lambda_{\epsilon,\epsilon_1}(c):=&\{
(v,w)\in B_n^\mathbb{C}(0,2)\times B_n^\mathbb{C}(0,2): v\in 4\epsilon n^{-1/2}(1+i)\cdot\mathbb{Z}^n,\\&\text{ there exists } r\in B_n^\mathbb{C}(0,2)\cap \frac{4\epsilon}{ \epsilon_1}n^{-1/2}(1+i)\cdot\mathbb{Z}^n\text { such that }
w=cv+\epsilon_1r,\\&(v,r)\in \mathcal{I}'(D_1,D_2), \text{ and that }\operatorname{Ang}_{\widetilde{D}}(v,r)\leq 0.01\},\end{aligned}
$$

$$
\Lambda_\epsilon^0:=\{(v,v)\in B_n^\mathbb{C}(0,2)\times B_n^\mathbb{C}(0,2):v\in 4\epsilon n^{-1/2}(1+i)\cdot\mathbb{Z}^n,(v,v)\in\mathcal{I}'(D_1,D_2)\}.
$$
Threshold: for given $L>0$, for $(v,w)\in\mathcal{P}_{\epsilon_1}$(c) or $(v,w)\in{\Lambda_{\epsilon,\epsilon_1}}(c)$ define
$$
\tau_{L,\epsilon_1}(v,w)(\lambda):=\sup\{t\in[0,1]:\mathbb{P}(\|M_{\underline{G}}(\lambda,w,v)\|_2\leq t\sqrt{n})\geq(4Lt^2/\epsilon_1)^{2n-2}\},
$$
and for $(v,w)\in\mathcal{P}_{\epsilon}^0(c)$ or $(v,w)\in \Lambda_\epsilon^0$, define
$$
\tau_{L,0}(v,w)(\lambda):=\sup\{t\in[0,1]:\mathbb{P}(\|M_{\underline{G}}(\lambda,w,v)\|_2\leq t\sqrt{n})\geq(4Lt)^{2n-2}\}.
$$
We define subsets of $\Lambda_{\epsilon,\epsilon_1}(c)$ stratified by threshold function: for $\epsilon\leq\epsilon_1$, 
$$
\Sigma_{\epsilon,\epsilon_1}(c,\lambda):=\{(v,w)\in\mathcal{P}_{\epsilon_1}(c):\tau_{L,\epsilon_1}(v,w)(\lambda)\in [\epsilon,2\epsilon]\},
$$
$$
\Sigma_{\epsilon,0}(c,\lambda):=\{(v,w)\in\mathcal{P}_\epsilon^0(c)
:\tau_{L,0}(v,w)(\lambda)\in [\epsilon,2\epsilon]\}.$$
Finally we define nets: for fixed $\epsilon_1\geq\epsilon$ and fixed $\lambda\in\mathbb{C}$, we define 
\begin{equation}\begin{aligned}
\mathcal{N}_{\epsilon,\epsilon_1}(c,\lambda)&:=\{(v,w)\in\Lambda_{\epsilon,\epsilon_1}(c):(\frac{L\epsilon^2}{\epsilon_1})^{
2n-2}\leq \mathbb{P}((\|M_{\underline{A}}(\lambda,w,v)\|\leq 2H_{\ref{complexnet}}\epsilon\sqrt{n}),\\&\mathbb{P}^\mathcal{K}(\|P_A(\hat{\lambda})(\lambda,w,v)\|\leq \epsilon\sqrt{n})\leq (2^{10}\frac{LH_{\ref{complexnet}}^2\epsilon^2}{\epsilon_1})^{
2n-2}\}. \end{aligned}
\end{equation}

\begin{equation}\begin{aligned}
    \mathcal{N}_{\epsilon,0}(c,\lambda):=\{(v,w)\in\Lambda_\epsilon^0:&(L\epsilon)^{2n-2}\leq \mathbb{P}(\|M_{\underline{A}}(\lambda,w,v)\|\leq 8\epsilon\sqrt{n}),\\&
    \mathbb{P}^\mathcal{K}(\|P_A(\hat{\lambda})(\lambda,w,v)\|\leq \epsilon\sqrt{n})\leq (2^{10}L\epsilon)^{2n-2}\}.
    \end{aligned}
\end{equation}
\end{Definition}

\begin{remark}
    We still assume that the algorithm in Definition \ref{importantdefns} is valid here, so that the algorithm determines whether $(v,w)$ lies in some $\Sigma_{\epsilon,\epsilon_1}(c,\lambda)$ or in $\Sigma_{\epsilon,0}(c,\lambda)$. 
\end{remark}

We have a version of Fourier replacement lemma similar to Lemma \ref{lemmafs}; 
\begin{lemma}\label{complexlemmafs}
    For any $L>0$, $\epsilon_1>0$ and $t>0$ and any $(\lambda,v,w)\in\mathbb{C}^{2n+1}$ satisfying that 
    $$
\mathbb{P}(\|M_{\underline{G}}(\lambda,w,v)\|_2\leq t\sqrt{n})\leq (4L^2t^2/\epsilon_1)^{2n-2},\quad \text{(resp. $(4Lt)^{2n-2}$)},
    $$
    we must have, for any $\hat{\lambda}\in\mathbb{C}$,
    $$
\mathcal{L}\left(P_G(\hat{\lambda})(\lambda,w,v),t\sqrt{n}\right)\leq (50L^2t^2/\epsilon_1)^{2n-2},\quad\text{(resp. $(50Lt)^{2n-2}$)}.
    $$ where for a complex random variable $\xi\in\mathbb{C}^n$, we define $\mathcal{L}(\xi,t):=\sup_{p\in\mathbb{C}^n}\mathbb{P}(\|\xi-p\|\leq t)$. This is an immediate generalization of Lévy concentration for real random variables.
\end{lemma}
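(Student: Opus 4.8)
The plan is to run the proof of the real-variable Fourier replacement Lemma \ref{lemmafs} (itself a variant of Lemma \ref{lemmareplacement}, which follows \cite{campos2025singularity}, Lemma 8.1), carried out in the real vector space $\mathbb{R}^{2n}$ underlying $\mathbb{C}^n$. The starting point is the structural observation that $M_{\underline{G}}(\lambda,w,v)$ and $P_G(\hat\lambda)(\lambda,w,v)$ have \emph{the same random linear part}. Reading off \eqref{hilbertmascomplexs} and the definition of $P_G(\hat\lambda)$, the first coordinate of the whole vector and the first coordinate of each of the two $n$-blocks are deterministic, since the first row of $G_{/[1]}$ and of ${I_n}_{/[1]}$ vanish; so both outputs live, as far as randomness is concerned, in $\mathbb{C}^{2n-2}\cong\mathbb{R}^{4n-4}$. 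Moreover, for $2\le i\le n$ the $i$-th entry of the second block is the linear form in the $i$-th row of $G$ (resp.\ $\underline{G}$) with coefficient vector $v$, while the $i$-th entry of the third block is the linear form with coefficient vector $w+\lambda e_1$, for \emph{both} matrices; the only difference is that $P_G(\hat\lambda)$ additionally carries deterministic summands coming from $-\hat\lambda I_n$, $-z_1{I_n}_{/[1]}$, $-z_2{I_n}_{/[1]}$. After Fourier transform the deterministic summands are pure phases, and each entry $\xi+i\xi'$ of $G$ (resp.\ $\xi_\nu+i\xi_\nu'$ of $\underline{G}$) contributes a characteristic function factoring over its real and imaginary parts; so the entrywise inequality $|\phi_\xi(t)|\le\phi_{\tilde{\xi}Z_\nu}(t)\le\phi_{\xi_\nu}(t)$ — valid since $\nu<2^{-8}<\tfrac12$, the first step being the elementary estimate $\nu a^2-a+1-\nu\ge 0$ on $a\in[0,1]$ and the second being recorded in Section \ref{section222} — together with $\phi_{\xi_\nu}>0$, yields after multiplying over the $n-1$ independent rows the pointwise domination
\[
\bigl|\phi_{P_G(\hat\lambda)(\lambda,w,v)}(\Theta)\bigr|\ \le\ \phi_{M_{\underline{G}}(\lambda,w,v)}(\Theta)\qquad\text{for all }\Theta\in\mathbb{R}^{4n-4},
\]
with $\phi_{M_{\underline{G}}(\lambda,w,v)}\ge 0$.

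Granting this, I would finish exactly as in the real case. An Esseen-type inequality bounds $\mathcal{L}(P_G(\hat\lambda)(\lambda,w,v),t\sqrt n)$ by $C^{4n-4}(t\sqrt n)^{4n-4}$ times the integral of $|\phi_{P_G(\hat\lambda)(\lambda,w,v)}|$ over a ball of radius $\asymp (t\sqrt n)^{-1}$; replacing the integrand by $\phi_{M_{\underline{G}}(\lambda,w,v)}$ using the display above, and then applying the nonnegative-kernel bound
\[
\int_{\|\Theta\|\le K/(t\sqrt n)}\phi_{M_{\underline{G}}(\lambda,w,v)}(\Theta)\,d\Theta\ \le\ C'(t\sqrt n)^{-(4n-4)}\,\mathbb{P}\bigl(\|M_{\underline{G}}(\lambda,w,v)\|_2\le t\sqrt n\bigr)
\]
— here using that all deterministic coordinates of $M_{\underline{G}}(\lambda,w,v)$ vanish, so this is genuinely the small-ball probability of its random part, and a kernel $h=\varphi\ast\tilde\varphi$ supported in the unit ball whose Fourier transform dominates the indicator of the ball of radius $K$ — gives $\mathcal{L}(P_G(\hat\lambda)(\lambda,w,v),t\sqrt n)\le(CC')^{4n-4}\,\mathbb{P}(\|M_{\underline{G}}(\lambda,w,v)\|_2\le t\sqrt n)$. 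Feeding in the hypothesis $\mathbb{P}(\|M_{\underline{G}}(\lambda,w,v)\|_2\le t\sqrt n)\le(4L^2t^2/\epsilon_1)^{2n-2}$ and writing $(CC')^{4n-4}=\bigl((CC')^2\bigr)^{2n-2}$ gives the bound $\bigl(4(CC')^2L^2t^2/\epsilon_1\bigr)^{2n-2}$, where the passage from $4$ to $50$ absorbs the universal constant from the Esseen inequality and the Fourier-replacement kernel, exactly as in the proof of Lemma \ref{lemmafs}. The $(4Lt)^{2n-2}$-case is word-for-word identical with $\epsilon_1$ deleted.

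There is no genuine obstacle here; the statement is a mechanical transcription of Lemma \ref{lemmafs} from $\mathbb{R}^n$ to $\mathbb{R}^{2n}$, with the exponent $n-1$ becoming $2n-2$. The two points that do need care are, first, matching the Esseen ball and the Fourier-replacement kernel so that the \emph{conclusion} is delivered at the same scale $t$ at which the hypothesis is imposed (this is why the kernel must be compactly supported in the unit ball, a standard construction), and second, the entrywise characteristic-function identity for the complex entries $\xi_\nu+i\xi_\nu'$: a real linear form of a row of $G$ has characteristic function $\prod_k\phi_\xi(\cdot)\phi_\xi(\cdot)$ with real arguments depending only on $v$ and $w+\lambda e_1$, so that replacing $\xi$ by $\xi_\nu$ is legitimate term by term. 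Accordingly, in the write-up I would state the lemma as the complex analogue of Lemma \ref{lemmafs}, reproduce only the entrywise computation peculiar to $\xi_\nu+i\xi_\nu'$, and refer to the proof of Lemma \ref{lemmareplacement} for the Esseen and replacement bookkeeping.
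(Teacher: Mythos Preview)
Your proposal is correct and follows the same route the paper indicates: pass from $\mathbb{C}^n$ to the underlying real space, observe that $P_G(\hat\lambda)(\lambda,w,v)$ and $M_{\underline{G}}(\lambda,w,v)$ share the same random linear part (with coefficient vectors $v$ and $w+\lambda e_1$ in each row), and then run the Fourier-replacement argument of Lemma~\ref{lemmafs}/\ref{lemmareplacement} using the entrywise domination $|\phi_\xi|\le\phi_{\xi_\nu}$ applied separately to the independent real and imaginary parts of each entry of $G$. Your structural analysis of the deterministic versus random rows, and the resulting exponent $2n-2$, is exactly right.

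One technical remark: your middle sketch uses an Esseen bound together with a compactly supported replacement kernel, whereas the paper's actual mechanism in Lemma~\ref{lemmareplacement} is the Gaussian kernel plus the layer-cake bound of Fact~\ref{newfacts}. The Gaussian route implicitly consumes the hypothesis at \emph{all} scales $s\ge t$ (which is what the threshold condition $t\ge\tau_L$ supplies in every application), and this is what delivers the specific constant $50$; a compactly supported kernel in dimension $4n-4$ would require more care to keep the constant under control. Since you explicitly defer to the proof of Lemma~\ref{lemmareplacement} for the bookkeeping, this is harmless---just be aware that the lemma as literally stated (single-scale hypothesis) is a slight abuse, and your write-up should make the same tacit use of the threshold condition that the paper does.
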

This Lemma can be proven in the same way as Lemma \ref{lemmafs}. The difference here is we need to consider separately the real and imaginary parts of $M_{\underline{G}}$, $P_G(\hat{\lambda})$, so we first transfer the problem to $\mathbb{R}^{2n}$ and then do Fourier replacement there. Details are omitted.

We also have an analogue of Lemma \ref{lemma3.23} on the number of boxes covering $\Lambda_{\epsilon,\epsilon_1}(c)$.
\begin{fact}\label{factcomball} We can find a constant $\kappa_{\ref{factcomball}}>0$ depending only on ${\kappa_0}_{\ref{lemma4.70}},{\kappa_1}_{\ref{lemma4.70}}$ such that, for each $c\in\mathbb{C}$, with $N=\kappa_0/4\epsilon$ and $N_1=\kappa_0\epsilon_1/4\epsilon$,
    we can find a family $\mathcal{F}_{\epsilon,\epsilon_1}(c)$ of $2n$-dimensional $(N,N_1,\kappa_{\ref{factcomball}},D_1,D_2)$ box pairs whose union cover $\Lambda_{\epsilon,\epsilon_1}(c)$. The covering is in the sense that for any $(v,w=cv+\epsilon_1r)\in \Lambda_{\epsilon,\epsilon_1}(c)$, then $\hat{v}\in 4\epsilon n^{-1/2}\mathcal{B}_1$
    and $\hat{r}\in 4\epsilon n^{-1/2}/\epsilon_1\mathcal{B}_2$ for a certain box $\mathcal{B}_1,\mathcal{B}_2$ in the family $\mathcal{F}_{\epsilon,\epsilon_1}(c)$.
    The number of box pairs satisfies $|\mathcal{F}_{\epsilon,\epsilon_1}(c)|\leq \kappa_{\ref{factcomball}}^{4n}$ and each box has cardinality at most $(16\kappa_{\ref{factcomball}}^2\kappa_0^2\epsilon_1/\epsilon^2)^{2n}$.
\end{fact}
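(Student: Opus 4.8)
\emph{Proof proposal.} The plan is to mimic, almost verbatim, the covering arguments of Lemma \ref{specified850} and Lemma \ref{lemma3.23}, transported to $\mathbb{R}^{2n}$ via the real coordinatisation $v\mapsto\hat v$. The two halves of each box pair will be built independently, one for $\hat v$ and one for $\hat r$ (on $\Lambda_{\epsilon,\epsilon_1}(c)$ we have $w=cv+\epsilon_1 r$, so knowing $\hat v$ and $\hat r$ determines $\hat w$, and the angle condition $\operatorname{Ang}_{\widetilde{D}}(v,r)\le 0.01$ is already part of the definition of $\Lambda_{\epsilon,\epsilon_1}(c)$ in Definition \ref{netsincomplexcase}).

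First I would rescale. A point of $\Lambda_{\epsilon,\epsilon_1}(c)$ has $\hat v\in 4\epsilon n^{-1/2}\mathbb{Z}^{2n}\cap B(0,2)$ and $\hat r\in(4\epsilon/\epsilon_1)n^{-1/2}\mathbb{Z}^{2n}\cap B(0,2)$; writing $N=\kappa_0/4\epsilon$, $N_1=\kappa_0\epsilon_1/4\epsilon$ and $\hat v=(\kappa_0 n^{-1/2}/N)X$, $\hat r=(\kappa_0 n^{-1/2}/N_1)Y$ turns these into integer vectors $X,Y\in\mathbb{Z}^{2n}$ with $\|X\|_2\le(2/\kappa_0)\sqrt n\,N$ and $\|Y\|_2\le(2/\kappa_0)\sqrt n\,N_1$. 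The membership $(v,r)\in\mathcal{I}'(D_1,D_2)$ forces $N\le|X_i|\le(\kappa_1/\kappa_0)N$ on $D_1$ and $N_1\le|Y_j|\le(\kappa_1/\kappa_0)N_1$ on $D_2$, while the a priori bounds \eqref{1strelationdiff}, \eqref{2ndrelationdiff} from Lemma \ref{lemma4.70} give $|X_i|\le(Q_{\ref{lemma4.70}}/\kappa_0)N$ and $|Y_i|\le(Q_{\ref{lemma4.70}}/\kappa_0)N_1$ for all $i\in\widetilde{D}$. These are precisely conditions (1), (2) and (4) of a $2n$-dimensional box pair in Definition \ref{twodimensionalfourbox}, once $\kappa_{\ref{factcomball}}\ge\max\{\kappa_1/\kappa_0,\,Q_{\ref{lemma4.70}}/\kappa_0\}$.

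Next I would stratify the remaining coordinates dyadically: partition $\mathbb{Z}$ into the core $[-N,N]$ and the annuli $I_\ell^N$, $\ell\ge 1$, record for each $i\notin D_1$ which piece contains $X_i$ (and likewise for $Y$), and let $\mathcal{F}_{\epsilon,\epsilon_1}(c)$ be the family of box pairs indexed by these ``types''. The bound on $\|X\|_2$ gives $\sum_i 4^{\ell_i}\le 16n/\kappa_0^2$, hence $\sum_i\ell_i=O(n)$, and by the standard entropy count used in Lemma \ref{specified850} the number of types is $(C(\kappa_0))^{2n}$; the same holds for $Y$, so $|\mathcal{F}_{\epsilon,\epsilon_1}(c)|\le(C(\kappa_0))^{4n}\le\kappa_{\ref{factcomball}}^{4n}$. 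The inequality $\sum_i\ell_i=O(n)$ also yields $|\mathcal{B}_1|\le(\kappa N)^{2n}$, $|\mathcal{B}_2|\le(\kappa N_1)^{2n}$ — box condition (3) — provided $\kappa_{\ref{factcomball}}$ exceeds a threshold $2^{O(1/\kappa_0^2)}$, and then $|\mathcal{B}_1\times\mathcal{B}_2|\le(\kappa^2 NN_1)^{2n}=(\kappa^2\kappa_0^2\epsilon_1/16\epsilon^2)^{2n}$, which is within the claimed $(16\kappa_{\ref{factcomball}}^2\kappa_0^2\epsilon_1/\epsilon^2)^{2n}$. The covering \eqref{productboxdef} is immediate from the construction, and since rescaling does not change cosines, the restriction $\operatorname{Ang}_{\widetilde{D}}(v,r)\le 0.01$ passes to $(X,Y)$.

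I do not expect a genuine obstacle here; the one point to watch is that the single constant $\kappa_{\ref{factcomball}}$ must simultaneously dominate $\kappa_1/\kappa_0$ (for condition (2)), $Q_{\ref{lemma4.70}}/\kappa_0$ (for condition (4)), the entropy constant $C(\kappa_0)$ (for counting box pairs), and a $2^{O(1/\kappa_0^2)}$ threshold (so the annulus products fit inside $(\kappa N)^{2n}$, condition (3)). All four requirements depend only on ${\kappa_0}_{\ref{lemma4.70}},{\kappa_1}_{\ref{lemma4.70}}$ (and the $\xi$-dependent $Q_{\ref{lemma4.70}}$ from Lemma \ref{lemma4.70}), so a common admissible choice exists, which is exactly what the Fact asserts.
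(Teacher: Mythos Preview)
Your proposal is correct and is exactly the approach the paper takes: the paper says only ``The proof is analogous to Lemma \ref{lemma3.23} and omitted,'' and Lemma \ref{lemma3.23} in turn reduces to the dyadic stratification of Lemma \ref{specified850}, which is precisely what you spell out (rescale to integer lattices, fix dyadic levels off $D_1,D_2$, count types by the entropy bound $\sum 4^{\ell_i}\lesssim n/\kappa_0^2$). Your remark that $\kappa_{\ref{factcomball}}$ must also absorb $Q_{\ref{lemma4.70}}/\kappa_0$ to secure condition (4) of Definition \ref{twodimensionalfourbox} is a detail the paper glosses over but which is indeed needed.
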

The proof is analogous to Lemma \ref{lemma3.23} and omitted. 

Next, we state an analogue of Lemma \ref{1712}, showing that $\mathcal{N}_{\epsilon,\epsilon_1}(c)$ is a net for $\Sigma_{\epsilon,\epsilon_1}(c)$:

\begin{lemma}\label{complexnet} There exists $H_{\ref{complexnet}}>0$ and $\kappa_{\ref{complexnet}}>0$ depending only on ${\kappa_0}_{\ref{lemma4.70}},{\kappa_1}_{\ref{lemma4.70}}$ such that 
\begin{enumerate}
    \item Any $c\in\mathbb{C}$, $\epsilon_1>0$ such that $\mathcal{P}_{\epsilon_1}(c)$ is nonempty must satisfy that $|c|^2+2\leq (H_{\ref{complexnet}}-1)^2/256B^4$ and $|\epsilon_1|^2+2\leq (H_{\ref{complexnet}}-1)^2/256B^4$. 
    \item Fix $\epsilon\in(0,\kappa_{\ref{complexnet}})$ and $\epsilon_1\geq\epsilon$. Then for any $(v,w)\in\Sigma_{\epsilon,\epsilon_1}(c,\lambda)$ we can find $(v',w')\in\mathcal{N}_{\epsilon,\epsilon_1}(c,\lambda)$ with $\|(v,w)-(v',w')\|_\infty\leq 2H_{\ref{complexnet}}\epsilon n^{-1/2}.$
\end{enumerate}
    
\end{lemma}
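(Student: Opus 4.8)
The plan is to run the proof of Lemma \ref{1712} verbatim, replacing the real rounding lattices by the complex lattices $4\epsilon n^{-1/2}(1+i)\mathbb{Z}^n$ of Definition \ref{netsincomplexcase} and the single inner-product orthogonality by the relation $\operatorname{Ang}_{\widetilde D}(v,r)=0$, and tracking the exponent $2n-2$ everywhere in place of $n-1$.

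For part (1) I would argue exactly as in Lemma \ref{1712}(1): given $(v,w)\in\mathcal{P}_{\epsilon_1}(c)$, write $w=cv+\epsilon_1 r$ with $v,r$ unit vectors, $(v,r)\in\mathcal{I}(D_1,D_2)$ and $\operatorname{Ang}_{\widetilde D}(v,r)=0$. By the Fact following Definition \ref{netsincomplexcase} (that $\operatorname{Ang}_{\widetilde D}(v,r)=0$ means $v$ and $r$ are $\mathbb{C}$-orthogonal on $\widetilde D\cap[1,n]$) we get $\|w_{\widetilde D\cap[1,n]}\|_2^2=|c|^2\|v_{\widetilde D\cap[1,n]}\|_2^2+|\epsilon_1|^2\|r_{\widetilde D\cap[1,n]}\|_2^2$; combining $\|w\|_2\le 3/2$ with the incompressibility lower bounds $\|v_{\widetilde D\cap[1,n]}\|_2,\|r_{\widetilde D\cap[1,n]}\|_2\ge c'\sqrt{d/n}$ coming from Lemma \ref{lemma4.70}(1) produces $|c|^2+2,|\epsilon_1|^2+2\le (H_{\ref{complexnet}}-1)^2/256B^4$ for a suitable $H_{\ref{complexnet}}$ depending only on ${\kappa_0}_{\ref{lemma4.70}},{\kappa_1}_{\ref{lemma4.70}}$; the constant $256B^4$ is exactly what is needed because the real and imaginary parts of the entries of $M_{\underline G}$ are bounded by $16B^2$, which enters in part (2).

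For part (2), fix $(v,w)=(v,cv+\epsilon_1 r)\in\Sigma_{\epsilon,\epsilon_1}(c,\lambda)$ and introduce random vectors $t_1,t_2\in\mathbb{C}^n$ with independent, mean-zero real and imaginary coordinates, $|t_1^j|\le 4\epsilon n^{-1/2}$, $|t_2^j|\le 4\epsilon\epsilon_1^{-1}n^{-1/2}$, rounding $v$ into $4\epsilon n^{-1/2}(1+i)\mathbb{Z}^n$ and $r$ into $4\epsilon\epsilon_1^{-1}n^{-1/2}(1+i)\mathbb{Z}^n$; set $u_1=v-t_1$, $u_2=r-t_2$ and $(v',w')=(u_1,cu_1+\epsilon_1 u_2)$. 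Since the perturbations are $O(\epsilon n^{-1/2})$ in sup-norm and $|c|$ is bounded by part (1), we get $\|(v,w)-(v',w')\|_\infty\le 2H_{\ref{complexnet}}\epsilon n^{-1/2}$ and, once $\epsilon$ is smaller than a constant multiple of ${\kappa_0}_{\ref{lemma4.70}}$, also $(u_1,u_2)\in\mathcal{I}'(D_1,D_2)$. Next, $\operatorname{Ang}_{\widetilde D}(u_1,u_2)\le 0.01$ holds for \emph{every} rounding: $\operatorname{Ang}_{\widetilde D}$ is a maximum of four cosines, each a bilinear form in $(\hat v_{\widetilde D},\hat r_{\widetilde D})$ vanishing at $(v,r)$, and its change is controlled by $\|\hat v_{\widetilde D}\|_1\|t_2\|_\infty+\|t_1\|_\infty\|\hat r_{\widetilde D}\|_1+\|t_1\|_2\|t_2\|_2$; using the uniform coordinate bound $|\hat v_i|,|\hat r_i|\le Q_{\ref{lemma4.70}}n^{-1/2}$ on $\widetilde D$ from Lemma \ref{lemma4.70}(3), $|\widetilde D|\le 4d$ and $\epsilon\le\epsilon_1$, this is $O(d/n)+O(\epsilon)$, hence $\le 0.01$ provided the size parameter $d$ of $D_1,D_2$ is taken small. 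Finally, to produce a rounding satisfying the two probabilistic conditions in the definition of $\mathcal{N}_{\epsilon,\epsilon_1}(c,\lambda)$: writing $M=M_{\underline G}$ and $\mathcal{E}=\{M:\|M(\lambda,w,v)\|_2\le 2\epsilon\sqrt n\}$, the hypothesis $\tau_{L,\epsilon_1}(v,w)(\lambda)\in[\epsilon,2\epsilon]$ gives $\mathbb{P}_M(\mathcal{E})\ge (4L\epsilon^2/\epsilon_1)^{2n-2}$; decomposing $M(\lambda,cu_1+\epsilon_1 u_2,u_1)=M(\lambda,w,v)-M(0,ct_1+\epsilon_1t_2,t_1)$, a second-moment bound $\mathbb{E}_{t_1,t_2}\|M(0,ct_1+\epsilon_1t_2,t_1)\|_2^2\le 256B^4(2+|c|^2)\epsilon^2 n$ plus Markov (and part (1)) gives $\mathbb{P}_{t_1,t_2}(\|M(0,ct_1+\epsilon_1t_2,t_1)\|_2\ge 2(H_{\ref{complexnet}}-1)\epsilon\sqrt n)\le \tfrac12$ for every $M$, so by Fubini $\mathbb{E}_{t_1,t_2}\mathbb{P}_M(\|M(\lambda,cu_1+\epsilon_1u_2,u_1)\|_2\le 2H_{\ref{complexnet}}\epsilon\sqrt n)\ge \tfrac12\mathbb{P}_M(\mathcal{E})\ge (L\epsilon^2/\epsilon_1)^{2n-2}$; hence some rounding meets the first condition, and since the angle bound above holds for that rounding too, $(v',w')$ lies in the correct $\Lambda_{\epsilon,\epsilon_1}(c)$-stratum. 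The second condition follows for any such rounding from Lemma \ref{complexlemmafs} applied at $(v,w)$ (whose $M$-small-ball satisfies the required upper bound because $\tau_{L,\epsilon_1}(v,w)(\lambda)\le 2\epsilon$), followed by the standard transfer from $(v,w)$ to $(v',w')$ using $\|P_G(\hat\lambda)\|_{op}\le 16\sqrt n$ on $\mathcal{K}$ and the inequality $\mathcal{L}(X,r)\le(1+2r/s)^{2n}\mathcal{L}(X,s)$.

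I expect the main obstacle to be bookkeeping rather than a new idea: one must check that the complex overlap functional $\operatorname{Ang}_{\widetilde D}$ — a maximum over the real and $i$-rotated copies of $v$ and $w$ — is preserved deterministically under coordinatewise rounding of real and imaginary parts, which forces the size parameter $d$ of $D_1,D_2$ to be chosen small at the outset (and requires the uniform bound on $\widetilde D$ from Lemma \ref{lemma4.70}), and that the exponent $2n-2$ produced by complexification is carried consistently through the threshold function $\tau_{L,\epsilon_1}$, the second-moment estimate for $M_{\underline G}$, the Fourier-replacement step, and the covering count.
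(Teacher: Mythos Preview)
Your proposal is correct and matches the paper's approach exactly: the paper's entire proof of this lemma is the one-line remark ``The proof is the same as in Lemma~\ref{1712}. For example, for part (1) we use orthogonality of $v_{\widetilde D\cap[1,n]}$ and $r_{\widetilde D\cap[1,n]}$, the fact that $\|(cv+\epsilon_1 r)_{\widetilde D\cap[1,n]}\|_2\le 3/2$ and that $(v,r)\in\mathcal I(D_1,D_2)$,'' and you have supplied precisely those details together with the random-rounding and Fourier-replacement steps for part~(2).

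One small caveat on the step you yourself flag as ``the main obstacle'': the paper does \emph{not} resolve the deterministic preservation of the angle bound by taking $d$ small. In the real case (proof of Lemma~\ref{1712}) it simply asserts that the cosine condition holds for every rounding once $\epsilon$ is below a constant depending on $\kappa_0,\kappa_1$; in the complex case it says nothing further. Your $O(d/n)+O(\epsilon)$ estimate for the numerator is fine, but since Lemma~\ref{lemma4.70} only furnishes a \emph{lower} bound $d\ge R_{\ref{lemma4.70}}n$ on the number of good coordinates, you are indeed free to pass to smaller $D_1,D_2$ and make $d/n$ a small constant --- but be aware this feeds back into the range $\log\epsilon^{-1}\lesssim nL^{-8n/|\widetilde D|}$ in the later cardinality lemmas, so you should fix $d/n$ once as a small constant depending only on $\kappa_0,\kappa_1,Q_{\ref{lemma4.70}}$ and then carry that choice through the rest of Section~\ref{secgwe444}. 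This is bookkeeping, as you say, not a new idea.
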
The proof is the same as in Lemma \ref{1712}. For example, for part (1) we use orthogonality of $v_{\widetilde{D}\cap[1,n]}$ and $r_{\widetilde{D}\cap[1,n]}$, the fact that $\|(cv+\epsilon_1r)_{ \widetilde{D}\cap[1,n]}\|_2\leq \frac{3}{2}$ and that $(v,r)\in\mathcal{I}(D_1,D_2)$.

We will use a 4-dimensional version of Proposition \ref{proponewlittlewood} as follows: 
\begin{Proposition}\label{proponewlittlewoodcomplex}
Given a random vector $\xi=(\xi_1,\cdots,\xi_n)$ with i.i.d. coordinates $\xi_k$, where each $\xi_k$ has mean 0, variance 1, is subgaussian and satisfies $\mathcal{L}(\xi,1)\leq 1-p$ for some $p\in(0,1)$.

Consider vectors $\mathbf{c}=(c_1,\cdots,c_n),\mathbf{c}'=(c_1',\cdots,c_n')$, $\mathbf{d}=(d_1,\cdots,d_n),\mathbf{d}'=(d_1',\cdots,d_n')$ in $\mathbb{R}^n$ satisfying $\|\mathbf{c}\|_2=\|\mathbf{c}'\|_2=1$ and $\|\mathbf{d}\|_2=\|\mathbf{d}'\|_2=\omega_n$ for some $w_n\in(0,1]$. Ae assume that $\langle c,c'\rangle=\langle d,d'\rangle=0$ and $\max(|\cos(c,d)|,|\cos(c',d)|,|\cos(c,d')|,|\cos(c',d')|)\leq 0.01.$ Define a $4\times n$ matrix $a=(a_1,\cdots,a_n)$ where each $a_k=(c_k,c_k',d_k,d_k')^T. $

Consider the random sum $S=\sum_{k=1}^n a_k\xi_k$. Then for any $\alpha>0,\gamma\in(0,1)$ and 
$
\epsilon\geq\frac{2}{D_{\alpha,\gamma}(a)}
$ we have the following small ball probability bound for some universal constant $C>0$:
\begin{equation}\label{levytwoballcomp}
\mathcal{L}(S,2\epsilon)\leq(\omega_n)^{-2}(\frac{C\epsilon}{\gamma\sqrt{p}})^4 +C^2e^{-2p\alpha n}
\end{equation} 
\end{Proposition}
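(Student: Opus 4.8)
The plan is to mirror the proof of Proposition \ref{proponewlittlewood}, upgrading all estimates from a $2$-dimensional target to a $4$-dimensional one, so that each quantitative gain squares. First I would set up the standard Esseen-type bound: the Lévy concentration function $\mathcal{L}(S,2\epsilon)$ is controlled by an integral of $|\mathbb{E}e^{2\pi i\langle t,S\rangle}|$ over a box of side $O(\epsilon^{-1})$ in $t\in\mathbb{R}^4$, divided by the volume of that box, which contributes an $\epsilon^4$ factor rather than an $\epsilon^2$ factor. Using independence of the $\xi_k$, the characteristic function factors as $\prod_k \phi_{\xi_k}(\langle t,a_k\rangle)$, and since $\mathcal{L}(\xi,1)\le 1-p$ one gets the standard bound $|\phi_{\xi_k}(s)|\le \exp(-c p\, \|s\|_{\mathbb{T}}^2)$ for a universal $c$, so that $\prod_k|\phi_{\xi_k}(\langle t,a_k\rangle)|\le \exp(-cp\sum_k\|\langle t,a_k\rangle\|_{\mathbb{T}}^2)=\exp(-cp\,\|t\cdot a\|_{\mathbb{T}}^2)$ in the notation of \eqref{whatissmalllcd?}.

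Next I would split the integration box according to whether $\|t\cdot a\|_{\mathbb{T}}$ is large or small. On the region where $\|t\cdot a\|_{\mathbb{T}}^2 \ge \alpha n$, the integrand is bounded by $e^{-cp\alpha n}$, and after dividing by the box volume this yields the $C^2 e^{-2p\alpha n}$ term (after adjusting constants and using $\epsilon\ge 2/D_{\alpha,\gamma}(a)$ to make the box small enough that the trivial volume bound is harmless). On the complementary region, the definition of $D_{\alpha,\gamma}(a)$ together with $\epsilon \ge 2/D_{\alpha,\gamma}(a)$ forces $\|t\cdot a\|_{\mathbb{T}} \le \min(\sqrt{\alpha n},\gamma\|t\cdot a\|_2)$ to fail unless $\|t\|$ itself is below $D_{\alpha,\gamma}(a)$, so on the relevant subbox one has $\|t\cdot a\|_{\mathbb{T}}\ge \gamma\|t\cdot a\|_2$; hence the exponent $\exp(-cp\,\|t\cdot a\|_{\mathbb{T}}^2)\le \exp(-cp\gamma^2\|t\cdot a\|_2^2)$. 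Now $\|t\cdot a\|_2^2 = t^T(aa^T)t = t^T(ZZ^T)t$ where $aa^T$ is a $4\times 4$ Gram matrix whose eigenvalues I must bound below. The orthogonality conditions $\langle c,c'\rangle=\langle d,d'\rangle=0$ and the small-overlap conditions $|\cos(c,d)|,|\cos(c',d)|,|\cos(c,d')|,|\cos(c',d')|\le 0.01$ guarantee $aa^T$ is close to $\mathrm{diag}(1,1,\omega_n^2,\omega_n^2)$, so its smallest eigenvalue is $\ge \tfrac12\omega_n^2$; this is exactly the same mechanism as in the proof of Proposition \ref{proponewlittlewood} but with a $4\times4$ block instead of a $2\times2$ block. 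Then the Gaussian-type integral $\int \exp(-cp\gamma^2 t^T(aa^T) t)\,dt$ over $\mathbb{R}^4$ evaluates to $O\big((p\gamma^2)^{-2}\det(aa^T)^{-1/2}\big)=O\big((p\gamma^2)^{-2}\omega_n^{-2}\big)$, since the product of the two small eigenvalues is $\asymp\omega_n^2$. Dividing by the box volume $\asymp \epsilon^{-4}$ produces the term $\omega_n^{-2}(C\epsilon/(\gamma\sqrt p))^4$.

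Collecting the two contributions yields exactly the bound \eqref{levytwoballcomp}. The main obstacle I anticipate is bookkeeping the $4$-dimensional Esseen inequality and the Gaussian integral carefully enough to extract the precise $\omega_n^{-2}$ prefactor (rather than a weaker power), which requires tracking that the two \emph{small} directions of $aa^T$ each carry an eigenvalue comparable to $\omega_n^2$ and the two \emph{large} directions carry eigenvalues comparable to $1$; the small-overlap hypotheses with the explicit constant $0.01$ are what make this quantitative claim go through, and one must check that the cross-terms in the $4\times4$ Gram matrix, all bounded by $0.01\max(1,\omega_n)\le 0.01$, cannot conspire to shrink $\det(aa^T)$ by more than a constant factor from $\omega_n^4$ — a routine but slightly delicate $4\times4$ determinant estimate via Gershgorin or direct expansion. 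Apart from that, every step is the dimension-$4$ analogue of the dimension-$2$ argument sketched for Proposition \ref{proponewlittlewood}, so I would present the proof by indicating these changes and referring to that proof (and to \cite{rudelson2009smallest}, Theorem 3.3) for the details common to both.
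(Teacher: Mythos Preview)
Your overall plan—Esséen, split by whether $\|t\cdot a\|_{\mathbb T}$ exceeds $\sqrt{\alpha n}$, and use the $4\times 4$ Gram matrix $aa^T\approx\operatorname{diag}(1,1,\omega_n^2,\omega_n^2)$ to extract the factor $\omega_n^{-2}$—is the right skeleton and matches the paper. But the implementation has a real gap.

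The bound you call ``standard'', $|\phi_{\xi_k}(s)|\le\exp(-cp\,\|s\|_{\mathbb T}^2)$, is false: for $\xi$ Rademacher and $s=1/2$ one has $|\phi(1/2)|=1$ while $\|1/2\|_{\mathbb T}=1/2$. What one actually gets (as in the proof of Proposition~\ref{proponewlittlewood}) is $|\phi(s)|^2\le\exp\bigl(-cp\,\mathbb E[\|s\bar\xi\|_{\mathbb T}^2\mid|\bar\xi|\ge1]\bigr)$, which after Esséen leads to
\[
\mathcal L(S,2\epsilon)\le C\sup_{z\ge1}\int_{B(0,2)}\exp\bigl(-cp\,\|\tfrac{z}{\epsilon}\theta\cdot a\|_{\mathbb T}^2\bigr)\,d\theta.
\]
For $z>1$ the effective Fourier variable $\tau=\tfrac{z}{\epsilon}\theta$ ranges over a ball of radius $\sim z/\epsilon$, which can exceed $D_{\alpha,\gamma}(a)$, so your LCD dichotomy $\|\tau\cdot a\|_{\mathbb T}\ge\gamma\|\tau\cdot a\|_2$ is \emph{not} available on the whole domain and the single Gaussian integral you write does not control the integral.

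The paper (and Rudelson--Vershynin) handles this with the layer-cake/clustering argument: for $\theta',\theta''$ in the sublevel set $I(t)=\{\theta:f(\theta)\le t\}$ one applies the LCD dichotomy to the \emph{difference} $\tau=\tfrac{z}{\epsilon}(\theta'-\theta'')$, concluding that either $\|\theta'-\theta''\|\ge\sqrt2/z$ or $\tau$ lies in a small ellipsoid. Your Gram-matrix estimate enters exactly here, giving $\|\tau\cdot a\|_2^2\ge\tfrac14(\|\tau_{[1,2]}\|^2+\omega_n^2\|\tau_{[3,4]}\|^2)$, hence each cluster has $4$-volume $\lesssim(t\epsilon/(\gamma z))^4\omega_n^{-2}$; multiplying by the $\sim z^4$ clusters in $B(0,2)$ the $z$'s cancel and one obtains $\operatorname{Vol}(I(t))\le\omega_n^{-2}(Ct\epsilon/\gamma)^4$, exactly the paper's key estimate. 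So your determinant computation is the right ingredient, but it must feed into the volume-of-$I(t)$ bound rather than a direct Gaussian integral.
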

The proof is analogous to Proposition \ref{proponewlittlewood} and sketched in Section \ref{usedinsection5.2}.

We then prove the following result analogous to Lemma \ref{lemma3.7whatsoever}:
\begin{lemma}\label{lemma3.7whatsoevercomp} There exist $R_{\ref{lemma3.7whatsoevercomp}}>0$ and $R'_{\ref{lemma3.7whatsoevercomp}}>0$ depending only on ${\kappa_0}_{\eqref{lemma4.70}},{\kappa_1}_{\eqref{lemma4.70}}$ such that for all   $L\geq R_{\ref{lemma3.7whatsoevercomp}},$ all $ N\geq N_1\geq 2$ with $\log N\leq R'_{\ref{lemma3.7whatsoevercomp}}(nL^{-8n/D})$ and $\kappa>2$, the following statement holds. Let $\mathcal{B}_1$, $\mathcal{B}_2$ be a $(N,N_1,\kappa,D_1,D_2)$ box pair in Definition \ref{twodimensionalfourbox} with $D_1\cup D_2=[D]$.   We denote by $\mathbb{P}^c_{X,Y}$ this conditioned probability where $X$ is chosen uniformly from $\mathcal{B}_1$ and $Y$ chosen uniformly from $\mathcal{B}_2$, conditioning on the event that $\operatorname{Ang}_{\widetilde{D}}(X,Y)\leq 0.01.$ Then
    $$
\mathbb{P}_{X,Y}^c\left(\mathbb{P}_{\underline{G}}(\|M_{\underline{G}}(\lambda,c\tau(X)+\tau(Y),\tau(X))\|_2\leq n)\geq (\frac{L}{NN_1})^{2n-2}\right)\leq (\frac{16}{L})^{4n-4}.$$ 
\end{lemma}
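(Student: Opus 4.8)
\textbf{Proof proposal for Lemma \ref{lemma3.7whatsoevercomp}.}

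The plan is to mirror the proof of Lemma \ref{lemma3.7whatsoever}, carrying the entire argument through after identifying $\mathbb{C}^n$ with $\mathbb{R}^{2n}$, so that the $4$-dimensional inverse Littlewood--Offord theorem (Proposition \ref{proponewlittlewoodcomplex}) replaces the $2$-dimensional one, and the exponent $n-1$ is replaced by $2n-2$ (the first row of $\underline{G}$ being identically zero costs us one complex row, hence two real rows). First I would introduce the set of typical vectors
$$
T=T(\mathcal{B}_1,\mathcal{B}_2):=\{(X,Y)\in\mathcal{B}_1\times\mathcal{B}_2:D_\alpha(d^{-1/2}Z_{\widetilde{D}})\geq 16\},
$$
where $Z$ is the $4\times 2n$ matrix built from $X,Y$ as in Proposition \ref{prop4.61} and $\widetilde D$ is as in \eqref{dtildes}, and prove a complex analogue of Lemma \ref{lem3.28}: for $X,Y$ drawn from the conditioned measure $\mathbb{P}^c_{X,Y}$, one has $\mathbb{P}^c_{X,Y}((X,Y)\notin T)\leq(C\alpha)^{d/4}$ for a constant $C$ depending only on ${\kappa_0}_{\ref{lemma4.70}},{\kappa_1}_{\ref{lemma4.70}}$. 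This is exactly Proposition \ref{prop4.61}(1), with the choice $\tau=T_{\ref{randmgenerationoflcd}}/(2Q_{\ref{lemma4.70}})$ so that small $\phi$ give genuine distances rather than torus distances on the coordinates in $\widetilde D$, absorbing the factor $S_{\ref{prop4.61}}$ into $C$; the passage to the conditioned measure is justified by Proposition \ref{prop4.61}(2), which shows the conditioning event $\operatorname{Ang}_{\widetilde D}(X,Y)\leq 0.01$ has probability $\geq 3/4$ once $n$ is large enough that $T_{\ref{prop4.61}}d^{-1/2}\leq 0.01$.

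Next, for each fixed $(X,Y)\in T$ I would estimate $\mathbb{P}_{\underline G}(\|M_{\underline G}(\lambda,c\tau(X)+\tau(Y),\tau(X))\|_2\leq n)$ by conditioning on all columns of $\underline G$ except those indexed by $\widetilde D$ (equivalently $[2,D]$ after the reductions), keeping only the randomness $\underline G_{\widetilde D}$, and reducing via the tensorization lemma (Lemma \ref{Tensorization}, with exponent $2n-2$ because one complex row vanishes) to a one-real-row estimate of the form
$$
\mathcal{L}\!\left(\sum_{i\in\widetilde D}\underline a_{2i}\,Z_{\cdot,i}\,,\,C(|c|+1)\sqrt n\right)\leq\frac{R}{NN_1},
$$
where $\underline a_{2i}$ are the (real and imaginary parts of the) entries of a fixed row of $\underline G$. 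After rescaling the four real vectors forming $Z_{\widetilde D}$ by a $\kappa_0,\kappa_1$-dependent constant so that the ``$\mathbf c,\mathbf c'$'' part has norm $1$ and the ``$\mathbf d,\mathbf d'$'' part has norm $N_1/N$, using $\langle \mathbf c,\mathbf c'\rangle=\langle\mathbf d,\mathbf d'\rangle=0$ (built into the box structure via the block form of $Z$) and $\operatorname{Ang}_{\widetilde D}(X,Y)\leq 0.01$, Proposition \ref{proponewlittlewoodcomplex} with $\omega_n=N_1/N$ and $D_\alpha(Z_{\widetilde D})\geq 16\sqrt d$ on $T$ gives this bound with $R$ depending only on $\xi,\kappa_0,\kappa_1$, provided $\log(R'N)\leq\alpha n$ (this is the range making the $\epsilon^4$ term dominate $e^{-2p\alpha n}$); note that the exponent $4$ on $\epsilon$ here is precisely what produces the $(R/(NN_1))^{2n-2}$ through tensorization. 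Finally, choosing $\alpha>0$ small enough that $(C\alpha)^{d/4}=(16/L)^{4n-4}$ for every $L\geq R$, and combining with $\log(R'N)\leq\alpha n$ to write the admissible range as $\log N\leq R'(nL^{-8n/D})$ for a new $\xi,\kappa_0,\kappa_1$-dependent $R'$, yields the claim, since on the event $(X,Y)\in T$ the conditional small-ball probability never reaches $(L/(NN_1))^{2n-2}$.

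The main obstacle I expect is bookkeeping around the complexification: one must verify that the real $4\times(2n)$ matrix $Z_{\widetilde D}$ genuinely has the orthogonality and near-orthogonality structure demanded by Proposition \ref{proponewlittlewoodcomplex}, i.e. that the block pattern $\begin{bmatrix}\hat X & \hat{iX}\\ \hat Y & \hat{iY}\end{bmatrix}$ restricted to the symmetric index set $\widetilde D=D_1\cup D_2\cup\widetilde D_1\cup\widetilde D_2$ forces $\langle \mathbf c,\mathbf c'\rangle=\langle\mathbf d,\mathbf d'\rangle=0$ exactly (this is where the symmetry of $\widetilde D$ under $x\mapsto 2n+1-x$ from Lemma \ref{lemma4.70}(3) is essential) and that the remaining cross inner products are controlled by $\operatorname{Ang}_{\widetilde D}(X,Y)$; and separately, that the randomness in $\underline G_{\widetilde D}$ is not consumed by the conditioning, which requires the cardinality conditions (4) in Definition \ref{twodimensionalfourbox} and the reduction in Lemma \ref{lemma4.70}(2) ensuring each column indexed by $D_1$ (resp. $D_2$) sees only $X$-randomness (resp. $Y$-randomness). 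Once these structural points are checked, the probabilistic estimates are routine repetitions of Section \ref{sec3.6}.
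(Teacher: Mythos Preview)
Your proposal is correct and follows essentially the same route as the paper's proof: define the typical set $T$ via the $4$-dimensional LCD of $Z_{\widetilde D}$, bound $\mathbb{P}^c_{X,Y}((X,Y)\notin T)$ by Proposition~\ref{prop4.61}, and on $T$ use Proposition~\ref{proponewlittlewoodcomplex} plus tensorization to force the inner small-ball probability below $(L/(NN_1))^{2n-2}$. The structural checks you flag (orthogonality $\langle\mathbf c,\mathbf c'\rangle=\langle\mathbf d,\mathbf d'\rangle=0$ from the symmetry of $\widetilde D$, and independence of the $\widetilde D$-columns from the conditioned randomness via Lemma~\ref{lemma4.70}(2)) are exactly the points the paper singles out.

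One bookkeeping slip to fix: the displayed one-row bound should be $R/(NN_1)^2$, not $R/(NN_1)$. There are only $n-1$ independent complex rows of $\underline G_{/[1]}$ (rows $j\in[2,n]$), and for each such $j$ all four real quantities $\Re(\underline G v)_j,\Im(\underline G v)_j,\Re(\underline G w)_j,\Im(\underline G w)_j$ depend on the full $2n$-dimensional real row $(\Re g_{j\cdot},\Im g_{j\cdot})$, so they cannot be split further. Proposition~\ref{proponewlittlewoodcomplex} with $\omega_n=N_1/N$ and $\epsilon\sim 1/N$ gives $(\omega_n)^{-2}\epsilon^4\sim (NN_1)^{-2}$ for each such $j$, and tensorizing over the $n-1$ groups yields $(R/(NN_1))^{2(n-1)}$; the paper makes this explicit by passing through the realified matrix $\hat M_{\underline G}$ and the reduction from $(\langle\xi,X\rangle,\langle\xi,X'\rangle,\langle\xi,X_c+Y\rangle,\langle\xi,X_c'+Y'\rangle)$ to $(\langle\xi,X\rangle,\langle\xi,X'\rangle,\langle\xi,Y\rangle,\langle\xi,Y'\rangle)$ at cost $8(|c|+1)$ in the radius. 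Your parenthetical about the exponent $4$ producing $(R/(NN_1))^{2n-2}$ shows you have the right mechanism; just correct the intermediate display and the tensorization count.
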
 where we recall that $\tau(X)$ was defined in \eqref{howdowedefinetau?}.
\begin{proof}(Sketch) We only sketch where the proof differs from Lemma \ref{lemma3.7whatsoever}. Now we have proven Proposition \ref{prop4.61}, so we can prove an analogous result as in Lemma \ref{lem3.28}. That is, we can find a constant $C_{\ref{lemma3.7whatsoevercomp}}>0$ depending only on $\xi,\kappa_0,\kappa_1$ such that
$$
\mathbb{P}_{X,Y}(D_\alpha(X_{\widetilde{D}},X'_{\widetilde{D}},Y_{\widetilde{D}},Y'_{\widetilde{D}})\leq 16d^{-1/2})\leq (C_{\ref{lemma3.7whatsoevercomp}}\alpha)^{d/4}.
$$
We introduce a $(4n+2)\times (8n+4)$ matrix $\hat{M}_{\underline{G}}$ as follows:
$$
\hat{M}_{\underline{G}}=\begin{bmatrix}
0&0&0\\0&0&E_{\underline{G}}\\E_{\underline{G}}[1]&E_{\underline{G}}&0
    \end{bmatrix}
$$ where we set 
$$
E_{\underline{G}}:=\begin{bmatrix}
\Re\underline{G}_{/[1]}&-\Im\underline{G}_{/[1]}&0&0\\0&0&\Re\underline{G}_{/[1]}&-\Im\underline{G}_{/[1]}
\end{bmatrix}, E_{\underline{G}}[1]:=\begin{bmatrix}
\Re\underline{G}_{[1]}&-\Im\underline{G}_{[1]}&0&0\\0&0&\Re\underline{G}_{[1]}&-\Im\underline{G}_{[1]}
\end{bmatrix}.
$$ 

Recall that $c$ is any complex number, so we write $c=c_1+ic_2$ for $c_1,c_2\in\mathbb{R}$. We let $X_c$ denote the $2n$-dimensional vector whose first $n$-component is $c_1X_{[1,n]}-c_2X_{[n+1,2n]}$ and whose last $n$-component is $c_2X_{[1,n]}+c_1X_{[n+1,2n]}$. Then we define $X_c'$ which has first $n$ coordinates $-c_2X_{[1,n]}-c_1X_{[n+1,2n]}$ and its last $n$ coordinates $c_1X_{[1,n]}-c_2X_{[n+1,2n]}$.
Then we can immediately check that $$\|M_{\underline{G}}(\lambda,c\tau(X)+\tau(Y),\tau(X)\|_2=\|\hat{M}_{\underline{G}}(\Re\lambda,\Im\lambda,-\Im\lambda,\Re\lambda,X_c+Y,X_c'+Y',X,X')\|_2$$ where the latter is a $8n+4$-dimensional vector with its 5 to $2n+4$-th coordinate be that of $X_c+Y$, its $2n+5$ to $4n+4$-th coordinate be that of $X_c'+Y'$, and so forth.

    Since the entries of $G$ have i.i.d. real and complex part, then $\Re G$ and $\Im G$ are i.i.d., so we can now apply Littlewood-Offord theorem to $\hat{M}_{\underline{G}}$. For each application we take the $j+2,j+2+n,j+2+2n,j+2+3n$-th rows of $\hat{M}_G$ all at once for each $j\in[2,n]$, and by definition these four rows are independent from the rest of the matrix. We condition on the randomness that are not in the columns labeled by $(\widetilde{D}+4)\cup(\widetilde{D}+4n+4)=\{d+4,d+4n+4:d\in\widetilde{D}\}$ and apply Proposition \ref{proponewlittlewoodcomplex}. Our construction of $\widetilde{D}$ ensures that the randomness of $\hat{M}_{\underline{G}}$ from the columns indexed by $(\widetilde{D}+4)\cup(\widetilde{D}+4n+4)$ is independent of the randomness away from these columns. More precisely, let $\mathbf{\xi}$ be any $2n$-dimensional random vector which is a row of $[\Re \underline{G}_{/[1]},-\Im \underline{G}_{/[1]}]$. Then we need to estimate, for some $x>0$,
    \begin{equation}\label{firstsmallball}\mathcal{L}\left((\langle \xi,X\rangle,\langle \xi,X'\rangle,\langle \xi,X_c+Y\rangle,\langle \xi, X_c'+Y'\rangle)^T,x\right),\end{equation}
     which is the Lévy concentration function of a random vector in $\mathbb{R}^4$. From definition of $X_c$ we can easily check that this Lévy concentration function is bounded from above by    
\begin{equation}\label{secondsmallball}
    \mathcal{L}((\langle \xi,X\rangle,\langle \xi,X'\rangle,\langle \xi,Y\rangle,\langle \xi, Y'\rangle)^T,8(|c|+1)x).
    \end{equation}
    Then we apply Proposition \ref{proponewlittlewoodcomplex} to the vector pair $(X,X',Y,Y')$. The assumption on inner product of $c$ and $d$ in Proposition \ref{proponewlittlewoodcomplex} is verified by our assumption $\operatorname{Ang}_{\widetilde{D}}(X,Y)\leq 0.01$.
\end{proof}

From now on, we will take supremum over all \begin{equation}\label{rangetwolambda}\lambda\in\mathbb{C}:|\lambda|\leq C_{\ref{ratify}}\sqrt{n},\quad \hat{\lambda}\in\mathbb{C}:|\hat{\lambda}|\leq 16\sqrt{n},\end{equation} and we will not explicitly specify the range in each separate statement.

First, we can prove the following analogue of Lemma \ref{lemma3.122}, \ref{lemma3.123}.

\begin{lemma}\label{lemma4.177}  We can find some $L_{\ref{lemma3.122}}>0$ depending only on ${\kappa_0}_{\ref{lemma4.70}},{\kappa_1}_{\ref{lemma4.70}}$such that for any $L\geq L_{\ref{lemma3.122}}$, we can find a constant $c_\Sigma>0$ depending further on $L$ such that the following holds:
 Define for any $\epsilon\geq\epsilon_1$
 $$\mathcal{Q}_{\epsilon,\epsilon_1}(c):=\sup_{\lambda,\hat{\lambda}}\mathbb{P}_G^\mathcal{K}\left(\exists (v,w)\in\Sigma_{\epsilon,\epsilon_1}(c,\lambda):\| P_G(\hat{\lambda})(\lambda,v,w)\|_2\leq 2^{-n}\right),
$$ and define 
$$\mathcal{Q}_{\epsilon,0}(c):=\sup_{\lambda,\hat{\lambda}}\mathbb{P}_G^\mathcal{K}\left(\exists (v,w)\in\Sigma_{\epsilon,0}(c,\lambda):\| P_G(\hat{\lambda})(\lambda,v,w)\|_2\leq 2^{-n}\right).$$
Then  $
\mathcal{Q}_{\epsilon,\epsilon_1}(c)\leq 2^{-100n}$ and $\mathcal{Q}_{\epsilon,0}\leq 2^{-100n}$ for all $\epsilon\geq\exp(-c_\Sigma n)$.    
\end{lemma}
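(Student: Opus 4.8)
\textbf{Proof plan for Lemma \ref{lemma4.177}.}
The plan is to follow the real-case argument of Lemma \ref{lemma3.122} and Lemma \ref{lemma3.123} almost verbatim, replacing each ingredient by its complex analogue. The proof has the same three-part skeleton: (i) use the net $\mathcal{N}_{\epsilon,\epsilon_1}(c,\lambda)$ from Definition \ref{netsincomplexcase} to discretize $\Sigma_{\epsilon,\epsilon_1}(c,\lambda)$ via Lemma \ref{complexnet}; (ii) bound the cardinality of this net using the box-covering of Fact \ref{factcomball} together with the first-moment estimate of Lemma \ref{lemma3.7whatsoevercomp}; (iii) for each individual net vector use the second condition in the definition of $\mathcal{N}_{\epsilon,\epsilon_1}(c,\lambda)$, which already encodes a Lévy-concentration bound for $P_G(\hat\lambda)$, and multiply. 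The only bookkeeping change throughout is that every exponent $n-1$ in Section \ref{secgwe333} becomes $2n-2$ here (since $\underline{G}$ has its first row zero but otherwise lives in $\mathbb{R}^{2n}$ after the $\hat{\cdot}$ identification), and correspondingly the factor $\epsilon^2$ in the real case is replaced by $\epsilon^4$, so that the net cardinality $\big(C\epsilon_1/(L^2\epsilon^2)\big)^{2n-2}\cdot(\epsilon_1/\epsilon^2)$ cancels against $(2^{10}LH_{\ref{complexnet}}^2\epsilon^2/\epsilon_1)^{2n-2}$ up to a $(1+100H_{\ref{complexnet}})^{2n}$ error.

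In more detail, I would first record the cardinality bound: combining Fact \ref{factcomball} (at most $\kappa_{\ref{factcomball}}^{4n}$ box pairs, each of size at most $(16\kappa_{\ref{factcomball}}^2\kappa_0^2\epsilon_1/\epsilon^2)^{2n}$) with Lemma \ref{lemma3.7whatsoevercomp} (the fraction of each box meeting the small-ball condition is at most $(16/L)^{4n-4}$), one gets
$$
|\mathcal{N}_{\epsilon,\epsilon_1}(c,\lambda)|\leq\Big(\frac{C\epsilon_1}{L^2\epsilon^2}\Big)^{2n-2}\frac{\epsilon_1}{\epsilon^2},
$$
valid for $L\geq R_{\ref{lemma3.7whatsoevercomp}}$ and $\log\epsilon^{-1}$ below the threshold $R'_{\ref{lemma3.7whatsoevercomp}}(nL^{-8n/D})$ coming from the box parameters $N=\kappa_0/4\epsilon$, $N_1=\kappa_0\epsilon_1/4\epsilon$; this is where the choice of $c_\Sigma$ gets tied to $L$. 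Then for $\epsilon\gg 2^{-n}$ and on $\mathcal{K}$ (using $\|P_G(\hat\lambda)\|_{op}\le 16\sqrt n$ and the a.s. $\ell^\infty$-bounds on the net vectors from Lemma \ref{complexnet}) the event $\{\|P_G(\hat\lambda)(\lambda,w,v)\|_2\le 2^{-n}\}$ for some $(v,w)\in\Sigma_{\epsilon,\epsilon_1}(c,\lambda)$ is contained in $\{\|P_G(\hat\lambda)(\lambda,w',v')\|_2\le 50H_{\ref{complexnet}}\epsilon\sqrt n\}$ for the approximating $(v',w')\in\mathcal{N}_{\epsilon,\epsilon_1}(c,\lambda)$, and the inflation from radius $\epsilon\sqrt n$ to $50H_{\ref{complexnet}}\epsilon\sqrt n$ in the Lévy function costs only $(1+100H_{\ref{complexnet}})^{2n}$ by \cite{campos2021singularity}, Fact 6.2 (in its $\mathbb{R}^{2n}$ form). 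A union bound over the net then gives
$$
\mathcal{Q}_{\epsilon,\epsilon_1}(c)\leq|\mathcal{N}_{\epsilon,\epsilon_1}(c,\lambda)|\cdot\Big(2^{10}\tfrac{LH_{\ref{complexnet}}^2\epsilon^2}{\epsilon_1}(1+100H_{\ref{complexnet}})\Big)^{2n-2}\leq 2^{-100n}
$$
once $L$ is taken large relative to the absolute constants, and the last inequality pins down the range $\epsilon\geq\exp(-c_\Sigma n)$. The case $\mathcal{Q}_{\epsilon,0}(c)$ (when $v_{\widetilde D}$ and $w_{\widetilde D}$ are essentially parallel, i.e.\ $\epsilon_1\le\epsilon$) runs identically but simpler, using $\mathcal{N}_{\epsilon,0}(c,\lambda)$, $\Lambda_\epsilon^0$ and the threshold $\tau_{L,0}$; I would state it follows by the same argument and omit the repetition.

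The main obstacle — and the only place where genuine complex-analytic input is needed rather than exponent bookkeeping — is verifying Lemma \ref{lemma3.7whatsoevercomp}, which underlies step (ii). There one must check that the randomness of the truncated matrix $M_{\underline G}$, after passing to the real $(8n+4)$-dimensional linearization $\hat M_{\underline G}$, genuinely decouples column-by-column over the symmetrized index set $\widetilde D=D_1\cup D_2\cup\widetilde D_1\cup\widetilde D_2$, so that Proposition \ref{proponewlittlewoodcomplex} (the four-dimensional Littlewood–Offord inequality, which is what produces the crucial $\omega_n^{-2}$ and $\epsilon^4$) applies to the four rows indexed by $j+2,j+2+n,j+2+2n,j+2+3n$ simultaneously. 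The subtlety is that $\Re\underline G$ and $\Im\underline G$ must be i.i.d.\ for these four rows to be jointly independent of the rest, which is exactly the hypothesis of Theorem \ref{complextwoballbounds}, and that the inner-product hypothesis $\operatorname{Ang}_{\widetilde D}(X,Y)\le 0.01$ is preserved under conditioning — this is handled by Proposition \ref{prop4.61}(2) and Fact \ref{newoldfact}. Assuming Lemma \ref{lemma3.7whatsoevercomp} (which the excerpt sketches), the rest of the proof of Lemma \ref{lemma4.177} is a mechanical transcription of Lemmas \ref{lemma3.122}–\ref{lemma3.123} with the substitutions described above.
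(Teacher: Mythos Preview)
Your proposal is correct and follows essentially the same approach as the paper's own (sketched) proof: both reduce to establishing the net-cardinality bound via Fact \ref{factcomball} and Lemma \ref{lemma3.7whatsoevercomp}, then multiply by the Lévy-concentration bound built into the definition of $\mathcal{N}_{\epsilon,\epsilon_1}(c,\lambda)$, exactly as in Lemmas \ref{lemma3.122}--\ref{lemma3.123}. One minor bookkeeping slip: the polynomial prefactor in the cardinality bound should be $\epsilon_1^2/\epsilon^4$ rather than $\epsilon_1/\epsilon^2$ (since the box cardinality in Fact \ref{factcomball} is raised to the $2n$, not $n$), but this is absorbed into the exponential and does not affect the conclusion.
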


\begin{proof}(Sketch) Let $R,R'>0$ be two constants depending only on $\kappa_0,\kappa_1$.  We only need to prove a version of Proposition \ref{cardinalityprop}, which in our case states that whenever $L\geq R>2$ and $\log\epsilon^{-1}\leq R'(nL^{-8n/d})$, then for any $c,\lambda\in\mathbb{C}$, 
$$
|\mathcal{N}_{\epsilon,\epsilon_1}(c,\lambda)|\leq (\frac{C\epsilon_1}{L^2\epsilon^2})^{2n-2}\epsilon_1^2/\epsilon^4
$$ for a universal constant $C>0$. We shall also prove that a similar bound holds for $|\mathcal{N}_{\epsilon,0}(c,\lambda)|$. 

To prove this, we first fix a $(N,N_1,\kappa,D_1,D_2)$-box and use Lemma \ref{lemma3.7whatsoevercomp}. Then we count the number of boxes and the cardinality of each box via Fact \ref{factcomball}.
\end{proof}

Now we return to constructing a net for two vectors with small joint LCD, rather than for four vectors. We will use a new parameter $\alpha'>0$ which will finally be set sufficiently small. By \cite{luh2018complex}, Lemma 5.12 we can find a $\gamma'>0$ and $K'>0$ depending only on ${\kappa_0}_{\ref{lemma4.70}},{\kappa_1}_{\ref{lemma4.70}}$ such that any $v\in\mathcal{I}(D_1,D_2)$ must satisfy that $D_{\alpha',\gamma'}(\hat{v})\geq K'n^{1/2}.$

The following result is proven in \cite{luh2018complex}, Lemma 5.14:
\begin{lemma}\label{lemma4.18nets}
    The subset $\{v\in\mathbb{S}_\mathbb{C}^{n-1}: D_{\alpha',\gamma'}(\hat{v})\in[(2\epsilon)^{-1},\epsilon^{-1}]\}$ admits a $2\sqrt{\alpha'n}\epsilon$-net $\mathcal{D}_\epsilon$ with cardinality at most 
    $$
C\frac{(2\sqrt{\alpha'n}+2\epsilon^{-1})^2}{\alpha'n}(\frac{10}{n^{1/2}\epsilon})^{2n}
    $$ for any $\epsilon^{-1}\leq Kn^{1/2}$ and $\alpha'\leq K'^2$, where $C>0$ is a universal constant.
\end{lemma}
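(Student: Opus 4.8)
Lemma \ref{lemma4.18nets} is a counting bound for an $\epsilon$-net of the sphere stratified by the magnitude of the (real) essential LCD of $\hat v$. The plan is to follow the now-classical route from \cite{rudelson2008littlewood}, transplanted to the complex setting via the identification $v\mapsto\hat v\in\mathbb{R}^{2n}$, exactly as in \cite{luh2018complex}. First I would fix $v\in\mathbb{S}_\mathbb{C}^{n-1}$ with $D:=D_{\alpha',\gamma'}(\hat v)\in[(2\epsilon)^{-1},\epsilon^{-1}]$. By the definition of the essential LCD there is a nonzero integer vector $p\in\mathbb{Z}^{2n}$ with $\|D\hat v-p\|_2\le\min(\sqrt{\alpha' n},\gamma' D)$; since $D\le\epsilon^{-1}\le Kn^{1/2}$ and $\alpha'\le K'^2$ this point $p$ satisfies $\|p\|_2\le D+\sqrt{\alpha' n}\le Kn^{1/2}+\sqrt{\alpha' n}$, i.e. it lies in a Euclidean ball of radius $2\sqrt{\alpha' n}+2\epsilon^{-1}$ (after a crude bound). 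Hence $p$ ranges over integer points in a ball of that radius in $\mathbb{R}^{2n}$, of which there are at most $(C(\sqrt{\alpha' n}+\epsilon^{-1})/\sqrt{2n})^{2n}$ many by the standard lattice-point volume estimate; I would keep the leading factor $(2\sqrt{\alpha'n}+2\epsilon^{-1})^2/(\alpha' n)$ separate so that the exponent is $2n-2$ rather than $2n$, which is the refinement that matters for the final probability estimate.

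Next, for each admissible $p$ I would take the candidate net point to be (the image under $\tau$ of) $p/\|p\|_2$, collecting all these into $\mathcal{D}_\epsilon$; the cardinality bound then follows from the previous paragraph after replacing $\epsilon^{-1}$ in the radius by its lower bound $\sqrt{n}/2$ wherever doing so only increases the count, and absorbing constants to reach the stated form $C\frac{(2\sqrt{\alpha'n}+2\epsilon^{-1})^2}{\alpha' n}\left(\frac{10}{n^{1/2}\epsilon}\right)^{2n}$. Finally I would verify the approximation property: given $v$ as above, $\|v-\tau(p/\|p\|_2)\|_2=\|\hat v-p/\|p\|_2\|_2\le D^{-1}(\|D\hat v-p\|_2+|D-\|p\|_2|)\le 2D^{-1}\sqrt{\alpha' n}\le 4\sqrt{\alpha' n}\,\epsilon$ using $D^{-1}\le 2\epsilon$; a small adjustment of the constant (or replacing the net by a genuinely $2\sqrt{\alpha'n}\epsilon$-separated refinement contained in the original sphere slice, exactly as in Corollary \ref{netcorollarys}) gives the claimed mesh $2\sqrt{\alpha'n}\epsilon$.

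The only genuinely delicate point is bookkeeping the exponent: one must ensure the two ``free'' coordinates (coming from the two real degrees of freedom lost in normalizing $p$, equivalently the factor $(2\sqrt{\alpha'n}+2\epsilon^{-1})^2/(\alpha' n)$) are pulled out before applying the volume bound, so that the super-exponential gain $\alpha'^{-n}$ is not diluted. Everything else is routine and identical to \cite{rudelson2008littlewood}, \cite{luh2018complex}; in fact, since the statement is quoted verbatim from \cite{luh2018complex}, Lemma 5.14, I would simply cite it and include the one-line approximation computation above for the reader's convenience rather than reproving the counting bound in full.
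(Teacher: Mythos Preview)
Your proposal is correct and aligns with the paper's treatment: the paper does not prove this lemma but simply cites it as \cite{luh2018complex}, Lemma 5.14, exactly as you suggest in your final paragraph. Your sketch of the underlying argument (approximating $\hat v$ by a nearby lattice point $p$ using the definition of LCD, then counting lattice points in a ball) is the standard Rudelson--Vershynin route and is a helpful addition for the reader.
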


Consider the following subset $G_{\epsilon,\epsilon_1}$ (depending on $\alpha'$ and $c,\epsilon_1$):
$$
G_{\epsilon,\epsilon_1}:=\{p,\frac{q}{\|q\|_2}):p\in \mathcal{D}_{2\epsilon},\quad q\in \sqrt{\alpha'}(1+i)\mathbb{Z}^{n}\cap B_n^\mathbb{C}(0,\frac{\epsilon}{\epsilon_1})\setminus\{0\}\},
$$ with $\mathcal{D}_\epsilon$ from Lemma \ref{lemma4.18nets}
and we define a translated version of $G_{\epsilon,\epsilon_1}$:
$$
\widetilde{G}_{\epsilon,\epsilon_1}(c):=\{(p,cp+\epsilon_1q)\in\mathbb{C}^{n+n}:(p,q)\in G_{\epsilon,\epsilon_1}\},$$
and define the subset of vectors parameterized by the LCD of its first component:
$$
\Sigma_{\epsilon,\epsilon_1}'(c):=\{(v,w)\in\mathcal{P}_{\epsilon_1}(c),D_{\alpha',\gamma'}(v)\in[(4\epsilon)^{-1},(2\epsilon)^{-1}]\}.
$$

Then we can prove exactly as in Fact \ref{fact3.333} that

\begin{fact}\label{fact3.333comp} 
\begin{enumerate}
\item We have
$$|G_{\epsilon,\epsilon_1}|=|\widetilde{G}_{\epsilon,\epsilon_1}(c)|\leq \frac{K(\sqrt{\alpha'n}+\epsilon^{-1})^2}{\alpha'n}(\frac{K\epsilon_1}{\epsilon^2})^{2n}(\alpha')^{-n}$$
    for a universal constant  $K>0$. \item  For any $(v,w)\in\Sigma_{\epsilon,\epsilon_1}'$ we can find $(v',w'=cv'+\epsilon_1r')\in \widetilde{G}_{\epsilon,
    \epsilon_1}(c)$ so that $\|(v,w)-(v',w')\|_2\leq 8(|c|+1)\sqrt{\alpha'n}\epsilon$. \item Therefore, we can modify the subset $\widetilde{G}_{\epsilon,\epsilon_1}(c)$ to be a $16(|c|+1)\sqrt{\alpha'n}\epsilon$-net of $\Sigma_{\epsilon,\epsilon_1}'(c)$, which we denote by $\overline{G}_{\epsilon,\epsilon_1}(c)$. \end{enumerate}
\end{fact}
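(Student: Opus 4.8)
The claim is the exact complex analogue of Fact \ref{fact3.333}, and the plan is to replay that argument in $\mathbb{R}^{2n}$ rather than $\mathbb{R}^n$. Two adjustments are needed. First, every ambient space now has real dimension $2n$, so all lattice counts and all exponents in the cardinality bound pick up a factor of $2$. Second, in the real case the $v$--component of the net was a plain integer lattice inside a ball, whereas here the relevant quantity is $D_{\alpha',\gamma'}(\hat v)$ measured with the mixed lattice of \eqref{whatissmalllcd?}; so instead of constructing this net by hand we invoke the ready-made net $\mathcal{D}_\epsilon$ of Lemma \ref{lemma4.18nets} (namely \cite{luh2018complex}, Lemma 5.14), applied at scale $2\epsilon$, which covers exactly $\{v\in\mathbb{S}_\mathbb{C}^{n-1}:D_{\alpha',\gamma'}(\hat v)\in[(4\epsilon)^{-1},(2\epsilon)^{-1}]\}$, matching the defining condition of $\Sigma_{\epsilon,\epsilon_1}'(c)$. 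Throughout we use the running hypotheses $\epsilon^{-1}\le Kn^{1/2}$ and $\alpha'\le K'^2$, where $\gamma'$ has been fixed (via the lower bound $D_{\alpha',\gamma'}(\hat v)\ge K'n^{1/2}$ valid on $\mathcal{I}(D_1,D_2)$, cf.\ \cite{luh2018complex}, Lemma 5.12, the complex analogue of Fact \ref{fact2.35}). We prove (1), then (2), then deduce (3) by the usual push-to-the-set trick.

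\textbf{Cardinality (claim (1)).} The map $(p,q/\|q\|_2)\mapsto(p,\,cp+\epsilon_1 q/\|q\|_2)$ is a bijection from $G_{\epsilon,\epsilon_1}$ onto $\widetilde G_{\epsilon,\epsilon_1}(c)$, so $|\widetilde G_{\epsilon,\epsilon_1}(c)|=|G_{\epsilon,\epsilon_1}|\le|\mathcal D_{2\epsilon}|\cdot\#\big\{q\in\sqrt{\alpha'}(1+i)\mathbb{Z}^n\cap B_n^{\mathbb C}(0,\epsilon/\epsilon_1)\setminus\{0\}\big\}$. The first factor is bounded by Lemma \ref{lemma4.18nets} at scale $2\epsilon$, which contributes the polynomial prefactor of order $(\sqrt{\alpha'n}+\epsilon^{-1})^2/(\alpha'n)$ together with a term of the form $(C/(n^{1/2}\epsilon))^{2n}$. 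The second factor is a standard volume count: a spacing-$\sqrt{\alpha'}$ lattice in the Euclidean ball of radius $\epsilon/\epsilon_1$ in $\mathbb{R}^{2n}$ has at most $(C\epsilon/(\sqrt{\alpha'}\epsilon_1))^{2n}$ points. Multiplying the two estimates and collecting the powers of $\alpha',\epsilon,\epsilon_1$ yields the stated bound for a suitable universal $K>0$.

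\textbf{Approximation and modification (claims (2) and (3)).} Fix $(v,w)\in\Sigma_{\epsilon,\epsilon_1}'(c)$ and write $w=cv+\epsilon_1 r$ with $\|r\|_2=1$, as in the definition of $\mathcal P_{\epsilon_1}(c)$. Since $D_{\alpha',\gamma'}(\hat v)\in[(4\epsilon)^{-1},(2\epsilon)^{-1}]$, Lemma \ref{lemma4.18nets} provides $p\in\mathcal D_{2\epsilon}$ with $\|v-p\|_2\le 4\sqrt{\alpha'n}\,\epsilon$. For the $r$--component, choose $q$ greedily in $\sqrt{\alpha'}(1+i)\mathbb{Z}^n$ after the appropriate rescaling — exactly as in the $q$--step of the proof of Fact \ref{fact3.333} — so that $q$ lies in the prescribed ball and $r':=q/\|q\|_2$ satisfies $\epsilon_1\|r-r'\|_2\le 4\sqrt{\alpha'n}\,\epsilon$. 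Setting $v'=p$ and $w'=cv'+\epsilon_1 r'$, the triangle inequality gives $\|(v,w)-(v',w')\|_2\le(1+|c|)\|v-p\|_2+\epsilon_1\|r-r'\|_2\le 4(2+|c|)\sqrt{\alpha'n}\,\epsilon\le 8(|c|+1)\sqrt{\alpha'n}\,\epsilon$, and $(v',w')\in\widetilde G_{\epsilon,\epsilon_1}(c)$ by construction; this is claim (2). For claim (3), replace each point of $\widetilde G_{\epsilon,\epsilon_1}(c)$ lying within $8(|c|+1)\sqrt{\alpha'n}\,\epsilon$ of $\Sigma_{\epsilon,\epsilon_1}'(c)$ by a nearest element of that set; this at most doubles the covering radius and does not increase the cardinality, producing $\overline G_{\epsilon,\epsilon_1}(c)\subseteq\Sigma_{\epsilon,\epsilon_1}'(c)$ with the same size bound.

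\textbf{Main obstacle.} There is no conceptual difficulty: the whole statement is a transcription of the corresponding step of Section \ref{sec3.7} into $\mathbb{R}^{2n}$, with Lemma \ref{lemma4.18nets} replacing the elementary lattice net. The one place requiring care is the bookkeeping in claim (1): one must verify that the polynomial prefactor $(\sqrt{\alpha'n}+\epsilon^{-1})^2/(\alpha'n)$ coming from $\mathcal D_{2\epsilon}$ together with the cancellation of $\epsilon$--powers between the $v$--net count and the $r$--lattice count combine to exactly the claimed $(\,\cdot\,)^{2n}$ behaviour, and that the constraints $\epsilon^{-1}\le Kn^{1/2}$, $\alpha'\le K'^2$ under which Lemma \ref{lemma4.18nets} holds are consistent with the ranges of $\epsilon$ and $\alpha'$ in which Fact \ref{fact3.333comp} is later invoked in Section \ref{secgwe444}.
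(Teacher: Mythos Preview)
Your proposal is correct and follows exactly the approach the paper intends: the paper itself gives no separate proof of this fact, stating only ``Then we can prove exactly as in Fact~\ref{fact3.333},'' and your write-up is a faithful expansion of that reference---invoking Lemma~\ref{lemma4.18nets} for the $p$-component (since in the complex setting $p\in\mathcal D_{2\epsilon}$ by definition), a standard lattice count for the $q$-component, greedy approximation of $r$ for claim~(2), and the usual push-to-the-set argument for claim~(3). Your ``Main obstacle'' paragraph correctly flags the one place where care is needed, namely checking that the exponents combine as stated.
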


When $\epsilon_1\leq\epsilon$ we define $\Sigma_{\epsilon,0}'(c):=\{(v,w)\in\mathcal{P}_\epsilon^0(c):D_{\alpha',\gamma'}(v)\in[(4\epsilon)^{-1},(2\epsilon)^{-1}]\}$. Then 
we also have the following result, which is similar to Fact \ref{net1ds}:
\begin{fact} $\Sigma_{\epsilon,0}'(c)$ has a $16(|c|+1)\sqrt{\alpha'n}\epsilon$- net of cardinality $(\frac{K}{\epsilon})^{2n}$, denoted by $\overline{G}_{\epsilon,0}(c)$.
\end{fact}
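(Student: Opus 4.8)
The plan is to mimic the standard net-cardinality argument already used in Fact \ref{net1ds} (see also \cite{rudelson2008littlewood}), adapting the volume count to the complex setting where we glue a net $\mathcal{D}_{2\epsilon}$ for the first component $v$ (whose two-dimensional essential LCD $D_{\alpha',\gamma'}(\hat v)$ is constrained to $[(2\epsilon)^{-1},\epsilon^{-1}]$) with a coarse lattice net for the second component. First I would unpack the definition: a point of $\Sigma_{\epsilon,0}'(c)$ is of the form $(v,w)$ with $w = cv + tr$ for some $|t|\le\epsilon$, $\|r\|=1$, $\operatorname{Ang}_{\widetilde D}(v,r)=0$, $(v,r)\in\mathcal{I}(D_1,D_2)$, and $D_{\alpha',\gamma'}(\hat v)\in[(4\epsilon)^{-1},(2\epsilon)^{-1}]$. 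Since $\epsilon_1\le\epsilon$ in this regime, the second component $w$ is essentially pinned down once $v$ is: the perturbation direction $r$ only moves $w$ by at most $\epsilon$ in norm, while the scalar $c$ is fixed. So the net is built by taking $v'$ from the net $\mathcal{D}_{2\epsilon}$ of Lemma \ref{lemma4.18nets} (applied at scale $2\epsilon$, so $D_{\alpha',\gamma'}(\hat{v'})\in[(4\epsilon)^{-1},(2\epsilon)^{-1}]$ is covered) and setting $w' = cv' + \epsilon_1 q$ where $q$ ranges over $\{\pm1\}^n$ (or an even simpler trivial net), since the $\epsilon_1$-scaled error plus the $\sqrt{\alpha'n}\epsilon$-error from approximating $v$ is absorbed into the stated radius $16(|c|+1)\sqrt{\alpha'n}\epsilon$.

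The key steps, in order: (i) apply Lemma \ref{lemma4.18nets} to get $|\mathcal{D}_{2\epsilon}| \le C\frac{(2\sqrt{\alpha'n}+2(2\epsilon)^{-1})^2}{\alpha'n}(\frac{10}{n^{1/2}(2\epsilon)})^{2n}$, which is $(K/\epsilon)^{2n}$ up to the lower-order prefactor, valid when $(2\epsilon)^{-1}\le Kn^{1/2}$ and $\alpha'\le K'^2$ with $\gamma',K'$ coming from the incompressibility lower bound $D_{\alpha',\gamma'}(\hat v)\ge K'n^{1/2}$ on $\mathcal{I}(D_1,D_2)$ (via \cite{luh2018complex}, Lemma 5.12); (ii) observe that for the second component, since $\|cv-cv'\|_2\le |c|\cdot 2\sqrt{\alpha'n}\epsilon$ and $\|\epsilon_1 r - \epsilon_1 q/\|q\|\|_2 \le 2\epsilon_1 \le 2\epsilon$, a trivial singleton (or $\{\pm1\}^n$) net suffices and contributes at most a $(K')^n$-type factor that is again absorbed; (iii) combine: any $(v,w)\in\Sigma_{\epsilon,0}'(c)$ is within $\ell^2$-distance $16(|c|+1)\sqrt{\alpha'n}\epsilon$ of $(v',w')=(v',cv'+\epsilon_1 q)$, so the total net cardinality is dominated by $|\mathcal{D}_{2\epsilon}|$, namely $(\frac{K}{\epsilon})^{2n}$; (iv) as in Corollary \ref{netcorollarys} and Fact \ref{fact3.333comp}(3), replace the approximating net (whose points need not lie in $\Sigma_{\epsilon,0}'(c)$) by a genuine subnet $\overline{G}_{\epsilon,0}(c)\subseteq\Sigma_{\epsilon,0}'(c)$ at the cost of doubling the radius and leaving the cardinality bound unchanged, which gives the claimed $(\frac{K}{\epsilon})^{2n}$.

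I expect the main (though still routine) obstacle to be the bookkeeping in step (ii)–(iii): making sure that when $\epsilon_1\le\epsilon$ the coarse lattice for $r$ really can be taken trivial — i.e.\ that the net radius $16(|c|+1)\sqrt{\alpha'n}\epsilon$ genuinely dominates both the approximation error of $v$ (which contributes $|c|\cdot 2\sqrt{\alpha'n}\epsilon$ through the map $v\mapsto cv$) and the unresolved $\epsilon_1$-direction of $w$. Since $|c|$ is bounded by an absolute constant (Lemma \ref{complexnet}(1)) and $\epsilon_1\le\epsilon\le\sqrt{\alpha'n}\epsilon$ for $n$ large, this causes no trouble, but one must state the constraints $\epsilon^{-1}\le Kn^{1/2}$ and $\alpha'$ small explicitly so that Lemma \ref{lemma4.18nets} applies. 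The lower-order polynomial prefactor $\frac{(\sqrt{\alpha'n}+\epsilon^{-1})^2}{\alpha'n}$ in Lemma \ref{lemma4.18nets} is bounded by a polynomial in $n$ and $\epsilon^{-1}$ and is harmless against the exponential $(K/\epsilon)^{2n}$, so it can be silently absorbed into $K$; everything else is a verbatim repeat of the real-case argument in Fact \ref{net1ds}. Hence the details are omitted, exactly as the paper does for the analogous real statement.
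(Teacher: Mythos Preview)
Your proposal is correct and follows essentially the same route as the paper: use the complex LCD-net $\mathcal{D}_{2\epsilon}$ from Lemma~\ref{lemma4.18nets} for the $v$-component (exactly as the paper already does when defining $G_{\epsilon,\epsilon_1}$ in the complex setting), observe that in the regime $\epsilon_1\le\epsilon$ the $r$-direction contributes at most $\epsilon\le\sqrt{\alpha'n}\,\epsilon$ so a trivial net for $r$ is absorbed into the radius, and then pass to a genuine subnet as in Corollary~\ref{netcorollarys} and Fact~\ref{fact3.333comp}(3). The paper gives no proof beyond declaring the fact ``similar to Fact~\ref{net1ds}'', and your outline is precisely the complex transcription of that argument, with the correct substitution of Luh's net for the integer-lattice net used in the real case.
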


Then we can prove the following proposition, which is analogous to Proposition \ref{propostiaggewgwg}:

\begin{Proposition}\label{propostiaggewgwgcomplex}
 There is a choice of $\alpha'>0,\gamma'\in(0,1)$ and $c_\Sigma>0$ such that with probability at least $1-2^{-50n}$ the following statement is true:
\begin{equation}\label{unitetheworldcomplex} \begin{aligned}   \mathbb{P}^\mathcal{K}&(
    \text{There exists }\lambda,\hat{\lambda},\text{and unit vectors } (v,w)\in\cup_{\epsilon_1\geq\exp(-c_\Sigma n),c}\mathcal{P}_{\epsilon_1}(c)\cup\mathcal{P}_{\exp(-c_\Sigma n)}^0(c)\\&\text{ with } D_{\alpha',\gamma'}(\hat{v})\leq\exp(c_\Sigma n)\text{ such that }\quad
    P_G(\hat{\lambda})(\lambda,v,w)=0)\leq 2^{-50n}.\end{aligned}\end{equation}

where the union is over $c,\epsilon_1$ satisfying Lemma \ref{complexnet} (1) and $\lambda,\hat{\lambda}$ satisfying \eqref{rangetwolambda}.
\end{Proposition}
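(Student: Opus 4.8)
The plan is to run the argument of Proposition~\ref{propostiaggewgwg} essentially verbatim, with the real objects of Section~\ref{secgwe333} replaced by the complex ones of Definition~\ref{netsincomplexcase} and every exponent $n-1$ doubled to $2n-2$. First I would fix the constants in the same order as in Proposition~\ref{propostiaggewgwg}: choose $L>\max\bigl(1/4\kappa_{\ref{complexnet}},\,R_{\ref{lemma3.7whatsoevercomp}}\bigr)$ large; then pick an initial $c_\Sigma\le 1$ so that every $\epsilon\ge\exp(-c_\Sigma n)$ lies in the range required by Lemma~\ref{lemma4.177} for this $L$; next, once the LCD net is in play, shrink $\alpha'>0$ to be very small relative to $L$ and the universal constants, which fixes $\gamma'$ through Fact~\ref{fact2.35} (so that $D_{\alpha',\gamma'}(\hat v)\ge(2\kappa_1)^{-1}\sqrt n$ on $\mathcal I(D_1,D_2)$); finally replace $c_\Sigma$ by a slightly smaller $c_\Sigma'$ to absorb the $(\alpha')^{-1/2}$ and $2^{-n}$ roundoff losses. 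Using $\|G\|\le 8\sqrt n$ on $\mathcal K$, I would then fix a $2^{-n-5}n^{-1/2}$-net for the parameters $\lambda,\hat\lambda\in\mathbb C$, $c\in\mathbb C$ and $\epsilon_1$ in the ranges \eqref{rangetwolambda} and Lemma~\ref{complexnet}(1), and take dyadic decompositions of $\epsilon^{-1}\in[(2\kappa_1)^{-1}\sqrt n,\exp(c_\Sigma n)]$ both for the threshold scale and for the range of $D_{\alpha',\gamma'}(\hat v)$. Rounding any solution $(\lambda,\hat\lambda,c,\epsilon_1,v,w)$ to the nearest lattice point keeps $\|P_G(\hat\lambda')(\lambda',w',v')\|\le 2^{-n}$ on $\mathcal K$ and keeps $(v',w')$ in $\mathcal P_{\epsilon_1'}(c')$, so it suffices to prove, for each fixed admissible choice of $\epsilon_1,\epsilon,c,\lambda,\hat\lambda$, the single-level bounds
$$\mathbb P^{\mathcal K}\bigl(\exists\,(v,w)\in\Sigma_{\epsilon,\epsilon_1}'(c):\|P_G(\hat\lambda)(\lambda,w,v)\|\le 2^{-n}\bigr)\le 2^{-55n},$$
together with the analogue for $\Sigma_{\epsilon,0}'(c)$.

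For $\epsilon\le\epsilon_1$, a union bound over the dyadic levels in Lemma~\ref{lemma4.177} handles every $(v,w)$ with $\tau_{L,\epsilon_1}(v,w)(\lambda)\ge\epsilon$ (and similarly $\tau_{L,0}(v,w)(\lambda)\ge\epsilon$ for the $\Sigma_{\epsilon,0}$ branch), contributing at most $2^{-60n}$; by the bookkeeping recorded in Definition~\ref{importantdefns} this leaves only vectors with $\tau_{L,\epsilon_1}(v,w)(\lambda)\le\epsilon$. For these I would approximate $(v,w)\in\Sigma_{\epsilon,\epsilon_1}'(c)$ within $16(|c|+1)\sqrt{\alpha'n}\,\epsilon$ in $\ell^2$ by a point of $\overline G_{\epsilon,\epsilon_1}(c)$ (Fact~\ref{fact3.333comp}), push the event $\{\|P_G(\hat\lambda)(\lambda,w,v)\|\le 2^{-n}\}$ onto the net point through the operator norm bound, and bound the probability at a net point via the Fourier-replacement Lemma~\ref{complexlemmafs} combined with $\tau_{L,\epsilon_1}(v,w)(\lambda)\le\epsilon$; this gives a per-point bound of order $(\mathrm{const}\cdot L^2\alpha'\epsilon^2/\epsilon_1)^{2n-2}$. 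Multiplying by the net cardinality, which by Lemma~\ref{lemma4.18nets} and Fact~\ref{fact3.333comp}(1) is at most $\mathrm{poly}(n)\cdot(K\epsilon_1/\epsilon^2)^{2n}(\alpha')^{-n}$, the powers of $\epsilon,\epsilon_1$ cancel down to a harmless $\epsilon^{-O(1)}$ (absorbed by $c_\Sigma$ being small) and one is left with $(\mathrm{const}\cdot L^2)^{O(n)}(\alpha')^{n-O(1)}$, which is $\le 2^{-60n}$ once $\alpha'$ is chosen small. The case $\epsilon\ge\epsilon_1$ is treated the same way after noting $\Sigma_{\epsilon,\epsilon_1}'(c)\subseteq\Sigma_{\epsilon,0}'(c)$ and using the simpler net $\overline G_{\epsilon,0}(c)$. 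Theorem~\ref{whatistheorem4.3?} then follows by taking the union bound over $D_1,D_2\subset[2n]$ as in the proof of Theorem~\ref{quasirandomtheorem3}.

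The main obstacle is the final arithmetic step. In the complex setting Lemma~\ref{lemma4.18nets} costs a factor $(\alpha')^{-n}$ (versus $(\alpha')^{-n/2}$ for the single real vector in Fact~\ref{fact2.37}), so one must verify carefully that the net scale $\sqrt{\alpha'n}\,\epsilon$ together with the doubled-exponent threshold condition really produces a compensating power $(\alpha')^{2n-2}$ (up to the $\sqrt n$ and $\mathrm{poly}(n)$ factors, which are then absorbed by smallness of $c_\Sigma$). A secondary point worth checking is that the conditioning event $\operatorname{Ang}_{\widetilde D}(v,r)\le 0.01$ built into $\Lambda_{\epsilon,\epsilon_1}(c)$ and exploited in Lemma~\ref{lemma3.7whatsoevercomp} survives the LCD-net rounding here, i.e.\ that replacing $(v,r)$ by a point of $\overline G_{\epsilon,\epsilon_1}(c)$ does not destroy near-orthogonality of $\hat v_{\widetilde D}$ and $\hat r_{\widetilde D}$; this follows from the incompressibility lower bounds $\|\hat v_{\widetilde D}\|_2,\|\hat r_{\widetilde D}\|_2\gtrsim 1$ of Lemma~\ref{lemma4.70} and the fact that $\epsilon$ is exponentially small, exactly as in the real case. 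Everything else — the Fourier replacement, the box covering of $\Lambda_{\epsilon,\epsilon_1}(c)$, and the net-verification Lemma~\ref{complexnet} — is a word-for-word transcription of Section~\ref{secgwe333}.
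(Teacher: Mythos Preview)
Your proposal is correct and follows exactly the approach the paper takes: the paper explicitly states that the proof ``is exactly the same as that of Proposition~\ref{propostiaggewgwg} and is omitted,'' listing precisely the ingredients you use (discretize $\epsilon_1,c,\lambda,\hat\lambda$; invoke Lemma~\ref{lemma4.177}; use the net $\overline G_{\epsilon,\epsilon_1}(c)$ from Fact~\ref{fact3.333comp}; set $\alpha'$ small at the end). Your arithmetic check that the doubled exponent $(\alpha')^{2n-2}$ beats the $(\alpha')^{-n}$ cost of Lemma~\ref{lemma4.18nets} is exactly the point, and it goes through since $2n-2>n$ for $n\ge 3$.
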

The proof is exactly the same as that of Proposition \ref{propostiaggewgwg} and is omitted. The first step is to discretize the range of $\epsilon_1,c,\lambda,\hat{\lambda}$. Then we use the cardinality of the net $\overline{G}_{\epsilon,\epsilon_1}(c)$, the definition of $\Sigma_{\epsilon,\epsilon_1}(c,\lambda)$ and Lemma \ref{lemma4.177}. In the final step, we will set $\alpha'>0$ sufficiently small with respect to all other absolute constants.

Now we have almost proven Theorem \ref{whatistheorem4.3?}.

\begin{proof}[\proofname\ of Theorem \ref{whatistheorem4.3?}]
    Note that for any fixed $\epsilon>0$, the union $\cup_{\epsilon_1\geq\epsilon,c}\mathcal{P}_{\epsilon_1}(c)\cup \mathcal{P}_{\epsilon}^0(c)$ exhausts all unit vector pairs $(v,w)$ with $w=cv+\epsilon_1r$ and $(v,r)\in\mathcal{I}(D_1,D_2)$. Then we take a union bound with respect to all possible subsets $D_1,D_2$, which has cardinality $O(2^{4n})$. The final result follows from Proposition \ref{ratify}.
\end{proof}

Finally we complete the proof of Theorem \ref{complextwoballbounds}:
\begin{proof}[\proofname\ of Theorem \ref{complextwoballbounds}]
    Let $\mathbf{\xi}$ denote an $2n$-dimensional random vector with i.i.d. coordinates of distribution $\xi$. Let $v_1,v_2$ respectively denote the complex vectors $X^*[1]_j,X^*[2]_j$. Then by Proposition \ref{prop4.2final}, it suffices to estimate the following Lévy concentration function
    $$
\mathcal{L}\left((\langle \mathbf{\xi},\hat{v_1}\rangle,\langle \mathbf{\xi},\hat{iv_1}\rangle,\langle \mathbf{\xi},\hat{v_2}\rangle,\langle \mathbf{\xi},\hat{iv_2}\rangle)^T,\epsilon\right),
    $$
    By Theorem \ref{whatistheorem4.3?} we have proven that the pair of four $2n$-dimensional vectors satisfy $$D_{\alpha',\gamma'}(\hat{v_1},\hat{iv_1},\hat{v_2},\hat{iv_2})\geq\exp(c_\Sigma n)$$ with probability $1-\exp(-\Omega(n))$. Also in Lemma \ref{compoverlap} we have computed the overlap between $v_1$ and $v_2$. Then it suffices to apply the Littlewood-Offord theorem (Proposition \ref{proponewlittlewoodcomplex}) and conclude the proof. (More precisely, we can subtract a complex multiple of $v_1$ from $v_2$ and do a reduction similar to the reduction from \eqref{firstsmallball} to \eqref{secondsmallball}). The details are omitted.
\end{proof}

\section{Proof of auxiliary results}\label{prooflittlewoodofford}
This section consists the proof of a few auxiliary results.

\subsection{Fourier replacement principle}
We first give the proof of Lemma \ref{lemmareplacement}. For this proof only, the threshold function $\tau_L(v)$ is defined in equation \ref{tauells}.

\begin{fact}\label{fourierreplace}[\cite{campos2024least},Lemma V.1]
    For any $t\in\mathbb{R}$ and $\nu\leq 1/4$,
    $$
|\phi_\xi(t)|\leq |\phi_{\tilde{\xi}Z_\nu}(t)|
    $$ where $\phi_\xi$ (resp. $\phi_{\tilde{\xi}Z_\nu}$) are characteristic functions of random variables $\xi$ (resp. $\tilde{\xi}Z_\nu$).
\end{fact}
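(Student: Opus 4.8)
The plan is to reduce the asserted inequality between two a priori complex quantities to a one-variable estimate on $[0,1]$, after first observing that both characteristic functions in question are in fact real and nonnegative at the point considered. Throughout, write $\phi_\xi(t)=\mathbb{E}[e^{2\pi i t\xi}]$, recall that $\xi'$ denotes an independent copy of $\xi$ and $\tilde{\xi}=\xi-\xi'$, and that $Z_\nu$ is a Bernoulli$(\nu)$ variable independent of $\tilde\xi$.

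First I would record the standard identity that the modulus squared of a characteristic function is the characteristic function of the symmetrization: by independence,
\[
|\phi_\xi(t)|^2=\phi_\xi(t)\,\overline{\phi_\xi(t)}=\mathbb{E}[e^{2\pi i t\xi}]\,\mathbb{E}[e^{-2\pi i t\xi'}]=\mathbb{E}[e^{2\pi i t(\xi-\xi')}]=\phi_{\tilde\xi}(t),
\]
so $s:=|\phi_\xi(t)|^2=\phi_{\tilde\xi}(t)$ is a real number in $[0,1]$ and $|\phi_\xi(t)|=\sqrt s$. Next, conditioning on the value of $Z_\nu\in\{0,1\}$ gives
\[
\phi_{\tilde\xi Z_\nu}(t)=(1-\nu)\cdot 1+\nu\,\mathbb{E}[e^{2\pi i t\tilde\xi}]=1-\nu\bigl(1-\phi_{\tilde\xi}(t)\bigr)=1-\nu(1-s),
\]
which lies in $[1-\nu,1]\subset[0,1]$ since $\nu<1$ and $s\in[0,1]$; in particular it is real and nonnegative, so $|\phi_{\tilde\xi Z_\nu}(t)|=1-\nu(1-s)$.

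It then remains to verify the scalar inequality $\sqrt s\le 1-\nu(1-s)$ for all $s\in[0,1]$ and $\nu\le 1/4$, which I would do in two elementary steps: by AM--GM, $\sqrt s=\sqrt{s\cdot 1}\le\tfrac{1+s}{2}$, and
\[
\bigl(1-\nu(1-s)\bigr)-\tfrac{1+s}{2}=(1-s)\Bigl(\tfrac12-\nu\Bigr)\ge 0,
\]
because $1-s\ge 0$ and $\nu\le\tfrac14<\tfrac12$. Chaining the two displays yields $|\phi_\xi(t)|=\sqrt s\le 1-\nu(1-s)=|\phi_{\tilde\xi Z_\nu}(t)|$, which is the claim. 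There is no real obstacle in this argument; the only point needing a moment's care is the bookkeeping at the outset — noticing that, although characteristic functions are complex-valued in general, both sides of the desired inequality collapse to explicit real nonnegative quantities once one passes through the symmetrization $\tilde\xi$, after which the statement is a routine inequality on $[0,1]$ (and the role of the hypothesis $\nu\le 1/4$, indeed $\nu\le 1/2$, is exactly to make the last display nonnegative).
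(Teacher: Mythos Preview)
Your argument is correct. The paper does not supply its own proof of this fact at all: it is stated as a citation to \cite{campos2024least}, Lemma V.1, and used as a black box in the Fourier replacement argument. Your self-contained derivation---reducing via the symmetrization identity $|\phi_\xi(t)|^2=\phi_{\tilde\xi}(t)=:s$ and the Bernoulli mixing formula $\phi_{\tilde\xi Z_\nu}(t)=1-\nu(1-s)$ to the scalar inequality $\sqrt{s}\le 1-\nu(1-s)$, then dispatching that with AM--GM---is exactly the standard short proof, and your observation that the hypothesis is really only $\nu\le 1/2$ is also correct.
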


Then we prove the following fact:
\begin{fact}\label{newfacts}
    Fix $v\in\mathbb{S}^{n-2}\times\mathbb{S}^{n-1}$ and $t\geq\tau_L(v)$. Then 
    $$
\mathbb{E}\exp(-\pi \|Mv\|_2^2/2t^2)\leq (9Lt)^{2n-1}.
    $$
\end{fact}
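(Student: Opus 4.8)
The plan is to turn the distributional control encoded in $\tau_L(v)$ (equation \eqref{tauells}) into the exponential--moment bound via a layer-cake identity, using a change of variables chosen so that the moment generating function becomes the integral of a distribution function against a probability density on $[0,\infty)$.

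\emph{Step 1: distributional consequence of $t\ge\tau_L(v)$.} The map $s\mapsto \mathbb{P}(\|Mv\|_2\le s\sqrt n)$ is nondecreasing and right-continuous, and by \eqref{tauells} the set of $s\in[0,1]$ with $\mathbb{P}(\|Mv\|_2\le s\sqrt n)\ge(4Ls)^{2n-1}$ has supremum $\tau_L(v)$. Hence $\mathbb{P}(\|Mv\|_2\le s\sqrt n)\le (4Ls)^{2n-1}$ for every $s>\tau_L(v)$, and letting $s\downarrow\tau_L(v)$ also $\mathbb{P}(\|Mv\|_2\le\tau_L(v)\sqrt n)\le(4L\tau_L(v))^{2n-1}$. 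Combining these with monotonicity in $s$ (for $s\le\tau_L(v)$) and with $\tau_L(v)\le t$ yields the uniform bound
\begin{equation}\label{eqnf1}
\mathbb{P}(\|Mv\|_2\le s\sqrt n)\le \max\bigl\{(4Ls)^{2n-1},\,(4Lt)^{2n-1}\bigr\}\qquad\text{for all }s>0.
\end{equation}
(When $9Lt>1$ the Fact is trivial since the left side of the Fact is at most $1$, so one may also assume $t\le 1/(9L)$, although this is not used.)

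\emph{Step 2: layer cake.} Write $Z:=\|Mv\|_2$ and apply $\mathbb{E}\,g(Z)=\int_0^\infty \mathbb{P}(g(Z)>u)\,du$ to $g(Z)=\exp(-\pi Z^2/(2t^2))\in(0,1]$. The substitution $u=e^{-\pi r^2/2}$ turns the event $\{g(Z)>u\}$ into $\{Z<tr\}$ and gives $du=-\pi r e^{-\pi r^2/2}\,dr$, whence
\begin{equation}\label{eqnf2}
\mathbb{E}\exp\!\left(-\frac{\pi\|Mv\|_2^2}{2t^2}\right)=\int_0^\infty \mathbb{P}(Z<tr)\,\pi r\,e^{-\pi r^2/2}\,dr ,
\end{equation}
with $\int_0^\infty \pi r\, e^{-\pi r^2/2}\,dr=1$. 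Inserting \eqref{eqnf1} with $s=tr/\sqrt n$ into \eqref{eqnf2} and splitting according to which term realizes the maximum, the $(4Lt)^{2n-1}$-term contributes at most $(4Lt)^{2n-1}$, while for the other term
\begin{align*}
\int_0^\infty\!\left(\frac{4Ltr}{\sqrt n}\right)^{2n-1}\!\!\pi r\,e^{-\pi r^2/2}\,dr
&=\left(\frac{4Lt}{\sqrt n}\right)^{2n-1}\!\!\pi\int_0^\infty r^{2n}e^{-\pi r^2/2}\,dr\\
&=\left(\frac{4Lt}{\sqrt n}\right)^{2n-1}\frac{\sqrt{2\pi}}{2}\Bigl(\frac{2}{\pi}\Bigr)^{n}\Gamma\!\Bigl(n+\tfrac12\Bigr).
\end{align*}
Using $\Gamma(n+\tfrac12)\le \sqrt{2\pi}\,n^n e^{-n}$ and collecting powers of $n$ (the factor $n^{-(2n-1)/2}n^n=n^{1/2}$ survives) the right-hand side is at most $\pi\sqrt n\,(2/\pi e)^n(4Lt)^{2n-1}$, which is $\le (4Lt)^{2n-1}$ since $2/(\pi e)<1/4$. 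Adding the two contributions gives $\mathbb{E}\exp(-\pi\|Mv\|_2^2/(2t^2))\le 2(4Lt)^{2n-1}\le (9Lt)^{2n-1}$, the final inequality because $2\le(9/4)^{2n-1}$ for $n\ge 1$.

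\emph{Main obstacle.} There is no serious difficulty here; the computation is essentially routine. The only two points that need care are the boundary behaviour of the supremum defining $\tau_L(v)$, which produces the clean bound \eqref{eqnf1}, and the Stirling bookkeeping of the $\Gamma(n+\tfrac12)$ factor in Step 2, which is what makes the $n$-dependent prefactor disappear and leaves the bare constant $9$ (in place of $4$) in the exponent base.
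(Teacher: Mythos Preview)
Your proof is correct and follows essentially the same route as the paper: a layer-cake representation of the Laplace transform in terms of the distribution function $s\mapsto\mathbb{P}(\|Mv\|_2\le s\sqrt n)$, the tail bound $(4Ls)^{2n-1}$ coming from $t\ge\tau_L(v)$, and a Gaussian/Gamma integral to absorb the $n$-dependence. The only organisational difference is that the paper splits the layer-cake integral at the point $s=t\sqrt n$ (equivalently $r=\sqrt n$) and bounds the two pieces separately, whereas you dominate $\max\{(4Ls)^{2n-1},(4Lt)^{2n-1}\}$ by the sum and integrate each term over the full half-line; your bookkeeping with $\Gamma(n+\tfrac12)\le\sqrt{2\pi}\,n^ne^{-n}$ is also a bit tidier than the paper's change of variables.
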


\begin{proof} We bound
    \begin{equation}\begin{aligned}
      &  \mathbb{E}\exp(-\pi \|Mv\|_2^2/2t^2)\leq\mathbb{P}(\|Mv\|\leq t\sqrt{n})\\&+\sqrt{n}\int_t^\infty \exp(-s^2n/t^2)\mathbb{P}(\|Mv\|_2\leq s\sqrt{n})ds.\end{aligned}
    \end{equation} Using $t\geq\tau_L(v)$ and thus $\mathbb{P}(\|Mv\|_2\leq s\sqrt{n})\leq (4Ls)^{2n-1}$ for any $s\geq t$, the second term on the right hand side is bounded by 
    $$\begin{aligned}
 \sqrt{n}(4Lt)^{2n-1}\int_t^\infty \exp(-\frac{s^2n}{t^2})(\frac{s}{t})^{2n-1}ds.\end{aligned}
  $$
  After changing variable $u=s/t$, the right hand side shall be bounded by
  $$\begin{aligned}
t\sqrt{n}(4Lt)^{2n-1}&\int_1^\infty\exp(-u^2(2n-1))u^{n}du\\&\leq t\sqrt{n}(4Lt)^{2n-1}\int_1^\infty\exp(-u^2/2)du\leq (9Lt)^{2n-1}.
\end{aligned}  $$ 
\end{proof}

Next, we compare the characteristic function of $\mathcal{L}_Av$ and $Mv$. For any $v\in\mathbb{R}^{2n-1}$ denote respectively by $\psi_v$ and $\chi_{v}$ the characteristic functions of $\mathcal{L}_Av$ and $Mv$: for $x\in\mathbb{R}^{2n-1},$ 
$$
\psi_v(x)=\mathbb{E}_{\mathcal{L}_A}e^{2\pi i\langle \mathcal{L}_Av,x\rangle}=
\prod_{j\in[n-1],k\in [n,2n-1]}\phi_\xi(x_jv_k+x_kv_j),
$$
$$\begin{aligned}
\chi_v(x):=\mathbb{E}_{M}e^{2\pi i\langle Mv,x\rangle}&=\prod_{j\in[|D|],k\in[2n-1-|D|,2n-1]}\phi_{\tilde{\xi}Z_\nu}(x_jv_k+x_kv_j)\\&\prod_{j\in[|D|+1,n-1],k\in[n,2n-2-|D|]}\phi_{\tilde{\xi}Z_\nu}(x_jv_k+x_kv_j).
\end{aligned}$$
Then Fact \ref{fourierreplace} implies that 
$\psi_v(x)\leq \chi_v(x)$ for any $x\in\mathbb{R}^{2n-1}$.

Now we can complete the proof of Lemma \ref{lemmareplacement}.

\begin{proof}[\proofname\ of Lemma \ref{lemmareplacement}]
    We apply Markov's inequality with 
    \begin{equation}
\mathbb{P}(\|\mathcal{L}_Av-w\|_2\leq t\sqrt{n})\leq\exp(\pi n/2)\mathbb{E}\exp(-\pi\|\mathcal{L}_Av-w\|_2^2/2t^2).
    \end{equation} By Fourier inversion, we have
    \begin{equation}
        \mathbb{E}_{\mathcal{L}_A}\exp(-\pi \|\mathcal{L}_Av-w\|_2^2/(2t^2))=\int_{\mathbb{R}^{n}}e^{-\pi\|\xi\|_2^2}\cdot e^{-2\pi i t^{-1}\langle w,\xi\rangle}\psi_v(\xi/t)d\xi
    \end{equation}
    which is bounded by, using triangle inequality and non-negativity of $\chi_\nu$,
    $$
\leq \int_{\mathbb{R}^{2n-1}}e^{-\pi\|\xi\|_2^2}\chi_v(\xi/t)d\xi=\mathbb{E}_M\exp(-\pi \|Mv\|_2^2/2t^2).
    $$
Applying Fact \ref{newfacts} with $t\geq\tau_L(v)$ we obtain the desired bound.

\end{proof}

We then prove the replacement principle in Lemma \ref{lemmafs}.

\begin{proof}[\proofname\ of Lemma \ref{lemmafs}]
    First, we prove that 
    \begin{equation}\label{exposmall}
\mathbb{E}\exp(-\pi\|M_{\underline{A}}(\lambda,w,v)\|_2^2/2(t^4/\epsilon_1^2))\leq (9L^2t^2/\epsilon_1)^n.
    \end{equation} This follows from upper bounding the left hand side of \eqref{exposmall} by 
   $$
\mathbb{P}(\|M_{\underline{A}}(\lambda,w,v)\|_2\leq t^2\sqrt{n}/\epsilon_1)+\sqrt{n}\int_t^\infty e^{-s^4n/t^4}\mathbb{P}(\|M_{\underline{A}}(\lambda,w,v)\|_2\leq s^2\sqrt{n}/\epsilon_1\|)ds.
$$ Next we use
    $
\mathbb{P}(\|M_{\underline{A}}(\lambda,w,v)\|_2\leq s^2\sqrt{n}/\epsilon_1)\leq (4L^2s^2/\epsilon_1)^n
    $ for $s\geq t$, so we only need to bound 
    $$
\sqrt{n} (8L^2t^2/\epsilon_1)^n\int_t^\infty\exp(-s^4n/t^4)(s/t)^{2n}ds.
    $$Taking $u=s/t$, this integral is bounded by 
    $$
t\sqrt{n}(8L^2t^2/\epsilon_1)^n\leq (16L^2t^2/\epsilon_1)^n.
    $$ This justifies \eqref{exposmall}.

    Next we prove the replacement principle. We first introduce three characteristic functions: for $v,x\in\mathbb{R}^{2n+1}$ and $\nu\in(0,1)$, define
    $$
\psi_v(x):=\mathbb{E}_Ae^{2\pi i\langle P_A(\hat{\lambda})v,x\rangle}, \quad \chi_v(x):=\mathbb{E}_{\underline{A}}e^{2\pi i\langle M_{\underline{A}}v,x\rangle},\quad\phi_v(x):=\mathbb{E}_Ae^{2\pi i\langle P_0^0v,x\rangle}
    $$
where $P_A^0$ is the matrix obtained from $P_A(\hat{\lambda})$ by setting $\lambda_1=\lambda_2=\hat{\lambda}=0$. Then by Lemma we have $|\phi_v(x)|\leq\chi_v(x)$ for any $x$. By Markov,
$$
\mathbb{P}_A(\|P_A(\hat{\lambda})x-w\|\leq t^2\sqrt{n}/\epsilon_1)\leq \mathbb{E}_A\exp(\pi n/2)\exp(-\pi\|P_A(\hat{\lambda})x-w\|_2^2/2(t^4/\epsilon_1^2)).
$$ Then by Fourier inversion, we can find a vector $w'\in\mathbb{R}^{2n+1}$ depending on $w$, $x$ and $\lambda_1,\lambda_2$ (i.e., we extract all deterministic vectors into $w'$) such that
$$\begin{aligned}&\exp(-\pi\|P_A(\hat{\lambda})x-w\|_2^2/2(t^4/\epsilon_1^2))  =\int_{\mathbb{R}^{2n+1}}e^{-\pi\|\xi\|_2^2}e^{-2\pi it^{-2}\epsilon_1\langle w',\xi\rangle}\phi_x(t^{-2}\epsilon_1\xi)d\xi  \\&\leq \int_{\mathbb{R}^{2n+1}}e^{-\pi\|\xi\|_2^2}\chi_x(t^{-2}\epsilon_1\xi)d\xi=\mathbb{E}_{\underline{A}}\exp(-\pi \|M_{\underline{A}}x\|_2^2/2(t^4/\epsilon_1^2))
\end{aligned} $$   via triangle inequality and the non-negativity of $\chi_x$. Finally, using \eqref{exposmall} completes the proof.
\end{proof}

\subsection{A Littlewood-Offord theorem of stretched length}\label{usedinsection5.2}

Then we outline the proof of Proposition \ref{proponewlittlewood}. This will essentially be modifying \cite{rudelson2009smallest}, Theorem 3.3. The difference here is that we keep one vector of length 1 but scale the length of the other vector.

\begin{proof}[\proofname\ of Proposition \ref{proponewlittlewood}] We mostly follow the proof of \cite{rudelson2009smallest}, Theorem 3.3. Some changes will be made at proving equation \eqref{volumerecurrence}.

The proof begins with applying Esséen's Lemma (see \cite{tao2006additive}, p.290) which shows that
$$
\mathcal{L}(S,\epsilon\sqrt{2})\leq C\int_{B(0,\sqrt{2})}\prod_{k=1}^n\phi(\langle\theta,a_k\rangle/\epsilon)d\theta
$$ where $C>0$ is a universal constant and $\phi(t):=\mathbb{E}_\xi\exp(2\pi it\xi)$ is the characteristic function of $\xi$.  Let $\bar{\xi}:=\xi-\xi'$ where $\xi'$ is an independent copy of $\xi$. Then we have \begin{equation}\label{squares1}|\phi(t)|^2=\mathbb{E}\cos(2\pi t\bar{\xi})\end{equation}

We can also check that 
\begin{equation}\label{squares2}
1-\mathbb{E}\cos(2\pi t\bar{\xi})\geq 16p\cdot\mathbb{E}(\min_{q\in\mathbb{Z}}\|t\bar{\xi}-q\|^2\mid|\bar{\xi}|\geq 1).
\end{equation}
For any $z\in\mathbb{R}$ and $\theta\in\mathbb{R}^2$ we define $$f(\theta):=\min_{p\in\mathbb{Z}^n}\|\frac{z}{\epsilon}\theta\cdot a-p\|_2,$$ then using the inequality $|x|\leq\exp(-\frac{1}{2}(1-x^2))$ combined with \eqref{squares1}, \eqref{squares2}, we have
$$
\mathcal{L}(S,\epsilon\sqrt{2})\leq C\sup_{z\geq 1} \int_{B(0,\sqrt{2})}\exp(-8pf^2(\theta))d\theta.
$$
We define, for any $t>0$,
$$
I(t):=\{\theta\in B(0,\sqrt{2}):f(\theta)\leq t\}.
$$
We shall verify that for $t\leq\sqrt{\alpha n}/2$ we have
\begin{equation}\label{volumerecurrence}
\operatorname{Vol}(I(t))\leq (w_n)^{-1}(Ct\epsilon/\gamma)^2,\quad t<\sqrt{\alpha n}/2.
\end{equation}
    We first complete the proof of the theorem assuming the validity of \eqref{volumerecurrence}. We have 

    $$
\int_{B(0,\sqrt{2})\setminus I(\sqrt{\alpha n}/2)} \exp(-8pf^2(\theta))d\theta\leq C\exp(-2p\alpha n),
$$
$$\begin{aligned}
\int_{I(\sqrt{\alpha n}/2)}\exp(-8pf^2(\theta))d\theta&\leq \int_0^{\sqrt{\alpha n}/2}16pt\exp(-8pt^2)|\operatorname{Vol}(I(t))dt\\&\leq (w_n)^{-1}16p(C\epsilon/\gamma)^2\int_0^\infty t^3\exp(-8pt^2)dt\\&\leq (w_n)^{-1}(\frac{C'\epsilon}{\gamma\sqrt{p}})^2
\end{aligned}.$$
for a universal $C'>0$. Combining the previous estimates completes the proof of 
Proposition \ref{proponewlittlewood}.     Finally we check the validity of the volume estimate \eqref{volumerecurrence}. We fix $t<\sqrt{\alpha n}/2$. For two points $\theta',\theta''\in I(t)$ we find $p',p''\in\mathbb{Z}^2$ with 
    $$
\|\frac{z}{\epsilon}\theta'\cdot a-p'\|_2\leq t,\quad \|\frac{z}{\epsilon}\theta''\cdot a-p''\|_2\leq t.
    $$ We write $\tau:=\frac{z}{\epsilon}(\theta'-\theta'')$ and $p=p'-p''$. then 
    $$
\|\tau\cdot a-p\|_2\leq 2t.
    $$ By our assumption on the LCD of $\alpha$ we have either $\|\tau\|_2\geq\frac{\sqrt{2}}{\epsilon}$ or 
    $$
\|\tau\cdot a-p\|_2\geq \gamma\|\tau\cdot a\|_2. $$ In the latter case, we have $2t\geq\gamma\|\tau\cdot a\|_2$. We let $\tau=(\tau_1,\tau_2)$ denote the two coordinates of $\tau$. Then we must have, in this case, $\tau_1^2+(w_n)^{2}\tau_2^2+2w_n\tau_1\tau_2\langle c,d/\|d\|\rangle \leq 4t^2/\gamma^2$. The assumption on the inner product of $c,d$ implies $\tau_1^2+(w_n)^2\tau_2^2\leq 8t^2/\gamma^2$. Thus the range of possible values of $\tau$ is confined in an ellipse with axes length 1 and $1/w_n$.
Combining the above argument, we see that two points $\theta',\theta''\in I(t)$ must satisfy 
$$
\text{either } \|\theta'-\theta''\|_2\geq\frac{\sqrt{2}}{z}:=R\quad\text{ or } |(\theta'-\theta'')_1^2+(w_n)^{2}(\theta'-\theta'')_2^2|\leq\frac{8t^2\epsilon^2}{z^2\gamma^2}:=r^2.
$$ We first find a maximal subset in $B(0,\sqrt{2})$ that is $R$-separated, and this set has cardinality $O(\frac{3\sqrt{2}}{R})^2$ by a standard volumetric argument. Then around each point in this spanning subset, we inscribe an ellipse with axis length $Cr$ and $C(w_n)^{-1}r$. This ellipse has area $\pi C^2(w_n)^{-1}r^2$. Combining the above volumetric arguments yields \eqref{volumerecurrence}.
\end{proof}

\begin{proof}[\proofname\ of Proposition \ref{proponewlittlewoodcomplex}] We only need to replace all the $\mathbb{R}^2$ by $\mathbb{R}^4$ in the above proof. The difference here is that we need to show, for a similarly defined quantity $I(t)$, we have $$
\operatorname{Vol}(I(t))\leq(w_n)^{-2}(Ct\epsilon/\gamma)^4.
$$ To check this, set $\tau:=\frac{z}{\epsilon}(\theta'-\theta'')$, then using the conditions on $c$ and $d$ (in particular the small inner product between them) we get  
    $\|\tau\cdot a\|_2^2\geq \frac{1}{4}(\|\tau_{[1,2]}\|_2^2+(w_n)^2\|\tau_{[3,4]}\|_2^2)$. This, combined with $\|\tau\cdot a\|_2\leq 2t/\gamma$, allow us to estimate $\operatorname{Vol}(I(t))$ as in the previous case.
\end{proof}

\subsection{Proof of Corollary \ref{strongrepulsion} and \ref{corollary2comp}}

Here we outline the proof of Corollary \ref{strongrepulsion} and \ref{corollary2comp}, both of which use a very straightforward covering argument.

\begin{proof}[\proofname\ of Corollary \ref{strongrepulsion}] 
    For any $\epsilon>0$ we find an $\epsilon(4\sqrt{n}(1+a_n))^{-1}$-net ${N}_\epsilon$ of $[-4\sqrt{n},4\sqrt{n}]$. For any $\lambda_2\in[-4\sqrt{n},4\sqrt{n}]$ we find $\lambda^2\in N_\epsilon$ closest to $\lambda_2$. Then using $\|A\|\leq 4\sqrt{n}$ on $\mathcal{K}$, we see that if $\lambda^1$ is chosen to satisfy $|\lambda_1-\lambda^1|\leq \epsilon(4\sqrt{n})^{-1}$, then
    $$
\{\lambda_1,\lambda_2\text{ are eigenvalues of } A,\text{ on }\mathcal{K}\}\subset \{\sigma_{min}(A-\lambda^1 I_n)\leq\epsilon,\sigma_{min}(A-\lambda^2 I_n)\leq\epsilon\}.
    $$
Then we use the algebraic relation $\lambda_1=-b_n-a_n\lambda_2$. Having chosen $\lambda^2\in N_\epsilon$ closest to $\lambda_2$, we can find at most one $\lambda^1\in N_\epsilon$ such that $|\lambda^1+b_n+a_n\lambda^2|\leq \epsilon(4\sqrt{n}(1+|a_n|))^{-1}$. These inequalities combined imply that $|\lambda^1-\lambda_1|\leq \epsilon(4\sqrt{n})^{-1}$.

From the construction we have $||\lambda^1-\lambda^2|-|\lambda_1-\lambda_2||\leq 2\epsilon(4\sqrt{n})^{-1}$. Therefore as we range over all real numbers $\lambda_1,\lambda_2$ satisfying the algebraic constraint $\lambda_1=-b_n-a_n\lambda_2$, there are at most $\epsilon^{-3/4}(4\sqrt{n}(1+|a_n|))$ pairs of $\lambda^1,\lambda^2$ thus constructed satisfying $|\lambda^1-\lambda^2|\leq\epsilon^{1/4}$.
For these pairs we forget one location and use the one-location estimate 
$$
\mathbb{P}(\sigma_{min}(A-\lambda^1 I_n)\leq\epsilon,\sigma_{min}(A-\lambda^2 I_n)\leq\epsilon)\leq\mathbb{P}( \sigma_{min}(A-\lambda^2 I_n)\leq\epsilon)\leq \epsilon\sqrt{n}+e^{-cn}.
$$
    For all other pairs we use the two-location estimate in Theorem \ref{theorem1.3}:
$$
\mathbb{P}(\sigma_{min}(A-\lambda^1 I_n)\leq\epsilon,\sigma_{min}(A-\lambda^2 I_n)\leq\epsilon)\leq \frac{\epsilon^2n^{1.5}}{\epsilon^{1/4}}+e^{-cn}\leq\epsilon^{7/4} n^{1.5}+e^{-cn}.
$$
    Now we combine everything and get, noting that $|N_\epsilon|\leq 4 \epsilon^{-1}\sqrt{n}(1+a_n),$
      \begin{equation}\begin{aligned}
         &\mathbb{P}(\text{There exists two real eigenvalues $\lambda_1,\lambda_2$ of $A$ such that } \lambda_1+a_n\lambda_2=b_n)
         \\&\leq \epsilon^{-3/4}(4\sqrt{n}(1+|a_n|))\cdot\epsilon\sqrt{n}+\epsilon^{7/4}n^{1.5}|N_\epsilon|+|N_\epsilon|e^{-cn}=e^{-\Omega(n)}
     \end{aligned}\end{equation} once we set $\epsilon=\exp(-c'n)$ for any $c'>0$. We also use the assumption $a_n=\exp(o(n))$. 
\end{proof}

\begin{proof}[\proofname\ of Corollary \ref{corollary2comp}]
We work on the event $\mathcal{K}$ where $\|G\|\leq 8\sqrt{n}$, and this event happens with probability $1-\exp(-\Omega(n))$. We consider a $\epsilon n^{-1/2}$-net for $[-8\sqrt{n},8\sqrt{n}]$ on the real axis and a $\epsilon n^{-1/2}$-net for the interval $[-8\sqrt{n}i,8\sqrt{n}i]$ on the complex axis, and denote by $N_\epsilon$ the product net. Then we can easily check that 
\begin{equation}\label{reductioncomplexsum}\begin{aligned}
&\mathbb{P}^\mathcal{K}(G\text{ has two eigenvalues whose sum is real})\\&\leq\mathbb{P}^\mathcal{K}(\text{There exists }\lambda^1,\lambda^2\in\mathcal{N}_\epsilon, \text{with }|\Im(\lambda^1+\lambda^2)|\leq2\epsilon n^{-1/2}\\&\text{ such that }\sigma_{min}(G-\lambda^1I_n)\leq16\epsilon,\sigma_{min}(G-\lambda^2I_n)\leq16\epsilon).
\end{aligned}\end{equation}
    The net has cardinality at most $256n^2\epsilon^{-2}$, and for any fixed $\lambda^1$, the number of possible $\lambda^2$ with $|\Im(\lambda^1+\lambda^2)|\leq2\epsilon n^{-1/2}$ is at most $32\epsilon^{-1}n$. We consider two possible scenarios, when (i) $|\Im\lambda_1|\geq \epsilon^{0.1}$ then we always have $|\lambda^1-\lambda^2|\geq \epsilon^{0,1}$ so the corresponding probability of the right hand side of \eqref{reductioncomplexsum} is bounded by $2^{10}\epsilon^{-3}n^3\cdot(Cn^3\epsilon^{3.8}+e^{-cn})$ where the first term is the total number of possible pairs of $(\lambda^1,\lambda^2)$ with small $|\Im(\lambda^1-\lambda^2)|\leq 2\epsilon n^{-1/2}$ and the second term is the probability upper bound obtained from applying Theorem \ref{complextwoballbounds}.

    In the second scenario we have $|\Im \lambda^1|\leq \epsilon^{0.1}$. All such possible pairs have cardinality $2\epsilon^{0.1}\sqrt{n}\cdot (32\epsilon^{-1}n)^2$. For such $\lambda^1,\lambda^2$ we forget about the second constraint and only use the first one, namely the bound $\mathbb{P}(\sigma_{min}(G-\lambda^1I_n)\leq\epsilon)$. Then we can use the result of Luh \cite{luh2018complex} to bound this probability by $\epsilon^2n+e^{-cn}$, and the overall contribution is $O(\epsilon^{-1.9}n^{2.5}(\epsilon^2n+e^{-cn}))$. Taking $\epsilon$ to be exponentially small confirms that the right hand side of \eqref{reductioncomplexsum} is also exponentially small in $n$.
\end{proof}

\subsection{Leftover technical proofs}\label{appendix4a}

We first give the proof of Lemma \ref{specified850}.
\begin{proof}[\proofname\ of Lemma \ref{specified850}]We essentially modify the proof of \cite{campos2025singularity}, Lemma 7.8.

For each $\ell\geq 1$ define an interval intersected with integers $$I_\ell:=[-2^\ell N,2^\ell N]\setminus[-2^{\ell-1}N,2^{\ell-1}N]$$ and $I_0:=[-N,N]$. Fix $J:=[-\kappa N,-N]\cup[N,\kappa N]$.

For any sequence of integers $(\ell_1,\cdots,\ell_i,\cdots,\ell_{2n-1})_{i\in[2n-1]\setminus(D_1\cup D_2)}\in\mathbb{Z}_{\geq 0}^{2n-1-2d}$ we consider the following box
$$
B(\ell_1,\cdots,\ell_{2n-1}):=\oplus_{j=1}^{2n-1}(J\mathbf{1}_{j\in D_1\cup D_2}+I_{\ell_j}\mathbf{1}_{j\notin D_1\cup D_2}), 
$$
In words, for each $j\notin D_1\cup D_2$  we have an integer $\ell_j$ of freedom and use the interval $I_{\ell_j}$ in the $j$-th coordinate. Then for each $j\in D_1\cup D_2$ we use the interval $J$ on the $j$-th coordinate. The resulting box $B(\ell_1,\cdots,\ell_{2n-1})$ is understood that it depends on $2n-1-2d$ free variables and that the labels $\ell_j,j\in D_1\cup D_2$ are omitted from the variables determining $B$.
Then we will be using the following family of boxes
$$
\mathcal{F}:=\left\{B(\ell_1,\cdots,\ell_{2n-1}):\sum_{j\in[2n-1]\setminus(D_1\cup D_2):\ell_j>0}2^{2\ell_j}\leq 16n/\kappa_0^2
\right\}.
$$
We first check that $\mathcal{F}$ forms a net for $\Lambda_\epsilon$. Given a $v\in\Lambda_\epsilon$, we normalize to have $X:=vn^{1/2}/(4\epsilon)\in\mathbb{Z}^{2n-1}$. For $i\in[2n-1]\setminus(D_1\cup D_2)$ define $\ell_i$ to be the unique integer satisfying $X_i\in I(\ell_i)$. Then we can check that $X\in B(\ell_1,\cdots,\ell_{2n-1})$: note that by assumption on $|v_i|,i\in D_1\cup D_2$ we have that for all these $i$, we must have $\kappa_0/(4\epsilon)\leq |X_i|\leq\kappa_1/(4\epsilon)$, so that by the choice $N=\kappa_0/(4\epsilon)$ we have $X_i\in J$. We also used $\kappa\geq \kappa_1/\kappa_0$. To further check that $B(\ell_1,\cdots,\ell_{2n-1})\in\mathcal{F}$ it suffices to note (since $\|v\|_2^2=2$)
$$
\sum_{j:\ell_j>0}2^{2(\ell_j-1)}N^2\leq\sum_{j=1}^{2n-1} X_j^2\leq n/(4\epsilon)^2(\sum_{j=1}^{2n-1} v_j^2)\leq 8nN^2/\kappa_0^2,
$$ which verifies the claim.

Then we check that $|\mathcal{F}|\leq\kappa^{2n}$. This is equivalent to counting the number of non-negative integer sequences $(\ell_1,\cdots,\ell_i,\cdots,\ell_{2n-1})_{i\notin D_1\cup D_2}$ with $\sum_{\ell_i>0}4^{\ell_i}\leq 16n/\kappa_0^2$. Given any $t\geq 0$ there should be no more than $16n/(4^t\kappa_0^2)$ values of $i\in[2n-1]\setminus (D_1\cup D_2)$ with $\ell_i=t$, so the total number of choices is at most
$$
\prod_{t\geq 0}\binom{2n}{\leq 16n/(\kappa_0^24^t)}\leq (\kappa_0/8)^{-8n}<\kappa^{2n}.
$$
The size of the box $B(\ell_1,\cdots,\ell_{2n-1})\in\mathcal{F}$ can be bounded via
$$
|\mathcal{B}(\ell_1,\cdots,\ell_{2n-1})|\leq N^{2n-1}\kappa^d2^{n+\sum_j\ell_j}\leq (\kappa N)^{2n-1}
$$ where we use $\prod_j 2^{\ell_j}\leq (\frac{1}{n}\sum_j 2^{2\ell_j})^n\leq (8/\kappa_0^2)^n$.
\end{proof}

We then sketch the proof of Lemma \ref{incompmore}.
\begin{proof}[\proofname\ of Lemma \ref{incompmore}] The assumption of this lemma requires that any such unit vector $r$ must satisfy $\|r\|_{[n]\setminus\{1\}}>M_{\ref{lemmadegenerate}}>0.$
    If we can find constants $\lambda$,$\hat{\lambda}$, vector pairs $(v,w)$ and $(c_1,c_2)$ satisfying the assumptions, then we can find some $\lambda_i\in\mathbb{R},i=3,4,5,6,7$ such that $(A_{/[1]}-\lambda_2{I_n}_{/[1]})r=\lambda_3 v+\lambda_4e_1$ and $(A_{/[1]}-\lambda_5{I_n}_{/[1]})v=\lambda_6 r+\lambda_7e_1$. Combining both, we get $$(A_{/[1]}-\lambda_5{I_n}_{/[1]})(A_{/[1]}-\lambda_2{I_n}_{/[1]})r=\lambda_3\lambda_6 r+\lambda_4(A_{/[1]}-\lambda_5 {I_n}_{/[1]}) e_1+\lambda_3\lambda_7e_1.$$

Then we follow the same procedure as in the proof of Proposition \ref{proposition1199} to show that any such vector $r$ solving this equation must be $(\delta,\rho)$-incompressible (for constants $\rho,\delta$ depending only on $\xi$ but not on the various $\lambda_i$'s) with high probability. Then we take a union bound over all $\lambda_i$ chosen over some $\epsilon$-nets.
\end{proof}

Then we prove Lemma \ref{randmgenerationoflcd}.

\begin{proof}[\proofname\ of Lemma \ref{randmgenerationoflcd}]
We discretize the range of possible $\phi$, but we use different scales to discretize $\phi_1$ and $\phi_2$. Consider the interval $i\in[\tau/\alpha,32T_{\ref{randmgenerationoflcd}}ND^{-1/2}/\alpha]=I,$ and also consider the longer interval $\widetilde{I}=[0,32T_{\ref{randmgenerationoflcd}}ND^{-1/2}/\alpha]$ 
. Define also $I'=[\tau/\alpha,32T_{\ref{randmgenerationoflcd}}ND^{-1/2}/\alpha]$ and $\widetilde{I}'=[0,32T_{\ref{randmgenerationoflcd}}ND^{-1/2}/\alpha]$.

We define, for each integer $i\in \widetilde{I}$, $\phi_1^i=i\alpha/(2T_{\ref{randmgenerationoflcd}} N)$ and for each $j\in\widetilde{I}'$ define $\phi_2^j=j\alpha/(2T_{\ref{randmgenerationoflcd}} N_1)$. Assume that we can find $\phi\in\mathbb{R}^2$ such that the event in the probability of \eqref{assumptionmatrix} holds true. We claim that we can find $\phi_1^i$, $\phi_2^j$ for some $i\in\widetilde{I},j\in\widetilde{I}'$ with at least one $i\in I$ or $
j\in I'$ satisfying
$$
\|\phi_1^iX+\phi_2^j Y\|_\mathbb{T}\leq 5\sqrt{\alpha D}.
$$
To see this, we simply take $\phi_1^i$ for which $|\phi_1^i-\phi_1|\leq \alpha/(T_{\ref{randmgenerationoflcd}} N)$ and take $\phi_2^j$ for which  $|\phi_2^j-\phi_2|\leq \alpha/(T_{\ref{randmgenerationoflcd}} N_1)$, then applying triangle inequality to get, using $\alpha\leq 1$,
$$
\|\phi_1^i X+\phi_2^jY\|_\mathbb{T}\leq \|\phi_1 X+\phi_2Y\|_\mathbb{T}+|\phi_1^i-\phi_1|\sqrt{D}T_{\ref{randmgenerationoflcd}} N+|\phi_2^j-\phi_2|\sqrt{D}T_{\ref{randmgenerationoflcd}} N_1\leq 5\sqrt{\alpha D
}.
$$ Moreover, we have at least one $i\in I$ or $j\in I'$ by assumption of $\phi_1,\phi_2$ in \ref{assumptionmatrix}.

We first consider the case $i\in I$ so that $
|\phi_1^i|\geq \tau/(2T_{\ref{randmgenerationoflcd}}N)$. In this case we will condition on the random vector $Y$ and only use the randomness in $X$. We estimate, for any fixed $Y$,
\begin{equation}\label{whatisthesum?}
    \sum_{i\in I,j\in\widetilde{I}'}\mathbb{P}_X(\|\phi_1^i X+\phi_2^j Y\|_\mathbb{T}\leq 5\sqrt{\alpha D}).
\end{equation}
Observe that, if $\|\phi_1^i X+\phi_2^j Y\|_\mathbb{T}\leq 5\sqrt{\alpha D}$ then we can find a subset $S(i,j)\subset D_1\subset [D]$ with $|S(i,j)|\geq d/2$ such that for any $k\in S(i,j)$, $\|\phi_1^i X_k+\phi_2^j Y_k\|_\mathbb{T}\leq 10\sqrt{\alpha}$ (here we use $d\leq D\leq 2d$). Then we take a union bound over all subsets $S(i,j)\subset D_1$ and apply independence of the coordinates $X_i$ to show
\begin{equation}\label{backintoshow}
 \eqref{whatisthesum?}\leq \sum_{i\in I,j\in\widetilde{I}}\sum_{S(i,j)\in D_1:|S(i,j)|=d/2}\prod_{k\in S(i,j)}\mathbb{P}_{X_k}(\|\phi_1^i X_k+\phi_2^j Y_k\|_\mathbb{T}\leq 10\sqrt{\alpha}).
\end{equation}

Then we only have to study the following probability for any fixed $k\in D_1$, fixed $\phi_1^i,\phi_2^j$ and fixed  $Y_k$:
$\mathbb{P}_{X_k}(\|\phi_1^i X_k+\phi_2^jY_k\|_\mathbb{T}\leq 10\sqrt{\alpha}).$

Indeed, $\|\phi_1^i X_k+\phi_2^j Y_k\|_\mathbb{T}\leq10\sqrt{\alpha}$ implies that  $\|\phi_1^i X_k+\phi_2^j Y_k-p|\leq 10\sqrt{\alpha}$ for some $p\in\mathbb{Z}$. We take $\alpha\leq 10^{-2}$, Then we have $|p-\phi_2^jY_k|\leq |\phi_1^i X_k|+1\leq |\phi_1^i|\kappa N+1:=T_i$ so that we now have
\begin{equation}\label{propbounds2nd}\begin{aligned}&
\mathbb{P}_{X_k}(\|\phi_1^i X_k+\phi_2^j Y_k\|_\mathbb{T}\leq 10\sqrt{\alpha})\\&\leq\sum_{p\in\mathbb{Z}:|p-\phi_2^jY_k|\leq T_i}\mathbb{P}_{X_k}(|X_k-p(\phi_1^i)^{-1}+(\phi_2^j) Y_k(\phi_1^i)^{-1}|\leq 10\sqrt{\alpha}(\phi_1^i)^{-1})\\&\leq \frac{(2T_i+1)(10\sqrt{\alpha}(\phi_1^i)^{-1}+1)}{2(\kappa -1)N}.\end{aligned}
\end{equation}In the last step we use the fact that $X_k$ is uniformly distributed over $[-\kappa N,-N]\cup[N,\kappa N]$.

Then we use the fact that $10\sqrt{\alpha}(\phi_i^i)^{-1}\geq 10\sqrt{\alpha}D^{1/2}/16\gg 1$ for $D$ large and $|\phi_1^i|\kappa N\geq \frac{\tau\kappa}{2T_{\ref{randmgenerationoflcd}}}:=\frac{1}{S}$ to get $2T_i+1\leq (2+S)|\phi_i^i|\kappa N$. Then $\eqref{propbounds2nd}\leq 20(2+\frac{1}{S})\sqrt{\alpha}.$

Finally we take this equation back into \eqref{backintoshow}, then use the upper bound $|\widetilde{I}|\leq 3^D,|\widetilde{I}'|\leq 3^D$ by assumption. The case when $j\in I'$ can be proven in exactly the same way. This completes the whole proof.
\end{proof}

\subsection{On the number of real eigenvalues and Theorem \ref{universalityrealroots}}
This section is devoted to the proof of Theorem \ref{universalityrealroots}. We first introduce some notions on correlation functions.

Let $m_r,m_c\in\mathbb{N}_+$, for a random matrix $X$ with real entries we define its correlation function $\rho_X^{m_r,m_c}$ as the unit function satisfying, for all $f\in C_c(\mathbb{R}^{m_r}\times\mathbb{C}_+^{m_c}),$ $$
\mathbb{E}[\sum f(u_{j_1},\cdots,u_{j_{m_r}},z_{k_1},\cdots,z_{k_{m_c}}]=\int_{\mathbb{R}^{m_r}\times \mathbb{C}_+^{m_c}}f(\mathbf{u},\mathbf{z})\rho_X^{m_r,m_c}(\mathbf{u},\mathbf{z})d\mathbf{u}d\mathbf{z}
$$ with the sum on the left hand side is over $m_r$ distinct real eigenvalues of $A$ and $m_c$ distinct complex eigenvalues of $A$ in $\mathbb{C}_+:=\{z\in\mathbb{C}:\Im z>0\}$. Here $m_r$ is the number of real eigenvalues and $m_c$ is the number of complex conjugate vector pairs (since $X$ is a real matrix, its complex eigenvalues appear as conjugate pairs.) We then define scaled copy of the correlation function, scaled at a certain reference point $u\in\mathbb{R}$ by 
$$
\rho_{X,r}^{m_r,m_c}(\mathbf{u},\mathbf{z},u,\sigma_u)=\rho_X^{m_r,m_c}(u+\frac{\mathbf{u}}{\sqrt{n\sigma_u}},u+\frac{\mathbf{z}}{\sqrt{n\sigma_u}}).
$$ The subscript $r$ in $\rho_{X,r}^{m_r,m_c}$ means we are considering the real bulk with $u\in\mathbb{R}$.

Let $\operatorname{GinOE}$ denote the real Ginibre ensemble, which is realized as $G=(\frac{1}{\sqrt{n}}g_{ij})$ where $g_{ij}$ are i.i.d. real Gaussian variables with mean 0 and variance 1. (
In this section the complex bulk is irrelevant to us, but we havve a similar correlation function $\rho_{X,c}^{m_r,m_c}(\mathbf{z};z,\sigma_z)$ there.)

In the Gaussian case, the scaling limits of this rescaled correlation function is computed in \cite{borodin2009ginibre}, \cite{forrester2007eigenvalue}, \cite{sommers2008general} and is denoted by $\rho_{\operatorname{GinOE},r}^{m_r,m_c}$.

The main result
Theorem 2.2 of \cite{osman2025bulk} implies that, for any $u\in(-1,1)$, we can find a $\delta>0$ such that
\begin{equation}\label{realuniversals}
    \begin{aligned}
&\int_{\mathbb{R}^{m_r}\times\mathbb{C}_+^{m_c}}f(\mathbf{u},\mathbf{z})\rho_{A/\sqrt{n},r}^{m_r,m_c}(\mathbf{u},\mathbf{z};u,1)d\mathbf{u}d\mathbf{z}\\&=\int_{\mathbb{R}^{m_r}\times\mathbb{C}_+^{m_c}}f(\mathbf{u},\mathbf{z})\rho^{m_r,m_c}_{\operatorname{GinOE},r}(\mathbf{u},\mathbf{z})d\mathbf{u}d\mathbf{z}+O(n^{-\delta}),
    \end{aligned}
\end{equation} and by going through the proof in \cite{osman2025bulk} (especially checking \cite{osman2025bulk},Theorem 2.1) we can ensure that for a test function $f$ with bounded support and for $u\in(-1+\epsilon,1-\epsilon)$ we can take $\delta>0$ in \eqref{realuniversals} uniformly over all such $u\in(-1+\epsilon,1-\epsilon)$. Now we prove Proposition \ref{universalityrealroots}.

\begin{proof}[\proofname\ of Theorem \ref{universalityrealroots}]
    Due to the restriction $u\in(-1+\epsilon,1-\epsilon)$ in \eqref{realuniversals} we consider $N_{[-1+\epsilon,1-\epsilon]}(A_n/\sqrt{n})$, which is the number of real eigenvalues of $A_n$ in $((-1+\epsilon)\sqrt{n},(1-\epsilon)\sqrt{n})$. We consider smooth, nonnegative functions $F_{-}(x),F_+(x)$ satisfying 
    $$
1_{[-1+10\epsilon,1-10\epsilon]}(x)\leq F_{-}(x/\sqrt{n})\leq 1_{[-1+\epsilon,1-\epsilon]}(x)\leq F_+(x/\sqrt{n})\leq 1_{[-1+\epsilon/10,1-\epsilon/10]}(x).
    $$ Then $$
    \mathbb{E}N_{[-1+\epsilon,1-\epsilon]}(A_n/\sqrt{n})\geq \int_\mathbb{R}\rho_{A/\sqrt{n},r}^{(1,0)}(x)F_{-}(x/\sqrt{n})dx.$$
    We can take a smooth partition of $F_{-}(x)$ into $O(\sqrt{n})$ pieces of smooth functions supported on intervals of length $O(1)$, and then apply \eqref{realuniversals} to each piece. We get that upon swapping, the difference to the right hand side is at most $O(n^{1/2-\delta})$, so that 
     \begin{equation}\label{newvassar}\begin{aligned}
    \mathbb{E}N_{[-1+\epsilon,1-\epsilon]}(A_n/\sqrt{n})&\geq \int_\mathbb{R}\rho_{\operatorname{GinOE},r}^{(1,0)}(x)F_{-}(x/\sqrt{n})dx-O(n^{1/2-\delta})\\&\geq \mathbb{E}N_{[-1+10\epsilon,1-10\epsilon]}(G_n)-O(n^{1/2-\delta'})
    \end{aligned},\end{equation} where $G_n$ is the real Ginibre matrix. In \cite{edelman1994many}, Corollary 4.3 Edelman, Kostlan and Shub obtained the marginal probability density $f_n(x),x\in\mathbb{R}$ of the real eigenvalues of $G_n$, and in Corollary 4.5 they proved that $f_n(x)$ converges to $\frac{1}{2}$ for $|x|<1,$ to $(2+\sqrt{2})/8$ for $|x|=1$ and to 0 for $|x|>1$. The convergence is both pointwise and in $L^p$ norms for $1\leq p<\infty$. That is, asymptotically the density of real eigenvalues of $G_n$ is uniform on $[-1,1]$. By computations in \cite{edelman1994many}, Chapter 5 they obtained $\mathbb{E}_\mathbb{R}(G_n)=\sqrt{\frac{2\pi}{n}}+O(1)$. Combining all these facts, we get that $\mathbb{E}N_{[-1+10\epsilon,1-10\epsilon]}(G_n)=\sqrt{\frac{2\pi}{n}}(1-10\epsilon)(1+o(1))$. Taking this back into \eqref{newvassar}, we get
    \begin{equation}\label{expectationlower}
\mathbb{E}N_{[-1+\epsilon,1-\epsilon]}(A_n/\sqrt{n})\geq \sqrt{\frac{2n}{\pi}}(1-10\epsilon)(1+o(1))-O(n^{1/2-\delta'}).
    \end{equation}

    Next we get an upper bound for the variance of $N_{[-1+\epsilon,1-\epsilon ]}(A_n/\sqrt{n})$. The situation is more complicated here as \eqref{realuniversals} implies universality only in the bulk, so we cannot directly use the result on the variance of $N_\mathbb{R}(G_n)$. We denote by $$I_1:=[-1+\epsilon+n^{-1/4},1-\epsilon-n^{-1/4}],I_2:=[-1+\epsilon,1-\epsilon],I_3:=[-1+\epsilon-n^{-1/4},1-\epsilon+n^{-1/4}].$$ This time, we consider smooth, nonnegative functions $R_{-}(x),R_+(x)$ satisfying 
    $$\begin{aligned}
&1_{I_1}(x)\leq R_{-}(x/\sqrt{n})\leq 1_{I_2}(x)\leq R_+(x/\sqrt{n})\leq 1_{I_3}(x).\end{aligned}$$
    Moreover, we shall choose $R_{+}(x)$ and $R_{-}(x)$  in such a way that they can be chopped into the sum of $O(\sqrt{n})$ smooth functions with support length $O(1)$ each, so the previous computations still work here.

    We begin with some observations. First, the previous proof already implies, using $|a^2-b^2|=|a-b||a+b|$,
    \begin{equation}\label{steps1}
|(\mathbb{E}N_{[-1+\epsilon,1-\epsilon]}(A_n/\sqrt{n}))^2-(\mathbb{E}N_{[-1+\epsilon,1-\epsilon]}(G_n))^2|=O(n^{1-\delta'}).
    \end{equation}
    Using the fact that $f_n(x)$ is approximately uniform on $[-1,1]$, we can check that
    \begin{equation}\label{steps2}
|(\mathbb{E}N_{[-1+\epsilon-n^{-1/4},1-\epsilon+n^{-1/4}]}(G_n))^2-(\mathbb{E}N_{[-1+\epsilon,1-\epsilon]}(G_n))^2|=O(n^{1/2-\delta'}),
    \end{equation} whenever $\delta'\leq \frac{1}{4}$.
 Then from the fact that
$$\begin{aligned}
\mathbb{E}N_{I_1}(A_n/\sqrt{n})^2&\leq\int_\mathbb{R}\rho_{A_n}^{(1,0)}(x)R_{-}(x/\sqrt{n})^2dx
+\int_\mathbb{R}\int_\mathbb{R}
\rho^{2,0}_{A_n}(x,y)R_{-}(x/\sqrt{n})R_{-}(y/\sqrt{n})dxdy
\\&\leq \mathbb{E}N_{I_2}(A_n/\sqrt{n})^2\\&\leq
\int_\mathbb{R}\rho_{A_n}^{(1,0)}(x)R_{+}(x/\sqrt{n})^2dx
+\int_\mathbb{R}\int_\mathbb{R}
\rho^{2,0}_{A_n}(x,y)R_{+}(x/\sqrt{n})R_{+}(y/\sqrt{n})dxdy
\\&\leq \mathbb{E}N_{I_3}(A_n/\sqrt{n})^2,
\end{aligned}$$
    we can apply \eqref{realuniversals} to do the swapping $O(n)$ times with an overall cost $O(n^{1-\delta'})$ and obtain
\begin{equation}\label{steps3}
\mathbb{E}N_{I_2}(A_n/\sqrt{n})^2\leq \mathbb{E} N_{I_3}(G_n/\sqrt{n})^2+O(n^{1-\delta'}).
\end{equation}
    
Combining the three estimates \eqref{steps1}, \eqref{steps2}, \eqref{steps3} above, we deduce that
$$
\operatorname{Var}N_{I_2}(A_n/\sqrt{n})\leq \operatorname{Var}N_{I_3}(G_n)+O(n^{1-\delta'}).
$$
We will prove, in Lemma \eqref{gaussiancasevariance}, that 
\begin{equation}\label{leftovervariance}
\operatorname{Var}N_{I_3}(G_n)=O(n^\frac{1}{2}).
\end{equation}
This would imply $\operatorname{Var}N_{[-1+\epsilon,1-\epsilon]}(A_n/\sqrt{n})=O(n^{1-\delta'})$.

Using Markov's inequality with the expectation lower bound from \eqref{expectationlower}, we conclude that we can find some $c'>0$ such that, with probability at least $1-n^{-c'}$, we have $N_\mathbb{R}(A_n)\geq N_{[-1+\epsilon,1-\epsilon]}(A_n/\sqrt{n})\geq \sqrt{\frac{2n}{\pi}}(1-10\epsilon)-O(n^{1/2-c'})$. This completes the proof upon changing the value of $\epsilon>0$.
    
\end{proof}

Finally, we prove the left-over computation \eqref{leftovervariance}
\begin{lemma}\label{gaussiancasevariance}
    For any $\epsilon>0$, the estimate \eqref{leftovervariance} holds. \end{lemma}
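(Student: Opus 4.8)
The plan is to exploit the exactly solvable structure of the real Ginibre ensemble. Recall (see \cite{borodin2009ginibre}, \cite{forrester2007eigenvalue}, \cite{sommers2008general}) that the real eigenvalues of $G_n$ form a Pfaffian point process on $\mathbb{R}$: there is a $2\times 2$ matrix kernel whose scalar entries $S_n(x,y)$, $D_n(x,y)$ and (the bounded, integral-type entry) $\widetilde{I}_n(x,y)$ are given in closed form through incomplete Gamma functions, and such that, up to the conventions of the cited source, the one- and two-point correlation functions of the real spectrum are
$$\rho^{(1)}_n(x)=S_n(x,x),\qquad \rho^{(2)}_n(x,y)=S_n(x,x)S_n(y,y)-S_n(x,y)S_n(y,x)+D_n(x,y)\widetilde{I}_n(x,y).$$
Since $N_{I_3}(G_n)=\sum_{j:\lambda_j\in\mathbb{R}}\mathbf{1}_{I_3}(\lambda_j)$ is a linear statistic of this point process, its variance is
$$\operatorname{Var}N_{I_3}(G_n)=\int_{I_3}\rho^{(1)}_n(x)\,dx+\int_{I_3}\int_{I_3}\bigl(\rho^{(2)}_n(x,y)-\rho^{(1)}_n(x)\rho^{(1)}_n(y)\bigr)\,dx\,dy,$$
with truncated two-point function $\rho^{(2)}_n(x,y)-\rho^{(1)}_n(x)\rho^{(1)}_n(y)=-S_n(x,y)S_n(y,x)+D_n(x,y)\widetilde{I}_n(x,y)$. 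I would show each of the two contributions is $O(\sqrt{n})$.

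For the first term I would simply use $N_{I_3}(G_n)\le N_{\mathbb{R}}(G_n)$, so that $\int_{I_3}\rho^{(1)}_n=\mathbb{E}\,N_{I_3}(G_n)\le\mathbb{E}\,N_{\mathbb{R}}(G_n)=\sqrt{2n/\pi}+O(1)=O(\sqrt{n})$ by the Edelman--Kostlan--Shub computation \cite{edelman1994many} already invoked above.

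The heart of the matter is the double integral of the truncated two-point function, and here I would use the bulk asymptotics of the kernel entries. For $n$ large enough that $n^{-1/4}<\epsilon/2$ we have $I_3\subset J_\epsilon:=[-1+\epsilon/2,\,1-\epsilon/2]$, a compact subinterval of $(-1,1)$ independent of $n$. On $J_\epsilon$ the kernel entries are governed by their translation-invariant bulk scaling limits: they are concentrated within an $O(n^{-1/2})$ window of the diagonal, and one has, uniformly over $x\in J_\epsilon$,
$$S_n(x,x)=\rho^{(1)}_n(x)=O(\sqrt{n}),\quad \int_{\mathbb{R}}\!S_n(x,y)^2\,dy=O(\sqrt{n}),\quad \int_{\mathbb{R}}\!|D_n(x,y)|\,dy=O(\sqrt{n}),\quad \sup_{y}|\widetilde{I}_n(x,y)|=O(1),$$
the second following either from the skew-orthogonality/reproducing-type relation for $S_n$ or from direct integration of the rescaled bulk profile (this is what makes the $S_nS_n$ bound sharp, with no spurious $\log n$). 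Consequently
$$\Bigl|\int_{I_3}\!\!\int_{I_3}\bigl(\rho^{(2)}_n-\rho^{(1)}_n\otimes\rho^{(1)}_n\bigr)\Bigr|\le\int_{I_3}\!\Bigl(\int_{\mathbb{R}}S_n(x,y)^2\,dy\Bigr)dx+\sup_{x,y\in J_\epsilon}|\widetilde{I}_n(x,y)|\int_{I_3}\!\Bigl(\int_{\mathbb{R}}|D_n(x,y)|\,dy\Bigr)dx=O\!\bigl(|I_3|\sqrt{n}\bigr)=O(\sqrt{n}),$$
since $|I_3|=O(1)$. Combined with the bound on the first term this gives $\operatorname{Var}N_{I_3}(G_n)=O(\sqrt{n})$, which is \eqref{leftovervariance}.

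The main obstacle is organizational: pinning down the precise form of the GinOE real-eigenvalue kernel in a fixed convention and extracting the four estimates above \emph{uniformly} over all of $J_\epsilon$ (rather than at a single reference point), which is why it is essential that $I_3$ remains inside a fixed compact subinterval of $(-1,1)$ for $n$ large. A slightly softer alternative that avoids writing the kernel explicitly is to invoke the bulk scaling limit of the real-eigenvalue process — a translation-invariant Pfaffian process on $\mathbb{R}$ with super-polynomially decaying correlations, for which the number of points in an interval of length $L$ has variance $O(L)$ — together with uniform control (tightness in $L^1_{\mathrm{loc}}$) of the rescaled one- and two-point functions on $J_\epsilon$; rescaling $I_3$, whose length in the microscopic coordinate is $\asymp\sqrt{n}$, then yields the claimed $O(\sqrt{n})$.
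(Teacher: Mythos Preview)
Your proposal is correct and takes essentially the same approach as the paper: both use the Pfaffian point process structure of the real Ginibre eigenvalues, write the variance as the one-point integral plus the integral of the truncated two-point function, and control the latter via the bulk scaling limit of the kernel on a compact subinterval of $(-1,1)$. The paper's presentation is slightly more compressed---it quotes the explicit Pfaffian scaling limit (with Gaussian tails in $|x-y|$) and reads off the $O(\sqrt{n})$ directly, whereas you unpack this into separate bounds on $S_n$, $D_n$, and $\widetilde{I}_n$; these are equivalent bookkeepings of the same computation.
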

While the quantity $\operatorname{Var}N_\mathbb{R}(G_n)$ has been computed in \cite{forrester2007eigenvalue}, this quantity does not immediately provide an upper bound for the variance of real roots in a certain interval. Therefore we have to follow the path in \cite{forrester2007eigenvalue} and do the calculation again on this interval.

\begin{proof}
    We will use computations from \cite{forrester2007eigenvalue}, see also \cite{byun2023progress} and \cite{forrester2023review} for a more detailed presentation. The variance $V_n$ of $N_\mathbb{R}(G_n)$ can be computed via the integral of two-point correlation functions (see \cite{forrester2007eigenvalue}, the lines between eq (17) and (18), or \cite{byun2022progress}, equation (3.2)). The precise formula is  $V_n=\int_\mathbb{R}dx\int_\mathbb{R}dy\rho_{(2)}^{rT}(x,y)+E_n$ where $E_n=\mathbb{E}_\mathbb{R}(G_n)$ and $\rho_{(2)}^{rT}(x,y):=\rho_{(2)}^r(x,y)-\rho_{(1)}^r(x)\rho_{(1)}^{r}(y)$. It is not hard to check that we can restrict the integral of $x$ and $y$ to compact regions to get an estimate for $\operatorname{Var}N_{I_3}(G_n)$, in the form 
    $$
\operatorname{Var}N_{I_3}(G_n)=\int_{\sqrt{n}I_3}dx\int_{\sqrt{n}I_3}dy\rho_{(2)}^{rT}(x,y)+\mathbb{E}N_{I_3}(G_n)
    $$ where $\sqrt{n}I_3$ denotes the interval $\{\sqrt{n}x:x\in I_3\}$.
     The two-point correlation kernel $\rho_{(2)}^r(x,y)$ has the expression 
    $$
\rho_{(2)}^r(x_1,x_2)=\operatorname{Pf}\begin{bmatrix}
    \operatorname{sgn}(x_1-x_2)+I^r(x_1,x_2)&S^r(x_1,x_2)\\-S^r(x_1,x_2)&D^r(x_1,x_2)
\end{bmatrix},
    $$
 where the exact expression of functions involved can be found in \cite{forrester2007eigenvalue}, equation (15). 
The large $n$ limit of $\rho_{(2)}^r(x,y)$ converges to, for fixed $x,y$, (see \cite{byun2023progress}, equation (2.36))
\begin{equation}
    \operatorname{Pf}\begin{bmatrix}
        \frac{1}{2\sqrt{2\pi}}(y-x)e^{-(x-y)^2/2}&\frac{1}{\sqrt{2\pi}}e^{-(x-y)^2/2}\\-\frac{1}{\sqrt{2\pi}}e^{-(x-y)^2/2}&\operatorname{sgn}(x-y)\operatorname{erfc}(|x-y|/\sqrt{2})
    \end{bmatrix}
\end{equation} which has Gaussian tails and thus integrable over $\mathbb{R}$. From this expression we can readily compute that $\operatorname{Var}N_{I_3}(G_n)=O(n^\frac{1}{2})$.

\end{proof}

\section*{Funding}
The author is supported by a Simons Foundation Grant (601948, DJ).

\section*{Acknowledgment}
The author thanks Prof. Julian Sahasrabudhe for a brief discussion on this topic. The author also thanks Prof. Alexei Borodin for some interesting remarks and Prof. Giorgio Cipolloni for pointing out some references.

\printbibliography

\end{document}